\def\l@section{\@tocline{1}{10pt}{1pc}{}{}}
\def\l@subsection{\@tocline{2}{0pt}{1pc}{4.6em}{}}
\def\l@subsubsection{\@tocline{3}{0pt}{1pc}{7.6em}{}}
\renewcommand{\tocsection}[3]{%
  \indentlabel{\@ifnotempty{#2}{\makebox[2.3em][l]{%
    \ignorespaces#1 #2.\hfill}}}\textbf{#3}}
\renewcommand{\tocsubsection}[3]{%
  \indentlabel{\@ifnotempty{#2}{\hspace*{2.3em}\makebox[2.3em][l]{%
    \ignorespaces#1 #2.\hfill}}}#3}
\renewcommand{\tocsubsubsection}[3]{%
  \indentlabel{\@ifnotempty{#2}{\hspace*{4.6em}\makebox[3em][l]{%
    \ignorespaces#1 #2.\hfill}}}#3}
\newlist{condenum}{enumerate}{1} 
\setlist[condenum]{label=\bfseries Condition \arabic*.,  ref=\arabic*, wide}
\numberwithin{equation}{section}
\theoremstyle{plain}
\def\ps@pprintTitle{%
 \let\@oddhead\@empty
 \let\@evenhead\@empty
 \def\@oddfoot{}%
 \let\@evenfoot\@oddfoot}
\newcommand{\R}{\mathbb{R}}
\newcommand{\RR}{\mathbb{R}^2}
\newcommand{\ps}{\partial_s}
\newcommand{\pr}{\partial_r}
\newcommand{\LL}{\mathcal{L}}
\newcommand{\Rm}{\textnormal{Rm}}
\newcommand{\Ric}{\textnormal{Ric}}
\newcommand{\pt}{\partial_t}
\newcommand{\rii}{\rightarrow\infty}
\newcommand{\ri}{\rightarrow}
\newcommand{\cigar}{\textnormal{Cigar}}
\newcommand{\ga}{\Gamma_{\ge A}}
\newcommand{\yi}[1]{\textcolor{black}{#1}}
\numberwithin{equation}{section}
\newtheorem{theorem}{Theorem}[section]
\newtheorem{lem}[theorem]{Lemma}
\newtheorem{remark}[theorem]{Remark}
\newtheorem{prop}[theorem]{Proposition}
\newtheorem{claim}[theorem]{Claim}
\newtheorem{cor}[theorem]{Corollary} 
\theoremstyle{definition}
\newtheorem{defn}[theorem]{Definition}
\newtheorem*{theorem*}{Theorem}
\xpatchcmd{\tableofcontents}{\contentsname \@mkboth}{\small\contentsname \@mkboth}{}{}
\xpatchcmd{\listoffigures}{\chapter *{\listfigurename }}{\chapter *{\small\listfigurename }}{}{}
\def\blfootnote{\xdef\@thefnmark{}\@footnotetext}
\begin{document}

\begin{abstract}
We prove that all 3D steady gradient Ricci solitons are O(2)-symmetric. The O(2)-symmetry is the most universal symmetry in Ricci flows with any type of symmetries. Our theorem is also the first instance of symmetry theorem for Ricci flows that are not rotationally symmetric.
We also show that the Bryant soliton is the unique 3D steady gradient Ricci soliton with positive curvature that is asymptotic to a ray. 
\end{abstract}

\blfootnote{The author was supported by the NSF grant DMS-2203310.}

\title[O(2)-symmetry of 3D steady gradient Ricci solitons]{O(2)-symmetry of 3D steady gradient Ricci solitons}

\author[Yi Lai]{Yi Lai}
\email{yilai@stanford.edu}
\address[]{Department of Mathematics, Stanford University, CA 94305, USA}

\maketitle

\tableofcontents

\begin{section}{Introduction}\label{s: intro}
\subsection{Statement of the main results}
The concept of Ricci solitons was introduced by Hamilton \cite{cigar}. Ricci solitons generate self-similar solutions of Hamilton's Ricci flow \cite{Hamilton_ric}, and often arise as singularity models of Ricci flows \cite{supR,distanceH,Cao-Kahler,Chen-Zhu-Pinch}. 
They can be viewed as the fixed points under the Ricci flow in the space of Riemannian metrics modulo rescalings and diffeomorphisms. 
Ricci solitons are also natural generalizations of the Einstein metrics and constant curvature metrics. 

A complete Riemannian manifold $(M,g)$ is called a Ricci soliton, if there exist a vector field $X$ and a constant $\lambda\in\R$ such that
\begin{equation*}
    \Ric=\frac{1}{2}\LL_Xg+\lambda\,g.
\end{equation*}
The soliton is called \textit{shrinking} if $\lambda>0$, \textit{expanding} if $\lambda<0$, and \textit{steady} if $\lambda=0$. Moreover, if the vector field $X$ is the gradient of some smooth function $f$, then we say it is a \textit{gradient Ricci soliton}, and $f$ is the potential function.
In particular, a steady gradient Ricci soliton satisfies the equation
\begin{equation*}
    \Ric=\nabla^2 f.
\end{equation*}
The goal of this paper is to study steady gradient Ricci solitons in dimension 3.  We assume they are non-flat.

In dimension 2, the only steady gradient Ricci soliton is Hamilton's cigar soliton, which is rotationally symmetric \cite{cigar}. In dimension $n\ge3$, Bryant constructed a steady gradient Ricci soliton which is rotationally symmetric \cite{bryant}. See \cite{CaoHD,FIK,Lai2020_flying_wing} for more examples of Ricci solitons in dimension $n\ge 4$.

In dimension 3, we know that all steady gradient Ricci solitons are non-negatively curved \cite{ChenBL}, and they
are asymptotic to sectors of angle  $\alpha\in[0,\pi]$.
In particular, the Bryant soliton is asymptotic to a ray ($\alpha=0$), and the soliton $\R\times\cigar$ is asymptotic to a half-plane ($\alpha=\pi$).
If the soliton has positive curvature, it must be diffeomorphic to $\R^3$ \cite{petersen}, and asymptotic to a sector of angle in $[0,\pi)$ \cite{Lai2020_flying_wing}.
If the curvature is not strictly positive, then it is a metric quotient of $\R\times\cigar$ \cite{MT}.

Hamilton conjectured that there exists a 3D steady gradient Ricci soliton that is asymptotic to a sector with angle in $(0,\pi)$, which is called a \textit{3D flying wing} \cite{CaoHD,infinitesimal,Catino,Chow2007a,DZ,HaRF}.
The author confirmed this conjecture by constructing a family of $\mathbb{Z}_2\times O(2)$-symmetric 3D flying wings \cite{Lai2020_flying_wing}. More recently, the author showed that the asymptotic cone angles of these flying wings can take arbitrary values in $(0,\pi)$. 
It is then interesting to see whether a 3D steady gradient Ricci soliton with positive curvature must be either a flying wing or the Bryant soliton.
This is equivalent to ask whether the Bryant soliton is the unique 3D steady gradient Ricci soliton with positive curvature that is asymptotic to a ray. Our first main theorem gives an affirmative answer to this.

\begin{theorem}[Uniqueness theorem]\label{t: must look like a flying wing}
Let $(M,g)$ be a 3D steady gradient Ricci soliton with positive curvature. If $(M,g)$ is asymptotic to a ray, then it must be isometric to the Bryant soliton up to a scaling.
\end{theorem}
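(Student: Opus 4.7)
The plan is to combine the $O(2)$-symmetry theorem (the main result of this paper) with the asymptotic-to-a-ray hypothesis to promote the symmetry to full $O(3)$-symmetry, after which the uniqueness of the Bryant soliton within the rotationally symmetric class closes the argument.

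By the $O(2)$-symmetry theorem, $(M,g)$ admits an isometric $O(2)$ action. Positive sectional curvature gives $M\cong\R^3$, so the action fixes a complete geodesic $\gamma$ (the axis), the tip $p=\{\nabla f=0\}$ lies on $\gamma$ (since $\nabla f$ is $O(2)$-invariant and vanishes at a single point), and each level set $\Sigma_c=\{f=-c\}$ for large $c$ is a topological $2$-sphere that is a surface of revolution about $\gamma$. Passing to the orbit space $Q=M/SO(2)$, which is a half-plane with boundary $\gamma$, the soliton equations reduce to a coupled PDE system on $Q$ for an induced metric together with a single warping function $w$ vanishing on $\partial Q$.

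Next, I would translate the assumption that the asymptotic cone is a single ray into an asymptotic roundness statement for the $\Sigma_c$. In the author's earlier work on flying wings, the asymptotic cone angle $\alpha\in[0,\pi]$ is tied quantitatively to the eccentricity of the level sets at infinity; the endpoint $\alpha=0$ should correspond to $\Sigma_c$ becoming asymptotically round. Concretely, after rescaling each $\Sigma_c$ to unit diameter, I would show that $\Sigma_c$ converges in $C^\infty$ to a round $2$-sphere as $c\to\infty$, using the soliton identities $R+|\nabla f|^2=\mathrm{const}$ and $\Delta f + R = 0$ together with the $O(2)$-symmetric warped-product structure on $Q$.

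The main step, and the principal obstacle, is to upgrade asymptotic roundness to an honest $O(3)$-symmetry of $(M,g)$. The key idea is the self-similarity of a steady soliton: the Ricci flow $\varphi_t^*g$ generated by $-\nabla f$ is isometric to $g$ for every $t$ and identifies $\Sigma_c$ at different heights. Roundness at infinity, propagated backwards by this self-similarity, should force exact roundness at every level, i.e.\ force $w$ on $Q$ to depend only on the distance from the tip $p$. I expect this propagation to require a maximum principle argument for a carefully chosen non-roundness quantity on $Q$ (for instance a suitably normalized traceless part of the Hessian of $w$, or of the second fundamental form of $\Sigma_c$), satisfying a drift-diffusion inequality with vanishing data at infinity. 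The difficulty is to locate a monotone quantity that is available using only the $O(2)$-symmetry, and to rule out an axial wobble that decays at infinity yet persists in the interior. Once $O(3)$-symmetry is established, the soliton equations reduce to a single ODE in one variable, for which the Bryant soliton is the unique non-flat positively curved solution up to scaling \cite{bryant}, completing the proof.
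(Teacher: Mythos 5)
Your plan diverges substantially from the paper's proof and has two serious problems, one logical and one analytical.

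\textbf{Circularity.} The paper's $O(2)$-symmetry theorem for flying wings (Theorem \ref{t: symmetry of flying wing}) is stated for solitons asymptotic to sectors of angle in $(0,\pi)$, so it does not directly cover the asymptotic-to-a-ray case you are considering; the full statement covering all solitons (Theorem \ref{t: symmetry of all solitons}) is deduced by \emph{combining} Theorem \ref{t: must look like a flying wing} with Theorem \ref{t: symmetry of flying wing}. Invoking Theorem \ref{t: symmetry of all solitons} to prove Theorem \ref{t: must look like a flying wing} is therefore circular. You might try to sidestep this by running the machinery of Sections \ref{s: semi-local}--\ref{s: O(2)-symmetry} directly for a hypothetical non-Bryant soliton asymptotic to a ray, but that machinery requires as input the $\mathbb{Z}_2$-symmetry at infinity (Theorem \ref{l: wing-like}), which says $\lim_{s\to\infty}R(\Gamma_i(s))$ is a positive constant for any positively curved non-Bryant soliton. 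As soon as one has this, the paper's own proof of Theorem \ref{t: must look like a flying wing} (via Corollary \ref{c: Asymptotic to a sector}) follows by a short metric-comparison argument: the integral curve $\phi_t(p)$ of $\nabla f$ moves away from $\Gamma$ at rate close to $2$, so $d(\phi_t(p),\Gamma)\ge C^{-1}d(x_0,\phi_t(p))$, which forces the two edges $\Gamma_1,\Gamma_2$ to separate linearly and hence the asymptotic cone angle to be strictly positive. That argument lives entirely in Section \ref{s: asymptotic geometry} and never touches the $O(2)$-symmetry construction. Thus the prerequisites you would need to set up your approach non-circularly already prove the theorem in a far simpler way.

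\textbf{Analytic gaps.} Even taking the $O(2)$-symmetry for granted, the two key steps in your plan are not established. First, you assert that asymptotic-to-a-ray implies $\Sigma_c$ becomes asymptotically round after rescaling. This does not follow immediately: the cone being a ray only says $\mathrm{diam}(\Sigma_c)=o(c)$, which constrains the \emph{size} of $\Sigma_c$ relative to $c$ but says nothing direct about its shape or eccentricity. Second, you acknowledge the propagation from asymptotic roundness to exact $O(3)$-symmetry is the principal obstacle, and the sketch is too thin to assess: you appeal to the self-similarity $\phi_t^*g=g(t)$, but $\phi_t$ is an isometry from $(M,g(t))$ to $(M,g)$, not from $(M,g)$ to itself, so roundness does not literally propagate backward; you would need a genuine monotone quantity on the quotient $Q=M/SO(2)$ satisfying a favorable drift-diffusion inequality, and no candidate is exhibited. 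Compare this with the paper, which avoids the roundness upgrade entirely by working with the scalar curvature along $\Gamma$: the key lemma is that $\lim_{s\to\infty}R(\Gamma_i(s))>0$ for non-Bryant solitons (Theorems \ref{t: Rmax critical point}, \ref{l: wing-like}, using the ODE comparison Lemma \ref{l: ODE}), which then pins down the cone angle directly.
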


We mention that there are many other uniqueness results for the 3D Bryant soliton under various additional assumptions.
First, Bryant showed in his construction that the Bryant soliton is the unique rotationally symmetric steady gradient Ricci solitons \cite{bryant}.
More recently, a well-known theorem by Brendle proved that the Bryant soliton is the unique steady gradient Ricci soliton that is non-collapsed in dimension 3 \cite{brendlesteady3d}. See also \cite{DZ,Cao2009OnLC,Cao2011BachflatGS,Chen2011OnFA,Munteanu2019PoissonEO,Catino} for more uniqueness theorems for the Bryant soliton and Cigar soliton. 

Our Theorem \ref{t: must look like a flying wing} is the Ricci flow analogue of X.J. Wang's well-known theorem in mean curvature flow, which proves that the bowl soliton is the unique entire convex graphical translator in $\R^3$ \cite{Wangxujia}. 
Note that the analogue of 3D steady Ricci solitons in mean curvature flow are convex translators in $\R^3$, where the rotationally symmetric solutions are called bowl solitons.
Moreover, a 3D steady Ricci soliton asymptotic to a ray can be compared to a convex graphical translator whose definition domain is the entire $\R^2$.

There have been many exciting symmetry theorems in geometric flows \cite{Huisken2015ConvexAS,Brendle2011AncientST,angenent2022unique,Bamler2021OnTR,Brendle_jdg_high,zhu2022so,Zhu2021RotationalSO,Bourni_jdg,Bourni_convex,Brendle2019UniquenessOC,Brendle2021OC,brendle2023rotational,BrendleNaffDasSesum,du2021hearing}.
If one views the rotational symmetry as the `strongest' symmetry, then the $O(2)$-symmetry is naturally the `weakest', and the most universal symmetry in all ancient Ricci flow solutions.
For example, in dimension 2, the non-flat ancient Ricci flows are the shrinking sphere, the cigar soliton, and the sausage solution \cite{2dancientcompact,Sesum}, and they are all rotationally symmetric (i.e. $O(2)$-symmetric).
In dimension 3, the author's flying wing examples, the Fateev's examples \cite{Fateev} (see also \cite{Bakas2009AncientSO}) are all $O(2)$-symmetric but not rotationally symmetric (i.e. $O(3)$-symmetric).

It was conjectured by Hamilton and Cao that the 3D flying wings are $O(2)$-symmetric.
Our second main theorem confirms this conjecture.
In particular, this is the first instance of a symmetry theorem for Ricci flows that are not rotationally symmetric.

\begin{theorem}\label{t: symmetry of flying wing}
Let $(M,g)$ be a 3D flying wing, then $(M,g)$ is $O(2)$-symmetric. 
\end{theorem}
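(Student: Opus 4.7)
My strategy is to construct a complete Killing vector field $V$ on $(M,g)$ whose orbits close up to circles, generating the desired $O(2)$-action. The plan combines asymptotic analysis at infinity with a Brendle-style maximum principle rigidity argument, adapted to the collapsed flying-wing setting. First I would pin down the asymptotic geometry. Because $(M,g)$ has positive curvature and is asymptotic to a sector of angle $\alpha\in(0,\pi)$, it collapses by exactly one dimension at infinity. For base points going to infinity in the \emph{interior} of the asymptotic sector, I would show that the pointed sequence subconverges smoothly, after appropriate rescalings, to $\cigar\times\R$: the line factor comes from the radial direction of the sector, and the Cigar factor from Hamilton's classification of 2D steady solitons \cite{cigar}, once positive curvature rules out flat limits. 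Each such limit carries a canonical $SO(2)$-action by cigar rotation. Using smooth subconvergence and a soliton-respecting patching, I then build an \emph{approximate Killing field} $V_0$ on a neighborhood of infinity in $M$ with $|\LL_{V_0}g|_{C^k}\to 0$ and approximately commuting with $\nabla f$, with extra care needed at the edge of the asymptotic sector, where the $\cigar\times\R$ model degenerates.

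Next I would extend $V_0$ to a global vector field $V$ on $M$ by solving the elliptic equation
\[
\Delta V+\Ric(V)-\nabla_{\nabla f}V=0,
\]
which is the drift-Bochner operator exactly annihilated by true Killing fields on a steady soliton, with prescribed asymptotic profile $V-V_0\to 0$. Its Fredholm theory in $f$-weighted spaces on a positively-curved 3D steady soliton is standard and should yield a unique such $V$.

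Setting $h=\LL_V g$, the soliton equation together with the PDE for $V$ should give a drift-Bochner inequality of the form
\[
\Delta|h|^2-\langle\nabla f,\nabla|h|^2\rangle\ \ge\ 2|\nabla h|^2-C|\Rm|\,|h|^2,
\]
with $|h|\to 0$ at infinity by construction. A weighted maximum principle, using the decay of the curvature at infinity and the $f$-convexity of the soliton, then forces $h\equiv 0$, so $V$ is Killing. The closed $SO(2)$-orbits of the asymptotic models transfer to $V$ by continuity, and $V$ integrates to an isometric $O(2)$-action on $(M,g)$.

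The heart of the argument, and the step I expect to be by far the hardest, is the rigidity step in the \emph{collapsing} flying-wing setting. Unlike Brendle's non-collapsed Bryant uniqueness \cite{brendlesteady3d}, here $M$ is 1-dimensionally collapsing at infinity, and the curvature terms in the Bochner identity cannot be absorbed by the drift term without substantial care: the transverse-to-sector direction produces error terms that are only small in an averaged sense along the collapsing $S^1$-fibers. I expect that closing the estimate requires using the $\cigar\times\R$ asymptotics quantitatively --- in particular the fact that the cigar rotation is \emph{exactly} Killing on each limit --- to establish a priori decay of $|h|$ along the collapsing fibers before invoking the maximum principle. This is where the bulk of the technical work should lie, and where new ideas beyond the non-collapsed theory are required.
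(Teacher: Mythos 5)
Your proposal has a fatal error in the foundational step and also misses the structural reason why a Brendle‑style elliptic argument cannot close in this setting.

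\textbf{The asymptotic geometry is backwards.} You claim that for base points going to infinity in the \emph{interior} of the asymptotic sector, the rescaled limit is $\cigar\times\R$. This is wrong: by Theorem~\ref{t: R_1=R_2} and Lemma~\ref{l: DR to cigar}, points with $d_g(\cdot,\Gamma)\to\infty$ have rescaled limit $\R^2\times S^1$, and $\R\times\cigar$ appears \emph{only} as the limit along the two edge curves $\Gamma_1,\Gamma_2$. The ``canonical $SO(2)$ from cigar rotation'' you want in the interior does not exist there; what exists in the interior is the $S^1$-fiber rotation of a cylindrical plane. The two models carry a priori unrelated $SO(2)$-actions, and reconciling them is precisely the crux of the problem. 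Consequently the approximate Killing field $V_0$ you would build from ``cigar rotation at infinity'' is ill-defined where you want it, and the hard matching happens not at the edge of the sector (where in fact you \emph{do} have $\R\times\cigar$) but across the transition from $\Gamma$ to the cylindrical-plane region.

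\textbf{The elliptic Fredholm approach is exactly what is blocked here.} You propose solving $\Delta V+\Ric(V)-\nabla_{\nabla f}V=0$ with prescribed asymptotics and invoke ``standard'' weighted Fredholm theory, then close with a drift-Bochner maximum principle for $|h|^2=|\LL_Vg|^2$. This is Brendle's scheme, and the non-uniqueness of asymptotic limits is the documented obstruction: the required weighted spaces need a single uniform geometric model at infinity, and $M$ has two. The paper replaces this by a parabolic ``stability method'': evolve $X(t)$ by $\partial_tX=\Delta X+\Ric(X)$, show $\LL_{X(t)}g(t)\to 0$ as $t\to\infty$ using the Anderson--Chow pinching and a vanishing theorem for the heat kernel of $\partial_tu=\Delta u+\tfrac{2|\Ric|^2}{R}u$, and show the limit $X_\infty$ is nonzero via polynomial decay bounds on the drift term. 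Your maximum-principle step, by contrast, has nothing to absorb the curvature term $C|\Rm||h|^2$ near $\Gamma$, where $|\Rm|$ does not decay and $|h|$ is not yet known to be small.

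\textbf{The missing quantitative ingredient is the error-vs.-curvature comparison.} For any vanishing argument (parabolic or elliptic) to close, the initial error $\LL_{V_0}g$ must decay strictly faster than $R$, because $R$ itself solves the comparison equation. The paper establishes this through two hard inputs you do not produce: (i) the exponential lower bound $R\ge C^{-1}e^{-2(1+\epsilon_0)d(\cdot,\Gamma)}$ via the improved Harnack inequality (Theorem~\ref{l: R>e^{-2r}}), and (ii) an approximating $SO(2)$-symmetric metric $\overline{g}$ with $|g-\overline{g}|\le e^{-2(1+\epsilon_1)d(\cdot,\Gamma)}$ and $|g-\overline{g}|\to 0$ at infinity (Theorem~\ref{c: best approximation}), built by iterating a symmetry-improvement theorem along harmonic-map heat flows (Sections~\ref{s: semi-local}--\ref{s: Approximating $SO(2)$-symmetric metrics}). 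This is the bulk of the new technical work, and a smooth ``soliton-respecting patching'' of the asymptotic rotations will not get you there; the exponent must beat $2$, which is the precise rate of the cigar-tip contraction along $\Gamma$. You correctly sense that the collapsed case is much harder than Brendle's and that ``new ideas beyond the non-collapsed theory are required,'' but you misdiagnose both where the $SO(2)$-action is available at infinity and which estimate is actually limiting.
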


Here we say a complete 3D manifold is $O(2)$-symmetric if it admits an effective isometric $O(2)$-action, and the action fixes a complete geodesic $\Gamma$, such that the metric is a warped product metric on $M\setminus\Gamma$ with $S^1$-orbits. 
It is easy to see the Bryant soliton and $\R\times\cigar$ are also $O(2)$-symmetric.
Therefore, combining Theorem \ref{t: must look like a flying wing} and \ref{t: symmetry of flying wing}, we see that all 3D steady gradient Ricci solitons are $O(2)$-symmetric. 

\begin{theorem}\label{t: symmetry of all solitons}
Let $(M,g)$ be a 3D steady gradient Ricci soliton, then $(M,g)$ is $O(2)$-symmetric. 
\end{theorem}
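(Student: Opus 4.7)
The plan is to deduce Theorem \ref{t: symmetry of all solitons} from Theorems \ref{t: must look like a flying wing} and \ref{t: symmetry of flying wing} via a case analysis based on curvature sign and asymptotic geometry. By the classification results cited in the introduction, any 3D steady gradient Ricci soliton $(M,g)$ has non-negative sectional curvature by \cite{ChenBL}, and either has strictly positive curvature and is asymptotic to a sector of some angle $\alpha\in[0,\pi)$ by \cite{petersen,Lai2020_flying_wing}, or fails to have strictly positive curvature, in which case it is a metric quotient of $\R\times\cigar$ by \cite{MT}. It therefore suffices to establish $O(2)$-symmetry in each of these three subcases.

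When the curvature is strictly positive I would split on the asymptotic angle $\alpha$. If $\alpha=0$, Theorem \ref{t: must look like a flying wing} forces $(M,g)$ to be homothetic to the Bryant soliton, whose full rotational symmetry $O(3)$ contains an $O(2)$-subgroup (the stabilizer of any coordinate axis) fixing a complete geodesic through the tip, with warped-product structure and circular orbits off the axis; this matches the paper's definition of $O(2)$-symmetry exactly. If $\alpha\in(0,\pi)$, then $(M,g)$ is by definition a 3D flying wing and the $O(2)$-symmetry is the content of Theorem \ref{t: symmetry of flying wing}. When $(M,g)$ is a metric quotient of $\R\times\cigar$, the rotational $O(2)$-action on the cigar factor (acting trivially on $\R$) is an isometric action on the universal cover fixing the tip axis $\R\times\{o\}$; observing that any deck transformation must preserve both the product splitting (because the $\R$-factor is flat and the cigar factor is not) and the gradient vector field $\nabla f$, one checks that this $O(2)$-action descends to an effective isometric $O(2)$-action on $(M,g)$ whose fixed set is a complete geodesic (a line or a closed geodesic arising from the tip-axis of the cigar) and which exhibits the metric as a warped product with $S^1$-orbits on the complement.

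The genuine mathematical content of Theorem \ref{t: symmetry of all solitons} is concentrated in Theorems \ref{t: must look like a flying wing} and \ref{t: symmetry of flying wing}, so once those are granted the present deduction is essentially a bookkeeping exercise. The only mildly delicate step in my plan is the verification in the third subcase that every metric quotient of $\R\times\cigar$ actually arising as a 3D steady gradient Ricci soliton inherits the $O(2)$-action in the precise form (effective, fixing a complete geodesic, warped product with $S^1$-orbits) required by the paper's definition, which requires a brief analysis of the possible deck groups inside $\mathrm{Isom}(\R\times\cigar)\cong(O(1)\ltimes\R)\times O(2)$ compatible with the soliton equation.
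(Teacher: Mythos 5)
Your case decomposition is exactly the route the paper takes: it deduces Theorem~\ref{t: symmetry of all solitons} from Theorems~\ref{t: must look like a flying wing} and~\ref{t: symmetry of flying wing} for the strictly positively curved solitons, and remarks that the Bryant soliton and $\R\times\cigar$ are ``easily seen'' to be $O(2)$-symmetric. Your handling of the positive-curvature subcases (Bryant when $\alpha=0$, flying wing when $\alpha\in(0,\pi)$) is correct and matches the paper.

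The issue is in your third subcase. You claim that for a metric quotient $(\R\times\cigar)/\Gamma$ ``one checks that this $O(2)$-action descends.'' Your preliminary observation is fine: any deck transformation preserves the de~Rham splitting and $\nabla f$, and (to act freely) cannot reverse the $\R$-factor, so $\Gamma$ embeds into $\R\times O(2)$, projecting injectively and discretely onto the $\R$-factor; hence $\Gamma$ is trivial or generated by a single screw motion $(t,x)\mapsto(t+a,R_\theta x)$ (or a glide reflection, whose square is a pure translation). But the $O(2)$-action on the cigar factor descends to $M=(\R\times\cigar)/\Gamma$ only if $O(2)$ normalizes $\Gamma$, and a reflection $\sigma\in O(2)$ conjugates $(a,R_\theta)$ to $(a,R_{-\theta})$, which lies in $\Gamma$ only when $\theta\in\{0,\pi\}$. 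For a screw motion with $\theta\notin\{0,\pi\}$, only the $SO(2)$-subaction descends; moreover the leaves of the distribution orthogonal to the $SO(2)$-orbits spiral around the quotient and are not embedded, so there is no totally geodesic cross-section $N$ with a diffeomorphism $\Phi:N\times S^1\ri M\setminus\Gamma$ pulling $g$ back to a warped product $g_N+\varphi^2 d\theta^2$, which is the precise condition the paper requires in Section~\ref{s: O(2)-symmetry}. Such a screw-motion quotient does carry the descended cigar potential $f_c$ and hence is a steady gradient Ricci soliton, so your sentence ``one checks that this $O(2)$-action descends'' is not correct as stated, and your ``brief analysis of the possible deck groups'' actually has to either restrict the statement (e.g.\ to simply connected solitons, where the quotient is trivial and $M\cong\R\times\cigar$) or relax the target notion of $O(2)$-symmetry. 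Since the paper's own remark only verifies $\R\times\cigar$ itself and never touches nontrivial quotients, your plan has made the implicit simple-connectivity assumption visible without resolving it, so you should state that hypothesis explicitly rather than assert the descent.
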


In mean curvature flow, the `weakest' symmetry is the $\mathbb{Z}_2$-symmetry, which are usually obtained using the standard maximum principle method. 
More precisely, if we compare 3D steady gradient Ricci solitons with convex translators in $\R^3$,
then the $O(2)$-symmetry is compared with the $\mathbb{Z}_2$-symmetry (reflectional symmetry) there.
The convex translators in $\R^3$ have been classified to be the tilted Grim Reapers, the flying wings, and the bowl soliton, all of which are $\mathbb{Z}_2$-symmetry \cite{white}.
However, as its analogy in Ricci flow, the $O(2)$-symmetry is not `discrete' at all, and no maximum principle is available.


We also obtain some geometric properties for the 3D flying wings.
First, we show that the scalar curvature $R$ always attains its maximum at some point, which is also the critical point of $f$.
The analogue of this statement in mean curvature flow is that the graph of the convex translator has a maximum point, which relies on the well-known convexity theorem by Spruck-Xiao \cite{Spruck2020CompleteTS}.

\begin{theorem}\label{t': max}
Let $(M,g,f)$ be a 3D steady gradient Ricci soliton with positive curvature, then there exists $p\in M$ which is a critical point of the potential function $f$, and the scalar curvature $R$ achieves the maximum at $p$.
\end{theorem}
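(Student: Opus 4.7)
The plan is to decompose the theorem into two claims: (a) any point where $R$ attains $\sup_M R$ is a critical point of $f$; (b) $\sup_M R$ is in fact attained.

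Claim (a) follows from standard steady-soliton identities. The equation $\Ric=\nabla^2 f$ combined with the contracted second Bianchi identity gives $\nabla R=-2\Ric(\nabla f)$, and differentiating $R+|\nabla f|^2$ then shows this quantity is a constant $C$, necessarily positive because $(M,g)$ is non-flat. Thus $R\le C$ pointwise with equality precisely when $\nabla f=0$. If $R(p)=\sup_M R$, then $\nabla R(p)=0$ and positive-definiteness of $\Ric$ (which in 3D follows from positive sectional curvature) forces $\nabla f(p)=0$; the conservation law then gives $R(p)=C=\max_M R$.

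For claim (b), the strategy is to show $R\to 0$ at infinity, which together with $R>0$ somewhere forces $\sup_M R$ to be attained at some interior point. The input for this is the asymptotic structure: by positive curvature, $(M,g)$ is asymptotic to a Euclidean sector of angle $\alpha\in[0,\pi)$ \cite{Lai2020_flying_wing}. Blowdowns $(M,\lambda_k^2 g)$ with $\lambda_k\to 0$ converge (in the Gromov--Hausdorff sense) to the flat sector, and the rescaled scalar curvature is $\lambda_k^{-2}R$. If $R(x_k)$ were bounded below along some divergent sequence $x_k$, then $\lambda_k^{-2}R(x_k)\to\infty$ would be incompatible with convergence to a flat limit, so $R(x_k)\to 0$.

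The main obstacle lies in making the decay step rigorous, especially in the flying wing regime. When $\alpha=0$ the soliton is non-collapsed by Brendle's theorem, smooth Cheeger--Gromov compactness of blowdowns applies, and pointwise decay of $R$ follows immediately. When $\alpha\in(0,\pi)$, however, the soliton collapses along an $S^1$ fiber at infinity and only Gromov--Hausdorff convergence of blowdowns is available, so pointwise curvature bounds are not automatic. Handling this case requires either an equivariant/collapsed Cheeger--Fukaya--Gromov limit taking into account the $S^1$ action, or an a priori curvature decay estimate such as $R(x)\,\mathrm{dist}(x,o)\le C'$ obtained by integrating the monotonicity $\frac{d}{dt}R(\phi_{-t}(x))=2\Ric(\nabla f,\nabla f)\ge 0$ along integral curves of $-\nabla f$.
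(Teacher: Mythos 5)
Your claim (a) is correct and is exactly the standard soliton-identity argument; the paper uses it as well. The problem is claim (b), which is built on a false premise: \emph{for a 3D flying wing, $R$ does not tend to zero at infinity}. This is not a defect of your method but a factual obstruction: Theorem \ref{t: R_1=R_2} of the paper shows that along the two edge curves $\Gamma_1,\Gamma_2$ the scalar curvature converges to a positive constant (equal to $4$ after normalization). Hence for a divergent sequence $x_k\in\Gamma_1$ one has $R(x_k)\to 4>0$, and the proposed a priori bound $R(x)\,\mathrm{dist}(x,o)\le C'$ is simply false. The GH blowdown argument does not give decay either, precisely for the reason you yourself flag: GH convergence to a flat sector carries no $C^2$ control, and in the collapsed regime the rescaled curvature does blow up along the edges without any contradiction. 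Your suggested derivation of the decay estimate by integrating $\frac{d}{dt}R(\phi_{-t}(x))=2\Ric(\nabla f,\nabla f)\ge 0$ along integral curves of $-\nabla f$ is also circular: you would need to know where those curves limit as $t\to\infty$ (namely, to a critical point of $f$), which is what is to be proved. The side remark that $\alpha=0$ implies non-collapsing ``by Brendle's theorem'' also misquotes the direction of Brendle's result (non-collapsed $\Rightarrow$ Bryant), and in this paper the implication $\alpha=0\Rightarrow$ Bryant is Theorem \ref{t: must look like a flying wing}, which is itself downstream of the critical-point theorem.

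The paper's proof of Theorem \ref{t: Rmax critical point} proceeds very differently, by contradiction. If $R$ had no maximum, then $f$ would have no critical point and one can splice the edge curve $\Gamma_1$ (an integral curve of $\nabla f$) with the integral curve of $-\nabla f$ emanating from $\Gamma_1(0)$ to obtain a complete integral curve $\Gamma$ of $\nabla f$ escaping to infinity at both ends, with the soliton dimension-reducing to $\R\times\cigar$ at both ends. One then compares the cigar scales at the two ends: by following the $\epsilon$-cylindrical region under the flow, using the ODE-type distance-distortion bounds and Lemma \ref{l: 2D surface} (comparison of $\epsilon$-neck scales in a positively curved level surface of $f$), one forces the limiting values of $R$ at the two ends of $\Gamma$ to agree, and hence $R$ to be constant along $\Gamma$. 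The soliton identities then give $\Ric(\nabla f,\nabla f)=0$ along $\Gamma$, so the soliton splits and is isometric to $\R\times\cigar$, contradicting strict positivity of curvature. In short, the paper never shows (and could not show) that $R\to 0$ at infinity; it instead shows that the \emph{non}-existence of a critical point forces the soliton to split.
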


We study the asymptotic geometry of 3D flying wings. First, we show that the soliton is $\mathbb{Z}_2$-symmetric at infinity, in the sense that the limits of $R$ at the two ends of $\Gamma$ are equal to a same positive number. Here $\Gamma$ is a complete geodesic fixed by the $O(2)$-isometry.
After a rescaling we may assume this positive number is $4$, then we show that there are two asymptotic limits, one is 
$\R\times\cigar$ with $R(x_{tip})=4$, and the other is $\RR\times S^1$ with the diameter of the $S^1$-factor equal to $\pi$. Note that in a cigar soliton where $R=4$ at the tip, the diameter of the $S^1$-fibers in the warped-product metric converges to $\pi$ at infinity.
See \cite{distanceH,Lu_Wang_Ricci_shrinker,De15,chow2022four,Lai2020_flying_wing} for more discussions on the asymptotic geometry of Ricci solitons.

\begin{theorem}[$\mathbb{Z}_2$-symmetry at infinity]\label{t: R_1=R_2}
Let $(M,g,f)$ be a 3D flying wing. Then after a rescaling we have
\begin{equation*}
    \lim_{s\rii}R(\Gamma(s))=\lim_{s\ri-\infty}R(\Gamma(s))=4.
\end{equation*}
For any sequence of points $p_i\rii$, the pointed manifolds $(M,g,p_i)$ smoothly converge to either $\R\times\cigar$ with $R(x_{tip})=4$ or $\RR\times S^1$ with the diameter of the $S^1$-factor equal to $\pi$. Moreover, if $p_i\in\Gamma$, then the limit is $(\R\times\cigar,x_{tip})$, and if $d_g(\Gamma,p_i)\rii$, then the limit is $\R^2\times S^1$.
\end{theorem}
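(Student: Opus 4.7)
The plan combines the $O(2)$-symmetry from Theorem \ref{t: symmetry of flying wing}, the soliton identity $R + |\nabla f|^2 \equiv C_0$, and Theorem \ref{t': max}. Since $\nabla^2 f = \Ric > 0$ by positive Ricci curvature, $f$ is strictly convex and has at most one critical point; Theorem \ref{t': max} gives existence, so there is a unique critical point $p_0$ with $C_0 = R_{max}$. Uniqueness forces $p_0$ to be $O(2)$-fixed, hence $p_0 \in \Gamma$, and $f$ itself to be $O(2)$-invariant, which gives $\nabla f|_\Gamma$ tangent to $\Gamma$. Parametrize $\Gamma$ by arclength with $\Gamma(0) = p_0$ and set $u(s) := (f \circ \Gamma)'(s)$, so that $|\nabla f|(\Gamma(s)) = |u(s)|$. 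The soliton identity becomes $R(\Gamma(s)) = R_{max} - u(s)^2$, while differentiation yields $u'(s) = \Ric(\Gamma'(s), \Gamma'(s)) > 0$ and $u(0) = 0$. Hence $u$ is strictly increasing, $R(\Gamma(\cdot))$ is strictly decreasing in $|s|$, and the limits $R_1 := \lim_{s \to +\infty} R(\Gamma(s))$ and $R_2 := \lim_{s \to -\infty} R(\Gamma(s))$ exist in $[0, R_{max}]$.

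Next I analyze the blow-down along $\Gamma$. For any $s_i \to +\infty$, by bounded non-negative curvature \cite{ChenBL} there is a subsequential pointed smooth Cheeger-Gromov limit $(M_\infty, g_\infty, p_\infty)$: a complete 3D steady gradient Ricci soliton with potential $f_\infty := \lim(f - f(\Gamma(s_i)))$ satisfying $R_\infty + |\nabla f_\infty|^2 = R_{max}$, and carrying an $O(2)$-action fixing the complete geodesic $\Gamma_\infty := \lim \Gamma$ through $p_\infty$. Crucially, for each fixed $\tau \in \R$, smooth convergence gives $R_\infty(\Gamma_\infty(\tau)) = \lim_i R(\Gamma(s_i + \tau)) = R_1$ by Step 1, so $R_\infty \equiv R_1$ along $\Gamma_\infty$. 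The uniform boundedness of $S^1$-orbit lengths on $M$ (a consequence of the 2D asymptotic cone, established in \cite{Lai2020_flying_wing}) passes to $M_\infty$ and excludes models with unbounded $O(2)$-orbits, notably Bryant and flat $\R^3$. Among $O(2)$-symmetric 3D steady gradient Ricci solitons (classified via Theorem \ref{t: symmetry of all solitons}) with bounded orbits and $R$ constant on the fixed axis, only $\R \times \cigar$ remains. Hence $M_\infty = \R \times \cigar$ with $p_\infty$ on the rotation axis $\R \times \{x_{tip}\}$, where $|\nabla f_\infty|$ vanishes; so $u(s_i) \to 0$ and $R_1 = R_{max}$. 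Symmetrically $R_2 = R_{max}$, and rescaling so that $R_{max} = 4$ proves the first assertion.

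For a sequence $p_i$ with $d_g(\Gamma, p_i) \to \infty$, the two-dimensionality of the asymptotic sector forces $R(p_i) \to 0$, and the subsequential pointed Cheeger-Gromov limit is flat. The $S^1$-orbit length at the limit base point equals the common asymptotic value of the $S^1$-orbit length function on $M$, which by Step 2 matches the asymptotic $S^1$-length of the cigar factor, namely $2\pi$ after rescaling $R_{max} = 4$. Simple connectivity and the $O(2)$-action then identify the limit as $\R^2 \times S^1$ with the $S^1$-factor of length $2\pi$ and diameter $\pi$. For a general sequence $p_i \to \infty$, extracting a subsequence on which $d_g(\Gamma, p_i)$ either stays bounded (reducing to a translate of the $\R \times \cigar$ of Step 2, since then the projection of $p_i$ along $\Gamma$ tends to $\pm\infty$) or tends to $\infty$ reduces to one of the two cases.

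The principal difficulty lies in Step 2: identifying $M_\infty = \R \times \cigar$. The two key inputs are the monotonicity of $R$ along $\Gamma$ from Step 1, which forces $R_\infty$ to be constant along $\Gamma_\infty$, and the uniform bound on $S^1$-orbit lengths from \cite{Lai2020_flying_wing}, which encodes the genuinely 2D nature of the asymptotic cone and rules out Bryant and flat limits. Once the splitting is established, $R_1 = R_{max}$ follows at once from the vanishing of $|\nabla f_\infty|$ on the cigar's rotation axis, and the remainder of the theorem reduces to straightforward case analysis.
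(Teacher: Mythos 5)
Your approach has a fundamental circularity: you invoke the $O(2)$-symmetry theorem (Theorem \ref{t: symmetry of flying wing}) from the start, but the paper proves Theorem \ref{t: R_1=R_2} first (as Theorem \ref{l: wing-like} in Section \ref{s: asymptotic geometry}) precisely because it is a key ingredient needed to \emph{prove} the $O(2)$-symmetry. The outline of Section \ref{s: Approximating $SO(2)$-symmetric metrics} and the normalization $\lim_{s\to\infty}R(\Gamma_i(s))=4$ used throughout Sections \ref{s: curvature etimates}--\ref{s: killing field} rest on having already established the $\mathbb{Z}_2$-symmetry at infinity; you cannot feed it back in as input. The paper's actual argument instead works directly with distance-distortion estimates in the backwards Ricci flow together with the ODE Lemma \ref{l: ODE}, using the quadratic curvature decay of Theorem \ref{l: curvature upper bound initial} to control $h(p_t)$ and $H(p_t)=d_g(\gamma_p(t),\Gamma)$; no symmetry is assumed.

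Your Step 2 also contains two concrete gaps. First, you cite \cite{Lai2020_flying_wing} for a "uniform boundedness of $S^1$-orbit lengths on $M$" to exclude the flat limit. That paper constructs explicit examples; it does not prove such a bound for a general 3D steady soliton, and in fact this uniform bound is essentially equivalent to $R_1>0$, which is exactly what you are trying to establish. Second, and more seriously, your conclusion $R_1=R_{\max}$ via "$|\nabla f_\infty|$ vanishes on the rotation axis" is false: the soliton $\R\times\cigar$ admits potential functions of the form $f_\infty = f_{\cigar} + a\,s + b$ with any constant $a$, since adding a linear function in the $\R$-direction preserves $\nabla^2 f_\infty = \Ric$. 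Along $\Gamma_\infty$ one has $|\nabla f_\infty|=|a|$. For a genuine flying wing, $u(s_i)\to\pm\sqrt{R_{\max}-R_1}\neq 0$ forces $a\neq 0$ in the limit. Indeed Theorem \ref{t': quantitative relation} shows $\lim R(\Gamma(s)) = R(p)\sin^2(\alpha/2) < R_{\max}$ whenever $\alpha<\pi$, so the claimed identity $R_1=R_{\max}$ contradicts the paper's own results for flying wings.
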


We also obtain a quantitative relation between the limit of $R$ along $\Gamma$, the asymptotic cone angle, and $R(p)$ where $p$ is the critical point of $f$.
This is also true in the Bryant soliton, and thus is true for all 3D steady gradient Ricci solitons with positive curvature.

\begin{theorem}\label{t': quantitative relation}
Let $(M,g,f,p)$ be a 3D steady gradient Ricci soliton with positive curvature.
Assume $(M,g)$ is asymptotic to a sector with angle $\alpha$, then we have 
\begin{equation*}
    \lim_{s\rii}R(\Gamma(s))=\lim_{s\ri-\infty}R(\Gamma(s))=R(p)\sin^2\frac{\alpha}{2}.
\end{equation*}
\end{theorem}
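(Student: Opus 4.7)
The plan is to analyze the rescaled potential function on the asymptotic cone of $(M,g)$ based at $p$.

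The Bryant case $\alpha=0$ is immediate since rotational symmetry forces $R\to 0$ at infinity, matching $R(p)\sin^2(0)=0$. Suppose henceforth $\alpha\in(0,\pi)$, so $(M,g)$ is a flying wing. By Theorem~\ref{t: symmetry of flying wing} the soliton is $O(2)$-symmetric with fixed geodesic $\Gamma$, and by Theorem~\ref{t: R_1=R_2} the two limits $R_\infty:=\lim_{s\to\pm\infty}R(\Gamma(s))$ coincide. The main algebraic tool is the conserved quantity $R+|\nabla f|^2=R(p)$, obtained by evaluating at the critical point. Since $\Gamma$ is $O(2)$-fixed and $f$ is $O(2)$-invariant, $\nabla f$ is tangent to $\Gamma$ along $\Gamma$, giving $(f\circ\Gamma)'(s)=\pm|\nabla f|(\Gamma(s))$ and the asymptotic growth rate $\lim_{s\to\pm\infty}f(\Gamma(s))/s=\sqrt{R(p)-R_\infty}$.

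Next I pass to the asymptotic cone $C:=\lim_{r\to\infty}(M,r^{-2}g,p)$, which should be a flat $2$-dimensional sector of angle $\alpha$ whose boundary rays are the blowdowns of $\Gamma_\pm$ (the $S^1$-fibers collapse in the limit). Define the rescaled potentials $\tilde f_r:=f/r$; since $|\nabla\tilde f_r|_{r^{-2}g}=|\nabla f|_g\le\sqrt{R(p)}$, this family is uniformly Lipschitz and subconverges to a Lipschitz function $\tilde f_\infty$ on $C$. By the growth rate above, the restriction of $\tilde f_\infty$ to the boundary ray $\theta=\alpha/2$ is $r\mapsto\sqrt{R(p)-R_\infty}\,r$, and similarly on $\theta=-\alpha/2$.

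The key step is to show $\tilde f_\infty$ is affine on the interior of $C$. For an interior cone point $q$, pick $q_i\in M$ realizing $q$ under the rescaling; since $q$ is interior, $d_g(\Gamma,q_i)\to\infty$, so Theorem~\ref{t: R_1=R_2} gives that the unit-scale blowup $(M,g,q_i)$ converges to $\R^2\times S^1$, which is Ricci flat. The limit potential there satisfies $\nabla^2 f_\infty=0$ and $|\nabla f_\infty|^2=R(p)$, hence is linear on the $\R^2$-factor with gradient norm $\sqrt{R(p)}$. Transferring to $C$, $\tilde f_\infty$ is smooth and affine on a neighborhood of $q$ with $|\nabla \tilde f_\infty|=\sqrt{R(p)}$; combined with simple connectedness of the interior, this globalizes to $\tilde f_\infty(x,y)=ax+by+c$ with $a^2+b^2=R(p)$. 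The equality of the boundary growth rates on $\theta=\pm\alpha/2$ (from Theorem~\ref{t: R_1=R_2}) then forces $b=0$, so $a=\sqrt{R(p)}$.

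Evaluating $\tilde f_\infty(x,y)=\sqrt{R(p)}\,x+c$ on the boundary ray $\theta=\alpha/2$ where $(x,y)=(r\cos(\alpha/2),r\sin(\alpha/2))$ yields $\sqrt{R(p)}\cos(\alpha/2)\,r+c$; matching with $\sqrt{R(p)-R_\infty}\,r$ forces $c=0$ and, after squaring, $R_\infty=R(p)\sin^2(\alpha/2)$ as desired. The main obstacle is establishing the affine structure of $\tilde f_\infty$ on the interior of $C$: upgrading the unit-scale linearity on each $\R^2\times S^1$ blowup to a single affine function on the large-scale cone requires enough regularity of the convergence $\tilde f_r\to\tilde f_\infty$ in the interior (locally $C^2$, say) to ensure the Hessian really vanishes in the limit, and one must separately identify $C$ itself as a flat sector of angle exactly $\alpha$ with $\Gamma_\pm$ mapping onto the boundary rays.
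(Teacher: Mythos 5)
Your proof is correct and follows the same essential strategy as the paper, which delegates this theorem to the argument of \cite[Theorem 1.5]{Lai2020_flying_wing}: both use the soliton identity $R+|\nabla f|^2 = R(p)$, the convergence of the rescaled potentials $f/r$ to an affine function on the asymptotic cone, and the boundary growth rate $\sqrt{R(p)-R_\infty}$ along $\Gamma$ to derive $R_\infty=R(p)\sin^2(\alpha/2)$.

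The gaps you flag at the end are real but fillable from results in the paper, so let me address them. For the affine structure of $\tilde f_\infty$ on the interior: since $\nabla^2 f = \Ric$ and both sides are scale-invariant $(0,2)$-tensors, the Hessian of $\tilde f_r = f/r$ in the rescaled metric $r^{-2}g$ satisfies $|\nabla^2\tilde f_r|_{r^{-2}g} = r\,|\Ric|_g$. For points realizing an interior cone point at radius $\rho$ from the vertex, $d_g(\cdot,\Gamma)\gtrsim \rho r$, so the quadratic decay of Theorem~\ref{l: curvature upper bound initial} gives $r\,|\Ric|_g \lesssim r\cdot (\rho r)^{-2} \to 0$. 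Combined with $|\nabla\tilde f_r|_{r^{-2}g}=|\nabla f|_g\to\sqrt{R(p)}$ there, and the smooth convergence of the $O(2)$-quotients on compact interior sets (again by the quadratic curvature decay), $\tilde f_\infty$ is $C^2$ and affine on the interior with gradient norm $\sqrt{R(p)}$. The identification of $C$ as a flat sector of angle $\alpha$ with the blowdowns of $\Gamma_1,\Gamma_2$ as the two boundary rays is exactly Lemma~\ref{l: two rays} and Corollary~\ref{l: new Gamma}, so no further work is needed there. One minor inaccuracy: you write $(f\circ\Gamma)'(s)=\pm|\nabla f|(\Gamma(s))$, which presumes unit-speed parametrization, whereas the paper's $\Gamma$ is an integral curve of $\nabla f$ so that $(f\circ\Gamma)'=|\nabla f|^2$; but since your later matching is phrased in terms of the arc-length distance $\rho$ from the vertex, the growth rate $\sqrt{R(p)-R_\infty}$ per unit distance is correct either way and nothing downstream is affected.
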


It has been conjectured whether there is a dichotomy of the curvature decay rate of steady gradient solitons, that is, the curvature decays either exactly linearly or exponentially \cite{Munteanu2019PoissonEO,DZ,chan2022dichotomy,Chan2019CurvatureEF}.
In dimension 3, the curvature of Bryant soliton decays linearly in the distance to the tip, and the curvature in $\R\times\cigar$ decays exponentially in distance to the line of cigar tips. 
In this paper, we prove that in a 3D flying wing, the curvature decays faster than any polynomial function in $r$, and slower than an exponential function in $r$, where $r$ is the distance function to $\Gamma$.

\begin{theorem}\label{t': curvature estimate}
Let $(M,g,f,p)$ be a 3D flying wing. Suppose $\lim_{s\rii}R(\Gamma(s))=4$. Then for any $\epsilon_0>0$ there exists $C(\epsilon_0)>0$, and for any $k\in\mathbb{N}$ there exists $C_k>0$ such that the following holds for all $x\in M$,
\begin{equation*}
    C^{-1}e^{-2(1+\epsilon_0)\,d_g(x,\Gamma)}\le R(x)\le C_k\,d_g^{-k}(x,\Gamma).
\end{equation*}
\end{theorem}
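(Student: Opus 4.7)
The proof of Theorem~\ref{t': curvature estimate} splits naturally into the lower and upper bounds, which require substantially different techniques.

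\textbf{Lower bound.} My plan is to establish $|\nabla \log R|\leq 2(1+\epsilon_0)$ outside a large compact set and then integrate. Differentiating Hamilton's identity $R+|\nabla f|^2=C_0$ gives $\nabla R=-2\Ric(\nabla f,\cdot)$, and hence $|\nabla R|\leq 2\lambda_{\max}(\Ric)\,|\nabla f|$. Since 3D steady gradient Ricci solitons have non-negative sectional curvature (\cite{ChenBL}), the pointwise pinching $\lambda_1+\lambda_2\geq\lambda_3$ forces $\lambda_{\max}(\Ric)\leq R/2$, so $|\nabla\log R|\leq|\nabla f|$. The crude bound $|\nabla f|\leq\sqrt{C_0}=2/\sin(\alpha/2)$ is then sharpened using Theorem~\ref{t: R_1=R_2}: in the $\R\times\cigar$ limit, the potential decomposes as $\nabla f=\nabla f_{\mathrm{cigar}}+c\,\partial_\tau$ where $\partial_\tau$ lies in the kernel of the limit Ricci tensor and $c^2=4\cot^2(\alpha/2)$, so only $\nabla f_{\mathrm{cigar}}$, whose norm is $\sqrt{4-R}\leq 2$, contributes to $\nabla R$; in the $\R^2\times S^1$ limit, $\Ric\equiv 0$. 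A quantitative smooth-convergence argument then yields $|\nabla\log R|(x)\leq 2(1+\epsilon_0)$ for $r(x)\geq r_0(\epsilon_0)$. Integrating this along a shortest path from $x$ to a reference point on $\Gamma$ where $R$ is close to $4$, combined with continuity on $\{r\leq r_0\}$, delivers the stated lower bound.

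\textbf{Upper bound.} For the faster-than-polynomial decay, my plan is to use the $O(2)$-symmetry (Theorem~\ref{t: symmetry of flying wing}) to reduce the soliton equation to a PDE system on the 2D quotient $B=M/O(2)$. In coordinates $(s,r)$ on $B$ adapted to $\Gamma=\{r=0\}$, the metric takes the form $A(s,r)^2ds^2+dr^2+w(s,r)^2 d\theta^2$, and Theorem~\ref{t: R_1=R_2} gives $w\to 1$ and $A\to 1$ uniformly at infinity. The reduced soliton equations form an elliptic-type system for $(w,A,f)$ whose linearization at the asymptotic state is a Laplace-like operator, so the deviation $1-w$ and its derivatives decay at rates controlled by this linear model. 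Combining this with a bootstrap via weighted maximum principle on the test function $R\cdot\rho^{k}$ ($\rho$ a smoothed version of $r$), using the scalar-curvature evolution $\Delta R+2|\Ric|^2=\langle\nabla R,\nabla f\rangle$, the pinching $|\Ric|^2\leq R^2/2$, the gradient bound from the lower-bound step, and the inductive hypothesis $R\leq C_{k-1}r^{-(k-1)}$, should produce $R\leq C_k r^{-k}$ for every $k$.

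\textbf{Main obstacle.} The hardest step is the upper bound. A direct blow-up contradiction is delicate because the natural rescaling $\tilde g=R(x_i)g$ collapses the $S^1$-orbits of the $O(2)$-action (their $\tilde g$-circumferences scale as $\sqrt{R(x_i)}\,2\pi w\to 0$), placing the Cheeger--Gromov--Hamilton limit in the collapsed regime; moreover the Hamilton constant $\tilde C_0=C_0/R(x_i)\to\infty$ forces a renormalization of the potential, so identifying the limit as Ricci-flat requires working with Fukaya-type collapsed convergence or Bamler's $F$-convergence. The quotient PDE route sidesteps this collapse, but the linear analysis needed to close the bootstrap for every $k$ still requires careful control of the linearized operator and of the coupling between $R$ and the derivatives of $w$.
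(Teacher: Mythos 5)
Your proposal has one idea that runs parallel to the paper and one step that is genuinely circular.

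For the \textbf{lower bound}, the core algebraic inequality you isolate -- that nonnegative curvature operator forces $\Ric\le\tfrac12 R\,g$, hence $|\nabla\log R|\le|\nabla f|$ -- is exactly the pinching the paper exploits (Lemma~\ref{l: Ric compare to R}). But the paper does not try to prove a pointwise gradient bound $|\nabla\log R|\le 2(1+\epsilon_0)$ and integrate; it inserts $\Ric\le\tfrac12 R\,g$ into Hamilton's differential Harnack inequality with $v=-\nabla\log R$ to obtain the \emph{improved integrated Harnack inequality}
\begin{equation*}
\frac{R(x_2,t_2)}{R(x_1,t_1)}\ge \exp\left(-\frac{1}{4}\frac{d^2_{g(t_1)}(x_1,x_2)}{t_2-t_1}\right),
\end{equation*}
and then applies it along the backward Ricci flow with $x_1\in\Gamma$ and $t_1=-d_0(x,\Gamma)/2$, using the distance-distortion bound $d_t(x,\Gamma)\le d_0(x,\Gamma)+2(1+\epsilon_0)(-t)$ coming from $\lim R(\Gamma_i(s))=4$. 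Your route via the decomposition $\nabla f=\nabla f_{\cigar}+c\,\partial_\tau$ is sensible near $\Gamma$, where the soliton is genuinely close to $\R\times\cigar$ pointed at the tip, but it leaves a real gap in the $\R^2\times S^1$ region: there the manifold is not $\epsilon$-close to $\R\times\cigar$ at any fixed point, the approximate decomposition is no longer available, and $\lambda_{\max}(\Ric)\le R/2$ alone only gives $|\nabla\log R|\le|\nabla f|\le\sqrt{C_0}=2/\sin(\alpha/2)$, which can exceed $2(1+\epsilon_0)$ for small asymptotic cone angles. You would need a separate quantitative argument (in effect, that $\nabla f$ stays almost in the kernel of $\Ric$ with errors that are summable along the radial geodesic) to close this step; the Harnack inequality packages this cleanly and sidesteps the issue.

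For the \textbf{upper bound}, your plan is circular. You propose to invoke Theorem~\ref{t: symmetry of flying wing} (the $O(2)$-symmetry) to pass to the quotient and run a weighted maximum principle. But the curvature estimates of Theorem~\ref{t': curvature estimate} are a \emph{prerequisite} for the $O(2)$-symmetry theorem: the polynomial upper bound on $R$ is used in the construction of the $\epsilon$-grafted soliton and the approximating $SO(2)$-symmetric metric in Section~\ref{s: Approximating $SO(2)$-symmetric metrics}, and the exponential lower bound on $R$ is used in Section~\ref{s: lie} to ensure $|\LL_Xg|/R\to 0$. So the $O(2)$-symmetry cannot be assumed here. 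The paper's proof of the upper bound is intrinsically different and non-symmetric: it establishes the base case $R\le C/d^2(\cdot,\Gamma)$ via Perelman's curvature estimate applied in a local universal cover (where non-collapsing is verified by constructing a $(3,\delta)$-strainer), and then runs a heat-kernel induction using the evolution $\partial_tR=\Delta R+2|\Ric|^2$ and the reproduction formula, in which the inhomogeneous term $2|\Ric|^2\le C\,d_g^{-2k}(\cdot,\Gamma)$ integrates in time to $C\,d_g^{-(2k-1)}(\cdot,\Gamma)$; the gain per step comes from integration over time, not from a weighted maximum principle on a quotient.

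Your closing observation about collapse under blow-up is correct and is precisely why the paper works in the (noncollapsed) local universal cover for the base case rather than with a naive blow-up argument.
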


\subsection{Outline of difficulties and proofs} 
In Ricci flow, the `strongest' symmetry, i.e. the rotational symmetry was first studied by Brendle in dimension 3 with many novel ideas that are successfully generalize to prove rotational symmetry in higher dimensions \cite{brendlesteady3d,Brendle_jdg_high,brendle2023rotational,BrendleNaffDasSesum}. In contrast to the rotational symmetry,
one of the major difficulties in studying any weaker symmetries is the non-uniqueness of asymptotic limits. 
For the $O(2)$-symmetry, this requires us to study the two different asymptotic limits $\mathbb{R}\times\cigar$ and $\R^2\times S^1$ separately and glue up these estimates in a delicate way.
Our $O(2)$-symmetry theorem is the first instance of tackling this issue in Ricci flow. 
Our method may be generalized to study the $O(n-k)$-symmetries, $k=1,...,n-2$, which are weaker than the rotational symmetry (i.e. $O(n)$-symmetry). For example, the author constructed $n$-dimensional steady solitons that are non-collapsed, $O(n-1)$-symmetric but not $O(n)$-symmetric \cite{Lai2020_flying_wing}.

Therefore, we need to develop new tools and methods to prove the $O(2)$-symmetry.
Some of our methods are independent of the soliton structure and the dimension, such as the distance distortion estimates, the curvature estimates, and the symmetry improvement theorem, which should have more applications.
In particular, as a consequence of the non-uniqueness issue of limits, Brendle's construction of the killing field is not applicable in our setting.
So we introduce a new stability method to construct the killing field (see more in Section \ref{s: lie} and \ref{s: killing field}).
Our stability method generalizes Brendle's method (see \cite[Lemma 4.1]{brendlesteady3d}) from steady solitons to flows, in the sense that it is much less restricted by the steady soliton structure, and hence may be applied in more general settings. 
For example, our method was recently applied in \cite{ZhaoZhu} to study the rigidity of the non-collapsed Bryant soliton in any dimension $n\ge 4$. 

We now outline the structure of the paper.
In the following we assume $(M,g,f)$ is a 3D steady gradient Ricci soliton that is not the Bryant soliton.
Let $\{\phi_t\}_{t\in\R}$ be the diffeomorphisms generated by $\nabla f$, $\phi_0=id$, and let $g(t)=\phi_{-t}^*g$. Then $(M,g(t))$, $t\in(-\infty,\infty)$, is the Ricci flow of the soliton.

\textbf{Section \ref{s: pre}} We give most of the definitions and standard Ricci flow results that will be used in the following proofs.

\textbf{Section \ref{s: asymptotic geometry}} We study the asymptotic geometry of the soliton in this section. First, by the splitting theorem of 3D Ricci flow we can show that 
for any sequence of points $x_i\in M$ going to infinity as $i\rii$, the rescaled manifolds $(M,r^{-2}(x_i)g,x_i)$ converge to a smooth limit which splits off a line, where $r(x_i)>0$ is some non-collapsing scale at $x_i$.
We show that such an asymptotic limit is either isometric to $\R\times\cigar$ or $\R^2\times S^1$. 
Moreover, we find two integral curves $\Gamma_1,\Gamma_2$ of $\nabla f$ tending to infinity at one end, such that the asymptotic limits are isometric to $\R\times\cigar$ along them, and are $\RR\times S^1$ away from them. 
The two integral curves also correspond to the two edge rays in the sector which is the blow-down limit of the soliton.

Second, we prove Theorem \ref{t': max} of the existence of the maximum point of $R$.
This is also the unique critical point of $f$.
We do this by a contradicting argument. Suppose there does not exist a maximum of $R$. Then we can find an integral curve $\gamma$ of $\nabla f$ which goes to infinity at both ends. We show that $R$ is non-increasing along the curve and has a positive limit at one end.
Using that the asymptotic limits along both ends of $\gamma$ are isometric to $\R\times\cigar$, we can compare the geometry at the two ends, and by a convexity argument we can show that $R$ is actually constant along $\gamma$, so that the soliton is isometric to $\R\times\cigar$. This gives a contradiction to our positive curvature assumption.
So we have a closed subset $\Gamma$, which is the union of the critical point and two integral curves of $\nabla f$, such that $\Gamma$ is invariant under the diffeomorphisms generated by $\nabla f$, and the soliton converges to $(\R\times\cigar,x_{tip})$ under rescalings along the two ends of $\Gamma$.

Next, we prove a quadratic curvature decay away from the edge $\Gamma$.
This corresponds to the case when $k=2$ in Theorem \ref{t': curvature estimate}.  
The proof uses Perelman's curvature estimate, which gives the upper bound $R(x,0)\le\frac{C}{r^2}$ on scalar curvature in a non-negatively curved Ricci flow $(M,g(t))$, $t\in[-r^2,0]$, assuming the flow is non-collapsed at $x$ on scale $r$.
In our situation,  we will show by methods of metric comparison geometry that the soliton is not non-collapsed at $x$ on scale $d_g(x,\Gamma)$ in a local universal covering. So we can apply Perelman's estimates on the local universal covering and obtain the desired quadratic decay $R(x)\le\frac{C}{d_g^2(x,\Gamma)}$.

Lastly, we prove Theorem \ref{t: must look like a flying wing} and \ref{t: R_1=R_2}.
In proving the two theorems, we will work in the backwards Ricci flow $(M,g(-\tau))$, $\tau\ge0$, and reduce the change of various geometric quantities to the distortion of distances and lengths under the flow.
More specifically,
for a fixed point $x\in M$ at which $(M,g(-\tau))$ is close to $\RR\times S^1$ on scale $h(\tau)$. Let $H(\tau)$ be the 
$g(-\tau)$-distance from $x$ to $\Gamma$. Then we can show 
\begin{equation*}
    \begin{cases}
    H'(\tau)\ge C^{-1}\cdot h^{-1}(\tau)\\
    h'(\tau)\le C\cdot H^{-2}(\tau)\cdot h(\tau),
    \end{cases}
\end{equation*}
which guarantees that $h(\tau)$ stays bounded, and $H(\tau)$ grows at least linearly as $\tau\rii$.
Using this we can show that $R$ has two positive limits $R_1,R_2$ at the two ends of $\Gamma$, and the asymtotic cone angle is non-zero.
To show $R_1=R_2$, first we can find two points $x_1,x_2$ at which the soliton $(M,g)$ is $\epsilon(\tau)$-close to $\RR\times S^1$, on the scales $2R_1^{-1/2}$ and $2R_2^{-1/2}$. Here $\epsilon(\tau)\ri0$ as $\tau\rii$.
Then we can show that $x_1,x_2$ stay in a bounded distance to each other as we move backwards long the Ricci flow, and hence $(M,g(-\tau))$ is $\epsilon(\tau)$-close to $\RR\times S^1$ at $x_1,x_2$ on a uniform scale.
So $R_1=R_2$ follows by controlling the scale change at $x_1,x_2$ in the flow.
Theorem \ref{t: R_1=R_2} is a key ingredient in proving the $O(2)$-symmetry theorem.

\textbf{Section \ref{s: curvature etimates}} We prove Theorem \ref{t': curvature estimate} of the curvature estimates in this section.
It is needed in the proof of the $O(2)$-symmetry.
First, we derive the exponential curvature lower bound of $R$, which needs an improved Harnack inequality for non-negatively curved Ricci flows.
For a Ricci flow solution with non-negative curvature operator, the following conventional integrated Harnack inequality can be obtained by integrating Hamilton's differential Harnack inequality and use the inequality $\Ric(v,v)\le|v|^2R$,
\begin{equation*}
    \frac{R(x_2,t_2)}{R(x_1,t_1)}\ge \exp\left(-\frac{1}{2}\frac{d^2_{g(t_1)}(x_1,x_2)}{t_2-t_1}\right),
\end{equation*}
see for example \cite[Theorem 4.40]{MT}.
We observe that the inequality $\Ric(v,v)\le|v|^2R$ can be improved to $\Ric(v,v)\le\frac{1}{2}|v|^2R$, using which we can prove the following improved Harnack inequality
\begin{equation*}
    \frac{R(x_2,t_2)}{R(x_1,t_1)}\ge \exp\left(-\frac{1}{4}\frac{d^2_{g(t_1)}(x_1,x_2)}{t_2-t_1}\right).
\end{equation*}

Using this improved Harnack inequality and some distance distortion estimates we can prove the exponential curvature lower bound. 
Note that
this exponential lower bound $C^{-1}(\epsilon_0)e^{-(2+\epsilon_0)\,d_g(\cdot,\Gamma)})$ is
sharp, because $\epsilon_0$ can be arbitrarily small, and in $\R\times\cigar$ where $R=4$ at the cigar tip, we have $\Gamma=\R\times\{x_{tip}\}$ and $R$ decays like $e^{-2\,d_g(\cdot,\Gamma)}$.

Next, we derive the polynomial upper bound of $R$, which states that $R$ decays faster than $d^{-k}_g(\cdot,\Gamma)$, for any $k\in\mathbb{N}$. 
We prove this by induction.
First, the case of $k=2$ is proved in Section \ref{s: asymptotic geometry}.
Now assume by induction that $R\le C_k\, d^{-k}_g(\cdot,\Gamma)$ for some $k\ge 2$.
Since $R$ evolves by $\partial_t R=\Delta R+2|\Ric|^2(x,t)$ under the Ricci flow, we have the following reproduction formula for all $s<t$,
\begin{equation*}\begin{split}
    R(x,t)&=\int_M G(x,t;y,s)R(y,s)\,d_sy+2\int_s^t\int_M G(x,t;z,\tau)|\Ric|^2(z,\tau)\,d_{\tau}z\,d\tau,
\end{split}\end{equation*}
where $G$ is the heat kernel of the heat equation $\pt u=\Delta u$. 
Using a heat kernel estimate on $G$ and the inductive assumption, we can show the first term goes to zero as we choose $s\ri-\infty$,
and the second term is bounded by $C\cdot d_{g(t)}^{-(2k-1)}(x,\Gamma)$.
Note that $k\ge 2$ implies $2k-1>k$, this completes the induction process.

\textbf{Section \ref{s: semi-local}} 
In this section we prove a local stability theorem, which is another key ingredient of the $O(2)$-symmetry theorem.
It states that the degree of $SO(2)$-symmetry improves as we move forward in time along the Ricci flow of the soliton. Here $SO(2)$-symmetric means that the manifold admits an isometric $SO(2)$-action whose principal orbits are circles.

First, we prove the symmetry improvement theorem in the linear case.
For a symmetric 2-tensor $h$ on a $SO(2)$-symmetric manifold, it has a decomposition as a sum of a rotationally invariant mode and an oscillatory mode. 
We show that if $h$ which satisfies the linearized Ricci-Deturck flow $\partial_t h=\Delta_L h$ on the cylindrical plane $\RR\times S^1$, then the oscillatory mode of $h$ decays exponentially in time. By a limiting argument we generalize this theorem to the non-linear case for the Ricci-Deturck flow perturbation, whose background is a $SO(2)$-symmetric Ricci flow that is sufficiently close to $\R^2\times S^1$. 

Moreover, the symmetry improvement theorem also describes the decay of $|h|$, in the case that it is bounded by an exponential function instead of a constant. More precisely, for $x_0\in M$, if $|h|(\cdot,0)\le e^{\alpha\,d_g(x_0,\cdot)}$ for any $\alpha\in[0,2.02]$, then $|h|(x_0,T)\le e^{-\delta_0 \,T}\cdot e^{2\alpha\,T}$ holds for some $\delta_0>0$. Applying the theorem to a 3D flying wing in which $R$ limits to $4$ along the edges, the increasing factor $e^{2\alpha\,T}$ will be compensated by the cigar tip contracting along the edges under the Ricci flow.
It is crucial that $\alpha$ can be slightly greater than $2$, using which we can construct a $SO(2)$-symmetric approximating metric in Section \ref{s: Approximating $SO(2)$-symmetric metrics}, so that the error decays like $e^{-(2+\delta)\,d_g(\cdot,\Gamma)}$ for some small but positive $\delta$. So the error decays faster than that of $R$ by the exponential lower bound in Theorem \ref{t': curvature estimate}.
We will use this fact to construct a killing field in Section \ref{s: lie} and \ref{s: killing field}.

\textbf{Section \ref{s: Approximating $SO(2)$-symmetric metrics}} In this section we construct an approximating $SO(2)$-symmetric metric $\overline{g}$ satisfying suitable error estimates. First, we construct a $SO(2)$-symmetric metric $\overline{g}_1$ away from $\Gamma$, which satisfies
\begin{equation}\label{e:  g_2}
    |\overline{g}_1-g|_{C^{100}}\le  e^{-(2+\epsilon_0)\,d_g(\cdot,\Gamma)},
\end{equation}
for some $\epsilon_0>0$.
To show this, we impose the following inductive assumption.

\textbf{Inductive assumption one:}
There are a constant $\delta\in(0,0.01)$ and an increasing arithmetic sequence $\alpha_n>0$, with $\delta\le\alpha_{n+1}-\alpha_n\le0.01$, and if $\alpha_n\le2.02$, then
there is a $SO(2)$-symmetric metric $\widehat{g}_{n}$ such that
\begin{equation}\label{e: intoinduction on n}
    |\widehat{g}_{n}-g|_{C^{100}}\le  e^{-\alpha_nd_g(\cdot,\Gamma)}.
\end{equation}
If this is true for all $n\in\mathbb{N}$, then $\widehat{g}_N$ will satisfy \eqref{e:  g_2} for a large enough $N\in\mathbb{N}$.

Now assume inductive assumption one holds for $n$, to show it also holds for $n+1$, 
we want to apply the symmetry improvement theorem to the Ricci flow of the soliton.
After applying the symmetry improvement theorem $i$ times, the error to a symmetric metric will decay by $C^{-i}$ for some $C>0$. 
So for points at larger distance to $\Gamma$, we need to apply the symmetry improvement theorem more times to achieve the error estimate $e^{-\delta\,d_g(\cdot,\Gamma)}$.
Therefore, we need a second induction to apply the symmetry improvement theorem infinitely many times so that eventually the error estimate $e^{-\delta\,d_g(\cdot,\Gamma)}$ holds everywhere. 

\textbf{Inductive assumption two:} There is a sequence of $SO(2)$-symmetric metrics $\{\widehat{g}_{n,k}\}_{k=1}^{\infty}$ such that $\widehat{g}_{n,k}$
satisfies \eqref{e: intoinduction on n}, and for some $C>0$ we have
\begin{equation}\begin{split}\label{e: intoinduction on k}
|\widehat{g}_{n,k}-g|_{C^{100}}\le   e^{-\alpha_{n} d_g(\cdot,\Gamma)}\cdot C^{-i}\quad \textit{on}\quad\Gamma_{\ge iD},\quad i=0,...,k,
\end{split}\end{equation}
where $\Gamma_{\ge iD}=\{x\in M:d_g(x,\Gamma)\ge iD\}$.
If inductive assumption two is true for all $k\in\mathbb{N}$, we take $\widehat{g}_{n+1}$ to be a subsequential limit of $\widehat{g}_{n,k}$ as $k\rii$, then $\widehat{g}_{n+1}$ satisfies \eqref{e: intoinduction on n} for $n+1$.

Inductive assumption two clearly holds for $k=0$ by taking $\widehat{g}_{n,0}=\widehat{g}_n$ and using inductive assumption one for $n$.
Now assume it holds for $k\ge0$, we verify it for $k+1$ by applying the symmetry improvement theorem.
More precisely, we consider the harmonic map heat flow from $(M,g(t))$ to the Ricci flow $\widehat{g}_{n,k}(t)$ starting from $\widehat{g}_{n,k}$ on $[0,T]$.
Then the error between $g(t)$ and $\widehat{g}_{n,k}(t)$ is then described by the Ricci Deturck flow perturbation. 
Let $\widehat{g}_{n,k+1}$ be the final-time metric $\widehat{g}_{n,k}(T)$ modulo the rotationally invariant part of the error and a diffeomorphism.
Since oscillatory part of the error decays exponentially in time by the symmetry improvement theorem, we can show that $\widehat{g}_{n,k+1}$ satisfies \eqref{e: intoinduction on k}.
This completes the two inductions and hence we obtain a $SO(2)$-symmetric metric $\overline{g}_1$ satisfying \eqref{e:  g_2}.

Lastly, we modify the metric $\overline{g}_1$ to obtain the desired approximating $SO(2)$-symmetric metric $\overline{g}$, which satisfies both \eqref{e: g_2} and 
\begin{equation}\label{e: g3}
    |\overline{g}-g|_{C^{100}}(x)\ri0\quad\textit{as}\quad x\rii.
\end{equation}
Note that $\overline{g}_1$ already satisfies \eqref{e: g3} as we move away from $\Gamma$, we just need to extend this estimate near $\Gamma$. 
Since the soliton converges along the two ends of $\Gamma$ to $\R\times\cigar$ which is $SO(2)$-symmetric, we can obtain $\overline{g}$ by gluing up $\overline{g}_1$ with $\R\times\cigar$ in suitable neighborhoods of the two ends of $\Gamma$.

\textbf{Section \ref{s: lie} } 
The goal of Section \ref{s: lie} and \ref{s: killing field} is to construct a non-trivial killing field of the soliton. 
We do this by a global stability argument using a heat kernel method, which is consistent with our curvature estimates and estimates of approximating metrics.

In this section, we study the solution to the following initial value problem of the linearized Ricci-Deturck flow equation,
\begin{equation}\begin{cases}\label{e: special heat equation}
    \pt h(t)=\Delta_{L,g(t)}h(t),\\
    h(0)=\LL_Xg,
\end{cases}\end{equation}
where $X$ is the killing field of the approximating $SO(2)$-symmetric metric obtained in Section \ref{s: Approximating $SO(2)$-symmetric metrics}.
By the conditions \eqref{e: g_2} and \eqref{e: g3}, and the exponential lower bound from Theorem \ref{t': curvature estimate}, we can deduce that $|\frac{\LL_Xg}{R}|(x)\ri0$ as $x\rii$.
We show that $|h(t)|\ri0$ as $t\rii$.

To prove this, we first observe by the Anderson-Chow's curvature pinching \cite{AC} that $|h|$ satisfies the following inequality 
\begin{equation}\label{e: think}
    |h(x,t)|\le\int_M G(x,t;y,0)|h|(y,0)\,d_0y,
\end{equation}
where $G(x,t;y,s)$ is the heat kernel to the following heat type equation,
\begin{equation}\label{e: ok}
    \partial_t u=\Delta u+\frac{2|\Ric|^2(x,t)}{R(x,t)}u.
\end{equation}
Our key estimate is to show a vanishing theorem of the heat kernel $G(x,t;y,s)$ for any fixed pair $(y,s)$ at $t=\infty$.
Using this vanishing theorem we can show that the integral in \eqref{e: think} in any compact subset is arbitrarily small when $t\rii$.
For the integral outside the compact subset, by the initial condition it is an arbitrary small multiple of $R$ integrated against the heat kernel $G$, which is bounded by the maximum of $R$ seeing that $R$ is also a solution to \eqref{e: ok}.

\textbf{Section \ref{s: killing field} } 
In this section we construct a killing field of the soliton metric.
Let $X$ be the killing field of the approximating $SO(2)$-symmetric metric obtained in Section \ref{s: Approximating $SO(2)$-symmetric metrics}.
Let $(M,g(t))$ be the Ricci flow of the soliton. Let $Q(t)=\pt \phi_{t*}(X)-\Delta_{g(t)} \phi_{t*}(X)-\Ric_{g(t)}(\phi_{t*}(X))$ and $Y(t)$ be a time-dependent vector field which solves the equation
\begin{equation*}
\begin{cases}
    \pt Y(t)-\Delta Y(t)-\Ric(Y(t))=Q(t),\\
    Y(0)=0.
\end{cases}
\end{equation*}
Moreover, let $X(t):=\phi_{t*}(X)-Y$. Then $X(t)$ solves the following initial value problem,
\begin{equation*}
\begin{cases}
    \partial_t X(t)-\Delta X(t)-\Ric(X(t))=0,\\
    X(0)=X,
\end{cases}
\end{equation*}
and the symmetric 2-tensor field $\LL_{X(t)}g(t)$ satisfies the equation \eqref{e: special heat equation}.
Therefore, by the result from Section \ref{s: lie} we see that $\LL_{X(t)}g(t)$ tends to zero as $t\rii$. So the limit of $X(t)$ as $t\rii$ is a killing field of $(M,g)$.

To show that the killing field is non-zero, we first show that $Q(t)$ satisfies a polynomial decay away from $\Gamma$ as a consequence of \eqref{e: g_2} and the polynomial curvature upper bound from Theorem \ref{t': curvature estimate}.
Then by some heat kernel estimates on $|Y(t)|$ we show that it also satisfies the polynomial decay away from $\Gamma$, which guarantees the non-vanishing of the limit of $X(t)$ as $t\rii$.

\textbf{Section \ref{s: O(2)-symmetry} } We prove Theorem \ref{t: symmetry of flying wing} of the $O(2)$-symmetry in this section. 
First, let $X$ be the killing field obtained in Section \ref{s: killing field}, and $\chi_{\theta}$, $\theta\in\R$, be the isometries generated by $X$. We show that $\chi_{\theta}$ are commutative with the diffeomorphisms $\phi_t$ generated by $\nabla f$. Then we show that $\chi_{\theta}$ is a $SO(2)$-isometry. This uses the existence of a maximum point of $R$, which must be fixed by the isometries $\chi_{\theta}$.
Since the maximum point of $R$ is also a critical point of $f$, it follows that $f$ is invariant under the isometries.
Using this we can show that $\chi_{\theta}$ is a $SO(2)$-isometry and fixes the edge $\Gamma$.

Lastly, in order to show that the soliton is also $O(2)$-symmetric, it remains to show that the curvature form of the $SO(2)$-isometry vanishes everywhere. By using the soliton equation and the curvature formula under the $SO(2)$-isometry, we can reduce this to the vanishing of a scaling invariant quantity at a point on $\Gamma$. By a limiting argument and the scaling invariance, this can be further reduced to the Euclidean space $\R^3$ where the $SO(2)$-isometry is the rotation around the $z$-axis, and hence the vanishing assertion clearly holds.

\subsection{Acknowledgments}
I would like to thank Richard Bamler for his constant help and encouragement, and Otis Chodosh, Huai-Dong Cao, Bennett Chow, Xiaohua Zhu, Brian White for helpful discussions and comments.

\end{section}

\section{Preliminaries}\label{s: pre}
In the following we present most of the definitions and concepts that are needed in the statement and proofs of the main results of this paper.

\subsection{Steady gradient Ricci solitons}
\begin{defn}[Steady Ricci soliton]
We say a smooth complete Riemannian manifold $(M,g)$ is a steady Ricci soliton if it satisfies
\begin{equation}\label{e: soliton}
    \Ric=\LL_X g,
\end{equation}
for some smooth vector field $X$. If moreover the vector field is the gradient of some smooth function $f$, then we say it is a steady gradient Ricci soliton, and $f$ is the potential function. In this case, the soliton satisfies the equation
\begin{equation*}
    \Ric=\nabla^2 f.
\end{equation*}
By a direct computation using \eqref{e: soliton}, the family of metrics $g(t)=\phi_t^*(g)$, $t\in(-\infty,\infty)$, satisfies the Ricci flow equation, where $\{\phi_t\}_{t\in(-\infty,\infty)}$ is the one-parameter group of diffeomorphisms generated by $-\nabla f$ with $\phi_0$ the identity. We say $g(t)$ is the Ricci flow of the soliton.
\end{defn}

Throughout the paper, we use the triple $(M,g,f)$ to denote a steady gradient soliton $(M,g)$ and a potential function $f$, and use the quadruple $(M,g,f,p)$ to denote the soliton when $p\in M$ is a critical point of $f$.

For 3D steady gradient Ricci solitons, by the maximum principle they must have non-negative sectional curvature \cite{ChenBL}. Moreover, by the strong maximum principle, see e.g. \cite[Lemma 4.13, Corollary 4.19]{MT}, we see that a 3D steady gradient Ricci soliton must be isometric to quotients of $\R\times\cigar$ if the curvature is not strictly positive everywhere. 
Therefore, throughout the paper we will assume our soliton has positive curvature. So by the soul theorem, the manifold is diffeomorphic to $\R^3$, see e.g. \cite{petersen}.

There are several important identities for the steady gradient Ricci solitons due to Hamilton, see e.g. \cite{HaRF}. In particular, we will use frequently 
\begin{equation}\label{e: two}
    \begin{split}
        \langle\nabla R,\nabla f\rangle&=-2\Ric(\nabla f,\nabla f),\\
        R+|\nabla f|^2&=\textnormal{const}.
    \end{split}
\end{equation}
By the second equation, a critical point of $f$ must be the maximum point of $R$.
For a 3D steady gradient Ricci soliton, since the Ricci curvature is positive, the first equation implies that a maximum point of $R$ is also a critical point of $f$. We will show in Section \ref{s: asymptotic geometry} that the critical point exists in all 3D steady gradient Ricci solitons.

Hamilton's cigar soliton is the first example of Ricci solitons \cite{cigar}. It is rotationally symmetric and has positive curvature. The cigar soliton is an important notion in this paper. In the following we review the definition of the cigar soliton and some properties we will use, including a precise description of the curvature decay and the tip contracting rate.

\begin{defn}[Cigar soliton, c.f. \cite{cigar}]\label{d: cigar}
Hamilton's cigar soliton is a complete Riemannian surface $(\R^2,g_c,f)$, where
\begin{equation*}
    g_c:=\frac{dx^2+dy^2}{1+x^2+y^2},\quad\textit{and}\quad f=\log(1+x^2+y^2).
\end{equation*}
As a solution of Ricci flow, its time-dependent version is
\begin{equation*}
    g_c(t):=\frac{dx^2+dy^2}{e^{4t}+x^2+y^2}.
\end{equation*}
Let $s$ denote the distance to the cigar tip $(0,0)$, then we may rewrite $g_c$ as
\begin{equation}\label{e: warped cigar}
    g_c=ds^2+\tanh^2s\,d\theta^2,
\end{equation}
and the scalar curvature of $g_c$ is
\begin{equation}\label{e: cigar R}
    R_{\Sigma}=4\,\textnormal{sech}^2s=\frac{4}{(e^s+e^{-s})^2}.
\end{equation}
In particular, $R(x_{\textnormal{tip}})=4$ and $K(x_{\textnormal{tip}})=2$.
For a fixed $\theta_0\in[0,2\pi)$, the curve $\gamma(s):=(\theta_0,s)$ is a unit speed ray starting from the tip, and we can also compute that  
\begin{equation}\label{e: integrate Ricci}
    \int_{0}^{r}\Ric_{\Sigma}(\gamma'(s),\gamma'(s))\,ds=\int_0^{r}2\,\textnormal{sech}^2s\,ds=2\tanh s|^{r}_0=2\left(1-\frac{2}{e^{2r}+1}\right),
\end{equation}
which converges to $2$ as $r\rii$.
Note that this integral is the speed of a point st distance $r$ drifts away from the tip in the backward Ricci flow $g_c(t)$. 
\end{defn}

Throughout the paper, by abuse of notation, we will use $g_c$ to denote the metric on the cigar, and the product metric on $\R\times\cigar$ such that $R(x_{tip})=4$; and use $g_{stan}$ to denote the product metrics on $\R\times S^1$ and $\RR\times S^1$ such that the length of the $S^1$-fiber is equal to $2\pi$.

With this convention, it is easy to see from \eqref{e: warped cigar} that for any sequence of points $x_i\rii$, the pointed manifolds $(\cigar,g_c,x_i)$ smoothly converges to $(\R\times S^1,g_{stan})$ in the Cheeger-Gromov sense.

Next, we introduce the concept of collapsing and non-collapsing.

\begin{defn}[Collapsing and non-collapsing]
Let $(M^n,g)$ be an n-dimensional Riemannian manifold. We say the it is non-collapsed (resp. collapsed) if there exists (resp. does not exist) a constant $\kappa>0$ such that the following holds: For all $x\in M$, if  $|\Rm|(x)<r^{-2}$ on $B_g(x,r)$ for some $r>0$, then
\begin{equation*}
    vol_g(B_g(x,r))\ge\kappa r^n.
\end{equation*}
\end{defn}

It is easy to see that an n-dimensional Riemannian manifold is collapsed if there is an asymptotic limit isometric to $\R^{n-2}\times\cigar$.

\begin{lem}\label{l: cigar implies collapsing}
Let $(M^n,g)$ be an n-dimensional Riemannian manifold. Suppose there exists a sequence of points $x_i\in M$ and constants $r_i>0$ such that the pointed manifolds $(M,r_i^{-2}g,x_i)$ smoothly converge to $\R^{n-2}\times\cigar$. Then $(M,g)$ is collapsed.
\end{lem}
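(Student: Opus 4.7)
The plan is to exhibit, for every $\kappa>0$, a ball in $(M,g)$ on which $|\Rm|<r^{-2}$ but whose volume is less than $\kappa r^n$. These witness balls will be produced inside the model $\R^{n-2}\times\cigar$ and then transported to $(M,g)$ via the smooth convergence.

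First I would verify that $\R^{n-2}\times\cigar$ itself is collapsed. Using \eqref{e: warped cigar} and \eqref{e: cigar R}, the sectional curvatures of $\cigar$ decay like $|\Rm_\Sigma|\le Ce^{-2s}$, where $s$ is the distance to the cigar tip, and the same bound extends trivially to $\R^{n-2}\times\cigar$ since the Euclidean factor is flat. Moreover, the $S^1$-fibers in \eqref{e: warped cigar} have length $2\pi\tanh s\le 2\pi$, so the cigar has at most linear area growth, and consequently a geodesic ball of radius $R$ in $\R^{n-2}\times\cigar$ has volume bounded by $CR^{n-2}\cdot R = CR^{n-1}$ for $R$ large. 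Now pick $q=(0,q')$ with $q'$ at cigar-distance $L=2R$ from the tip; then every point of the metric ball $B(q,R)$ lies at cigar-distance at least $R$ from the tip, so $|\Rm|\le Ce^{-2R}<R^{-2}$ once $R$ is large, while $\mathrm{vol}(B(q,R))\le CR^{n-1}<\kappa R^n$ once $R>C/\kappa$.

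Second, I would transfer the estimate to $(M,g)$. Given $\kappa>0$, fix the radius $R$ above but with slack, so that in the limit $|\Rm|<\tfrac12 R^{-2}$ on $B(q,R)$ and $\mathrm{vol}(B(q,R))<\tfrac{\kappa}{2}R^n$. By smooth Cheeger-Gromov convergence of $(M,r_i^{-2}g,x_i)$ to $(\R^{n-2}\times\cigar,q_0)$, for all $i$ sufficiently large there is a point $\widetilde x_i\in M$, lying at bounded $r_i^{-2}g$-distance from $x_i$, which approximates $q$ in the limit, and such that
\begin{equation*}
  |\Rm|_{r_i^{-2}g}<R^{-2}\ \text{on}\ B_{r_i^{-2}g}(\widetilde x_i,R),\qquad \mathrm{vol}_{r_i^{-2}g}\!\left(B_{r_i^{-2}g}(\widetilde x_i,R)\right)<\kappa R^n.
\end{equation*}
Setting $\rho_i:=Rr_i$ and unwinding the rescaling (distances by $r_i$, curvatures by $r_i^{-2}$, volumes by $r_i^n$), these inequalities become $|\Rm|_g<\rho_i^{-2}$ on $B_g(\widetilde x_i,\rho_i)$ together with $\mathrm{vol}_g(B_g(\widetilde x_i,\rho_i))<\kappa\rho_i^n$, which contradicts the $\kappa$-non-collapsing condition. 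Since $\kappa>0$ was arbitrary, $(M,g)$ is collapsed.

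The argument has no analytic obstacle; the only subtle point is bookkeeping, namely selecting a basepoint in the cigar where the exponential curvature decay and the linear area growth can both be certified on a common scale $R$, and then propagating the chosen slack through the Cheeger-Gromov approximation so that the strict inequalities $|\Rm|<R^{-2}$ and $\mathrm{vol}<\kappa R^n$ survive for large $i$. Both are handled by the explicit formulas \eqref{e: warped cigar}, \eqref{e: cigar R}.
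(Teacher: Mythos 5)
Your proof is correct, and the geometric engine is the same as the paper's: far from the cigar tip, curvature decays to zero while the volume ratio of balls becomes sub-Euclidean, and these witnesses are transported back to $(M,g)$ by the smooth Cheeger--Gromov convergence and a rescaling. Where you differ is in execution. The paper avoids the explicit cigar formulas by passing to a secondary limit: choosing base points $y_i$ in $M$ that correspond to points escaping to infinity in the cigar factor, it observes that $(M,r_i^{-2}g,y_i)$ converges to the flat cylinder $\R^{n-1}\times S^1$, picks $A_i\to\infty$ diagonally so that $|\Rm|_{r_i^{-2}g}\le A_i^{-2}$ on $B_{r_i^{-2}g}(y_i,A_i)$ while the volume ratio tends to zero, and then rescales. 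You instead stay inside the single limit $\R^{n-2}\times\cigar$, fix a radius $R$ (depending on the target $\kappa$), verify by the explicit formulas \eqref{e: warped cigar} and \eqref{e: cigar R} that a ball of radius $R$ centered at cigar-distance $2R$ from the tip has curvature $<R^{-2}$ and volume $<\kappa R^n$, and push that single ball back to $M$ for $i$ large. Both are fine; the paper's version is a bit slicker and reuses the already-stated fact that the cigar end converges to $\R\times S^1$, while yours is more self-contained and makes the quantitative dependence on $\kappa$ explicit.
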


\begin{proof}
By the assumption we may choose a sequence of points $y_i\in M$ such that $(M,r_i^{-2}g,y_i)$ converge to $\R^{n-1}\times S^1$. So there is a sequence of constants $A_i\rii$ such that $|\Rm|_{r_i^{-2}g}\le A_i^{-2}$ on $B_{r_i^{-2}g}(y_i,A_i)$, and $\lim_{i\rii}\frac{vol_{r_i^{-2}g}B_{r_i^{-2}g}(y_i,A_i)}{A_i^n}=0$. After rescaling, this implies $|\Rm|_{g}\le (A_ir_i)^{-2}$ on $B_{g}(y_i,A_ir_i)$ and $\lim_{i\rii}\frac{vol_{g}B_{g}(y_i,A_ir_i)}{(A_ir_i)^n}=0$. So $(M,g)$ is collapsed.
\end{proof}

In Section \ref{s: asymptotic geometry} we will see that the opposite of Lemma \ref{l: cigar implies collapsing} is also true for all 3D steady gradient Ricci solitons, i.e. any collapsed solitons have asymptotic limits isometric to $\R\times\cigar$. 
Note all steady gradient Ricci solitons except the Bryant soliton are collapsed \cite{brendlesteady3d}.

\subsection{Local geometry models}
We will show in Section \ref{s: asymptotic geometry} that $\R\times\cigar$ and $\RR\times S^1$ are asymptotic limits in 3D steady gradient Ricci solitons that are not Bryant solitons.
In this subsection we define $\epsilon$-necks, $\epsilon$-cylindrical planes and $\epsilon$-tip points, which are local geometry models corresponding to these asymptotic limits.
Moreover, to obtain the asymptotic limits, we need to rescale the soliton by factors that are comparable to volume scale at the points.

\begin{defn}[Volume scale]
Let $(M,g)$ be a 3D Riemannian manifold. We define the volume scale $r(\cdot)$ to be
\begin{equation*}
    r(x)=\sup\{s>0: vol_g(B_g(x,s))\ge\omega_0s^3\},
\end{equation*}
where $\omega_0>0$ is chosen such that $r(x)=1$ for all $x\in \RR\times S^1$. It is clear that $\omega_0$ is less than the volume of the radius one ball in the Euclidean space $\R^3$.
\end{defn}

We measure the closeness of two pointed Riemannian manifolds by using the following notion of $\epsilon$-isometry.

\begin{defn}[$\epsilon$-isometry between manifolds]\label{d: epsilon isometry}
Let $\epsilon>0$ and $m\in\mathbb{N}$.
Let $(M^n_i,g_i)$, $i=1,2$, be an n-dimensional Riemannian manifolds, $x_i\in M_i$. We say a smooth map $\phi:B_{g_1}(x_1,\epsilon^{-1})\ri M_2$, $\phi(x_1)=x_2$, is an $\epsilon$-isometry in the $C^m$-norm if it is a diffeomorphism onto the image, and
\begin{equation}\label{e: parti}
    |\nabla^{k}(\phi^*g_2-g_1)|\le \epsilon\quad \textit{on}\quad B_{g_1}(x_1,\epsilon^{-1}),\quad k=0,1,...,m,
\end{equation}
where the covariant derivatives and norms are taken with respect to $g_1$.
We also say $(M_2,g_2,x_2)$ is $\epsilon$-close to $(M_1,g_1,x_1)$ in the $C^m$-norm. 
In particular,  if $m=[\epsilon^{-1}]$, then we simply say $(M_2,g_2,x_2)$ is $\epsilon$-close to $(M_1,g_1,x_1)$ and $\phi$ is an $\epsilon$-isometry. 
\end{defn}

\begin{defn}[$\epsilon$-isometry between Ricci flows]\label{d: RF-epsilon isometry}
Let $\epsilon>0$.
Let $(M^n_i,g_i(t))$, $i=1,2$, $t\in[-\epsilon^{-1},\epsilon^{-1}]$, be an n-dimensional Ricci flow, $x_i\in M_i$. We say a smooth map $\phi:B_{g_1(0)}(x_1,\epsilon^{-1})\ri M_2$, $\phi(x_1)=x_2$, is an $\epsilon$-isometry between the two Ricci flows if it is a diffeomorphism onto the image, and
\begin{equation*}
    |\nabla^{k}(\phi^*g_2(t)-g_1(t))|\le \epsilon\quad \textit{on}\quad B_{g_1(0)}(x_1,\epsilon^{-1})\times[-\epsilon^{-1},\epsilon^{-1}],\quad k=0,1,...,[\epsilon^{-1}],
\end{equation*}
where the covariant derivatives and norms are taken with respect to $g_1(0)$.
We also say $(M_1,g_1(t),x_1)$ is $\epsilon$-close to $(M_2,g_2(t),x_2)$.
\end{defn}

In the following, we will choose the target manifolds to be the cylinder and cylindrical plane, and call the manifolds that are close to them the $\epsilon$-neck and $\epsilon$-cylindrical plane.

\begin{defn}[$\epsilon$-neck]\label{d: neck}
We say a 2D Riemannian manifold $(N,g)$ is an $\epsilon$-neck at $x_0\in M$ for some $\epsilon>0$, if there exists a constant $r>0$ such that $(N,r^{-2}g,x_0)$ is $\epsilon$-close to the cylinder $\R\times S^1$.

We say $r$ is the scale of the $\epsilon$-neck and $x_0$ is a center of an $\epsilon$-neck. 
On $\R\times S^1$, let the function $x:\R\times S^1$ be the projection onto the $\R$-factor, and $\partial_{x}$ be the vector field. Then by an abuse of notation, we will denote the corresponding function and vector field on $N$ modulo the $\epsilon$-isometry by $x$ and $\partial_{x}$.
\end{defn}

\begin{defn}[$\epsilon$-cap]
Let $(M,g)$ be a complete 2D Riemannian manifold.
We say a compact subset $\mathcal{C}\subset M$ is an $\epsilon$-cap if $\mathcal{C}$ is diffeomorphic to a 2-ball and the boundary $\partial\mathcal{C}$ is the central circle of an $\epsilon$-neck $N$ in $(M,g)$.
We say that the points in $\mathcal{C}\setminus N$ are centers of the $\epsilon$-cap.
\end{defn}

\begin{defn}[$\epsilon$-cylindrical plane]\label{d: cylindrical plane}
We say a 3D Riemannian manifold $(M,g)$ is an $\epsilon$-cylindrical plane at $x_0\in M$ for some $\epsilon>0$, if there exists a constant $r>0$ such that $(M,r^{-2}g,x_0)$ is $\epsilon$-close to the cylindrical plane $\RR\times S^1$.

We say $r$ is the scale of the $\epsilon$-cylindrical plane, and $x_0$ is the center of the $\epsilon$-cylindrical plane.
Let $x,y:\R^2\times S^1\ri\R$ and $\theta:\R^2\times S^1\ri S^1$ be the projection to the three product factors.
By abuse of notation, we use $\partial_{x},\partial_{y},\partial_{\theta}$ to denote the corresponding vector fields on $M$ modulo the $\epsilon$-isometry.
In particular, we call $\partial_{\theta}$ to be the $SO(2)$-killing field of the $\epsilon$-cylindrical plane.
\end{defn}

At the center of an $\epsilon$-cylindrical plane, we introduce another scale in the following, which is comparable to the volume scale. Since this scale is measured by the length of curves, it is more useful than the volume scale in the Ricci flow of the soliton when combining with suitable distance distortion estimates.

\begin{defn}[Scale at an $\epsilon$-cylindrical plane]\label{d: h}
Let $(M,g)$ be a 3D Riemannian manifold. Suppose $x\in M$ is the center of an $\epsilon$-cylindrical plane. We denote by $h(x)$ the infimum length of all closed smooth curves at $x$ that are homotopic to the $S^1$-factor of the $\epsilon$-cylindrical plane in $B(x,1000 r(x))$, where $r(x)>0$ is the volume scale at $x$.
It is clear that $h(x)$ is achieved by a geodesic loop at $x$. 

Moreover, by the definition of volume scale we have $r(x)=1$ and $h(x)=2\pi$ for all $x\in\RR\times S^1$. So when $\epsilon$ is sufficiently small we have 
\begin{equation*}
    1.9\pi\,r(x)\le h(x)\le 2.1\pi\,r(x).
\end{equation*}
\end{defn}

In a manifold that is $\epsilon$-close to the cigar soliton, we call the points that are $\epsilon$-close to the tip of cigar under the $\epsilon$-isometry to be the $\epsilon$-tip points.

\begin{defn}[$\epsilon$-tip point]\label{d: tip points}
Let $\epsilon>0$.
Let $(M,g)$ be a 2D Riemannian manifold, $x\in M$. 
If there is an $\epsilon$-isometry from $(M,r^{-2}(x)g,x)$ to $(\cigar,r^{-2}(x_0)g_c,x_0)$, for some $x_0\in \cigar$ such that $d_{g_c}(x_0,x_{tip})\le\epsilon$, then we say $x$ is an $\epsilon$-tip point.

Similarly, if $(M,g)$ is a 3D Riemannian manifold, $x\in M$. 
Suppose there is an $\epsilon$-isometry from $(M,r^{-2}(x)g,x)$ to $(\R\times\cigar,r^{-2}(x_0)g_c,x_0)$, for some $x_0\in \R\times\cigar$ such that $d_{g_c}(x_0,x_{tip})\le\epsilon$ where $x_{tip}$ is the tip of the cigar with the same $\R$-coordinate as $x_0$. Then we say that $x$ is an $\epsilon$-tip point.
\end{defn}

\subsection{Distance distortion estimates and curvature estimates}
In this subsection, we review some standard distance distortion estimates and curvature estimates, which are originally due to Hamilton and Perelman \cite{Hamilton_singularity_formation,Pel1}.  

The following lemma gives an upper bound on the speed of distance shrinking between two points, using only local curvature bounds near the two points. The proof uses the second variation formula, see e.g. \cite[Theorem 18.7]{RFTandA3}.

\begin{lem}\label{l: distance laplacian}
Let $(M,g(t))_{t\in[0,T]}$ be a Ricci flow of dimension $n$. Let $K,r_0>0$.
\begin{enumerate}
    \item Let $x_0\in M$ and $t_0\in(0,T)$. Suppose that $\Ric\le(n-1)K$ on $B_{t_0}(x_0,r_0)$. Then the distance function $d(x,t)=d_t(x,x_0)$ satisfies the following inequality in the outside of $B_{t_0}(x_0,r_0)$:
\begin{equation*}
    (\pt-\Delta)|_{t=t_0}d\ge -(n-1)\left(\frac{2}{3}Kr_0+r_0^{-1}\right).
\end{equation*}  
   \item Let $t_0\in[0,T)$ and $x_0,x_1\in M$. Suppose 
\begin{equation*}
    \Ric(x,t_0)\le(n-1)K,
\end{equation*}
for all $x\in B_{t_0}(x_0,r_0)\cup B_{t_0}(x_1,r_0)$.
Then 
\begin{equation*}
    \pt|_{t=t_0}d_t(x_0,x_1)\ge -2(n-1)(Kr_0+r_0^{-1}).
\end{equation*}
\end{enumerate}
\end{lem}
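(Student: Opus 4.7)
The plan is to prove both parts via the classical Hamilton--Perelman technique based on first and second variation of arc length along minimizing geodesics. Under the Ricci flow, for any smooth curve $\gamma:[0,d]\to M$ parametrized by $g(t_0)$-arc length, the first variation formula reads $\partial_t L(\gamma)|_{t_0} = -\int_0^d \Ric(\gamma',\gamma')\,ds$. Taking $\gamma$ to be a minimizing $g(t_0)$-geodesic from $x_0$ to $x$ (respectively, from $x_0$ to $x_1$), this will equal $\partial_t d_t$ at smooth points of $d_t$, with the usual upper-barrier argument at the cut locus.

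For part~(1), to estimate $\Delta d$ at a point $x \notin B_{t_0}(x_0,r_0)$, I will choose an orthonormal frame $\{E_i\}_{i=1}^{n-1}$ perpendicular to $\gamma'(d)$ and parallel-transport it back to obtain vector fields $E_i(s)$ along $\gamma$. Define the Lipschitz cutoff $\phi(s):=\min\{s/r_0,\,1\}$ and set $X_i(s):=\phi(s)E_i(s)$, so that $X_i(0)=0$ and $X_i(d)=E_i(d)$. The second variation formula applied to each $X_i$ yields
\[
\mathrm{Hess}(d)(E_i,E_i) \;\le\; \int_0^d \bigl(\phi'(s)^2 - \phi(s)^2 R(E_i,\gamma',\gamma',E_i)\bigr)\,ds.
\]
Summing over $i$ and using $\Delta d(x)\le \sum_i \mathrm{Hess}(d)(E_i,E_i)$ gives $\Delta d(x) \le (n-1)/r_0 - \int_0^d \phi(s)^2\Ric(\gamma',\gamma')\,ds$. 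Combining with the first variation,
\[
(\partial_t - \Delta)d \;\ge\; -\int_0^{r_0}(1-s^2/r_0^2)\,\Ric(\gamma',\gamma')\,ds \;-\; \frac{n-1}{r_0}.
\]
Since $\gamma([0,r_0])\subset B_{t_0}(x_0,r_0)$ where $\Ric \le (n-1)K$, the integral is at most $(n-1)K\int_0^{r_0}(1-s^2/r_0^2)\,ds = \tfrac{2}{3}(n-1)K r_0$, which is the claimed bound.

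For part~(2) there is no Laplacian term, so I must bound $\int_0^d \Ric(\gamma',\gamma')\,ds$ from above using only the upper bound on $\Ric$ near the two endpoints. I will apply the second variation inequality along the minimizing $\gamma$ from $x_0$ to $x_1$ with test fields $X_i(s)=\psi(s)E_i(s)$, where $\psi$ is the hat function equal to $s/r_0$ on $[0,r_0]$, $1$ on $[r_0,d-r_0]$, and $(d-s)/r_0$ on $[d-r_0,d]$. Since $X_i$ vanishes at both endpoints and $\gamma$ is minimizing, summing over $i=1,\ldots,n-1$ gives $\int_0^d \psi^2\,\Ric(\gamma',\gamma')\,ds \le 2(n-1)/r_0$. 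Splitting
\[
\int_0^d \Ric\,ds \;=\; \int_0^d \psi^2 \Ric\,ds \;+\; \int_0^d (1-\psi^2)\Ric\,ds,
\]
and using $\Ric \le (n-1)K$ together with $1-\psi^2 \le 1$ on the support $[0,r_0]\cup[d-r_0,d]\subset B_{t_0}(x_0,r_0)\cup B_{t_0}(x_1,r_0)$ (of total length $\le 2r_0$), I obtain $\int_0^d \Ric\,ds \le 2(n-1)(r_0^{-1} + Kr_0)$, and negating yields the lower bound for $\partial_t d_t(x_0,x_1)$.

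The main technical obstacle will be handling non-smooth points of the distance function (cut locus and focal points) and the short-distance edge case $d_{t_0}(x_0,x_1)\le 2r_0$ in part~(2) where the hat function $\psi$ degenerates. The former is treated by the standard upper-barrier/support-function argument, so the inequalities hold in the viscosity or distributional sense; the latter is handled by direct integration of the pointwise bound $\Ric\le(n-1)K$ along the entire minimizing geodesic, which then lies within the union of the two curvature-controlled balls.
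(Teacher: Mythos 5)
Your proposal is correct and follows the standard Hamilton--Perelman argument (first variation of arc length for the $\partial_t$ term, second variation with the piecewise-linear cutoff/hat test field for the Laplacian and index-form estimates, plus a Calabi-type support function argument at the cut locus), which is exactly the route the paper points to via the citation to Chow et al.\ (Theorem 18.7 of RFTandA Part III). One small remark: in part~(2) the sharper choice $\int_0^d(1-\psi^2)\,ds=\tfrac{4}{3}r_0$ would recover Perelman's constant $\tfrac{2}{3}Kr_0$, but your cruder bound $1-\psi^2\le 1$ on a support of measure $2r_0$ already yields the $Kr_0$ stated in the lemma, so there is no gap.
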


The following lemmas control how fast a metric ball shrinks and expands along Ricci flow, using the nearby curvature assumptions. They can be proved by using the Ricci flow equation, see e.g. \cite[Lemma 2.1, 2.2]{simon2021local}.

\begin{lem}\label{l: expanding lemma}
Let $(M,g(t))_{t\in[0,T]}$ be a Ricci flow of dimension $n$, $x_0\in M$.
Let $K,A>0$.
\begin{enumerate}
    \item Suppose $\Ric_{g(t)}\ge -K$ on $ B_t(x_0,A)\subset\subset M$ for all $t\in[0,T]$. Then the following holds for all $t\in[0,T]$:
\begin{equation*}
    B_0(x_0,A\,e^{-KT})\subset B_t(x_0,A\,e^{-K(T-t)}).
\end{equation*}
    \item Suppose $\Ric_{g(t)}\le K$ on $ B_t(x_0,R)\subset\subset M$ for all $t\in[0,T]$. Then the following holds for all $t\in[0,T]$:
\begin{equation*}
    B_t(x_0,A
\,e^{-Kt})\subset B_0(x_0,A).
\end{equation*}
\end{enumerate}

\end{lem}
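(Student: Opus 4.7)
The approach is to use the Ricci flow equation $\partial_t g = -2\Ric$ to compare metrics at different times under one-sided Ricci bounds, and then apply an open--closed continuity argument to ensure that the geodesic realizing the distance stays within the ball where the curvature assumption is valid. The key pointwise estimate is that if $\Ric_{g(t)} \ge -K$ along a curve for $t \in [t_1,t_2]$, then integrating $\partial_t |v|^2_{g(t)} = -2\Ric(v,v) \le 2K|v|^2_{g(t)}$ gives $|v|_{g(t_2)} \le e^{K(t_2-t_1)}|v|_{g(t_1)}$ for any tangent vector $v$ along the curve; the reverse inequality $|v|_{g(t_2)} \ge e^{-K(t_2-t_1)}|v|_{g(t_1)}$ holds if instead $\Ric_{g(t)} \le K$.

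For part (1), I would fix $y \in B_0(x_0, Ae^{-KT})$ and let $\gamma$ be a $g(0)$-minimizing geodesic from $x_0$ to $y$, of $g(0)$-length $L_0 < Ae^{-KT}$. Define
\begin{equation*}
t^* = \sup\{t \in [0,T] : \gamma \subset B_s(x_0, A) \text{ for every } s \in [0,t]\}.
\end{equation*}
On $[0, t^*]$ the Ricci lower bound holds along $\gamma$, so for every $z \in \gamma$ and $s \in [0,t^*]$ the $g(s)$-length of the subarc from $x_0$ to $z$ is at most $e^{Ks}$ times its $g(0)$-length, giving
\begin{equation*}
d_s(x_0, z) \le e^{Ks} L_0 < e^{Ks} A e^{-KT} = A e^{-K(T-s)} \le A.
\end{equation*}
Hence $\gamma \subset B_s(x_0, Ae^{-K(T-s)})$ with strict inequality for $s \in [0, t^*]$, and by continuity of $t \mapsto d_t(x_0, \cdot)$ this strict inclusion persists slightly past $t^*$, forcing $t^* = T$. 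Specializing to $s = t$ yields $y \in B_t(x_0, Ae^{-K(T-t)})$.

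For part (2), the argument is symmetric but runs backwards in time. Take $y \in B_t(x_0, Ae^{-Kt})$ and a $g(t)$-minimizing geodesic $\gamma$ of $g(t)$-length $L_t < Ae^{-Kt}$, which lies in $B_t(x_0, A) \subset B_t(x_0, R)$ at time $t$. Define
\begin{equation*}
s^* = \inf\{s \in [0,t] : \gamma \subset B_\sigma(x_0, R) \text{ for every } \sigma \in [s,t]\},
\end{equation*}
so that $s^* < t$ by continuity. The Ricci upper bound along $\gamma$ on $[s^*, t]$ yields $|v|_{g(\sigma)} \le e^{K(t-\sigma)}|v|_{g(t)}$, hence $d_\sigma(x_0, z) \le e^{K(t-\sigma)} L_t < A e^{-K\sigma} \le A \le R$ for every $z \in \gamma$; continuity then extends past $s^*$, forcing $s^* = 0$, and evaluating at $s = 0$ gives $y \in B_0(x_0, A)$.

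The only technical subtlety is the open--closed continuity argument ensuring the geodesic never exits the region where the curvature bound applies; the particular exponential radii $Ae^{-K(T-t)}$ and $Ae^{-Kt}$ are tuned precisely so that the length-distortion factor $e^{K\cdot}$ produces a \emph{strict} inclusion, which is what closes the bootstrap.
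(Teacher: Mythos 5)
Your proof is correct, and since the paper defers this lemma to \cite[Lemma 2.1, 2.2]{simon2021local} without giving details, your argument matches the standard one given there: integrate $\partial_t |v|^2_{g(t)} = -2\Ric(v,v)$ under the one-sided Ricci bound to get a length-distortion factor of $e^{K|\Delta t|}$ along a fixed minimizing geodesic, and then run an open--closed continuity argument in time (a ``bootstrap'' on the maximal interval where the geodesic remains inside the ball on which the curvature bound is assumed). The particular radii $Ae^{-K(T-t)}$ and $Ae^{-Kt}$ are indeed chosen so that the distortion factor yields a strict inclusion, which is what lets the continuity argument close. Two small remarks: the ``$R$'' appearing in part (2) of the statement is clearly a typo for ``$A$'' (otherwise the inclusion $B_t(x_0,A)\subset B_t(x_0,R)$ you use at $\sigma=t$ would need $A\le R$), and you implicitly use that $B_t(x_0,A)\subset\subset M$ guarantees the existence of minimizing geodesics, continuity of $s\mapsto d_s(x_0,\cdot)$, and compactness of the image of $\gamma$ needed for the open--closed argument; it would be worth stating those uses explicitly, but none of them is in doubt under the given hypotheses.
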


The following curvature estimate is also due to Perelman \cite[Corollary 11.6]{Pel1}. It provides a curvature upper bound at points in a Ricci flow, if the local volume has a positive lower bound.
For a more general version of this estimate see \cite[Proposition 3.2]{BamA}.

\begin{lem}[Perelman's curvature estimate]\label{l: Perelman}
For any $\kappa>0$ and $n\in \mathbb{N}$, there exists $C>0$ such that the following folds:
Let $(M^n,g)\times [-T,0]$ be an $n$-dimensional Ricci flow (not necessarily complete).
Let $x\in M$ be a point with $B_g(x,r)\times[-r^2,0]\subset\subset M\times [-T,0]$ for some $r>0$. Assume also $vol(B_g(x,r))\ge \kappa r^n$. Then $R(x,0)\le \frac{C}{r^2}$.
\end{lem}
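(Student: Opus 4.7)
The plan is to follow Perelman's original argument by contradiction and parabolic blow-up. After parabolically rescaling, normalize $r=1$. Suppose no constant $C(\kappa,n)$ as in the statement exists; then there is a sequence of Ricci flows $(M_i^n, g_i(t))_{t\in[-1,0]}$ and basepoints $x_i\in M_i$ with $B_{g_i(0)}(x_i,1)\times[-1,0]\subset\subset M_i\times[-1,0]$, $\mathrm{vol}_{g_i(0)}(B_{g_i(0)}(x_i,1))\ge\kappa$, yet $R(x_i,0)\to\infty$.

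The first step is to upgrade the volume bound to uniform $\kappa'$-noncollapsing using Perelman's no local collapsing theorem: the volume lower bound yields a positive lower bound on the reduced volume $\widetilde V_{(x_i,0)}(1)$ based at $(x_i,0)$ on scale $1$, and monotonicity of reduced volume along $\mathcal{L}$-geodesics then implies that every sub-parabolic neighborhood contained in $B_{g_i(0)}(x_i,1)\times[-1,0]$ on which $|\Rm|\le s^{-2}$ has volume $\ge\kappa' s^n$. Next I would apply Perelman's point-picking lemma inside this parabolic cylinder to produce points $(y_i,t_i)$ with $Q_i:=R(y_i,t_i)\to\infty$ and $|\Rm|\le 4Q_i$ on a parabolic neighborhood of $(y_i,t_i)$ of size on the order of $Q_i^{-1/2}$. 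Parabolically rescaling by $Q_i$, combining $\kappa'$-noncollapsing with Shi's local derivative estimates, and invoking Hamilton's compactness theorem, I would extract a subsequential smooth Cheeger--Gromov--Hamilton limit which is a complete ancient Ricci flow, $\kappa'$-noncollapsed, with bounded curvature and $R=1$ at the basepoint.

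The main obstacle, and the heart of Perelman's argument, is to derive a contradiction from the existence of this limit. In dimension $3$ one uses Hamilton--Ivey pinching to force the limit to be a $\kappa$-solution with nonnegative curvature operator, and then Perelman's structure theory of $\kappa$-solutions (e.g.\ the canonical neighborhood theorem, or the fact that an asymptotic soliton of such a limit must be a round shrinking cylinder or sphere) provides a universal bound on the scalar curvature at the basepoint depending only on $\kappa$ and $n$; this contradicts $Q_i\rii$ relative to the fixed scale $r=1$. This is exactly the argument of [Pel1, Corollary~11.6]; a dimension-independent extension via Bamler's $\mathbb{F}$-convergence is [BamA, Proposition~3.2], either of which may be invoked directly, so no additional work is required beyond quoting the cited results.
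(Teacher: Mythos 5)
The paper offers no proof of this lemma: it is stated as a citation to Perelman's Corollary~11.6 and, for a more general version, to Bamler's Proposition~3.2, and your proposal ends by invoking exactly these references. So you arrive at the same place as the paper, and the decision to quote rather than reprove matches.

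Two technical cautions about the sketch itself, which are moot here since you ultimately cite, but worth flagging. First, you cannot in general pass from the single-scale volume bound $\mathrm{vol}\,B_{g_i(0)}(x_i,1)\ge\kappa$ to a lower bound on the reduced volume $\widetilde V_{(x_i,0)}(1)$ and thence to $\kappa'$-noncollapsing at all smaller scales: bounding $\widetilde V$ from below from a volume ratio already requires curvature control along $\mathcal{L}$-geodesics over the whole parabolic cylinder, which is precisely what the lemma seeks to establish. In Perelman's actual argument for Corollary~11.6 the volume ratio is propagated to the point-picking scale by Bishop--Gromov under the hypothesis of nonnegative curvature, not by reduced-volume monotonicity. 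Second, Perelman's Corollary~11.6 is stated for complete flows with nonnegative curvature operator, and the Hamilton--Ivey step in your sketch likewise needs a uniform initial pinching; the lemma as recorded in the paper suppresses both completeness and the curvature sign. In the paper's application these are satisfied (the estimate is applied to universal covers of open subsets of a soliton with positive curvature), and the unconditional incomplete formulation is the one covered by Bamler's Proposition~3.2, as you correctly note at the end.
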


\subsection{Metric comparisons}
We need the following notions and facts from metric comparison geometry, see \cite{BGP}. Let $(M,g)$ be a complete n-dimensional Riemannian manifold with non-negative sectional curvature.

\begin{lem}[Monotonicity of angles]
For any triple of points $o,p,q\in M$, the comparison angle $\widetilde{\measuredangle}poq$ is the corresponding angle formed by minimizing geodesics with lengths equal to $d_g(o,p),d_g(o,q),d_g(p,q)$ in Euclidean space.

Let $op,oq$ be two minimizing geodesics in $M$ between $o,p$ and $o,q$, and $\measuredangle poq$ be the angle between them at $o$, then $\measuredangle poq\ge\widetilde{\measuredangle}poq$.
Moreover, for any $p'\in op$ and $q'\in oq$, we have $\widetilde{\measuredangle}p'oq'\ge \widetilde{\measuredangle}poq$.  
\end{lem}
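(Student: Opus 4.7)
The plan is to deduce both assertions from Toponogov's comparison theorem in nonnegative sectional curvature, together with Alexandrov's lemma from Euclidean planar geometry. The first part is essentially the hinge comparison, and the monotonicity of comparison angles then follows by applying the first part at an intermediate vertex and invoking Alexandrov's lemma.

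First I would prove the angle inequality at $o$. Set $a = d_g(p,q)$, $b = d_g(o,q)$, $c = d_g(o,p)$. Toponogov's hinge comparison in nonnegative sectional curvature gives
\begin{equation*}
a^2 \le b^2 + c^2 - 2bc \cos \measuredangle poq,
\end{equation*}
while by the Euclidean law of cosines applied to the comparison triangle with side lengths $a, b, c$,
\begin{equation*}
a^2 = b^2 + c^2 - 2bc \cos \widetilde{\measuredangle}poq.
\end{equation*}
Comparing the two and using that cosine is strictly decreasing on $[0,\pi]$ yields $\widetilde{\measuredangle}poq \le \measuredangle poq$.

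For the monotonicity I would argue in one variable at a time. Fix $q' = q$ first and consider $p' \in op$ with $0 < d_g(o,p') < d_g(o,p)$. Since $o, p', p$ lie on a common minimizing geodesic, the actual angles at $p'$ satisfy $\measuredangle op'q + \measuredangle pp'q = \pi$. Applying the angle inequality just proved at the vertex $p'$ in each of the two adjacent triangles $op'q$ and $pp'q$ yields
\begin{equation*}
\widetilde{\measuredangle}op'q + \widetilde{\measuredangle}pp'q \le \pi.
\end{equation*}
Alexandrov's lemma in the Euclidean plane states that this inequality is equivalent to $\widetilde{\measuredangle}poq \le \widetilde{\measuredangle}p'oq$, giving the desired monotonicity in the $p$-variable. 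The $q$-variable is handled symmetrically by swapping the roles of $p$ and $q$ in the argument above, and concatenating the two estimates yields $\widetilde{\measuredangle}poq \le \widetilde{\measuredangle}p'oq \le \widetilde{\measuredangle}p'oq'$.

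Both ingredients, Toponogov's theorem and Alexandrov's lemma, are entirely classical and would simply be cited from a standard reference on comparison geometry (e.g., Burago--Gromov--Perelman), so I do not anticipate any serious technical obstacle. The only mild subtlety is to verify Alexandrov's lemma carefully in the plane, which is a short computation with the law of cosines applied to the two adjacent comparison triangles sharing the edge $p'q$.
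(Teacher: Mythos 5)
The paper does not actually prove this lemma; the surrounding subsection opens with "We need the following notions and facts from metric comparison geometry, see \cite{BGP}," and the lemma is simply quoted as a standard fact from Alexandrov geometry (it is the angle form of Toponogov's comparison theorem). Your reconstruction is the classical textbook route: first deduce $\widetilde{\measuredangle}poq \le \measuredangle poq$ from the hinge version of Toponogov (itself a consequence of Rauch), and then bootstrap the monotonicity of comparison angles using Alexandrov's lemma applied to the two adjacent comparison triangles sharing the edge $p'q$. One small remark on presentation: when you fix $q'=q$ and move only $p'$, the conclusion you want at the subdivision step is that the angle at $\tilde{o}$ in the straightened (outer) comparison triangle is $\le$ the angle at $\tilde{o}$ in the inner comparison triangle $\widetilde{\Delta}(op'q)$, which is indeed what Alexandrov's lemma in the "$\le \pi$" case delivers; it is worth stating explicitly that this case of Alexandrov's lemma shortens $|\tilde{q}\tilde{p'}|$ after straightening and hence decreases the opposite angles at the far vertices, so the direction of the inequality comes out right. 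With that clarified, your argument is correct and is, for practical purposes, the standard proof the paper is implicitly invoking.
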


In a non-negatively curved complete non-compact Riemannian manifold, we can equip a length metric on the space of geodesic rays. Moreover, a blow-down sequence of this manifold converges to the metric cone over the space of rays in the Gromov-Hausdorff sense, see e.g. \cite[Prop 5.31]{MT}.

Let $\gamma_1,\gamma_2$ be two rays with unit speed starting from a point $p\in M$, the limit $\lim_{r\rii}\widetilde{\measuredangle}\gamma_1(r)p\gamma_2(r)$ exists by the monotonicity of angles and we say it is the angle at infinity between $\gamma_1$ and $\gamma_2$, and denote it as $\measuredangle(\gamma_1,\gamma_2)$.

\begin{lem}[Space of rays]
Let $p\in M$ and $S_{\infty}(M,p)$ be the space of equivalent classes of rays starting from $p$, where two rays are equivalent if and only if the angle at infinity between them is zero, and the distance between two rays is the limit of the angle at infinity between them. Then $S_{\infty}(M,p)$ is a compact length space.
\end{lem}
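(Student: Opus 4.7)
My plan is to establish the three required properties — that $\measuredangle$ descends to a bona fide metric on the quotient, that this metric space is compact, and that it is a length space — by combining the monotonicity of comparison angles with Arzelà-Ascoli and Toponogov comparison.

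First, I would verify that $\measuredangle$ is a pseudometric on the set of unit-speed rays from $p$. The monotonicity recalled just before the statement ensures that $\measuredangle(\gamma_1,\gamma_2)=\lim_{r\rii}\widetilde{\measuredangle}\gamma_1(r)p\gamma_2(r)$ exists and lies in $[0,\pi]$. Symmetry and non-negativity are immediate. For the triangle inequality, I would invoke the fact that a complete non-negatively curved Riemannian manifold is an Alexandrov space of curvature $\ge 0$, so that comparison angles at a common vertex obey the spherical triangle inequality; passing to $r\rii$ yields $\measuredangle(\gamma_1,\gamma_3)\le \measuredangle(\gamma_1,\gamma_2)+\measuredangle(\gamma_2,\gamma_3)$. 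The equivalence relation quotienting out rays with zero angle at infinity is precisely what promotes this pseudometric to a metric on $S_\infty(M,p)$.

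Next, for compactness I would apply Arzelà-Ascoli to the set of unit-speed rays from $p$. Such rays are $1$-Lipschitz maps $[0,\infty)\to M$ all sending $0$ to $p$, so any sequence admits a subsequence converging locally uniformly to a $1$-Lipschitz curve $\gamma_\infty$ through $p$. Being minimizing on each $[0,L]$ is a closed property under uniform convergence, so $\gamma_\infty$ is again a unit-speed ray. Then $\gamma_i(r)\ri\gamma_\infty(r)$ for each fixed $r$, together with the continuous dependence of Euclidean comparison angles on side lengths, gives $\limsup_i\measuredangle(\gamma_i,\gamma_\infty)=0$ after sending $r\rii$, so the induced images converge in $S_\infty(M,p)$. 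Hence the space is sequentially compact, and therefore compact.

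For the length space property, given rays $\gamma_1,\gamma_2$ with $\measuredangle(\gamma_1,\gamma_2)=\alpha$, my strategy is to construct a midpoint class. For each large $r$, let $m_r$ be a midpoint of a minimizing segment from $\gamma_1(r)$ to $\gamma_2(r)$, and let $\sigma_r$ be a unit-speed ray from $p$ whose initial vector coincides with that of some minimizing segment $pm_r$. By the compactness step a subsequence of $\sigma_r$ converges to a ray $\sigma$. Toponogov's hinge comparison applied to the triangles $p\,\gamma_i(r)\,m_r$, together with Euclidean law-of-cosines estimates for the symmetric placement forced by $d(\gamma_1(r),m_r)=d(m_r,\gamma_2(r))$, yields an upper bound $\widetilde{\measuredangle}\gamma_i(r)p\sigma_r(s)\le\alpha/2+\epsilon_r$ with $\epsilon_r\ri 0$; passing $r\rii$ gives $\measuredangle(\gamma_i,\sigma)\le\alpha/2$ for $i=1,2$. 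The pseudometric triangle inequality from the first step then forces equality. Iterating this midpoint construction on the resulting geodesic segments and taking a completion produces an arc-length curve of length $\alpha$ between $[\gamma_1]$ and $[\gamma_2]$, so $S_\infty(M,p)$ is a length space.

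I expect the main obstacle to be the midpoint construction in the last paragraph, specifically producing the uniform bound $\alpha/2+\epsilon_r$ on both of the comparison hinges; the care needed lies in controlling simultaneously the distances $d_g(p,m_r)$, $d_g(\gamma_i(r),m_r)$, and $d_g(\gamma_1(r),\gamma_2(r))$ as $r\rii$, and in exploiting the monotonicity of comparison angles in non-negative curvature to ensure that the limiting angle at infinity really attains the predicted midpoint value rather than a smaller one.
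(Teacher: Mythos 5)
The paper does not actually prove this lemma: it is stated as part of a collection of standard facts from metric comparison geometry with a pointer to \cite{BGP}, so there is no argument in the paper against which your sketch can be compared. Judged on its own, the compactness step via Arzelà--Ascoli is fine: local uniform convergence of a subsequence of rays, combined with the monotonicity bound $\measuredangle(\gamma_i,\gamma_\infty)\le\widetilde{\measuredangle}\gamma_i(r)p\gamma_\infty(r)$ for each fixed $r$, correctly gives $\measuredangle(\gamma_i,\gamma_\infty)\to 0$, so $S_\infty(M,p)$ is sequentially compact.

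The triangle inequality is the real gap. You invoke a ``spherical triangle inequality for comparison angles at a common vertex,'' but there is no such lemma for comparison angles at a fixed finite scale $r$. Nonnegative curvature gives $\widetilde{\measuredangle}\gamma_1(r)p\gamma_2(r)\le\measuredangle(\gamma_1'(0),\gamma_2'(0))$, and the actual angles at $p$ do satisfy the spherical triangle inequality; but since each $\widetilde{\measuredangle}$ can be strictly smaller than the corresponding initial angle, this chain only yields $\widetilde{\measuredangle}\gamma_1(r)p\gamma_3(r)\le\measuredangle(\gamma_1'(0),\gamma_2'(0))+\measuredangle(\gamma_2'(0),\gamma_3'(0))$, which does not produce the desired $\measuredangle(\gamma_1,\gamma_3)\le\measuredangle(\gamma_1,\gamma_2)+\measuredangle(\gamma_2,\gamma_3)$ after $r\to\infty$. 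The standard way to close this (and presumably the route \cite{BGP} intends) is to identify $S_\infty(M,p)$ with the space of directions at the cone vertex of the asymptotic cone: the pointed Gromov--Hausdorff limit of $(M,r^{-2}g,p)$ is a nonnegatively curved Alexandrov space by stability of the lower curvature bound under such limits, its space of directions at the vertex is a compact Alexandrov space of curvature $\ge 1$, and for such spaces the intrinsic angle metric is automatically a compact length metric. This packages compactness, the triangle inequality, and the length-space property in one step. Your midpoint argument is also fragile at two points you only partially acknowledge: a ray with initial direction $pm_r$ need not exist (not every direction at $p$ carries a ray), so one must take a subsequential limit of the minimizing segments $pm_{r_i}$ themselves and argue the limit curve is a ray; and the bound $\measuredangle(\gamma_1,\sigma)\le\alpha/2$ must be extracted with some care, because moving endpoints inward toward $p$ increases comparison angles, so the naive comparison of $\widetilde{\measuredangle}\gamma_1(s)p\sigma_{r_i}(s)$ for small $s$ against $\widetilde{\measuredangle}\gamma_1(r_i)pm_{r_i}$ runs the wrong way -- one should instead push both endpoints outward past $\gamma_1(r_i)$ and $m_{r_i}$ along the rays before taking $i\to\infty$, a step your sketch elides.
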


\begin{lem}[Asymptotic cone]\label{l: converge to asymp}
Let $p\in M$ and $\mathcal{T}(M,p)$ be the metric cone over $S_{\infty}(M,p)$.
Then for any $\lambda_i\rii$, the  sequence of pointed manifolds $(M,\lambda_i^{-1}g,p)$ converge to $\mathcal{T}(M,p)$ with $p$ converging to the cone point in the pointed Gromov-Hausdorff sense.
Moreover, for $p,q\in M$, we have that $\mathcal{T}(M,p)$ is isometric to $\mathcal{T}(M,q)$.
We say $(M,g)$ is asymptotic to $\mathcal{T}(M,p)$, and $\mathcal{T}(M,p)$ is the asymptotic cone of $M$.
\end{lem}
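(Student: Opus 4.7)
The plan is to construct explicit Gromov–Hausdorff approximations between large balls in $(M,\lambda_i^{-1}g,p)$ and the metric cone $\mathcal{T}(M,p)$, exploiting Toponogov-style comparison and the monotonicity of comparison angles recorded in the preceding lemma. The key analytic input is that for any two unit-speed rays $\gamma_1,\gamma_2$ from $p$, the comparison angle $\widetilde{\measuredangle}\gamma_1(s)p\gamma_2(s)$ is non-increasing in $s$ and decreases to $\measuredangle(\gamma_1,\gamma_2)$. Combined with the Euclidean law of cosines
\begin{equation*}
d_g(\gamma_1(s),\gamma_2(s))^2 = 2s^2\bigl(1-\cos\widetilde{\measuredangle}\gamma_1(s)p\gamma_2(s)\bigr),
\end{equation*}
this yields, after rescaling by $\lambda_i^{-1}$ and setting $s=\lambda_i r$,
\begin{equation*}
d_{\lambda_i^{-1}g}(\gamma_1(\lambda_i r),\gamma_2(\lambda_i r))^2 \xrightarrow[i\to\infty]{} 2r^2\bigl(1-\cos\measuredangle(\gamma_1,\gamma_2)\bigr),
\end{equation*}
which is exactly the squared cone distance between $(r,[\gamma_1])$ and $(r,[\gamma_2])$ in $\mathcal{T}(M,p)$.

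For the first claim, fix $A>0$ and define a map $\Phi_i\colon B_{\mathcal{T}}(0,A)\to M$ by $\Phi_i(r,[\gamma])=\gamma(\lambda_i r)$. The computation above, together with the triangle inequality and a compactness/uniformity argument using the fact that $S_\infty(M,p)$ is compact, shows $\Phi_i$ is an $\epsilon_i$-almost-isometry on distances with $\epsilon_i\to 0$. To verify $\epsilon_i$-density, take any $x\in B_{\lambda_i^{-1}g}(p,A)$ and consider a unit-speed minimizing $g$-geodesic $\sigma_i$ from $p$ to $x$, defined on $[0,d_g(p,x)]$. Since $M$ is complete, non-compact, and non-negatively curved, we may (after passing to a subsequence in $i$) extend the initial segments $\sigma_i$ via Arzelà–Ascoli to a ray $\gamma$ from $p$, and $\Phi_i(\lambda_i^{-1}d_g(p,x),[\gamma])$ is then within $o(1)$ of $x$ in $\lambda_i^{-1}g$. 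Because every subsequential GH-limit of $(M,\lambda_i^{-1}g,p)$ is canonically identified with $\mathcal{T}(M,p)$, the whole sequence converges, and $p$ manifestly tends to the cone point.

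For the second claim, observe that $d_{\lambda_i^{-1}g}(p,q)=\lambda_i^{-1}d_g(p,q)\ri 0$, so the two pointed sequences $(M,\lambda_i^{-1}g,p)$ and $(M,\lambda_i^{-1}g,q)$ differ only by a basepoint shift that vanishes in the limit. Applying the first part to each basepoint, the limiting pointed spaces $(\mathcal{T}(M,p),0_p)$ and $(\mathcal{T}(M,q),0_q)$ must agree as pointed GH-limits, which gives an isometry of cones sending cone point to cone point.

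The main obstacle is Step 3 (surjectivity/density of $\Phi_i$): a minimizing geodesic $p\to x$ need not extend to a ray without passing to a subsequence, and one must also confirm that the choice of ray extension does not spoil the distance estimate. This is handled by quantifying the monotonicity—namely, for any $\delta>0$ there is $S(\delta)$ such that $\widetilde{\measuredangle}\gamma_1(s)p\gamma_2(s)-\measuredangle(\gamma_1,\gamma_2)<\delta$ for $s\ge S(\delta)$ uniformly in the rays, by compactness of $S_\infty(M,p)$—so that different ray extensions of nearby geodesic initial segments produce endpoints at distance $o(1)$ in $\lambda_i^{-1}g$. The rest of the argument is bookkeeping with the triangle inequality and the monotonicity of comparison angles.
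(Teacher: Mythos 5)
The paper does not prove this lemma itself; it is presented as a standard fact from Alexandrov geometry and cited to \cite{BGP} and \cite[Prop 5.31]{MT}. Your sketch is the standard blow-down argument (build Gromov--Hausdorff approximations from the cone ball into $M$ along rays, deduce almost-isometry on distances from the Euclidean law of cosines and the monotone convergence of comparison angles, and obtain density by extracting limiting rays from minimizing geodesics to far points), and it is sound in outline.

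One concrete slip: with the paper's normalization $(M,\lambda_i^{-1}g,p)$, distances scale by $\lambda_i^{-1/2}$, not by $\lambda_i^{-1}$. The approximation map should therefore be $\Phi_i(r,[\gamma])=\gamma(\lambda_i^{1/2}r)$, not $\gamma(\lambda_i r)$. With your parametrization, the displayed computation actually gives
\begin{equation*}
d_{\lambda_i^{-1}g}(\gamma_1(\lambda_i r),\gamma_2(\lambda_i r))^2
=\lambda_i^{-1}\, d_g(\gamma_1(\lambda_i r),\gamma_2(\lambda_i r))^2
=2\lambda_i r^2\bigl(1-\cos\widetilde{\measuredangle}\gamma_1(\lambda_i r)\,p\,\gamma_2(\lambda_i r)\bigr),
\end{equation*}
which diverges rather than converging to the cone distance. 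Replacing $\lambda_i r$ by $\lambda_i^{1/2}r$ throughout repairs both the displayed line and the rest of the bookkeeping; the limit then correctly equals $2r^2(1-\cos\measuredangle(\gamma_1,\gamma_2))$. Apart from that, the two points you flag as requiring care — uniformity of the monotone convergence over $S_\infty\times S_\infty$ by a compactness argument, and the density step, where one should note that $d_g(p,x_i)\ge\epsilon\lambda_i^{1/2}\rii$ forces the Arzel\`a--Ascoli limit of initial segments to be a ray, after which the monotonicity of $\widetilde{\measuredangle}\gamma(s)\,p\,\sigma_i(s)$ in $s$ (valid for any two geodesics from $p$, not just rays) closes the estimate — are precisely the right things to verify, and your proposed handling of them is correct. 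Your treatment of the second claim (basepoint change vanishes in the rescaled metric) is also standard and fine.
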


It is clear that the asymptotic cone $\mathcal{T}(M,p)$ is in fact independent
of the choice of p. 
It is easy to see that the asymptotic cones of the Bryant soliton and $\R\times\cigar$ are a ray and a half-plane.
In \cite{Lai2020_flying_wing}, the author constructed a family of 3D steady gradient Ricci solitons that are asymptotic to metric cones over an interval $[0,a]$, $a\in(0,\pi)$. 
We will show in Section \ref{s: asymptotic geometry} that this is true for all
3D steady gradient Ricci soliton with positive curvature that is not a Bryant soliton.

In the rest of this subsection we introduce a very useful technical notion called strainer \cite{BGP}.
It is similar to the notion of an orthogonal frame in the Euclidean space $\R^n$, that provides a local coordinate system in the metric space.
\begin{defn}[$(m,\delta)$-strainer]
Let $\delta>0$ and $1\le m\in\mathbb{N}$. A $2m$-tuple $(a_1,b_1,...,a_m,b_m)$ of points  in a metric space $(X,d)$ is called an $(m,\delta)$-strainer around a point $x\in X$ if
\begin{equation*}
\begin{cases}
    \widetilde{\measuredangle}a_ixb_j>\frac{\pi}{2}-\delta \quad\textit{for all }i\neq j,\quad i,j=1,...,m,\\
     \widetilde{\measuredangle}a_ixa_j>\frac{\pi}{2}-\delta \quad\textit{for all }i\neq j,\quad i,j=1,...,m,\\
      \widetilde{\measuredangle}b_ixb_j>\frac{\pi}{2}-\delta \quad\textit{for all }i\neq j,\quad i,j=1,...,m,\\
      \widetilde{\measuredangle}a_ixb_i>\pi-\delta\quad\textit{for all }i=1,...,m.
\end{cases}
\end{equation*}
The strainer is said to have size $r$ if $d(x,a_i)=d(x,b_i)=r$ for all $i=1,...,m$. It is said to have size at least $r$ if $d(x,a_i)\ge r$ and $d(x,b_i)\ge r$ for all $i=1,...,m$.
\end{defn}

We also introduce the notion of $(m+\frac{1}{2},\delta)$-strainers. Similarly, this notion provides a local coordinate system in the metric space that look at a half-plane $\R^n_+=\R^n\times\R_+$.

\begin{defn}[$(m+\frac{1}{2},\delta)$-strainer]
Let $\delta>0$ and $1\le m\in\mathbb{N}$. A $2m+1$-tuple $(a_1,b_1,...,a_m,b_m,a_{m+1})$ of points in a metric space $(X,d)$ is called an $(m+\frac{1}{2},\delta)$-strainer around a point $x\in X$ if
\begin{equation*}
\begin{cases}
       \widetilde{\measuredangle}a_ixb_j>\frac{\pi}{2}-\delta , &\text{for all } i\neq j,\quad i=1,...,m+1,\quad j=1,...,m,\\
        \widetilde{\measuredangle}a_ixa_j>\frac{\pi}{2}-\delta , &\text{for all } i\neq j,\quad i,j=1,...,m+1,\\
        \widetilde{\measuredangle}b_ixb_j>\frac{\pi}{2}-\delta , &\text{for all } i\neq j,\quad i,j=1,...,m,\\
        \widetilde{\measuredangle}a_ixb_i>\pi-\delta , &\text{for all } i=1,...,m.
\end{cases}
\end{equation*} 
The strainer is said to have size $r$ if $d(x,a_i)=d(x,b_j)=r$ for all $i=1,...,m+1$ and $j=1,...,m$. It is said to have size at least $r$ if $d(x,a_i)\ge r$ for all $i=1,...,m+1$ and $d(x,b_j)\ge r$ for all $j=1,...,m$.
\end{defn}

\subsection{Heat kernel estimates}
We prove a few lemmas using the standard heat kernel estimates of the heat equations under the Ricci flows.
Let $G(x,t;y,s)$, $x,y\in M$, $s<t$, be the heat kernel of the heat equation $\pt u=\Delta u$ under $g(t)$, that is,
\begin{equation}\label{e: standard heat kernel}
\begin{split}
    \pt G(x,t;y,s)&=\Delta_{x,t} G(x,t;y,s),\\
    \lim_{t\searrow s}G(\cdot,t;y,s)&=\delta_{y}.
\end{split}
\end{equation}

It is easy to see that $G(x,t;y,s)$ is also the heat kernel of the conjugate heat equation, that is,
\begin{equation*}
\begin{split}
    -\partial_s G(x,t;y,s)&=\Delta_{y,s} G(x,t;y,s)-R(y,s)\,G(x,t;y,s),\\
    \lim_{s\nearrow t}G(x,t;\cdot,s)&=\delta_{x}.
\end{split}
\end{equation*}

We have the following Gaussian upper bound for $G$.

\begin{lem}[Upper bound of the heat kernel for an evolving metric](c.f. \cite[Theorem 26.25]{RFTandA3})\label{l: heat kernel lower bound implies upper bound}
Let $(M^n,g(t))$ be a complete Ricci flow on $[0,T]$ with $|\Rm|\le K$. There exists a constant $C<\infty$ depending only on $n,T,K$ such that the conjugate heat kernel satisfies 
\begin{equation}\label{e: theorem 2625}
    G(x,t;y,s)\le \frac{C}{vol^{1/2}(B_s(x,\sqrt{\frac{t-s}{2}}))\cdot vol^{1/2}(B_s(y,\sqrt{\frac{t-s}{2}}))}\, \textnormal{exp}\left(-\frac{d^2_s(x,y)}{C(t-s)}\right)
\end{equation}
for any $x,y\in M$ and $0\le s<t\le T$.
\end{lem}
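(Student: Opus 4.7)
The plan is to reduce the evolving-metric estimate to the classical Gaussian upper bound for a fixed Riemannian manifold, using $|\Rm|\le K$ to control the distortion of the geometry on $[0,T]$. First I would note that integrating the Ricci flow equation $\partial_t g = -2\Ric$ against $|\Ric|\le (n-1)K$ gives
\begin{equation*}
    e^{-2(n-1)K(t-s)}\,g(s) \le g(t) \le e^{2(n-1)K(t-s)}\,g(s),
\end{equation*}
so distances and volumes computed with $g(s)$ and $g(t)$ agree up to constants depending only on $n, K, T$. This lets us measure everything with respect to $g(s)$ at the cost of absorbing constants into $C$.

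The second step is the on-diagonal bound. Perelman's no-local-collapsing together with the uniform two-sided curvature bound give a uniform local Sobolev/Nash inequality on geodesic balls up to the parabolic scale $\sqrt{t-s}$. Feeding this into the standard differential inequality for $\|G(\cdot,t;y,s)\|_{L^2(dV_t)}^2$ (where the extra scalar curvature term is bounded by $|R|\le n(n-1)K$) and running a Nash iteration yields
\begin{equation*}
    G(x,t;y,s) \le \frac{C}{\mathrm{vol}_s(B_s(x,\sqrt{(t-s)/2}))^{1/2}\,\mathrm{vol}_s(B_s(y,\sqrt{(t-s)/2}))^{1/2}},
\end{equation*}
after exploiting the symmetry between $(x,t)$ and $(y,s)$ coming from the conjugate heat equation.

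For the off-diagonal Gaussian factor I would apply Davies' perturbation trick. Fix $y$ and let $\phi$ be a $1$-Lipschitz approximation (with respect to $g(s)$) of $d_s(\cdot,y)$. For $\alpha>0$ set $u(x,\tau):=G(x,\tau;y,s)$ and consider the weighted energy
\begin{equation*}
    F(\tau) := \int_M u(x,\tau)^2\, e^{2\alpha \phi(x)}\, dV_\tau(x).
\end{equation*}
Differentiating in $\tau$, using $\partial_\tau u = \Delta_\tau u$ and $\partial_\tau dV_\tau = -R\,dV_\tau$, integration by parts together with Cauchy--Schwarz on the cross term $-4\alpha\int u\,\nabla u\cdot\nabla\phi\,e^{2\alpha\phi}dV_\tau$ absorbs the $-2\int|\nabla u|^2 e^{2\alpha\phi}dV_\tau$ and leaves
\begin{equation*}
    F'(\tau) \le (2\alpha^2 e^{2(n-1)K(\tau-s)} + n(n-1)K)\, F(\tau),
\end{equation*}
since $|\nabla_\tau\phi|^2\le e^{2(n-1)K(\tau-s)}$. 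Grönwall then gives $F(t)\le e^{C(1+\alpha^2)(t-s)}F(s^+)$. Combining with the on-diagonal estimate of Step 2 (which bounds $F(s^+)$ via the reproducing property $u(\cdot,s^+)=\delta_y$) and optimizing in $\alpha$ by choosing $\alpha \sim d_s(x,y)/(t-s)$ converts the factor $e^{C\alpha^2(t-s)-\alpha d_s(x,y)}$ into the desired $\exp(-d_s^2(x,y)/(C(t-s)))$.

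The main obstacle is the interplay between the evolving metric and the conjugate heat equation: both the integration-by-parts computations and the Nash iteration produce extra terms from $\partial_\tau g = -2\Ric$ and from the scalar curvature that appears when switching between the forward and backward heat equations. The point of the hypothesis $|\Rm|\le K$ is precisely that each such error term is controlled by a constant multiple of $K$, so all of them can be absorbed into the final constant $C=C(n,K,T)$ without disturbing the shape of the Gaussian.
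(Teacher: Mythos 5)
The paper does not prove this lemma; it is stated with a citation to Theorem 26.25 of the reference \cite{RFTandA3} (Chow et al., ``The Ricci Flow: Techniques and Applications, Part III''), so there is no in-paper proof to compare against. Your sketch is essentially the standard proof found in that reference: metric equivalence across time slices from $|\Rm|\le K$, an on-diagonal bound via a parabolic mean-value/Moser-type iteration, and the Gaussian tail via the Davies--Grigor'yan exponential-weight argument, and its overall structure is correct.

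Two small imprecisions are worth flagging. First, Perelman's no-local-collapsing is neither needed nor appropriate here: the target estimate already carries the factors $\mathrm{vol}_s\bigl(B_s(\cdot,\sqrt{(t-s)/2})\bigr)^{-1/2}$, so the local Sobolev/Nash constants entering the iteration are allowed to depend on those volumes, and the bounded-curvature hypothesis alone supplies local Sobolev inequalities in the right (volume-dependent) form; invoking non-collapsing would both add an unused hypothesis and obscure why the volumes appear in the conclusion. Second, the quantity $F(s^+)$ is not finite as written (it would be $\int\delta_y^2 e^{2\alpha\phi}\,dV_s$). In Davies' method one instead runs the weighted $L^2$-energy inequality starting from a midpoint time such as $(t+s)/2$, controls the $L^2$-norm there by the on-diagonal bound, and then composes $L^1\to L^2$ and $L^2\to L^\infty$ estimates (using the duality between the forward heat equation in $(x,t)$ and the conjugate heat equation in $(y,s)$). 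With those two adjustments your outline is a correct account of the standard argument, and the role of $|\Rm|\le K$ is exactly as you describe: every error term arising from $\partial_\tau g=-2\Ric$ and from the scalar-curvature term in the conjugate equation is absorbed into the constant $C(n,K,T)$.
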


Using this we can prove the following lemma, which gives a time and distance dependent upper bound on subsolutions to the heat equation, which satisfy certain initial upper bounds.

\begin{lem}\label{l: heat equation}
For any $C_0>0$ and $\alpha>0$, there are a constant $\delta>0$ and a non-decreasing function $C:[0,\infty)\ri [0,\infty)$ such that the following holds:

Let $(M,g(t),y_0)$ be a complete Ricci flow on $[0,T]$. 
Assume the following holds:
\begin{enumerate}
    \item $|\Rm|(x,t)\le C_0$ for all $x\in M$ and $t\in[0,T]$.
    \item $u(x,t)$ is a function with $\partial_t u\le\Delta u$, satisfying
    \begin{equation*}
        |u|(x,0)\le e^{\alpha\,d_0(x,y_0)}.
    \end{equation*}
\end{enumerate}
Then $|u|(x,t)\le C(\alpha,T,C_0)\,e^{(\alpha+1)D}$ for any $(x,t)\in B_0(y_0,D)\times[0,T]$.
\end{lem}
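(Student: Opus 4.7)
The natural approach is to represent the subsolution via the heat kernel of the evolving metric and then estimate the resulting integral using the Gaussian upper bound from Lemma~\ref{l: heat kernel lower bound implies upper bound}. Since $\partial_t u \le \Delta u$ on a complete Ricci flow with $|\Rm|\le C_0$, and since the initial data grows at most exponentially, a cutoff-and-limit version of the maximum principle (using the two-sided curvature bound to make $g(t)$ uniformly equivalent to $g(0)$ on $[0,T]$, and the Gaussian decay of $G$ to kill the boundary contributions as the cutoff scale tends to infinity) gives the pointwise bound
\begin{equation*}
u(x,t) \;\le\; \int_M G(x,t;y,0)\,|u|(y,0)\,d_0y \;\le\; \int_M G(x,t;y,0)\,e^{\alpha\,d_0(y,y_0)}\,d_0y,
\end{equation*}
where $G$ is the heat kernel from \eqref{e: standard heat kernel}.

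For $x\in B_0(y_0,D)$, the triangle inequality $d_0(y,y_0)\le d_0(y,x)+D$ lets us pull out the factor $e^{\alpha D}$, so the integral is bounded by
\begin{equation*}
e^{\alpha D}\cdot\int_M G(x,t;y,0)\,e^{\alpha\,d_0(y,x)}\,d_0y.
\end{equation*}
Inserting the Gaussian estimate from Lemma~\ref{l: heat kernel lower bound implies upper bound} and completing the square in $r=d_0(x,y)$, the pointwise exponent becomes
\begin{equation*}
-\frac{1}{Ct}\left(r-\frac{\alpha Ct}{2}\right)^2+\frac{\alpha^2 C t}{4}\;\le\;\frac{\alpha^2 C T}{4},
\end{equation*}
so one obtains a Gaussian in $r$ times a constant factor $e^{\alpha^2 CT/4}$. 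The volume factors $\mathrm{vol}^{-1/2}(B_0(\cdot,\sqrt{t/2}))$ in Lemma~\ref{l: heat kernel lower bound implies upper bound} are absorbed by the area element $d_0 y$ via a decomposition of $M$ into balls of radius $\sqrt{t/2}$ with controlled multiplicity, using the Bishop-Gromov volume comparison (applicable since $|\Rm|\le C_0$ forces $\Ric\ge-(n-1)C_0$). The resulting scalar integral reduces, after the coarea formula, to
\begin{equation*}
\int_0^\infty \exp\!\left(-\frac{r^2}{Ct}+\alpha r\right)r^{n-1}e^{\beta(C_0) r}\,dr,
\end{equation*}
which converges to a constant $C(\alpha,T,C_0,n)$. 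Combining all the above,
\begin{equation*}
u(x,t)\;\le\; C(\alpha,T,C_0)\,e^{\alpha D}\;\le\; C(\alpha,T,C_0)\,e^{(\alpha+1)D}.
\end{equation*}

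\textbf{Main obstacle.} The only real delicacy is the initial heat-kernel representation: $u$ itself may grow exponentially, so one cannot apply a naive maximum principle. The standard remedy is to truncate by a family of cutoffs supported on $B_0(y_0,R)$ and let $R\to\infty$. This limit is justified because (i) the two-sided curvature bound makes $g(t)$ uniformly equivalent to $g(0)$ on $[0,T]$, so distances and volumes are comparable between the initial and current times, and (ii) the Gaussian decay of $G(x,t;\cdot,0)$ beats the exponential growth $e^{\alpha d_0(\cdot,y_0)}$ of the initial data on the bounded time interval $[0,T]$, ensuring the boundary contributions vanish in the limit.
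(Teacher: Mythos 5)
Your proposal is correct and follows essentially the same strategy as the paper: represent the subsolution via the heat kernel, invoke the Gaussian upper bound of Lemma~\ref{l: heat kernel lower bound implies upper bound}, pull out $e^{\alpha D}$ by the triangle inequality, and integrate the remaining Gaussian-times-exponential. You are somewhat more careful than the paper in flagging the need to justify the heat kernel representation for exponentially growing data via a cutoff argument, and you handle the volume factors by a ball-packing argument where the paper splits into the cases $t\ge 1$ and $t<1$; these are bookkeeping differences, not a different route.
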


We need the following heat kernel estimates, see e.g. \cite[Theorem 26.25]{RFTandA3}.

\begin{proof}
First, by the curvature assumption $|\Rm|\le K$, it follows immediately from the Ricci flow equation that the metrics at different times are comparable to each other, i.e.
\begin{equation*}
    e^{-CK}g(s)\le g(t)\le e^{CK} g(s),
\end{equation*}
where $C$ depends only on $T$. 
From now on we will use $C$ to denote all constants that depend only on $T,K$, which may vary from line to line.

By the reproduction formula we have
\begin{equation}\label{e: I+II}
    u(x,t)=\int_M G(x,t;y,0)\cdot u(y,0)\,d_0y.
\end{equation}
Now assume $x\in B_0(x_0,D)$, and split the integral into two parts
\begin{equation*}
    u(x,t)=\int_{B_0(x,1)} G(x,t;y,0)\cdot u(y,0)\,d_0y+\int_{M\setminus B_0(x,1)} G(x,t;y,0)\cdot u(y,0)\,d_0y.
\end{equation*}
For the first part, note that for any $y\in B_0(x,1)$, we have
\begin{equation*}
    d_0(y,x_0)\le d_0(y,x)+d_0(x,x_0)\le 1 +D.
\end{equation*}
So by the assumption on $u$ we have $u(y,0)\le e^{\alpha(1+D)}$, and hence
\begin{equation}\label{e: I}
    \int_{B_0(x,1)} G(x,t;y,0)\cdot u(y,0)\,d_0y\le e^{\alpha(1+D)}\int_{B_0(x,1)} G(x,t;y,0)\,d_0y\le e^{\alpha(1+D)}.
\end{equation}

To estimate the second part in \eqref{e: I+II}, we first claim that for any $y\in M\setminus B_0(x,1)$, the following holds,
\begin{equation}\label{claim: heat kernel}
    G(x,t;y,0)\le C \cdot\textnormal{exp}\left(-\frac{d^2_0(x,y)}{Ct}\right).
\end{equation}
To this end, we note that if $t\ge 1$, the volumes of the two balls $B_0(x,\sqrt{\frac{t}{2}})$ and $B_0(y,\sqrt{\frac{t}{2}})$ are bounded below by $C^{-1}$.
So the claim follows immediately from \eqref{e: theorem 2625}.
If $t<1$, then by the assumption on the injectivity radius and the curvature, 
we see that the volumes of the two balls $B_0(x,\sqrt{\frac{t}{2}})$ and $B_0(y,\sqrt{\frac{t}{2}})$ are bounded below by $C^{-1}\left(\frac{t}{2}\right)^{n/2}$.
Note also $\left(\frac{t}{2}\right)^{n/2}\le C\cdot\textnormal{exp}\left(-\frac{1}{Ct}\right)$, we obtain
\begin{equation*}
    G(x,t;y,0)\le C\left(\frac{t}{2}\right)^{-n/2}\textnormal{exp}\left(-\frac{d^2_0(x,y)}{Ct}\right)\le C \cdot\textnormal{exp}\left(-\frac{d^2_0(x,y)}{Ct}\right),
\end{equation*}
which proves the claim.

Since for any $y\in M\setminus B_0(x,1)$, we have 
\begin{equation*}
    u(y,0)\le e^{\alpha d_0(x_0,y)}\le e^{\alpha(d_0(x_0,x)+d_0(x,y))}\le e^{\alpha D}\cdot e^{\alpha d_0(x,y)}.
\end{equation*}
Combining this with \eqref{claim: heat kernel}, we see that the second part in \eqref{e: I+II} satisfies
\begin{equation*}\begin{split}\label{e: II final}
    \int_{M\setminus B_0(x,1)} G(x,t;y,0)\cdot u(y,0)\,d_0y&\le C\,e^{\alpha D}\int_{M} \textnormal{exp}\left(-\frac{d^2_0(x,y)}{Ct}\right)\cdot  e^{\alpha d_0(x,y)}\,d_0y\\
    &\le C(\alpha,T)\,e^{\alpha D}\int_{M} \textnormal{exp}\left(-\frac{d^2_0(x,y)}{Ct}\right)\,d_0y\\
    &\le C(\alpha,T,C_0)\,e^{\alpha D}.
\end{split}\end{equation*}
where in the last inequality we used the curvature bound $|\Rm|\le C_0$ and $t\le T$, which allows us to apply a volume comparison to estimate the last integral term, see also \cite[Lemma 2.8]{Lai2019}. 
Combining the two inequalities \eqref{e: I} and \eqref{e: II final} we get $|u|(x,t)\le C(\alpha,T,C_0)\,e^{(\alpha+1)D}$, which proves the lemma.
\end{proof}

\section{Asymptotic geometry at infinity}\label{s: asymptotic geometry}
In this section,
we study the asymptotic geometry of a 3D steady gradient soliton that is not a Bryant soliton. We show that it dimension reduces to the cigar soliton along two integral curves of $\nabla f$. Moreover, we show that the asymptotic cone of the soliton is isometric to a metric cone over an interval $[0,a]$, $a\in(0,\pi)$.
We also prove a few other geometric properties of the soliton, one of which is that the scalar curvature attains its maximum at some point.

\subsection{Classification of asymptotic limits}
The main result in this subsection is Lemma \ref{l: DR to cigar}, which states that there are two asymptotic limits in the soliton, which are $\RR\times S^1$ and $\R\times\cigar$.

We will use the following lemma to show that $\R\times T^2$ can not be an asymptotic limit.

\begin{lem}\label{l: T2}
Let $(M,g)$ be a 3D complete Riemannian manifold with positive curvature, and $\R\times T^2=\R\times S^1\times S^1$ is equipped with some product metric $g_0$ (in which the lengths of the two $S^1$-fibers are not necessarily equal), $x_0\in\R\times T^2$. Then there exists an $\epsilon_0>0$ such that for any $y\in M$, the pointed manifold $(M,r^{-2}(y)g,y)$ is not $\epsilon_0$-close to $(\R\times T^2,r^{-2}(x_0)g_0,x_0)$.
\end{lem}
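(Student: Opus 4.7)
Suppose there exist a sequence $y_i\in M$ and $\epsilon_i\downarrow 0$ together with $\epsilon_i$-isometries $\phi_i:B_{g_0}(x_0,\epsilon_i^{-1})\to (M,g_i)$ sending $x_0$ to $y_i$, where $g_i:=r^{-2}(y_i)g$. The strategy is to produce an almost totally geodesic two-torus $\Sigma_i\subset M$, extract a curvature bound via Gauss-Bonnet, and then derive a contradiction separately in the two cases according to whether $y_i$ stays in a bounded region of $M$ or diverges to infinity.

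First, I would take $\Sigma_i:=\phi_i(\{0\}\times T^2)\subset M$. Since $\{0\}\times T^2$ is flat and totally geodesic in $(\R\times T^2,g_0)$ and $|\phi_i^*g_i-g_0|_{C^m}<\epsilon_i$, one obtains $|\sff_{\Sigma_i}|_{g_i}\le C\epsilon_i$ and $|K_M^{g_i}(T\Sigma_i)|\le \epsilon_i$ pointwise on $\Sigma_i$, with $\mathrm{Area}_{g_i}(\Sigma_i)$ close to $\mathrm{Area}_{g_0}(T^2)$. Applying the Gauss equation together with Gauss-Bonnet and $\chi(T^2)=0$,
\[
\int_{\Sigma_i}K_M^{g_i}(T\Sigma_i)\,dA_{g_i}=-\int_{\Sigma_i}\det(\sff_{\Sigma_i})\,dA_{g_i}=O(\epsilon_i^2).
\]
Because this integral is scaling-invariant, the same bound holds in $(M,g)$, and together with $K_M^g>0$ and $\mathrm{Area}_g(\Sigma_i)\asymp r^2(y_i)$ it forces $\min_{\Sigma_i}K_M^g\le C\,\epsilon_i^2/r^2(y_i)$. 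If some subsequence of $y_i$ converges to $y^*\in M$, then $r(y_i)\to r(y^*)>0$ by continuity of $r$ at smooth points, the tori $\Sigma_i$ are eventually contained in a compact neighborhood $U$ of $y^*$, and by smoothness and strict positivity $K_M^g\ge c>0$ on $U$; this contradicts the displayed bound as $\epsilon_i\to 0$.

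\emph{The main obstacle is the remaining case $y_i\to\infty$ in $M$.} Here the strict positivity $K_M^g>0$ is allowed to degenerate at infinity, so the compact argument does not directly apply. To handle it I would extract a pointed smooth Cheeger-Gromov subsequential limit $(M,g_i,y_i)\to(N,h,y_\infty)$, which by the $\epsilon_i$-closeness must be isometric to $(\R\times T^2,r^{-2}(x_0)g_0,x_0)$. The contradiction then comes from combining two inputs: (a) by Perelman's soul conjecture, since $(M,g)$ has positive sectional curvature the soul is a single point $p_0$ and $d_g(p_0,\cdot)^2$ is strictly convex, so $(M,g)$ admits no closed geodesic (along one, $s\mapsto d_g(p_0,\gamma(s))^2$ would be simultaneously strictly convex and periodic); and (b) the closed geodesics of lengths $L_1,L_2$ on the tori $\{c\}\times T^2\subset N$ should, after a perturbation modulo the two-parameter Morse-degenerate family of parallel translates, yield genuine closed geodesics in $(M,g_i)$ near $y_i$ for all large $i$. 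A cleaner variant replaces step (b) with the topological observation that $\R^3$ admits no free smooth $T^2$-action, since a faithful representation of $T^2$ requires real dimension at least four; the approximate $T^2$-symmetry furnished by $\phi_i$ then cannot produce an $\R\times T^2$ limit in a manifold diffeomorphic to $\R^3$, via a Cheeger-Fukaya-Gromov analysis. The technical crux is precisely the execution of one of these two routes in the divergent case; the Gauss-Bonnet identity, the compact-case contradiction, and the nonexistence of closed geodesics in $(M,g)$ are all routine.
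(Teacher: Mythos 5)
Your Gauss--Bonnet argument for the bounded case is sound but unnecessary: if $y_i\to y^*$ then $r(y_i)\to r(y^*)>0$ and the $\epsilon_i$-isometries limit to a surjective isometry from $\R\times T^2$ onto $(M,r^{-2}(y^*)g)$, contradicting positive curvature directly. The genuine gap is in the divergent case $y_i\to\infty$, and neither of your sketched routes closes it. Route (a) requires extracting a genuine closed geodesic near $y_i$ from the degenerate family $\{t\}\times S^1\subset\{c\}\times T^2$; since these are not isolated even on the model (they come in a two-parameter family of parallel translates) and the ambient ball $B_{g_i}(y_i,\epsilon_i^{-1})$ has trivial $\pi_1$, there is no homotopy class to minimize in, and a Morse/min-max perturbation argument needs to be supplied, not asserted. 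The nonexistence claim you pair with it is also not quite right: in positive curvature the soul is a point and $M\cong\R^3$, but $d_g(p_0,\cdot)^2$ is not strictly convex in general (consider the cigar, where it is constant on circles), so the ``strictly convex and periodic'' contradiction is not available without more care. Route (b) does not work either: the topological fact that $\R^3$ admits no \emph{global} free smooth $T^2$-action is irrelevant, because the $\epsilon_i$-isometry only gives a $T^2$-structure on a bounded region; a tubular neighborhood of an embedded torus in $\R^3$ is already diffeomorphic to $T^2\times\R$, so a local $T^2\times\R$ picture is perfectly consistent with $M\cong\R^3$.

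What is missing is a geometric, not topological, input. The paper fixes a base point $y_0$, uses a metric-comparison/splitting argument (as in Lemma~\ref{l: splitting}) to show that for $k$ large the $\R$-direction vector field $X=\phi_{k*}(\partial_x)$ on $U_k$ makes angle less than $\epsilon$ with every minimizing geodesic from $y_0$, so $d_{y_0}(\cdot)$ increases at rate at least $1-C_0\epsilon$ along the flow of $X$. Consequently each integral curve of $X$ meets each level set of $d_{y_0}$ at most once, and the map flowing the central torus $T^2_k=\phi_k(\{0\}\times T^2)$ onto $d_{y_0}^{-1}(d_{y_0}(y_k))$ is a homeomorphism onto a connected component. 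This contradicts the fact that level sets of $d_{y_0}$ at large distance in a complete positively curved $3$-manifold are $2$-spheres (see \cite[Corollary 2.11]{MT}). This step -- identifying the $\R$-direction as radial with respect to $y_0$ and comparing tori to level sets -- is the ingredient your proposal lacks, and it replaces both of your proposed routes.
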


\begin{proof}
Suppose $\epsilon_0$ does not exist. Then there exists a sequence of points $y_k\in M$ such that $(M,r^{-2}(y_k)g,y_k)$ smoothly converges to $(\R\times T^2,r^{-2}(x_0)g_0,x_0)$. It is clear that $y_k\rii$, because otherwise the manifold is isometric to $\R\times T^2$, a contradiction to the positive curvature.
So there exists $\epsilon_k\ri0$, an open neighborhood $U_k$ of $y_k$ and a diffeomorphism $\phi_k$ such that $\phi_k:[-\epsilon_k^{-1},\epsilon_k]\times T^2\ri U_k\subset M$ which maps $x_0\in\{0\}\times T^2$ to $y_k$, and $\phi_k$ is an $\epsilon_k$-isometry.
We say $T^2_k:=\phi_k(\{0\}\times T^2)$ is the central torus, which is homeomorphic to $T^2$.

In the rest of the proof we show that a connected component of the level set of the function $d_{y_0}(\cdot):=d_g(y_0,\cdot)$ at $y_k$ is homeomorphic to a 2-torus. Suppose this claim is true, then for $k$ sufficiently large, $d_g(y_0,y_k)$ is sufficiently large, which contradicts the fact that a level set of a distance function to a fixed point in a positively-curved 3D Riemannian manifold must be homeomorphic to a 2-sphere at all large distances, see e.g. \cite[Corollary 2.11]{MT}. Then this will prove the lemma.

To show the claim, without loss of generality, we may assume after a rescaling that $r(y_k)=1$.  
Let $s:\R\times T^2\ri \R$ be the coordinate function in the $\R$-direction of $\R\times T^2$, and let $X=\phi_k(\partial_{x})$ be a vector field on $U_k\subset M$.
For any small $\epsilon>0$, it is easy to see that for all large $k$, the angle formed by any minimizing geodesic from $y_0$ to a point $p\in U_k$ and $X(p)$ is less than $\epsilon$.
Let $\chi_{\mu}$, $\mu\in(-100,100)$, be the flow generated by $X$ on $\phi_k((-100,100)\times T^2)\subset M$, then the distance function $d_{y_0}(\cdot)$ increases along $\chi_{\mu}$ at a rate bounded below by $1-C_0\epsilon$,
where $C_0>0$ is a universal constant.
In particular, an integral curve of $X$ intersects a level set of $d_{y_0}(\cdot)$ in a single point.

Therefore, there is a continuous function  $\tilde{s}:T^2_k\ri\R$
such that for any $x\in T^2_k$, we have $ d_{y_0}(\chi_{\tilde{s}(x)}(x))=d_{y_0}(y_k)$. Let $F:T^2_k\ri d_{y_0}^{-1}(d_{y_0}(y_k))$ be defined by $F(x)=\chi_{\tilde{s}(x)}(x)$, then $F$ is continuous. We show that
$F$ is an injection: suppose $F(x_1)=F(x_2)=y$, then $x_i=\chi_{-\tilde{s}(x_i)}(y)$, $i=1,2$.
Since $x_1,x_2\in T^2_k$, it follows that $(\phi_k^{-1})^*s(x_1)=(\phi_k^{-1})^*s(x_2)=0$ and
\begin{equation*}
    0=(\phi_k^{-1})^*s(x_1)-(\phi_k^{-1})^*s(x_2)
    =\tilde{s}(x_2)-\tilde{s}(x_1).
\end{equation*}
So $\tilde{s}(x_2)=\tilde{s}(x_1)$, and hence $x_1=x_2$.
Since $T^2_k$ is compact, $F$ is a homeomorphism from the 2-torus onto the image which is a connected component of $d_{y_0}^{-1}(d_{y_0}(y_k))$.
This proves the claim.

\end{proof}

The following lemma will be used to show that all asymptotic limit splits off a line.
\begin{lem}\label{l: splitting}
Let $(M,g)$ be a complete Riemannian manifold with non-negative sectional curvature and $\{y_k\}_{k=0}^{\infty}\subset M$ is a sequence of points with $d_g(y_0,y_k)\rii$. 
Then after passing to a subsequence of $\{y_k\}_{k=0}^{\infty}$, there exists a ray $\sigma:[0,\infty)\ri M$ with $\sigma(0)= y_0$ and a sequence of numbers $s_k\rii$ such that for $z_k=\sigma(s_k)$, we have $d_g(z_k,y_k)=d_g(y_0,y_k)$ and 
\begin{equation*}
    \widetilde{\measuredangle}y_0y_kz_k\ri\pi,
\end{equation*}
as $k\rii$.
\end{lem}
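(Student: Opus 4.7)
The plan is to construct $\sigma$ as a limit of unit-speed minimizing segments from $y_0$ to $y_k$, to locate the points $z_k$ via the intermediate value theorem applied to $s\mapsto d_g(\sigma(s),y_k)$, and to extract the angle convergence from Toponogov's hinge comparison in non-negative curvature. The main technical obstacle will be producing a non-trivial crossing $s_k$ with $d_g(\sigma(s_k),y_k)=L_k$: since $\sigma$ in general does not pass through $y_k$, a priori this distance function might stay above $L_k$ for all $s>0$, and a diagonal-sequence argument is needed to force it to dip below $L_k$.

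First, I set $L_k:=d_g(y_0,y_k)$ and let $\gamma_k\colon[0,L_k]\to M$ be a unit-speed minimizing geodesic with $\gamma_k(0)=y_0$ and $\gamma_k(L_k)=y_k$. After passing to a subsequence, $\gamma_k'(0)$ converges to some unit vector $v\in T_{y_0}M$, and I define $\sigma(t):=\exp_{y_0}(tv)$. Since $\gamma_k$ converges to $\sigma$ uniformly on every compact interval and $\gamma_k|_{[0,T]}$ is minimizing as soon as $L_k\ge T$, passing to the limit shows that $\sigma|_{[0,T]}$ is minimizing for every $T>0$, so $\sigma$ is a ray.

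Next, a diagonal argument produces $T_k\rii$ with $T_k\le L_k$ and $\delta_k:=d_g(\gamma_k(T_k),\sigma(T_k))\ri0$. Since $\gamma_k$ is minimizing, $d_g(\gamma_k(T_k),y_k)=L_k-T_k$, so the triangle inequality yields
\[
d_g(\sigma(T_k),y_k)\le\delta_k+(L_k-T_k)<L_k
\]
for all sufficiently large $k$. Conversely $d_g(\sigma(s),y_k)\ge s-L_k\rii$ as $s\rii$, so the intermediate value theorem applied to the continuous function $s\mapsto d_g(\sigma(s),y_k)$ furnishes $s_k\in[T_k,2L_k]$ with $d_g(\sigma(s_k),y_k)=L_k$, and in particular $s_k\rii$.

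Finally, let $\theta_k:=\measuredangle(\gamma_k'(0),v)$, which tends to $0$ by construction. Since both $\sigma|_{[0,s_k]}$ and $\gamma_k$ are minimizing, $\theta_k$ is also the actual angle at $y_0$ in the geodesic triangle $y_0y_kz_k$. Toponogov's hinge comparison in non-negative curvature then gives $\widetilde{\measuredangle}y_ky_0z_k\le\theta_k\ri0$, and the Euclidean law of cosines applied to the comparison triangle with side lengths $L_k,L_k,s_k$ at the vertex $y_0$ yields $s_k=2L_k\cos(\widetilde{\measuredangle}y_ky_0z_k)\ge 2L_k\cos\theta_k$, so $s_k/L_k\ri2$. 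One more application of the law of cosines, this time at the vertex $y_k$, gives
\[
\cos\widetilde{\measuredangle}y_0y_kz_k=1-\frac{s_k^2}{2L_k^2}\ri-1,
\]
which is the desired conclusion $\widetilde{\measuredangle}y_0y_kz_k\ri\pi$.
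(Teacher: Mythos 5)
Your proof is correct and carries out precisely the ``standard metric comparison argument'' that the paper delegates to \cite[Lemma 5.1.5]{Bamler_thesis}: extract a limiting ray from the minimizing segments $y_0y_k$, locate $z_k$ on it by the intermediate value theorem, and use Toponogov/angle monotonicity in non-negative curvature together with the law of cosines on the isoceles comparison triangle to force $s_k/L_k\to 2$ and hence $\widetilde{\measuredangle}y_0y_kz_k\to\pi$. No gaps.
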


\begin{proof}
A standard metric comparison argument. See for example \cite[Lemma 5.1.5]{Bamler_thesis}.
\end{proof}

\begin{lem}\label{l: DR to cigar}
Let $(M,g)$ be a 3D steady gradient soliton with positive curvature that is not a Bryant soliton. Let $p\in M$ be a fixed point.  Then for any $\epsilon>0$, there is $D(\epsilon)>0$ such that for all $x\in M\setminus B_{g}(p,D)$, the pointed manifold $(M,r^{-2}(x)g,x)$ is $\epsilon$-close to exactly one of the following,
\begin{enumerate}
    \item $(\mathbb{R}\times\cigar,r^{-2}(\widetilde{x})g_c,\widetilde{x})$;
    \item $(\RR\times S^1, g_{stan},\widetilde{x})$.
\end{enumerate}
We call these two limits the asymptotic limits of the soliton.
\end{lem}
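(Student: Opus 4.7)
The plan is to analyze subsequential Cheeger--Gromov limits of the rescaled pointed manifolds $(M, r^{-2}(x_k)g, x_k)$ along an arbitrary sequence $x_k\to\infty$, show they must be one of the two listed models, and then deduce the uniform statement with $D(\epsilon)$ by a contradiction-compactness argument.

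First, I would extract a smooth splitting limit. Since $R+|\nabla f|^2$ is constant by \eqref{e: two}, $R$ is bounded on $M$, and $|\Rm|\le C R$ in dimension three under non-negative curvature, so the ambient Ricci flow $(M,g(t))$ has uniformly bounded curvature on $(-\infty,0]$. Hamilton's compactness theorem, combined with the volume lower bound $\mathrm{vol}_g(B(x_k,r(x_k)))=\omega_0\,r^3(x_k)$ built into the definition of the volume scale, yields a smooth subsequential limit $(M_\infty,g_\infty(t),x_\infty)$, $t\le 0$, which is a non-negatively curved 3D ancient Ricci flow with $\mathrm{vol}_{g_\infty(0)}(B(x_\infty,1))=\omega_0$. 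Applying Lemma \ref{l: splitting} to $p$ and the sequence $x_k$ produces a ray $\sigma$ and points $z_k=\sigma(s_k)$ with $d_g(x_k,z_k)=d_g(p,x_k)$ and $\widetilde{\measuredangle}px_kz_k\to\pi$. Since $r(x_k)/d_g(p,x_k)\to 0$, after rescaling by $r(x_k)$ the points $p$ and $z_k$ escape to infinity on opposite sides of $x_\infty$, so $M_\infty$ contains a complete geodesic line through $x_\infty$. Hamilton's strong maximum principle for 3D Ricci flow (or the Cheeger--Gromoll splitting) then splits $M_\infty=\R\times\Sigma$, where $\Sigma$ is a 2D non-negatively curved ancient Ricci flow.

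Next I would classify $\Sigma$ using the 2D classification of ancient Ricci flows with non-negative curvature: $\Sigma$ is either flat ($\R^2$, $\R\times S^1$, or $T^2$), Hamilton's cigar, a round shrinking $S^2$, or the Rosenau (King) sausage. The flat $\R^2$ case gives $M_\infty=\R^3$, ruled out by $\omega_0<\tfrac{4\pi}{3}$, which holds by our choice of $\omega_0$. The $T^2$ case is ruled out by Lemma \ref{l: T2}. The compact positively curved cases ($\R\times S^2$, $\R\times$ sausage) would produce an asymptotic limit that is non-collapsed with a uniform $\kappa_0>0$ at all scales; using the soliton self-similarity $g(t)=\phi_t^*g$ one upgrades this to $\kappa$-non-collapsing of all of $(M,g)$, so by Brendle's theorem \cite{brendlesteady3d} $(M,g)$ would be the Bryant soliton, contradicting the hypothesis. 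Thus $\Sigma\in\{\R\times S^1,\cigar\}$, and the normalization $\mathrm{vol}_{g_\infty(0)}(B(x_\infty,1))=\omega_0$ fixes the scales of the two models, identifying them with $(\RR\times S^1,g_{stan})$ and $(\R\times\cigar,g_c)$. For $\epsilon$ sufficiently small these two models are not $\epsilon$-close to each other (the former is flat while the latter has $R\ge 4$ on the soul $\R\times\{x_{tip}\}$), so any manifold $\epsilon$-close to one cannot be $\epsilon$-close to the other, and the existence of $D(\epsilon)$ follows from the contradiction-compactness argument above.

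The main obstacle I expect is the elimination of the compact positively curved cross-sections $\R\times S^2$ and $\R\times$ sausage: such a limit does arise in the Bryant soliton, so the argument must leverage the non-Bryant hypothesis through Brendle's uniqueness theorem, and upgrading a single non-collapsed asymptotic limit at the volume scale to genuine $\kappa$-non-collapsing of the entire soliton at all scales (with a uniform $\kappa$ that survives as $t\to\pm\infty$ under the flow $\phi_t$) is the key technical step and requires careful use of the soliton self-similarity.
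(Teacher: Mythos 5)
Your skeleton matches the paper's: extract a subsequential Cheeger--Gromov limit, split off a line via Lemma~\ref{l: splitting} and the strong maximum principle, invoke the classification of 2D ancient Ricci flows, rule out the unwanted cross-sections, and obtain the uniform $D(\epsilon)$ by contradiction-compactness. Ruling out $\R^2$ by the choice of $\omega_0$ and $T^2$ by Lemma~\ref{l: T2} is fine. The gap is exactly where you flag it, and it is not a small one.

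You propose to dispose of both $\R\times S^2$ and $\R\times\textnormal{sausage}$ at once by arguing that a single such asymptotic limit forces the entire soliton to be $\kappa$-non-collapsed, and then invoking Brendle. The upgrade from ``one non-collapsed blow-down limit at the volume scale'' to ``$(M,g)$ is globally $\kappa$-non-collapsed'' is the whole problem, and your proposal supplies no argument for it. A priori nothing prevents a soliton from dimension-reducing to $\R\times S^2$ along one sequence while being collapsed (cylindrical-plane or cigar-like) along another; self-similarity $g(t)=\phi_t^*g$ transports the non-collapsed region along integral curves of $\nabla f$, but does not populate all of $M$. The paper closes this gap in a genuinely different way: it shows that if one asymptotic limit is $\R\times S^2$ then \emph{all} of them are, using a coarse Gromov--Hausdorff dichotomy — after a further large fixed rescaling, $\R\times S^2$, $\R\times\cigar$, and $\RR\times S^1$ are GH-close to $\R$, $\R\times\R_+$, and $\R^2$ respectively, which are pairwise $\epsilon_0$-separated in the pointed GH sense; an open-closed argument at infinity then produces a point whose coarse model is none of these, contradicting the already-established trichotomy. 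Only once all limits are $\R\times S^2$ does Brendle apply. Separately, $\R\times\textnormal{sausage}$ is excluded not via Brendle but by the diameter-function argument from \cite[Theorem~3.7]{Lai2020_flying_wing}: one constructs a curve $\tilde\Gamma(s)\to\infty$ of $\epsilon$-tip points, defines a level-set diameter $F(s)$, and shows $F(s)\to\infty$, which is incompatible with compact time-slices. So your proposal is incomplete at precisely the step you identify, and the route you sketch for filling it is not the one that works.
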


\begin{proof}
First, we show that there exists $D>0$ such that $(M,r^{-2}(x)g,x)$ is $\epsilon$-close to some product space with an $\R$-factor, for all $x\in M\setminus B_{g}(p,D)$. 
This suffices to show that for any sequence of points $\{y_k\}_{k=0}^{\infty}$, the rescaled Ricci flows $(M,r^{-2}(y_k)g(r^2(y_k)t),(y_k,0))$ converge to an ancient Ricci flow that split-off a line.

First, we claim $\frac{r(y_k)}{d_g(y_0,y_k)}\ri0$ as $k\rii$.
If this is not true, then $(M,g)$ has Euclidean volume growth, and hence is flat by Perelman's curvature estimate Lemma \ref{l: Perelman}, contradiction.
So by passing to a subsequence we may assume $(M,r^{-2}(y_k)g(r^2(y_k)t),(y_k,0))$ converges to an ancient 3D Ricci flow, see \cite[Lemma 3.3]{Lai2020_flying_wing}.

So by Lemma \ref{l: splitting} and the strong maximum principle \cite[Lemma 4.13, Corollary 4.19]{MT}, the limit flow splits off an $\R$-factor.
Therefore, by the classification of ancient 2D Ricci flows \cite{Chu,Sesum}, the limit flow must be isometric to one of the following Ricci flows up to a rescaling,
\begin{enumerate}
    \item $\R\times\cigar$;
    \item $\RR\times S^1$;
    \item $\R\times T^2$;
    \item $\R\times S^2$;
    \item $\R\times \textnormal{sausage solution}$.
\end{enumerate}

\yi{First, item (3) is impossible by Lemma \ref{l: T2}. 
Second, we can argue in the same way as
 \cite[Theorem 3.7]{Lai2020_flying_wing} to exclude item (5): 
 Note that in \cite[Theorem 3.7]{Lai2020_flying_wing}, we argue under the $O(2)$-symmetry assumption which is not available here, use the curve $\Gamma(s)$ fixed under the $O(2)$-symmetry to define a diameter function $F(s)$, and show $\lim_{s\rightarrow\infty}F(s)=\infty$. In our setting, we first find a curve $\tilde{\Gamma}(s):[0,\infty)\rightarrow M$ going to infinity, such that for all $s\ge0$, $(M,R(\tilde{\Gamma}(s))g,\tilde{\Gamma}(s))$ is $\epsilon$-close to either $\R\times\textnormal{Cigar}$ or a time-slice of $\R\times \textnormal{Sausage solution}$ at the tip. Then we can define the diameter function $F(s)$ as in \cite[Theorem 3.7]{Lai2020_flying_wing} using $\tilde{\Gamma}(s)$, and show $\lim_{s\rightarrow\infty}F(s)=\infty$ in the same way.
This implies that $\R\times \textnormal{Sausage solution}$ can not appear in the blow-up limit.
}
 
\yi{Finally, we exclude item (4) as follows: Suppose $\R\times S^2$ is an asymptotic limit, we claim that all asymptotic limits are $\R\times S^2$. If so, then by Brendle's result \cite{brendlesteady3d} we know that a 3D steady Ricci solitons must be Bryant soliton if it dimension reduces to $S^2$ along any sequence of points tending to infinity. This contradicts with our assumption that the soliton is not the Bryant soliton.
To show the claim, suppose by contradiction that there is another type of limit, which by the above argument has to be item (1) or (2). First, we observe the following fact: Let $(M_i,g_i)$, $i=1,2,4$ be one of the limits (1), (2), (4), and $p\in M_i$ be an arbitrary point. For any $\epsilon>0$, there exists $C(\epsilon)>0$ so that for any $C> C(\epsilon)$, the metric space $(M_i,d_{C^{-1}g_i},p)$ is $\epsilon$-close in the pointed Gromov-Hausdorff sense to an $\epsilon^{-1}$-ball in $\R\times\R_+$ (for $i=1$), $\R^2$ (for $i=2$), or $\R$ (for $i=4$). 
In particular, there exists $\epsilon_0>0$ such that for all $C\ge C_0>0$, $(M_4,d_{C^{-1}g_4},p)$ are not $\epsilon_0$-close in the pointed Gromov-Hausdorff sense to neither $(M_1,d_{C^{-1}g_1},p)$ nor $(M_2,d_{C^{-1}g_2},p)$.}

\yi{Since $\R\times S^2$ is not the unique limit, let $C>C(\frac{\epsilon}{3})$, then by an open-closed argument we can find a sequence of points $z_k\in M$ going to infinity as $k\rii$, such that $(M,C^{-1}r^{-2}(z_k)g,z_k)$ is not $\frac{\epsilon_0}{3}$-close to neither of the limits (1)(2)(4) in the pointed Gromov-Hausdorff sense, where $r(z_k)$ is the volume scale at $z_k$. This contradicts the above fact and hence proves the claim.
}
\end{proof}

The following lemma shows that in the Ricci flow of the soliton, the closeness of a time-slice to the asymptotic limit leads to the closeness in a parabolic region of a certain size.

\begin{cor}\label{c: RF closeness}
Let $(M,g)$ be a 3D steady gradient soliton with positive curvature that is not a Bryant soliton. Let $p\in M$ be a fixed point.  Then for any $\epsilon>0$, there is $D(\epsilon)>0$ such that for all $x\in M\setminus B_{g}(p,D)$, the pointed rescaled Ricci flow $(M,r^{-2}(x)g(r^2(x)t),x)$ is $\epsilon$-close to exactly one of the following two Ricci flows
\begin{enumerate}
    \item $(\mathbb{R}\times\cigar,r^{-2}(\widetilde{x})g_c(r^2(\widetilde{x})t),\widetilde{x})$;
    \item $(\RR\times S^1, g_{stan},\widetilde{x})$.
\end{enumerate}
\end{cor}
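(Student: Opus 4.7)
The plan is to argue by contradiction, extracting a limit pointed Ricci flow via Hamilton's Cheeger-Gromov compactness theorem and identifying that limit via uniqueness of Ricci flow starting from the two fixed model initial data. Suppose the statement fails: there exist $\epsilon_0>0$ and points $x_k\in M$ with $d_g(p,x_k)\rii$ such that the rescaled pointed Ricci flows $(M,\tilde g_k(t),x_k)$, where $\tilde g_k(t):=r^{-2}(x_k)g(r^2(x_k)t)$, are not $\epsilon_0$-close to either (1) or (2). Applying Lemma \ref{l: DR to cigar} and passing to a subsequence, I may assume the time-zero slices $(M,\tilde g_k(0),x_k)$ smoothly converge to one of the two bounded-geometry model spaces $(N,g_N,\widetilde{x})$.

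The key analytic input is a uniform $|\Rm|$-bound on parabolic neighborhoods in the rescaled metrics. First I use the soliton identity $R+|\nabla f|^2\equiv\textnormal{const}$ from \eqref{e: two} together with $\Ric\ge0$ to obtain a global bound $R\le R_{\max}$ on $M$; in dimension three, non-negative sectional curvature then upgrades this to $|\Rm|_g\le C R_{\max}$ everywhere, and because $g(t)=\phi_t^*g$ is a pullback the same bound holds uniformly in time. Next I claim that $r(x_k)$ stays bounded: since the asymptotic cone of $(M,g)$ has dimension at most two (otherwise $(M,g)$ would have Euclidean volume growth and Perelman's estimate Lemma \ref{l: Perelman} would force flatness), $g$-balls around $x_k$ have at most quadratic volume growth for large radii, so the definition of the volume scale forces $r(x_k)\le C$. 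Together these yield $|\Rm|_{\tilde g_k(t)}=r^2(x_k)|\Rm|_{g(r^2(x_k)t)}\le C'$ uniformly, while the injectivity radius at $x_k$ in $\tilde g_k(0)$ is bounded below thanks to the bounded geometry of $(N,g_N)$.

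With these bounds in hand, Hamilton's compactness theorem for Ricci flows supplies, after a further subsequence, smooth convergence of $(M,\tilde g_k(t),x_k)$ on compact parabolic sets to a pointed Ricci flow $(N',g_N'(t),\widetilde{x})$ with $g_N'(0)=g_N$. Chen-Zhu uniqueness of bounded-curvature Ricci flow then pins down the limit: in case (2) the cylindrical plane is Ricci flat so $g_N'(t)\equiv g_{stan}$; in case (1) the product cigar is itself a steady soliton so $g_N'(t)$ coincides with $r^{-2}(\widetilde{x})g_c(r^2(\widetilde{x})t)$ as in Definition \ref{d: cigar}. In either case the limit is the target Ricci flow named in the statement, contradicting the failure of $\epsilon_0$-closeness for $k$ large. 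The step I expect to be most delicate is the uniform bound $r(x_k)\le C$; the cleanest path runs through the volume-scale matching built into the $\epsilon$-isometry of Definition \ref{d: epsilon isometry} together with the fact that the two model limits $\R\times\cigar$ and $\RR\times S^1$ have volume scales bounded above at all basepoints.
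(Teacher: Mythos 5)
Your overall strategy — contradiction, pass to a subsequence, invoke Lemma \ref{l: DR to cigar} for time-zero convergence, extract a limit flow by Hamilton's compactness, then identify the limit by uniqueness of bounded-curvature Ricci flow — is exactly the ``standard limiting argument'' the paper alludes to, and most of the pieces are right. However, the step where you derive a uniform curvature bound on the rescaled flows $\tilde g_k(t)$ has a genuine gap. You bound $|\Rm|_{\tilde g_k(t)}=r^2(x_k)\,|\Rm|_{g(r^2(x_k)t)}$ by the global bound $|\Rm|_g\le CR_{\max}$ times $r^2(x_k)$, and then claim $r(x_k)\le C$ to conclude. But the justification offered — sub-Euclidean (quadratic) volume growth of $(M,g)$ — only yields $r(x_k)/d_g(p,x_k)\to 0$ (which is exactly the first claim in the proof of Lemma \ref{l: DR to cigar}); it does not give $r(x_k)$ uniformly bounded, since the quadratic bound on $vol(B(p,\cdot))$ is anchored at the fixed basepoint $p$ and degrades as $x_k\to\infty$. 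One can only extract $r(x_k)\lesssim d_g(p,x_k)^{2/3}$ from that argument, which is not bounded.

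Moreover, the fact that the volume scale $r(\cdot)$ is actually globally bounded on a non-Bryant soliton is established only in Theorem \ref{l: wing-like} (where one shows the unrescaled limits are $\R^2\times S^1$ or $\R\times\cigar$); that theorem is proved \emph{after} Corollary \ref{c: RF closeness}, and in fact its proof relies on Lemma \ref{l: tip contracting}, which in turn invokes this very corollary. So using $r(x_k)\le C$ here would be circular. The correct route to the uniform curvature bound on $\tilde g_k$ does not pass through $r(x_k)\le C$: for $t\le 0$ one should instead use the time-zero closeness (which bounds $|\Rm|_{\tilde g_k(0)}$ on $B_{\tilde g_k(0)}(x_k,A)$) together with Hamilton's trace Harnack inequality, which for an eternal flow with $\Rm\ge 0$ gives $\partial_t R\ge 0$ and hence $R(\cdot,t)\le R(\cdot,0)$ backward in time (combined with the fact that $\Ric\ge 0$ implies metric balls only shrink forward in time, so $B_{\tilde g_k(t)}(x_k,A)\subset B_{\tilde g_k(0)}(x_k,A)$ for $t\le 0$); the forward-time direction needs a separate local argument. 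Once the uniform bounds are in place, the rest of your argument — Hamilton compactness, then Chen-Zhu uniqueness (and, for the negative-time half of the interval, Kotschwar's backward uniqueness) to identify the limit flow with the model flow — is sound.
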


\begin{proof}
Since the Ricci flows $g_{\mathbb{R}\times\cigar}(t)$ and $g_{\mathbb{R}^2\times S^1}(t)$ on $\R\times\cigar$ and $\RR\times S^1$ are both eternal Ricci flows which have bounded curvature. The assertion now follows from Lemma \ref{l: DR to cigar} by a standard limiting argument as in \cite[Lemma 3.4]{Lai2021Producing3R}.
\end{proof}

In the remaining of this section we show that $\R\times\cigar$ is a stable asymptotic limit when we move forward in the Ricci flow of the soliton, in the sense that a region close to the $\R\times\cigar$ stays close to it until it is not close to any asymptotic limits.

This is based on the observation that the Ricci flow of the cigar soliton contracts all points to the tip when we move forward in time along the flow.
By using this and the closeness to the Ricci flow of cigar, we show in the next lemma that an $\epsilon$-tip point $x$ (see Definition \ref{d: tip points}) stays an $2\epsilon$-tip point outside a compact subset, when we move along an integral curve of $\phi_{-t}(x)$ $-\nabla f$.
Note that this amounts to moving forward along the Ricci flow of the soliton, since  $g(t)=\phi_{-t}^*g$ satisfies the Ricci flow equation, where $\{\phi_t\}_{t\in\R}$ are the diffeomorphisms generated by $\nabla f$.

\begin{lem}\label{l: tip contracting}
Fix some $p\in M$. For any $\epsilon>0$, there exists $D(\epsilon)>0$ such that the following holds.

For any point $x\in M\setminus B_g(p,D)$, $\phi_{-t}(x)$ is the integral curve of $-\nabla f$, $t\in[0,\infty)$.
Suppose $x$ is an $\epsilon$-tip point. Then $\phi_{-t}(x)$ is a $2\epsilon$-tip point for all $t\in[0,t(x))$, where
$t(x)\in(0,\infty]$ is the supremum of $t$ such that $\phi_{-t}(x)\in M-B_g(p,D)$. 
\end{lem}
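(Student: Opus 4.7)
The plan is to argue by contradiction, combining the Ricci flow closeness from Corollary \ref{c: RF closeness} with the invariance of the tip line in $\R\times\cigar$ under its gradient flow.

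Suppose the lemma fails for some $\epsilon>0$. Then there are sequences $D_k\rii$ and $x_k\in M\setminus B_g(p,D_k)$ such that each $x_k$ is an $\epsilon$-tip point while the failure set
\begin{equation*}
F_k:=\{t\in[0,t(x_k)): \phi_{-t}(x_k)\text{ is not a $2\epsilon$-tip point}\}
\end{equation*}
is nonempty. Being a $2\epsilon$-tip is a locally closed condition on the basepoint (after a harmless slight weakening of $\epsilon$ to absorb the jump in $[\epsilon^{-1}]$ derivatives), so $F_k$ is open in $[0,t(x_k))$. Picking a maximal open subinterval $(\alpha_k,\beta_k)\subset F_k$, we have $\alpha_k>0$ (since $0\notin F_k$ as $x_k$ is $\epsilon$-tip) and $\alpha_k\notin F_k$, so that $y_k:=\phi_{-\alpha_k}(x_k)$ is itself $2\epsilon$-tip. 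Since $\alpha_k\in[0,t(x_k))$, we have $d_g(p,y_k)\ge D_k\rii$.

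By Corollary \ref{c: RF closeness} applied at $y_k$, and using that the $\RR\times S^1$ alternative is incompatible with $y_k$ being $2\epsilon$-tip, the pointed rescaled Ricci flows $(M,r^{-2}(y_k)g(r^2(y_k)s),y_k)$ are $\delta_k$-close to the Ricci flow $(\R\times\cigar,r^{-2}(x_0)g_c(r^2(x_0)s),x_0)$ based at a tip $x_0\in\R\times\{x_{tip}\}$, with $\delta_k\ri0$. The key geometric observation is that the tip line $\R\times\{x_{tip}\}$ is preserved by the gradient flow on $\R\times\cigar$: the potential on the product must have the form $f_{cigar}+a\,x$ for some $a\in\R$ (forced by $\Ric=\nabla^2 f$ on the product), whence at any tip $\nabla f=a\partial_x$ lies along the $\R$-factor. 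Consequently, in the limit Ricci flow the basepoint stays at a tip for all rescaled times.

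Translating through the Ricci flow $\delta_k$-isometry, for every $s\in[0,\delta_k^{-1}]$ the point
\begin{equation*}
\phi_{-r^2(y_k)s}(y_k)=\phi_{-(\alpha_k+r^2(y_k)s)}(x_k)
\end{equation*}
is a $\delta_k'$-tip point in $(M,g)$ with $\delta_k'\ri 0$. For $k$ large enough that $\delta_k'<2\epsilon$, these points are all $2\epsilon$-tip, producing a nonempty right-neighborhood of $\alpha_k$ inside $[0,t(x_k))$ that is disjoint from $F_k$, contradicting the fact that $\alpha_k$ is the left endpoint of the open interval $(\alpha_k,\beta_k)\subset F_k$. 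Hence $F_k=\emptyset$ for all large $k$, proving the existence of $D(\epsilon)$. The main subtlety is converting the Ricci flow $\delta_k$-closeness at $y_k$ into forward closeness of the actual gradient flow $\phi_{-t}(y_k)$ over a quantifiable positive duration; this is possible precisely because the tip is a fixed set of the limit's gradient flow, so the closeness persists along the orbit even as it progresses through the $\R$-factor.
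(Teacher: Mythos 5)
Your argument has a genuine gap in the step where you translate the Ricci-flow $\delta_k$-closeness of Corollary \ref{c: RF closeness} into a quantitative tip estimate along the orbit. You write that the rescaled Ricci flows at $y_k$ are $\delta_k$-close to a model flow on $\R\times\cigar$ ``based at a tip $x_0\in\R\times\{x_{tip}\}$'' and conclude that $\phi_{-(\alpha_k+r^2(y_k)s)}(x_k)$ is a $\delta_k'$-tip point with $\delta_k'\ri0$. Neither assertion is justified. Corollary \ref{c: RF closeness} only supplies closeness to the flow of $\R\times\cigar$ at \emph{some} basepoint $\widetilde{x}$; since $y_k$ is merely a $2\epsilon$-tip point, $\widetilde{x}$ is only at distance on the order of $2\epsilon$ from the tip line (comparing the Corollary's $\delta_k$-isometry with the $2\epsilon$-isometry in the definition of $\epsilon$-tip point). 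Consequently, after running the flow and using that the cigar contracts, what you actually obtain is that $\phi_{-(\alpha_k+\tau)}(x_k)$ is a $(2\epsilon+O(\delta_k))$-tip point, \emph{not} a $\delta_k'$-tip point with $\delta_k'\ri0$.

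This matters because of where you placed $y_k$: it is chosen at the left boundary of a maximal interval of the failure set, so it is a $2\epsilon$-tip with no room to spare, and ``being a $2\epsilon$-tip'' is a closed, not open, condition. Immediately after $\alpha_k$ you therefore only know the orbit is a $(2\epsilon+O(\delta_k))$-tip, which does not land you back inside the complement of $F_k$, and the desired contradiction with $(\alpha_k,\beta_k)\subset F_k$ fails. The paper sidesteps exactly this boundary problem: it works inductively in fixed time blocks of length $T(\epsilon)$ (chosen so the $2\epsilon$-ball at the tip contracts to an $\epsilon$-ball), uses a much smaller $\epsilon_1\ll\epsilon$ in Corollary \ref{c: RF closeness}, shows that an $\epsilon$-tip stays a $2\epsilon$-tip throughout $[0,T(\epsilon)]$ (the slack between $\epsilon$ and $2\epsilon$ absorbs the $\epsilon_1$-error), and crucially shows it \emph{refreshes} to an $\epsilon$-tip at time $T(\epsilon)$, allowing the induction to continue. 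Your soft topological argument has no such slack at the boundary point and no refresh step, so the quantitative error accumulation is not absorbed; to repair it you would essentially need to reintroduce the two-threshold-plus-refresh structure, which is the content of the paper's proof.
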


\begin{proof}

For the fixed $\epsilon$, we choose $T(\epsilon)>0$ to be the constant such that in the Ricci flow of the cigar soliton, the metric ball of radius $2\epsilon$ at time $0$ centered at the tip contracts to a metric ball of radius $\epsilon$ at time $T(\epsilon)$.
Let $0<\epsilon_1<<\epsilon$ be a constant whose value will be chosen later, then choose $D(\epsilon_1)>0$ to be the constant from Corollary \ref{c: RF closeness}.

If $x_0\notin B_g(p,D)$ is an $\epsilon$-tip point, in the following we will show that $\phi_{-r^2(x_0)\,t}(x_0)$ is a $2\epsilon$-point for all $t\in[0,T(\epsilon)]$ such that $\phi_{-r^2(x_0)\,t}(x_0)\notin B_g(p,D)$, and  $\phi_{-r^2(x_0)\,T(\epsilon)}(x_0)$ is again an $\epsilon$-tip point.
Suppose this is true, then the lemma follows by induction immediately.

By Corollary \ref{c: RF closeness}, there is an $\epsilon_1$-isometry $\psi$ between the two pointed Ricci flows $(M,r^{-2}(x_0)g(r^2(x_0)t),x_0)$ and $(\R\times\cigar,r^{-2}(\psi(x_0))g_0(r^{2}(\psi(x_0))t),\psi(x_0))$. 
Note that $\psi$ is also a $100\epsilon_1$-isometry between time-slices, and hence an $\epsilon$-isometry for $\epsilon_1<\frac{\epsilon}{100}$.

Let $x_{tip}$ be the tip of the cigar in $\R\times\cigar$ which has the same $\R$-coordinate as $\psi(x_0)$.
Then by
taking $\epsilon_0$ sufficiently small depending on $\epsilon$, and using the distance shrinking of the cigar, it is easy to see that $d_t(\psi(x_0),x_{tip})<2\epsilon$ in the Ricci flow $r^{-2}(\psi(x_0))g_0(r^{2}(\psi(x_0))t)$, for all $t\ge0$.
This implies that the first half of the claim that $\phi_{-r^2(x_0)\,t}(x_0)$ is a $2\epsilon$-point.
The second half of the claim follows by the choice of $T(\epsilon)$ and taking $\epsilon_1$ sufficiently small such that $\epsilon_1^{-1}>T(\epsilon)$.

\end{proof}

\subsection{The geometry near the edges}
We study the local and global geometry at points that look like $\R\times\cigar$.
First, we show in Lemma \ref{l: two chains} that there are two chains of infinitely many topological 3-balls that cover all $\epsilon$-tip points. 
Using this we show in Lemma \ref{l: two rays} that the asymptotic cone of the soliton is a metric cone over an interval $[0,a]$, $a\in[0,\pi)$, and the points in these two chains correspond to the boundary points of the cone.
Next, in Lemma \ref{l: Gamma} we construct two smooth curves going to infinity inside the two chains, such that they are two integral curves of $\nabla f$ or $-\nabla f$, and along them the soliton converges to $(\mathbb{R}\times\cigar,x_{tip})$.

Fix a point $p\in M$, in the following technical lemma we show that the velocity vector of a minimizing geodesic from $p$ to an $\epsilon$-tip point is almost parallel to the $\R$-direction in $\R\times\cigar$. 
The idea is to study the geometry near an $\epsilon$-tip point $q$ in three different scales: In the largest scale $d(p,q)$, the soliton looks like its asymptotic cone; In the smallest scale by the volume scale $r(q)$, it looks like $\R\times\cigar$; In some intermediate scale between $r(q)$ and $d(p,q)$, it looks like a 2-dimensional upper half-plane. Note when there is no confusion, we will omit the subscript $g$ and write $d_g(\cdot,\cdot)$ as $d(\cdot,\cdot)$ and $B_g(p,\cdot)$ as $B(p,\cdot)$.

\begin{lem}\label{l: parallel}
Let $(M,g)$ be a 3D steady gradient soliton with positive curvature that is not a Bryant soliton. Let $p\in M$ be a fixed point.  
For any $\delta>0$, there exists $\epsilon>0$ such that the following holds:

Let $q\in M$ be an $\epsilon$-tip point, and $\phi$ be an $\epsilon$-isometry.  Let $\partial_{r}$ be the unit vector field in the $\R$-direction in $\R\times\cigar$.
Let $\gamma:[0,1]\ri M$ be a minimizing geodesic from $p$ to $q$.
Then $||\cos\measuredangle(\gamma'(1),\phi_*\left(\partial_{r}\right))|-1|\le\delta$.
\end{lem}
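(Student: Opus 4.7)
The plan is to argue by contradiction using three distinct scales: the cigar scale $r(q_i)$ at $q_i$, the ambient scale $d_i := d_g(p, q_i)$, and an intermediate scale that reconciles them.

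Suppose for contradiction that there are $\delta > 0$, $\epsilon_i$-tip points $q_i$ with $\epsilon_i \to 0$, $\epsilon_i$-isometries $\phi_i$, and unit-speed minimizing geodesics $\gamma_i:[0,d_i]\to M$ from $p$ to $q_i$ with $|\cos\measuredangle(\gamma_i'(d_i),\phi_{i*}(\partial_r))|\le 1-\delta$. First, I claim $d_i\to\infty$: otherwise, passing to a subsequence yields $q_i\to q_\infty$, and $\epsilon_i\to 0$ together with smoothness forces the rescaled metric at $q_\infty$ to be pointwise isometric to a neighborhood of $x_{tip}$ in $\R\times\cigar$, which has a direction of vanishing sectional curvature -- contradicting positive curvature (cf.\ \cite[Corollary 4.19]{MT}). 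Second, I claim $r(q_i)/d_i \to 0$: by Lemma \ref{l: DR to cigar}, for $x$ far from $p$ the rescaled space $(M, r^{-2}(x)g, x)$ is $\epsilon$-close to either $\R\times\cigar$ or $\R^2\times S^1$, both of which have at most quadratic volume growth at infinity; hence $(M,g)$ itself has at most quadratic volume growth, so $r(q_i)$ is uniformly bounded and $r(q_i)/d_i\to 0$.

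Choose intermediate scales $\nu_i$ with $r(q_i)/\nu_i\to 0$ and $\nu_i/d_i\to 0$, and extract a diagonal subsequence producing three compatible pointed GH-limits. At the largest (cone) scale, $(M, d_i^{-2}g, p)\to(\mathcal{T}(M,p), o)$ by Lemma \ref{l: converge to asymp}, with $q_i\to q_\infty$ satisfying $d(o,q_\infty)=1$; the rescaled $\gamma_i$ converge to the unit-length radial geodesic $\gamma_\infty$ from $o$ to $q_\infty$. At the smallest (cigar) scale, $(M, r^{-2}(q_i)g, q_i)\to(\R\times\cigar, x_{tip})$, and $\phi_{i*}(\partial_r)$ converges to the $\R$-direction. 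At the intermediate scale, since the asymptotic cone of $\R\times\cigar$ is the half-plane $H := \R\times[0,\infty)$, we get $(M, \nu_i^{-2}g, q_i)\to(H, o_H)$ with $o_H$ a boundary point, the image of $\phi_{i*}(\partial_r)$ converging to the unit boundary direction $e_\partial$, and (since $\gamma_i$ has rescaled length $d_i/\nu_i\to\infty$) the normalized endpoint tangent $\gamma_i'(d_i)$ converging to some unit direction $v\in T_{o_H}H$.

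The compatibility of the three limits is the heart of the argument: since $\nu_i/d_i\to 0$, the intermediate limit $H$ is canonically isometric to the tangent cone of $\mathcal{T}(M,p)$ at $q_\infty$. Therefore $q_\infty$ lies on the boundary of $\mathcal{T}$, and the radial geodesic $\gamma_\infty$ from the apex $o$ to $q_\infty$ lies entirely on the corresponding boundary ray of the cone. Under the canonical isometry $H\cong T_{q_\infty}\mathcal{T}$, the tangent direction of $\gamma_\infty$ at $q_\infty$ is thus the boundary direction $e_\partial$. Because $\gamma_i'(d_i)$ at the endpoint corresponds to the outgoing direction of $\gamma_\infty$ at $q_\infty$ (which also points along the boundary), the limit $v$ equals $e_\partial$ up to sign, giving $|\cos\measuredangle(\gamma_i'(d_i),\phi_{i*}(\partial_r))|\to 1$ and contradicting the standing hypothesis.

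The main technical obstacle is executing the three-scale blow-up with a single diagonal subsequence so that the two independent routes to the tangent half-plane at $q_\infty$ -- one as a tangent cone of $\mathcal{T}(M,p)$ and one as the intermediate blow-up of $M$ at $q_i$ -- yield isometrically the same pointed limit with the same identification of the boundary direction. A secondary point is justifying that the endpoint tangent direction of $\gamma_i$ does converge to the direction of $\gamma_\infty$ at $q_\infty$ in the intermediate half-plane (which is a reverse of the direction toward the apex); this uses that $q_\infty\ne o$, so the radial geodesic from $q_\infty$ to $o$ is unambiguously defined and its tangent direction is continuous under GH convergence.
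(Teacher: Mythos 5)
The proposal correctly identifies the three-scale structure (cigar scale, intermediate scale, cone scale), which is indeed the backbone of the argument, but it hinges on an assertion that is not a standard fact and is not justified: that the intermediate blow-down $\lim_i (M,\nu_i^{-2}g,q_i)$ is \emph{canonically isometric} to the tangent cone of $\mathcal{T}(M,p)$ at $q_\infty$, with a matching identification of boundary directions. These two objects are a priori different: the intermediate blow-down records the geometry of $M$ near $q_i$ at scale $\nu_i$, while the tangent cone of $\mathcal{T}$ records the infinitesimal geometry of the \emph{asymptotic cone}, which has already forgotten finite-scale information. Monotonicity of comparison angles (the tool the paper actually uses) gives only a one-sided relation between the two scales (angles increase as the scale shrinks), so the comparison triangle $\widetilde{\measuredangle}p_i q_i w_i$ at the intermediate scale is bounded \emph{below} by the cone-scale angle $\widetilde{\measuredangle}p q_i z_i$; it does not produce an isometry. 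Asserting an isometry here is not ``a technical obstacle to be filled in'' — it is the entire content, and without it the conclusion that $q_\infty$ is a boundary point of $\mathcal{T}$ (and that $\gamma_\infty$ tracks the boundary ray) does not follow. Note also that the paper establishes $q_\infty\in\partial\mathcal{T}$ only later (Lemma~\ref{l: two rays}), and that proof \emph{uses} the present lemma, so a proof of Lemma~\ref{l: parallel} cannot invoke it.

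The secondary point you flag — convergence of the endpoint tangent $\gamma_i'(d_i)$ to the direction of $\gamma_\infty$ at $q_\infty$ — is also genuinely nontrivial, and ``$q_\infty\ne o$'' does not suffice to settle it; one needs to compare an actual angle at the smooth cigar scale with a comparison angle at the intermediate scale, which the paper handles by choosing the intermediate scale $r(q_i)/\delta_i$ with $\delta_i\to 0$ slowly enough that $|\widetilde{\measuredangle}p_iq_io_i-\measuredangle p_iq_io_i|$ is small. The way to repair your outline is to replace the ``canonical isometry'' step by the paper's one-sided argument: extract a $(1,\epsilon_i)$-strainer $(p,z_i)$ at $q_i$ from the cone-scale convergence, pass it down to the intermediate scale via monotonicity of comparison angles (which only increases the strainer quality), and observe that in the intermediate half-plane limit the $(1,0)$-strainer at the boundary base point forces the limit of $p_i$ to lie on the boundary — contradicting the $y$-coordinate lower bound extracted from the cigar-scale angle hypothesis.
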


\begin{proof}
Suppose not, then there are $\delta>0$,  $\epsilon_i\ri0$ and $\epsilon_i$-tip points $q_i\rii$ such that 
\begin{equation}\label{e: bigger than delta}
   \measuredangle\left(\gamma_i'(1),\phi_{i*}\left(\partial_{r}\right)\right)\in(\delta,\pi-\delta),
\end{equation}
where $\gamma_i:[0,1]\ri M$ is a minimizing geodesic from $p$ to $q_i$, and $\phi_i$ is the inverse of an $\epsilon_i$-isometry at $q_i$.
For convenience, we will use $\epsilon_i$ to denote any sequences $C\epsilon_i$ where $C>0$ is a constant independent of $i$.

Since $d_g(p,q_i)\rii$ and the curvature is positive, after passing to a subsequence we may assume that the rescaled manifold $(M,d^{-2}(q_i,p)g,p)$ converges to the asymptotic cone in the Gromov-Hausdorff sense, see Lemma \ref{l: converge to asymp}. So we can find a point $z_i\in M$ such that the pair of points $(p,z_i)$ is a $(1,\epsilon_i)$-strainer at $q_i$ of size $d_g(q_i,p)$.
Let $p_i$ be a point on $\gamma_i$ and $w_i$ be a point on the minimizing geodesic connecting $q_i,z_i$ such that $d(w_i,q_i)=d(p_i,q_i)=\frac{r_i}{\delta_i}$, where $\delta_i>\frac{r(q_i)}{d(q_i,p)}$ is a sequence converging to zero, which we may adjust later. Since $\frac{r(q_i)}{\delta_i}<d(q_i,p)$, by the monotonicity of angles, $(p_i,w_i)$ is a $(1,\epsilon_i)$-strainer at $q_i$ of size $\frac{r(q_i)}{\delta_i}$. 
So
\begin{equation}\label{e: large angle}
    \widetilde{\measuredangle} p_iq_iw_i\ge\pi-\epsilon_i.
\end{equation}

Next, consider the rescaled pointed manifold $(M,r^{-2}(q_i)g,q_i)$, which is $\epsilon_i$-close to $(\R\times\cigar,x_{tip})$. 
Then there is a sequence of points $o_i\in M$ with $d(o_i,q_i)=\frac{r(q_i)}{\delta_i}$ such that the minimizing geodesic $\sigma_i:[0,1]\ri M$ from $q_i$ to $o_i$ satisfies $\measuredangle\left(\sigma_i'(0),\phi_{i*}\left(\partial_{r}\right)\right)\ri0$ as $i\rii$.
Combining this fact with \eqref{e: bigger than delta} we get 
\begin{equation*}
    \measuredangle p_iq_io_i\in\left(\delta/2,\pi-\delta/2\right).
\end{equation*}
Since $\epsilon_i\ri0$, by choosing $\delta_i\ri0$ properly we have $|\widetilde{\measuredangle}p_iq_io_i-\measuredangle p_iq_io_i|<\frac{\delta}{10}$, and hence
\begin{equation}\label{e: small angle}
    \widetilde{\measuredangle} p_iq_io_i\in\left(\delta/4,\pi-\delta/4\right)
\end{equation}
for all sufficiently large $i$.

Now consider the rescaled pointed manifold $(M,\delta^2_ir^{-2}(q_i)g,q_i)$. Since $\delta_i\ri0$, after passing to a subsequence we may assume that it converges to the upper half-plane $\R\times\R_+$ in the Gromov-Hausdorff sense, with $q_i\ri(0,0),o_i\ri(1,0)\in \R\times\R_+$ modulo the approximation maps.
Assume $p_i$ converges to $(x_0,y_0)\in \R\times\R_+$, then by \eqref{e: small angle} we have $y_0>\frac{\delta}{100}$. 
On the other hand, \eqref{e: large angle} implies that $(x_0,y_0)$ is one point in a $(1,0)$-strainer at $(0,0)$ of size $1$. So it is clear that $|x_0|=1$, $y_0=0$, a contradiction. This proves the lemma.

\end{proof}

In the next few definitions, we introduce the concept of $\epsilon$-solid cylinders. These are topological 3-balls that look like a large neighborhood of the tip in $\R\times\cigar$. A chain of $\epsilon$-solid cylinder is a sequence of these cylinders meeting nicely. In this subsection, we will show in Lemma \ref{l: two chains} that all $\epsilon$-tip points are covered by exactly two such chains.

\begin{defn}[$\epsilon$-solid cylinder]
Let $x\in M$ be an $\epsilon$-tip point, and $\phi_x$ be the inverse of the corresponding $\epsilon$-isometry. We say that the neighborhood $\nu_{L,D}:=\phi_x([-L,L]\times B_{g_c}(x_{tip},D))$, is an $\epsilon$-solid cylinder centered at $x$, where $L,D>0$ are constants. 
\end{defn}

In order to make sure that the union of two intersecting $\epsilon$-solid cylinders is still a topological 3-ball, we want them to meet nicely. So we introduce the concept of good intersection between two $\epsilon$-solid cylinders, see e.g. \cite[Section 5.6]{MT2}.

\begin{defn}[Good intersection]\label{d: good intersection}
Let $y_1,y_2$ be two $\epsilon$-tip points, and $\phi_{y_i}$ be the inverses of corresponding $\epsilon$-isometries.
Let $\widetilde{\gamma}_i:[-L_i,L_i]\ri M$ be a curve passing through $y_i$ such that $\phi_{y_i}^{-1}(\widetilde{\gamma}_i(s))=(s,x_{tip})$. 
We say $\nu(i)=\phi_{y_i}([-L_i,L_i]\times B_{g_c}(x_{tip},D_i))$, $i=1,2$, have good intersection if after possibly reversing the directions either or both of the $\R$-factors, the following hold:
\begin{enumerate}
    \item The projection $r_1$ in the direction of $\R$ is an increasing function along $\widetilde{\gamma}_2$ at any point of $\widetilde{\gamma}_2\cap \nu(1)$.
    \item There is a point in the negative end of $\nu(2)$ that is contained in $\nu(1)$, and the positive end of $\nu(2)$ is disjoint from $\nu(1)$.
    \item Either $(1.1)D_1r(y_1)\le D_2r(y_2)$ or $D_2r(y_2)\le (0.9)D_1r(y_1)$.
\end{enumerate}
\end{defn}

With the notion above, if two $\epsilon$-solid cylinders have good intersection, then the intersection is homeomorphic to a 3-ball, see \cite[Lemma 5.19]{MT2}.

\begin{defn}[Chain]
Suppose that we have a sequence of $\epsilon$-solid cylinders $\{\nu(1),\cdots,\nu(k)\}$, $k\in\mathbb{N}\cup\{\infty\}$, with the curves $\widetilde{\gamma}_i$ from Definition \ref{d: good intersection}. We say that they form a chain of $\epsilon$-solid cylinders if the following hold:
\begin{enumerate}
    \item For each $1\le i<k$ the open sets $\nu(i)$ and $\nu(i+1)$ have a good intersection with the given orientations.
    \item If $\nu(i)\cap\nu(j)\neq\emptyset$ for some $i\neq j$, then $|i-j|=1$.
\end{enumerate}
\end{defn}

\begin{lem}(c.f. \cite[Lemma 5.22]{MT2})
Suppose that $\{\nu(1),\cdots,\nu(k)\}$ is a chain of $\epsilon$-solid cylinders. Then $\nu(1)\cup\cdots\cup\nu(k)$ is homeomorphic to a 3-ball and its boundary is the union of the negative end of $\nu(1)$, the positive end of $\nu(k)$, and an annulus $A$.
\end{lem}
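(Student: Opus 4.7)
I will argue by induction on $k$. The base case $k=1$ is immediate: an $\epsilon$-solid cylinder $\nu(1)=\phi_{y_1}([-L_1,L_1]\times B_{g_c}(x_{tip},D_1))$ is homeomorphic to $[-L_1,L_1]\times D^2\cong D^3$, and its boundary splits as the negative end $\phi_{y_1}(\{-L_1\}\times B_{g_c}(x_{tip},D_1))$, the positive end $\phi_{y_1}(\{L_1\}\times B_{g_c}(x_{tip},D_1))$, and the lateral annulus $\phi_{y_1}([-L_1,L_1]\times\partial B_{g_c}(x_{tip},D_1))$.

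For the inductive step, assume $U_{k-1}:=\nu(1)\cup\cdots\cup\nu(k-1)$ is a 3-ball whose boundary decomposes as the negative end of $\nu(1)$, the positive end of $\nu(k-1)$, and an annulus $A_{k-1}$. The chain condition forces $\nu(k)\cap U_{k-1}=\nu(k)\cap\nu(k-1)$, and by the cited consequence of good intersection (\cite[Lemma 5.19]{MT2}) this intersection is a 3-ball $B^\ast$. Next I will identify the piece $\partial U_{k-1}\cap\nu(k)$. By the good intersection conditions, the positive end of $\nu(k)$ is disjoint from $\nu(k-1)$, while a point in the negative end of $\nu(k)$ lies in $\nu(k-1)$, and condition (1) on the monotonicity of the $\R$-projection along $\widetilde{\gamma}_k$ combined with condition (3) on the comparability of radii guarantees that $\partial\nu(k)\cap U_{k-1}$ is a topological disk $\Delta\subset \partial U_{k-1}$ sitting inside the positive end of $\nu(k-1)$ (together with a small piece of the annular portion of $\partial U_{k-1}$). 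Similarly, $\partial U_{k-1}\cap\nu(k)$ is a disk contained in the positive end of $\nu(k-1)$.

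Therefore $U_k=U_{k-1}\cup \nu(k)$ is obtained from the 3-ball $U_{k-1}$ by gluing the 3-ball $\nu(k)$ along the 2-disk $\Delta$ in both boundaries, which is again homeomorphic to a 3-ball by the standard fact that $D^3\cup_{D^2}D^3\cong D^3$. For the boundary decomposition I will track what happens piece by piece: the negative end of $\nu(1)$ is untouched; the positive end of $\nu(k-1)$ is swallowed into the interior apart from a collar piece that merges with the lateral annulus; the positive end of $\nu(k)$ now sits on $\partial U_k$; and the union of $A_{k-1}$ with the portion of the lateral annulus of $\nu(k)$ outside $U_{k-1}$, glued along the circle $\partial\Delta$, produces the new annulus $A_k$. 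Each of these identifications is a direct consequence of conditions (1)--(3) of Definition \ref{d: good intersection} and the product structure of $\nu(k)$.

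\textbf{Main obstacle.} The technical point I expect to have to verify carefully is that $\partial U_{k-1}\cap\nu(k)$ really is a single 2-disk (and not, say, a more complicated planar region). This requires showing that $\partial\nu(k)$ meets $\partial U_{k-1}$ transversely in a single circle, which uses the monotonicity of the $\R$-projection along $\widetilde{\gamma}_k$ within $\nu(k-1)$ together with the disjointness of the positive end of $\nu(k)$ from $\nu(k-1)$; this rules out the curve $\widetilde{\gamma}_k$ re-entering $\nu(k-1)$ after leaving. For this step I will closely follow the presentation in \cite[\S 5.6]{MT2}, since the entire setup of $\epsilon$-solid cylinders, good intersections, and chains is modeled on theirs.
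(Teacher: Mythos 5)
The paper does not prove this lemma; it is stated with the tag \texttt{c.f.\ [Lemma 5.22]\{MT2\}} and treated as a citation to Morgan--Tian. Your induction on the chain length $k$ is precisely the argument that the cited reference uses, so there is nothing to compare against inside this paper beyond noting that your approach is the expected one.

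Two small remarks on the write-up. First, in the inductive step you write that $\partial\nu(k)\cap U_{k-1}$ is a disk $\Delta\subset\partial U_{k-1}$; as written this is a notational slip, since $\partial\nu(k)\cap U_{k-1}$ lies in $\partial\nu(k)$, not in $\partial U_{k-1}$. What you actually need (and clearly intend) is that there are two disks, $D_1=\partial U_{k-1}\cap\nu(k)\subset\partial U_{k-1}$ and $D_2=\partial\nu(k)\cap U_{k-1}\subset\partial\nu(k)$, which together form the boundary of the ball $B^\ast=U_{k-1}\cap\nu(k)$ and meet along a circle; the sentence that follows (``Similarly, $\partial U_{k-1}\cap\nu(k)$ is a disk...'') shows you already have both disks in hand, so this is a labeling issue rather than a gap. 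Second, the appeal to $D^3\cup_{D^2}D^3\cong D^3$ is stated as if you were gluing along a single boundary disk, whereas what you have is a union with a three-dimensional overlap. It is worth inserting the intermediate step: write $U_k=U_{k-1}\cup\bigl(\nu(k)\setminus\mathrm{int}\,B^\ast\bigr)$, observe that $\nu(k)\setminus\mathrm{int}\,B^\ast$ is a $3$-ball because $B^\ast\cap\partial\nu(k)=D_2$ is a disk (so you are removing a collar), and that the two pieces now genuinely meet along the single disk $D_1$, at which point the standard gluing fact applies. With those two clarifications the proof is complete and follows the Morgan--Tian template.
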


For an $\epsilon$-tip point $x$, let $\nu$ be an $\epsilon$-solid cylinder centered at it. In Lemma \ref{l: one chain}, we construct a chain of $\epsilon$-solid cylinders starting from $\nu$, which extends to infinity on one end. Moreover, the $\epsilon$-solid cylinder on the other end meets a 2-sphere metric sphere $\partial B(p,D_0)$ in a spanning disk, where $D_0>0$ is independent of $x$.

\begin{lem}[Extend an $\epsilon$-solid cylinder to a chain]\label{l: one chain}
Let $(M,g)$ be a 3D steady gradient soliton with positive curvature that is not a Bryant soliton. Let $p\in M$ be a fixed point. There exists $\overline{D}>0$ such that the following is true.

For any $\epsilon$-tip point $x\in M\setminus B(p,\overline{D})$,
there exist an integer $N\in\mathbb{N}$, a sequence of $\epsilon$-tip points $\{x_i\}_{i=-N}^{\infty}$ going to infinity, $x_{-N}\in \partial B(p,\overline{D})$, and an infinite chain of $\epsilon$-solid cylinders
$\{\nu(i)\}_{i=-N}^{\infty}$ centered at $x_i$, where $\nu(i)=\phi_{x_i}([-900,900]\times B_{g_c}(x_{tip}, L_i))$, $L_i\in[500,1000]$,
such that if $\nu(i)$ intersects with $\partial B(p,D)$, $D\ge \overline{D}$, then $\nu(i)$ meets $\partial B(p,D)$, $D\ge \overline{D}$, in a spanning disk.

\end{lem}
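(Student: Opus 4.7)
The plan is to construct the chain $\{\nu(i)\}_{i=-N}^\infty$ inductively by moving at each step in the local $\R$-direction $\partial_r$ of the $\R\times\cigar$ model. First, fix a small constant $\delta\in(0,0.01)$ and assume $\epsilon$ is chosen small enough so that Lemma \ref{l: parallel} applies with error at most $\delta$ at any $\epsilon$-tip point. Then choose $\overline{D}>0$ large enough that by Lemma \ref{l: DR to cigar}, every point in $M\setminus B_g(p,\overline{D})$ is $\epsilon_0$-close to exactly one of $\R\times\cigar$ or $\RR\times S^1$ for some $\epsilon_0\ll\epsilon$. Note every $\epsilon$-tip point has volume scale $r\approx r_0$ for the tip volume scale $r_0>0$ of the normalized cigar, so all scales are uniformly controlled. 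At any $\epsilon$-tip point $y$, use Lemma \ref{l: parallel} to orient the local $\R$-direction $\partial_r$ of $\phi_y$ so that $\phi_{y,*}(\partial_r)$ is within angle $\delta$ of the outward radial direction $\nabla d_g(p,\cdot)$ at $y$.

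Given $x_i$, define $x_{i+1}=\phi_{x_i}(L_i,x_{tip})$ in the outward direction and $x_{i-1}=\phi_{x_i}(-L_{i-1},x_{tip})$ in the inward direction, with $L_i,L_{i-1}\in[500,1000]$ chosen so that the adjacent $\epsilon$-solid cylinders $\nu(i\pm 1)$ and $\nu(i)$ have good intersection in the sense of Definition \ref{d: good intersection}. Because $|L_i|,|L_{i-1}|\le 1000\ll\epsilon^{-1}$, each new point lies well within the $\epsilon^{-1}r(x_i)$-neighborhood of $x_i$ where $\phi_{x_i}$ is defined, and the metric near $x_{i\pm 1}$ is thus $\epsilon_0$-close to a tip region of $\R\times\cigar$ with cigar tip at $x_{i\pm 1}$. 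Lemma \ref{l: DR to cigar} gives a dichotomy: $x_{i\pm 1}$ is $\epsilon_0$-close to $\R\times\cigar$ or $\RR\times S^1$, and the local model rules out the latter, so $x_{i\pm 1}$ is again an $\epsilon$-tip point. By the outward orientation and Lemma \ref{l: parallel}, each outward step increases $d_g(\cdot,p)$ by at least $(1-\delta)\cdot 500 \cdot r_0>0$, so the outward chain reaches infinity; each inward step decreases $d_g(\cdot,p)$ by the same amount, so the inward chain enters $B_g(p,\overline{D})$ after finitely many steps. One adjusts the final inward step so that $x_{-N}$ lies exactly on $\partial B_g(p,\overline{D})$, which is possible by continuity of $d_g(p,\cdot)$.

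For the spanning disk property, suppose $\nu(i)\cap\partial B_g(p,D)\neq\emptyset$ for some $D\ge\overline{D}$. By Lemma \ref{l: parallel} applied at tip points in $\nu(i)$, together with continuity of $\nabla d_g(p,\cdot)$, the gradient $\nabla d_g(p,\cdot)$ is within angle $2\delta$ of $\phi_{x_i,*}(\partial_r)$ throughout $\nu(i)$. Hence $d_g(p,\cdot)$ is strictly monotone in the $\partial_r$-direction with derivative at least $1-2\delta$, and $\nu(i)\cap\partial B_g(p,D)$ is the graph of a smooth function on the Cigar cross-section $B_{g_c}(x_{tip},L_i)$, a topological 2-disk separating the two ends of $\nu(i)$. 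The main obstacle is preserving the $\epsilon$-tip property across the infinite chain without accumulated error; the key is that Lemma \ref{l: DR to cigar} pins down this property at each $x_{i\pm 1}$ from only local information at the preceding step, so errors do not compound.
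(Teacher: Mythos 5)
Your construction follows the same strategy as the paper: orient the $\R$-direction of each $\epsilon$-solid cylinder via Lemma~\ref{l: parallel}, step outward/inward by a bounded number of volume scales, invoke Lemma~\ref{l: DR to cigar} at the new point to prevent error accumulation, and conclude the spanning-disk property from the near-alignment of $\nabla d_g(p,\cdot)$ with $\partial_r$. One technical detail worth tightening: the image point $\phi_{x_i}(L_i,x_{tip})$ is a priori only a $C\epsilon$-tip point (Lemma~\ref{l: DR to cigar} gives $\epsilon$-closeness to $\R\times\cigar$ at \emph{some} base point, and one must separately argue that base point lies within $\epsilon$ of the tip), so the paper instead selects a genuine $\epsilon$-tip point in $B(y_{\pm},r(x_0))$; likewise, the claim $r\approx r_0$ should be weakened to a uniform lower bound on $r$ (from $R\le R_{\max}$), which is all that the escape-to-infinity argument needs, and the good-intersection condition~(3) on the radii $L_i$ forces an alternating choice such as the paper's $500/1000$.
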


\begin{proof}
By Lemma \ref{l: DR to cigar} we can choose $\overline{D}>0$ sufficiently large such that the rescaled soliton at any point in $M\setminus B(p,\overline{D}/2)$ is $\epsilon$-close to the asymptotic limits.
For an $\epsilon$-tip point $x_0\in M\setminus B(p,\overline{D})$, assume $d(p,x_0)=D_0$.
Let $\phi_{x_0}$ be the inverse of the $\epsilon$-isometry. Then
$\nu(0):=\phi_{x_0}([-900,900]\times B_{g_c}(x_{tip}, 1000))$ is an $\epsilon$-solid cylinder. 
Let $r$ be the projection onto the the $\R$-direction in $\R\times\cigar$. Then by Lemma \ref{l: parallel}, after possibly replacing $r$ by $-r$, we may assume $\measuredangle(\nabla d(p,\cdot),\phi_{x_0*}(\partial_{r}))<\epsilon$. So the two points $y_{\pm}:=\phi_{x_0}((\pm1000,x_{tip}))\in M$ satisfy
\begin{equation*}
    d(y_+,p)\ge(1-\epsilon)1000\,r(x_0)+D_0\quad\textit{and}\quad d(y_-,p)\le-(1-\epsilon)1000\,r(x_0)+D_0.
\end{equation*}

By the choice of $\overline{D}$ we can find
two $\epsilon$-tip points $x_{\pm1}\in B(y_{\pm},r(x_0))$. In particular, we have $x_{\pm1}\notin \phi_{x_0}([-900,900]\times B_{g_c}(x_{tip}, 1000))$, and
\begin{equation*}
    d(x_1,p)>900\,r(x_0)+D_0\quad \textit{and}\quad d(x_{-1},p)<-900\,r(x_0)+D_0.
\end{equation*}
Similarly, let $\phi_{x_{\pm1}}$ be the inverse of the $\epsilon$-isometry at $x_{\pm1}$, then $\nu(\pm1):=\phi_{x_{\pm1}}([-900,900]\times B_{g_c}(x_{tip}, 500))$ is an $\epsilon$-solid cylinder at $x_{\pm1}$.
It is clear that $\nu(0)$ and $\nu(\pm1)$ have good intersections.

Repeating this, we can obtain a sequence of $\epsilon$-tip points $\{x_i\}_{i=-N}^{\infty}$, $x_{-N}\in B(p,\overline{D})$,
and a sequence of $\epsilon$-solid cylinders $\nu(i):=\phi_{x_i}([-900,900]\times B_{g_c}(x_{tip}, D_i))$ centered at $x_i$, where $D_i=500$ when $i$ is odd and $D_i=1000$ when $i$ is even, such that $x_{i+1}\in B(\phi_{x_i}((1000,x_{tip})),10 r(x_i))$.
Therefore, by triangle inequalities it is easy to see that $\nu(i)$ only intersects with $\nu(i-1)$ and $\nu(i+1)$, and have good intersections with them. In particular, for all $i\ge0$ we have
\begin{equation*}
    d(x_i,p)>900 \sum_{k=0}^{i-1}r(x_k)+D_0\rii.
\end{equation*}
So $\{\nu(i)\}_{i=1}^{\infty}$ is an infinite chain of $\epsilon$-solid cylinders.
Moreover, we see that $\nu(i)$ meets all metric spheres $\partial B(p,D)$, $D\ge \overline{D}$, in a spanning disk since by Lemma \ref{l: parallel} we have $\measuredangle(\nabla d(p,\cdot),\phi_{x_i*}(\partial_{r}))<\epsilon$ for all $i$.
\end{proof}

In the following, we use Lemma \ref{l: one chain} to show that all $\epsilon$-tip points outside of a compact subset are contained in the union of finitely many chains of $\epsilon$-solid cylinders.

\begin{lem}[Disjoint chains containing all $\epsilon$-tip points]\label{l: finitely many chains}
Under the same assumption as in Lemma \ref{l: one chain}, and let $\overline{D}$ be from Lemma \ref{l: one chain}.
There exist $k$ chains of $\epsilon$-solid cylinders  $\mathcal{C}_1,\cdots,\mathcal{C}_k$, $k\in\mathbb{N}$, each of which satisfies the conclusions in Lemma \ref{l: one chain}. Moreover, all $\epsilon$-tip points in $M\setminus B(p,\overline{D})$ are contained in the union of $\mathcal{C}_1,\cdots,\mathcal{C}_k$.
\end{lem}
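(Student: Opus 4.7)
The plan is to construct the chains iteratively using Lemma \ref{l: one chain}, and then prove termination via a compactness argument in a fixed annular region around $\partial B(p,\overline{D})$.

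First, I would iteratively build the chains. If there is no $\epsilon$-tip point in $M\setminus B(p,\overline{D})$, then $k=0$ and we are done. Otherwise, choose an $\epsilon$-tip point $y_1\in M\setminus B(p,\overline{D})$ and apply Lemma \ref{l: one chain} to obtain an infinite chain $\mathcal{C}_1$ through $y_1$; by construction this chain automatically satisfies all conclusions of Lemma \ref{l: one chain}. As long as there exists some $\epsilon$-tip point $y_{j+1}\in M\setminus B(p,\overline{D})$ not covered by $\mathcal{C}_1\cup\cdots\cup\mathcal{C}_j$, apply Lemma \ref{l: one chain} to $y_{j+1}$ to obtain $\mathcal{C}_{j+1}$. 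Upon termination, the chains $\mathcal{C}_1,\ldots,\mathcal{C}_k$ manifestly cover every $\epsilon$-tip point in $M\setminus B(p,\overline{D})$, so only the finiteness $k<\infty$ needs to be argued.

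Second, I would localize the innermost cylinders. For each chain $\mathcal{C}_j$, let $x^{(j)}$ denote the center of its innermost solid cylinder $\nu_j(-N_j)$, which by Lemma \ref{l: one chain} meets $\partial B(p,\overline{D})$ in a spanning disk. The construction in Lemma \ref{l: one chain} gives $d_g(x^{(j)},\partial B(p,\overline{D}))\le C\,r(x^{(j)})$ for a universal $C$. Since $r$ is continuous and positive, on any fixed compact annular neighborhood of $\partial B(p,\overline{D})$ it satisfies uniform bounds $r_0\le r\le r_1$, so all $x^{(j)}$ lie in a fixed compact region $A\subset M$ with $r_0\le r(x^{(j)})\le r_1$.

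Third, I would argue finiteness by contradiction. Suppose infinitely many chains $\{\mathcal{C}_j\}_{j\in\N}$ are produced. By compactness of $A$, pass to a subsequence with $x^{(j)}\to x_\infty\in A$; hence $d_g(x^{(j)},x^{(j')})\ll r_0$ for large $j,j'$, and the innermost cylinders $\nu_j(-N_j)$ and $\nu_{j'}(-N_{j'})$ overlap substantially. By Lemma \ref{l: parallel}, the direction of $\phi_{x^{(j)}\,*}(\partial_r)$ at $x^{(j)}$ is nearly aligned, up to sign, with $\nabla d_g(p,\cdot)$; and since the chain is required to extend to infinity while the other end terminates at $\partial B(p,\overline{D})$, the sign is also fixed. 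A step-by-step comparison with the construction of Lemma \ref{l: one chain} then shows that, with a suitable index shift $i'=i+\mathrm{const}$, the cylinders $\nu_j(i)$ and $\nu_{j'}(i')$ overlap substantially for every $i$: the next center $x_{i+1}$ in either chain is chosen near $\phi_{x_i}((1000,x_{\mathrm{tip}}))$, and the near-alignment of the $\partial_r$-directions of the two chains propagates the substantial overlap. Therefore every $\epsilon$-tip point of $\mathcal{C}_{j'}$ is already contained in $\mathcal{C}_j$, contradicting the fact that $y_{j'}$ was chosen uncovered by $\mathcal{C}_1\cup\cdots\cup\mathcal{C}_{j'-1}$. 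Hence the iteration must terminate after finitely many steps.

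The main obstacle is the inductive propagation of substantial overlap between two chains from their innermost cylinders all the way out to infinity, which requires tracking how small perturbations of the center $x_i$ and of the $\epsilon$-isometry $\phi_{x_i}$ affect the choice of the next $\epsilon$-tip point. The key stabilizing mechanism is Lemma \ref{l: parallel}, which pins down the chain's direction of extension (up to sign) in terms of $\nabla d_g(p,\cdot)$, a quantity that varies continuously in the center.
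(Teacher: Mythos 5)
Your iterative construction of the chains is the same as the paper's, and your observation that the innermost centers $x^{(j)}$ lie in a compact annulus around $\partial B(p,\overline{D})$ with volume scale bounded below by some $r_0 > 0$ is also in the spirit of the paper. But your finiteness argument diverges from the paper's, and the part where you diverge is also where the gap is.

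The paper proves finiteness in two steps: it first establishes as a \emph{Claim} that any two chains produced by the iteration are actually \emph{disjoint} (as sets), via an open-closed argument along the family of spheres $\partial B(p,r)$, $r\ge \overline{D}$; it then notes that the (now disjoint) chains meet the fixed compact sphere $\partial B(p,\overline{D})$ in spanning disks whose areas are bounded below by $\sim r_0^2$, so the finite area of the sphere forces $k<\infty$. You instead try to derive a contradiction from accumulation: if $x^{(j)}\to x_\infty$ along a subsequence, you want to conclude that for $j' > j$ large, the whole chain $\mathcal{C}_{j'}$ lies inside $\mathcal{C}_j$, contradicting the choice of the uncovered seed point $y_{j'}$. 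This requires propagating the ``substantial overlap'' of the innermost cylinders all the way out to $y_{j'}$, which may be arbitrarily far from $\partial B(p,\overline{D})$. As stated, that propagation is the gap: step by step, the centers in the two chains are each chosen only to lie within $O(r)$ of $\phi_{x_i}((1000,x_{\mathrm{tip}}))$, so without further input the offset between the two chains can grow by a fixed fraction of $r$ at each step and the overlap you need is not preserved. What's missing is the key ``snapping'' observation the paper uses explicitly in its Claim: two $\epsilon$-tip points at distance $\le 1000\,r(x)$ from each other are in fact within $0.1\,r(x)$ of each other. With this fact, consecutive centers of the two chains are forced to within $0.1\,r$ rather than merely $O(r)$, and the overlap does propagate, so your argument could be made to work — but you would then in effect be reproving the paper's disjointness Claim (in its contrapositive), and the finiteness would still ultimately rest on the bounded geometry near the fixed sphere, which is exactly the content of the paper's area bound. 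The paper's route — disjointness first, then a one-line area estimate on a single compact sphere — avoids having to chase the two chains to infinity and is substantially more robust.
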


\begin{proof}
Assume $x_1$ is an $\epsilon$-tip point, $x_1\in M\setminus B(p,\overline{D})$. 
Then let $\mathcal{C}_1$ be a chain of $\epsilon$-solid cylinders produced by Lemma \ref{l: one chain} whose union contains $x_1$.
If there exists an $\epsilon$-tip point $x_2\in M\setminus(\mathcal{C}_1\cup B(p,\overline{D}))$, then by Lemma \ref{l: one chain} we can construct a new chain $\mathcal{C}_2$ of $\epsilon$-solid cylinders containing $x_2$.
We claim that $\mathcal{C}_1\cap\mathcal{C}_2=\emptyset$, i.e. the union of all $\epsilon$-solid cylinders in $\mathcal{C}_1$ is disjoint from that in $\mathcal{C}_2$.

First, for two $\epsilon$-tip points $x,y$ with $d(x,y)\le1000\,r(x)$, it is easy to see that $d(x,y)\le 0.1r(x)$ when $\epsilon$ is sufficiently small.
Using this fact we see that $x_2$ is at least $900r(x_2)$-away from 
$\mathcal{C}_1$, and hence $\mathcal{C}_1\cap\mathcal{C}_2\cap \partial B(p,D_1)=\emptyset$, where $D_1=d(x_2,p)\ge \overline{D}$. Let $a,b>0$ be the infimum and supremum of $r$ such that $\mathcal{C}_1\cap\mathcal{C}_2\cap \partial B(p,r)=\emptyset$.
Then $a<D_1<b$.
Suppose by contradiction that $a\ge \overline{D}$.
Then $\mathcal{C}_1\cap\mathcal{C}_2\cap \partial B(p,a)\neq\emptyset$, and we can find an $\epsilon$-tip $y\in\mathcal{C}_1\cap\mathcal{C}_2\cap \partial B(p,a)$.
However, by Lemma \ref{l: parallel} this implies that $\nu$ the $\epsilon$-solid cylinder centered at $y$, is contained in $\mathcal{C}_1\cap\mathcal{C}_2$ and the positive end of $\nu$ is at distance $a+1000r(y)$ to $p$, which is greater than $a$.
This contradicts the infimum assumption of $a$.
By a similar argument we can show $b=\infty$.
This proves the claim.

Repeating this procedure, we obtain a sequence of chains of $\epsilon$-solid cylinders whose unions are disjoint.
This must stop in finite steps, because these chains intersect with $\partial B(p,\overline{D})$ in spanning disks whose areas are uniformly bounded below.
So we may assume these chains are $\mathcal{C}_1,\cdots,\mathcal{C}_k$ and they contain all $\epsilon$-tip points.

\end{proof}

Now we show that the number of these chains are exactly equal to two.
To do this, we need the following lemma, \cite[Proposition 4.4]{MT2}, which enables us to glue up $\epsilon$-cylindrical planes that intersect, and produce a global $S^1$-fibration on their union.

\begin{lem}[global $S^1$-fibration]\label{l: MT-glue} (\cite[Proposition 4.4]{MT2})
Let $M$ be a 3D Riemannian manifold. Given $\epsilon'>0$, the following holds for all $\epsilon>0$ less than a positive constant $\epsilon_1(\epsilon')$.
Suppose $K\subset M$ is a compact subset and each $x\in K$ is the center of an $\epsilon$-cylindrical plane.
Then there is an open subset $V$ containing $K$ and a smooth $S^1$-fibration structure on $V$.

Furthermore, if $U$ is an $\epsilon$-cylindrical plane that contains a fiber $F$ of the fibration on $V$, then $F$ is $\epsilon'$-close to a vertical $S^1$-factor in $U$ and $F$ generates the fundamental group of $U$. 
In particular, the diameter of $F$ is at most twice the length of any circle in the $\epsilon$-cylindrical plane centered at any point of $F$.
\end{lem}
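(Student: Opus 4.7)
The plan is to build the global $S^1$-fibration by constructing local $S^1$-fibrations from each $\epsilon$-cylindrical plane chart, showing their generating vector fields agree on overlaps up to sign and a small error, and then averaging via a partition of unity to obtain a single smooth vector field whose orbits are closed of nearly constant period.

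First, by compactness of $K$ and the definition of $\epsilon$-cylindrical plane, I would cover $K$ by finitely many neighborhoods $U_1,\dots,U_N$ such that each $U_\alpha$ comes with an $\epsilon$-isometry $\psi_\alpha$ to a large ball in $(\RR\times S^1,g_{stan})$ centered at some $x_\alpha\in K$, of scale $r(x_\alpha)$. Pulling back $\pth$ under $\psi_\alpha$ gives a smooth vector field $X_\alpha$ on $U_\alpha$, and since $\psi_\alpha$ is an $\epsilon$-isometry in a high $C^m$ norm, the integral curves of $X_\alpha$ are, up to an $O(\epsilon)$ error, closed of length close to $2\pi\,r(x_\alpha)$. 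A small rescaling produces a genuine local $S^1$-action and thus a local fibration $\pi_\alpha\colon U_\alpha\to U_\alpha/S^1$ onto a smooth $2$-manifold base.

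The heart of the argument is the overlap compatibility. On $U_\alpha\cap U_\beta$ one can choose $r(x_\alpha)$ and $r(x_\beta)$ to be comparable (up to a factor as close to $1$ as desired, shrinking $\epsilon$), and both $X_\alpha,X_\beta$ are $\epsilon'$-close to the unique (up to sign) closed-orbit direction of the model $\RR\times S^1$. After fixing orientations consistently using the freely-homotopic-loop structure on the connected cover of $K$, we get $\measuredangle(X_\alpha,X_\beta)<\epsilon'$ and $||X_\alpha|-|X_\beta||<\epsilon'$ on the overlap. Take a smooth partition of unity $\{\chi_\alpha\}$ subordinate to $\{U_\alpha\}$ and set $X=\sum_\alpha \chi_\alpha X_\alpha$ on $V=\bigcup U_\alpha$. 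Because this is a convex combination of nearby vector fields, $X$ is nowhere zero and $C^\infty$-close to each $X_\alpha$ on $U_\alpha$. Standard ODE perturbation together with the Poincar\'e return map then shows every integral curve of $X$ closes up, with periods varying smoothly; hence $X$ generates a free $S^1$-action on $V$, yielding the desired smooth fibration $\pi\colon V\to V/S^1$.

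For the last assertion, let $F$ be a fiber of $\pi$ contained in an $\epsilon$-cylindrical plane $U$ with chart $\psi$. By construction $F$ is an integral curve of the averaged field $X$, whose direction differs from $\psi^*\pth$ by at most $\epsilon'$ at every point, and whose length differs from that of the vertical $S^1$ at the base point of $F$ by at most $\epsilon'$. Hence $F$ is $\epsilon'$-close to a vertical $S^1$-factor in $U$. Since $\pi_1(U)\cong\pi_1(\RR\times S^1)\cong\mathbb{Z}$ is generated by the vertical loop, a curve whose tangent is uniformly close to the vertical direction and whose length matches that of a vertical loop must itself represent a generator; the diameter bound follows immediately because the length of any closed curve in $U$ homotopic to $\pth$ at a point $y\in F$ is at least $(1-\epsilon)$ times the length of the vertical loop at $y$, while $F$ has diameter at most half its own length plus an $\epsilon'$ error. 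The main obstacle in the above plan is the orientation/period-matching on overlaps: it must be arranged globally on $K$ so that the averaged field $X$ has a well-defined rotation number, which requires care when $K$ is disconnected or has complicated $\pi_1$; this is handled by working component-by-component and using that the $\epsilon$-closeness forces all nearby vertical loops to be freely homotopic with matching orientation classes.
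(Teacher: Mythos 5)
The paper does not give its own proof of this lemma; it is imported directly from Morgan--Tian \cite[Proposition 4.4]{MT2}, so there is no internal proof to compare against. Your outline, however, has a real gap at the crucial step.

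The local picture is fine: pushing forward $\partial_\theta$ under $(\psi_\alpha)^{-1}$ does give a vector field $X_\alpha$ on $U_\alpha$ whose orbits close up exactly (since $\psi_\alpha$ is a diffeomorphism and the orbits of $\partial_\theta$ on $\RR\times S^1$ are circles), so each chart carries a genuine local $S^1$-fibration. The problem is the gluing. You set $X=\sum_\alpha\chi_\alpha X_\alpha$ and assert that ``standard ODE perturbation together with the Poincar\'e return map then shows every integral curve of $X$ closes up.'' That implication is false. A convex combination of periodic vector fields is generically \emph{not} periodic: its orbits are $C^k$-close to circles, and the first-return map on a transversal is $C^k$-close to the identity, but a small perturbation of the identity can have no fixed points nearby and the orbits can drift (the classic spiralling phenomenon on an annulus). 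Nothing forces the return map to \emph{equal} the identity. This is exactly the difficulty the lemma must overcome, so waving at it is not an option.

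The standard way around this (Cheeger--Fukaya--Gromov, Morgan--Tian) is to average the \emph{actions}, not the infinitesimal generators. One defines, for each $\theta\in S^1$, a map $\Psi_\theta$ on the overlap region by taking a Riemannian center of mass (Grove--Karcher) of the points $\psi_{\alpha,\theta}(x)$, weighted by a partition of unity; equivalently, one averages the parameterized diffeomorphisms $F_\alpha(x,\theta)=\psi_{\alpha,\theta}(x)$ and then declares the fiber through $x$ to be the image of $\{x\}\times S^1$. Because the $F_\alpha$ are $C^k$-close on overlaps, the averaged $F$ is a diffeomorphism onto its image, its restriction to each $\{x\}\times S^1$ is an embedded circle by construction (no periodicity to check a posteriori), and the circles fit together into an honest smooth fibration. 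This is precisely what the present paper does for its own gluing in Lemma \ref{l: glue up local diffeomorphisms from N x S1 to M}, via the $\Sigma_K$ center-of-mass operator. If you want to stay with vector fields, you must first replace the $X_\alpha$ by generators of this averaged \emph{action}, which already have closed orbits by construction, rather than try to recover closedness after convex combination.

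A smaller point: your diameter bound at the end needs a quantitative statement that the length of $F$ is within $\epsilon'$ of the length of a model vertical circle at every point $F$ passes through, uniformly over the fiber, together with the triangle inequality in the $\epsilon$-cylindrical chart; as written it is plausible but elliptical. The orientation/$\pi_1$ bookkeeping you flag at the end is legitimate but is the easier of the two concerns.
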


\begin{lem}[Two chains containing all $\epsilon$-tip points]\label{l: two chains}
Let $(M,g)$ be a 3D steady gradient soliton with positive curvature that is not a Bryant soliton. Let $p\in M$ be a fixed point. There exists $\overline{D}>0$ such that the following is true.

There are exactly $2$ chains of $\epsilon$-solid cylinders  $\mathcal{C}_1,\mathcal{C}_2$, each of which satisfies the conclusions in Lemma \ref{l: one chain}. Moreover, all $\epsilon$-tip points in $M\setminus B(p,\overline{D})$ are contained in the union of $\mathcal{C}_1$ and $\mathcal{C}_2$.

\end{lem}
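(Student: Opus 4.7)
By Lemma \ref{l: finitely many chains} we have finitely many pairwise disjoint chains $\mathcal{C}_1,\ldots,\mathcal{C}_k$ of $\epsilon$-solid cylinders whose union contains every $\epsilon$-tip point outside $B(p,\overline{D})$. The plan is to exhibit a smooth $S^1$-fibration on a neighborhood of the complement of $\bigcup_i \mathcal{C}_i$ via Lemma \ref{l: MT-glue}, and then run a topological Euler characteristic count on a large metric sphere $\partial B(p,D)$ to force $k=2$. This argument simultaneously rules out $k=0$, $k=1$, and $k\ge 3$.

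\textbf{Fibration step.} First I would verify that every point in $M\setminus(\bigcup_i \mathcal{C}_i\cup B(p,\overline{D}))$ is the center of an $\epsilon'$-cylindrical plane, for some $\epsilon'$ comparable to $\epsilon$. Indeed, by Lemma \ref{l: DR to cigar} such a point is $\epsilon$-close either to $\RR\times S^1$, in which case there is nothing to check, or to $\R\times\cigar$ at some $\widetilde{x}$ bounded away from $x_{tip}$ (otherwise it would be an $\epsilon$-tip point and hence lie in a chain); since $(\cigar,g_c)$ is smoothly close to $\R\times S^1$ at infinity, the latter case also yields a cylindrical-plane structure. Lemma \ref{l: MT-glue} then produces a smooth $S^1$-fibration on an open set $V$ containing this complement, with fibers that are $C^1$-close to the $\partial_\theta$-factor of the local $\RR\times S^1$ model and that generate the fundamental group of the local $\epsilon$-cylindrical plane.

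\textbf{Euler characteristic count.} Next I would pick $D\gg\overline{D}$ so that $\partial B(p,D)$ is a topological $2$-sphere (valid for all large $D$ in a positively curved $3$-manifold, cf. \cite[Corollary 2.11]{MT}). By Lemma \ref{l: one chain} each chain $\mathcal{C}_i$ meets $\partial B(p,D)$ in a spanning disk $D_i$, and the $D_i$ are pairwise disjoint by Lemma \ref{l: finitely many chains}; hence $S_D:=\partial B(p,D)\setminus\bigcup_{i=1}^k D_i$ is homeomorphic to $S^2$ with $k$ open disks removed, and $\chi(S_D)=2-k$. By Lemma \ref{l: parallel}, at each $\epsilon$-tip point $\nabla d_g(p,\cdot)$ is nearly parallel to $\partial_r$; propagating this through the cylindrical-plane region along $S_D$ gives that $\nabla d_g(p,\cdot)$ lies up to small error in the $\R^2$-factor of the local $\RR\times S^1$ model, and is therefore $C^1$-close to being orthogonal to $\partial_\theta$. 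Consequently the $S^1$-fibers on $V$ are uniformly $C^1$-close to tangent to $\partial B(p,D)$ along $S_D$. A small $C^1$-perturbation of $\partial B(p,D)$ along $\nabla d_g(p,\cdot)$, preserving both the topology of $S_D$ and the intersection pattern with each chain, then produces a surface $\widetilde{S}_D$ homeomorphic to $S_D$ that is a union of fibers; it therefore inherits a smooth $S^1$-bundle structure over a compact $1$-manifold. Multiplicativity of Euler characteristics for fiber bundles forces $\chi(\widetilde{S}_D)=0$, and hence $\chi(S_D)=0$, which gives $k=2$.

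\textbf{Main obstacle.} The delicate point is the last step, namely promoting the approximate tangency of the $S^1$-fibers to exact tangency on a perturbed surface in a way that preserves the topology of $S_D$. The argument is most delicate near each boundary circle $\partial D_i$, where the perturbation must match the existing chain boundary. This will rely on the uniform $C^1$-closeness of the fibers to $\partial_\theta$ on the compact set $S_D$, together with the fact that $\partial D_i$ already generates the homotopy class of the $S^1$-fiber in the adjacent $\epsilon$-cylindrical plane region, by the ``Furthermore'' clause of Lemma \ref{l: MT-glue}; an interpolation/smoothing argument on a collar of $\partial D_i$ should then glue the exact tangent structure on the interior of $\widetilde{S}_D$ to the boundary data coming from the chains.
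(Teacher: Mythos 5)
Your approach is genuinely different from the paper's. The paper first rules out $k\le 1$ by noting that the $S^1$-fiber produced by Lemma~\ref{l: MT-glue} must be incompressible in the fibered region $V_1$ (homotopy exact sequence over a non-compact base), whereas the complement of the chains inside $B(p,2\overline{D})\setminus B(p,\overline{D})$ is simply connected when $k\in\{0,1\}$, so the fiber would bound a disk; it then rules out $k\ge 3$ by building a $2$-sphere $S=D_1\cup A\cup D_2$ from two spanning disks inside $\mathcal{C}_1,\mathcal{C}_2$ and a fiber-saturated annulus $A$, observing that $\mathcal{C}_3$ must cross $S$ at an $\epsilon$-tip point, which is impossible in any of the three pieces. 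Your Euler-characteristic count (a sphere with $k$ holes foliated by circle fibers forces $\chi=2-k=0$) collapses both cases into a single obstruction, which is elegant.

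That said, there is a real gap in the tangency/perturbation step that the paper's argument sidesteps entirely. You need $\nabla d_g(p,\cdot)$ to be approximately orthogonal to $\partial_\theta$ at \emph{every} point of $S_D$, but Lemma~\ref{l: parallel} only gives this at $\epsilon$-tip points, and ``propagating through the cylindrical-plane region'' is not a continuity argument---the gradient is free to rotate as you move along $S_D$, and one must actually prove the perpendicularity. It is provable (for instance: a fiber $F$ has diameter $O(1)$ while $d(p,F)\gg 1$, so comparing distances from $p$ to several points on $F$ shows the oscillation of $d(p,\cdot)$ on $F$ is small, forcing the comparison angle at each $y\in F$ between the direction to $p$ and the fiber direction to be close to $\pi/2$), but this is substantive additional content your proposal does not supply. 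A second issue is that $\partial B(p,D)$ need not be smooth where it meets the cut locus of $p$, so the ``$C^1$-perturbation'' is not well defined there; a cleaner substitute would be to take $S_D$ inside a level set $f^{-1}(a)$ of the potential function, which is a smooth $2$-sphere, and then Lemma~\ref{l: 3D neck is 2D neck} (resp.\ Lemma~\ref{l: geometry of level set near tip}) gives the needed approximate orthogonality at cylindrical-plane points (resp.\ near the tips) directly. In contrast, the paper's incompressibility-plus-separation argument is purely topological in the chain decomposition and never needs any tangency or transversality control.
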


\begin{proof}

First, we show $k\ge 1$, which is equivalent to that the $\R\times\cigar$ is indeed an asymptotic limit of the soliton $M$.
Suppose this is not the case.
On the one hand, since all $\epsilon$-tip points are contained in $\mathcal{C}_1,\cdots,\mathcal{C}_k$,
the complement of them in $M\setminus B(p,\overline{D})$ is covered by $\epsilon$-cylindrical planes.
Then by Lemma \ref{l: MT-glue} we can find a connected open subset $V_1$ containing $B(p,2\overline{D})-B(p,\overline{D})$ which carries a smooth $S^1$-fibration.
Consider the homotopy exact sequence
\begin{equation*}
    \cdots\ri\pi_2(V_{1,0})\ri\pi_1(S^1)\ri\pi_1(V_1)\ri\pi_1(V_{1,0})\ri\cdots
\end{equation*}
Since the base space $V_{1,0}$ of this fibration is connected and non-compact, we have $\pi_2(V_{1,0})=0$, and 
it follows that the $S^1$-fiber is incompressible in $V_1$, i.e. the map $\pi_1(S^1)\ri\pi_1(V_1)$ is an injection, see e.g. \cite{Hatcher}.

On the other hand, let $U$ be an $\epsilon$-cylindrical plane contained in $B(p,2\overline{D})-B(p,\overline{D})$, $x\in U$ be the center of the $\epsilon$-cylindrical plane, and $F$ be the $S^1$-fiber of the fibration on $V$ that passes through $x$. 
Then by Lemma \ref{l: MT-glue}, the fiber $F$ is $\epsilon$-close to the vertical $S^1$-factor in $U$, and hence $F$ is also contained in $U$, and hence contained in $B(p,2\overline{D})-B(p,\overline{D})$.
Note that the metric spheres $\partial B(p,\overline{D})$ and $\partial B(p,2\overline{D})$ are diffeomorphic to 2-spheres, and $B(p,2\overline{D})-B(p,\overline{D})$ is difffeomorphic to $\R\times S^2$.
So if $k=0$, we see that $B(p,2\overline{D})-B(p,\overline{D})-\mathcal{C}_1-\cdots-\mathcal{C}_k$ is diffeomorphic to $\R\times S^2$; if $k=1$, it is diffeomorphic to $\R^3$, both of which have trivial fundamental groups. Therefore, $F$ bounds a disk in $B(p,2\overline{D})-B(p,\overline{D})$.
However, this contradicts with the above fact that the fiber $F$ is incompressible.
So $k\ge 2$.

Next, we show $k=2$.
Suppose not, then $k\ge3$.
Since the subset $K:=B(p,2\overline{D})-B(p,\overline{D})-\mathcal{C}_1-\mathcal{C}_2-\cdots-\mathcal{C}_k$ is covered by $\epsilon$-cylindrical planes, by Lemma \ref{l: MT-glue} we can obtain a smooth $S^1$-fibration on a subset $V$ containing $K$. Let $C_1,C_2$ be two circles of the fibration on $V$ that are contained in $\mathcal{C}_1,\mathcal{C}_2$, respectively.
Then by Lemma \ref{l: MT-glue}, $C_i$
bounds a spanning disk $D_i$ in $\mathcal{C}_i$, $i=1,2$.
Let $A\subset B(p,2\overline{D})-B(p,\overline{D})$ be an annulus bounded by $C_1,C_2$, which is saturated by $S^1$-fibers. Then the union $S:=D_1\cup D_2\cup A$ is a 2-sphere which is isotopic to the two metric spheres $\partial B(p,\overline{D})$ and $\partial B(p,2\overline{D})$. In particular, $S$ separates $\partial B(p,\overline{D})$ from $\partial B(p,2\overline{D})$.

Since $k\ge3$, it follows that $\mathcal{C}_3$ intersects with $S$ at some $\epsilon$-tip point $y$.
First, $y$ cannot be in the annulus $A$ because $A$ is covered by $\epsilon_0$-cylindrical planes for a very small $\epsilon_0>0$. Second, $y$ cannot be in either of the two spanning disks $D_1$ and $D_2$, because  $\mathcal{C}_3$ is disjoint from $\mathcal{C}_i$, $i=1,2$.
So this contradiction proves $k=2$.

\end{proof}

It is clear that at the volume scale, an $\epsilon$-tip point is different from a point at which the manifold is an $\epsilon$-cylindrical plane.
We show in the following technical lemma that they can be also distinguished from each another at an even larger scale. Roughly speaking, the former point looks like a boundary point in the half-plane, while the latter looks like a point in the plane. 

\begin{lem}\label{l: tip point has to be edge}
Let $(M,g)$ be a 3D steady gradient soliton with positive curvature that is not a Bryant soliton. Let $p\in M$ be a fixed point.
For any $\delta>0$, there exists $\epsilon>0$ such that for any $\epsilon$-tip point $x$, there exists a constant $0<r_x<\delta d(p,x)$ such that there is no 2-strainer at $x$ of size larger than $r_x$.
\end{lem}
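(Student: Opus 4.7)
The plan is to argue by contradiction. Suppose the lemma fails, so for some $\delta>0$ there are $\epsilon_i\to 0$ and $\epsilon_i$-tip points $x_i$ each admitting a $2$-strainer of size $s_i\ge \delta\,d(p,x_i)$; by monotonicity of strainers under shrinking I may take $s_i=\delta\,d(p,x_i)$. Strict positivity of Ricci forces $d(p,x_i)\to\infty$ (otherwise a sub-sequential limit at a fixed $x_\infty\in M$ would make $M$ locally isometric, up to scale, to $\R\times\cigar$, contradicting positive Ricci in the $\R$-direction), so $s_i\to\infty$.

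First I bound the volume scales. The smooth Cheeger--Gromov convergence $(M,r^{-2}(x_i)g,x_i)\to(\R\times\cigar,g_c,x_{tip})$ as pointed Ricci solitons yields $r^2(x_i)R(x_i)\to 4$ and $r^2(x_i)|\nabla f|^2(x_i)\to 0$. Hamilton's identity $R+|\nabla f|^2=C$, with $C>0$ since the curvature is strictly positive, then gives $r^2(x_i)C\to 4$, so $r(x_i)\to 2/\sqrt{C}$ is bounded and $s_i/r(x_i)\to\infty$.

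Next I pass to a subsequential pointed Gromov--Hausdorff limit $(M,s_i^{-2}g,x_i)\to (Y,y_\infty)$, where $Y$ is a non-negatively curved Alexandrov space. The strainer condition passes to the limit, producing a $2$-strainer of size $1$ at $y_\infty$. By monotonicity of strainers under shrinking, $2$-strainers of arbitrarily small size exist at $y_\infty$, which in the rescaled limit yield a $2$-strainer in the tangent cone $T_{y_\infty}Y$ at its base point. Because $r(x_i)/s_i\to 0$ forces the cigar's $S^1$-direction to collapse in the rescaled metric, locally near $y_\infty$ the space $Y$ inherits the blow-down of $\R\times\cigar$ at $x_{tip}$, which is the half-plane $\R\times[0,\infty)$ with $y_\infty$ at the corner $(0,0)$; since this is already a metric cone, $T_{y_\infty}Y=\R\times[0,\infty)$ with $y_\infty$ at the corner.

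The contradiction emerges at the corner: only the $\R$-factor of the half-plane supplies a pair of opposite directions of comparison angle $\pi$, since the $[0,\infty)$-direction has no opposite, so no $2$-strainer can live at $(0,0)$. The main obstacle is the identification of $T_{y_\infty}Y$ as the corner of the half-plane, which requires combining the cigar convergence with Lemma~\ref{l: DR to cigar} (to control the structure of $M$ just outside the cigar picture, which collapses to $\R^2$) and with the uniqueness of tangent cones in non-negatively curved Alexandrov spaces.
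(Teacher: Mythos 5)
Your proof is by contradiction at the scale $s_i=\delta\,d(p,x_i)$, which is much larger than the cigar scale $r(x_i)$. This creates two problems.

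First, the claim $r(x_i)\to 2/\sqrt{C}$ is wrong. It relies on $r^2(x_i)|\nabla f|^2(x_i)\to 0$, i.e.\ on the rescaled potential having vanishing gradient at the tip in the limit. But the limiting potential on $\R\times\cigar$ has the form $f_{cigar}+a\,s+b$, and the linear coefficient $a$ need not vanish (compare Case~2 of the proof of Lemma~\ref{l: geometry of level set near tip}). In fact, if $R(x_i)\to 0$ then $|\nabla f|^2(x_i)\to C>0$ while $r(x_i)\to\infty$, giving $r^2(x_i)|\nabla f|^2(x_i)\to\infty$, the opposite of what you assert. What the paper actually uses (and what is correct) is the weaker fact $r(x_i)/d(p,x_i)\to 0$, which follows from volume comparison with no reference to the potential. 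This is not fatal by itself, since $s_i/r(x_i)\to\infty$ is still true, but it shows the reasoning in that step is off.

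The substantive gap is the identification of the local structure of $Y$ near $y_\infty$ (or of $T_{y_\infty}Y$) as the corner of a half-plane. The $\epsilon_i$-isometry to $\R\times\cigar$ holds on a ball of radius $\epsilon_i^{-1}r(x_i)$ around $x_i$. For fixed $\epsilon$ (which is what the lemma requires), this is $O(r(x_i))$, which is vanishingly small compared to $s_i=\delta\,d(p,x_i)$. So when you rescale by $s_i^{-2}$, the cigar picture collapses to a point and contributes no information about the geometry of $Y$ at scale $1$. The structure of $Y$ near $y_\infty$ — in particular whether $y_\infty$ is a boundary point — is governed by the geometry of $M$ at intermediate scales between $r(x_i)$ and $s_i$, which your argument does not control. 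The phrase ``locally near $y_\infty$ the space $Y$ inherits the blow-down of $\R\times\cigar$'' is exactly the assertion that would need a proof; it does not follow from the cigar convergence alone, because tangent cones of $Y$ are double limits ($i\to\infty$ then blow-up scale $\to 0$), and identifying them with a diagonal sequence $(M,\lambda_i^{-2}g,x_i)$ at a chosen intermediate scale $\lambda_i$ requires quantitative control that is not available. Invoking Lemma~\ref{l: DR to cigar} and ``uniqueness of tangent cones'' does not close this gap; Lemma~\ref{l: DR to cigar} classifies the local models at the cigar scale, not at scale $s_i$.

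The paper avoids all of this by working at a single, explicit scale $r_x=r(x)/\delta$: this is large enough that the cigar looks thin (GH-close to a ray) but small enough that the $\epsilon$-tip isometry still covers a definite ball. Then $(M,r_x^{-2}g,x)$ is directly $C\delta$-GH-close to the unit ball of the half-plane $\R\times\R_+$ centered at the boundary point $(0,0)$, which has no $(2,\epsilon_0)$-strainer of size $1$; monotonicity of comparison angles then rules out larger strainers. You should also note that if you insist on starting from a strainer of size $\ge\delta\,d(p,x_i)$, monotonicity already gives a strainer of size $r(x_i)/\delta$, so you could pass to the paper's scale immediately — the detour through the large-scale GH limit $Y$ is unnecessary and, as written, unsupported.
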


\begin{proof}
First, note that for $x$ sufficiently far away from $p$, we have by volume comparison that $\frac{r(x)}{d(p,x)}\le \delta^2$.
Take $r_x=r(x)/\delta$, then $r_x<\delta d(p,x)$ and the rescaling $(M,r_x^{-2}g, x)$ is the scaling down of $(M,r^{-2}(x)g, x)$ by $\delta^{-1}$.
The scaling down of $(\R\times\cigar,x_{tip})$ by $\delta^{-1}$ is $C\delta$-close to the 2-dimensional upper half-plane $(\R\times\R_+,(0,0))$ in the pointed Gromov-Hausdorff sense.
Take $\epsilon<\delta$, by the definition of $\epsilon$-tip point, we see that $(M,r^{-2}(x)g, x)$ is $\epsilon$-close to $(\R\times\cigar,x_{tip})$.
Therefore, $(M,r_x^{-2}g, x)$ is $C\epsilon\delta$-close in the Gromov-Hausdorff sense to the metric ball $B((0,0),1)$ in the 2-dimensional upper half-plane $(\R\times\R_+,(0,0))$.

It is easy to see that there is not a $(2,\epsilon_0)$-strainer at $(0,0)$ in $\R\times\R_+$ of size larger than $1$, which is the same in $(M,r_x^{-2}g, x)$.
Scaling back, it follows that there is not a $(2,\epsilon_0)$-strainer at $x$ in $(M,g)$ of size larger than $r_x$.
\end{proof}

Recall that $S_{\infty}(M,p)$ is the metric space of equivalent classes of rays starting from $p$.
We show in the following lemma that $S_{\infty}(M,p)$ is a closed interval $[0,\theta]$ for some $\theta\in[0,\pi)$.
Moreover, the two chains of $\epsilon$-solid cylinders $\mathcal{C}_1,\mathcal{C}_2$ are two ``edges" of the soliton $(M,g)$, in the sense that the two rays $\gamma_1,\gamma_2$ correspond to the two end points in $[0,\theta]$.
Note we will show $\theta>0$ in Corollary \ref{c: Asymptotic to a sector} so that the soliton is indeed a flying wing.

\begin{lem}\label{l: two rays}
Let $(M,g)$ be a 3D steady gradient soliton with positive curvature that is not a Bryant soliton. Let $p\in M$ be a fixed point. Then $S_{\infty}(M,p)$ is isometric to an interval $[0,\theta]$ for some $\theta\in[0,\pi)$.

Moreover, for any sequence of points $p^{(i)}_k\in\mathcal{C}_i$, $i=1,2$, $p^{(i)}_k\rii$ as $k\rii$, the minimizing geodesics $pp^{(i)}_k$ subsequentially converge to two rays $\gamma_1,\gamma_2$, such that $[\gamma_1]=0$, $[\gamma_2]=\theta$, after possibly switching $\gamma_1,\gamma_2$.
\end{lem}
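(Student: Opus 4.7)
I would first obtain the identification $S_\infty(M,p) \cong [0,\theta]$ with $\theta\in[0,\pi)$ by combining the author's earlier result \cite{Lai2020_flying_wing} with Lemma \ref{l: converge to asymp}. The asymptotic cone $\mathcal{T}(M,p)$ is the metric cone over $S_\infty(M,p)$, and \cite{Lai2020_flying_wing} shows that the asymptotic cone of a 3D positively curved steady gradient Ricci soliton is a sector of angle $\theta\in[0,\pi)$; hence $S_\infty(M,p)$ is an interval of length $\theta$.

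For the second assertion, compactness of the unit sphere at $p$ produces subsequential limits of the minimizing geodesics $pp_k^{(i)}$, giving rays $\gamma_i$ starting at $p$. To show $[\gamma_i] \in \{0,\theta\}$, I pass to a diagonal subsequence so that each $p_k^{(i)}$ is an $\epsilon_k$-tip point with $\epsilon_k\to 0$, which is possible because each chain $\mathcal{C}_i$ consists of $\epsilon$-solid cylinders centered at $\epsilon$-tip points for arbitrarily small $\epsilon$. Lemma \ref{l: tip point has to be edge} then rules out any $2$-strainer at $p_k^{(i)}$ of size comparable to $d(p,p_k^{(i)})$. Rescaling $g$ by $d^{-2}(p,p_k^{(i)})$ and passing to the Gromov--Hausdorff limit in $\mathcal{T}(M,p)$, the image of $p_k^{(i)}$ in the unit sphere has no $2$-strainer of positive size, and hence must lie on the boundary $\{0,\theta\}$ of $[0,\theta]$.

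The main obstacle is showing that $[\gamma_1]\ne[\gamma_2]$. Suppose for contradiction they coincide at $\theta^*\in\{0,\theta\}$, let $\theta^{**}$ denote the other endpoint, and pick any ray $\sigma:[0,\infty)\to M$ with $\sigma(0)=p$ and $[\sigma]=\theta^{**}$. Setting $x_k=\sigma(k)$, by Lemma \ref{l: DR to cigar} each $x_k$ is either an $\epsilon_k$-tip point or is $\epsilon_k$-close at volume scale to $\RR\times S^1$, with $\epsilon_k\to 0$. In the latter case, rescaling the nearby model $\R^2\times S^1$ further by $r(x_k)/k\to 0$ collapses the $S^1$ factor and yields $\R^2$ locally; but $(M,k^{-2}g,x_k)$ also converges to a neighborhood of $\sigma(1)\in\mathcal{T}(M,p)$, which is the half-plane $\R\times\R_+$ since $\sigma(1)$ is a boundary point of the sector, a contradiction. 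Hence $x_k$ is an $\epsilon_k$-tip point, so by Lemma \ref{l: two chains}, $x_k\in\mathcal{C}_1\cup\mathcal{C}_2$ for all large $k$. Extracting subsequential limits of $px_k$ then gives a ray with direction in $\{[\gamma_1],[\gamma_2]\}=\{\theta^*\}$, contradicting $[\sigma]=\theta^{**}\ne\theta^*$. After possibly switching labels, this yields $[\gamma_1]=0$ and $[\gamma_2]=\theta$.
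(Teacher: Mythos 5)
Your first step — deriving $S_\infty(M,p)\cong[0,\theta]$ with $\theta\in[0,\pi)$ by citing \cite{Lai2020_flying_wing} — is a legitimate shortcut; the paper instead gives a self-contained argument (a 1D Alexandrov space is an interval or a circle, and the circle case is ruled out by Lemma \ref{l: tip point has to be edge} applied to $\epsilon$-tip points, with $\theta<\pi$ following from the splitting theorem). Your argument that any subsequential limit direction of $pp_k^{(i)}$ lies in $\{0,\theta\}$ is essentially the paper's argument and is fine.

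However, you omit the key intermediate step that occupies most of the paper's proof of the ``Moreover'' clause: that all sequences going to infinity \emph{within the same chain} $\mathcal{C}_i$ produce rays in the same equivalence class. The paper proves this by joining two hypothetically divergent sequences $p_{1k},p_{2k}\in\mathcal{C}_1$ by a curve $\beta_k$ of $\epsilon$-tip points inside $\mathcal{C}_1$ (Lemma \ref{l: parallel} makes $d(p,\beta_k(\cdot))$ monotone), locating an interior point $q_k$ where the comparison angles to $\sigma_1$ and $\sigma_2$ balance, deducing that $pq_k$ converges to a ray of direction in $(0,\theta)$, and contradicting Lemma \ref{l: tip point has to be edge}. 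Without this per-chain uniqueness your final step collapses: once you place $x_k=\sigma(k)$ in $\mathcal{C}_1\cup\mathcal{C}_2$, you assert that a subsequential limit of $px_k$ must have direction in $\{[\gamma_1],[\gamma_2]\}=\{\theta^*\}$ — but $[\gamma_1],[\gamma_2]$ were only the limit directions of your \emph{given} sequences $p_k^{(1)},p_k^{(2)}$, not of arbitrary sequences in those chains, so there is no contradiction with $[\sigma]=\theta^{**}$.

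Independently, the step asserting that $x_k=\sigma(k)$ cannot be $\epsilon_k$-close to $\RR\times S^1$ at volume scale does not hold. The $\epsilon_k$-isometry to $\RR\times S^1$ near $x_k$ is only defined on a ball of radius $\epsilon_k^{-1}r(x_k)$; it says nothing about $(M,k^{-2}g,x_k)$, which probes scale $k\gg\epsilon_k^{-1}r(x_k)$. Rescaling the model $\RR\times S^1$ by $r(x_k)/k$ tells you what the \emph{model} looks like at small scale, not what $M$ looks like at the much larger scale $k$. Indeed a point with $r(x_k)\ll d(x_k,\Gamma)\ll d(p,x_k)$ is an $\epsilon_k$-cylindrical-plane point at volume scale and yet converges at scale $d(p,x_k)$ to a boundary point of the sector, so the dichotomy you invoke does not force $x_k$ to be a tip point. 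The argument for $[\gamma_1]\ne[\gamma_2]$ therefore does not close.
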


\begin{proof}
Fix some $p\in M$. We shall say that two quantities $d_1,d_2>0$ are comparable if $C^{-1}d_1<d_2<Cd_1$ for some universal constant $C>0$. 
By the non-euclidean volume growth of $(M,g)$, the asymptotic cone is a 2-dimensional metric cone over $S_{\infty}(M,p)$, where $S_{\infty}(M,p)$ is a 1-dimensional Alexandrov space, and hence is an interval or a circle.
In the latter case, we can find a $(2,\epsilon)$-strainer of size comparable to $d(p,x)$ at any point $x\in M$. However, this is impossible at an $\epsilon$-tip point by Lemma \ref{l: tip point has to be edge}. 
So $S_{\infty}(M,p)$ is isometric to an interval $[0,\theta]$, $\theta\in[0,\pi]$.
Moreover, we have $\theta<\pi$, because otherwise the manifold splits off a line and is isometric to $\R\times\cigar$, which does not have strictly positive curvature.

First, we show that for any points going to infinity in $\mathcal{C}_1$, the minimizing geodesics from $p$ to them subsequentially converge to rays that are in the same equivalent class in $S_{\infty}(M,p)$.
Suppose by contradiction that this is not true. Then we can find two sequences of points $p_{1k},p_{2k}\in\mathcal{C}_1$ going to infinity, such that the minimizing geodesics $pp_{1k},pp_{2k}$ converge to two rays $\sigma_1,\sigma_2$ with $\widetilde{\measuredangle}(\sigma_1,\sigma_2)>0$.

Let $\beta_k:[0,1]\ri\mathcal{C}_1$ be a smooth curve joining $p_{1k},p_{2k}$, which consists of $\epsilon$-tip points. 
By Lemma \ref{l: parallel} we may assume that $d(p,\beta_k(s))$ is monotone in $s$.
Let $\theta_{ik}(s)=\widetilde{\measuredangle}\sigma_i(d(p,\beta_k(s)))\,p\,\beta_k(s)$, $i=1,2$.
Then it is clear that $\theta_{1k}(0),\theta_{2k}(1)$ converge to $0$, and $\theta_{1k}(1),\theta_{2k}(0)$ converge to $\widetilde{\measuredangle}(\sigma_1,\sigma_2)>0$, as $k\rii$.
So for sufficiently large $k$, we have  $\frac{\theta_{1k}(0)}{\theta_{2k}(0)}<\epsilon$ and $\frac{\theta_{1k}(1)}{\theta_{2k}(1)}>\epsilon^{-1}$.

By continuity, there exists a point $q_k=\beta_k(s_k)$, $s_k\in(0,1)$, such that $\theta_{1k}(s_k)=\theta_{2k}(s_k)$, and hence
\begin{equation*}
    \widetilde{\measuredangle}\sigma_1(d(p,q_k))\,p\,q_k=\widetilde{\measuredangle}\sigma_2(d(p,q_k))\,p\,q_k.
\end{equation*}
Since $d(p,p_{1k}),d(p,p_{2k})\rii$, we have $d(p,q_k)\rii$. 
Therefore, after passing to a subsequence $pq_k$ converges to a ray $\sigma_3$, which satisfies $\widetilde{\measuredangle}(\sigma_3,\sigma_1)=\widetilde{\measuredangle}(\sigma_3,\sigma_2)$.
Since $S_{\infty}(M,p)$ is an interval, this implies
\begin{equation*}
    \widetilde{\measuredangle}(\sigma_3,\sigma_1)=\widetilde{\measuredangle}(\sigma_3,\sigma_2)= \widetilde{\measuredangle}(\sigma_1,\sigma_2)/2.
\end{equation*}
So for sufficiently large $k$, we can find a $(2,\epsilon)$-strainer at $q_k$ of size comparable to $d(q_k,p)$, which is impossible by Lemma \ref{l: tip point has to be edge} because $q_k$ is an $\epsilon$-tip point.

Now it remains to show that the ray $\sigma_1$ corresponds to one of the two end points in $S_{\infty}(M,p)=[0,\theta]$. This can be shown by a similar argument:
Suppose $\theta>0$ and $[\sigma_1]\in (0,\theta)$. 
Then we can find $(2,\epsilon_0)$-strainers at a sequence of $\epsilon$-tip points $x_k\rii$ at scales comparable to $d(p,x_k)$, which contradicts with Lemma \ref{l: tip point has to be edge}.

\end{proof}

Now we prove our main result in this subsection.
We construct two smooth curves $\Gamma_i:[0,\infty)\ri\mathcal{C}_i$ tending to infinity, $i=1,2$, which are integral curves of either $\nabla f$ or $-\nabla f$, such that the rescaled manifold converges to $\R\times\cigar$ pointed at the tip along $\Gamma_i$.

\begin{lem}\label{l: Gamma}
Let $(M,g)$ be a 3D steady gradient soliton with positive curvature that is not a Bryant soliton. 
There exists a smooth curve $\Gamma_i:[0,\infty)\ri\mathcal{C}_i$ which is an integral curve of $\nabla f$ or $-\nabla f$, such that $\lim_{s\rii}\Gamma_i(s)=\infty$, and for any sequence of points $x_i\rii$ along $\Gamma_i$, the pointed manifolds $(M,r^{-2}(x_i)g,x_i)$ smoothly converge to $(\R\times\cigar,r^{-2}(x_{tip})g_c,x_{tip})$.
\end{lem}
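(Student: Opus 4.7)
The plan is to construct $\Gamma_1$ by starting at an $\epsilon$-tip point $q\in\mathcal{C}_1$ far from $p$ and following the integral curve of $-\nabla f$ through $q$ (or $+\nabla f$, with the sign chosen so that the curve moves toward the unbounded end of the chain); the construction of $\Gamma_2$ is entirely analogous.

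Concretely, I will fix $\epsilon>0$ small, pick $q\in\mathcal{C}_1$ an $\epsilon$-tip point with $d_g(p,q)\gg D(\epsilon)$ (where $D(\epsilon)$ is the constant from Lemma~\ref{l: tip contracting}), and define $\sigma(s):=\phi_{-s}(q)$, the integral curve of $-\nabla f$ (which is forward in the Ricci flow in the convention of the outline). By Lemma~\ref{l: tip contracting}, $\sigma(s)$ is a $2\epsilon$-tip point on the maximal interval $[0,s(q))$ on which $\sigma(s)\in M\setminus B_g(p,D(\epsilon))$; combined with Lemma~\ref{l: two chains} and continuity of $\sigma$, this forces $\sigma(s)\in\mathcal{C}_1$ there. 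The crux is to prove $s(q)=\infty$ and $\sigma(s)\to\infty$ along $\mathcal{C}_1$. For this I will exploit the asymptotic $\R\times\cigar$ structure at $\epsilon$-tip points: in a neighborhood of $q$ the potential reads $f\approx f_\cigar+ar+b$ in the asymptotic model, so $-\nabla f$ at tip points approximately equals $-a\,\partial_r$, tangent to the tip line. Combining this with Lemma~\ref{l: parallel} and the chain description from Lemma~\ref{l: one chain}, the velocity of $\sigma$ along $\mathcal{C}_1$ is bounded below by a positive constant in one of the two possible directions; choosing that direction to be the unbounded end of $\mathcal{C}_1$ (after possibly replacing $\nabla f$ by $-\nabla f$, which is why the statement allows either sign) yields $s(q)=\infty$ together with $\sigma(s)\to\infty$.

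To upgrade the $2\epsilon$-tip property along the resulting curve to the sharp convergence $(M,r^{-2}(x_i)g,x_i)\to(\R\times\cigar,r^{-2}(x_{tip})g_c,x_{tip})$ required by the statement, I will apply the above construction to a sequence of $\epsilon_k$-tip points $q_k\in\mathcal{C}_1$ with $\epsilon_k\to 0$ and $d_g(p,q_k)\to\infty$. By uniqueness of integral curves of $\nabla f$, the resulting curves $\sigma_k$ eventually lie on the same maximal integral curve (up to reparametrization) once they enter the same region of $\mathcal{C}_1$; passing to a subsequential limit produces a single smooth $\Gamma_1:[0,\infty)\to\mathcal{C}_1$ along which $\Gamma_1(s)$ is $\epsilon_0$-tip for every $\epsilon_0>0$ once $s$ is large enough. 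By definition of $\epsilon_0$-tip point, this yields $(M,r^{-2}(\Gamma_1(s))g,\Gamma_1(s))\to(\R\times\cigar,r^{-2}(x_{tip})g_c,x_{tip})$ as $s\to\infty$.

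The main obstacle is the crux step above: ruling out the scenarios where the orbit is deflected inward into $B_g(p,D(\epsilon))$ at finite time or stalls near a critical point of $f$. The resolution is to exploit the non-triviality of $\nabla f$ near the tip line — which follows from the asymptotic $\R\times\cigar$ structure together with the fact that $M$ is not isometric to $\R\times\cigar$ (forcing the asymptotic linear coefficient $a$ of $f$ to be nonzero, and hence $|\nabla f|$ to have a definite lower bound at $\epsilon$-tip points) — combined with the correct choice of $\pm\nabla f$ to orient the motion toward the unbounded end of $\mathcal{C}_1$ identified by Lemma~\ref{l: one chain}.
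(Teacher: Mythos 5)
The crux step does not close. After defining $\sigma(s)=\phi_{-s}(q)$ you invoke Lemma~\ref{l: tip contracting} to keep $\sigma(s)$ a $2\epsilon$-tip point; that lemma is specifically about $\phi_{-t}$, the integral curve of $-\nabla f$, because tip preservation there rests on the distance \emph{contraction} of the cigar under forward Ricci flow. You then claim you may ``choose'' the direction toward the unbounded end of $\mathcal{C}_1$ by ``possibly replacing $\nabla f$ by $-\nabla f$.'' That choice is not available: $\nabla f$ is a fixed vector field, and $\sigma$ is already defined. In the generic situation — in particular whenever $f$ has a critical point, which Theorem~\ref{t: Rmax critical point} will eventually show but is not yet known here — $-\nabla f$ drives $q$ \emph{inward}, so $\sigma$ reaches $\partial B_g(p,D)$ in finite time $s(q)$. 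Switching to $+\nabla f$ at that point moves backward in the Ricci flow, where the cigar expands rather than contracts; the errors to the $\R\times\cigar$ model no longer self-correct and there is no available lemma saying the $\epsilon$-tip property persists. This is exactly the dichotomy the paper's proof handles: either some $\gamma_{p_k}=\phi_{-s}(p_k)$ never enters $B(p,D_0)$ (then that curve itself works, as an integral curve of $-\nabla f$), or all of them exit at points $q_k\in\partial B(p,D_0)$, one extracts a subsequential limit $q_\infty$, and $\Gamma_1$ is the $+\nabla f$ integral curve from $q_\infty$. In that second case the tip property along $\Gamma_1$ is not re-derived by a forward-in-$s$ argument — it is inherited from the backward flows by writing $\widetilde{\gamma}_{q_k}(s)=\gamma_{p_k}(s_k-s)$, using $s_k\to\infty$, and passing to the limit. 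Your proposal has no substitute mechanism.

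The upgrade step has a second gap: you assert that ``by uniqueness of integral curves, the resulting curves $\sigma_k$ eventually lie on the same maximal integral curve once they enter the same region of $\mathcal{C}_1$.'' Uniqueness identifies integral curves through the \emph{same} point; the $q_k$ are distinct, so the $\sigma_k$ are in general distinct, and lying in the same chain does not make them coalesce. The paper instead gets compactness by tracking the curves down to the fixed locus $\partial B(p,D_0)$. Finally, the side remark that non-isometry with $\R\times\cigar$ ``forces the asymptotic linear coefficient $a\neq 0$'' is an assertion rather than an argument: since $|\nabla f|^2=C_1^2-R$ by the soliton identity, ruling out $R\to C_1^2$ along the tips needs input (existence and uniqueness of the critical point, positivity of $\lim R(\Gamma_i(s))$) that is established only later, in Theorem~\ref{t: Rmax critical point} and Theorem~\ref{l: wing-like}, and cannot be used here.
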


\begin{proof}
Fix some point $p\in M$, and let $D_0>0$ be a large constant such that if $f$ has a critical point (which must be unique), then $B(p,D_0)$ contains the critical point.
We will use $\epsilon(D)>0$ denote all positive constants depending on $D$ such that $\epsilon\ri0$ as $D\rii$.

Take a sequence of $\epsilon_k$-tip points $p_k\in \mathcal{C}_1$ going to infinity, where $\epsilon_k\ri0$.
Assume $p_k\notin B(p,D_0)$ for all $k$.
Denote by $\gamma_{p_k}:[0,s_k]\ri M$ the integral curve of $-\nabla f$ starting from $p_k$, where $s_k\in(0,\infty]$ is the smallest value of $s$ such that $\gamma_{p_k}(s)\in \partial B(p,D_0)$.
It is easy to see that $s_k\rii$ as $k\rii$.
By Lemma \ref{l: tip contracting} we see that $\gamma_{p_k}([0,s_k])\subset\mathcal{C}_1$ and if $\gamma_{p_k}(s)\notin B(p,D)$ for some $D>0$, then $\gamma_{p_k}(s)$ is an $\epsilon(D)$-tip point, where $\epsilon(D)\ri0$ as $D\rii$ is independent of $k$.

First, assume $s_k$ is finite for all $k$. 
Let $q_k=\gamma_{p_k}\in\mathcal{C}_1\cap \partial B(p,D_0)$, then after passing to a subsequence $q_k$ converges to a point $q_{\infty}\in\mathcal{C}_1\cap \partial B(p,D_0)$.
Denote by $\widetilde{\gamma}_{q_k}$ the integral curve of $\nabla f$ starting from $q_k$. Then $\widetilde{\gamma}_{q_k}$ converges to the integral curve $\Gamma_1:[0,\infty)\ri\mathcal{C}_1$ of $\nabla f$ starting from $q_{\infty}$, which satisfies all the assertions.

Now assume $s_k=\infty$ for some $k=k_0$. Since the critical point of $f$ (if exists) is contained in $B(p,D_0)$, this implies that $\gamma_{p_{k_0}}(s)\rii$ as $s\rii$.
Therefore, we may take $\gamma_{p_{k_0}}$ to be $\Gamma_1$, which is an integral curve of $-\nabla f$ and satisfies all assertions as a consequence of Lemma \ref{l: tip contracting}.
By the same argument we can find $\Gamma_2\subset\mathcal{C}_2$ satisfying the assertions.

\end{proof}

\begin{remark}
When $(M,g)$ is isometric to $\R\times\cigar$, and the potential function $f$ could be a non-constant linear function in the $\R$-direction, then one of $\Gamma_1,\Gamma_2$ is the integral curve of $\nabla f$ and the other is of $-\nabla f$.
However, if $(M,g)$ has strictly positive curvature, we will show in Theorem \ref{t: Rmax critical point} that $f$ has a unique critical point, and $\Gamma_1,\Gamma_2$ are both integral curves of $\nabla f$.
\end{remark}

Let $\Gamma=\Gamma_1([0,\infty))\cup\Gamma_2([0,\infty))$, the following lemma shows that the distance to the subset $\Gamma$ must be achieved at interior points, so that the minimizing geodesics connecting them to $\Gamma$ are orthogonal to $\Gamma$ by the first variation formula.

\begin{lem}\label{l: inf achieved on the non-compact portion}
Under the same assumptions as in Lemma \ref{l: Gamma}. Suppose there are a sequence of points $p_i\rii$ and a constant $s_0>0$, such that the following holds:
\begin{equation*}
d(p_i,\Gamma)=d(p_i,\Gamma_1([0,s_0])\cup\Gamma_2([0,s_0]))
\end{equation*}
Then $M$ is isometric to $\R\times\cigar$.
\end{lem}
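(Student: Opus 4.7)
The plan is to deduce a complete geodesic line in $M$ from the hypothesis and then conclude by the Cheeger--Gromoll splitting theorem combined with Hamilton's classification of 2D steady gradient Ricci solitons.

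First, extract a limit ray. By compactness of $\Gamma_1([0,s_0])\cup\Gamma_2([0,s_0])$, after passing to a subsequence the foot-of-perpendicular points $q_i$ to $p_i$ converge to some $q_\infty$ in this compact subset; without loss of generality $q_\infty\in\Gamma_1$, and (by adjusting $s_0$ and discarding boundary cases if needed) $q_i$ lies in the smooth interior. The first variation formula then gives that the unit-speed minimizing geodesic $\alpha_i\colon[0,d_i]\to M$ from $q_i$ to $p_i$ satisfies $\alpha_i'(0)\perp\Gamma_1'(q_i)$. Since $d_i:=d_g(p_i,q_i)\to\infty$, a further subsequence produces a limiting ray $\alpha\colon[0,\infty)\to M$ with $\alpha(0)=q_\infty$ and $\alpha'(0)\perp\Gamma_1'(q_\infty)$.

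The key step is to use the hypothesis to produce a complete line $\ell\colon\R\to M$ through $q_\infty$. The naive idea of extending $\alpha$ backward as a smooth geodesic through $q_\infty$ is insufficient, as one can already check in the anticipated limit $\R\times\cigar$: such an extension in the cigar's radial direction runs through the tip and fails to be globally minimizing, because of shortcuts around the $S^1$-fibers in the cigar's cylindrical end at infinity. The correct line should instead lie along $\Gamma$ itself. I would apply Lemma \ref{l: splitting} with basepoints running to infinity along $\Gamma_1$ and $\Gamma_2$, combined with the hypothesis $d_g(p_i,r)\ge d_i$ for every $r\in\Gamma$ and with Toponogov comparison in non-negative sectional curvature, to force Toponogov equality asymptotically; this equality pushes the two integral curves $\Gamma_1$ and $\Gamma_2$ to glue into a bi-infinite globally minimizing geodesic $\ell$ through $q_\infty$, of the form $d_g(\Gamma_1(t_0+s),\Gamma_2(t_0'+t))=s+t+o(1)$. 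The intermediate blow-up picture at midpoints $\alpha_i(d_i/2)$ (where by Lemma \ref{l: DR to cigar} the soliton locally models $\R^2\times S^1$ at arbitrarily large scales, so long minimizing segments in the $\R^2$-factor exist) plays a role in extracting the line structure. This line construction is the main obstacle of the proof.

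Once a line $\ell$ through $q_\infty$ is in hand, the Cheeger--Gromoll splitting theorem---applicable because 3D steady gradient Ricci solitons have non-negative sectional curvature---produces an isometric splitting $M\cong\R\times N$, where $N$ is a complete 2D Riemannian manifold with non-negative curvature. The steady gradient Ricci soliton structure on $M$ descends to $N$, giving a 2D steady gradient Ricci soliton with potential of the form $f=ar+f_N$. Non-triviality of $M$ forces $N$ to be non-flat, and Hamilton's classification of 2D steady gradient Ricci solitons \cite{cigar} identifies $N$ with the cigar. Therefore $M$ is isometric to $\R\times\cigar$, as asserted.
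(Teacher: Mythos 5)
Your overall strategy --- produce a line and conclude via the splitting theorem together with Hamilton's 2D classification --- is indeed the same strategy the paper uses (the paper invokes the splitting implicitly through the proof of Lemma~\ref{l: two rays}, which already records that $\theta=\pi$ in $S_\infty(M,p)=[0,\theta]$ forces $M\cong\R\times\textnormal{Cigar}$). Your opening step, extracting a limit ray $\alpha$ at $q_\infty$ with $\alpha'(0)\perp\Gamma$ by first variation, is also a reasonable entry point. The problem is that you then abandon $\alpha$, acknowledge that the construction of a bi-infinite line is ``the main obstacle,'' and only gesture at it. What you sketch --- that the hypothesis plus Lemma~\ref{l: splitting} should force $\Gamma_1$ and $\Gamma_2$ to ``glue into a bi-infinite globally minimizing geodesic'' --- does not follow as stated. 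The integral curves $\Gamma_1,\Gamma_2$ both emanate from the critical point $p$ and, a priori, could form an arbitrarily bent wing; showing they in fact lie on a line is exactly what needs a quantitative argument, and Lemma~\ref{l: splitting} on its own produces a ray from a single base point, not an identity of the form $d_g(\Gamma_1(t_0+s),\Gamma_2(t_0'+t))=s+t+o(1)$.

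The paper fills this gap with a short comparison-triangle computation that does not pass through an explicit line at all, and you should compare it with what you proposed. Fix the footpoints $q_i\in\Gamma_1([0,s_0])\cup\Gamma_2([0,s_0])$ with $d(p_i,q_i)=d(p_i,\Gamma)$; these stay in a bounded set. Choose auxiliary points $o_i\in\Gamma_1$ with $d(q_i,o_i)=\alpha_i\,d(p_i,q_i)$, where $\alpha_i\to 0$ slowly enough that $d(q_i,o_i)\to\infty$. Since $\alpha_i\to0$ one has $\widetilde{\measuredangle}o_i p_i q_i\to 0$, hence $\widetilde{\measuredangle}o_i q_i p_i+\widetilde{\measuredangle}q_i o_i p_i\ge\pi-\epsilon$. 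Crucially, the hypothesis gives $d(p_i,o_i)\ge d(p_i,\Gamma)=d(p_i,q_i)$, so the side $q_i o_i$ is shortest in the comparison triangle, forcing $\widetilde{\measuredangle}o_i q_i p_i\ge\widetilde{\measuredangle}q_i o_i p_i$ and therefore $\widetilde{\measuredangle}o_i q_i p_i\ge\pi/2-\epsilon$. Passing to the limit of the minimizing geodesics $pp_i$ yields a ray $\gamma$ with $\measuredangle(\gamma,\gamma_1)\ge\pi/2$ (using Lemma~\ref{l: two rays} to identify $\lim po_i=\gamma_1$), and a symmetric argument gives $\measuredangle(\gamma,\gamma_2)\ge\pi/2$. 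Since $S_\infty(M,p)$ is an interval with endpoints $[\gamma_1],[\gamma_2]$, this forces $\measuredangle(\gamma_1,\gamma_2)\ge\pi$, so the soliton splits and is $\R\times\textnormal{Cigar}$. In short: the line is obtained indirectly, by showing the angle at infinity between the edge rays is $\pi$, rather than by exhibiting $\Gamma$ itself as a minimizing line. This is where your proposal stops short, and it is the step you would need to supply.
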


\begin{proof}
We will show that after passing to a subsequence, the minimizing geodesics from $p$ to $p_i$ converge to a geodesic ray $\gamma$ such that $\measuredangle(\gamma,\gamma_i)\ge\pi/2$, where $\gamma_i$ are the two rays corresponding to the two end points in $S_{\infty}(M,p)$, see Lemma \ref{l: two rays}. Then by Lemma \ref{l: two rays} this implies $S_{\infty}(M,p)=[0,\pi]$ and the assertion of the lemma follows immediately. 
In the proof we use $\epsilon$ to denote all positive constants that goes to zero as $i\rii$.

Let $q_i\in\Gamma$ such that $d(p_i,\Gamma([-s_0,s_0]))=d(p_i,q_i)$. Choose a sequence of points $o_i\in \Gamma_1$ with $d(q_i,o_i)=\alpha_i d(p_i,q_i)$, such that $\alpha_i\ri0$ and $d(q_i,o_i)\rii$.
Since $\alpha_i\ri0$, we have $\widetilde{\measuredangle}o_ip_iq_i\ri0$, and hence
\begin{equation*}
    \widetilde{\measuredangle}o_iq_ip_i+\widetilde{\measuredangle}q_io_ip_i\ge\pi-\epsilon.
\end{equation*}
Since $d(p_i,o_i)\ge d(p_i,\Gamma)=d(p_i,q_i)$, the segment $o_iq_i$ is the longest in the comparison triangle $o_ip_iq_i$, so it must be opposite to the largest comparison angle, i.e.
\begin{equation*}
    \widetilde{\measuredangle}o_iq_ip_i\ge \widetilde{\measuredangle}q_io_ip_i.
\end{equation*}
So the last two inequalities imply
\begin{equation}\label{e: soon}
    \widetilde{\measuredangle}o_iq_ip_i\ge\pi/2-\epsilon. 
\end{equation}
By Lemma \ref{l: two rays} the minimizing geodesics $po_i$ converge to the ray $\gamma_1$.
After passing to a subsequence we may assume $pp_i$ converge to a ray $\gamma$.
Then by the boundeeness of $\{q_i\}$ and \eqref{e: soon}, it is easy to see that
$\measuredangle (\gamma_1,\gamma)\ge\pi/2$.
By a symmetric argument we also have $\measuredangle (\gamma_2,\gamma)\ge\pi/2$. So $\measuredangle (\gamma_1,\gamma_2)\ge\pi$ by Lemma \ref{l: Gamma}, which implies a splitting of $(M,g)$, and hence it is isometric to $\R\times\cigar$. 
\end{proof}

\subsection{Quadratic curvature decay}
The main result in this subsection is the following theorem of the quadratic curvature decay, which corresponds the upper bound in Theorem \ref{t': curvature estimate}. We show that there is a uniform $C>0$ such that the scalar curvature has the upper bound $R\le\frac{C}{d^2(\cdot,\Gamma)}$, where $\Gamma=\Gamma_1([0,\infty))\cup\Gamma_2([0,\infty))$ and $\Gamma_1,\Gamma_2$ are from Lemma \ref{l: Gamma}.

\begin{theorem}\label{l: curvature upper bound initial}
Let $(M,g)$ be a 3D steady gradient soliton with positive curvature that is not a Bryant soliton. 
There exists $C>0$ such that for any $x\in M\setminus\Gamma$, 
\begin{equation*}
 R(x)\le\frac{C}{d^2(x,\Gamma)}.
\end{equation*}
\end{theorem}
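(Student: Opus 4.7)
The plan is to apply Perelman's curvature estimate (Lemma~\ref{l: Perelman}) at $x$ on a local universal cover of a neighborhood of $x$. The soliton itself is collapsed on the scale $d(x,\Gamma)$ by Lemma~\ref{l: cigar implies collapsing} (because of the $\R\times\cigar$ asymptotic limit along $\Gamma$), so Perelman's estimate cannot be applied directly in $M$. However, by Lemma~\ref{l: DR to cigar} the asymptotic limit away from $\Gamma$ is $\RR\times S^1$, whose universal cover is the non-collapsed Euclidean $\R^3$; unwinding the $S^1$-direction locally should therefore restore non-collapsing at scale $d(x,\Gamma)$.

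First, by Hamilton's identity \eqref{e: two} we have $R\le R+|\nabla f|^2=\textrm{const}$, so $R$ is uniformly bounded on $M$, and the estimate is automatic when $d:=d(x,\Gamma)$ is bounded above. Thus we may assume $d\ge D_0$ for $D_0$ as large as needed. Let $V:=\{y\in M:d(y,\Gamma)>d/10\}$. For $D_0$ large, Lemma~\ref{l: DR to cigar} combined with Lemma~\ref{l: two chains} implies that every $y\in V$ is the center of an $\epsilon$-cylindrical plane at scale $r(y)$: the chains $\mathcal{C}_1,\mathcal{C}_2$ form a neighborhood of $\Gamma$ of uniformly bounded thickness because $r(y)$ is uniformly bounded along $\Gamma$ (thanks to the convergence to $\R\times\cigar$, which has bounded volume scale at the tips). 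Lemma~\ref{l: MT-glue} then endows $V$ with a smooth $S^1$-fibration whose fibers are homotopically non-trivial in the ambient $\epsilon$-cylindrical charts. Let $\pi:\tilde V\to V$ be the universal cover, $\tilde g:=\pi^*g$, and $\tilde x\in\tilde V$ a lift of $x$.

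The main step is the non-collapsing estimate $\mathrm{vol}_{\tilde g}\bigl(B_{\tilde g}(\tilde x,cd)\bigr)\ge \kappa(cd)^3$ for universal constants $c,\kappa>0$. The idea is that in $\tilde V$ the $S^1$-fibers of $V$ unwind to $\R$-fibers whose length grows linearly with the base distance traveled, while the 2D base of the fibration is, on the scale $d$, close to an open subset of the 2D asymptotic cone of Lemma~\ref{l: two rays}. Concretely, one aims to build a $(3,\delta)$-strainer at $\tilde x$ of size $\sim d$ by combining two base directions (transverse to the fiber) with one unwound fiber direction; the length estimate on the unwound fiber uses the bound $h(y)\le 2.1\pi\,r(y)$ from Definition~\ref{d: h} together with the uniform boundedness of $r(y)$ on $V$, coming from the convergence to $\RR\times S^1$ with normalized volume scale $1$. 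The Bishop--Gromov volume comparison applied in the non-negatively curved $(\tilde V,\tilde g)$ then yields the desired volume lower bound.

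Once the non-collapsing is in hand, we lift the eternal Ricci flow $(M,g(t))$ to $(\tilde V,\tilde g(t))$; it still has non-negative sectional curvature. For $D_0$ large, $R$ and hence $|\Ric|$ are arbitrarily small on $V$ (the rescaled flow is close to the flat $\RR\times S^1$), and the distance distortion Lemmas~\ref{l: distance laplacian} and \ref{l: expanding lemma} guarantee that the parabolic neighborhood $B_{\tilde g(0)}(\tilde x,cd)\times[-c^2d^2,0]$ stays well inside $\tilde V$ (after enlarging the base set $V$ slightly if necessary). Perelman's estimate Lemma~\ref{l: Perelman} then yields $R(\tilde x,0)\le C/(cd)^2$, and since scalar curvature is a local invariant under the covering, $R(x)\le C/d^2$. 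The main obstacle is the volume lower bound on the cover: it requires carefully combining the local $\epsilon$-cylindrical plane fibration with the 2D structure of the asymptotic cone to construct a sufficiently spread-out, nearly Euclidean ball at $\tilde x$ on the scale $d$.
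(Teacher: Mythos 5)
Your plan is essentially the paper's: reduce the quadratic decay to a non-collapsing lower bound on a local universal cover of the $S^1$-fibration region (via a $(3,\delta)$-strainer combining two base directions with the unwound fiber) and then apply Perelman's curvature estimate on the cover. One caveat worth flagging: to bound the fiber length you invoke ``uniform boundedness of $r(y)$ on $V$, coming from the convergence to $\RR\times S^1$ with normalized volume scale $1$,'' but that unrescaled convergence is Theorem~\ref{l: wing-like}, proved later and itself relying on this very curvature bound, so that justification would be circular; the paper avoids this by working only under the reduction hypothesis $d(x,\Gamma)\ge D_2\,r(x)$ (the opposite case $d(x,\Gamma)<D_2\,r(x)$ being handled directly by Perelman's estimate in $M$), which already forces $h(x)\le 2.1\pi\,r(x)\le 2.1\pi D_2^{-1}d(x,\Gamma)$ and thereby controls the spacing of consecutive lifts of $x$ in the cover without any appeal to a uniform bound on $r(\cdot)$.
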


In the following we will introduce some constants and we may further adjust their values. The dependence of these constants is subject to the following order,
\begin{equation*}
    \delta,\alpha,D_2,\epsilon,D_1,\omega,C,
\end{equation*}
such that each constant is chosen depending only on the preceding ones.
More precisely, we will choose $D_1,D_2$ sufficiently large such that $\delta$ can be arbitrarily close to zero.

Let $\epsilon>0$ be some very small number, and fix a point $p_0\in M$.
Then by Lemma \ref{l: DR to cigar}, \ref{l: two chains} and \ref{l: Gamma} we see that there are $D_1>D_2>0$ such that the following holds:
First, the soliton is an $\epsilon$-cylindrical plane at all points $x$ which satisfies $d(x,p_0)\ge \frac{1}{2}D_1$ and $d(x,\Gamma)\ge \frac{1}{2}D_2 \,r(x)$.
Second, the soliton is $\epsilon$-close to $(\R\times\cigar,r^{-2}(x_0)g_c,x_0)$ for some $x_0$, $d_{g_c}(x_{tip},x_0)<10 D_2$.

By compactness we may choose $C>0$ large enough, so that Theorem \ref{l: curvature upper bound initial} holds in $B(p_0,D_1)$.
Moreover, for points $x\in M\setminus B(p_0,D_1)$ which satisfies $d(x,p_0)\ge D_1$ and $d(x,\Gamma)\le D_2 \,r(x)$, by the definition of the volume scale  and a volume comparison argument, it is clear that there exists $\omega>0$ such that
\begin{equation*}
     vol(B(\widetilde{x},d(x,\Gamma)))\ge \omega\,d(x,\Gamma)^3.
\end{equation*}
So Theorem \ref{l: curvature upper bound initial} holds at $x$ by Perelman's curvature estimate, see Lemma \ref{l: Perelman}.

Therefore, from now on we assume that 
\begin{equation}\label{e: captain}
    d(x,p_0)\ge D_1\quad\textit{and}\quad d(x,\Gamma)\ge D_2 \,r(x).
\end{equation}
So the metric ball $B(x,d(x,\Gamma)/2)$ is covered by $\epsilon$-cylindrical planes.
By the $S^1$-fibration Lemma \ref{l: MT-glue}, we see that there is an open subset $U$ containing $B(x,d(x,\Gamma)/2)$ which has a global $S^1$-fibration, and the $S^1$-fibers are incompressible.

The following lemma gives a uniform lower bound for the volume ratio on the universal coverings of $U$. So by applying Perelman's curvature estimate in the universal covering we obtain the inequality in Theorem \ref{l: curvature upper bound initial} at a lift of $x$, which implies the same inequality at $x$. So Theorem \ref{l: curvature upper bound initial} reduces to the following lemma.

\begin{lem}\label{l: non-collapsing in universal cover}
There exists $\omega>0$ such that in the universal cover $\widetilde{U}$ of $U$, we have $B(\widetilde{x},d(x,\Gamma)/2)\subset\subset \widetilde{U}$, and
\begin{equation*}
 vol(B(\widetilde{x},d(x,\Gamma)/2))\ge \omega\,d^3(x,\Gamma),
\end{equation*}
where $\widetilde{x}\in\widetilde{U}$ is a lift of $x$.
\end{lem}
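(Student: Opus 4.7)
The plan is to establish the desired volume estimate on $\tilde U$ by constructing a $(3,\delta)$-strainer at $\tilde x$ of size comparable to $d(x,\Gamma)/2$, and then invoking the standard fact from Alexandrov geometry that in a non-negatively curved 3-manifold, a $(3,\delta)$-strainer of size $R$ at a point $p$ forces $\mathrm{vol}(B(p,R))\ge \omega R^3$ for some $\omega=\omega(\delta)>0$. This is the natural framework because $\tilde U$ inherits non-negative sectional curvature from $M$ via the local-isometry covering map, yet has no a priori large-scale flat structure that Bishop--Gromov could exploit (Bishop--Gromov only gives volume upper bounds at large scales). I will first arrange compact containment by enlarging $U$: via Lemma~\ref{l: MT-glue}, I replace $U$ by the saturation (under the $S^1$-fibration) of a slightly larger region such as a neighborhood of $B(x,d(x,\Gamma))$, so that every point of $U$ lies at $U$-distance at least $d(x,\Gamma)/2$ from $\partial U$; since the covering projection is a local isometry, $B(\tilde x,d(x,\Gamma)/2)\subset\subset\tilde U$.

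For the fiber-direction leg of the strainer, the fiber $F$ through $x$ is incompressible in $U$ by Lemma~\ref{l: MT-glue}, hence its lift $\tilde F$ through $\tilde x$ is a properly embedded curve in $\tilde U$ along which the deck group acts cocompactly. The $\epsilon$-cylindrical plane structure at every point of $F$ identifies $F$ with the $S^1$-factor in $\R^2\times S^1$, which is a geodesic; so $\tilde F$ is an $O(\epsilon)$-almost-geodesic line. Setting $K=\lfloor d(x,\Gamma)/(2h(x))\rfloor$, which is comparable to $D_2$, the lifts $\tilde x_{\pm K}$ of $x$ along $\tilde F$ sit at $\tilde U$-distance $(1\pm O(\epsilon))\,Kh(x)$ from $\tilde x$ and provide a $(1,\delta)$-strainer leg of size $\sim d(x,\Gamma)/2$.

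For the two transverse legs, I use the base $B$ of the $S^1$-fibration, which is locally a 2-dimensional smooth manifold. At $x$ the $\epsilon$-isometry to $\R^2\times S^1$ singles out two orthogonal horizontal directions $\phi_\ast(\partial_X),\phi_\ast(\partial_Y)$. Extending these as horizontal geodesics in $U$ (perpendicular to the fibers), and reapplying the $\epsilon$-cylindrical plane structure at each point along the way, the horizontal distribution is propagated with $O(\epsilon)$ angular error per unit arc length; since the accumulated error $\sim \epsilon\cdot d(x,\Gamma)/r(x)\sim \epsilon D_2\cdot(\text{bounded})$ can be made $\le\delta$ by choosing $\epsilon$ small in terms of $D_2$, the four endpoints at $U$-distance $d(x,\Gamma)/2$ lift to points in $\tilde U$ forming the $(2,\delta)$-strainer legs. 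Orthogonality of the three legs at $\tilde x$ is checked by comparing, at the small scale $r(x)$, with the corresponding flat frame in $\R^3$ (the universal cover of $\R^2\times S^1$), and extended to the macroscopic scale using Toponogov monotonicity of angles in the non-negatively curved space $\tilde U$.

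The main obstacle is precisely this coherent propagation of the transverse frame out to the macroscopic scale $d(x,\Gamma)/2$, which is much larger than the local scale $r(x)$ on which the $\epsilon$-cylindrical plane structure is defined; one has to rule out that the horizontal geodesics drift substantially into the fiber direction or that the two transverse geodesics lose their near-orthogonality. This is handled by a step-by-step argument along each horizontal geodesic, using the coverings produced by Lemma~\ref{l: MT-glue} at each step, the uniform $\epsilon$-closeness to $\R^2\times S^1$ on overlapping balls, and the fact that the global $S^1$-fibration singles out a canonical horizontal distribution on the entire saturated region $U$, so that the only error per step is the $\epsilon$-error in identifying horizontal directions via the $\epsilon$-isometry; choosing $\epsilon\ll 1/D_2$ absorbs all accumulated errors.
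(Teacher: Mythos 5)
Your high-level plan --- build a $(3,\delta)$-strainer near a lift of $x$ in $\tilde U$ and invoke the Alexandrov volume estimate --- coincides with the paper's, but your construction of the strainer is genuinely different, and it has several gaps.

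\textbf{The transverse legs.} You propose to take two horizontal geodesics through $x$ of length $d(x,\Gamma)/2$ and argue that the $\epsilon$-cylindrical plane errors accumulate to at most $O(\epsilon D_2)\le\delta$. But the $\epsilon$-closeness to $\R^2\times S^1$ lives at scale $r(x)$, while $d(x,\Gamma)$ can be arbitrarily larger than $D_2\,r(x)$; the accumulated deviation of the horizontal distribution is not the quantity that controls the strainer conditions. Having a geodesic through $x$ joining $a_1$ to $b_1$ gives $\widetilde{\measuredangle}a_1xb_1=\pi$ \emph{only if that geodesic is minimizing from $a_1$ to $b_1$}, and Perelman's estimate gives only $R\le C/d^2(x,\Gamma)$ for a universal but not small $C$, so over length $d(x,\Gamma)$ the geodesics can fail to minimize and the comparison angle can drop by a definite amount. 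More fundamentally, a $(2,\delta)$-strainer of macroscopic size need not exist \emph{at $x$ itself}: the blow-down limit $(M,d^{-2}(x_i,\Gamma)g,x_i)$ is a 2-dimensional Alexandrov space that may have singular points, at which no strainer of the required quality exists. The paper handles both issues by first producing the $\tfrac{\pi}{6}$-triangle $(p,q,x)$ (Claim~\ref{c: alpha triangle}), which forces the blow-down limit to be genuinely 2-dimensional, and then exploiting the \emph{density} of $(2,\delta)$-strained points in 2D Alexandrov spaces (Claim~\ref{l: alp}) to find a nearby $x'\in B(x,d(x,\Gamma)/10)$ carrying a strainer of size $\alpha\,d(x,\Gamma)$. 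Neither the triangle construction nor the density argument appears in your proposal, and without them the transverse legs are not justified.

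\textbf{The fiber leg.} You take the lift of $x$ at distance $\sim d(x,\Gamma)/2$ along $\tilde F$. The paper instead takes $\tilde y$ at the much smaller rescaled distance $\approx D_2^{-1/4}$, i.e.\ $D_2^{-1/4}d(x,\Gamma)$, and this smallness is load-bearing: the proof that $(\tilde a_1,\tilde a_2,\tilde b_1,\tilde b_2,\tilde y)$ is a $(2+\tfrac12,\delta)$-strainer hinges on the fact that in the triangle $\triangle\tilde y\tilde x\tilde a_i$ the side $|\tilde y\tilde a_i|$ is the longest (because $d(\tilde x,\tilde y)$ is tiny compared to $d(\tilde x,\tilde a_i)$), which forces $\widetilde{\measuredangle}\tilde y\tilde a_i\tilde x<\delta$ and hence $\widetilde{\measuredangle}\tilde a_i\tilde x\tilde y\ge\pi/2-\delta$. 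With your choice $d(\tilde x,\tilde y)\sim d(x,\Gamma)/2$, this comparison is unavailable, and you offer no substitute argument for orthogonality between the fiber leg and the transverse legs.

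\textbf{Toponogov monotonicity.} You invoke it to ``extend to the macroscopic scale'', but it goes the other way: monotonicity says comparison angles \emph{increase} when you pass to points closer to the vertex, so it transports strainer conditions from the macroscopic scale \emph{inward} to a smaller scale, not outward. In the paper this is used, correctly, to pass from $\tilde y$ to the midpoint $\tilde m$, i.e.\ to shorten the fiber leg, not to lengthen anything.

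In short: your plan is in the same family as the paper's, but the mechanism that actually produces the $(2,\delta)$-strainer of size $\alpha\,d(x,\Gamma)$ (the $\tfrac{\pi}{6}$-triangle plus the Alexandrov density argument, and then the crucial 1-Lipschitz lift to $\tilde U$ that only improves comparison angles) is missing, and the way the fiber leg and the orthogonality check are set up cannot work at the scale you propose.
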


\begin{proof}[Proof of Lemma \ref{l: non-collapsing in universal cover}]
To prove the lemma, we will follow the idea in \cite[Lemma 2.2]{BamD} to construct a $(3,\delta)$-strainer near $\widetilde{x}$ of size comparable to $d(x,\Gamma)$ for some small $\delta>0$, then use this to obtain a lower bound on the volume ratio as in \cite[Lemma 2.2]{BamD} and \cite[Theorem 10.8.18]{BuragoBuragoIvanov}.
We first construct a $(2,\delta)$-strainer at $\widetilde{x}$ using Claim \ref{c: alpha triangle} and \ref{l: alp}.

\begin{claim}\label{c: alpha triangle}
There exist $p,q\in M$ such that the triple of points $\{p,q,x\}$ forms a $\frac{\pi}{6}$-triangle and $d(x,p)=d(x,\Gamma)$. Here by a $\mu$-triangle we mean that $\mu>0$ and 
\begin{equation*}
    \widetilde{\measuredangle}xpq,\widetilde{\measuredangle}xqp,\widetilde{\measuredangle}pxq>\mu.
\end{equation*}
\end{claim}

\begin{proof}
Let $\epsilon\in(0,\frac{1}{100})$ be some fixed small number.
Let $p\in\Gamma$ to be the closest point to $x$ on $\Gamma$, then by Lemma \ref{l: inf achieved on the non-compact portion} $p$ is an interior point in $\Gamma$, and $p$ is an $\epsilon$-tip point by Lemma \ref{l: Gamma}. Therefore, it is easy to see that the minimizing geodesic $px$ and $\psi_{p*}(\partial_{r})$ is almost orthogonal at $p$, where $\psi_p$ is the $\epsilon$-isometry to $(\R\times\cigar,r^{-2}(x_{tip})g_c,x_{tip})$ at $p$, i.e.
\begin{equation}\label{e: plane}
    \left|\measuredangle \left(px,\psi_{p*}\left(\partial_{r}\right)\right)-\frac{\pi}{2}\right|\le\epsilon.
\end{equation}

Note that $d(x,\Gamma)>D_2\,r(x)$ and $d(x,p)=d(x,\Gamma)$, we claim that
\begin{equation}\label{e: hudson}
    d(x,\Gamma)>\frac{1}{10}\,D_2\,r(p).
\end{equation}
Because otherwise we may choose $D_1$ sufficiently large (depending on $D_2$) such that $d(x,p)\le \frac{1}{10}\,D_2\,r(p)$ must imply $r(p)\le 10r(x)$, which implies $d(x,p)\le \,D_2\,r(x)$, a contradiction. So \eqref{e: hudson} holds.

By Lemma \ref{l: Gamma} we can take $D_1$ to be large so that $\epsilon$ is sufficiently small, where $\epsilon$ is determined by $D_2$ such that the following holds:
By \eqref{e: plane} and the $\epsilon$-closeness to $\R\times\cigar$ in the region containing the $B(p,D_2\,r(p))$, we have for all points $y\in B(p,D_2\,r(p))$ that
\begin{equation}\label{e: Sully}
    \widetilde{\measuredangle}ypx'\le\frac{\pi}{2}+\frac{1}{100}\le\frac{2\pi}{3},
\end{equation}
where $x'$ is a point on the minimizing geodesic $px$ such that 
\begin{equation*}
    d(p,x')=\frac{1}{10}\,D_2\,r(p)<d(x,\Gamma).
\end{equation*}

Now let $q\in\Gamma$ be a point such that $d(q,p)=d(x,\Gamma)$. 
By \eqref{e: hudson} we can choose a point $q'$ on the minimizing geodesic $pq$ such that
\begin{equation*}
    d(p,q')=\frac{1}{10}\,D_2\,r(p)<d(x,\Gamma).
\end{equation*}
Then using \eqref{e: Sully} we obtain
\begin{equation*}
    \widetilde{\measuredangle}q'px'\le\frac{\pi}{2}+\frac{1}{100}\le\frac{2\pi}{3}.
\end{equation*}
By the monotonicity of angles, the last three inequalities imply
\begin{equation*}
    \widetilde{\measuredangle}qpx\le\frac{\pi}{2}+\frac{1}{100}\le\frac{2\pi}{3}.
\end{equation*}

Note $d(x,q)\ge d(x,\Gamma)=d(x,p)=d(p,q)$. The segment $|\widetilde{x}\widetilde{q}|$ is the longest in the comparison triangle $\widetilde{\triangle}\widetilde{x}\widetilde{q}\widetilde{p}$, and thus it must be opposite to the largest comparison angle, i.e.
\begin{equation*}
    \widetilde{\measuredangle}qpx\ge \widetilde{\measuredangle}qxp=\widetilde{\measuredangle}pqx.
\end{equation*}
This combined with $\widetilde{\measuredangle}qpx+ \widetilde{\measuredangle}qxp+\widetilde{\measuredangle}pqx=\pi$ implies
\begin{equation*}
    \widetilde{\measuredangle}qpx\ge \widetilde{\measuredangle}qxp=\widetilde{\measuredangle}pqx\ge\frac{\pi}{6}.
\end{equation*}
So $\{p,q,x\}$ forms a $\frac{\pi}{6}$-triangle.
\end{proof}

\begin{claim}\label{l: alp}
There exists $\alpha>0$ such that when $D_1,D_2$ are sufficiently large, the following holds for all $x\in M$ satisfying \eqref{e: captain}:
There is a point $x'\in B(x,\frac{1}{10} \,d(x,\Gamma))$ such that there is a $(2,\delta)$-strainer at $x'$ of size $\alpha\,d(x,\Gamma)$. 
\end{claim}

\begin{proof}
Suppose this is not true. Then there is a sequence of points $x_i\in M$ going to infinity and $d(x_i,\Gamma)\rii$ such that the claim fails.
Let $p_i,q_i$ be points from Claim \ref{c: alpha triangle} which form an $\frac{\pi}{6}$-triangle together with $x_i$, and $d(p_i,x_i)=d(x_i,\Gamma)$.

After passing to a subsequence, the pointed manifolds $(M,d^{-2}(x_i,\Gamma)g,x_i)$ converge to a complete non-compact length space $(X,d_X)$, which is an Alexandrov space with non-negative curvature \cite{BGP}.
The triples $(p_i,q_i,x_i)$ converge to a triple $(p_{\infty},q_{\infty},x_{\infty})$ in the limit space, which forms a $\frac{\pi}{6}$-triangle.
So the Alexandrov dimension of the limit space is at least two.
Because otherwise $X$ must be isometric to a ray or a line, which does not contain any $\frac{\pi}{6}$-triangles.

Since the set of $(2,\delta)$-strainers is open and dense in an Alexandrov space of dimension 2, we can find a point $x'\in B_{d_X}(x,\frac{1}{10})$ such that there is a $(2,\delta)$-strainer at $x'$ of size $\alpha$ for some $\alpha>0$. 
This induces a contradiction for sufficiently large $i$.
\end{proof}

Assume we can show that the volume of $B(x',1)$ is bounded below, then by a volume comparison using that $x'\in B(x,\frac{1}{10} \,d(x,\Gamma))$, this would imply a lower bound on the volume of $B(x,1)$. 
Therefore, we may assume without loss of generality that there is a $(2,\delta)$-strainer $(a_1,b_1,a_2,b_2)$ at $x$ of size $\alpha\,d(x,\Gamma)$.

From now on we will work with the rescaled metric $h=d^{-2}(x,\Gamma)g$ and bound the 1-ball around a lift of $x$ in the universal cover from below by a universal constant. 

Let $\pi:\widetilde{U}\ri U$ be a universal covering, and $\widetilde{x},\widetilde{a}_i,\widetilde{b}_i$ be lifts of $x,a_i,b_i$ in the universal cover $\widetilde{U}$ such that 
\begin{equation}\label{e: equal length}
    d(\widetilde{x},\widetilde{a}_i)=d(x,a_i)=d(\widetilde{x},\widetilde{b}_i)=d(x,b_i)=\alpha.
\end{equation}
Then since the covering map $\pi$ is $1$-Lipschitz, we have
\begin{equation}\label{e: bigger length}
    d(\widetilde{a}_i,\widetilde{b}_j)\ge d(a_i,b_j),\quad 
    d(\widetilde{a}_i,\widetilde{a_j})\ge d(a_i,a_j),\quad
    d(\widetilde{b}_i,\widetilde{b}_j)\ge d(b_i,b_j).
\end{equation}
So the comparison angles between $\widetilde{x},\widetilde{a}_i,\widetilde{b}_j$ at $\widetilde{x}$ are at least as large as those between $x,a_i,b_j$ at $x$, i.e.
\begin{equation*}
    \widetilde{\measuredangle}\widetilde{a}_i\widetilde{x}\widetilde{b}_j\ge \widetilde{\measuredangle} a_ixb_j,\quad  \widetilde{\measuredangle}\widetilde{a}_i\widetilde{x}\widetilde{b}_j\ge \widetilde{\measuredangle} a_1xa_2,\quad  \widetilde{\measuredangle}\widetilde{a}_i\widetilde{x}\widetilde{b}_j\ge \widetilde{\measuredangle} b_1xb_2.
\end{equation*}
So $(\widetilde{a}_1,\widetilde{a}_2,\widetilde{b}_1,\widetilde{b}_2)$ is a $(2,\delta)$-strainer at $\widetilde{x}$.

Next, we will extend the $(2,\delta)$-strainer to a $(2+\frac{1}{2},\delta)$-strainer at $\widetilde{x}$.
Since the $S^1$-fiber in $U$ is incompressible, we can
find a sequence $\widetilde{x}_i$ of lifts of $x$ that is unbounded. We may assume that the the consecutive distances of $\{\widetilde{x}_i\}$ are at most $D_2^{-1/2}$.
Because otherwise, there would be two points $\widetilde{x}_i,\widetilde{x}_j$ such that $d(\widetilde{x}_i,\widetilde{x}_j)\ge D_2^{-1/2}$. Rescaling back to the metric $g$ and by Lemma \ref{l: MT-glue}, we see that the length of the circle in the $\epsilon$-circle plane at $x$ is at least $D_2^{-1/2}\,d(x,\Gamma)$. This implies $r(x)\ge D_2^{-1/2}\,d(x,\Gamma)$, which contradicts our assumption \eqref{e: captain}.
So we can find an $i\in\mathbb{N}$ such that with $\widetilde{y}=\widetilde{x}_i$ we have
\begin{equation*}
    |d(\widetilde{y},\widetilde{x})-D_2^{-\frac{1}{4}}|<D_2^{-\frac{1}{2}}.
\end{equation*}

\begin{claim}
The tuple $(\widetilde{a}_1,\widetilde{a}_2,\widetilde{b}_1,\widetilde{b}_2,\widetilde{y})$ is a $(2+\frac{1}{2},\delta)$-tuple at $\widetilde{x}$ of size at least $D_2^{-\frac{1}{4}}-D_2^{-\frac{1}{2}}$.
\end{claim}

\begin{proof}
Note that in the triangle $\triangle \widetilde{y}\widetilde{x}\widetilde{a}_i$, the segment $|\widetilde{y}\widetilde{a}_i|$ has the longest length, and thus must be opposite to the largest comparison angle, i.e.
\begin{equation*}
    \widetilde{\measuredangle}\widetilde{a}_i\widetilde{x}\widetilde{y}\ge \widetilde{\measuredangle}\widetilde{x}\widetilde{y}\widetilde{a}_i.
\end{equation*}
Since $d(\widetilde{x},\widetilde{y})\ri0$ as $D_2\rii$, we find
\begin{equation}\label{e: smaller than delta}
    \widetilde{\measuredangle}\widetilde{y}\widetilde{a}_i\widetilde{x}<\delta.
\end{equation}
We also have 
\begin{equation}\label{e: euclid equal to pi}
    \widetilde{\measuredangle}\widetilde{y}\widetilde{a}_i\widetilde{x}+\widetilde{\measuredangle}\widetilde{a}_i\widetilde{x}\widetilde{y}+\widetilde{\measuredangle}\widetilde{x}\widetilde{y}\widetilde{a}_i=\pi.
\end{equation}
So the last three inequalities imply $\widetilde{\measuredangle}\widetilde{a}_i\widetilde{x}\widetilde{y}\ge\frac{\pi}{2}-\delta$. The same is true with $\widetilde{a}_i$ replaced by $\widetilde{b}_i$. So
\begin{equation}\label{e: hero}
    \widetilde{\measuredangle}\widetilde{a}_i\widetilde{x}\widetilde{y}\ge\frac{\pi}{2}-\delta\quad \textit{and}\quad \widetilde{\measuredangle}\widetilde{b}_i\widetilde{x}\widetilde{y}\ge\frac{\pi}{2}-\delta.
\end{equation}
and hence the claim holds.
\end{proof}

\begin{claim}\label{c: 2half strainer at y}
The tuple $(\widetilde{a}_1,\widetilde{a}_2,\widetilde{b}_1,\widetilde{b}_2,\widetilde{x})$ is a $(2+\frac{1}{2},\delta)$-tuple at $\widetilde{y}$ of size at least $D_2^{-\frac{1}{4}}-D_2^{-\frac{1}{2}}$.
\end{claim}

\begin{proof}
First, since $|d(\widetilde{y},\widetilde{a}_i)-d(\widetilde{x},\widetilde{a}_i)|<D_2^{-\frac{1}{4}}$ and $|d(\widetilde{y},\widetilde{b}_i)-d(\widetilde{x},\widetilde{b}_i)|<D_2^{-\frac{1}{4}}$, we see that $(\widetilde{a}_1,\widetilde{a}_2,\widetilde{b}_1,\widetilde{b}_2)$ is a $(2,\delta)$-strainer at $\widetilde{y}$ of size at least $\alpha-D_2^{-\frac{1}{4}}-2D_2^{-\frac{1}{2}}$.
Note we may assume $D_2$ sufficiently large such that this is at least $\frac{\alpha}{2}>D_2^{-\frac{1}{4}}-D_2^{-\frac{1}{2}}$.

By metric comparison we have
\begin{equation*}
    \widetilde{\measuredangle}\widetilde{a}_i\widetilde{x}\widetilde{y}+
     \widetilde{\measuredangle}\widetilde{b}_i\widetilde{x}\widetilde{y}+
      \widetilde{\measuredangle}\widetilde{a}_i\widetilde{x}\widetilde{b}_i\le2\pi,
\end{equation*}
which by \eqref{e: hero} and $\widetilde{\measuredangle}\widetilde{a}_i\widetilde{x}\widetilde{b}_i\ge\pi-\delta$ implies
\begin{equation*}
    \widetilde{\measuredangle}\widetilde{a}_i\widetilde{x}\widetilde{y}\le\frac{\pi}{2}+\delta\quad \textit{and}\quad \widetilde{\measuredangle}\widetilde{b}_i\widetilde{x}\widetilde{y}\le\frac{\pi}{2}+\delta.
\end{equation*}
Combining this with \eqref{e: euclid equal to pi} and \eqref{e: smaller than delta} we obtain 
\begin{equation*}
    \widetilde{\measuredangle}\widetilde{x}\widetilde{y}\widetilde{a}_i\ge\frac{\pi}{2}-\delta\quad\textit{and}\quad \widetilde{\measuredangle}\widetilde{x}\widetilde{y}\widetilde{b}_i\ge\frac{\pi}{2}-\delta.
\end{equation*}
So the claim holds.
\end{proof}

Now take $\widetilde{m}$ be the midpoint of a minimizing geodesic between $\widetilde{y}$ and $\widetilde{x}$.
\begin{claim}\label{c: 3-strainer}
The tuple $(\widetilde{a}_1,\widetilde{a}_2,\widetilde{b}_1,\widetilde{b}_2,\widetilde{y},\widetilde{x})$ is a $(3,\delta)$-strainer at $\widetilde{m}$ of size at least $\frac{1}{2}D_2^{-\frac{1}{4}}-D_2^{-\frac{1}{2}}$.
\end{claim}

\begin{proof}
First, by the monotonicity of comparison angles we have
\begin{equation*}
    \widetilde{\measuredangle}\widetilde{m}\widetilde{x}\widetilde{a}_i\ge \widetilde{\measuredangle}\widetilde{y}\widetilde{x}\widetilde{a}_i\ge\frac{\pi}{2}-\delta\quad\textit{and}\quad \widetilde{\measuredangle}\widetilde{m}\widetilde{x}\widetilde{b}_i\ge \widetilde{\measuredangle}\widetilde{y}\widetilde{x}\widetilde{b}_i\ge\frac{\pi}{2}-\delta.
\end{equation*}
Then repeating the same argument as in Claim \ref{c: 2half strainer at y} replacing $\widetilde{y}$ by $\widetilde{m}$, we wee that
\begin{equation*}
    \widetilde{\measuredangle}\widetilde{a}_i\widetilde{m}\widetilde{x},\;
    \widetilde{\measuredangle}\widetilde{b}_i\widetilde{m}\widetilde{x}>\frac{\pi}{2}-\delta.
\end{equation*}
Replacing $\widetilde{x}$ by $\widetilde{y}$, then similarly we can obtain
\begin{equation*}
    \widetilde{\measuredangle}\widetilde{a}_i\widetilde{m}\widetilde{y},\;
    \widetilde{\measuredangle}\widetilde{b}_i\widetilde{m}\widetilde{y}>\frac{\pi}{2}-\delta.
\end{equation*}
Moreover, similarly as before we can see that $(\widetilde{a}_1,\widetilde{a}_2,\widetilde{b}_1,\widetilde{b}_2)$ is a $(2,\delta)$-strainer at $\widetilde{m}$.
Finally, $\widetilde{\measuredangle}\widetilde{y}\widetilde{m}\widetilde{x}=\pi$ is trivially true. So the claim holds.
\end{proof}

Now using the $(3,\delta)$-strainer in Claim \ref{c: 3-strainer}, one can construct a $100$-bilipschitz map $f:B(\widetilde{m},\lambda \,D_2^{-\frac{1}{4}})\ri\R^3$ for some sufficiently small $\lambda$ as in \cite[Lemma 2.2(i)]{BamD} and \cite[Theorem 10.8.18]{BuragoBuragoIvanov}, which implies
\begin{equation*}
    vol(B(\widetilde{m},\lambda \,D_2^{-\frac{1}{4}}))>c(\lambda \,D_2^{-\frac{1}{4}})^3,
\end{equation*}
for some universal $c>0$. This implies Lemma \ref{l: non-collapsing in universal cover} by volume comparison.
\end{proof}

Now Theorem \ref{l: curvature upper bound initial} follows from Lemma \ref{l: non-collapsing in universal cover} and Perelman's curvature estimate.

\subsection{Existence of a critical point}

The main result in this subsection is Theorem \ref{t: Rmax critical point}, which proves the existence of the maximum point of the scalar curvature, which is also the critical point of the potential function.

In Lemma \ref{l: geometry of level set near tip} and \ref{l: 3D neck is 2D neck}, we study the geometry of level sets of the potential function $f$.
To start, we first note that the second fundamental form of a level set of $f$ satisfies
\begin{equation}\label{e: second fundamental form}
    \mathrm{I\!I}=-\left.\frac{\nabla ^2 f}{|\nabla f|}\right|_{f^{-1}(a)}=-\left.\frac{\Ric}{|\nabla f|}\right|_{f^{-1}(a)}\le 0.
\end{equation}
Recall the Gauss equation that for a manifold $N$ embedded in a Riemannian manifold $(M,g)$, the curvature tensor $\Rm_N$ of $N$ with induced metric can be expressed using the second fundamental form and $\Rm_M$, the curvature tensor of $M$:
\begin{equation*}
   \langle \Rm_N(u,v)w,z \rangle=\langle \Rm_M(u,v)w,z \rangle+\langle\mathrm{I\!I}(u,z),\mathrm{I\!I}(v,w)\rangle-\langle\mathrm{I\!I}(u,w),\mathrm{I\!I}(v,z)\rangle
\end{equation*}
So \eqref{e: second fundamental form} implies
the level sets of $f$ with induced metric have positive curvature.

More precisely, for a level set of $f$ which passes an $\epsilon$-tip point, Lemma \ref{l: geometry of level set near tip} shows that at a point that is sufficiently far away from this $\epsilon$-tip point in the level set, the soliton is close to $\RR\times S^1$ under a suitable rescaling. We show this by a limiting argument.

\begin{lem}\label{l: geometry of level set near tip}
Let $(M,g,f)$ be a 3D steady gradient soliton with positive curvature that is not a Bryant soliton. 
For any $\delta>0$ and $C_0>0$, there exists $\overline{D}>0$ such that the following holds:

For all $D\ge\overline{D}$, there exists $\epsilon>0$ such that the following holds:
Suppose $q\in M$ is an $\epsilon$-tip point with $|\nabla f|(q)\ge C_0^{-1}$, and let $\Sigma=f^{-1}(f(q))$ be the level set passing through $q$.
Suppose also that $z\in\Sigma$ is a point with $d_{\Sigma}(q,z)=D\,R^{-1/2}(q)$, where $d_{\Sigma}$ is the length metric of the induced metric on $\Sigma$. Then the soliton $(M,g)$ is a $\delta$-cylindrical plane at $q$.
\end{lem}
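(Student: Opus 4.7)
I would argue by contradiction via a blow-up limit. (I read the conclusion's ``at $q$'' as a typo for ``at $z$,'' since $q$ is an $\epsilon$-tip point and so cannot be close to a cylindrical plane.) Fix $\delta, C_0 > 0$ and suppose the conclusion fails. Then one can find arbitrarily large $D$ for which no $\epsilon$ works, so fix such a $D$ and extract sequences $\epsilon_i \to 0$, $\epsilon_i$-tip points $q_i \in M$ with $|\nabla f|(q_i) \ge C_0^{-1}$, and $z_i \in \Sigma_i := f^{-1}(f(q_i))$ with $d_{\Sigma_i}(q_i, z_i) = D\, R^{-1/2}(q_i)$, such that $(M, g)$ is not a $\delta$-cylindrical plane at $z_i$.

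Rescale to $\tilde g_i := \tfrac{R(q_i)}{4} g$, so $R_{\tilde g_i}(q_i) = 4$. Since at an $\epsilon_i$-tip point $r(q_i)$ is comparable to $R^{-1/2}(q_i)$, the $\epsilon$-tip point condition (cf.\ Definition~\ref{d: tip points}) and Cheeger--Gromov compactness give, after passing to a subsequence, smooth pointed convergence $(M, \tilde g_i, q_i) \to (\R\times\cigar, g_c, x_{tip})$ with $R_{g_c}(x_{tip}) = 4$. Now consider the normalized potentials $F_i := f - f(q_i)$. They satisfy the scale-invariant equation $\nabla^2 F_i = \Ric_{\tilde g_i}$; the identity $|\nabla f|^2 + R \equiv \mathrm{const}$ combined with $R \le R_{\max} < \infty$ (which holds on our soliton because $|\nabla f|^2 \ge 0$) gives $|\nabla F_i|_{\tilde g_i}$ uniformly bounded, and at $q_i$ an explicit computation yields $|\nabla F_i|_{\tilde g_i}(q_i) = \tfrac{2}{\sqrt{R(q_i)}}\,|\nabla f|_g(q_i) \ge c_0 := 2 C_0^{-1}/\sqrt{R_{\max}} > 0$. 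Elliptic regularity applied to $\Delta F_i = R(\tilde g_i)$ then yields, after a further subsequence, smooth local convergence $F_i \to F$ on $\R\times\cigar$, where $F$ solves $\nabla^2 F = \Ric$ with $F(x_{tip}) = 0$ and $|\nabla F|(x_{tip}) \ge c_0$. Since $\Ric$ on $\R\times\cigar$ equals $\nabla^2 f_\cigar$ (with $f_\cigar$ the normalized cigar potential, extended $r$-independently), $\nabla(F - f_\cigar)$ is parallel; decomposing in the product and using that the cigar is not flat forces $F = c\,r + f_\cigar$ with $|c| = |\nabla F|(x_{tip}) \ge c_0$.

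The level set $\Sigma_\infty := F^{-1}(0)$ is thus the graph $r = -f_\cigar/c$ over $\cigar$, whose induced metric pulls back to $g_\cigar + c^{-2}\, df_\cigar \otimes df_\cigar$. Because $|\nabla f_\cigar|^2 = 4\tanh^2 s \le 4$, this is bi-Lipschitz to $g_\cigar$ with constants depending only on $c_0$, so the cigar-projection $p_\infty$ of the limit $z_\infty$ of $z_i$ in $\Sigma_\infty$ satisfies $d_\cigar(x_{tip}, p_\infty) \ge D/(2\sqrt{1 + 4/c_0^2}) \to \infty$ as $D \to \infty$. Far out on the cigar $(\R\times\cigar, g_c)$ is $C^k$-arbitrarily close to $\RR \times S^1$ and its volume scale tends to $1$, so (i) $(\R\times\cigar, g_c, z_\infty)$ is $\tfrac{\delta}{2}$-close to $\RR\times S^1$ once $D$ is sufficiently large in terms of $\delta, C_0, R_{\max}$, and (ii) the rescalings $\tfrac{R(q_i)}{4} g$ and $r^{-2}(z_i) g$ differ by a factor tending to $1$ at $z_i$. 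Combining, $(M, r^{-2}(z_i) g, z_i)$ is $\delta$-close to $\RR\times S^1$ for $i$ large, a contradiction.

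\textbf{Main obstacle.} The delicate step is producing the nontrivial limit potential $F$ with $|c| \ge c_0 > 0$ controlled purely in terms of $C_0$ and the upper bound on $R$; everything else is careful bookkeeping about the distance comparison between $\Sigma_\infty$ and the cigar factor and about the volume-scale ratio between $q_i$ and $z_i$ in the limit.
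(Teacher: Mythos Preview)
Your overall strategy---contradiction plus a blow-up limit to $\R\times\cigar$, passing the potential to a limit, and then reading off that the limit level set is asymptotically cylindrical---is the same as the paper's. However, there is a genuine gap at the gradient-bound step.

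With $\tilde g_i=\tfrac{R(q_i)}{4}g$ and $F_i=f-f(q_i)$ one has
\[
|\nabla F_i|_{\tilde g_i}=\frac{2}{\sqrt{R(q_i)}}\,|\nabla f|_g.
\]
The identity $|\nabla f|_g^2+R\equiv C_1^2$ only yields $|\nabla f|_g\le C_1$, hence $|\nabla F_i|_{\tilde g_i}\le 2C_1/\sqrt{R(q_i)}$, which is uniformly bounded \emph{only if $R(q_i)$ stays bounded away from $0$}. If $R(q_i)\to 0$ along a subsequence (equivalently $r(q_i)\to\infty$), the gradient blows up and $F_i$ cannot subconverge to a smooth function on $\R\times\cigar$; your Arzel\`a--Ascoli step and the identification $F=c\,r+f_{\cigar}$ both fail in that regime. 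This case is not hypothetical: the lemma is invoked later precisely at tip points marching off to infinity along $\Gamma$, before one knows that $R$ has a positive limit there.

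The paper repairs this by splitting into two cases after passing to a subsequence. If $\liminf R(q_k)>0$, your argument goes through essentially verbatim. If $R(q_k)\to 0$, one instead normalizes the potential by
\[
\tilde f_k:=\frac{f-f(q_k)}{4\,R^{-1/2}(q_k)},
\]
so that $|\tilde\nabla\tilde f_k|_{\tilde g_k}=|\nabla f|_g\in[C_0^{-1},C_1]$ is uniformly bounded above and below, while $\tilde\nabla^2\tilde f_k=\widetilde\Ric/(4R^{-1/2}(q_k))\to 0$. The limit potential $f_\infty$ is therefore a nonconstant linear function in the $\R$-direction, the limit level set is an exact slice $\{a\}\times\cigar$, and the conclusion follows as in your last paragraph. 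Once this dichotomy is inserted, the remainder of your plan (the bi-Lipschitz comparison between $\Sigma_\infty$ and the cigar, and the volume-scale matching at $z_i$) is fine.
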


\begin{proof}
Suppose this is false, then there is some $\delta>0$, such that for any large $\overline{D}$, there exist a constant $D>\overline{D}$, a sequence of constants $\epsilon_k\ri0$, and a sequence of $\epsilon_k$-tip points $q_k\rii$, and a sequence of points $z_k\in \Sigma_k:=f^{-1}(f(q_k))$ with $d_{\Sigma_k}(q_k,z_k)= D\,R^{-1/2}(q_k)$ such that the soliton $(M,g)$ is not a $\delta$-cylindrical plane at $z_k$. 

In the following we will show that when $\overline{D}$ is large enough, the level sets $\Sigma_k$ under suitable rescalings converge to a level set of a smooth function on $\R\times\cigar$, and $z_k$ converge to a point at which $\R\times\cigar$ is a $\delta/2$-cylindrical plane, which will imply a contradiction for sufficiently large $k$.
We now divide the discussion into three cases depending on the limit of $R(q_k)$. To start, note that $R+|\nabla f|^2=C_1^2$ for some $C_1>0$. So $R\le C_1^2$ and $|\nabla f|\le C_1$.

\textbf{Case 1:} $\limsup_{k\rii} R(q_k)>C^{-2}>0$ for some $C>0$ which may depend on the sequence.
Then we can deduce from the soliton equation $\nabla^2 f=\Ric$ that 
derivatives of order at least two of the functions $\widetilde{f}_k:=f-f(q_k)$ are uniformly bounded.
Moreover, at $q_k$ we have $\widetilde{f}_k(q_k)=0$ and
\begin{equation*}
    C_0^{-1}\le|\nabla \widetilde{f}_k|(q_k)=|\nabla f|(q_k)\le C_1.
\end{equation*}
Let $\widetilde{\nabla}$ denote the covariant derivatives of $\widetilde{g}_k=\frac{1}{16}R(q_k)g$, then we have that $|\widetilde{\nabla} \widetilde{f}_k|_{\widetilde{g}_k}= 4R^{-1/2}(q_k)|\nabla f|$ is uniformly bounded above, and in particular at $q_k$ we have
\begin{equation*}
    |\widetilde{\nabla} \widetilde{f}_k|(q_k)\ge 4\,C_0^{-1}R^{-1/2}(q_k)\ge 4\,C_0^{-1}C_1^{-1}.
\end{equation*}
So after passing to a subsequence we may assume that the functions $\widetilde{f}_k$ converge to a smooth function $f_{\infty}$ on $\R\times\cigar$, which satisfies $f_{\infty}(x_{tip})=0$, $\nabla^2 f_{\infty}=\Ric$ and 
\begin{equation*}
    |\nabla f_{\infty}|(x_{tip})\ge 4\,C_0^{-1}C_1^{-1}.
\end{equation*}
Note that $C_0,C_1$ are constants that only depend only on the soliton but not the sequence.

Since $\nabla^2 f_{\infty}=\Ric$, by the uniqueness of the potential function on the cigar soliton we see that $f_{\infty}$ is the sum of the potential function on $\cigar$ which vanishes at the tip and a linear function along the $\R$-factor whose derivative has absolute value at least $4\,C_0^{-1}C_1^{-1}$ and vanishes at $x_{tip}$. In particular, $0$ is a regular value of $f_{\infty}$ and the level set $\Sigma_{\infty}:=f^{-1}_{\infty}(0)$ is a non-compact complete rotationally symmetric 2D manifold.  

Therefore, after passing to a subsequence, as the manifolds $(M,\frac{1}{16}R(q_k)g,q_k)$ smoothly converge to $(\R\times\cigar,g_c,x_{tip})$,  the level sets $(\Sigma_k,\frac{1}{16}R(q_k)g_{\Sigma_k},q_k)$ of $\widetilde{f}_k$ with the induced metrics smoothly converge to the level set $(\Sigma_{\infty},g_{\Sigma_{\infty}},x_{tip})$ of $f_{\infty}$, and $z_k\in\Sigma_k$ converge to a point
$z_{\infty}\in\Sigma_{\infty}$ with $d_{\Sigma_{\infty}}(x_{tip},z_{\infty})=\frac{1}{4}D$.
Since $\R\times\cigar$ is a $\delta/2$-cylindrical plane at $z_{\infty}$ when $\overline{D}$ is sufficiently large depending on $\delta$ and $4\,C_0^{-1}C_1^{-1}$, we obtain a contradiction for all sufficiently large $k$.

\textbf{Case 2:} $\lim_{k\rii} R(q_k)=0$. Consider the rescaled metrics $\widetilde{g}_k=\frac{1}{16}R(q_k)g$ and the rescaled functions $\widetilde{f}_k:=\frac{f-f(q_k)}{4R^{-1/2}(q_k)}$, then $\widetilde{f}_k$ satisfies $\widetilde{f}_k(q_k)=0$ and
\begin{equation}\label{e: higher derivatives of f}
    \widetilde{\nabla}^2\widetilde{f}_k=\nabla^2\widetilde{f}_k=\frac{\nabla^2 f}{4R^{-1/2}(q_k)}=\frac{\Ric}{4R^{-1/2}(q_k)}=\frac{\widetilde{\Ric}}{4R^{-1/2}(q_k)},
\end{equation}
and also
\begin{equation*}
    |\widetilde{\nabla}\widetilde{f}_k|_{\widetilde{g}_k}=
    4R^{-1/2}(q_k)|\nabla\widetilde{f}_k|=
    |\nabla f|\le C_1.
\end{equation*}
In particular, at $q_k$ we have
\begin{equation}\label{e: the norm of gradient}
    |\widetilde{\nabla}\widetilde{f}_k|_{\widetilde{g}_k}(q_k)=|\nabla f|(q_k)\in[C_0^{-1},C_1].
\end{equation}
By \eqref{e: higher derivatives of f} and \eqref{e: the norm of gradient}, the derivatives of $\widetilde{f}_k$ are uniformly bounded. Thus there is a subsequence of $\widetilde{f}_k$ converging to a smooth function $f_{\infty}$ on $\R\times\cigar$ with $f_{\infty}(x_{tip})=0$.

By \eqref{e: higher derivatives of f} and \eqref{e: the norm of gradient} we have $\nabla^2f_{\infty}=0$ and $|\nabla f_{\infty}|(x_{tip})>0$. 
So $f_{\infty}$ is a non-constant linear function in the $\R$-direction.
In particular, $0$ is a regular value of $f_{\infty}$, and the level set $\Sigma_{\infty}:=f_{\infty}^{-1}(0)$ is equal to $\{a\}\times\cigar\subset\R\times\cigar$, for some $a\in\R$.

Therefore, after passing to a subsequence, as the manifolds $(M,\frac{1}{16}\,R(q_k)g,q_k)$ smoothly converge to $(\R\times\cigar,g_c,x_{tip})$,  the level sets $(\Sigma_k,\frac{1}{16}\,R(q_k)g_{\Sigma_k},q_k)$ of $\widetilde{f}_k$ with induced metrics smoothly converge to the level set $(\Sigma_{\infty},g_{\Sigma_{\infty}},x_{tip})$ of $f_{\infty}$, and the points $z_k\in\Sigma_k$ converge to a point
$z_{\infty}\in\Sigma_{\infty}$ with $d_{\Sigma_{\infty}}(x_{tip},z_{\infty})=\frac{1}{4}D$.
So it follows when $\overline{D}$ is sufficiently large that $\R\times\cigar$ is a $\delta/2$-cylindrical plane at $z_{\infty}$. This is a contradiction for large $k$.

\end{proof}

The next lemma shows that for a point at which the soliton looks sufficiently like the cylindrical plane $\RR\times S^1$, the level set of $f$ passing through it looks like the cylinder $\R\times S^1$. We prove this lemma by a limiting argument. 

\begin{lem}\label{l: 3D neck is 2D neck}
Let $(M,g,f)$ be a 3D steady gradient soliton with positive curvature that is not a Bryant soliton. 
For any $\delta>0$ and $C_0>0$, there exists $\epsilon>0$ such that if $M$ is an $\epsilon$-cylindrical plane at $x\in M$ and $r(x)\ge C_0^{-1}$, then the level set $f^{-1}(f(x))$ of $f$ passing through $x$ is a $\delta$-neck at $x$ at scale $r(x)$. 
\end{lem}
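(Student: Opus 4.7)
The plan is to argue by contradiction using a smooth convergence / limiting argument, in direct parallel to the proof of Lemma \ref{l: geometry of level set near tip}. Suppose the conclusion fails: then there exist $\delta>0$, $C_0>0$, and a sequence of points $x_k\in M$ with $r(x_k)\ge C_0^{-1}$, such that $(M,g)$ is an $\epsilon_k$-cylindrical plane at $x_k$ with $\epsilon_k\to 0$, yet the level set $\Sigma_k:=f^{-1}(f(x_k))$ is not a $\delta$-neck at $x_k$ at scale $r(x_k)$. Rescale by $\widetilde{g}_k=r^{-2}(x_k)g$, so that by Definition~\ref{d: cylindrical plane} the pointed manifolds $(M,\widetilde{g}_k,x_k)$ converge smoothly to $(\RR\times S^1,g_{stan},\widetilde{x}_\infty)$. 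The goal is to show that the rescaled level sets $\Sigma_k$ converge smoothly to a standard cylinder, which will contradict the non-$\delta$-neck assumption for large $k$.

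The next step is to normalize the potential function. Write $C_1^2=R+|\nabla f|^2$ for Hamilton's constant, and set $\widetilde{f}_k:=r^{-1}(x_k)(f-f(x_k))$. Since $\widetilde{R}_{\widetilde{g}_k}(x_k)=r^2(x_k)R(x_k)$ is $\epsilon_k$-close to the scalar curvature of $\RR\times S^1$, which vanishes, and $r(x_k)\ge C_0^{-1}>0$, we get $R(x_k)\to 0$, and hence $|\nabla f|(x_k)\to C_1$. (Since the soliton has positive Ricci curvature, $f$ is strictly convex and not constant, so $C_1>0$.) In $\widetilde{g}_k$ the gradient norm computes to $|\widetilde{\nabla}\widetilde{f}_k|_{\widetilde{g}_k}(x_k)=|\nabla f|(x_k)\to C_1>0$, and the Hessian satisfies $|\widetilde{\nabla}^2\widetilde{f}_k|_{\widetilde{g}_k}=r(x_k)|\Ric|_g=r^{-1}(x_k)\cdot|\widetilde{\Ric}|_{\widetilde{g}_k}\to 0$, because $|\widetilde{\Ric}|_{\widetilde{g}_k}\to 0$ from the closeness to $\RR\times S^1$ and $r^{-1}(x_k)\le C_0$. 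Higher derivatives of $\widetilde{f}_k$ are controlled in the same way by differentiating the identity $\nabla^2 f=\Ric$ and using the smooth convergence of $\widetilde{g}_k$.

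By Arzel\`a--Ascoli we pass to a subsequence, obtaining a limiting smooth function $f_\infty$ on $\RR\times S^1$ with $f_\infty(\widetilde{x}_\infty)=0$, $|\nabla f_\infty|(\widetilde{x}_\infty)=C_1>0$, and $\nabla^2 f_\infty\equiv 0$. Thus $f_\infty$ is an affine function, and since the $S^1$-factor is compact it must be independent of the $S^1$-direction, i.e.\ $f_\infty=ax+by+c$ in the coordinates $(x,y,\theta)$ on $\RR\times S^1$ with $(a,b)\neq(0,0)$. Consequently $0$ is a regular value, and the level set $\Sigma_\infty:=f_\infty^{-1}(0)$ with the induced metric is isometric to the standard cylinder $\R\times S^1$.

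The last step is the transfer: since $\widetilde{f}_k\to f_\infty$ in $C^\infty_{loc}$ and $|\nabla f_\infty|(\widetilde{x}_\infty)>0$, the rescaled level sets $(\Sigma_k,\widetilde{g}_k|_{\Sigma_k},x_k)$ converge smoothly to $(\Sigma_\infty,g_{stan}|_{\Sigma_\infty},\widetilde{x}_\infty)$, a standard cylinder. Thus for all sufficiently large $k$, $\Sigma_k$ is $\delta$-close in $\widetilde{g}_k$ to $\R\times S^1$ on arbitrarily large neighborhoods of $x_k$, i.e.\ $\Sigma_k$ is a $\delta$-neck at $x_k$ at scale $r(x_k)$ in the original metric $g$, contradicting the choice of the sequence $x_k$. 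The main technical point is checking that the gradient of the limit $f_\infty$ is genuinely nonzero, which follows from $C_1>0$ plus $R(x_k)\to 0$; once that is secured, the argument is a direct adaptation of the framework already developed in Lemma~\ref{l: geometry of level set near tip}.
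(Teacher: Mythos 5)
Your proposal is correct and follows essentially the same route as the paper's proof: a contradiction argument by rescaling $g$ by $r^{-2}(x_k)$ and $f$ by the affine normalization $\widetilde{f}_k = r^{-1}(x_k)(f - f(x_k))$, extracting a smooth limit $f_\infty$ on $\R^2\times S^1$ with vanishing Hessian and nonzero gradient (using $r(x_k)\ge C_0^{-1}$ together with $R(x_k)\to 0$ from the closeness to $\R^2\times S^1$ and Hamilton's identity $R+|\nabla f|^2\equiv C_1^2$), concluding that $f_\infty$ is affine and independent of the $S^1$-factor so that its zero level set is a standard cylinder, and transferring the smooth convergence to the level sets $\Sigma_k$. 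The scaling computations for $|\widetilde{\nabla}\widetilde{f}_k|$ and $|\widetilde{\nabla}^2\widetilde{f}_k|$ match those in the paper, and no step is missing.
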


\begin{proof}
Suppose the conclusion is not true, then for some $\delta>0$ and $C_0>0$, there is a sequence of points $x_i\in M$ at which $(M,g)$ is an $\epsilon_i$-cylindrical plane,  $\epsilon_i\ri0$, such that $\Sigma_i:=f^{-1}(f(x_i))$ is not a $\delta$-neck at $x_i$ at scale $r(x_i)$.

Consider the rescalings of the metrics $\widetilde{g}_i:=r^{-2}(x_i)g$, and the rescalings of the functions $\widetilde{f}_i:=\frac{f_i-f_i(x_i)}{r(x_i)}$. We have $\widetilde{f}_i(x_i)=0$ and
\begin{equation*}
\begin{split}
    \widetilde{\nabla}^{k+2}\widetilde{f}_i&=r^{-1}(x_i)\widetilde{\nabla}^{k+2} f_i
    =r^{-1}(x_i)\widetilde{\nabla}^{k} (\widetilde{\nabla}^{2}f_i)
    =r^{-1}(x_i)\widetilde{\nabla}^{k}(\widetilde{\Ric}),\quad k\ge0\\
\end{split}
\end{equation*}
and also
\begin{equation*}
    \widetilde{\nabla}\widetilde{f}_i=r^2(x_i)\nabla\widetilde{f}_i=r(x_i)\nabla f_i.
\end{equation*}
Therefore, using $r(x_i)\ge C_0^{-1}$ we obtain
\begin{equation*}
\begin{split}
|\widetilde{\nabla}^{k+2}\widetilde{f}_i|_{\widetilde{g}_i}&=r^{-1}(x_i)|\widetilde{\nabla}^{k}(\widetilde{\Ric})|_{\widetilde{g}_i}\le C_0 |\widetilde{\nabla}^{k}(\widetilde{\Ric})|_{\widetilde{g}_i}\\
\end{split}
\end{equation*}
which goes to zero for each $k\ge 0$ since $\epsilon_i\ri0$. Note $R+|\nabla f|^2=C_1^2$ for some $C_1>0$, we also have $|\nabla f|\le C_1$ and 
\begin{equation*}
    |\widetilde{\nabla} \widetilde{f}_i|_{\widetilde{g}_i}=|\nabla f|\le C_1.
\end{equation*}
In particular, since $r(x_i)\ge C_0^{-1}$ and $\epsilon_i\ri0$, it follows that $R(x_i)\ri0$ and $|\nabla f|(x_i)\ge\frac{1}{2}C_1$ for all large $i$. So at $x_i$ we have
\begin{equation*}
    |\widetilde{\nabla} \widetilde{f}_i|_{\widetilde{g}_i}(x_i)=|\nabla f|(x_i)\in\left[C_1/2,C_1\right].
\end{equation*}

So after passing to a subsequence we may assume that
the manifolds $(M,\widetilde{g}_i,x_i)$ smoothly converge to $(\RR\times S^1,g_{stan},x_{\infty})$, and
the functions $\widetilde{f}_i$ converge to a smooth function $f_{\infty}$ on $\RR\times S^1$, which satisfies
$f_{\infty}(x_{\infty})=0$ and
\begin{equation}\label{e: happyday}
    |\nabla^{k+2} f_{\infty}|=0,\quad{k\ge0},\quad {and}\quad |\nabla f_{\infty}|(x_{\infty})\in\left[C_1/2,C_1\right].
\end{equation}

By \eqref{e: happyday} it is easy to see that $f_{\infty}$ is a constant in each $S^1$-factor in $\RR\times S^1$, and a non-constant linear function on the $\RR$-factor. After a possible rotation on $\RR$, we may assume the level set $\Sigma_{\infty}:=f^{-1}_{\infty}(0)=\{(x,0,\theta):x\in\R,\theta\in[0,2\pi)\}$. In particular, $0$ is a regular value of $f_{\infty}$, and $\Sigma_{\infty}$ is isometric to $(\R\times S^1,g_{stan})$. 
Therefore, the level sets $(\Sigma_i,r^{-2}(x_i)g_{\Sigma_i},x_i)$ of $\widetilde{f}_i$ smoothly converge to the level set $(\Sigma_{\infty},g_{stan},x_{\infty})$ of $f_{\infty}$.
This implies that $\Sigma_i$ is a $\delta$-neck when $i$ is sufficiently large, a contradiction.
\end{proof}

The following lemma compares the value of $f$ at two points $x$ and $y$, when the minimizing geodesic from $x$ to $y$ is orthogonal to $\nabla f$ at $y$.

\begin{lem}\label{l: compare f}
Let $x,y$ be two points in $M$.
Suppose that a minimizing geodesic $\sigma$ from $y$ to $x$ is orthogonal to $\nabla f$ at $y$. Then $f(x)\ge f(y)$.
\end{lem}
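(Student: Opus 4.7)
The plan is to restrict $f$ to the geodesic $\sigma$ and exploit the convexity coming from the soliton equation combined with the non-negativity of the Ricci curvature (which holds for all 3D steady gradient Ricci solitons, as noted in Section \ref{s: pre}, and which in our setting is actually strictly positive).

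First I would parametrize $\sigma:[0,L]\to M$ by arc length so that $\sigma(0)=y$ and $\sigma(L)=x$, and consider the real-valued function $\varphi(t):=f(\sigma(t))$. Since $\sigma$ is a geodesic, $\sigma'$ is parallel along $\sigma$, and hence
\begin{equation*}
    \varphi''(t)=\nabla^2 f(\sigma'(t),\sigma'(t))=\Ric(\sigma'(t),\sigma'(t))\ge 0,
\end{equation*}
where the first equality uses the definition of the Hessian together with $\nabla_{\sigma'}\sigma'=0$, and the second is the steady soliton equation $\Ric=\nabla^2 f$. Thus $\varphi$ is convex on $[0,L]$.

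Next, the orthogonality hypothesis says that $\langle\nabla f,\sigma'(0)\rangle=0$ at $y$, i.e.\ $\varphi'(0)=0$. Combined with convexity, this gives $\varphi'(t)\ge \varphi'(0)=0$ for all $t\in[0,L]$, so $\varphi$ is non-decreasing on $[0,L]$. In particular
\begin{equation*}
    f(x)=\varphi(L)\ge\varphi(0)=f(y),
\end{equation*}
which is the desired inequality.

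There is essentially no obstacle here: the only ingredients are the soliton equation, the non-negativity of $\Ric$ in dimension $3$, and the elementary fact that a convex function with vanishing derivative at one endpoint is non-decreasing. No appeal to the asymptotic geometry, to $\Gamma$, or to the structure results of this section is needed.
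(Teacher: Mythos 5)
Your proof is correct and is essentially the paper's argument: the paper writes $f(x)-f(y)=\int_0^1\int_0^r\nabla^2 f(\sigma',\sigma')\,ds\,dr$, which is precisely your convexity argument expressed as a double integral. Both hinge on $\varphi'(0)=0$ from orthogonality, the soliton equation $\nabla^2 f=\Ric$, and $\Ric\ge0$.
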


\begin{proof}
Since $\langle\nabla f(\sigma(0)),\sigma'(0)\rangle=0$, computing by calculus variation we have
\begin{equation*}
\begin{split}
    f(x)-f(y)&=f(\sigma(1))-f(\sigma(0))
    =\int_{0}^{1}\langle\nabla f(\sigma(r)),\sigma'(r)\rangle\,dr\\
    &=\int_{0}^1\int_0^r\nabla^2f(\sigma'(s),\sigma(s))\,ds\,dr\\
    &=\int_{0}^1\int_0^r\Ric(\sigma'(s),\sigma(s))\,ds\,dr
    \ge 0.
\end{split}
\end{equation*}
\end{proof}

The following lemma compares the scales of two $\epsilon$-necks in a positively-curved non-compact complete 2D manifold. It says that the scale of the inner $\epsilon$-neck is almost not larger than that of the outer $\epsilon$-neck. 

\begin{lem}\label{l: 2D surface}
For any $\delta>0$, there exists $\epsilon>0$ such that the following holds:

Let $(M,g)$ be a 2D complete non-compact Riemannian manifold with positive curvature and let $p$ be a soul for it. Then for any $\epsilon$-neck $N$ disjoint from $p$ the central circle of $N$ separates the soul from the end of the manifold.
In particular, if two $\epsilon$-necks $N_1$ and $N_2$ in $M$ are disjoint from each other and from $p$, then the central circles of $N_1$ and $N_2$ are the boundary components of a region in $M$ diffeomorphic to $S^1\times I$.

Moreover, assume $N_1$ is contained in the 2-ball bounded by the central circle of $N_2$, then the scales $r_1,r_2$ of $N_1,N_2$ satisfy
\begin{equation*}
    r_1<(1+\delta)r_2.
\end{equation*}
\end{lem}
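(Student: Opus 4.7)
By the soul theorem, the positively curved complete non-compact 2-manifold $M$ is diffeomorphic to $\R^2$, with soul $p$ a single point; hence every simple closed curve in $M$ bounds a unique topological disk.

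For the \emph{separation property}, the central circle $C$ of an $\epsilon$-neck of scale $r$ has $|\kappa_g| \le \epsilon/r$ and length $\le 2\pi r(1+\epsilon)$, so $|\int_C \kappa_g\,ds| = O(\epsilon)$. Gauss-Bonnet on the disk $D_C$ bounded by $C$ gives $\int_{D_C} K\,dA \ge 2\pi - O(\epsilon)$, and Cohn-Vossen's $\int_M K\,dA \le 2\pi$ forces $\int_{M\setminus D_C} K\,dA = O(\epsilon)$. To place $p$ in $D_C$, I argue by contradiction: if $p$ lay in the unbounded component $A = M\setminus \overline{D_C}$, then a Gauss-Bonnet argument on geodesic sectors based at $p$ inside $A$ bounds the angular span of asymptotic ray directions from $p$ by $\int_A K = O(\epsilon)$; but the soul structure of $M$ (together with the fact that the end of $M$ lies in $A$) forces this span to be bounded below, a contradiction. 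The \emph{topological corollary} is then immediate: both $D_{C_1}, D_{C_2}$ contain $p$, they are nested by disjointness of $C_1, C_2$, and their difference is an annulus diffeomorphic to $S^1 \times I$.

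For the \emph{quantitative comparison}, Gauss-Bonnet on $\Omega = D_2 \setminus \overline{D_1}$ (with $\chi(\Omega)=0$) combined with the $O(\epsilon)$ geodesic curvature integrals on $\partial\Omega$ yields $\int_\Omega K\,dA = O(\epsilon)$. I argue by contradiction and compactness: suppose there exists $\delta > 0$, a sequence $\epsilon_n \to 0$, and manifolds $(M_n, g_n)$ with nested disjoint $\epsilon_n$-necks $N_1^n \subset D_2^n$ satisfying $r_1^n/r_2^n \ge 1+\delta$. Rescale $g_n$ by $r_2^{n,-2}$ so that $N_2^n$ has unit scale, and let $x_i^n$ be the center of $N_i^n$. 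When $d_{\tilde g_n}(x_1^n, x_2^n)$ stays bounded, pass to a pointed Cheeger-Gromov subsequential limit $(M_\infty, g_\infty, x_2^\infty)$; the smoothness of $\epsilon_n$-neck convergence together with $\int_{\Omega^n} K \to 0$ forces the limiting annular region to be flat, so the limit is isometric to a standard flat cylinder of unique scale $1$ near $x_2^\infty$. But then the inner neck in the limit would be a cylindrical region of scale $\ge 1+\delta$ inside a flat cylinder of scale $1$, an impossibility.

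The \textbf{main obstacle} is the case $d_{\tilde g_n}(x_1^n, x_2^n) \to \infty$, where the two necks do not appear together in any single pointed Cheeger-Gromov limit. For this case I plan to analyze the length function $L(t) = \mathrm{length}(\{d(\cdot, C_2^n) = t\})$ on $\Omega^n$. By the second-variation-of-arclength formula $L''(t) = -\int_{\{d=t\}} K\,ds$, the function $L$ is strictly concave with total convexity defect $\int_0^{T_n} (-L''(t))\,dt = \int_{\Omega^n} K\,dA = O(\epsilon_n)$, and the boundary derivatives satisfy $L'(0), L'(T_n) = O(\epsilon_n)$ from the small geodesic curvatures on $C_1^n, C_2^n$. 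These three pieces of information together force $|L'(t)| = O(\epsilon_n)$ uniformly on $[0, T_n]$; to eliminate the resulting $O(\epsilon_n T_n)$ dependence in $|L(C_1^n) - L(C_2^n)|$, I plan to exploit the strict concavity of $L$ together with higher-order control inside the $\epsilon$-necks (giving that $C_1^n$ is approximately a level set of $d(\cdot, C_2^n)$ up to an $O(\epsilon_n)$ perturbation) to obtain a uniform-in-$T_n$ bound. Alternatively, an intermediate rescaling at scale $s_n$ with $r_2^n \ll s_n$ chosen so that the two necks sit at bounded rescaled distance reduces the problem to the first case, where a collapsing or non-collapsing limit argument yields the same contradiction.
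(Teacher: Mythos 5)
Your approach is genuinely different from the paper's. The paper's proof is a one-line citation to \cite[Lemma 2.20]{MT}, which is proved via the convexity of the Busemann function: the Busemann function $b$ of the end has totally convex sublevel sets, so the level-set length $\ell(a)=\operatorname{length}\{b=a\}$ satisfies $\ell'(a)=\int_{\{b=a\}}\nabla^2b(T,T)\,ds\ge0$, and the convexity of $\{b\le a\}$ also forces the Busemann level sets inside any $\epsilon$-neck to be $\epsilon$-close to the central $S^1$-factor. Those two facts together give $r_1\le(1+O(\epsilon))r_2$ with no case division, no compactness argument, and no dependence on how far apart the two necks are. The separation statement falls out as well, since the soul is the minimum of $b$.

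Your Gauss--Bonnet plan has genuine gaps. First, the separation argument (the ``angular span of asymptotic ray directions from $p$'' step) is too vague to assess; the condition $\int_{M\setminus D_C}K=O(\epsilon)$ alone does not obviously force $p\in D_C$, and you would need to make the contradiction precise. Second, and more seriously, your treatment of the far-apart case does not close. You correctly identify that $|L'|=O(\epsilon)$ only gives $|L(T_n)-L(0)|=O(\epsilon T_n)$, but neither of your proposed remedies works as stated: ``strict concavity'' gives nothing quantitative here since $L''=-\int_{\{\rho=t\}}K$ can be arbitrarily close to zero, and the ``intermediate rescaling'' produces a collapsed Gromov--Hausdorff limit (a line or ray), where the flat-cylinder contradiction is no longer visible. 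In fact the correct fix is a sign you missed: Gauss--Bonnet on $D_2$ together with Cohn--Vossen gives $\int_{C_2}\kappa_g^{D_2}=2\pi-\int_{D_2}K\ge 2\pi-\int_M K\ge 0$, and then $L'(t)=-\int_{\Omega_t}K-\int_{C_2}\kappa_g^{D_2}\le 0$ identically, so $L$ is monotone non-increasing and the $O(\epsilon T_n)$ problem disappears. Even with that fix, you still need to convert $L$ at the level touching $C_1$ into $\operatorname{length}(C_1)$; the level sets of $d(\cdot,C_2)$ are not a priori controlled inside $N_1$ the way Busemann level sets are (you have no convexity), and your sketch of ``higher-order control inside the $\epsilon$-necks'' does not supply this. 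These are the places where the Busemann-function route earns its keep.
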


\begin{proof}
The proof is a slight modification of the proof of \cite[Lemma 2.20]{MT} using Busemann functions.
\end{proof}

Now we prove the critical point theorem.

\begin{theorem}\label{t: Rmax critical point}
Let $(M,g)$ be a 3D steady gradient soliton with positive curvature that is not a Bryant soliton. Then there exists $p\in M$ such that $R$ attains its maximum at $p$ and $\nabla f(p)=0$.
\end{theorem}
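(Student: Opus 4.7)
The theorem reduces to showing that $R$ attains its supremum. Indeed, the soliton identity $R + |\nabla f|^2 = \mathrm{const}$ gives $\nabla R = -2\Ric(\nabla f, \cdot)$; at a maximum $p$ of $R$ we have $\nabla R(p) = 0$, and the strict positive-definiteness of $\Ric$ (from positive sectional curvature in dimension three) then forces $\nabla f(p) = 0$. I argue by contradiction: assume $R$ attains no maximum. Then $\nabla f$ never vanishes (a critical point of $f$ would yield a maximum of $R$), so every integral curve of $\nabla f$ is globally defined. Since $f$ is strictly monotone along orbits while any $\alpha$- or $\omega$-limit point $q$ of an orbit would, by flow-invariance of such limit sets combined with $f(q)$ equalling an extremum of $f$ along the orbit, force $|\nabla f|(q) = 0$, every orbit escapes every compact set at both ends.

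\textbf{Constructing the orbit.} Apply Lemma \ref{l: Gamma} to obtain $\Gamma_1 : [0,\infty) \to \mathcal{C}_1$, which after possibly reversing its parametrization I regard as an integral curve of $\nabla f$ and extend via the global $\nabla f$-flow to $\gamma : \R \to M$. At the end where $\gamma$ approaches the cigar tip line of $\mathcal{C}_1$, Lemma \ref{l: Gamma} gives that the limit $R_1 := \lim R(\gamma(s))$ equals the cigar tip scalar curvature and is strictly positive. Hamilton's identity $\frac{d}{ds}\, R \circ \gamma = -2\Ric(\nabla f, \nabla f) \le 0$ makes $R \circ \gamma$ monotone, so $R \ge R_1 > 0$ everywhere along $\gamma$; in particular the other end limit $R_2 := \lim R(\gamma(s))$ is also positive. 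By Lemma \ref{l: DR to cigar} (the $\RR \times S^1$ model has $R \equiv 0$) and Lemma \ref{l: two chains}, the other end of $\gamma$ likewise approaches a cigar tip structure in $\mathcal{C}_1 \cup \mathcal{C}_2$.

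\textbf{Equating the cigar scales.} The crux is to show $R_1 = R_2$. For a regular value $c$ of $f$ chosen so that $\Sigma_c := f^{-1}(c)$ meets both $\Gamma_1$ and $\Gamma_2$ (possible because $f$ is strictly monotone along each $\Gamma_i$ and the image intervals $f(\Gamma_1), f(\Gamma_2)$ must overlap, since otherwise a separating level set would carry only one cigar cap, contradicting the two-chain structure), the induced metric on $\Sigma_c$ has positive Gauss curvature by the Gauss equation and $\sff = -\Ric/|\nabla f| \le 0$. By Lemma \ref{l: geometry of level set near tip}, $\Sigma_c$ exhibits a $2$-dimensional cigar cap of scale $\sim R_i^{-1/2}$ at $q_i := \Gamma_i \cap \Sigma_c$; by Lemma \ref{l: 3D neck is 2D neck} it consists of $\delta$-necks in the $\RR \times S^1$ region of $M$. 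Comparing $2$-neck scales along $\Sigma_c$ via Lemma \ref{l: 2D surface}, and controlling $f$-values at candidate comparison points through Lemma \ref{l: compare f} (applied to minimizing geodesics orthogonal to $\nabla f$), the two cap scales $R_1^{-1/2}$ and $R_2^{-1/2}$ must agree up to arbitrarily small error, yielding $R_1 = R_2$.

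\textbf{Conclusion and main obstacle.} Once $R_1 = R_2$, monotonicity of $R \circ \gamma$ forces $R \equiv R_1$ on $\gamma$, hence $\Ric(\nabla f, \nabla f) \equiv 0$ along $\gamma$; strict positivity of $\Ric$ then gives $\nabla f \equiv 0$ on $\gamma$, contradicting $\nabla f \ne 0$ everywhere and completing the contradiction. The main obstacle is the scale-matching step producing $R_1 = R_2$: carefully translating the intrinsic positively-curved $2$-surface $\Sigma_c$, with its two cigar-cap regions at a priori different scales joined through a $\delta$-neck region, into an equality of the tip curvatures requires combining the $2$D neck comparison of Lemma \ref{l: 2D surface} with the perpendicular $f$-value comparisons of Lemma \ref{l: compare f}.
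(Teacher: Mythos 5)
Your high-level strategy matches the paper's: argue by contradiction that $R$ has no maximum, build a complete $\nabla f$-orbit $\gamma$ joining the two chains, show $R\circ\gamma$ must be constant via a level-set neck-scale comparison, and deduce a splitting that contradicts strict positivity of the curvature. Two of your intermediate steps, however, have genuine problems. First, you assert that Lemma \ref{l: Gamma} gives $R_1 := \lim R(\gamma(s)) > 0$ at the $\mathcal{C}_1$ end. That lemma only asserts convergence of the volume-scale rescalings $(M,r^{-2}(\gamma(s))g,\gamma(s))$ to $(\R\times\cigar, x_{tip})$; if $r(\gamma(s))\to\infty$ then $R(\gamma(s))\to 0$, and since you have identified $R_1$ as the limit of the monotone non-increasing function $R\circ\gamma$ (you write $R\ge R_1$), it is exactly this forward limit whose positivity you may not assume. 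The limit is automatically positive only at the backward end, and the paper has to work (Claims \ref{c: gamma goes to infinity}--\ref{c: in C_2}, via the quadratic decay Theorem \ref{l: curvature upper bound initial}) to show that backward end is an $\epsilon$-tip point lying in $\mathcal{C}_2$.

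The more serious gap is the one you flag yourself. Your plan is to pick a fixed level set $\Sigma_c$ meeting both $\Gamma_1$ and $\Gamma_2$ and compare the scales of the two cigar caps at $q_1$ and $q_2$. But under the contradiction hypothesis the level sets of $f$ are non-compact (the paper proves this at the outset using convexity of $f$), and a complete non-compact positively-curved surface has total curvature at most $2\pi$ by Cohn--Vossen; since every $\epsilon$-cap carries total curvature $2\pi - O(\epsilon)$, such a surface can host at most one $\epsilon$-cap. So the two-cap picture on a single $\Sigma_c$ cannot occur, and in fact the complete $\nabla f$-orbit $\gamma$ meets each level set exactly once since $f$ is strictly monotone along it. Even setting this aside, Lemma \ref{l: 2D surface} is a one-sided inequality between nested necks and cannot by itself produce an equality of scales. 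The paper instead runs a dynamic argument which your sketch omits: it chooses a cylindrical-plane point $p$ with $h(p)\approx 2\pi$ initially closest to $\Gamma_2$, flows it forward under $\nabla f$, and uses the integrated-Ricci estimates \eqref{e: cigarricci}, \eqref{e: largecigarricci} (built from \eqref{e: integrate Ricci}) together with the contradiction assumption $R|_{\Gamma_1}\le 4-c_1$ to show the nearest point of $\Gamma$ switches to $\Gamma_1$ at a finite time $T$ while $h(\gamma_p(T))\le 2\pi(1+\epsilon)$ stays controlled (via Theorem \ref{l: curvature upper bound initial}); only then is Lemma \ref{l: 2D surface} applied, with the neck at $\gamma_p(T)$ as the outer neck and a neck on the level set near $q_2\in\Gamma_1$ as the inner one, yielding $R(q_2)\ge 4-\delta$. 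This one-sided bound, combined with monotonicity of $R\circ\Gamma$, forces constancy and the splitting. The static comparison you propose does not reach this conclusion.
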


\begin{proof}
Let $\epsilon>0$ be some constant we can take arbitrarily small, and we will use $\delta>0$ to denote all constants that converge to zero as $\epsilon\ri0$.
We suppose by contradiction that $R_{\max}$ does not exists.

First, the level sets of $f$ are non-compact: Suppose not, then for some $a\in\R$, the level set $f^{-1}(a)$ is compact and hence is diffeomorphic to $S^2$.
So $f^{-1}(a)$ separates the manifold into a compact and a non-compact connected component. Since $f$ is convex and non-constant, by the maximum principle, it attains the minimum in the compact region.
This contradicts our assumption.

Let $\Gamma_1,\Gamma_2:[0,\infty)\in  M$ be the two integral curves of $\nabla f$ or $-\nabla f$ from Lemma \ref{l: Gamma}, which extend to infinity on the open ends.
First, we claim that $\Gamma_2$ and $\Gamma_1$ can not be integral curves of $-\nabla f$ at the same time.
Because otherwise, on the one hand, we have $d(\Gamma_1(s),\Gamma_2(s))\rii$ as $s\rii$ by Lemma \ref{l: two rays}.
On the other hand, since $\Gamma_1,\Gamma_2$ are integral curves of $-\nabla f$, it follows by the positive curvature and distance expanding along the backwards Ricci flow of the soliton that $d(\Gamma_1(s),\Gamma_2(s))\le d(\Gamma_1(s_0),\Gamma_2(s_0))$ for any $s
\ge s_0$. This contradiction shows the claim.

So we may assume $\Gamma_1$ is an integral curve of $\nabla f$.
In Claim \ref{c: gamma goes to infinity}, \ref{c: DR}, and \ref{c: in C_2}, we will construct a complete integral curve $\Gamma:(-\infty,+\infty)\in M$ of $\nabla f$ such that for some $s_0>0$, $\Gamma([s_0,\infty))\subset \mathcal{C}_1,\Gamma((-\infty,-s_0])\subset \mathcal{C}_2$ and moreover the manifolds $(M,r^{-2}(\Gamma(s))g,\Gamma(s))$ converge to $(\R\times\cigar,r^{-2}(x_{tip})g_c,x_{tip})$ as $s\ri\pm\infty$. Note that $\Gamma((-\infty,\infty))$ is invariant under the diffeomorphisms $\phi_t$ generated by $\nabla f$.

\begin{claim}\label{c: gamma goes to infinity}
Take $\gamma_1:[0,\infty)\ri M$ to be the integral curve of $-\nabla f$ starting from $\Gamma_1(0)$.
Then $\gamma_1(s)$ goes to infinity as $s\rii$.
\end{claim}
\begin{proof}
Suppose otherwise, assume for some $s_i\rii$ and a compact subset $V$ there is $\gamma_1(s_i)\in V$. By the compactness of $V$, there is $c>0$ such that $\Ric\ge c g$ and $c\le|\nabla f|\le c^{-1}$ in $V$. So by the first identity in \eqref{e: two} we have
\begin{equation*}
    \frac{d}{ds}|_{s=s_i}R(\gamma_1(s))\ge c.
\end{equation*}
Moreover, by the increasing of $R(\gamma_1(s))$ we get $\frac{d}{ds}|_{s=s_i}R(\gamma_1(s))\ge 0$ for all $s$.
It is clear that there is a uniform $C_0>0$ such that $\left|\frac{d^2}{ds^2}R(\gamma_1(s))\right|\le C_0$ for all $s$.
We may choose the sequence $s_i$ such that $s_{i+1}>s_i+1$. Then 
\begin{equation*}
    R(\gamma_1(s_{i+1}))\ge R(\gamma(s_1)) + \sum_{k=1}^{i}(R(\gamma_1(s_{k+1}))-R(\gamma_1(s_{k})))
    \ge R(\gamma(s_1)) + i\,c^2C_0^{-1}/2\rii,
\end{equation*}
which is impossible.
\end{proof}

\begin{claim}\label{c: DR}
The manifolds $(M,r^{-2}(\gamma_1(s))g,\gamma_1(s))$ converge smoothly to the manifold $(\R\times\cigar,r^{-2}(x_{tip})g_{\Sigma},x_{tip})$ as $s\rii$. In particular,  $\gamma_1(s)$ is an $\epsilon$-tip point for all sufficiently large $s$. 
\end{claim}

\begin{proof}[Proof of Claim \ref{c: DR}]
It follows from Theorem \ref{l: DR to cigar} that $(M,r^{-2}(\gamma_1(s))g,\gamma_1(s))$ converge smoothly to either $(\R\times\cigar,r^{-2}(x_0)g_{\Sigma},x_0)$, or $(\RR\times S^1,g_{stan},x_0)$.
We show that it must be the first case: Since $\liminf_{s\rii}R(\gamma_1(s))>0$, by the quadratic curvature decay Theorem \ref{l: curvature upper bound initial}, it follows that $\gamma_1(s)$ is within uniformly bounded distance to the $\epsilon$-tip points on $\Gamma_1\cup\Gamma_2$. So the limit must be $(\R\times\cigar,r^{-2}(x_0)g_{\Sigma},x_0)$.

Moreover, since $\gamma_1(s)$ is an integral curve of $-\nabla f$, by the distance shrinking in the cigar soliton it is easy to see that $x_0$ must be a tip point in $\R\times\cigar$. 
\end{proof}

\begin{claim}\label{c: in C_2}
$\gamma_1(s)\subset\mathcal{C}_2$ for all $s$ sufficiently large.
\end{claim}

\begin{proof}[Proof of Claim \ref{c: in C_2}]
Since the two chains $\mathcal{C}_1,\mathcal{C}_2$ contains all $\epsilon$-tip points by Lemma \ref{l: two chains}, we have either $\gamma_1(s)\in \mathcal{C}_2$ or $\gamma_1(s)\in\mathcal{C}_1$ for all sufficiently large $s$.

Suppose $\gamma_1(s)\in\mathcal{C}_1$ for all large $s$.
On the one hand, by Claim \ref{c: DR}, we have $d(\gamma_1(s),\Gamma_1)\ri0$ as $s\rii$. Since $\Gamma_1(s)$ is the integral curve of $\nabla f$, we see that $|\nabla f|(\Gamma_1(s))$ increases in $s$, and hence $\liminf_{s\rii}|\nabla f|(\gamma_1(s))=\liminf_{s\rii}|\nabla f|(\Gamma_1(s))>0$. So by Claim \ref{c: DR} we have that 
$\measuredangle(\nabla f,\phi_{*}(\partial_{r}))<\epsilon$ at all $\epsilon$-tip points in $\mathcal{C}_1$ after possibly replacing $r$ by $-r$.

On the other hand, for a fixed point $p_0$, by Lemma \ref{l: parallel}, $\measuredangle(\nabla d(p_0,\cdot),\phi_{*}(\partial_{r}))<\epsilon$ holds at all $\epsilon$-tip points in $\mathcal{C}_1$ after possibly replacing $r$ by $-r$.
So either $\measuredangle(\nabla f,\nabla d(p_0,\cdot))<\epsilon$ or $\measuredangle(-\nabla f,\nabla d(p_0,\cdot))<\epsilon$ has to hold at all $\epsilon$-tip points in $\mathcal{C}_1$.
Note that Claim \ref{c: gamma goes to infinity} implies that $d(p_0,\gamma_1(s))\rii$ as $s\rii$, and hence (1) must hold. But the fact $d(p_0,\Gamma_1(s))\rii$ as $s\rii$ implies (2) must hold, a contradiction.

\end{proof}

Therefore, letting $\Gamma(s)=\Gamma_1(s)$ for $s\ge s_0$, and $\Gamma(s)=\gamma_1(s_0-s)$ for $s\le s_0$ we get the desired complete integral curve $\Gamma:(-\infty,+\infty)$ of $\nabla f$.
So we may assume $\Gamma_2(s)=\gamma_1(s-s_0)$ for $s\ge s_0$, then $\Gamma_1,\Gamma_2$ still satisfy the conclusions in Lemma \ref{l: Gamma}, and moreover satisfy the additional properties that $\lim_{s\rii}R(\Gamma_2(s))>0$ and $\Gamma_1,\Gamma_2$ are both parts of a complete integral curve $\Gamma$.

After a rescaling we may assume $\lim_{s\rii}R(\Gamma_2(s))=4$. 
Then for some $s_1>0$ whose value will be determined later, we can find a point $p$ which is the center of an $\epsilon$-cylindrical plane, such that $|h(p)-2\pi|\le\epsilon$ and $d(p,\Gamma)=d(p,\Gamma_2)=s_1$, see Definition \ref{d: h} for $h(\cdot)$.
Let $\gamma_p(t)$ be the integral curve of $\nabla f$ starting from $p$.
Then $d(\gamma_p(t),\Gamma)$ increases in $t$.
In particular, $d(\gamma_p(t),\Gamma(0))\ge d(\gamma_p(t),\Gamma)\ge s_1$. 
So by Lemma \ref{l: inf achieved on the non-compact portion} we see that when $s_1$ is sufficiently large, the distance $d(\gamma_p(t),\Gamma)$ for any fixed $t$ is always attained in $\Gamma((-\infty,-s_0)\cup(s_0,\infty))$ where $\Gamma(s)$ are $\epsilon$-tip points.

In particular, the minimizing geodesic connecting $\gamma_p(t)$ to some point $y_t\in\Gamma((-\infty,-s_0)\cup(s_0,\infty))$ such that $d(\gamma_p(t),y_t)=d(\gamma_p(t),\Gamma)$ is orthogonal to $\Gamma$ at the $\epsilon$-tip point $y_t$, and $y_t\rii$ as $t\rii$.
On the one hand, by distance distortion estimate we have
\begin{equation}\label{e: fast}
    \frac{d}{dt}d(\gamma_p(t),\Gamma)\ge\sup_{\gamma\in \mathcal{Z}(t)}\int_{\gamma}\Ric(\gamma'(s),\gamma'(s))\,ds
\end{equation}
in the backward difference quotient sense (see \cite[Lemma 18.1]{RFTandA3}), where $\mathcal{Z}(t)$ is the space of minimizing geodesics $\gamma$ that realize the distance of $d(\gamma_p(t),\Gamma)$.
In particular, if $y_t\in\Gamma_2$ and $\gamma$ is a minimizing geodesic connecting $\gamma_p(t)$ to $y_t$, we have
\begin{equation}\label{e: cigarricci}
    \frac{d}{dt}d(\gamma_p(t),\Gamma)\ge\int_{\gamma}\Ric(\gamma'(s),\gamma'(s))\,ds\ge 2-\epsilon \quad \textit{for all}\quad t\in[0,T],
\end{equation}
where in the second inequality we used \eqref{e: integrate Ricci} that in a cigar soliton with $R(x_{tip})=4$, the integral of Ricci curvature along a geodesic ray starting from the tip is equal to $2$.
On the other hand, we have
\begin{equation}\label{e: slow}
    \frac{d}{dt}d(\gamma_p(t),\Gamma_1(s_0+t))\le\inf_{\gamma\in \mathcal{W}(t)}\int_{\gamma}\Ric(\gamma'(s),\gamma'(s))\,ds,
\end{equation}
in the forward difference quotient sense, where $\mathcal{W}(t)$ is the space of all minimizing geodesics between $\gamma_p(t)$ and $\Gamma_1(s_0+t)$.
Since $R(\Gamma(s))$ strictly decreases in $s$, because otherwise $(M,g)$ is isometric to $\R\times\cigar$, we may assume that for some $c_1>0$ we have $R(\Gamma_1(s_0,\infty))\le 4-c_1$.
So by \eqref{e: slow} there exists some $c_2>0$ such that
\begin{equation}\label{e: largecigarricci}
    \frac{d}{dt}d(\gamma_p(t),\Gamma_1(s_0+t))\le2-c_2.
\end{equation}
Therefore, it follows from \eqref{e: cigarricci} and \eqref{e: largecigarricci} that $d(\gamma_p(t),\Gamma)=d(\gamma_p(t),\Gamma_1)$ for sufficiently large $t$.

Therefore, we may let
\begin{equation*}
    T=\sup\{t: d(\gamma_p(t),\Gamma)=d(\gamma_p(t),\Gamma_2)\}<\infty,
\end{equation*}
then $d(\gamma_p(T),\Gamma_1)=d(\gamma_p(T),\Gamma_2)$ and $d(\gamma_p(t),\Gamma)=d(\gamma_p(t),\Gamma_2)$ for all $t\le T$. 
Integrating \eqref{e: cigarricci} from $0$ to $T$ we obtain
\begin{equation*}
    d(\gamma_p(t),\Gamma)\ge d(p,\Gamma)+(2-\epsilon)t\ge s_1+(2-\epsilon)t.
\end{equation*}
\yi{Since $\gamma_p(t)$ is the integral curve of $\nabla f$ starting from $p$, it follows by the definition of $h$ (see Definition \ref{d: h}) that $h(\gamma_p(t))$ is equal to the length of a minimizing 
geodesic loop at $p$ with respect to $g(t)$. So by the Ricci flow equation, $\Rm\ge0$, and $|\Ric|\le C R$, we see that $h(\gamma_p(t))$ is non-decreasing in $t$ and the following evolution inequality holds
\begin{equation*}
    \frac{d}{dt} h(\gamma_p(t))\le C\cdot R(\gamma_p(t))\cdot h(\gamma_p(t)).
\end{equation*}
}
Combining this with the following curvature upper bound from Theorem \ref{l: curvature upper bound initial}, 
\begin{equation*}
    R(\gamma_p(t))\le\frac{C}{d^2(\gamma_p(t),\Gamma)}\le \frac{C}{(s_1+(2-\epsilon)t)^2},
\end{equation*}
we obtain
\begin{equation*}
    \frac{d}{dt} h(\gamma_p(t))\le \frac{C}{(s_1+(2-\epsilon)t)^2}\cdot h(\gamma_p(t)).
\end{equation*}
Assuming $s_1$ is sufficiently large and integrating this we obtain
\begin{equation}\label{e: ell less than}
    h(\gamma_p(t))\le h(p)(1+\epsilon)\le 2\pi(1+\epsilon).
\end{equation}

Let $q\in \Gamma_1$ be a point such that 
\begin{equation*}
    d(\gamma_p(T),q)=d(\gamma_p(T),\Gamma).
\end{equation*}
So by Lemma \ref{l: compare f} we have
\begin{equation*}
    f(q)< f(\gamma_p(T)).
\end{equation*}
Since $R(\Gamma_1(s))$ decreases and $f(\Gamma_1(s))$ increases in $s$, there is $q_2\in\Gamma_1$ such that 
\begin{equation*}
 f(q_2)=f(\gamma_p(T))>f(q) \quad \textit{and} \quad   R(q)> R(q_2).
\end{equation*}

In the rest of proof we will show $R(q_2)\ge 4-\delta$.
First, if $d(\gamma_p(T),q_2)<\delta^{-1}R^{-1/2}(q_2)$, then since $q_2$ is an $\epsilon$-tip point, we obtain
\begin{equation*}
    |h(\gamma_p(T))-4\pi\cdot R^{-1/2}(q_2)|\le\delta.
\end{equation*}
This fact combined with \eqref{e: ell less than} gives
\begin{equation}\label{e: upper bound of h}
    R^{-1/2}(q_2)\le\frac{1}{4\pi}\,h(\gamma_p(T))+\delta
    \le \frac{1}{2}+\delta,
\end{equation}
and hence $R(q_2)\ge 4-\delta$.

So we may assume from now on that
$d(\gamma_p(T),q_2)\ge\delta^{-1}R^{-1/2}(q_2)$.

\begin{claim}\label{claim: neck}
There exists an $\delta$-cylindrical plane at some $z\in f^{-1}(f(q_2))$ at scale $r=R^{-1/2}(q_2)$.
\end{claim}

\begin{proof}[Proof of Claim \ref{claim: neck}]
Let $\phi:(\mathbb{R}\times\cigar,x_{tip}):\ri (M,q_2)$ be the inverse of an $\epsilon$-isometry.
For the interval $[-1/\delta,1/\delta]\subset\R$ and the ball $B_{g_c}(x_{tip},1/\sqrt{\delta})$ in $(\cigar,g_c)$,
we consider image of their product $U:=\phi([-1/\delta,1/\delta]\times B_{g_c}(x_{tip},1/\sqrt{\delta}))$.
Let $\sigma$ be a smooth curve connecting $q_2$ to $\gamma_p(T)$ in the level set $f^{-1}(f(q_2))$. Since $\gamma_p(T)\notin U$, by continuity $\sigma$ must exit $U$ at some $z\in\partial U$. 

Denote $\partial U_{\pm}:=\phi(\{\pm1/\delta\}\times B_{g_c}(x_{tip},1/\sqrt{\delta}))$. 
We will show that $z\in \phi([-1/\delta,1/\delta]\times \partial B_{g_c}(x_{tip},1/\sqrt{\delta}))=\partial U-\partial U_--\partial U_+$.
Replacing $+,-$ if necessary we may assume $f(\phi(1/\delta))>f(q_2)>f(\phi(-1/\delta))$.

On the one hand, for any $y\in \partial U_-$, let $\sigma:[0,\ell]\ri M$ be a unit speed geodesic from $y$ to some point $q_-\in \phi([-1/\delta,1/\delta]\times x_{tip})$, which achieves the distance from $y$ to it. Then we have
\begin{equation*}
\begin{split}
    f(y)-f(q_-)&=\int_0^{\ell}\langle\nabla f,\sigma'(r)\rangle dr
    =\int_0^{\ell}\int_0^r\nabla^2 f(\sigma'(s),\sigma'(s))\,ds dr\\
    &\le \int_0^{\ell}\int_0^{\ell}\Ric(\sigma'(s),\sigma'(s))\,ds dr
    \le C/\sqrt{\delta},
\end{split}
\end{equation*}
where we used that the length of $\sigma$ satisfies $\ell\in[\delta^{-1/2}R^{-1/2}(q_2),2\delta^{-1/2}R^{-1/2}(q_2)]$.
This then implies
\begin{equation*}
    f(y)\le f(q_-)+\frac{C}{\sqrt{\delta}}\le f(q_2)-\frac{1}{C\delta}+\frac{C}{\sqrt{\delta}}<f(q_2),
\end{equation*}
and hence $\partial U_-$ is disjoint from $f^{-1}(f(q_2))$.

On the other hand, for any $y\in \partial U_+$, let $q_+$ be a point in $\phi([-1/\delta,1/\delta]\times x_{tip})$ that is closest to $y$. Then
\begin{equation*}
    f(y)\ge f(q_+)\ge f(q_2)+\frac{1}{C\delta}>f(q_2),
\end{equation*}
and hence $\partial U_+$ is also disjoint from $f^{-1}(f(q_2))$.
So the claim holds.
\end{proof}

By Lemma \ref{l: 3D neck is 2D neck}, the two $\delta$-cylindrical plane at $\gamma_p(T)$ and $z$ produce the two $\delta$-necks in the level set surface $f^{-1}(f(q_2))$: \yi{One $\delta$-neck denoted by $N_1$ is centered at $\gamma_p(T)$ which has scale $2(1+\delta)$, because $h(\gamma_p(T))\le 2\pi(1+\delta)$ by \eqref{e: upper bound of h}, and $h(\gamma_p(T))\ge h(p)\ge 2\pi-\epsilon$ by the choice of $p$ and the monotonicity of $h$ along integral curves;} and the other $\delta$-neck denoted by $N_2$ is centered at $z$ with scale $R^{-1/2}(q_2)$.
By the choice of $N_2$ and $d(\gamma_p(T),q_2)\ge\delta^{-1}R^{-1/2}(q_2)$, it is clear that $N_2$ is in the 2-ball bounded by the central circle of $N_1$.
Since $f^{-1}(f(q_2))$ is positively-curved, we can apply Lemma \ref{l: 2D surface} and deduce that
\begin{equation*}
    R^{-1/2}(q_2)\le 2(1+\delta), \quad\textit{hence}\quad R(q_2)\ge 4(1-\delta).
\end{equation*}
Now letting $\epsilon$ go to zero, by the monotonicity of $R$ along $\Gamma$, this implies that $R$ is a constant along $\Gamma$. So $R\equiv2$ on $\Gamma$.
So by the soliton identity,
\begin{equation*}
    \Ric(\nabla f,\nabla f)=-\langle\nabla R,\nabla f\rangle=0.
\end{equation*}
The Ricci curvature vanishes along $\Gamma$ in the direction of $\nabla f$.
So the soliton splits off a line and it is isometric to $\R\times\cigar$, contradiction.
This proves the existence of a critical point of $f$.
\end{proof}

The following corollary follows immediately from the Theorem, Lemma \ref{l: Gamma} and \ref{l: two rays}.

\begin{cor}\label{l: new Gamma}
There are two integral curves $\Gamma_i:(-\infty,\infty)\ri M$ of $\nabla f$, $i=1,2$, such that the followings hold: 
\begin{enumerate}
    \item Let $p$ be the critical point of $f$. Then $\lim_{s\ri-\infty}\Gamma_i(s)=p$;
    \item The pointed manifolds $(M,r^{-2}(\Gamma_i(s))g,\Gamma_i(s))$ smoothly converge to the manifold $(\R\times\cigar,r^{-2}(x_{tip})g_c,x_{tip})$ as $s\rii$;
    \item For any $p^{(i)}_k\in\Gamma_i$, $p^{(i)}_k\rii$ as $k\rii$, the minimizing geodesics $pp^{(i)}_k$ subsequentially converge to two rays $\gamma_1,\gamma_2$, such that $[\gamma_1]=0,\,[\gamma_2]=\theta\in S_{\infty}(M,p)=[0,\theta]$, $\theta\in[0,\pi)$. 
\end{enumerate}
\end{cor}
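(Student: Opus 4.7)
The plan is to combine the three ingredients now at our disposal: the critical point $p$ supplied by Theorem \ref{t: Rmax critical point}, the edge curves $\Gamma_i:[0,\infty)\ri\mathcal{C}_i$ from Lemma \ref{l: Gamma}, and the asymptotic cone description from Lemma \ref{l: two rays}. The only substantive work is to upgrade the $\Gamma_i$---which are only known to be integral curves of either $\nabla f$ or $-\nabla f$---to complete integral curves of $+\nabla f$ defined on all of $(-\infty,\infty)$ and limiting to $p$ at the backward end.

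The key auxiliary claim I would establish is: \emph{any integral curve $\sigma:[0,\infty)\ri M$ of $-\nabla f$ with $\sigma(0)\neq p$ satisfies $\sigma(s)\ri p$ as $s\ri\infty$.} Note first that $p$ is the unique critical point of $f$, since $|\nabla f|^2=C-R$ by \eqref{e: two} and $R$ attains its maximum only at $p$; moreover $\nabla^2 f=\Ric>0$ makes $f$ strictly convex along every geodesic, with global minimum at $p$. Along $\sigma$, $f$ is non-increasing and bounded below by $f(p)$, hence converges to some $a\geq f(p)$, and the energy identity $\int_0^\infty|\nabla f|^2(\sigma(s))\,ds=f(\sigma(0))-a<\infty$ extracts a sequence $s_k\ri\infty$ with $|\nabla f|(\sigma(s_k))\ri 0$, equivalently $R(\sigma(s_k))\ri R_{\max}$. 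Any subsequential limit in $M$ must be a critical point of $f$ and therefore equal $p$. Subsequences of $\sigma(s_k)$ cannot escape to infinity: by Lemma \ref{l: DR to cigar} the only asymptotic limits are $\R\times\cigar$ and $\RR\times S^1$, and on either one the scalar curvature is bounded by a constant strictly smaller than $R_{\max}$---along the edges this follows from the monotonicity argument already used in the proof of Theorem \ref{t: Rmax critical point} (equality with $R_{\max}$ at infinity would force a splitting of $(M,g)$, contradicting positive curvature). Therefore $a=f(p)$, and since $f^{-1}(f(p))=\{p\}$ by strict convexity, the monotonicity of $f\circ\sigma$ upgrades the subsequential convergence to $\sigma(s)\ri p$.

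With this claim, the curves $\Gamma_i$ cannot be integral curves of $-\nabla f$ (since $\Gamma_i(s)\ri\infty$, not to $p$), so they are integral curves of $+\nabla f$. The identity $|\nabla f|^2=C-R\le C$ shows $\nabla f$ is bounded, so the ODE $\gamma'=\nabla f$ starting from $\Gamma_i(0)$ extends uniquely to all $s\in\R$; applying the claim in the time variable $-s$ gives $\Gamma_i(s)\ri p$ as $s\ri-\infty$, which is item (1). Item (2) is exactly the asymptotic statement in Lemma \ref{l: Gamma}. For item (3), I would take the basepoint in Lemma \ref{l: two rays} to be $p$---valid because the asymptotic cone is basepoint-independent by Lemma \ref{l: converge to asymp}---and use that $\Gamma_i(s)\in\mathcal{C}_i$ for all large $s$; Lemma \ref{l: two rays} then gives the claimed subsequential convergence of $pp_k^{(i)}$ to two rays representing the endpoints of $S_\infty(M,p)=[0,\theta]$, with $\theta\in[0,\pi)$. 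The one mildly delicate point in the whole argument is the ``no escape to infinity'' step of the auxiliary claim; everything else is essentially bookkeeping over previously established results.
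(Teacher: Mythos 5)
Your overall plan---deduce that each $\Gamma_i$ must be a $+\nabla f$ integral curve, extend it backwards and show it limits to $p$, and read off items (2)--(3) from Lemma \ref{l: Gamma} and Lemma \ref{l: two rays}---is the right structure for this corollary, and reducing everything to an auxiliary claim about forward $-\nabla f$ orbits is a sensible organizing device.

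However, two steps in the proof of the auxiliary claim carry genuine gaps, and they are not mere bookkeeping.

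First, the ``no escape to infinity'' step. You justify $\limsup_{x\to\infty}R(x)<R_{\max}$ by saying that equality with $R_{\max}$ at infinity along the edges would force a splitting, via the monotonicity argument from Theorem \ref{t: Rmax critical point}. But that splitting argument works when $R$ is \emph{non-increasing} along the edge and equal to $R_{\max}$ in the limit (which forces $R\equiv R_{\max}$, hence $\Ric(\nabla f,\nabla f)\equiv 0$); this corresponds to $\Gamma_i$ being an integral curve of $+\nabla f$. At this point in the proof you have not yet ruled out the possibility that one of the $\Gamma_i$ is an integral curve of $-\nabla f$, along which $R$ is strictly \emph{increasing}. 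In that case $R$ could approach $R_{\max}$ from below at infinity without ever being constant, and no splitting is forced. So the very case you are trying to exclude---a $-\nabla f$ orbit escaping to infinity while $R\nearrow R_{\max}$---is exactly the case your cited argument does not cover, and the reasoning is circular.

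Second, the ``upgrade to full convergence'' step. From $f(\sigma(s_k))\to f(p)$ along a subsequence and $f^{-1}(f(p))=\{p\}$ you conclude $\sigma(s)\to p$. This is correct only if $\sigma(s)$ stays in a compact set for \emph{all} $s$, not just along the subsequence $s_k$. That compactness is equivalent to $f$ being proper (sublevel sets compact, equivalently the level sets $f^{-1}(c)$, $c>0$, being compact $2$-spheres). The paper does treat this as standard---it asserts it in the proof of Theorem \ref{l: wing-like}---and once it is granted your whole argument simplifies dramatically: $\sigma$ is trapped in the compact set $\{f\le f(\sigma(0))\}$, its $\omega$-limit set is contained in the critical set $\{p\}$, and the energy/asymptotics discussion becomes unnecessary. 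So either invoke properness of $f$ explicitly (which closes both gaps at once and makes the corollary genuinely ``immediate''), or, if you want to avoid it, you must give a real argument that a $-\nabla f$ orbit cannot have $R\nearrow R_{\max}$ at infinity, which is more work than the proposal acknowledges.

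Items (2) and (3) are handled correctly: (2) is the second conclusion of Lemma \ref{l: Gamma}, and (3) follows from Lemma \ref{l: two rays} applied with basepoint $p$ (basepoint-independence of the asymptotic cone via Lemma \ref{l: converge to asymp}) together with $\Gamma_i(s)\in\mathcal{C}_i$ for large $s$.
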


\subsection{An ODE lemma for distance distortion estimates}

We will use the following ODE lemma of two time-dependent scalar functions to estimate certain distance distortion in Theorem \ref{l: wing-like}. 
This method generalizes the bootstrap argument
in \cite[Theorem 1.3]{Lai2020_flying_wing}, which relies on the $O(2)$-symmetric structure of the soliton.

\begin{lem}(An ODE Lemma)\label{l: ODE}
Let $H,d:[0,T]\rightarrow(0,\infty)$ be two differentiable functions satisfying the following
\begin{equation}\label{e: ODE derivative assump}
    \begin{cases}
     H'(t)\ge C_1\cdot h^{-1}(t)\\
     h'(t)\le C_2\cdot H^{-2}(t)\cdot h(t),
    \end{cases}
\end{equation}
for some constants $C_1,C_2>0$. Suppose 
\begin{equation}\label{e: C5}
\frac{H(0)}{h(0)}> \frac{C_2}{C_1}.
\end{equation}
Let $C_3:=C_1 h^{-1}(0)-C_2H^{-1}(0)>0$. Then we have
\begin{equation*}
    \begin{cases}
    H(t)\ge C_3t+H(0)\\
    h(t)\le h(0)e^{\frac{C_2}{C_3H(0)}},
    \end{cases}
\end{equation*}
for all $t\in[0,T]$.
\end{lem}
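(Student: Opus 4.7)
The plan is to identify a monotone quantity that decouples the two differential inequalities. Specifically, I would work with
\[
u(t) := \frac{1}{h(t)} - \frac{C_2}{C_1\,H(t)},
\]
and show that $u'(t) \ge 0$ whenever $(H,h)$ satisfies \eqref{e: ODE derivative assump}. The computation is a direct derivative: the ``bad'' contribution $-h'(t)/h(t)^2$, which the upper bound on $h'$ controls from below by $-C_2/(h(t)\,H(t)^2)$, is cancelled exactly by the ``good'' contribution $C_2\,H'(t)/(C_1\,H(t)^2)$, which is bounded below using $H' \ge C_1/h$ by $C_2/(h(t)\,H(t)^2)$. The coefficient $C_2/C_1$ in the definition of $u$ is chosen precisely to produce this cancellation.

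Given $u'(t) \ge 0$, I would then read off $u(t) \ge u(0) = C_3/C_1$, which rearranges to
\[
\frac{C_1}{h(t)} \ge C_3 + \frac{C_2}{H(t)}.
\]
Combining with the hypothesis $H'(t)\ge C_1/h(t)$ and the positivity of $H$, this yields $H'(t) \ge C_3$, and integrating from $0$ to $t$ gives the first conclusion $H(t) \ge C_3 t + H(0)$.

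For the second conclusion, I would feed the linear lower bound just established for $H$ into the second differential inequality, rewritten as $(\log h)'(t) \le C_2/H^2(t)$, to obtain the explicit upper estimate
\[
\frac{d}{dt}\log h(t) \le \frac{C_2}{(C_3 t + H(0))^2}.
\]
A direct integration from $0$ to $t$ produces $\log h(t) - \log h(0) \le \tfrac{C_2}{C_3}\bigl(H(0)^{-1} - (C_3 t + H(0))^{-1}\bigr) \le \tfrac{C_2}{C_3 H(0)}$, which exponentiates to the claimed bound on $h(t)$.

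The main obstacle is guessing the correct monotone quantity $u$. The principled way to discover it is to examine the corresponding equality system $H'=C_1/h$, $h'=C_2 h/H^2$: dividing these gives $dh/dH = C_2 h^2/(C_1 H^2)$, whose separation of variables produces the first integral $h^{-1} - C_2/(C_1 H) = \textrm{const}$. Since our system is a one-sided relaxation of the equality system in both variables simultaneously (upper bound on $h'$, lower bound on $H'$), the first integral becomes a monotone quantity rather than a conserved one; the hypothesis \eqref{e: C5} is precisely what guarantees $u(0) > 0$, so that the resulting bound $H'(t) \ge C_3$ is strictly positive and the linear lower bound on $H$ is non-trivial.
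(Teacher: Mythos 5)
Your proof is correct. The monotone quantity $u(t)=h(t)^{-1}-\tfrac{C_2}{C_1}H(t)^{-1}$ is exactly the right observation: the algebra checks out ($u'\ge 0$ from the two differential inequalities, $u(0)=C_3/C_1>0$ by assumption, and the rearrangement gives $H'\ge C_1/h\ge C_3$), and your final integration matches the paper's closing step verbatim.

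Your route is, however, cleaner than the one the paper actually takes. The paper first reduces the second inequality to $h(t)\le h(0)\,e^{C_2\int_0^t H^{-2}}$, substitutes this into the first inequality to get a lower bound on $H'$ in terms of $\int_0^t H^{-2}$, then introduces an auxiliary function $H_0$ solving the associated integro-differential equality and shows $H\ge H_0$ by comparison, and finally observes that the quantity $H_0'(t)-C_2 H_0(t)^{-1}$ has vanishing derivative. That conserved quantity is precisely $C_1 u$ evaluated along the equality system, so the two proofs rest on the same first integral; the difference is that the paper discovers it indirectly via the auxiliary ODE for $H_0$, whereas you exhibit $u$ directly on $(H,h)$ and show it is nondecreasing under the one-sided relaxations. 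Your version avoids the comparison lemma and the intermediate exponential bound on $h$ entirely, and makes the role of hypothesis \eqref{e: C5} transparent (it is exactly $u(0)>0$). The only thing to keep in mind is the small justification, which you implicitly use, that $u'\ge 0$ holds on the full interval $[0,T]$ because $H,h>0$ are given there a priori; since positivity is part of the hypotheses, no continuation argument is needed.
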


In Section \ref{s: asymptotic geometry} we will show that the soliton outside of a compact subset is covered by two regions: The edge region consists of two solid cylindrical chains where the local geometry is close to $\R\times\cigar$, and the almost flat region carries a $S^1$-fibration and the local geometry looks like $\RR\times S^1$. 
Fix a point $x$ in the almost flat region, the two functions $h(t)$ and $H(t)$ are essentially the length of the $S^1$-fiber at $\phi_t(x)$, and the distance from $\phi_t(x)$ to $\Gamma$, where $\{\phi_t\}_{t\in\R}$ are the diffeomorphisms generated by $\nabla f$.

\begin{proof}
Dividing both sides by $h(t)$ in the second inequality in \eqref{e: ODE derivative assump} we get
\begin{equation*}
    \pt(\ln h(t))\le C_2H^{-2}(t).
\end{equation*}
Integrating this from $0$ to $t$ we get
\begin{equation*}
    \ln h(t)\le C_2\int_{0}^{t}H^{-2}(s)ds + \ln h(0),
\end{equation*}
and hence
\begin{equation*}
    h(t)\le h(0)\,e^{C_2\int_{0}^{t}H^{-2}(s)\,ds},
\end{equation*}
plugging which into the first inequality in \eqref{e: ODE derivative assump} we get
\begin{equation*}
    H'(t)\ge C_1\,h(0)\,e^{-C_2\int_0^{t}H^{-2}(s)\,ds}.
\end{equation*}
Let $H_0(t)$ be a solution to the following problem:
\begin{equation}\label{e: H_0}
    \begin{cases}
    H_0(0)=H(0)\\
    H'_0(t)=C_1\,h^{-1}(0)\,e^{-C_2\int_0^{t}H_0^{-2}(s)\,ds}.
    \end{cases}
\end{equation}
Then it is easy to see that 
\begin{equation}\label{e: H bigger than H0}
    H(t)\ge H_0(t)>0,
\end{equation}
for all $t\ge0$.

The second equation in \eqref{e: H_0} implies
\begin{equation*}
    \ln  (H'_0(t))=\ln (C_1\, h^{-1}(0))-C_2\int_0^t H_0^{-2}(s)ds,
\end{equation*}
differentiating which at both sides we obtain
\begin{equation*}
\begin{split}
    \pt( H'_0(t)&-C_2H_0^{-1}(t))=0.
\end{split}
\end{equation*}
Integrating this and using \eqref{e: H_0},\eqref{e: C5} we obtain
\begin{equation*}
    H'_0(t)-C_2H_0^{-1}(t)= H'_0(0)-C_2H_0^{-1}(0)=C_1 h^{-1}(0)-C_2H^{-1}(0)=C_3>0.
\end{equation*}
So by \eqref{e: H bigger than H0} we obtain
\begin{equation*}
    H(t)\ge H_0(t)\ge C_3t+H(0).
\end{equation*}
Substituting this into the second inequality in \eqref{e: ODE derivative assump} we get
\begin{equation*}
    \pt(\ln h(t))\le \frac{C_2}{(C_3t+H(0))^2},
\end{equation*}
integrating which we obtain
\begin{equation*}
    h(t)\le h(0)e^{\frac{C_2}{C_3H(0)}-\frac{C_2}{C_3(C_3t+H(0))}}\le h(0)e^{\frac{C_2}{C_3H(0)}}.
\end{equation*}
\end{proof}

\subsection{Asymptotic cone is not a ray}
In this subsection, we show that the scalar curvature has positive limits along the two integral curves $\Gamma_1,\Gamma_2$.
As a consequence of this, the asymptotic cone is isometric to a sector with non-zero angle.

A key step in the proof is to choose two suitable functions that evolves by the conditions in Lemma \ref{l: ODE}. Roughly speaking, for a fixed point $x$ at which the soliton is an $\epsilon$-cylindrical plane, the two functions $h(t)$ and $H(t)$ are essentially the time-$t$-length of the $S^1$-fiber at $x$, and the time-$t$-distance to the edges $\Gamma$, where $t$ is the backwards time variable in the Ricci flow of the soliton.

These two functions satisfy the inequalities in Lemma \ref{l: ODE}:
On the one hand, by Perelman's curvature estimate we will see that the curvature in the almost flat region is bounded above by $
H^{-2}(t)$. So by the Ricci flow equation, $h(t)$ evolves by the second inequality in \eqref{e: ODE derivative assump}.
On the other hand, the increase of $H(t)$ is contributed by the regions that look like $\R\times\cigar$, whose volume scale is roughly $h(t)$.
So $H(t)$ evolves under the first inequality in \eqref{e: ODE derivative assump}.
Then applying the lemma we will see that $H(t)$ increases at least linearly, and $h(t)$ stays bounded as $t\rii$.

We first prove a technical lemma using metric comparison.

\begin{lem}\label{l: exactly two caps in 2D}
There exists $\epsilon>0$ such that the following holds:
Let $\Sigma$ be a 2D complete Riemannian manifold with non-negative curvature. Then there can not be more than two disjoint $\epsilon$-caps.

Moreover,
suppose there are two disjoint $\epsilon$-caps centered at $p_1,p_2$, and $\Sigma$ is an $\epsilon$-neck at a point $p\in\Sigma$ such that $p$ is not in the two $\epsilon$-caps.
Then the central circle at $p$ separates $p_1$ and $p_2$.
\end{lem}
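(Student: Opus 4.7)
The plan is to deduce the lemma from the Gauss--Bonnet theorem, exploiting the non-negative curvature together with the structure of $\epsilon$-necks. A preliminary observation is that the central circle $C$ of an $\epsilon$-neck of scale $r$ is, after rescaling by $r^{-1}$, $\epsilon$-close in $C^m$-norm ($m=[\epsilon^{-1}]$) to the geodesic central circle of the standard cylinder $\R\times S^1$; hence its geodesic curvature in $\Sigma$ satisfies $|\int_C\kappa_g\,ds|=O(\epsilon)$. Applying Gauss--Bonnet to any $\epsilon$-cap $\mathcal C$ (a topological disk bounded by such a central circle) therefore gives
\begin{equation*}
\int_{\mathcal C} K\,dA \;=\; 2\pi-\int_{\partial \mathcal C}\kappa_g\,ds \;=\; 2\pi+O(\epsilon).
\end{equation*}

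For the first assertion, suppose three pairwise disjoint $\epsilon$-caps exist. By the preliminary estimate and $K\ge 0$, $\int_\Sigma K\,dA\ge 6\pi-O(\epsilon)$. If $\Sigma$ is compact, global Gauss--Bonnet gives $\int_\Sigma K = 2\pi\chi(\Sigma)\le 4\pi$, a contradiction for small $\epsilon$. If $\Sigma$ is non-compact, the Cohn--Vossen inequality gives $\int_\Sigma K\le 2\pi\chi(\Sigma)\le 2\pi$, again contradicting the lower bound. This rules out three disjoint $\epsilon$-caps.

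For the second assertion, given two disjoint $\epsilon$-caps $\mathcal C_1,\mathcal C_2$ and an $\epsilon$-neck centered at $p\notin\mathcal C_1\cup \mathcal C_2$ with central circle $C$, the same argument with two caps yields $\int_\Sigma K\ge 4\pi-O(\epsilon)$, which rules out $\Sigma$ non-compact and forces $\chi(\Sigma)=2$, i.e.\ $\Sigma\cong S^2$. Since the $\epsilon$-neck at $p$ covers the ball $B_g(p,\epsilon^{-1}r(p))$ while $C$ has diameter $O(r(p))$, for $\epsilon$ small $C$ is disjoint from both caps. The simple closed curve $C$ then splits $\Sigma\cong S^2$ into two open disks $D_+,D_-$, each satisfying
\begin{equation*}
\int_{D_\pm} K\,dA \;=\; 2\pi\mp\int_C\kappa_g\,ds \;=\; 2\pi+O(\epsilon).
\end{equation*}
If $p_1$ and $p_2$ both lay in one component, say $D_+$, then $\mathcal C_1\cup \mathcal C_2\subset D_+$ (each cap is a connected disk disjoint from $C$ containing its center), giving $\int_{D_+} K\ge 4\pi-O(\epsilon)$, which contradicts the display above for small $\epsilon$. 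Hence $p_1$ and $p_2$ lie on different sides of $C$.

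The only places requiring care are the estimate $|\int_C\kappa_g\,ds|=O(\epsilon)$, which is a direct consequence of the $C^m$-closeness in the definition of an $\epsilon$-neck, and the disjointness of $C$ from the two caps, which follows from the fact that the $\epsilon$-neck chart at $p$ contains a ball of radius $\epsilon^{-1}r(p)\gg r(p)$ around $p$ that lies in $\Sigma\setminus(\mathcal C_1\cup\mathcal C_2)$. Neither presents a genuine obstacle; the main content of the proof is the Gauss--Bonnet bookkeeping outlined above.
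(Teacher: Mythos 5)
Your approach via Gauss--Bonnet and Cohn--Vossen is genuinely different from the paper's, which proves both assertions by comparison-angle (Toponogov-type) arguments and never invokes Gauss--Bonnet. Your first assertion is handled correctly: each $\epsilon$-cap carries $2\pi+O(\epsilon)$ total curvature, so three disjoint caps would force $\int_\Sigma K\ge 6\pi-O(\epsilon)$, exceeding the topological ceiling $2\pi\chi(\Sigma)\le 4\pi$ in the compact case and the Cohn--Vossen bound $2\pi$ for a complete non-compact surface with $K\ge 0$. This is a clean alternative to the paper's argument, which instead places $p_1,p_2,p_3$ in a geodesic triangle and shows all three comparison angles are $O(\delta(\epsilon))$, contradicting that Euclidean comparison angles sum to $\pi$.

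The second assertion, however, has a gap. Your reduction needs $\mathcal{C}_1\cup\mathcal{C}_2\subset D_+$, hence that $C$ is disjoint from both caps, and you justify this by asserting that $B_g(p,\epsilon^{-1}r(p))$ lies in $\Sigma\setminus(\mathcal{C}_1\cup\mathcal{C}_2)$. But the hypothesis only says $p\notin\mathcal{C}_1\cup\mathcal{C}_2$; nothing prevents $\partial\mathcal{C}_1$ from passing within distance $O(\epsilon\,r(p))$ of $p$, in which case $C$ (of diameter $\approx r(p)$ and passing through $p$) can cross $\partial\mathcal{C}_1$ and enter $\mathcal{C}_1$. Part of $\mathcal{C}_1$'s curvature then lands in $D_-$, and the best one extracts from $K\ge 0$ together with the two Gauss--Bonnet identities on $D_\pm$ is $\int_{D_+}K\ge 2\pi-O(\epsilon)$, which is no contradiction. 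The disjointness can plausibly be recovered, for instance by replacing $\mathcal{C}_i$ with a slightly shrunk cap whose boundary circle has been pushed past $C$ while still carrying $2\pi-O(\epsilon)$ of curvature, but this requires an argument and is in fact the crux of the second assertion. The paper sidesteps it entirely: if $p_1,p_2$ lie on the same side of $C$, then the minimizing geodesics $pp_1$ and $pp_2$ must exit the $\epsilon$-neck through the same end $\gamma_+$, so $\widetilde{\measuredangle}p_1pp_2=O(\delta(\epsilon))$, and the comparison angles at $p_1,p_2$ are $O(\delta(\epsilon))$ as in the three-cap case, again contradicting that the three comparison angles must sum to $\pi$.
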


\begin{proof}
For the first claim, suppose by contradiction that there are three disjoint $\epsilon$-caps $\mathcal{C}_1,\mathcal{C}_2,\mathcal{C}_3$ centered at $p_1,p_2,p_3$.
We shall use $\delta(\epsilon)$ to denote all constants that go to zero as $\epsilon$ goes to zero.

Assume the minimizing geodesics $p_1p_2,p_1p_3$ intersect the boundary of $\mathcal{C}_1$ at $x_2$ and $x_3$ respectively, which are centers of two $\epsilon$-necks. So we have
\begin{equation*}
    d(x_2,x_3)<\delta(\epsilon)d(x_2,p_1),
\end{equation*}
which by the monotonicity of angles implies
\begin{equation*}
    \widetilde{\measuredangle}p_2p_1p_3\le\widetilde{\measuredangle}x_2p_1x_3\le \delta(\epsilon).
\end{equation*}
In the same way we obtain that $\widetilde{\measuredangle}p_1p_2p_3,\widetilde{\measuredangle}p_1p_3p_2\le\delta(\epsilon)$.
But then we have
\begin{equation*}
    \widetilde{\measuredangle}p_1p_2p_3+\widetilde{\measuredangle}p_2p_1p_3+\widetilde{\measuredangle}p_1p_3p_2\le3\delta(\epsilon)<\pi,
\end{equation*}
which is impossible.

For the second claim,
suppose the central circle at $p$ does not separates $p_1$ and $p_2$. 
Let $\psi:(-\epsilon^{-1},\epsilon^{-1})\times S^1\ri \Sigma$ be the inverse of the $\epsilon$-isometry of the $\epsilon$-neck at $p$.
Let $\gamma_{\pm}=\psi(\{\pm\epsilon^{-1}\}\times S^1)$. Then after possibly replacing $+$ with $-$, we claim that the minimizing geodesics $pp_1,pp_2,p_1p_2$ are all contained in the the component of $\Sigma$ separated by $\gamma_{-}$ which contains $\gamma_{+}$:
First, since $p_1,p_2$ are in the same component of $\Sigma$ separated by $\psi(\{0\}\times S^1)$,
suppose $pp_1$ intersects $\gamma_{+}$, then it follows that $pp_2$ also intersects $\Sigma_{+}$, and the claim follows by the minimality of these geodesics.

By the claim, we can use a similar argument as before to deduce 
\begin{equation*}
    \widetilde{\measuredangle}p_1p_2p+\widetilde{\measuredangle}p_2p_1p+\widetilde{\measuredangle}p_1pp_2<\pi,
\end{equation*}
which is a contradiction.
\end{proof}

Now we prove the main theorem in this section. 
First, it states that the soliton is $\mathbb{Z}_2$-symmetric at infinity, in the sense that $R$ has equal positive limits along the two ends of $\Gamma$.
Moreover, assume this positive limit is equal to $4$ after a proper rescaling, then any sequence of points going to infinity converges to either $\RR\times S^1$ or $\R\times\cigar$, without any rescalings.

We remark that this $\mathbb{Z}_2$-symmetry at infinity is also true in mean curvature flow: A mean curvature flow flying wing in $\R^3$ is a graph over a finite slab. Moreover, the slab width is equal to that of its asymptotic translators, which are two tilted Grim-Reaper hypersurfaces  \cite{Spruck2020CompleteTS,white}.

\begin{theorem}\label{l: wing-like}
Let $(M,g)$ be a 3D steady gradient soliton with positive curvature that is not a Bryant soliton. Let $\Gamma_1,\Gamma_2$ be the two integral curves of $\nabla f$ from Corollary \ref{l: new Gamma}. Then after a rescaling of $(M,g)$, we have
\begin{equation*}
    \lim_{s\rii}R(\Gamma_1(s))=\lim_{s\rii}R(\Gamma_2(s))=4.
\end{equation*}
Moreover, for any sequence of points $q_k\rii$, the sequence of pointed manifolds $(M,g,q_k)$ converge to either $(\RR\times S^1,g_{stan})$, or  $(\R\times\cigar,g_c)$. In particular, if $\{q_k\}\subset\Gamma_1\cup\Gamma_2$, then $(M,g,q_k)$ converges to $\R\times\cigar$.
\end{theorem}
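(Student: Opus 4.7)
The plan is to invoke the ODE Lemma \ref{l: ODE} on a pair of time-dependent quantities tracking a fixed point $x \in M$ under the backward Ricci flow $g(-\tau) = \phi_\tau^* g$ (with $\phi_\tau$ generated by $\nabla f$). Fix $i \in \{1,2\}$ and choose $x$ in the almost-flat region sufficiently far from $\Gamma$, whose nearest point on $\Gamma$ in $g$ lies in the open-end portion of $\Gamma_i$. Define $H(\tau) := d_{g(-\tau)}(x,\Gamma) = d_g(\phi_\tau(x),\Gamma)$ (using $\phi_\tau$-invariance of $\Gamma$) and $h(\tau) := $ length of the shortest geodesic loop at $x$ in $g(-\tau)$ generating the $S^1$-fiber of the local $\epsilon$-cylindrical plane structure.

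The two ODE inequalities are verified as follows. The upper bound $h'(\tau) \le C\,H^{-2}(\tau)\,h(\tau)$ follows from the quadratic curvature decay in Theorem \ref{l: curvature upper bound initial}, yielding $R(\phi_\tau(x)) \le C\,H^{-2}(\tau)$, combined with the Ricci flow equation and $|\Ric| \le C R$ (valid in dimension 3 with non-negative curvature). The lower bound $H'(\tau) \ge C^{-1}\,h^{-1}(\tau)$ comes from the first variation formula (Lemma \ref{l: distance laplacian}): by Lemma \ref{l: inf achieved on the non-compact portion}, a $g(-\tau)$-minimizing geodesic $\gamma$ from $x$ realizing $H(\tau)$ must terminate at an $\epsilon$-tip point $y_\tau$ in the open end of $\Gamma_i$; the local rescaled-cigar model near $y_\tau$ has scale $\lambda(\tau)$ with asymptotic $S^1$-fiber length $2\pi\lambda(\tau) \approx h(\tau)$, and the Ricci integral identity \eqref{e: integrate Ricci} applied on the rescaled cigar along $\gamma$ gives $\int_\gamma \Ric(\gamma',\gamma') \ge C^{-1}\,h(\tau)^{-1}$. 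Choosing $x$ deep enough in the almost-flat region to arrange $H(0)/h(0) > C_2/C_1$, Lemma \ref{l: ODE} gives $h(\tau)$ uniformly bounded and $H(\tau) \ge C_3\tau + H(0)$ for all $\tau \ge 0$. The boundedness of $h(\tau)$ forces the cigar scale at $y_\tau$ to stay bounded, and combined with the monotonicity $\langle \nabla R, \nabla f \rangle = -2\Ric(\nabla f, \nabla f) \le 0$ along $\Gamma_i$, the limit $R_i := \lim_{s \to \infty} R(\Gamma_i(s))$ exists and is strictly positive; the same argument on the other end produces $R_{3-i} > 0$.

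After a global rescaling we may assume $R_1 = 4$; it remains to show $R_2 = 4$ and then classify the unrescaled asymptotic limits. Select points $x^{(i)} \in M$ near the end of $\Gamma_i$ at which $(M,g)$ is $\epsilon_i$-close to $\RR \times S^1$ on scale $2R_i^{-1/2}$, with $\epsilon_i \to 0$. Running the ODE estimates simultaneously at $x^{(1)}$ and $x^{(2)}$, one tracks how $d_{g(-\tau)}(x^{(1)}, x^{(2)})$ evolves and shows that it remains uniformly bounded as $\tau \to \infty$: both points flow backward into the expanding almost-flat region whose transverse diameter is controlled by the same ODE system. Consequently, for large $\tau$, $(M, g(-\tau))$ contains a single $\epsilon$-cylindrical plane region around both images at a uniform scale, forcing the two candidate scales $2R_1^{-1/2}$ and $2R_2^{-1/2}$ to coincide, so $R_2 = R_1 = 4$. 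With the limits normalized, Lemma \ref{l: DR to cigar} classifies the rescaled asymptotic limit of any sequence $q_k \to \infty$; the rescaling factor $r(q_k)$ tends to the canonical volume scale of the corresponding model ($1$ for $\RR \times S^1$ with $g_{stan}$, resp.\ $r(x_{tip})$ for the cigar tip with $g_c$), so the unrescaled convergence $(M,g,q_k) \to (\RR \times S^1, g_{stan})$ or $(\R \times \cigar, g_c)$ follows. The case $q_k \in \Gamma_1 \cup \Gamma_2$ is then Corollary \ref{l: new Gamma}(2). I expect the main obstacle to be the $R_1 = R_2$ step, as it requires a delicate simultaneous bounded-distance estimate between $x^{(1)}$ and $x^{(2)}$ along the backward flow; a secondary technical hurdle is making the comparison $2\pi\lambda(\tau) \approx h(\tau)$ (needed for the first ODE inequality) quantitative and uniform in $\tau$.
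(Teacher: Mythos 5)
Your overall skeleton---apply the ODE Lemma \ref{l: ODE} to the pair $(H(\tau),h(\tau))$ obtained by pushing an $\epsilon$-cylindrical-plane center forward under $\phi_\tau$, then compare fiber lengths at two well-separated cylindrical points to force $R_1=R_2$---is exactly the strategy the paper follows. However, both steps you flag as ``hurdles'' are where essentially all the work of the proof actually lives, and neither is resolved in your proposal. What you call a ``secondary technical hurdle'' (that $2\pi\lambda(\tau)\le C\,h(\tau)$, with $\lambda(\tau)$ the cigar scale at the foot $y_\tau$) is in fact the central difficulty: the $\epsilon$-closeness to $\R\times\cigar$ at $y_\tau$ only controls the metric within distance $\epsilon^{-1}\lambda(\tau)$ of $y_\tau$, whereas your basepoint sits at distance $H(\tau)\gg\lambda(\tau)$ away, so a priori the local cigar model at $y_\tau$ says nothing about the fiber length at $x$. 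The paper resolves this (Claim \ref{c: compare r}) by descending to the level surface $\Sigma=f^{-1}(f(q_t))$, showing via Lemmas \ref{l: geometry of level set near tip} and \ref{l: 3D neck is 2D neck} that $\Sigma$ carries $\delta$-necks both at the flowed-down basepoint $p_1$ and at a point near $q_t$, and then invoking the 2D neck-scale monotonicity of Lemma \ref{l: 2D surface} (which crucially uses the intrinsic positive curvature of $\Sigma$, inherited from $\nabla^2 f=\Ric\ge 0$ via the Gauss equation) to propagate the fiber-length bound across $\Sigma$. Without a substitute for this chain the inequality $H'\ge C^{-1}h^{-1}$ is not established and the ODE lemma cannot be invoked.

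The second gap is the assertion that $d_{g(-\tau)}(x^{(1)},x^{(2)})$ stays bounded as $\tau\to\infty$. Appealing to ``the same ODE system'' does not give this, because the ODE controls $H$ and $h$ separately at each basepoint, not the relative drift between two basepoints. The paper's Claim \ref{c: lnt} proves it by a two-pass distance-distortion argument keyed to the quadratic decay of Theorem \ref{l: curvature upper bound initial}: integrating $\frac{d}{dt}d(\phi_t(x_1),\phi_t(x_2))\le C\max_i(d(x_i,\Gamma)+C^{-1}t)^{-1}$ first yields only logarithmic growth $d\le d(x_1,x_2)+C\ln t$, and this must then be bootstrapped---the connecting geodesic is eventually at distance at least $C^{-1}t$ from $\Gamma$, giving $\frac{d}{dt}d\le Ct^{-3/2}$ whose integral converges---to reach a finite bound. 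There is also a smaller omission: you assume without argument that the foot $y_\tau$ stays on the open end of the same $\Gamma_i$, but the paper has to split into cases depending on whether $\lim R(\Gamma_2)=0$, and when it is positive uses an estimate like \eqref{e: inf bigger than sup} together with distance distortion to rule out the foot hopping to the other edge.
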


\begin{proof}
We will first prove $\lim_{s\rii}R(\Gamma_i(s))>0$, $i=1,2$.
By Theorem \ref{t: Rmax critical point} we know that $f$ has a unique critical point $x_0$. Assume $f(x_0)=0$. Then it is easy to see that all level sets $f^{-1}(s)$ for all $s>0$ are diffeomorphic to 2-spheres, and the induced metrics have positive curvature.
Suppose by contradiction that $\lim_{s\rii}R(\Gamma_i(s))>0$, $i=1,2$, does not hold.
Let $\Gamma_1,\Gamma_2$ be from Corollary \ref{l: new Gamma}, and $\Gamma=\Gamma_1(-\infty,\infty)\cup\Gamma_2(-\infty,\infty)\cup\{x_0\}$. Then the subset $\Gamma$ is invariant under the diffeomorphism $\phi_t$.

Let $C$ denote all positive universal constants, $\epsilon$ denote all positive constants that we may take arbitrarily small, and $\delta$ denote all positive constants that converge to zero as $\epsilon\ri0$. The value of $\delta$ may change from line to line.
Suppose by contradiction that the theorem does not hold. Then we may assume $\lim_{s\rii}R(\Gamma_1(s))=0$.

Choose a point $p\in M$ such that $(M,g)$ is an $\epsilon$-cylindrical plane at $p$, and $d(p,\Gamma)=d(p,\Gamma_1)$. Let $\gamma_p(t)$ be the integral curve of $\nabla f$ starting from $p$. Then by Lemma \ref{l: tip contracting} it follows that $(M,g)$ is always an $\epsilon$-cylindrical plane at $\gamma_p(t)$ for $t\ge0$.
So we can define $h(\gamma_p(t))$ as in Definition \ref{d: h}.
Denote $d(\gamma_p(t),\Gamma)$ by $H(\gamma_p(t))$, and abbreviate $\gamma_p(t)$ as $p_t$.
By some distortion estimates and Theorem \ref{l: curvature upper bound initial} it is easy to see that
\begin{equation}\label{e: H h initial}
    \begin{cases}
    \pt H(p_t)\ge C^{-1}\cdot r^{-1}(q_t)\\
    \pt h(p_t)\le C\cdot H^{-2}(p_t)\cdot h(p_t),
    \end{cases}
\end{equation}
In the following we will show that $r(q_t)<C\,h(p_t)$, and $H(p_t),h(p_t)$ satisfy the conditions in the ODE Lemma \ref{l: ODE}.

First, let $q_t\in\Gamma$ be a point such that $d(p_t,\Gamma)=d(p_t,q_t)$. 
We claim that $R(q_t)\ri0$ as $t\rii$. First, this is clear
if $\lim_{s\rii}R(\Gamma_2(s))=0$, because $q_t\rii$ as $t\rii$ by Lemma \ref{l: inf achieved on the non-compact portion}.
Second, if $\lim_{s\rii}R(\Gamma_2(s))>0$.
We may assume 
\begin{equation}\label{e: inf bigger than sup}
    \sup_{s\in[s_1,\infty)} R(\Gamma_1(s))< 100\inf_{s\in[s_1,\infty)} R(\Gamma_2(s)),
\end{equation}
for some $s_1>0$. We may also assume by Lemma \ref{l: inf achieved on the non-compact portion} that $q_t\in\Gamma_1([s_1,\infty))\cup\Gamma_2([s_1,\infty))$.
Since $d(p,\Gamma)=d(p,\Gamma_1)$, by \eqref{e: inf bigger than sup} 
and a distance distortion estimate we see that the closest point $q_t$ on $\Gamma$ is always on the segment $\Gamma_1([s_1,\infty))$
for all $t\ge 0$. So $R(q_t)\ri0$. So the claim holds.

We fix some sufficiently large $t$ so that $R(q_t)<\frac{1}{2000C^2}$,  where the value of $C>0$ will be clear later.
For simplicity, we will omit the subscript $t$ in $q_t$ and $p_t$.
Let $\Sigma:=f^{-1}(f(q))$.
Then $\Sigma$ is diffeomorphic to $S^2$, and it separates $M$ into a bounded component $f^{-1}([0,f(q)))$ diffeomorphic to a 3-ball, and an unbounded component $f^{-1}((f(q),\infty))$ diffeomorphic to $\R\times S^2$. So $\Gamma_2\cap\Sigma\neq\emptyset$. Let $q_2\in\Gamma_2\cap\Sigma$, and we may assume it is an $\epsilon$-tip point.

\begin{claim}\label{c: one half}
$d_{\Sigma}(p_1,q_2)\ge\frac{1}{2}\,d_{\Sigma}(q,p_1)$.
\end{claim}

\begin{proof}[Proof of the claim]
Let $\gamma:[0,1]\ri M$ be a minimizing geodesic from $q$ to $p$, then by Lemma \ref{l: compare f} we have $f(\gamma([0,1]))\ge f(q)$, so $p\in f^{-1}([f(q),\infty))$. 
Therefore, there is a smooth  non-negative function $T:[0,1]\ri\R$ such that $\overline{\gamma}:=\phi_{-T(r)}(\gamma(r))\in\Sigma$, $r\in[0,1]$. 
Let $p_1=\overline{\gamma}(1)=\phi_{-T(1)}(p)$. Then by the discussion in the beginning of the proof, we may assume $(M,g)$ is an $\epsilon$-cylindrical plane at $p_1$.
First, by the positive curvature and distance shrinking in the Ricci flow $g(t)=\phi_{-t}^*g$ of the soliton, we have
\begin{equation}\label{e: two d}
    h(p_1)\le h(p).
\end{equation}
Consider the smooth map $\chi:[0,1]\times\R\ri M$ defined by $\chi(r,t)=\phi_t(\overline{\gamma}(r))$.
Since $\phi_t$ is the flow of $\nabla f$, we have $f\circ\chi(r,t)=t+f(q)$ and hence $\langle\chi_*(\partial_t),\chi_*(\partial_r)\rangle=\langle\nabla f,\chi_*(\partial_r)\rangle=0$. So we can compute that
\begin{equation}\label{e: useful}
\begin{split}
    d(p,q)=L(\gamma)&=\int_0^1|\chi_{*(r,T(r))}(\partial_r)+T'(r)\cdot\chi_{*(r,T(r))}(\partial_t)|\,dr\\
    &\ge\int_0^1|\chi_{*(r,T(r))}(\partial_r)|\,dr
    =\int_0^1|\phi_{T(r)*}(\overline{\gamma}'(r))|\,dr\\
    &\ge\int_0^1|\overline{\gamma}'(r)|\,dr=L(\overline{\gamma})\ge d_{\Sigma}(q,p_1),
\end{split}
\end{equation}
where $d_{\Sigma}$ denotes the intrinsic metric on $\Sigma$. 

Since $|\nabla f|\ge C^{-1}>0$ on $M\setminus B(x_0,1)$, we have 
\begin{equation}\label{e: one half}
\begin{split}
    d(p,p_1)\le C(f(p)-f(p_1))&=C(f(p)-f(q))\\
    &=C\int_0^1\int_0^r\Ric(\gamma'(s),\gamma'(s))\,ds\,dr\\
    &\le C\,d(p,q)\int_0^1\Ric(\gamma'(r),\gamma'(r))\,dr
    \le \frac{1}{2} d(p,q),
\end{split}
\end{equation}
where in the last inequality we used $\int_0^1\Ric(\gamma'(r),\gamma'(r))\,dr\le \frac{1}{2C}$, which follows from the second variation formula and the assumption $R(q)<\frac{1}{2000C^2}$.

By the choice of $q$ we have $d(p,q_2)\ge d(p,\Gamma) 
=d(p,q)$. Together with inequalities \eqref{e: useful} and \eqref{e: one half} this implies
\begin{equation*}\begin{split}\label{e: distance compare for volume comparison}
    d_{\Sigma}(p_1,q_2)\ge d(p_1,q_2)\ge d(p,q_2)-d(p,p_1)
    \ge d(p,q)-d(p,p_1)\ge\frac{1}{2}d_{\Sigma}(q,p_1).
\end{split}\end{equation*}
This proves the claim.
\end{proof}

\begin{claim}\label{c: compare r}
$r(q)\le 1200\,r(p)$.
\end{claim}
\begin{proof}[Proof of the claim]
Since $q$ is an $\epsilon$-tip point, we may assume without loss of generality that $d_{\Sigma}(q,p_1)\ge \overline{D} R^{-1/2}(q)$ where $\overline{D}$ is from Lemma \ref{l: geometry of level set near tip}, because otherwise the claim clearly holds for sufficiently small $\epsilon$.
So we can find a point $q'_1$ on the $\Sigma$-minimizing geodesic between $q$ and $p_1$ such that $d_{\Sigma}(q,q'_1)= \overline{D} R^{-1/2}(q)$ and hence
\begin{equation}\label{e: point}
    r(q)\le 10\,r(q'_1).
\end{equation}
Moreover, by Lemma \ref{l: geometry of level set near tip} it follows that $(M,g)$ is a $\delta$-cylindrical plane at $q'_1$. So by Lemma \ref{l: 3D neck is 2D neck} this implies that the level set $\Sigma$ is a $\delta$-neck at $p_1$ and $q'_1$ at scale $r(p_1)$ and $r(q'_1)$.

We may also assume  $d_{\Sigma}(p_1,q'_1)>10\,r(p_1)$, because otherwise $r(q)\le 10 r(q'_1)\le 20\,r(p_1)$ and the claim holds. We will that the following  inequalities hold,  
\begin{equation}\label{e: VC}
d_{\Sigma}(q_2,p_1)\le d_{\Sigma}(q_2,q'_1)\le 3\,d_{\Sigma}(q_2,p_1).
\end{equation}
For the first inequality in \eqref{e: VC},
since $q_2$ and $q'_1$ are in two disjoint $\delta$-caps, it follows by Lemma \ref{l: exactly two caps in 2D} that the central circle at $p_1$ separates $q'_1$ and $q_2$ in $\Sigma$.
So a minimizing geodesic between $q'_1$ and $q_2$ intersects the central circle at $p_1$, and hence
\begin{equation*}
    d_{\Sigma}(q_2,q'_1)\ge d_{\Sigma}(p_1,q_2)-10 h(p_1)+d_{\Sigma}(p_1,q'_1)\ge d_{\Sigma}(p_1,q_2),
\end{equation*}
where in the last inequality we used
$d_{\Sigma}(p_1,q'_1)>10\,h(p_1)$.
For the second inequality in \eqref{e: VC}, 
by Claim \ref{c: one half} we have
\begin{equation*}
d_{\Sigma}(q_2,q'_1)\le d_{\Sigma}(q_2,p_1)+d_{\Sigma}(p_1,q'_1)
\le d_{\Sigma}(q_2,p_1)+d_{\Sigma}(p_1,q)\le 3\,d_{\Sigma}(q_2,p_1).
\end{equation*}

By the first inequality in \eqref{e: VC} and the positive curvature on $\Sigma$, we can deduce by the volume comparison that
\begin{equation}\label{e: oil}
    \frac{|\partial B_{\Sigma}(q_2,d_{\Sigma}(q_2,q'_1))|}{d_{\Sigma}(q_2,q'_1)}\le \frac{|\partial B_{\Sigma}(q_2,d_{\Sigma}(q_2,p_1))|}{d_{\Sigma}(q_2,p_1)}.
\end{equation}
Since $\Sigma$ is a $\delta$-neck at both points $p_1$ and $q'_1$, it is easy to see that
\begin{equation*}
    \frac{1}{2}\le\frac{|\partial B_{\Sigma}(q_2,d_{\Sigma}(q_2,q'_1))|}{r(q'_1)}\le2\quad\textit{and}\quad \frac{1}{2}\le\frac{|\partial B_{\Sigma}(q_2,d_{\Sigma}(q_2,p_1))|}{r(p_1)}\le2.
\end{equation*}
Therefore, by the second inequality in \eqref{e: VC} and \eqref{e: oil} we get
$r(q'_1)\le 12\,r(p_1)$.
Then by \eqref{e: point}, $r(p_1)\le 10\,h(p_1)$, and \eqref{e: two d} we get
\begin{equation*}
    r(q)\le 120\,r(p_1)\le 1200\,h(p_1)\le 1200\,h(p).
\end{equation*}

\end{proof}

Restoring the subscript $t$ in $p_t,q_t$, we proved $r(q_t)<1200\,h(p_t)$. So by the evolution inequalities \eqref{e: H h initial} we see  that $h(p_t)$ and $H(p_t)$ satisfy the following inequalities
\begin{equation*}
    \begin{cases}
    \pt H(p_t)\ge (1200C)^{-1}\cdot h^{-1}(q_t),\\
    \pt h(p_t)\le C\cdot H^{-2}(p_t)\cdot h(p_t).
    \end{cases}
\end{equation*}
Since $p$ is the center of an $\epsilon$-cylindrical plane, it follows that by assuming $\epsilon$ to be sufficiently small, we have $\frac{H(p)}{h(p)}>1200\,C^2$.
Therefore, the two functions $H(p_t),h(p_t)$ satisfy all assumptions in the ODE Lemma \ref{l: ODE}, applying which we can deduce
\begin{equation*}
    H(p_t)\ge C^{-1}t\quad\textit{and}\quad h(p_t)\le C,
\end{equation*}
for all sufficiently large $t$.
So by Claim \ref{c: compare r} and $r(p_t)\le 10\,h(p_t)$ we obtain
\begin{equation*}
    r(q_t)\le 1200\,r(p_t)\le 12000\,h(p_t)\le C.
\end{equation*}
Note $q_t\in \Gamma_1$ are $\epsilon$-tip points and $q_t\rii$ as $t\rii$. 
This implies $\lim_{s\rii}R(\Gamma_1(s))>0$, a contradiction.
Therefore, we proved $\lim_{s\rii}R(\Gamma_i(s))>0$.

Lastly, we prove $\lim_{s\rii}R(\Gamma_1(s))=\lim_{s\rii}R(\Gamma_2(s))$ and the rest assertions of the theorem.
Let $x_1,x_2\in M$ be any two $\epsilon$-cylindrical points. We will show $|h(x_1)-h(x_2)|\le \epsilon$. If this is true, then combining with the convergence to $\R\times\cigar$ after rescalings along $\Gamma_1$ and $\Gamma_2$, this implies the theorem.

On the one hand, similarly as before, we can show that 
\begin{equation}\label{enheng}
    |h(x_i)-h(\phi_t(x_i))|\le\epsilon,\quad i=1,2.
\end{equation}
On the other hand, we claim
\begin{claim}\label{c: lnt}
$\lim_{t\rii} d(\phi_t(x_1),\phi_t(x_2))<\infty$.
\end{claim}

\begin{proof}[Proof of the claim]
First, we have that $d(\phi_t(x_i),\Gamma)\ge d(x_i,\Gamma)+C^{-1}t$, $i=1,2$,
so by the distance distortion Lemma \ref{l: distance laplacian} and Theorem \ref{l: curvature upper bound initial} we have 
\begin{equation*}
    \frac{d}{dt}d(\phi_t(x_1),\phi_t(x_2))\le \max\left\{\frac{C}{d(x_1,\Gamma)+C^{-1}t},\frac{C}{d(x_2,\Gamma)+C^{-1}t}\right\},
\end{equation*}
integrating which we obtain
\begin{equation}\label{e: lnt}
    d(\phi_t(x_1),\phi_t(x_2))\le d(x_1,x_2)+C\ln t.
\end{equation}
Therefore, for any sufficiently large $t$, let $\gamma:[0,1]\ri M$ be a minimizing geodesic between $\phi_t(x_1),\phi_t(x_2)$, by triangle inequalities we have $d(\gamma([0,1],\Gamma)> C^{-1}t$.
So by Theorem \ref{l: curvature upper bound initial} we have $\sup_{s\in[0,1]}R(\gamma(s))\le \frac{C}{t^2}$, and hence \eqref{e: lnt} implies
\begin{equation*}
    \frac{d}{dt}d(\phi_t(x_1),\phi_t(x_2))\le\int_{\gamma}\Ric(\gamma'(s),\gamma'(s))\,ds\le \frac{C}{t^{\frac{3}{2}}},
\end{equation*}
integrating which we proved the claim.
\end{proof}

Note that the points $\phi_t(x_2)$ converge to a rescaling of $\RR\times S^1$ as $t\rii$, so by Claim \ref{c: lnt} we see that 
\begin{equation*}
    |h(\phi_t(x_1))-h(\phi_t(x_2))|\le\epsilon,
\end{equation*}
for all sufficiently large $t$.
Combining this with \eqref{enheng}, this implies 
\begin{equation*}
    |h(x_1)-h(x_2)|\le\epsilon,
\end{equation*}
which proves the theorem.

\end{proof}

In the following we show that the soliton is asymptotic to a sector. Therefore, 3D steady gradient solitons are all flying wings except the Bryant soliton.

\begin{cor}[Asymptotic to a sector]\label{c: Asymptotic to a sector}
Let $(M,g)$ be a 3D steady gradient soliton with positive curvature. If the asymptotic cone of $(M,g)$ is a ray, then $(M,g)$ is isometric to a Bryant soliton.
\end{cor}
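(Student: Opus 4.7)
The plan is to prove the contrapositive: assuming $(M,g)$ is a 3D steady gradient Ricci soliton with positive curvature that is not isometric to the Bryant soliton, I will show that the opening angle $\theta$ of the asymptotic cone is strictly positive, so that the asymptotic cone cannot be a ray.

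Since $(M,g)$ is not Bryant, Lemma \ref{l: Gamma} produces two integral curves $\Gamma_1,\Gamma_2$ of $\nabla f$ along which $(M,g)$ dimension-reduces to $\R\times\cigar$, and after the rescaling of Theorem \ref{l: wing-like} we have $R(\Gamma_i(s))\to 4$ with $\R\times\cigar$ and $\RR\times S^1$ as the only possible blow-down limits. By Lemma \ref{l: two rays} the space $S_\infty(M,p_0)$ of ray classes at infinity from the critical point $p_0$ is an interval $[0,\theta]$ whose two endpoints are represented by subsequential limits of minimizing geodesics $p_0\Gamma_i(s)$ as $s\to\infty$. It remains to show $\theta > 0$.

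For this I reuse the intermediate output of the ODE argument in the proof of Theorem \ref{l: wing-like}. Fix an $\epsilon$-cylindrical-plane point $p$ sufficiently far from $\Gamma$, which exists in abundance by the $\RR\times S^1$ part of the blow-down conclusion, and let $\gamma_p(t)$ be the integral curve of $\nabla f$ starting at $p$. Then the pair
\[
\bigl(h(t),\,H(t)\bigr) \;=\; \bigl(h(\gamma_p(t)),\; d_g(\gamma_p(t),\Gamma)\bigr)
\]
satisfies the hypotheses of Lemma \ref{l: ODE} — the inequality $H'(t)\ge C^{-1}h^{-1}(t)$ comes from distance distortion at the nearest $\epsilon$-tip point on $\Gamma$ (whose volume scale is $\lesssim h(t)$), while $h'(t)\le C H^{-2}(t)h(t)$ comes from Perelman's curvature estimate combined with the quadratic decay Theorem \ref{l: curvature upper bound initial}. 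Consequently $h$ stays bounded and $H(t)\ge C^{-1}t$ for all $t\ge 0$. Simultaneously, since $|\nabla f|\le \sqrt{R(p_0)}$ everywhere, both $d_g(p_0,\gamma_p(t))$ and $d_g(p_0,\Gamma_1(t))$ are bounded above by a linear function of $t$. Therefore
\[
\frac{d_g(\gamma_p(t),\Gamma_1(t))}{d_g(p_0,\gamma_p(t))+d_g(p_0,\Gamma_1(t))} \;\ge\; \frac{H(t)}{d_g(p_0,\gamma_p(t))+d_g(p_0,\Gamma_1(t))} \;\ge\; c_0 > 0
\]
for all large $t$, which by the law of cosines forces the comparison angle $\widetilde{\measuredangle}(\gamma_p(t)\,p_0\,\Gamma_1(t))$ to remain bounded below by a positive constant. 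Passing to subsequential ray limits, this produces two inequivalent ray classes in $S_\infty(M,p_0)$, so $\theta > 0$.

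The only nontrivial ingredient is the linear lower bound $H(t)\ge C^{-1}t$, which is not an additional result but rather a step already carried out inside the proof of Theorem \ref{l: wing-like} via the ODE Lemma \ref{l: ODE}; the rest of the argument is elementary metric comparison, which is why this corollary is essentially an immediate consequence of Theorem \ref{l: wing-like}, Lemma \ref{l: Gamma}, and Lemma \ref{l: two rays}.
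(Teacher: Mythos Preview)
Your approach and the paper's share the same key estimate, namely $d_g(\phi_t(p),\Gamma)\ge C^{-1}t$ together with $d_g(p_0,\phi_t(p))\le Ct$; the paper records this as \eqref{e: scallop}, obtaining it not via the ODE lemma but directly from the distance distortion estimate and the cigar Ricci integral \eqref{e: integrate Ricci} once Theorem~\ref{l: wing-like} has established $R\to 4$ along $\Gamma$. From there the two arguments diverge. The paper picks $q_k\in\Gamma_1$ and $\overline{q}_k\in\Gamma_2$ at equal distance from $x_0$, shows that the minimizing geodesic $q_k\overline{q}_k$ must cross the $S^1$-fiber of a cylindrical plane along the trajectory $\phi_t(p)$, and combines this with the fact that the soliton is not $\R\times\cigar$ to deduce $d(q_k,\overline{q}_k)\ge C^{-1}d(x_0,q_k)$ and hence $\widetilde{\measuredangle}\,q_k x_0\overline{q}_k\ge C^{-1}$.

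Your shortcut of comparing $\gamma_p(t)$ directly with the point $\Gamma_1(t)$ is more elementary, but the law-of-cosines step has a gap as written: the inequality $c/(a+b)\ge c_0$ alone does not force the comparison angle at $p_0$ to be bounded below unless the two legs $a=d(p_0,\gamma_p(t))$ and $b=d(p_0,\Gamma_1(t))$ are comparable---for instance $a=t$, $b=\sqrt{t}$, $c=t-\sqrt{t}$ is a degenerate triangle with $c/(a+b)\to 1$. You have supplied only the linear upper bound on $b$; the matching lower bound $b\ge C^{-1}t$ follows because $|\nabla f|^2=R(p_0)-R$ is bounded below along $\Gamma_1$ by $R(p_0)-4-o(1)>0$ for large $s$ (this uses $R(p_0)>4$, which holds since $R$ is strictly decreasing along $\Gamma$ whenever the soliton is not $\R\times\cigar$). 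Once this is added, the monotonicity-of-angles passage to ray limits is routine and your argument is complete---and arguably more direct than the paper's, at the cost of bounding only the aperture from below rather than relating it explicitly to $\Gamma_1,\Gamma_2$.
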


\begin{proof}
Suppose that $(M,g)$ is not a Bryant soliton.
Let $C>0$ denotes all constants depending on the soliton $(M,g)$ and $\epsilon>0$ be some sufficiently small number.
Let $\Gamma_1,\Gamma_2$ be the integral curves from Corollary \ref{l: new Gamma}. By Theorem \ref{l: wing-like} we may assume  $\lim_{s\rii}R(\Gamma_i(s))=4$.
Let $\Gamma=\Gamma_1([0,\infty))\cup\Gamma_2([0,\infty))$.
Let $p\in M$ be the center of an $\epsilon$-cylindrical plane, then we have
\begin{equation}\label{e: scallop0}
    d(\phi_t(p),\Gamma)\ge 1.9\,t+d(p,\Gamma),
\end{equation}
for $t\ge0$.
Suppose $\max R=R(x_0)\le C$, then we have
\begin{equation*}
    d(x_0,\phi_t(p))\le 20\,C\,t,
\end{equation*}
which combining with \eqref{e: scallop0} implies
\begin{equation}\label{e: scallop}
    d(\phi_t(p),\Gamma)\ge C^{-1}d(x_0,\phi_t(p))
\end{equation}
for all large $t$.

Let $q_k\in\Gamma_1$, $\overline{q}_k\in\Gamma_2$ be sequences of points such that $d(x_0,q_k)=d(x_0,\overline{q}_k)$, and let $\sigma_k:[0,1]\ri M$ be a minimizing geodesic connecting $q_k,\overline{q}_k$.
Then it is easy to see $d(x_0,\sigma_k([0,1]))\rii$ as $k\rii$.
Since $d(x_0,\phi_t(p))\rii$ as $t\rii$, the integral curve $\phi_t(p)$ must pass through the $S^1$-factor of an $\epsilon$-cylindrical plane centered at some point on $\sigma_k(0,1)$. In particular, we can find $t_k>0$ and $s_k\in(0,1)$ such that $t_k\rii$ as $k\rii$, and
\begin{equation*}
    d(\phi_{t_k}(p),\sigma_k(s_k))<2\pi.
\end{equation*}
Since $d(q_k,\overline{q}_k)\ge d(\sigma_k(s_k),q_k)\ge d(\sigma_k(s_k),\Gamma)$, this implies by triangle inequality that
\begin{equation*}
    d(q_k,\overline{q}_k)
    \ge d(\phi_{t_k}(p),\Gamma)-d(\phi_{t_k}(p),\sigma_k(s_k))
    \ge d(\phi_{t_k}(p),\Gamma)-2\pi,
\end{equation*}
which together with \eqref{e: scallop} implies
\begin{equation}\label{berry}
    d(q_k,\overline{q}_k)\ge C^{-1}d(x_0,\phi_{t_k}(p))
\end{equation}
for all large $k$.
Since $(M,g)$ is not isometric to $\R\times\cigar$, we have
\begin{equation*}
    d(q_k,\overline{q}_k)<(2-C^{-1})d(x_0,q_k).
\end{equation*}
Combining it with the following triangle inequality  
\begin{equation*}
    d(x_0,\phi_{t_k}(p))+d(q_k,\overline{q}_k)\ge d(x_0,q_k)+d(x_0,\overline{q}_k)-2\pi=2d(x_0,q_k)-2\pi,
\end{equation*}
we obtain
\begin{equation*}
    d(x_0,\phi_{t_k}(p))\ge C^{-1}d(x_0,q_k).
\end{equation*}
So by \eqref{berry} this implies $d(q_k,\overline{q}_k)\ge C^{-1}d(x_0,q_k)$ and thus $\widetilde{\measuredangle}q_kx_0\overline{q}_k\ge C^{-1}$.
Lastly, by Lemma \ref{l: two rays}, the minimizing geodesics $x_0q_k,x_0\overline{q}_k$ converge to two rays $\sigma_1,\sigma_2$ with $\widetilde{\measuredangle}(\sigma_1,\sigma_2)\ge C^{-1}>0$. So the soliton is asymptotic to a sector.

\end{proof}

\section{Upper and lower curvature estimates}\label{s: curvature etimates}
In this subsection, we prove Theorem \ref{t': curvature estimate} of the two-sided curvature estimates. 
For the lower bound, Theorem \ref{l: R>e^{-2r}} shows that $R$ decays at most exponential fast away from $\Gamma$ by using the improved Harnack inequality in Corollary \ref{l: Harnack}.
For the upper bound, Theorem \ref{t: R upper bd} shows that $R$ decays at least polynomially fast away from $\Gamma$.
Theorem \ref{t: R upper bd} is proved using the quadratic curvature decay from Theorem \ref{l: curvature upper bound initial} and a heat kernel method.

\subsection{Improved integrated Harnack inequality}
In this subsection, we prove an improved integrated Harnack inequality for Ricci flows with non-negative curvature operators. 
This improved Harnack inequality will be used to deduce the exponential curvature lower bound in Theorem \ref{l: R>e^{-2r}}.

First, we state Hamilton's traced differential Harnack inequality and its integrated version.

\begin{theorem}
Let $(M,g(t)),t\in(0,T]$ be an n-dimensional Ricci flow with complete time slices and non-negative curvature operator. Assume furthermore that the curvature is bounded on compact time intervals. Then for any $(x,t)\in M\times(0,T]$ and $v\in T_xM$,
\begin{equation}\label{e: Harnack}
    \pt R(x,t)+\frac{R}{t}+2\langle v,\nabla R\rangle+2\Ric(v,v)\ge 0.
\end{equation}
Moreover, integrating this inequality appropriately yields:
For any $(x_1,t_1),(x_2,t_2)\in M\times (0,T]$ with $t_1<t_2$, we have
\begin{equation}\label{e: integrated version}
      \frac{R(x_2,t_2)}{R(x_1,t_1)}\ge\frac{t_1}{t_2} \exp\left(-\frac{1}{2}\frac{d^2_{g(t_1)}(x_1,x_2)}{t_2-t_1}\right).
\end{equation}
\end{theorem}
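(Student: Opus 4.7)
I would treat the two parts separately. The differential inequality \eqref{e: Harnack} is Hamilton's trace Harnack estimate, which I would not reprove: it is obtained by applying the tensor maximum principle to the matrix Harnack quadratic
\begin{equation*}
Z(U,W)=M_{ij}W^iW^j+2P_{jik}U^{ij}W^k+R_{ikjl}U^{ij}U^{kl},
\end{equation*}
where $M_{ij}=\Delta R_{ij}-\tfrac12\nabla_i\nabla_j R+2R_{ikjl}R^{kl}-R_{ik}R_j^{\,k}+\tfrac1{2t}R_{ij}$ and $P_{ijk}=\nabla_iR_{jk}-\nabla_jR_{ik}$. One checks that non-negativity of $Z$ is preserved by the Ricci flow with non-negative curvature operator, and then \eqref{e: Harnack} follows by tracing out; see \cite{Hamilton_ric}.

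For the integrated inequality \eqref{e: integrated version}, the plan is to integrate \eqref{e: Harnack} along a spacetime path. Given any smooth $\gamma:[t_1,t_2]\to M$ with $\gamma(t_1)=x_1$ and $\gamma(t_2)=x_2$, I would apply \eqref{e: Harnack} at $(\gamma(t),t)$ with the \emph{half-tangent} $v=\tfrac{1}{2}\dot\gamma(t)$. The cross term becomes $2\langle v,\nabla R\rangle=\langle\dot\gamma,\nabla R\rangle$, which combines with $\pt R$ into the total time derivative $\tfrac{d}{dt}R(\gamma(t),t)$, so that after dividing by $R$ one obtains
\begin{equation*}
\frac{d}{dt}\log\bigl(t\,R(\gamma(t),t)\bigr)\ge -\frac{\Ric(\dot\gamma,\dot\gamma)}{2R(\gamma(t),t)}\ge -\frac{1}{2}|\dot\gamma|_{g(t)}^{\,2},
\end{equation*}
where the second step uses the pointwise bound $\Ric(v,v)\le R|v|^2$, valid whenever $\Ric\ge 0$ (the largest Ricci eigenvalue is then bounded by the scalar).

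Integrating this from $t_1$ to $t_2$ and exponentiating gives
\begin{equation*}
\frac{R(x_2,t_2)}{R(x_1,t_1)}\ge\frac{t_1}{t_2}\exp\!\left(-\frac{1}{2}\int_{t_1}^{t_2}|\dot\gamma|_{g(t)}^{\,2}\,dt\right).
\end{equation*}
To finish I would minimize the right-hand side over admissible paths $\gamma$. Since $\pt g=-2\Ric\le 0$ under $\Ric\ge 0$, the metric is non-increasing in $t$, so $|\dot\gamma|_{g(t)}\le|\dot\gamma|_{g(t_1)}$ for $t\ge t_1$, and it suffices to minimize $\int_{t_1}^{t_2}|\dot\gamma|_{g(t_1)}^{\,2}\,dt$; Cauchy--Schwarz on a constant-$g(t_1)$-speed reparametrization gives the bound $L_{g(t_1)}(\gamma)^2/(t_2-t_1)$, whose infimum over $\gamma$ is $d_{g(t_1)}^2(x_1,x_2)/(t_2-t_1)$. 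This yields \eqref{e: integrated version}.

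The main (and essentially only) obstacle is the differential Harnack \eqref{e: Harnack} itself, which requires the full tensor Harnack machinery; once \eqref{e: Harnack} is granted, the path-integration step is short and standard, and the role of $\Ric\ge 0$ is twofold: it gives the pointwise bound $\Ric\le R g$ used to absorb the Ricci term, and the distance-shrinking $\pt g\le 0$ that lets us replace the time-dependent length by $d_{g(t_1)}$ in the infimum.
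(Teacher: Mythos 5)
Your proof is correct. Since the paper cites \eqref{e: integrated version} from \cite{MT} rather than proving it, the natural comparison is with the paper's proof of the improved version, Theorem~\ref{l: Harnack_with_time}, which takes a genuinely different route. There $v$ is chosen to be $-\nabla(\log R)$, not $\tfrac12\dot\gamma$: combined with the pinching $\Ric\le\tfrac12R\,g$ of Lemma~\ref{l: Ric compare to R}, this produces the pointwise Li--Yau-type estimate $\partial_t\log(tR)\ge|\nabla\log R|^2$, and the cross-term $\langle\nabla\log R,\dot\mu\rangle$ along the minimizing geodesic is then absorbed against this residual $|\nabla\log R|^2$ by completing the square, which is what yields the factor $\tfrac14$. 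Your choice $v=\tfrac12\dot\gamma$ couples the Harnack vector directly to the geodesic velocity, so the total time-derivative of $\log(tR\circ\gamma)$ appears at once and nothing is left over; the Ricci term is then handled by the weaker bound $\Ric\le R\,g$. The two routes are interchangeable in effect: substituting $\Ric\le\tfrac12R\,g$ into your argument gives the improved constant $\tfrac14$ with no further change, whereas the paper's route run with only $\Ric\le R\,g$ gives $\partial_t\log(tR)\ge0$ and leaves no residual to absorb the cross-term, so for the un-improved factor $\tfrac12$ your route is actually the cleaner one. One small wording slip: you say you would \emph{minimize} the right-hand side over admissible paths, but you want to maximize it, i.e.\ minimize $\int_{t_1}^{t_2}|\dot\gamma|_{g(t)}^2\,dt$; the Cauchy--Schwarz step you give, together with the observation that $g(t)\le g(t_1)$ for $t\ge t_1$, makes clear you mean the latter and that a constant-$g(t_1)$-speed minimizing $g(t_1)$-geodesic achieves it.
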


\begin{remark}
By the soliton identities it is not hard to see that the equality in the differential Harnack inequality \eqref{e: Harnack} is achieved
if $(M,g(t))$ is the Ricci flow of an expanding gradient Ricci soliton with non-negative curvature operator and $v=\nabla f_t$. Note we adopt the convention that the flow satisfies $\Ric(g(t))+\frac{1}{2t}g(t)=\nabla^2 f_t$, $t>0$, see e.g. \cite[Chapter 10.4]{HaRF}.
\end{remark}

\begin{remark}
In dimension 2, using $\Ric=\frac{1}{2}R\,g$ one can prove the following slightly better integrated Harnack inequality, 
\begin{equation}\label{e: integrated version better}
      \frac{R(x_2,t_2)}{R(x_1,t_1)}\ge\frac{t_1}{t_2} \exp\left(-\frac{1}{4}\frac{d^2_{g(t_1)}(x_1,x_2)}{t_2-t_1}\right).
\end{equation}

\end{remark}

The main result of this subsection shows that \eqref{e: integrated version better} actually holds in all dimensions. Our key observation is the following curvature inequality.

\begin{lem}\label{l: Ric compare to R}
Let $(M,g)$ be an n-dimensional Riemannian manifold with non-negative curvature operator. Then 
\begin{equation}\label{e: Ric}
   \Ric\le \frac{1}{2}\,R\,g.
\end{equation}
\end{lem}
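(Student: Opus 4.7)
\medskip

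\noindent\textbf{Proof plan.} The inequality is pointwise and algebraic, so I would fix a point $p\in M$ and a unit tangent vector $v\in T_pM$, and reduce the claim to showing the scalar inequality $\Ric(v,v)\le\tfrac{1}{2}R$. The plan is to expand both sides in a well-chosen orthonormal frame and observe that the difference is manifestly a sum of nonnegative sectional curvatures.

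First, I would extend $v$ to an orthonormal basis $\{e_1=v,e_2,\ldots,e_n\}$ of $T_pM$ and set $K_{ij}:=\langle \Rm(e_i,e_j)e_j,e_i\rangle=\langle\mathcal{R}(e_i\wedge e_j),e_i\wedge e_j\rangle$, where $\mathcal{R}$ is the curvature operator acting on $\Lambda^2T_pM$. Since $\mathcal{R}\ge 0$ by hypothesis, each $K_{ij}\ge 0$; this is the only place where the curvature operator assumption is used.

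Next, I would write down the standard identities
\begin{equation*}
\Ric(v,v)=\sum_{i=2}^{n}K_{1i},\qquad R=\sum_{i=1}^{n}\Ric(e_i,e_i)=2\sum_{1\le i<j\le n}K_{ij}.
\end{equation*}
Subtracting yields
\begin{equation*}
\tfrac{1}{2}R-\Ric(v,v)=\sum_{1\le i<j\le n}K_{ij}-\sum_{i=2}^{n}K_{1i}=\sum_{2\le i<j\le n}K_{ij}\ge 0,
\end{equation*}
which is exactly \eqref{e: Ric} applied to $(v,v)$. Since $v$ was arbitrary and both sides are quadratic forms, this proves $\Ric\le\tfrac{1}{2}Rg$.

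There is essentially no obstacle here: the argument is just bookkeeping in a diagonalizing frame, and the only hypothesis used is $K_{ij}\ge 0$. It is worth noting for the sequel that equality holds at $p$ in the direction $v$ if and only if $K_{ij}=0$ for all $2\le i<j\le n$, which in particular forces the inequality to be strict in generic directions whenever $n\ge 3$ and the curvature is not everywhere of the special form $\Ric=\tfrac{1}{2}Rg$ (for instance, in dimension $2$ one always has equality, consistent with the sharper 2D Harnack inequality \eqref{e: integrated version better}).
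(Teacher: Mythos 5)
Your proof is correct and uses essentially the same idea as the paper: both reduce to expanding $\Ric(v,v)$ and $R$ as sums of nonnegative sectional curvatures $K_{ij}$ in an orthonormal frame. The paper takes a small detour through a Ricci-diagonalizing frame and the eigenvalue bound $\lambda_1\le\lambda_2+\cdots+\lambda_n$, whereas you work directly in a frame with $e_1=v$ and exhibit $\tfrac12 R-\Ric(v,v)=\sum_{2\le i<j\le n}K_{ij}\ge 0$; the latter is marginally more streamlined but not a different argument.
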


\begin{proof}
To show this, let $p\in M$ and choose an orthonormal basis $\{e_i\}_{i=1}^n$ of $T_pM$ under which the Ricci curvature is diagonal: 
\begin{equation*}
    \Ric=(\lambda_1,...,\lambda_n),
\end{equation*}
where $\lambda_1\ge...\ge\lambda_n$ are the $n$ eigenvalues. 
Let $k_{ij}=\Rm(e_i,e_j,e_j,e_i)$. Then since $\Rm\ge0$, we have
\begin{equation*}
    k_{1i}\le\sum_{j\neq i}k_{ji}=\lambda_i
\end{equation*}
for all $i=2,...,n$. So
\begin{equation*}
    \lambda_1=k_{12}+k_{13}+\dots+k_{1n}\le\lambda_2+\lambda_3+\dots+\lambda_n,
\end{equation*}
hence we have
\begin{equation*}
    \Ric(v,v)\le\lambda_1|v|^2\le\left(\frac{\lambda_1+\dots+\lambda_n}{2}\right)|v|^2=\frac{1}{2}|v|^2R.
\end{equation*}
which proves the lemma.
\end{proof}

Now we prove the improved integrated Harnack inequality.

\begin{theorem}[Improved integrated Harnack inequality]
\label{l: Harnack_with_time}
Let $(M,g(t))$, $t\in[0,T]$, be a Ricci flow with non-negative curvature operator. Then for any $x_1,x_2\in M$ and $0<t_1<t_2\le T$, we have
\begin{equation*}
    \frac{R(x_2,t_2)}{R(x_1,t_1)}\ge\frac{t_1}{t_2} \exp\left(-\frac{1}{4}\frac{d^2_{g(t_1)}(x_1,x_2)}{t_2-t_1}\right).
\end{equation*}
\end{theorem}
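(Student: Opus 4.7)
The plan is to follow the classical strategy of integrating Hamilton's differential Harnack inequality along a curve, but replace the usual bound $\Ric(v,v)\le|v|^2R$ with the improved bound $\Ric\le\frac{1}{2}Rg$ established in Lemma \ref{l: Ric compare to R}. This substitution is exactly what converts the coefficient $\frac{1}{2}$ into $\frac{1}{4}$.

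First, I would fix a smooth curve $\gamma:[t_1,t_2]\to M$ with $\gamma(t_1)=x_1$, $\gamma(t_2)=x_2$, and compute
\begin{equation*}
\frac{d}{dt}R(\gamma(t),t)=\partial_tR+\langle\nabla R,\gamma'(t)\rangle.
\end{equation*}
Applying Hamilton's differential Harnack inequality \eqref{e: Harnack} with the choice $v=\tfrac{1}{2}\gamma'(t)$ (so that $2\langle v,\nabla R\rangle=\langle\nabla R,\gamma'(t)\rangle$) gives
\begin{equation*}
\frac{d}{dt}R(\gamma(t),t)\ge -\frac{R}{t}-2\Ric\bigl(\tfrac{1}{2}\gamma'(t),\tfrac{1}{2}\gamma'(t)\bigr)=-\frac{R}{t}-\frac{1}{2}\Ric(\gamma'(t),\gamma'(t)).
\end{equation*}

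Next I would invoke Lemma \ref{l: Ric compare to R}, which yields $\Ric(\gamma'(t),\gamma'(t))\le\frac{1}{2}R|\gamma'(t)|^2_{g(t)}$, and therefore
\begin{equation*}
\frac{d}{dt}\log(tR)=\frac{1}{t}+\frac{1}{R}\frac{dR}{dt}\ge -\frac{1}{4}|\gamma'(t)|^2_{g(t)}.
\end{equation*}
Integrating from $t_1$ to $t_2$ gives
\begin{equation*}
\log\!\left(\frac{t_2R(x_2,t_2)}{t_1R(x_1,t_1)}\right)\ge -\frac{1}{4}\int_{t_1}^{t_2}|\gamma'(t)|^2_{g(t)}\,dt.
\end{equation*}

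Finally, I would specialize $\gamma$ to be a $g(t_1)$-minimizing geodesic from $x_1$ to $x_2$, reparametrized with constant $g(t_1)$-speed $d_{g(t_1)}(x_1,x_2)/(t_2-t_1)$. Since $\Ric\ge 0$ implies $g(t)\le g(t_1)$ for $t\ge t_1$ under the Ricci flow, we have $|\gamma'(t)|^2_{g(t)}\le|\gamma'(t)|^2_{g(t_1)}$, so
\begin{equation*}
\int_{t_1}^{t_2}|\gamma'(t)|^2_{g(t)}\,dt\le\frac{d^2_{g(t_1)}(x_1,x_2)}{t_2-t_1},
\end{equation*}
and the claimed inequality follows upon exponentiating.

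The proof is essentially mechanical given Lemma \ref{l: Ric compare to R}, so there is no serious obstacle; the only technical point to verify is that the formal computation is justified, which holds because the curvature is bounded on compact time intervals (as in Hamilton's original derivation of the integrated Harnack inequality, e.g.\ in \cite{HaRF} or \cite[Theorem 4.40]{MT}). The substantive content is entirely in the improved pointwise inequality $\Ric\le\tfrac{1}{2}Rg$, which is sharp on space-forms and produces the factor of $\tfrac{1}{4}$ rather than $\tfrac{1}{2}$.
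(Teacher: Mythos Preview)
Your proof is correct and follows essentially the same strategy as the paper: integrate Hamilton's differential Harnack along a $g(t_1)$-minimizing geodesic, use Lemma~\ref{l: Ric compare to R} to replace $\Ric(v,v)\le R|v|^2$ by $\Ric(v,v)\le\tfrac{1}{2}R|v|^2$, and use $\Ric\ge 0$ to bound $|\gamma'|_{g(t)}\le|\gamma'|_{g(t_1)}$. The only cosmetic difference is that the paper first takes $v=-\nabla\log R$ to derive the pointwise Li--Yau type inequality $\partial_t\log(tR)\ge|\nabla\log R|^2$ and then completes the square along the path, whereas you plug $v=\tfrac{1}{2}\gamma'$ directly into the Harnack; the two computations are algebraically equivalent.
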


\begin{proof}
In the Harnack inequality \eqref{e: Harnack}, let $v=-\nabla(\log R)(x,t)$ and $T_0=0$, we get
\begin{equation}\label{e: Harnack_0}
 R^{-1}\pt R+\frac{1}{t}-2|\nabla\log R|^2+\frac{2\Ric(\nabla\log R,\nabla\log R)}{R}\ge0.   
\end{equation}

Then by Lemma \ref{l: Ric compare to R} we obtain
\begin{equation}\label{e: equality holds on cigar}
    \frac{\partial}{\partial t}\log(tR)\ge|\nabla \log R|^2.
\end{equation}
Let $\mu:[t_1,t_2]\rightarrow M$ be a $g(t_1)$-minimizing geodesic from $x_1$ to $x_2$. Then
\begin{equation*}
    \begin{split}
        \log\left(\frac{t_2R(x_2,t_2)}{t_1R(x_1,t_1)}\right)&=\int_{t_1}^{t_2}\frac{d}{dt}\log(tR(\mu(t),t))\;dt\\
        &=\int_{t_1}^{t_2} \frac{\partial}{\partial t}\log(tR)+\left\langle\nabla\log (tR),\frac{d\mu}{dt}\right\rangle\;dt\\
        &\ge\int_{t_1}^{t_2} |\nabla \log R|^2+\left\langle\nabla\log R,\frac{d\mu}{dt}\right\rangle\;dt\\
        &\ge\int_{t_1}^{t_2}|\nabla\log R|^2-|\nabla\log R|\left|\frac{d\mu}{dt}\right|\;dt\\
        &\ge-\frac{1}{4}\int_{t_1}^{t_2}\left|\frac{d\mu}{dt}\right|^2\;d\mu
        \ge-\frac{1}{4}\frac{d^2_{g(t_1)}(x_1,x_2)}{t_2-t_1}.
    \end{split}
\end{equation*}
\end{proof}

Note that if moreover the Ricci flow $(M,g(t))$ is ancient, then \eqref{e: equality holds on cigar} becomes
\begin{equation}\label{e: equality holds on cigar2}
    \frac{\partial}{\partial t}\log R\ge|\nabla \log R|^2.
\end{equation}
In particular, the following integrated Harnack inequality is a direct consequence of Theorem \ref{l: Harnack_with_time}.

\begin{cor}[Improved Harnack inequality, ancient flow]\label{l: Harnack}
Let $(M,g(t))$, $t\in(-\infty,0]$, be a Ricci flow. Suppose $\Rm_{g(t)}\ge0$ for all $t\in(-\infty,0]$. Then for any $x_1,x_2\in M$ and $t_1<t_2$, we have
\begin{equation*}
    \frac{R(x_2,t_2)}{R(x_1,t_1)}\ge \exp\left(-\frac{1}{4}\frac{d^2_{g(t_1)}(x_1,x_2)}{t_2-t_1}\right).
\end{equation*}
\end{cor}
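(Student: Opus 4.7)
The plan is to reduce this corollary to Theorem \ref{l: Harnack_with_time} by a time-translation argument that kills the $t_1/t_2$ prefactor, since for an ancient flow we may push the initial time to $-\infty$. Equivalently, I would re-run the computation in the proof of Theorem \ref{l: Harnack_with_time} after upgrading Hamilton's traced differential Harnack inequality \eqref{e: Harnack} to its sharper ancient-flow form, namely
\begin{equation*}
    \partial_t R + 2\langle v,\nabla R\rangle + 2\,\Ric(v,v) \ge 0,
\end{equation*}
which holds on any ancient solution with non-negative curvature operator (obtained by translating the time origin $T_0 \to -\infty$ in the standard Harnack inequality on $[T_0,0]$).

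Concretely, I would proceed as follows. For any $s<t_1$, the time-shifted flow $\tilde g(t):=g(t+s)$ is defined on $[0,t_2-s]$ and still has non-negative curvature operator, so Theorem \ref{l: Harnack_with_time} applies at the times $\tilde t_i:=t_i-s$ and gives
\begin{equation*}
    \frac{R(x_2,t_2)}{R(x_1,t_1)} \;\ge\; \frac{t_1-s}{t_2-s}\,\exp\!\left(-\tfrac{1}{4}\frac{d^2_{g(t_1)}(x_1,x_2)}{t_2-t_1}\right).
\end{equation*}
Letting $s\to-\infty$ sends the prefactor $\frac{t_1-s}{t_2-s}$ to $1$, which yields the claim. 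Alternatively, having established the ancient differential inequality $\partial_t \log R \ge |\nabla\log R|^2$ (this is \eqref{e: equality holds on cigar2}, which combines the shifted Harnack inequality with Lemma \ref{l: Ric compare to R} applied to $v=-\nabla\log R$), one can integrate $\log R$ along a $g(t_1)$-minimizing geodesic $\mu:[t_1,t_2]\to M$ from $x_1$ to $x_2$ exactly as in the proof of Theorem \ref{l: Harnack_with_time}, completing the square
\begin{equation*}
    |\nabla\log R|^2 + \left\langle \nabla\log R,\tfrac{d\mu}{dt}\right\rangle \;\ge\; -\tfrac{1}{4}\left|\tfrac{d\mu}{dt}\right|^2,
\end{equation*}
and then using $\int_{t_1}^{t_2}|\tfrac{d\mu}{dt}|^2\,dt = \frac{d^2_{g(t_1)}(x_1,x_2)}{t_2-t_1}$ since $\mu$ is a constant-speed $g(t_1)$-geodesic.

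There is no substantive obstacle here: the only point that needs mild care is the justification that Hamilton's Harnack inequality upgrades from the $R/t$ form to the ancient form under the translation $T_0\to-\infty$, which is standard for complete ancient solutions with bounded curvature on compact time intervals, and which is exactly the hypothesis under which Theorem \ref{l: Harnack_with_time} was stated.
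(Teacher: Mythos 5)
Your proposal is correct and takes essentially the same approach as the paper: the paper states the corollary as "a direct consequence of Theorem \ref{l: Harnack_with_time}," having just noted that for an ancient flow \eqref{e: equality holds on cigar} upgrades to \eqref{e: equality holds on cigar2}, and both of your routes — time-shifting and letting the initial time go to $-\infty$ to kill the $t_1/t_2$ prefactor, or integrating $\partial_t\log R\ge|\nabla\log R|^2$ along a $g(t_1)$-geodesic with the same completion of the square — are exactly what that phrase is pointing to.
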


\begin{remark}
Note that the equality in Lemma \ref{l: Ric compare to R} holds on any 2-dimensional solutions. In the cigar soliton, it is easy to see that the equality in \eqref{e: equality holds on cigar2} is achieved, but the equality is lost in the integrated version.
Nevertheless, the factor $\frac{1}{4}$ in Corollary \ref{l: Harnack} is still sharp in the sense that using it we can obtain a curvature lower bound on cigar soliton which is arbitrarily close to the actual curvature decay in the cigar at infinity: 

Let $(\Sigma,g_c)$ be a cigar soliton and $R(x_{tip})=4$. Let $(\Sigma,g_c(t))$ be the Ricci flow of the soliton. For any $x\in \Sigma$, let  $t=-\frac{d_0(x,x_{tip})}{2}$, then by the distance distortion estimate \eqref{e: integrate Ricci} in $g_c(t)$, we have
\begin{equation*}
    d_t(x,x_{tip})\le d_0(x,x_{tip})+(2-\epsilon)(-t),
\end{equation*}
where $\epsilon>0$ denotes all constants depending on $d_0(x,x_{tip})$, such that $\epsilon\ri0$ as $d_0(x,x_{tip})\rii$.
So applying the improved Harnack inequality we get
\begin{equation*}
    R(x,0)\ge R(x_{tip},t)\,e^{-(2-\epsilon)\,d_0(x,x_{tip})}= 4\,e^{-(2-\epsilon)\,d_0(x,x_{tip})}.
\end{equation*}
This can be compared with the curvature formula of the cigar soliton \eqref{e: cigar R},
\begin{equation*}
    R(x,0)=\frac{16}{(e^{d_0(x,x_{tip})}+e^{-d_0(x,x_{tip})})^2}\le 16 \,e^{-2\,d_0(x,x_{tip})}.
\end{equation*}
\end{remark}

\subsection{Exponential lower bound of the curvature}
In this subsection we use the improved Harnack inequality to deduce the exponential curvature lower bound. 
\begin{theorem}[Scalar curvature exponential lower bound]\label{l: R>e^{-2r}}
Let $(M,g,f,p)$ be a 3D steady gradient soliton that is not a Bryant soliton. Assume $\lim_{s\rii}R(\Gamma_1(s))=\lim_{s\rii}R(\Gamma_2(s))=4$. Then for any $\epsilon_0>0$, there exists $C>0$ such that
\begin{equation}\label{e: conde}
    R(x)\ge C^{-1}e^{-2(1+\epsilon_0)d_g(x,\Gamma)}.
\end{equation}
\end{theorem}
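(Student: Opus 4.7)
The plan is to apply the improved integrated Harnack inequality of Corollary~\ref{l: Harnack} in the backward Ricci flow $g(-T)=\phi_T^*g$, comparing $R$ at $(x,0)$ with its value at a carefully chosen point of $\Gamma$ at time $-T$, and then optimizing over $T>0$. Since $R$ has a positive lower bound on any compact set, it suffices to prove \eqref{e: conde} for $d:=d_g(x,\Gamma)$ larger than any prescribed constant. Fix a small $\delta>0$ to be chosen from $\epsilon_0$ at the end, and let $y_T\in\Gamma$ realize $d_{g(-T)}(x,\Gamma)$. Corollary~\ref{l: Harnack} gives
\[
R(x) \;\ge\; R(y_T,-T)\,\exp\!\Bigl(-\tfrac{1}{4T}\,d^{2}_{g(-T)}(x,y_T)\Bigr).
\]
Using $g(-T)=\phi_T^*g$ and the $\phi_t$-invariance of $\Gamma$, one sees that $\phi_T(y_T)$ is the $g$-projection of $\phi_T(x)$ onto $\Gamma$ and $R(y_T,-T)=R_g(\phi_T(y_T))$. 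Since $f$ is strictly increasing along $\nabla f$ away from its unique critical point (Theorem~\ref{t: Rmax critical point}), $\phi_T(x)$ leaves every compact set as $T\to\infty$; the asymptotic structure of Corollary~\ref{l: new Gamma} forces its $\Gamma$-projection to escape to infinity along $\Gamma$, and Theorem~\ref{l: wing-like} yields $R_g(\phi_T(y_T))\to 4$.

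The heart of the proof is the distance-distortion bound $d_{g(-T)}(x,y_T)\le d+(2+\delta)T$ for $d$ large. To establish it, let $p\in\Gamma$ be the $g$-closest point to $x$ and set $H(s):=d_{g(-s)}(x,p)$. The first variation of arclength under Ricci flow gives
\[
H'(s) \;=\; \int_{\mu_s}\Ric_{g(-s)}(\mu'_s,\mu'_s)\,dr,
\]
where $\mu_s$ is a unit $g(-s)$-speed minimizer from $x$ to $p$. Transporting by $\phi_s$, the right-hand side equals the $g$-integrated Ricci along a $g$-minimizing geodesic from $\phi_s(x)$ to $\phi_s(p)\in\Gamma$. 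Since $p$ lies far along $\Gamma$ when $d$ is large, the soliton is smoothly close to $\R\times\cigar$ near $\phi_s(p)$ by Theorem~\ref{l: wing-like}; combined with the quadratic curvature decay of Theorem~\ref{l: curvature upper bound initial}, this minimizer closely approximates a radial cigar ray, and \eqref{e: integrate Ricci} yields $H'(s)\le 2+\delta$ uniformly in $s\in[0,T]$. Integrating gives $H(T)\le d+(2+\delta)T$, and thus $d_{g(-T)}(x,y_T)\le H(T)$. Choosing $T:=d/(2+2\delta)$,
\[
\frac{d_{g(-T)}^{2}(x,y_T)}{4T} \;\le\; \frac{\bigl(d+(2+\delta)T\bigr)^2}{4T} \;=\; \bigl(2+O(\delta)\bigr)\,d+O(1),
\]
so with $\delta=\delta(\epsilon_0)$ small enough the Harnack inequality yields $R(x)\ge C^{-1}e^{-2(1+\epsilon_0)d}$.

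The principal obstacle is the distance-distortion step: the constant $2$ must be extracted sharply from the cigar model via \eqref{e: integrate Ricci}, uniformly across $s\in[0,T]$ with $T\approx d/2$. Although the soliton becomes arbitrarily close to $\R\times\cigar$ along the edge $\Gamma$, quantitatively controlling how $g(-s)$-minimizing geodesics from $x$ to $p$ track radial cigar rays requires coupling the edge convergence of Theorem~\ref{l: wing-like} with precise control on the deformation of these minimizers under the backward Ricci flow.
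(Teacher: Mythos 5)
Your proposal takes essentially the same route as the paper: apply the improved integrated Harnack inequality of Corollary~\ref{l: Harnack} between $(x,0)$ and a point of $\Gamma$ at backward time $-T$, establish a distance-distortion bound of the form $d_{g(-T)}(x,\Gamma)\le d_g(x,\Gamma)+(2+\delta)T$ using the fact that $R\to 4$ along $\Gamma$ together with the cigar model \eqref{e: integrate Ricci}, and optimize $T\approx d_g(x,\Gamma)/2$ to extract the exponent $-2(1+\epsilon_0)d_g(x,\Gamma)$.

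Where you deviate from the paper --- and where your self-identified ``principal obstacle'' sits --- is in choosing to track $H(s)=d_{g(-s)}(x,p)$ for the \emph{fixed} point $p\in\Gamma$ realizing the $g$-distance at time $0$. Once $s>0$, the $g(-s)$-minimizers $\mu_s$ from $x$ to this fixed $p$ (equivalently, the $g$-minimizers from $\phi_s(x)$ to $\phi_s(p)$) need not meet $\Gamma$ orthogonally, so deducing $\int_{\mu_s}\Ric(\mu'_s,\mu'_s)\,dr\le 2+\delta$ from the $\R\times\cigar$ model genuinely requires the extra control on the deformed minimizers that you flag but do not carry out. That is a real gap as written.

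The paper removes this obstacle by working with the time-varying quantity $d_t(x,\Gamma)$ instead of $d_{g(-s)}(x,p)$. Restricting to the subset $U$ of points $x$ for which $d_t(x,\Gamma)$ is achieved at $\epsilon$-tip points of $\Gamma$ for all $t\le 0$ (its complement is compact by Lemma~\ref{l: inf achieved on the non-compact portion}, hence covered by the easy compact case), every $g(t)$-minimizer $\gamma\in\mathcal{Z}(t)$ realizing $d_t(x,\Gamma)$ meets $\Gamma$ orthogonally at an $\epsilon$-tip point $y_t$. Transporting by $\phi_{-t}$, this minimizer is, near $y_t$, close to a radial cigar ray in the $\R\times\cigar$ model, so \eqref{e: integrate Ricci} bounds the Ricci integral over the near piece by $2+O(\epsilon)$, while Theorem~\ref{l: curvature upper bound initial} controls the far piece. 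This gives $-\frac{d}{dt}d_t(x,\Gamma)\le 2(1+\epsilon_0)$ (backward difference quotient) directly, and integrating yields the same distortion bound you need without any additional tracking of how the minimizers deform. Substituting that formulation into your Harnack-plus-optimization skeleton closes the gap.
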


\begin{proof}
For any $\epsilon_0>0$, let $\epsilon>0$ denote all small constants depending on $\epsilon_0$ whose values may change from line to line.
Let $(M,g(t))$, $t\in(-\infty,\infty)$, be the Ricci flow associated to the soliton $(M,g)$, $g(0)=g$. 
Consider the subset $U$ consisting of all points $x$ such that the distance $d_t(x,\Gamma)$ for all $t\le0$ must be achieved at $\epsilon$-tip points on $\Gamma$.
Then it is clear by Lemma \ref{l: inf achieved on the non-compact portion} that the complement of $U$ is compact.
So we can find a constant $C>0$ such that $R\ge C^{-1}$ on $M\setminus U$.
Therefore, it suffices to prove the curvature lower bound \eqref{e: conde} for points $(x,0)\in U\times\{0\}\subset M\times(-\infty,\infty)$.

Let $x\in U$ and $t\le0$.
By a distance distortion estimate we have
\begin{equation}\label{e: seminar}
    -\frac{d}{dt}d_t(x,\Gamma)\le\sup_{\gamma\in\mathcal{Z}(t)}\int_{\gamma}\Ric(\gamma'(s),\gamma'(s))\,ds, 
\end{equation}
where the derivative is the backward difference quotient, and $\mathcal{Z}(t)$ is the space of all minimizing geodesics $\gamma$ which realize the distance $d_t(x,\Gamma)$.
For any such $\gamma$ connecting $x$ to a point $y_t\in\Gamma$, $d_t(x,y_t)=d_t(x,\Gamma)$, we have that $y_t$ is an $\epsilon$-tip point since $x\in U$, and $\gamma$ is orthogonal at $y_t$ to $\Gamma$. Moreover, by the assumption $\lim_{s\rii}R(\Gamma_1(s))=\lim_{s\rii}R(\Gamma_2(s))=4$ we have
\begin{equation}\label{e: oo}
    R(y_t,t)\ge4-\epsilon.
\end{equation}
So by taking $\epsilon$ small, \eqref{e: seminar} implies
\begin{equation*}
    -\frac{d}{dt}d_t(x,\Gamma)\le2(1+\epsilon_0),
\end{equation*}
integrating which we get
\begin{equation}\label{e: tt}
    d_t(x,y_t)=d_t(x,\Gamma)\le d_0(x,\Gamma)+2(1+\epsilon_0)(-t).
\end{equation}

Now applying Corollary \ref{l: Harnack} (Improved Harnack inequality) and using \eqref{e: oo},\eqref{e: tt} we obtain
\begin{equation*}
    \begin{split}
        R(x,0)\ge R(y_t,t)e^{-\frac{d^2_{t}(x,y_t)}{4(-t)}}
        \ge (4-\epsilon) e^{-\frac{(d_0(x,\Gamma)+2(1+\epsilon_0) (-t))^2}{4(-t)}}.
    \end{split}
\end{equation*}
Letting $t=-\frac{d_0(x,\Gamma)}{2}$, this implies
\begin{equation*}
    R(x,0)\ge  (4-\epsilon)e^{-2(1+\epsilon_0)d_0(x,\Gamma)}.
\end{equation*}
\end{proof}

\begin{remark}
Note that this curvature estimate is
sharp: In the manifold $\R\times\cigar$, the curvature decays like $O(e^{-2\,d_g(\cdot,\Gamma)})$, so our lower bound estimate $O(e^{-(2+\epsilon_0)\,d_g(\cdot,\Gamma)})$ gets arbitrarily close to it as the distance $d_g(\cdot,\Gamma)$ goes to infinity.
\end{remark}

\subsection{Polynomial upper bound of the curvature}

In Theorem \ref{t: R upper bd} 
we show that the quadratic curvature decay from Theorem \ref{l: curvature upper bound initial} can be improved to polynomial decay at any rate.  
The proof relies on the following heat kernel estimate. This estimate shows that the heat kernel starting from $(x,t)$ behaves like a Gaussian, and it is centered at the $(x,s)$ for all $s< t-2$.
For $s\in[t-2,t)$, the Gaussian bound also holds by Lemma \ref{l: heat kernel lower bound implies upper bound}.

\begin{lem}\label{l: L-geodesic}
Let $(M,g,f,p)$ be a 3D steady gradient soliton that is not a Bryant soliton and $(M,g(t))$ be the Ricci flow of the soliton. Let $G(x,t;y,s)$, $x,y\in M$, $s< t-2$, be the heat kernel of the heat equation $\pt u=\Delta u$ under $g(t)$.
Then there exists $C>0$ such that
\begin{equation*}
    G(x,t;y,s)\le C\,(t-s)^\frac{3}{2}\, \textnormal{exp}\left(-\frac{d^2_s(x,y)}{4C(t-s)}\right).
\end{equation*}
\end{lem}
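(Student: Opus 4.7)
The plan is to combine Perelman's $\mathcal{L}$-geometry upper bound for the conjugate heat kernel with the global boundedness of curvature on the steady soliton. First I would establish that the curvature is uniformly bounded on the spacetime of the Ricci flow of the soliton: by the identity $R+|\nabla f|^2=\textnormal{const}$ from \eqref{e: two}, together with the existence of a critical point of $f$ (Theorem \ref{t: Rmax critical point}), we get $0\le R\le R(p)$ everywhere. Since the sectional curvature is non-negative in dimension $3$, this yields $|\Rm|\le C$ uniformly, so the background flow satisfies the hypothesis of the standard Gaussian upper bound in Lemma \ref{l: heat kernel lower bound implies upper bound} on every finite subinterval.

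Next I would use Perelman's reduced distance $\ell$ based at $(x,t)$ together with his bound
\[
G(x,t;y,s)\le (4\pi(t-s))^{-3/2}\exp\bigl(-\ell(y,s;x,t)\bigr),
\]
and then produce a lower bound of the form
\[
\ell(y,s;x,t)\ge \frac{d^2_s(x,y)}{4C(t-s)}-C(t-s).
\]
The lower bound is obtained by testing $\ell$ with an explicit admissible $\mathcal{L}$-curve, e.g.\ a suitable reparameterization of a $g(s)$-minimizing geodesic from $y$ to $x$: the non-negativity of $R$ drops that contribution to the $\mathcal{L}$-length, and the $g(\tau)$-length of the test curve for $\tau\in[s,t]$ is controlled using the metric-distortion estimate $e^{-C(t-s)}g(s)\le g(\tau)\le e^{C(t-s)}g(s)$ implied by $|\Ric|\le C$, followed by Cauchy--Schwarz (which is what produces the $1/4$ after accounting for the $\sqrt{\sigma}$ weight in $L$).

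Plugging the lower bound on $\ell$ back into Perelman's inequality gives
\[
G(x,t;y,s)\le C(t-s)^{-3/2}\,\exp\bigl(C(t-s)\bigr)\,\exp\!\left(-\frac{d^2_s(x,y)}{4C(t-s)}\right).
\]
The hypothesis $s<t-2$ then allows us to trade $(t-s)^{-3/2}$ for $(t-s)^{3/2}$ (since $(t-s)^{-3/2}\le 1\le (t-s)^{3/2}$ when $t-s\ge 1$), and to absorb the $\exp(C(t-s))$ factor by enlarging the constant in the denominator of the Gaussian (accepting a weaker, but still quadratic Gaussian decay). After this bookkeeping we arrive at the bound in the statement, with $C$ depending only on $R(p)$ and universal constants.

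The main obstacle I expect will be producing the factor $\tfrac14$ in the Gaussian exponent with the correct constant dependence. A naive $\mathcal{L}$-length estimate only gives some constant, not necessarily $\tfrac14$. If a direct $\mathcal{L}$-geometry computation turns out to be delicate, an alternative route is to bypass reduced distance altogether and instead apply the improved integrated Harnack inequality (Theorem \ref{l: Harnack_with_time}) to suitable solutions of the heat equation constructed from $G$, mirroring the way the factor $\tfrac14$ replaced $\tfrac12$ there; either way the bounded-curvature property of the soliton spacetime and the hypothesis $t-s>2$ are what ultimately make the constants admissible.
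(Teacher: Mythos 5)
Your plan has a fatal sign error at its core: Perelman's $\mathcal{L}$-geometry inequality (used in the paper as \eqref{reduced length}, citing \cite[Corollary 9.5]{Pel1}) is a \emph{lower} bound on the conjugate heat kernel,
\[
G(x,t;z,s)\ \ge\ \frac{1}{4\pi(t-s)^{3/2}}\,e^{-\ell_{(x,t)}(z,s)},
\]
not the upper bound $G\le(4\pi(t-s))^{-3/2}e^{-\ell}$ that you write down. Consequently, producing a \emph{lower} bound on $\ell$ of the form $\ell\ge d^2/(4C\tau)-C\tau$ only makes the right-hand side of Perelman's inequality smaller and yields no information about $G$ from above. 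The whole ``lower bound on $\ell$ $\Rightarrow$ upper bound on $G$'' route collapses at this step. Your fallback via the improved Harnack inequality does not rescue this either: Theorem~\ref{l: Harnack_with_time} is a Harnack estimate for the scalar curvature along the Ricci flow, not an upper bound mechanism for heat kernels of the ordinary heat equation.

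What the paper actually does is the reverse: it exploits Perelman's \emph{lower} bound. Using the quadratic curvature decay along the flow (Theorem~\ref{l: curvature upper bound initial}) and the linear rate of backward distance expansion \eqref{e: distance change}, it shows $\ell_{(x,t)}(z,s)\le C$ for all $z\in B_s(x,1)$ and $s\in[0,1]$, giving $\int_{B_s(x,1)}G(x,t;z,s)\,d_sz\ge C/t^{3/2}$. It then applies the Hein--Naber multiplication inequality \cite[Theorem 1.30]{HN}, which bounds the product $\bigl(\int_{B_s(x,1)}G\bigr)\bigl(\int_{B_s(y,1)}G\bigr)$ by a Gaussian in $d_s(x,y)$ (a concentration estimate coming from near-conservation of the total integral of the conjugate heat kernel); dividing off the known lower bound on the first factor turns the lower bound near $x$ into an upper bound on $\int_{B_s(y,1)}G$, and this division is precisely where the factor $(t-s)^{3/2}$ in the conclusion comes from — it is not a bookkeeping trade-off from a Gaussian estimate. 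A parabolic mean value inequality then converts the integral bound into the pointwise bound. Note also that your use of only $R\le R(p)$ is too coarse for the $\mathcal{L}$-length estimate: integrating $\sqrt{\tau}\,R(p)$ over $[0,t-s]$ gives $O((t-s)^{3/2})$ inside the exponent, which is not bounded as $t-s\to\infty$; the polynomial curvature decay along $\Gamma$ is genuinely needed to make $\ell\le C$ uniformly.
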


\begin{proof}
After a rescaling we assume $\lim_{s\rii}R(\Gamma_1(s))=\lim_{s\rii}R(\Gamma_2(s))=4$. We shall use $C$ to denote all constants depending only on the soliton $(M,g)$.
Without loss of generality, we may assume $s=0$. 
For any $s\in[0,1]$ and $z\in B_s(x,1)$, let $\gamma:[s,t]\ri M$ be a curve such that $\gamma|_{[s,2]}$ is a minimizing geodesic connecting $x$ and $z$ with respect to $g(0)$, and $\gamma|_{[2,t]}\equiv y$.

For any $\tau\in[0,t-s]$, by Theorem \ref{l: curvature upper bound initial} we have
$R(y,t-\tau)\le \frac{C}{r^2(y,t-\tau)}$.
Moreover, denote $d_t(x,\Gamma)$ by $r(x,t)$, then by Theorem \ref{l: wing-like} and distance distortion estimates, we have
\begin{equation}\label{e: distance change}
    r(x,t)+1.9(t-s)\le r(x,s)\le r(x,t)+2.1\,(t-s).
\end{equation}
So $R(y,t-\tau)\le \frac{C}{(r(x,t)+C^{-1}\tau)^2}$.
For $\tau\in[t-2,t-s]$,
we have $R(\gamma(t-\tau),t-\tau)\le C$ and $|\gamma'|(t-\tau)\le C$.
Putting these together we can estimate the $\LL$-length of $\gamma$,
\begin{equation*}
    \begin{split}
        \mathcal{L}(\gamma)&=\int_{0}^{t-2}\sqrt{\tau}R(y,\tau)\,d\tau+
        \int_{t-2}^{t-s}\sqrt{\tau}(R(\gamma(t-\tau),t-\tau)+|\gamma'|^2)\,d\tau\\
        &\le \int_{0}^{t-2}\sqrt{\tau}\frac{C}{(r(x,t)+C^{-1}\tau)^2}\,d\tau+\int_{t-2}^{t-s}C\sqrt{\tau}\,d\tau\le C\sqrt{t}.
    \end{split}
\end{equation*}
Let $\ell(z,s):=\ell_{(x,t)}(z,s)$ be the reduced length from $(x,t)$ to $(z,s)$, then 
\begin{equation*}
    \ell(z,s)=\frac{\mathcal{L}_{(x,t)}(z,s)}{2\sqrt{t-s}}\le \frac{\mathcal{L}(\gamma)}{2\sqrt{t-s}}\le C.
\end{equation*}
Recall the heat kernel lower bound by Perelman in
\cite[Corollary 9.5]{Pel1}
we get:
\begin{equation}\label{reduced length}
    G(x,t;z,s)\ge \frac{1}{4\pi(t-s)^{3/2}}e^{-\ell(z,s)}\ge\frac{C}{t^{\frac{3}{2}}},
\end{equation}
for all $s\in[0,1]$ and $z\in B_s(x,1)$, integrating which in $B_s(x,1)$ we get
\begin{equation}\label{low}
    \int_{B_s(x,1)}G(x,t;z,s)\,d_sz\ge \frac{C}{t^{3/2}},
\end{equation}
for all $s\in [0,1]$.

Let $y\in M$, then by the multiplication inequality for the heat kernel in \cite[Theorem 1.30]{HN} we have 
\begin{equation}\label{e: hein-naber}
     \left(\int_{B_s(x,1)}G(x,t;z,s)\,d_sz\right)\left(\int_{B_s(y,1)}G(x,t;z,s)\,d_sz\right)\le C\, \textnormal{exp}\left(-\frac{(d_s(x,y)-2)^2}{4C(t-s)}\right).
\end{equation}
So by substituting \eqref{low} into \eqref{e: hein-naber} and using the distance distortion estimate $d_s(x,y)-2\ge C^{-1}d_0(x,y)-2\ge (2C)^{-1}d_0(x,y)$,
we obtain
\begin{equation*}
    \left(\int_{B_s(y,1)}G(x,t;z,s)\,d_sz\right)\le C\,t^\frac{3}{2}\, \textnormal{exp}\left(-\frac{d_0(x,y)^2}{4C(t-s)}\right).
\end{equation*}
Integrating this for all $s\in[0,1]$, and then applying the parabolic mean value inequality (see e.g. \cite{RFTandA3}) to $G(x,t;\cdot,\cdot)$ at $(y,0)$, we obtain 
\begin{equation*}
    G(x,t;y,0)\le C\,t^\frac{3}{2}\, \textnormal{exp}\left(-\frac{d^2_0(x,y)}{4Ct}\right).
\end{equation*}

\end{proof}

\begin{theorem}[Scalar curvature polynomial upper bound]\label{t: R upper bd}
Let $(M,g,f)$ be a 3D steady gradient soliton that is not a Bryant soliton. Then for any integer $k\ge2$, there exists $C_k>0$ such that 
\begin{equation*}
    R\le \frac{C_k}{d_g^k(\cdot,\Gamma)}.
\end{equation*}
\end{theorem}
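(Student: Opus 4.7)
The proof will proceed by induction on $k$, with the base case $k=2$ supplied by Theorem~\ref{l: curvature upper bound initial}. Assume inductively that $R(x)\le C_k\,d_g^{-k}(x,\Gamma)$ for some $k\ge 2$; I will show the improved bound $R(x)\le C\,d_g^{-(2k-1)}(x,\Gamma)$, which is strictly stronger since $2k-1>k$ when $k\ge 2$, thereby producing decay of any polynomial order.

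Since $R$ evolves by $\partial_t R=\Delta R+2|\Ric|^2$ under the Ricci flow $g(t)$ of the soliton, the reproduction formula at $t=0$ reads
\begin{equation*}
R(x,0)=\int_M G(x,0;y,s)\,R(y,s)\,d_s y+2\int_s^{0}\!\!\int_M G(x,0;z,\tau)\,|\Ric|^2(z,\tau)\,d_\tau z\,d\tau,
\end{equation*}
for any $s<0$, where $G$ is the heat kernel of $\partial_t u=\Delta u$. The plan is to send $s\to-\infty$: the first integral should vanish in the limit, while the second, after applying the inductive bound $|\Ric|^2\le C\,R^2\le C\,d_{g(\tau)}^{-2k}(\cdot,\Gamma)$ in the far-edge region and the uniform bound $R\le C_0$ everywhere, should yield the desired $d_{g(0)}^{-(2k-1)}(x,\Gamma)$ decay.

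Both terms reduce to Gaussian-type upper bounds on $G$ together with control on how points drift from $\Gamma$ along the backwards flow. For the first term, the uniform bound $R\le C_0$ (from $R+|\nabla f|^2=\mathrm{const}$) combined with Lemma~\ref{l: L-geodesic} and the linear growth $d_{g(\tau)}(x,\Gamma)\ge d_{g(0)}(x,\Gamma)+c|\tau|$ (a consequence of Theorem~\ref{l: wing-like} applied along minimizing geodesics to $\Gamma$ together with \eqref{e: integrate Ricci}) shows that the Gaussian tail overwhelms the bounded contribution of $R$ as $s\to-\infty$. For the second term, at each time $\tau\in[s,0]$ split the spatial integral into a near-edge piece $\{z:d_{g(\tau)}(z,\Gamma)\le\tfrac12 d_{g(0)}(x,\Gamma)\}$, where $|\Ric|^2$ is only uniformly bounded but the Gaussian factor is exponentially small thanks to the large separation $d_{g(\tau)}(x,z)\ge\tfrac12 d_{g(0)}(x,\Gamma)$, and a far-edge piece, where the inductive bound $|\Ric|^2(z,\tau)\le C\,d_{g(\tau)}^{-2k}(z,\Gamma)$ applies; integrating this against the heat kernel in space and time produces the factor $d_{g(0)}^{-(2k-1)}(x,\Gamma)$, one power being lost to the integration against the Gaussian.

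The main obstacle will be making the distance comparisons precise in the Ricci flow as $\tau$ ranges over $(-\infty,0]$: the effective distance to $\Gamma$ in $g(\tau)$ depends on both the cigar-like contraction of the edge near $\Gamma$ and the near-flat behavior away from it, and these two regimes meet along a moving interface. The key geometric inputs are Theorem~\ref{l: wing-like}, which gives the uniform linear drift of points away from $\Gamma$ under the backwards flow, and Lemma~\ref{l: L-geodesic}, whose Gaussian kernel upper bound converts this drift into the exponential suppression needed to close the induction.
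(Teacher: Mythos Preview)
Your overall strategy matches the paper's: induction on $k$ via the reproduction formula for $R$, sending $s\to-\infty$, and controlling both resulting terms with the Gaussian estimate of Lemma~\ref{l: L-geodesic} together with the linear backward drift $d_{g(\tau)}(x,\Gamma)\ge d_{g(0)}(x,\Gamma)+c|\tau|$; the inductive arithmetic $2k-1\ge k+1$ is correct.

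The gap is in the splitting, and it affects both terms. For the second term you split at the \emph{time-independent} threshold $d_{g(\tau)}(z,\Gamma)=\tfrac12\,d_{g(0)}(x,\Gamma)=:\tfrac12 r_0$. On the far-edge piece the inductive bound then gives only $|\Ric|^2(z,\tau)\le C\,r_0^{-2k}$, and since $\int_M G\,d_\tau z=1$, integrating in $\tau\in(s,0)$ yields $C|s|\,r_0^{-2k}\to\infty$. What you are missing is that at time $\tau$ the Gaussian is concentrated on a ball of radius $\sim\sqrt{|\tau|}\ll r(x,\tau):=d_{g(\tau)}(x,\Gamma)\sim r_0+c|\tau|$, so the effective size of $|\Ric|^2$ near $x$ is $\sim r(x,\tau)^{-2k}$, whose $\tau$-integral is the desired $r_0^{-(2k-1)}$. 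The paper implements this by splitting at $d_{g(\tau)}(x,z)=\tfrac{1}{1000}\,r(x,\tau)$: inside, $d_{g(\tau)}(z,\Gamma)\ge\tfrac12 r(x,\tau)$ and the inductive bound gives $|\Ric|^2\le C\,r(x,\tau)^{-2k}$; outside, $d_{g(\tau)}^2(x,z)/(t-\tau)\ge c\,r(x,\tau)\ge c(t-\tau)$, so the Gaussian of Lemma~\ref{l: L-geodesic} absorbs its own $(t-\tau)^{3/2}$ prefactor and contributes only $\le C\,e^{-r(x,\tau)/C}$. The same correction is needed for the first term: with only $R\le C_0$ you obtain $I(s)\le C_0\int_M G\,d_sy=C_0$, which does not tend to zero. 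One must split $I(s)$ the same way and invoke the inductive bound $R(y,s)\le C\,r(x,s)^{-k}\to 0$ on the near-$x$ piece, with the Gaussian handling the complement.
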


\begin{proof}
By Theorem \ref{l: curvature upper bound initial} this is true for $k=2$. Let $(M,g(t))$ be the Ricci flow of the soliton. We denote $d_t(x,\Gamma)$ by $r(x,t)$. 
After a rescaling we assume $\lim_{s\rii}R(\Gamma_1(s))=\lim_{s\rii}R(\Gamma_2(s))=4$, so \eqref{e: distance change} holds.

Suppose by induction that this is true for $k\ge2$, we will show that this is also true for $k+1$. In the following $C$ denotes all positive constants that depend on $k$, the maximum of $R$ and the limits of $R$ at the two ends of $\Gamma$.
Since $R$ satisfies the evolution equation
\begin{equation*}
    \partial_tR=\Delta R+2|\Ric|^2,
\end{equation*}
for a fixed pair $(x,t)\in M\times(-\infty,\infty)$ we have
\begin{equation*}\begin{split}
    R(x,t)&=\int_M G(x,t;y,s)R(y,s)\,d_sy+2\int_s^t\int_M G(x,t;z,s)|\Ric|^2(z,\tau)\,d_{\tau}z\,d\tau\\
    &:=I(s)+II(s).
\end{split}\end{equation*}

First, we claim that $\lim_{s\ri-\infty}I(s)=0$.
To show this, we split $I(s)$ into two integrals on $B_s(x,\frac{r(x,s)}{1000})$ and $M\setminus B_s(x,\frac{r(x,s)}{1000})$, and denote them respectively by $I_1(s)$ and $I_2(s)$.
Then for $I(s)$, using $\int_M G(x,t;y,s)\,d_sy=1$ we can estimate that
\begin{equation*}
    I_1(s)\le\left(\int_M G(x,t;y,s)\,d_sy\right)\cdot\left(\sup_{B_{s}(x,\frac{r(x,s)}{1000})}R(\cdot,s)\right)=\sup_{B_{s}(x,\frac{r(x,s)}{1000})}R(\cdot,s).
\end{equation*}
For any $y\in B_{s}(x,\frac{r(x,s)}{1000})$, we have $r(y,s)\ge r(x,s)-\frac{r(x,s)}{1000}\ge\frac{r(x,s)}{2}$. So $R(y,s)\le\frac{C}{r^k(x,s)}$ by the inductive assumption. 
So it follows by \eqref{e: distance change} that $I_1(s)\le\frac{C}{r^2(x,s)}$ which goes to zero as $s\ri-\infty$.

For $I_2(s)$, since by \eqref{e: distance change} we have
\begin{equation*}
    \frac{d_s^2(y,x)}{t-s}\ge C^{-1}r(x,s)\ge C^{-1}(t-s),
\end{equation*}
for all $y\in M\setminus B_s(x,\frac{r(x,s)}{1000})$, it follows by the heat kernel estimates Lemma \ref{l: L-geodesic} and Lemma \ref{l: heat kernel lower bound implies upper bound} that
\begin{equation*}
    G(x,t;y,s)\le\,C\,e^{-\frac{d_s^2(y,x)}{C(t-s)}},
\end{equation*}
which implies
\begin{equation*}
    I_2(s)\le\,C\,\int_{M\setminus B_s(x,\frac{r(x,s)}{1000})}e^{-\frac{d_s^2(y,x)}{C(t-s)}}\,d_sy\le Ce^{-\frac{r(x,s)}{C}}\ri0, \quad\textit{as }s\ri-\infty.
\end{equation*}

Next, we estimate $II(s)$. Let 
\begin{equation*}
    J(\tau):=\int_M G(x,t;z,\tau)|\Ric|^2(z,\tau)\,d_{\tau}z,
\end{equation*}
and split it into two integrals on $B_{\tau}(x,\frac{r(x,\tau)}{1000})$ and $M\setminus B_{\tau}(x,\frac{r(x,\tau)}{1000})$, and denote them respectively by $J_1(\tau)$ and $J_2(\tau)$.
Then by a similar argument as above and using the inductive assumption, we see that $J_1(\tau)\le\frac{C}{r^{2k}(x,\tau)}$ and $J_2(\tau)\le C\,e^{-\frac{r(x,\tau)}{C}}\le \frac{C}{r^{2k}(x,\tau)}$.

Therefore, integrating $J(s)$ we obtain
\begin{equation*}\begin{split}
    II(s)&=2\int_s^tJ(\tau)\,d\tau=2\int_s^t(J_1(\tau)+J_2(\tau))\,d\tau
    \le\int_s^t\frac{C}{r^{2k}(x,\tau)}\,d\tau\\
    &\le\int_s^t\frac{C}{(r(x,t)+1.9\,(t-\tau))^{2k}}\,d\tau\\
    &=C\left(\frac{1}{r^{2k-1}(x,t)}-\frac{1}{(r(x,t)+1.9\,(t-s))^{2k-1}}\right)
    \le\frac{C}{r^{2k-1}(x,t)}.
\end{split}\end{equation*}
Combining this with the estimate on $I(s)$, it follows that
\begin{equation*}
    R(x,t)\le\limsup_{s\ri-\infty}(I(s)+II(s))\le\frac{C}{r^{2k-1}(x,t)}.
\end{equation*}
Since $k\ge2$, we have $2k-1\ge k+1$, which proves the theorem by induction.

\end{proof}

\section{Symmetry improvement theorems}\label{s: semi-local}
In this section we will study the Ricci-Deturck perturbations $h$ whose background metric is a $SO(2)$-symmetric complete Ricci flow which is sufficiently close to the cylindrical plane $\RR\times S^1$.
Such symmetric 2-tensor can be decomposed as $h=h_++h_-$, where $h_+$ is the rotationally invariant mode and $h_-$ is the oscillatory mode.
We show that the oscillatory mode $h_-$ decays in time exponentially in a certain sense.
We will first prove the linear version of this symmetry improvement theorem, that is, the oscillatory mode of a linearized Ricci-Deturck flow on $\RR\times S^1$ decays exponentially in time.
Then we can obtain the theorem from its linear version by using a limiting argument.

More explicitly, $|h_-|$ decays exponentially in time in the following sense: First, if $|h_-|$ is initially bounded uniformly by a constant, then the theorem shows that it decays as $e^{-\delta_0t}$ for some $\delta_0>0$.
Moreover, if $|h_-|(\cdot,0)$ has an exponential growth in the space direction, then the theorem shows that $|h_-|(\cdot,t)$ still decays as $e^{-\delta_0t}$ modulo the same exponential growth rate in the space direction.

\subsection{SO(2)-decomposition of a symmetric 2-tensor}
For a 3D Riemannian manifold $(M,g)$,
we say it is $SO(2)$-symmetric if it admits 
an effective isometric $SO(2)$-action.
Equivalently, this means that there is a one parameter group of isometries $\psi_{\theta}$, $\theta\in\R$, such that $\psi_{\theta}=i.d.$ if and only if $\theta=2k\pi$, $k\in\mathbb{Z}$.
Throughout this section, we will moreover assume that $(M,g)$ is a 3D $SO(2)$-symmetric Riemannian manifold such that there exist a 2D Riemannian manifold $(N,g_0)$ and a Riemannian submersion $\pi:(M,g)\ri(N,g_0)$ which maps an orbit of the $SO(2)$-action to a point in $N$. This can be ensured when the $SO(2)$-action is free.

Let $U$ be a local coordinate chart on $N$ with coordinates $\rho:(x,y)\in U_0\subset\RR\ri\rho(x,y)\in U$.   
Take a section $s: U\ri\pi^{-1}(U)\subset M$.
Parametrize $SO(2)$ by $\theta:[0,2\pi)\ri SO(2)$.
Then we obtain a local coordinate on $\pi^{-1}(U)$ by $(x,y,\theta)\ri \theta\cdot s(\rho(x,y))$.

Let $h$ be a symmetric 2-tensor on $M$, and 
\begin{equation}\label{e: h_+}
    h_+(y):=\frac{1}{2\pi}\int_0^{2\pi}(\theta^*h)(y)\,d\theta,
\end{equation}
and $h_-:=h-h_+$. 
Then $h_+,h_-$ are two symmetric 2-tensors.
For any $\theta_0\in SO(2)$, we have
\begin{equation*}
    \theta_0^*h_+=\frac{1}{2\pi}\int_0^{2\pi}\theta_0^*(\theta^*h)\,d\theta=\frac{1}{2\pi}\int_0^{2\pi}(\theta_0+\theta)^*h\,d\theta=\frac{1}{2\pi}\int_{\theta_0}^{2\pi+\theta_0}\theta^*h\,d\theta=h_+.
\end{equation*}
So we say $h_+$ is the rotationally invariant part and $h_-$ is the oscillatory part of $h$, and $h=h_++h_-$ the $SO(2)$-decomposition of $h$. 
Similarly, We say $h$ is rotationally invariant if $h=h_+$, and oscillatory if $h_+=0$.

We now analyze the structure of the oscillatory mode more carefully.
Since the one forms $dx,dy,d\theta$ are invariant under the $SO(2)$-action, it follows that the basis  $\{dx^2,dy^2,dxdy,dxd\theta,dyd\theta,d\theta^2\}$ of the space of all symmetric 2-tensors are rotationally invariant. So the $SO(2)$-decomposition of $h$ reduces to that the decomposition of components under this basis: $h$ can be written as below under the local coordinates,
\begin{equation*}
    h=F_{1}dx^2+F_{2}dy^2+F_3dxdy+F_4dxd\theta+F_5dyd\theta+F_6d\theta^2,
\end{equation*}
where $F_i(x,y,\theta):U_0\times S^1\ri\R$ are functions.
Let $F_{i,\pm}$ be the i-th component in $h_{\pm}$.
Then 
\begin{equation*}
    F_{i,+}(x,y,\theta)=\frac{1}{2\pi}\int_0^{2\pi}F_{i}(x,y,\theta')\,d\theta',
\end{equation*}
which is independent of $\theta$, and
\begin{equation*}
    F_{i,-}(x,y,\theta)=\sum_{j=1}^{\infty}A_{i,j}(x,y)\cos(j\theta)+B_{i,j}(x,y)\sin(j\theta),
\end{equation*}
where 
\begin{equation*}\begin{split}
    A_{i,j}(x,y)&=\frac{1}{\pi}\int_0^{2\pi}F_i(x,y,\theta')\cos(j\theta')\,d\theta',\\ B_{i,j}(x,y)&=\frac{1}{\pi}\int_0^{2\pi}F_i(x,y,\theta')\sin(j\theta')\,d\theta'.
\end{split}
\end{equation*}

We have the following observations:
Suppose $\{M_i,g_i,x_i\}$ is a sequence of $SO(2)$-symmetric Riemannian manifold, which smoothly converges to a $SO(2)$-symmetric Riemannian manifold $(M_{\infty},g_{\infty},x_{\infty})$, and the convergence if $SO(2)$-equivariant.
Suppose also that $h_i$ is a sequence of symmetric 2-tensors on $M_i$ that smoothly converges to a symmetric 2-tensor on $M_{\infty}$.
Write $h_i=h_{i,+}+h_{i,-}$ and $h_{\infty}=h_{\infty,+}+h_{\infty,-}$ for the $SO(2)$-decomposition. Then $h_{i,+}$ smoothly converges to $h_{\infty,+}$, and $h_{i,-}$ smoothly converges to $h_{\infty,-}$.

\subsection{A symmetry improvement theorem in the linear case}

Note that straightforward computation shows that the decomposition $h=h_++h_-$ is compatible with the linearized Ricci Deturck flow $\partial_th=\Delta_Lh$ on the cylindrical plane $\RR\times S^1$.
In the following, we consider an oscillatory symmetric 2-tensor $h$ on $\R^2\times S^1$ which solves the linearized Ricci DeTurck equation. Assume $|h|(\cdot,0)$ satisfies an exponential growth bound,    then the following proposition shows that $|h|$ decays exponentially in time in a certain sense.

\begin{prop}(On $\RR\times S^1$, linear)\label{l: h on R2xS1}
Let $\delta_0=0.01$. There exists $T_0>0$ such that the following holds for all
\begin{equation*}
    \alpha\in[0,2.02],\quad T\ge T_0.
\end{equation*}
Let $h(\cdot,t)$, $t\in[0,T]$, be a continuous family of oscillatory tensors on $\RR\times S^1$,  which is smooth on $t\in(0,T]$ and satisfies the linearized Ricci deturck flow $\partial_t h=\Delta_L h$. 
Suppose we have
\begin{equation}\label{e: bound on |h(0)|}
    |h(x,y,\theta,0)|\le A\,e^{\alpha\sqrt{x^2+y^2}}
\end{equation}
for any $(x,y,\theta)\in\RR\times S^1$. Then
\begin{equation*}
    |h|(0,0,\theta,T)\le A\,e^{2\alpha T}\cdot e^{-\delta_0 T}.
\end{equation*}
\end{prop}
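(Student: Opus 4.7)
The plan is to exploit the fact that the background metric $g_{stan}$ on $\RR\times S^1$ is flat, so that the Lichnerowicz Laplacian $\Delta_L$ acting on a symmetric $2$-tensor reduces, in the standard parallel coordinate frame $\{dx,dy,d\theta\}$, to the ordinary Laplacian $\Delta = \partial_x^2+\partial_y^2+\partial_\theta^2$ applied componentwise (all curvature terms vanish). Thus the PDE decouples into six scalar heat equations for the components $h_{ij}$, and all analysis is scalar.

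The key step is to separate variables in $\theta$. Expand each component as a Fourier series
\begin{equation*}
h_{ij}(x,y,\theta,t)=\sum_{k\in\mathbb{Z}}\hat h_{ij,k}(x,y,t)\,e^{ik\theta}.
\end{equation*}
The oscillatory hypothesis $h_+=0$ from \eqref{e: h_+} is precisely the statement that $\hat h_{ij,0}\equiv 0$. Each Fourier coefficient then satisfies the damped heat equation on $\R^2$,
\begin{equation*}
\partial_t\hat h_{ij,k} = \bigl(\partial_x^2+\partial_y^2\bigr)\hat h_{ij,k} - k^2\,\hat h_{ij,k},
\end{equation*}
so $\hat h_{ij,k}(\cdot,t)=e^{-k^2 t}\,e^{t\Delta_{\R^2}}\hat h_{ij,k}(\cdot,0)$. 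The initial bound \eqref{e: bound on |h(0)|} gives, via $|\hat h_{ij,k}(x,y,0)|\le \frac{1}{2\pi}\int_0^{2\pi}|h(x,y,\theta,0)|\,d\theta\le A\,e^{\alpha\sqrt{x^2+y^2}}$, a pointwise bound independent of $k$.

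Next, I would establish the heat-kernel-on-exponential estimate
\begin{equation*}
\bigl(e^{t\Delta_{\R^2}}e^{\alpha|\cdot|}\bigr)(0)\le C\,(1+\alpha\sqrt{t})\,e^{\alpha^2 t},
\end{equation*}
by a direct Gaussian integral: writing $|z|^2/(4t)-\alpha|z|=(|z|/(2\sqrt t)-\alpha\sqrt t)^2-\alpha^2 t$ and integrating in polar coordinates. Plugging this into the formula for $\hat h_{ij,k}$ yields
\begin{equation*}
|\hat h_{ij,k}|(0,0,T)\le C A\,(1+\alpha\sqrt T)\,e^{(\alpha^2-k^2)T}.
\end{equation*}
Summing by the triangle inequality over $k\ne 0$ and over the finitely many tensor components, the series converges geometrically (since $k^2\ge 1$) and the dominant contribution comes from $k=\pm 1$, giving
\begin{equation*}
|h|(0,0,\theta,T)\le C'A\,(1+\alpha\sqrt T)\,e^{(\alpha^2-1)T}.
\end{equation*}

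Finally, I would verify the numerics: for $\delta_0=0.01$ and $\alpha\in[0,2.02]$, the function $\alpha^2-2\alpha-1+\delta_0$ is concave on $[0,2.02]$ with maxima at the endpoints; at $\alpha=2.02$ it equals $-0.9496<0$, and at $\alpha=0$ it equals $-0.99<0$. Hence $\alpha^2-1\le 2\alpha-\delta_0$ uniformly on this range with a margin of at least $0.94$. Choosing $T_0$ sufficiently large so that $C'(1+\alpha\sqrt T)\le e^{(0.9)T}$ for $T\ge T_0$ absorbs the polynomial prefactor into the exponential gap and produces the claimed bound $A\,e^{2\alpha T}\cdot e^{-\delta_0 T}$. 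The only mildly delicate point is ensuring the Fourier series manipulations are legitimate for data merely continuous with exponential growth; this is handled by the instantaneous smoothing of the heat semigroup at positive times, combined with the uniform convergence (with prefactor $e^{-k^2 T/2}$) of the resulting series for $T>0$.
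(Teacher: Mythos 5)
Your proposal is correct and follows the same overall strategy as the paper's proof: reduce $\Delta_L$ to the componentwise flat Laplacian, Fourier-decompose in $\theta$ (the oscillatory hypothesis kills the $k=0$ mode), and exploit the damping $e^{-k^2 t}$ from the $\theta$-frequency together with an estimate for the $\R^2$ heat semigroup applied to $e^{\alpha|z|}$. The one place you diverge is in how that last estimate is obtained: the paper avoids the Gaussian integral by comparing against the explicit supersolution $\int_0^{2\pi} e^{(1.1\alpha)^2 t}\,e^{1.1\alpha(x\cos\theta+y\sin\theta)}\,d\theta$ (using the maximum principle and the elementary inequality $\sqrt{x^2+y^2}\le 1.1(x\cos\theta+y\sin\theta)$ on a fixed arc), which costs the slack factor $(1.1)^2$ and gives $e^{(1.1\alpha)^2 T}$; you instead compute the Gaussian integral directly after completing the square and get the sharper $\bigl(1+\alpha\sqrt{T}\bigr)e^{\alpha^2 T}$, leaving a larger margin before the numerics. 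Both are valid, and the range $\alpha\in[0,2.02]$ works either way. Two small points: the quadratic $\alpha^2-2\alpha-1+\delta_0$ you check is convex, not concave (your conclusion ``maxima at the endpoints'' is the convex statement and the evaluations are right, so this is harmless); and the justification you flag at the end — that the Fourier manipulations and the representation $\hat h_{ij,k}(\cdot,t)=e^{-k^2t}e^{t\Delta_{\R^2}}\hat h_{ij,k}(\cdot,0)$ are legitimate for exponentially growing data — is precisely what the paper secures with its heat-kernel growth estimate (Lemma~\ref{l: heat equation}), which gives the a priori spatial exponential bound at positive times needed to run the maximum principle or invoke uniqueness; you would want to cite a comparable uniqueness/growth result rather than rely purely on smoothing.
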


Note that at $(0,0,\theta)$, the upper bound $|h|\le A$ at time $0$ becomes $|h|\le A\,e^{2\alpha T}\cdot e^{-\delta_0T}$ at time $T$.
For $\alpha=0$, the bound at time $T$ is $|h|\le A\cdot e^{-\delta_0T}$, in which case the exponential decay is clear. 
For $\alpha\neq0$, there is an extra increasing factor $e^{2\alpha T}$ which seems to cancel out the effect of the decreasing factor $e^{-\delta_0T}$. In this case, the exponential decay rate is measured by the time-dependent distance to the `base point' in a suitable Ricci flow. So the increasing factor $e^{2\alpha T}$ will be compensated by the distance shrinking as going forward along the flow.

In Section \ref{s: Approximating $SO(2)$-symmetric metrics}, we will apply the non-linear version of this proposition on the 3D flying wing with $\lim_{s\rii}R(\Gamma_i(s))=4$, $i=1,2$.
We will consider $h$ satisfying the initial bound $|h|\le e^{\alpha\,d_g(\cdot,\Gamma)}$. Since the soliton converges to $\R\times\cigar$ along $\Gamma$, it follows the distance to $\Gamma$ shrinks at a speed arbitrarily close to $2$. This will outweigh the increasing caused by $e^{2\alpha T}$.
It is crucial that $\alpha$ can be slightly greater than $2$ since we will rely on this to find a $SO(2)$-symmetric metric sufficiently close to the soliton metric so that the error decays like $e^{-(2+\delta)\,d_g(\cdot,\Gamma)}$ for some small but positive $\delta$.
So the error can decay faster than the scalar curvature as a consequence of Theorem \ref{t': curvature estimate}.

\begin{proof}
Since $h$ is oscillatory, we can write it as 
\begin{equation*}\begin{split}
    h(x,y,\theta,t)=F_1&(x,y,\theta,t)dx^2+F_2(x,y,\theta,t)dxdy+F_3(x,y,\theta,t)dy^2\\
    &+F_4(x,y,\theta,t)dxd\theta+F_5(x,y,\theta,t)dyd\theta +F_6(x,y,\theta,t)d\theta^2,
\end{split}\end{equation*}
where $F_i$ are in the following form
\begin{equation*}\begin{split}\label{e: F_i}
    F_i(x,y,\theta,t)&=\sum_{j=1}^{\infty}A_{i,j}(x,y,t)\cos (j\theta) + B_{i,j}(x,y,t)\sin (j\theta),\\
    A_{i,j}(x,y,t)&=\frac{1}{\pi}\int_0^{2\pi}F_i(x,y,\theta',t)\cos(j\theta')\,d\theta'\\
    B_{i,j}(x,y,t)&=\frac{1}{\pi}\int_0^{2\pi}F_i(x,y,\theta',t)\sin(j\theta')\,d\theta'.
\end{split}\end{equation*}
So by the assumption \eqref{e: bound on |h(0)|} we have $|F_i(x,y,\theta,0)|\le A\,e^{\alpha\sqrt{x^2+y^2}}$,
and hence
\begin{equation}\label{e: A and B upper bound}
    |A_{i,j}|(x,y,0),|B_{i,j}|(x,y,0)\le 2A\,e^{\alpha\sqrt{x^2+y^2}}.
\end{equation}

Since $h$ satisfies $\partial_th=\Delta_L h$, which in the coordinate $(x,y,\theta)$ is equivalent to
\begin{equation}\label{e: solve}
     \partial_t F_i(x,y,\theta,t)=(\partial_{xx}+\partial_{yy}+\partial_{\theta\theta})F_i(x,y,\theta,t)=(\Delta_{\R^2}+\partial_{\theta\theta})F_i(x,y,\theta,t).
\end{equation}
Solving \eqref{e: solve} term by term we see 
\begin{equation*}
    \begin{split}
       \partial_t A_{i,j}= \Delta_{\R^2} A_{i,j}-j^2\,A_{i,j},\quad \partial_t B_{i,j}= \Delta_{\R^2} B_{i,j}-j^2\,B_{i,j}.
    \end{split}
\end{equation*}
So $A_{i,j}(x,y,t)\cdot e^{j^2(t-T)}$ and $B_{i,j}(x,y,t)\cdot e^{j^2(t-T)}$ satisfy the heat equation on $\R^2$.
In the following, we will estimate these terms from above at $(0,0,T)$.

For convenience, we will omit the indices for a moment and let
\begin{equation}\label{e: firstapril}
    u(x,y,t)=A_{i,j}(x,y,t)\cdot e^{j^2(t-T)}.
\end{equation}
Then $u$ satisfies the heat equation
\begin{equation*}
    \partial_t u=\Delta_{\R^2} u,
\end{equation*}
and by \eqref{e: A and B upper bound} we have
\begin{equation}\label{e: april}
    |u|(x,y,0)\le 2A\,e^{\alpha\sqrt{x^2+y^2}}\cdot e^{-j^2T}.
\end{equation}

Since $\cos 0.4\ge \frac{1}{1.1}$, it follows that for any $(x,y)\in\RR$, there is
\begin{equation*}
    \sqrt{x^2+y^2}\le 1.1\,(x\cos\theta+y\sin\theta),
\end{equation*}
for all $\theta\in[\theta_0-0.4,\theta_0+0.4]$, where $\theta_0$ satisfies $\cos\theta_0=\frac{x}{\sqrt{x^2+y^2}}$ and $\sin\theta_0=\frac{y}{\sqrt{x^2+y^2}}$. So for any $\alpha$ we have that
\begin{equation}\label{e: exp}
    e^{\alpha\sqrt{x^2+y^2}}\le \frac{1}{0.8} \int_0^{2\pi}e^{1.1\,\alpha(x\cos\theta+y\sin\theta)} \;d\theta.
\end{equation}
Let
\begin{equation*}
    v(x,y,t)=2A\,e^{-j^2T}\cdot e^{(1.1\alpha)^2t}\cdot\frac{1}{0.8} \int_0^{2\pi}e^{(1.1\alpha)(x\cos\theta+y\sin\theta)} \;d\theta.
\end{equation*}
For any fixed $\theta$, by a straightforward computation we see that the function $e^{(1.1\alpha)^2t}\cdot e^{(1.1\alpha)(x\cos\theta+y\sin\theta)}$ is a solution to the heat equation on $\R^2$. So it follows that $v$ also satisfies the heat equation, i.e.
\begin{equation*}
    \partial_t v=\Delta_{\R^2} v.
\end{equation*}
Note by \eqref{e: april} and \eqref{e: exp} we have $|u|(x,y,0)\le v(x,y,0)$.
Moreover, by Lemma \ref{l: heat equation} we have a linear exponential growth bound on $|u|(x,y,t)$ for all later times $t\in[0,T]$ which may depend on $T$. This allows us to 
use the maximum principle (see e.g. \cite{Evans2010PartialDE}) and deduce that 
\begin{equation}\label{e: h_i}
    |u|(0,0,T)\le v(0,0,T)=2.5 \,A\,e^{-j^2T} \cdot e^{(1.1\alpha)^2 T}.
\end{equation}

Since $\alpha\in[0,2.02]$, it is easy to check that the following holds
\begin{equation*}
    2\alpha-(1.1\alpha)^2+1\ge0.1.
\end{equation*}
Take $T_0=\frac{\ln 2.5}{0.05}$, then $e^{0.05\,T}>  2.5 $ for all $T\ge T_0$, and hence
\begin{equation*}
      2.5\,  e^{-T}\cdot e^{(1.1\alpha)^2 T}\le e^{2\alpha T}\cdot e^{-0.05\, T}.
\end{equation*}
Substituting this into \eqref{e: h_i} we obtain
\begin{equation*}
    |u|(0,0,T)\le A\,e^{-(j^2-1)T}\cdot e^{2\alpha T}\cdot e^{-0.05\, T}.
\end{equation*}
Restoring the indices in \eqref{e: firstapril}, we obtain
\begin{equation*}
    |A_{i,j}|(0,0,T)\le A\,e^{-(j^2-1)T}\cdot e^{2\alpha T}\cdot e^{-0.05\, T}.
\end{equation*}
Similarly, we can show that $|B_{i,j}|(0,0,T)$ satisfies the same inequality. 
Therefore, assuming $T_0\ge\frac{\ln 400}{0.04}$, we obtain
\begin{equation*}\begin{split}
    |F_i|(0,0,T)&\le \sum_{j=1}^{\infty}|A_{i,j}|(0,0,T)+\sum_{j=1}^{\infty}|B_{i,j}|(0,0,T)\\
    &\le 2A\,e^{2\alpha T}\cdot e^{-0.05\, T}\cdot\sum_{j=1}^{\infty}e^{-(j^2-1)T}\\
    &\le 4A\,e^{2\alpha T}\cdot e^{-0.05\, T}
    \le \frac{1}{100}A\,e^{2\alpha T}\cdot e^{-0.01\, T},
\end{split}\end{equation*}
which impiles $|h|(0,0,T)\le A\,e^{2\alpha T}\cdot e^{-0.01\, T}$, and hence proves the lemma.
\end{proof}

\subsection{A symmetry improvement theorem in the nonlinear case}

In Theorem \ref{t: symmetry improvement}, we prove the non-linear version of Proposition \ref{l: h on R2xS1}. 
In the theorem, $h$ is a symmetric 2-tensor satisfying the Ricci Deturck flow perturbation with background metric $g(t)$ being a $SO(2)$-symmetric complete Ricci flow which is sufficiently close to $\RR\times S^1$ at a base point. 
We will show that the oscillatory part of $h$ has a similar exponential decay in time as in Proposition \ref{l: h on R2xS1}.

We briefly recall some facts of Ricci Deturck flow perturbations from \cite[Appendix A]{bamler2022uniqueness}. Let $(M,g(t))$ be a complete Ricci flow and $h$ be a solution to the following Ricci Deturck flow perturbation equation with background metric $g(t)$. 
\begin{equation*}
    \nabla_{\partial_t}h=\Delta_{g(t)}h+2\,\Rm_{g(t)}(h)+\mathcal{Q}_{g(t)}[h],
\end{equation*}
where $\mathcal{Q}_{g(t)}[h]$ is quadratic in $h$ and its spacial derivatives, and the left-hand side contains the conventional Uhlenbeck trick:
\begin{equation*}
    (\nabla_{\partial_t}h)_{ij}=(\partial_th)_{ij}+g^{pq}(h_{pj}\Ric_{qi}+h_{ip}\Ric_{qj}).
\end{equation*}
Then $\widetilde{h}:=\alpha^{-1}h$ satifies the rescaled Ricci Deturck flow perturbation equation
\begin{equation*}
    \nabla_{\partial_t}\widetilde{h}=\Delta_{g(t)}\widetilde{h}+2\,\Rm_{g(t)}(\widetilde{h})+\mathcal{Q}^{(\alpha)}_{g(t)}[\widetilde{h}],
\end{equation*}
which converges to the linearized Ricci-Deturck equation as $\alpha\ri0$,
\begin{equation*}
    \nabla_{\partial_t}\widetilde{h}=\Delta_{g(t)}\widetilde{h}+2\,\Rm_{g(t)}(\widetilde{h}),
\end{equation*}
which can also be written as $\partial_t\widetilde{h}=\Delta_{L}\widetilde{h}$, where $\Delta_{L}$ is the lichnerowicz laplacian
\begin{equation*}
    \Delta_{L}h_{ij}=\Delta h_{ij}+2\,g^{kp}g^{\ell q}R_{kij\ell}h_{pq}-g^{pq}(h_{pj}\Ric_{qi}+h_{ip}\Ric_{qj}).
\end{equation*}

Theorem \ref{t: symmetry improvement} is proved using a limiting argument: We consider a sequence of blow-ups of solutions $h_i$ to the Ricci Deturck flow perturbation, and show that they converge to a solution to the linearized Ricci Deturck flow to which we can apply Proposition \ref{l: h on R2xS1}.
To take the limit, we need to derive uniform bounds for $h_i$ and the derivatives. 

To this end, we first observe that for a solution $h$ to the Ricci Deturck flow perturbation, $|h|^2$ satisfies the following evolution inequality, see \cite[Appendix A.1]{bamler2022uniqueness},
\begin{equation*}\begin{split}\label{e: original}
    \partial_t |h|^2\le (g+h)^{ij}\nabla^2_{ij}|h|^2-2(g+h)^{ij}g^{pq}g^{uv}\nabla_ih_{pu}\nabla_jh_{qv}\\
    +C(n)|\Rm_g|\cdot|h|^2+C(n)|h|\cdot|\nabla h|^2,
\end{split}\end{equation*}
where $C(n)>0$ is some dimensional constant.
Note that the elliptic operator $(g+h)^{ij}\nabla^2_{ij}$ is not exactly a laplacian of metrics.
So in order to use the standard heat kernel estimates, we compare this operator with the exact laplacian $\Delta_{g(t)+h(t)}$ in the following lemma and show that $\partial_t |h|^2\le \Delta_{g(t)+h(t)}|h|^2+|h|^2$.

\begin{lem}\label{l: laplacian}
For any $n\in\mathbb{N}$, there are constants $C_0(n),C_1(n)$ such that the following holds:
Let $(M^n,g(t))$, $t\in[0,T]$, be a Ricci flow (not necessarily complete) with $|\Rm|\le 1$ and $\textnormal{inj}\ge 1$. and $h(t)$ be a Ricci flow perturbation with background $g(t)$. Suppose $|\nabla^kh|\le\frac{1}{C_0(n)}<\frac{1}{100}$, then $|h|^2$ satisfies the following evolution inequality
\begin{equation}\label{e: garden}
    \partial_t |h|^2\le \Delta_{g(t)+h(t)}|h|^2+C_1(n)\,|h|^2.
\end{equation}
\end{lem}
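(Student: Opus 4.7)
\textbf{Proof proposal for Lemma \ref{l: laplacian}.}

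The plan is to rewrite the already-established evolution inequality
\begin{equation*}
    \partial_t |h|^2 \le (g+h)^{ij}\nabla^2_{ij}|h|^2 - 2(g+h)^{ij}g^{pq}g^{uv}\nabla_i h_{pu}\nabla_j h_{qv} + C(n)|\Rm_g||h|^2 + C(n)|h||\nabla h|^2
\end{equation*}
in terms of the genuine Laplacian $\Delta_{g+h}$ of the perturbed metric, absorbing all resulting error terms into the given right-hand side of \eqref{e: garden}. The key identity I would use is
\begin{equation*}
    \Delta_{g+h} u \;=\; (g+h)^{ij}\bigl(\nabla^2_{ij} u - (\widetilde{\Gamma}^k_{ij}-\Gamma^k_{ij})\,\nabla_k u\bigr),
\end{equation*}
where $\widetilde{\Gamma}$ and $\Gamma$ denote the Christoffel symbols of $g+h$ and $g$ respectively. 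A standard computation gives $\widetilde{\Gamma}^k_{ij}-\Gamma^k_{ij} = \tfrac{1}{2}(g+h)^{k\ell}(\nabla_i h_{j\ell}+\nabla_j h_{i\ell}-\nabla_\ell h_{ij})$, which is a tensor bounded pointwise by $C(n)|\nabla h|$ as soon as $|h|\le 1/2$ so that $(g+h)^{-1}$ is equivalent to $g^{-1}$.

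Applying this to $u=|h|^2$ and using $\bigl|\nabla|h|^2\bigr| \le 2|h|\cdot|\nabla h|$, I obtain
\begin{equation*}
    (g+h)^{ij}\nabla^2_{ij}|h|^2 \;=\; \Delta_{g+h}|h|^2 \;+\; E,\qquad |E|\le C_2(n)\,|h|\,|\nabla h|^2.
\end{equation*}
Next, the smallness assumption $|h|\le 1/C_0(n)$ ensures that $(g+h)^{ij}$ and $g^{ij}$ are uniformly equivalent (with constants depending only on $n$), so the Bochner-type gradient term satisfies
\begin{equation*}
    -2(g+h)^{ij}g^{pq}g^{uv}\nabla_i h_{pu}\nabla_j h_{qv} \;\le\; -c(n)\,|\nabla h|^2,
\end{equation*}
for some $c(n)>0$. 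Substituting these into the evolution inequality yields
\begin{equation*}
    \partial_t|h|^2 \;\le\; \Delta_{g+h}|h|^2 \;+\;\bigl(C_2(n)|h|-c(n)+C(n)|h|\bigr)|\nabla h|^2 \;+\; C(n)|\Rm_g|\,|h|^2.
\end{equation*}

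Now I would choose $C_0(n)$ large enough that $\bigl(C_2(n)+C(n)\bigr)/C_0(n)\le c(n)$, so the bracket in front of $|\nabla h|^2$ is non-positive and that whole term can be dropped. Using the curvature bound $|\Rm_g|\le 1$ and setting $C_1(n):=C(n)$, inequality \eqref{e: garden} follows. The main (and really only) subtle point is booking the two sources of $|h||\nabla h|^2$ error — one from the original Ricci--DeTurck inequality and one from the Christoffel correction — and ensuring that the good term $-c(n)|\nabla h|^2$ coming from the quadratic gradient expression is large enough to absorb both; this is exactly where the smallness hypothesis $|\nabla^k h|\le 1/C_0(n)<1/100$ (in particular on $|h|$) is used. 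I do not expect to use the injectivity radius or the higher-order derivative bounds beyond $k=0$ here, but they are presumably included so that $(M,g(t)+h(t))$ is a legitimate Riemannian manifold on which $\Delta_{g+h}$ is well-defined and the above identities hold pointwise.
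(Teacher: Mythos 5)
Your proof is correct and follows essentially the same strategy as the paper: rewrite the operator $(g+h)^{ij}\nabla^2_{ij}$ as $\Delta_{g+h}$ plus a Christoffel-correction term, bound that correction, and absorb all bad gradient contributions into the good term $-2(g+h)^{ij}g^{pq}g^{uv}\nabla_i h_{pu}\nabla_j h_{qv}$ using the smallness of $h$ and $\nabla h$. The one genuine difference is small but worth noting: you invoke the tensorial formula $\widetilde{\Gamma}^k_{ij}-\Gamma^k_{ij}=\tfrac12(g+h)^{k\ell}(\nabla_i h_{j\ell}+\nabla_j h_{i\ell}-\nabla_\ell h_{ij})$, which immediately gives the bound $C(n)|\nabla h|$ and an error $|E|\le C(n)|h||\nabla h|^2$ without any reference to a choice of coordinates. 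The paper instead introduces local distance coordinates (using the injectivity-radius and curvature hypotheses to guarantee $|\nabla^k g|\le C$ for $k=0,1$) and records the weaker bound $|\widetilde{\Gamma}^k_{ij}-\Gamma^k_{ij}|\le C(|h|+|\nabla h|)$, leading to an extra $|h|^2|\nabla h|$ error that it then absorbs by Young's inequality. Your tensorial route is slightly cleaner and makes it transparent that, for this part of the argument, the injectivity-radius assumption is not actually needed — it is there, as you guessed, to make the ambient analytic setup (and possibly other applications of this lemma in the paper) well behaved.
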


\begin{proof}
In the following the covariant derivatives and curvature quantities are taken with respect to $g(t)$ and the time-index $t$ in  $g(t),h(t)$ is suppressed.
Let $C$ denote all dimensional constants whose values may change from line to line.
Let $(x^1,x^2,...,x^n)$ be local coordinates on an open subset $U\subset M$, such that $|\nabla^kg|\le C$, $k=0,1$, for example we may choose the distance coordinates, \cite[Theorem 74]{petersen}.
By the formula of the Hessian $\nabla^2_{ij} f=\partial^2_{ij}f-\Gamma^k_{ij}\partial_kf$ for any smooth function $f$ on $U$, it is easy to see
\begin{equation*}\begin{split}
    (g+h)^{ij}\nabla^2_{ij}|h|^2&=(g+h)^{ij}\partial^2_{ij}|h|^2-(g+h)^{ij}\Gamma^k_{ij}\partial_k|h|^2,\\
        \Delta_{g+h}|h|^2&=(g+h)^{ij}\partial^2_{ij}|h|^2-(g+h)^{ij}\widetilde{\Gamma}^k_{ij}\partial_k|h|^2,
\end{split}\end{equation*}
where $\widetilde{\Gamma}^k_{ij}$ is the Christoffel symbol of $g+h$.
So we have 
\begin{equation*}\begin{split}\label{e: lemon}
    (g+h)^{ij}\nabla^2_{ij}|h|^2-\Delta_{g+h}|h|^2&=(g+h)^{ij}(\widetilde{\Gamma}^k_{ij}-\Gamma^k_{ij})\partial_k|h|^2.
\end{split}\end{equation*}
Seeing that $|\widetilde{\Gamma}^k_{ij}-\Gamma^k_{ij}|\le C(|h|+|\nabla h|)$ and using the assumption $|\nabla^kh|\le\frac{1}{C_0(n)}$, $k=0,1$, for sufficiently large $C_0(n)$ we obtain
\begin{equation*}\begin{split}
    |(g+h)^{ij}\nabla^2_{ij}|h|^2-\Delta_{g+h}|h|^2|&\le C\,|h|^2\,|\nabla h|+C\,|h|\,|\nabla h|^2
    \le \frac{1}{2}\,|h|^2+\frac{1}{2}|\nabla h|^2.
\end{split}\end{equation*}
Combining this with \eqref{e: original}, and note that 
\begin{equation*}
    2(g+h)^{ij}g^{pq}g^{uv}\nabla_ih_{pu}\nabla_jh_{qv}\ge 1.8\,|\nabla h|^2,
\end{equation*}
we obtain \eqref{e: garden}.

\end{proof}

Now we prove the main result of this section.

\begin{theorem}[On almost cylindrical part, non-linear]\label{t: symmetry improvement}
There exist $\delta_0,T_0>0$ such that for any $T\ge T_0$, there exists $\overline{\epsilon}(T),\overline{\delta}(T),\underline{D}(T)>0$ such that for any
\begin{equation*}
  \alpha\in[0,2.02],\quad \epsilon<\overline{\epsilon},\quad\delta<\overline{\delta},\quad D_{\#}>\underline{D}.
\end{equation*}
the following holds:

Let $(M,g(t),x_0)$, $t\in[0,T]$, be a $SO(2)$-symmetric complete Ricci flow with $|\Rm|_{g(t)}\le 1$, and $(M,g,x_0)$ is $\delta$-close to $(\RR\times S^1,g_{stan})$ in the $C^{98}$-norm. Let $h(t)$ be a Ricci Deturck flow perturbation with background metric $g(t)$ on $B_0(x_0,D_{\#})\times[0,T]$ and $|\nabla^{k}h(t)|\le\frac{1}{1000}$, $k=0,1$.
Suppose also
\begin{equation}\label{e: double}
\begin{cases}
    |\nabla^kh|(x,0)\le \epsilon\cdot e^{100\,d_0(x,x_0)} \quad&\textit{for}\quad x\in B_0(x_0,D_{\#}),\quad k=0,1,2,\\
    |h|(x,t)\le \epsilon\cdot e^{10\,D_{\#}}\quad&\textit{for}\quad (x,t)\in \partial B_0(x_0,D_{\#})\times[0,T].
\end{cases}
\end{equation}
Suppose also
\begin{equation}\label{e: alpha}
    |h|(x,0)\le\epsilon\cdot e^{\alpha\,d_{g(0)+h(0)}(x,x_0)}\quad\textit{for}\quad x\in B_0(x_0,D_{\#}).
\end{equation}
Then we have
\begin{equation*}
    |\nabla^kh_-|(x_0,T)\le \epsilon\cdot e^{-\delta_0T}\cdot e^{2\alpha T},\quad k=0,1,...,100.
\end{equation*}
where $h_-$ is the oscillatory part of $h$.
\end{theorem}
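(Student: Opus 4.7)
The plan is to prove this by contradiction through a compactness/blow-up argument, reducing the non-linear statement to its linear counterpart, Proposition \ref{l: h on R2xS1}. Suppose the theorem fails with the $\delta_0$ from Proposition \ref{l: h on R2xS1}. Then there exist $\alpha \in [0,2.02]$ and $T \ge T_0$, together with sequences $\epsilon_i \to 0$, $\delta_i \to 0$, $D_{\#,i} \to \infty$, $SO(2)$-symmetric complete Ricci flows $(M_i, g_i(t), x_{0,i})$ that are $\delta_i$-close to $(\RR \times S^1, g_{stan})$ in $C^{98}$, and Ricci-DeTurck perturbations $h_i$ satisfying the hypotheses \eqref{e: double}, \eqref{e: alpha} with parameters $\epsilon_i, D_{\#,i}$, yet with
\begin{equation*}
    |\nabla^{k_i} h_{i,-}|(x_{0,i}, T) > \epsilon_i \cdot e^{-\delta_0 T} \cdot e^{2\alpha T}
\end{equation*}
for some $0 \le k_i \le 100$. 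Define the rescaled perturbations $\widetilde h_i := \epsilon_i^{-1} h_i$. Then $\widetilde h_i$ satisfies a rescaled Ricci-DeTurck perturbation equation whose quadratic nonlinearity carries a prefactor $\epsilon_i$ and therefore formally degenerates to the linearized equation $\partial_t \widetilde h = \Delta_L \widetilde h$ as $i \to \infty$. The initial and boundary conditions become
\begin{equation*}
    |\widetilde h_i|(x,0) \le e^{\alpha\, d_{g_i(0) + h_i(0)}(x, x_{0,i})}, \qquad |\widetilde h_i|(x,t) \le e^{10\, D_{\#,i}} \textnormal{ on } \partial B_0(x_{0,i}, D_{\#,i}) \times [0,T].
\end{equation*}

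The next step is to establish uniform interior bounds for $\widetilde h_i$ on regions that, after passing to the limit, cover all of $\RR \times S^1$. By Lemma \ref{l: laplacian} (and its obvious rescaled analogue), the pointwise norm $|\widetilde h_i|^2$ is a subsolution of a uniformly parabolic heat-type equation with bounded zeroth-order term on $B_0(x_{0,i}, D_{\#,i}) \times [0,T]$. Adapting the argument of Lemma \ref{l: heat equation}, I would compare $|\widetilde h_i|$ with the barrier $e^{(1.1\alpha)^2 t} \cdot \int_{S^1} e^{1.1\alpha(x\cos\theta + y\sin\theta)} d\theta$ transplanted via the $\delta_i$-isometry to $\RR \times S^1$; the boundary contribution from $\partial B_0(x_{0,i}, D_{\#,i})$ is controlled using Gaussian heat kernel upper bounds (Lemma \ref{l: heat kernel lower bound implies upper bound}) together with the boundary hypothesis $|\widetilde h_i| \le e^{10 D_{\#,i}}$, and decays to $0$ at the base point as $D_{\#,i} \to \infty$ since the squared distance penalty $d^2/(Ct)$ dwarfs the exponential boundary bound once $D_{\#,i} \gg T$. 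This yields, on every fixed compact subset of the parabolic space-time (identified with $\RR \times S^1 \times [0,T]$ via the $\delta_i$-approximate isometries), a uniform bound $|\widetilde h_i|(x,t) \le C(\alpha, T) e^{(\alpha + 1) d_{g_i(0)}(x, x_{0,i})}$. Standard parabolic Schauder/interior regularity then produces uniform bounds on all higher covariant derivatives $|\nabla^k \widetilde h_i|$ on compact subsets.

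Passing to a subsequence, $(M_i, g_i(t), x_{0,i})$ converges smoothly and $SO(2)$-equivariantly to $(\RR \times S^1, g_{stan})$, and $\widetilde h_i$ converges smoothly on compact subsets of $\RR \times S^1 \times (0,T]$ to a smooth limit $\widetilde h_\infty$ satisfying $\partial_t \widetilde h_\infty = \Delta_L \widetilde h_\infty$, with $|\widetilde h_\infty|(x, 0) \le e^{\alpha \sqrt{x^2 + y^2}}$. Writing $\widetilde h_\infty = \widetilde h_{\infty,+} + \widetilde h_{\infty,-}$ for its $SO(2)$-decomposition, the oscillatory parts $\widetilde h_{i,-}$ converge to $\widetilde h_{\infty,-}$ (the averaging \eqref{e: h_+} is continuous under smooth convergence), and the linearized equation preserves the decomposition. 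Applying Proposition \ref{l: h on R2xS1} to $\widetilde h_{\infty,-}$ with $A = 1$ (after possibly applying interior Schauder estimates to promote a $C^0$ bound to $C^{100}$ bounds at $x_0$ by parabolic smoothing over a bounded time interval) gives $|\nabla^k \widetilde h_{\infty,-}|(0, T) \le e^{-\delta_0 T} \cdot e^{2\alpha T}$ for $k = 0, 1, \ldots, 100$, contradicting the lower bound inherited from the contradiction hypothesis after passing $k_i$ to a subsequential limit.

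The main technical obstacle is the rigorous control of $|\widetilde h_i|$ on the moving domain $B_0(x_{0,i}, D_{\#,i})$ in the presence of the exponentially growing initial data together with the boundary data growing like $e^{10 D_{\#,i}}$: one has to choose the growth rate in the barrier function carefully so that it dominates the initial condition, so that parabolic evolution ensures the barrier still dominates at later times, and so that the influence of the (possibly large) boundary values is negligible near the fixed base point. The condition $\alpha \le 2.02$, which is needed in the linear proof to enforce $2\alpha - (1.1\alpha)^2 + 1 \ge 0.1$, is also what allows the barrier construction here; any significantly larger $\alpha$ would break the argument. Once this heat-kernel/barrier control is secured, the remainder reduces to a clean extraction of limits and appeal to the linear theorem.
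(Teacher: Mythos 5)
Your proposal matches the paper's proof in all essential respects: the same contradiction/blow-up scheme, the rescaling $\widetilde h_i = \epsilon_i^{-1}h_i$, uniform bounds on compact sets via the evolution inequality for $|h_i|^2$ (Lemma \ref{l: laplacian}) combined with heat-kernel/maximum-principle control on the exhausting domains, smooth $SO(2)$-equivariant convergence to a linearized Ricci--DeTurck solution on $\RR\times S^1$, and an appeal to Proposition \ref{l: h on R2xS1} followed by parabolic interior estimates to upgrade to $C^{100}$. The only detail worth flagging is that the paper takes $\delta_0$ strictly smaller than the $0.01$ appearing in Proposition \ref{l: h on R2xS1}, precisely so that the slack $e^{(\delta_0 - 0.01)T}$ absorbs the constant from the derivative bootstrap and the shifted-base-point factor $e^{\alpha+0.01}$; your writeup implicitly uses this but does not state it.
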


\begin{proof}[Proof of Theorem \ref{t: symmetry improvement}]
Let $T_0>0$ be from Proposition \ref{l: h on R2xS1}, and the value of $\delta_0$ will be determined later.
Suppose the assertion does not hold for some $T\ge T_0$ and $C_0>0$.
Then there are sequences of numbers $\epsilon_i\ri0,\delta_i\ri0,D_i\ri\infty$, a sequence of $SO(2)$-symmetric complete Ricci flows $(M_i,g_i(t))$, $t\in[0,T]$, which is $\delta_i$-close to $\RR\times S^1$ at $(x_i,0)\in M_i\times[0,T]$, and a sequence of Ricci deturck flow perturbation $h_i(t)$ with background metric $g_i(t)$, defined on $B_0(x_0,D_i)\times[0,T]$, which satisfies \eqref{e: double} and
\begin{equation}\label{e: initial bd on h}
    |h_i|(x,0)\le\epsilon_i\cdot e^{\alpha\, d_{g_i(0)+h_i(0)}(x,x_i)},
\end{equation}
but there is some $k_i\in\{0,1,...,100\}$ such that
\begin{equation}\label{e: h-lower bound}
    |\nabla^{k_i}h_{i,-}|(x_i,T)\ge \epsilon_i\cdot e^{-\delta_0 T}\cdot e^{2\alpha T}.
\end{equation}

After passing to a subsequence we may assume that the pointed Ricci flows $(M_i,g_i(t),x_i)$ on $[0,T]$ converge to  $(\RR\times S^1,g_{stan},x_0)$ in the $C^{96}$-sense.
We will show that $\frac{h_i}{\epsilon_i}$ converge to a solution to the linearized Ricci Deturck flow on $\RR\times S^1$. To this end, since $|\nabla^{k}h|\le\frac{1}{1000}$, $k=0,1$, by Lemma \ref{l: laplacian}, there is $C_0>0$ such that
\begin{equation*}
    \partial_t|h_i|^2\le\Delta_{g_i+h_i}|h_i|^2+ C_0\cdot|h_i|^2 \quad\textit{on}\quad B_0(x_i,D_i)\times[0,T].
\end{equation*}
Let $u_i:=e^{-C_0t}|h_i|^2$, then this implies $\partial_t u_i\le\Delta_{g_i+h_i}u_i$. Moreover, \eqref{e: double} implies
\begin{equation*}\begin{cases}
    u_i(x,0)
    \le \epsilon_i^2\cdot e^{20\, d_{g_i(0)+h_i(0)}(x,x_i)} &\textit{for}\quad x\in B_0(x_i,D_{i}),\\
    u_i(x,t)\le \epsilon_i^2\cdot e^{20\,D_{i}} &\textit{for}\quad x\in\partial B_0(x_i,D_{i}), t\in[0,T].
\end{cases}
\end{equation*}
Applying the heat kernel estimate Lemma \ref{l: heat equation} and the weak maximum principle on $B_0(x_i,D_i)\times[0,T]$, we obtain bounds on $|u_i|$ which are independent of all $i$: For any $A>0$, there exists $C(A,T)>0$, which is uniform for all $i$, such that 
\begin{equation*}
    |u_i|(x,t)\le C(A,T)\cdot \epsilon_i^2\quad\textit{on}\quad B_0(x_i,A)\times[0,T],
\end{equation*}
and hence
\begin{equation}\label{e: C0 bound}
    |h_i|(x,t)\le C(A,T)\cdot \epsilon_i\quad\textit{on}\quad B_0(x_i,A)\times[0,T],
\end{equation}
with a possibly larger $C(A,T)$.
By the local derivative estimates for the Ricci deturck flow perturbations \cite[Lemma A.14]{bamler2022uniqueness}, this implies bounds for higher derivatives,
\begin{equation*}
    |\nabla^mh_i|\le C_m(A,T)\cdot\epsilon_i\cdot t^{-m/2}\quad\textit{on}\quad B_0(x_i,A/2)\times(0,T],
\end{equation*}
where $m\in\mathbb{N}$ and $C_m(A,T)>0$ are constants depending on $A,T,m$. Moreover, the first inequality in \eqref{e: double} implies
\begin{equation*}
    |\nabla^k h_i|\le C(A,T)\cdot\epsilon_i\quad\textit{on}\quad B_0(x_i,A/2)\times[0,T],\quad k=0,1,2.
\end{equation*}

Therefore, letting $H_i=\frac{h_i}{\epsilon_i}$, then
\begin{equation*}
    |\nabla^mH_i|\le C_m(A,T)\cdot t^{-m/2}\quad\textit{on}\quad B_0(x_i,A/2)\times(0,T],
\end{equation*}
and also
\begin{equation*}
    |\nabla^k H_i|\le C(A,T)\quad\textit{on}\quad B_0(x_i,A/2)\times[0,T],\quad k=0,1,2.
\end{equation*}
So after passing to a subsequence, $H_i$ converges to a symmetric 2-tensor $H_{\infty}$ on $(\RR\times S^1)\times[0,T]$ in the $C^0$-sense, and the convergence is smooth on $(0,T]$.

On the one hand, by the contradiction assumption \eqref{e: h-lower bound} there is some $k_0\in\{0,...,100\}$ such that
\begin{equation}\label{e: greater than}
    |\nabla^{k_0}H_{\infty,-}|(x_0,T)\ge e^{-\delta_0T}\cdot e^{2\alpha T}.
\end{equation}
On the other hand, since $\epsilon_i\ri0$, it follows that $H_{\infty}$ satisfies the linearized Ricci Deturck equation $\partial_t H_{\infty}=\Delta_L H_{\infty}$. The initial bound \eqref{e: initial bd on h} passes to the limit and implies
\begin{equation*}
    |H_{\infty}|(x,0)\le e^{\alpha \,d_{g_{stan}}(x,x_0)}.
\end{equation*}
So we can apply Proposition \ref{l: h on R2xS1} to the oscillatory part $H_{\infty,-}$ at every point in the backward parabolic neighborhood $U:=B_T(x_0,1)\times[T-1,T]\subset (\RR\times S^1)\times[0,T]$ centered at $(x_0,T)$, and obtain
\begin{equation*}
    |H_{\infty,-}|< e^{\alpha+0.01}\cdot e^{-0.01\,T}\cdot e^{2\alpha T}\quad \textit{on}\quad U.
\end{equation*}
Since the components of $H_{\infty,-}$ satisfy the heat equation \eqref{e: solve}, it follows by the standard derivative estimates of heat equations that for all $k=0,1,...,100$,
\begin{equation*}
    |\nabla^kH_{\infty,-}|(x_0,T)< C_k\cdot e^{-0.01\,T}\cdot e^{2\alpha T}\le e^{-\delta_0T}\cdot e^{2\alpha T},
\end{equation*}
for some $\delta_0<0.01$, which contradicts with \eqref{e: greater than}.

\end{proof}

\section{Construction of an approximating SO(2)-symmetric metric}\label{s: Approximating $SO(2)$-symmetric metrics}
The main goal in this section is to construct an approximating $SO(2)$-symmetric metric which approximates the soliton metric within error $e^{-2(1+\epsilon_0)\,d_g(\cdot,\Gamma)}$, for some positive constant $\epsilon_0>0$, and moreover the error goes to zero as we move towards the infinity of the soliton.  Here $\Gamma=\Gamma_1(-\infty,\infty)\cup\Gamma_2(-\infty,\infty)\cup\{p\}$, where $p$ is the critical point of $M$, and $\Gamma_1,\Gamma_2$ are two integral curves from Corollary \ref{l: new Gamma}.

The construction consists of two parts: First, in subsection \ref{ss: the desired exponential decay part 2}, we do an inductive construction to obtain a $SO(2)$-symmetric metric $\overline{g}$ that approximates the soliton metric within the error $e^{-2(1+\epsilon_0)d_g(\cdot,\Gamma)}$.
Next, in subsection \ref{ss: near the edge part 3}, we extend $\overline{g}$ to a neighborhood of $\Gamma$ to obtain the desired approximating metric.

In the first step, we repeating the following process in an induction scheme: We consider the harmonic map heat flows from the Ricci flow of the soliton to the Ricci flow of some approximating $SO(2)$-symmetric metric.
The error between the two flows is characterized by the Ricci deturck flow perturbation, whose oscillatory mode decays in time by our symmetric improvement theorem.
Therefore, the accuracy of the approximation will improve by the flow, after adding the rotationally symmetric mode in the Ricci deturck flow perturbation to the approximating metric.

Note that the perturbation could grow very fast in the compact regions because the soliton is not close to $\RR\times S^1$ and we do not have a symmetry improvement theorem there. In order to deal with this, we will do surgeries to the soliton metric $g$ and  approximating $SO(2)$-symmetric metrics, by cutting-off their compact regions and glue-back regions that are sufficiently close to $\RR\times S^1$. The resulting manifolds are diffeomorphic to $\RR\times S^1$, and close to $\RR\times S^1$ everywhere.
So the harmonic map heat flows between the flows of these manifolds exist up to a long enough time for us to apply Theorem \ref{t: symmetry improvement}.
In the surgeries we need to glue up $\epsilon$-cylindrical planes, and for the $SO(2)$-symmetric metrics we also need to preserve the $SO(2)$-symmetry in the resulting metrics. This needs some gluing-up lemmas in subsection \ref{ss: glueup}.
We conduct the surgeries in subsection \ref{ss: Surgery on the soliton metric part1} and \ref{ss: Surgery on the background SO(2)-symmetric metrics part 1}.

\subsection{Glue up SO(2)-symmetric metrics}\label{ss: glueup}
In order to do the surgeries, we need to know how to glue up $SO(2)$-symmetric metrics. This is done in this subsection.
Recall that for a 3D Riemannian manifold $(M,g)$,
we say it is $SO(2)$-symmetric if there is 
a one parameter group of isometries $\psi_{\theta}$, $\theta\in\R$, such that $\psi_{\theta}=i.d.$ if and only if $\theta=2k\pi$, $k\in\mathbb{Z}$.
In this subsection, we show how to glue up several $SO(2)$-symmetric metrics which are close to $(\RR\times S^1,g_{stan})$ and also close to each other on their intersections.
Since the metrics
throughout this subsection are $\epsilon$-close to $\RR\times S^1$ for some very small $\epsilon$, we will take derivatives and measure norms using the metric $g_{stan}$ on $\RR\times S^2$.

First, we show in following lemma that if a 3D Riemannian manifold $(M,g)$ is $\epsilon$-close to $\RR\times S^1$ at $x_0\in M$ under two $\epsilon$-isometries $\phi_{1}$ and $\phi_{2}$, then the two vector fields $\phi_{1*}\left(\partial_{\theta}\right)$ and $\phi_{2*}\left(\partial_{\theta}\right)$ are $C_0\epsilon$-close.
Therefore, the vector field $\partial_{\theta}$ is well-defined on an $\epsilon$-cylindrical plane up to sign and an error $\epsilon$.

\begin{lem}\label{l: two epsilon cylindrical planes are close}
Let $k\in\mathbb{N}$.
There exists $C_0,\overline{\epsilon}>0$ such that the following holds for all $\epsilon<\overline{\epsilon}$. 
Let $(M,g)$ be a 3-dimensional Riemannian manifold.
Suppose $(M,g,x_0)$ is $\epsilon$-close to $(\RR\times S^1,g_{stan})$ in the $C^k$-norm under two $\epsilon$-isometries $\phi_i:S^1\times(-\epsilon^{-1},\epsilon^{-1})\times(-\epsilon^{-1},\epsilon^{-1})\rightarrow U_i\subset\subset M$, $i=1,2$, where $V:=\phi_1(S^1\times(-100,100)\times(-100,100))\subset U_2$. Then after possibly replacing $\phi_2$ by $\phi_2\circ p$, where $p(\theta,x,y)=(-\theta,x,y)$ for $\theta\in[0,2\pi)$ and $x,y\in(-\epsilon^{-1},\epsilon^{-1})$, we have
\begin{equation*}
    \left|\phi_{1*}\left(\partial_{\theta}\right)-\phi_{2*}\left(\partial_{\theta}\right)\right|_{C^{k-1}(V)}\le C_0\epsilon.
\end{equation*}
\end{lem}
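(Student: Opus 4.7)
The plan is to set $\Phi := \phi_2^{-1}\circ\phi_1$, viewed as a diffeomorphism from $W := S^1\times(-100,100)\times(-100,100)\subset\RR\times S^1$ onto an open subset of $\RR\times S^1$, and to show that $\Phi$ is $C^k$-close to a global isometry $\psi$ of $(\RR\times S^1,g_{stan})$ satisfying $\psi_*\partial_\theta=\pm\partial_\theta$. Since both $\phi_i$ send the basepoint $x_1\in\RR\times S^1$ to $x_0\in M$, $\Phi$ fixes $x_1$. The tautological identity $\Phi^*(\phi_2^*g) = \phi_1^*g$ together with $|\phi_i^*g-g_{stan}|_{C^k}\le\epsilon$ yields $|\Phi^*g_{stan}-g_{stan}|_{C^k(W)}\le C\epsilon$, and combined with uniform $C^{k+1}$-bounds on $\phi_i$ coming from the approximate isometry condition, this gives uniform $C^{k+1}$-bounds on $\Phi$.

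I will then argue by contradiction via compactness. If the conclusion fails, there exist sequences $\epsilon_n\to 0$ and maps $\Phi_n$ as above with $\min_{\pm}|\Phi_{n*}\partial_\theta\mp\partial_\theta|_{C^{k-1}(W)}>n\,\epsilon_n$. By the uniform $C^{k+1}$-bounds and Arzela--Ascoli, a subsequence of $\Phi_n$ converges in $C^k(W)$ to a smooth map $\Phi_\infty:W\to\RR\times S^1$ fixing $x_1$, and $|\Phi_n^*g_{stan}-g_{stan}|_{C^k}\to 0$ forces $\Phi_\infty^*g_{stan}=g_{stan}$, so $\Phi_\infty$ is a smooth local isometry. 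The key geometric observation is that the $S^1$-fiber through $x_1$ lies entirely in $W$ and is a closed geodesic of length $2\pi$; the only closed geodesics of length $2\pi$ in $\RR\times S^1$ are $S^1$-fibers, so $\Phi_\infty$ must map this fiber to itself. Consequently, $d\Phi_\infty|_{x_1}$ preserves the orthogonal splitting $T_{x_1}\R^2\oplus T_{x_1}S^1$ and sends $\partial_\theta$ to $\pm\partial_\theta$; that is, $d\Phi_\infty|_{x_1}\in O(2)\times O(1)\subset O(3)$. Hence by analytic continuation $\Phi_\infty$ agrees on $W$ with the global isometry $\psi$ of $\RR\times S^1$ determined by $\psi(x_1)=x_1$ and $d\psi|_{x_1}=d\Phi_\infty|_{x_1}$, and $\psi_*\partial_\theta=\pm\partial_\theta$ globally.

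Passing to a further subsequence so the sign is fixed, $\Phi_{n*}\partial_\theta$ converges to $\pm\partial_\theta$ in $C^{k-1}(W)$, contradicting the assumption once the sign is matched by replacing $\phi_2$ with $\phi_2\circ p$ (which negates $\phi_{2*}\partial_\theta$). The bound on $V$ then follows by writing $\phi_{2*}\partial_\theta=\phi_{1*}(\Phi^{-1}_*\partial_\theta)$ and using the uniform $C^{k+1}$-bound on $\phi_1$ to transfer the $C^{k-1}$-estimate $|\Phi^{-1}_*\partial_\theta-\partial_\theta|_{C^{k-1}(W)}\le C\epsilon$ to the corresponding estimate on $V=\phi_1(W)$. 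The main obstacle will be the identification of the local isometric limit $\Phi_\infty$ with the restriction of a global isometry of $\RR\times S^1$; this rests on the rigidity that closed geodesics of length $2\pi$ on $\RR\times S^1$ are precisely the $S^1$-fibers, together with the uniqueness of local isometries through a given point with a given differential.
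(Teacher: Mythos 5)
Your strategy — compose the two charts into $\Phi = \phi_2^{-1}\circ\phi_1$, observe $\Phi^*g_{stan}$ is $C\epsilon$-close to $g_{stan}$, and pass to a limit — is a genuinely different route from the paper, which instead works with the Lie derivative $\LL_{X_2}g_1$ directly, uses that Killing fields of $\R^3$ have an explicit finite-dimensional form, and estimates the coefficients one by one. Your setup is clean and the qualitative limit analysis (closed geodesics of length $2\pi$ in $\R^2\times S^1$ are exactly $S^1$-fibers, hence $d\Phi_\infty$ preserves the splitting) is correct. However, there is a genuine gap in the quantitative conclusion.

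The contradiction hypothesis is $\min_{\pm}|\Phi_{n*}\partial_\theta\mp\partial_\theta|_{C^{k-1}(W)}>n\,\epsilon_n$, where $\epsilon_n<1/n$. Your compactness argument shows only that $\min_{\pm}|\Phi_{n*}\partial_\theta\mp\partial_\theta|\to 0$. Since $n\epsilon_n$ also tends to zero (indeed $n\epsilon_n<1$ and can decay arbitrarily fast), the statement $\min_\pm(\cdots)\to 0$ does not contradict $\min_\pm(\cdots)>n\epsilon_n$. This is the standard failure mode of unrescaled compactness arguments: they yield ``for every $\delta>0$ there is $\epsilon_0(\delta)$ such that for $\epsilon<\epsilon_0$ the error is $<\delta$,'' but not the linear rate $C_0\epsilon$ that the lemma asserts and that the later induction in Section~\ref{s: Approximating $SO(2)$-symmetric metrics} actually needs (the $C_0\epsilon$ bound is propagated through Lemma~\ref{l: glue cylindrical plane} and the inductive construction in Theorem~\ref{t: precise approximation}; a non-linear dependence on $\epsilon$ would break that loop).

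To repair it, you would need a second blow-up: after establishing the qualitative closeness of $\Phi_n$ to a fixed isometry $\psi$, consider the renormalized differences $(\Phi_{n*}\partial_\theta - \psi_*\partial_\theta)/\epsilon_n$, show they stay bounded using $|\Phi_n^*g_{stan}-g_{stan}|_{C^k}\le C\epsilon_n$ and a coercive Korn-type estimate for the Killing operator on the bounded domain $W$, and then contradict the lower bound $>n$. But at that point the argument is essentially the paper's linear estimate in disguise: the coercive Korn inequality is exactly what lets the paper pass from $|\LL_{X_2}g_1|_{C^{k-1}}\le\epsilon$ to ``$X_2$ is $C\epsilon$-close to a Killing field,'' after which the explicit six-parameter form of Euclidean Killing fields, combined with $2\pi$-periodicity of the flow and the unit-speed normalization, pins down the coefficients with linear-in-$\epsilon$ errors.
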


\begin{proof}
We shall use $\epsilon$ to denote all constants $C_0\epsilon$, where $C_0>0$ is a constant depending only on  $k$.
Let $g_i=(\phi_i^{-1})^{*}g_{stan}$ and $X_i=\phi_{i*}\left(\partial_{\theta}\right)$, $i=1,2$. 
Let $(x,y,\theta)$ be the coordinates on $U_1$ induced by $\phi_1$ such that $g_1$ can be written as 
$d\theta^2+dx^2+dy^2$, where $x,y\in(-\epsilon^{-1},\epsilon^{-1})$ and $\theta\in [0,2\pi)$. So $X_1=\partial_{\theta}$.
The coordinate function $\theta$ can be lifted to a function $z$ on the universal covering $\widetilde{U}_1\rightarrow U_1$, so that the metric on $\widetilde{U}_1$ can be written as $dz^2+dx^2+dy^2$ under the coordinates $(x,y,z)$. 

Assume $\epsilon$ is sufficiently small, then
\begin{equation}\label{e: conference}
    |g_1-g_2|_{C^k(V)}\le|g_1-g|_{C^k(V)}+|g_2-g|_{C^k(V)}\le\epsilon.
\end{equation}
Since $\LL_{X_2}g_2=0$, this implies
\begin{equation}\label{e: sloan}
    \left|\LL_{X_2}g_1\right|_{C^{k-1}(V)}\le\left|\LL_{X_2}(g_2+(g_1-g_2))\right|_{C^{k-1}(V)}=\left|\LL_{X_2}(g_1-g_2)\right|_{C^{k-1}(V)}\le\epsilon.
\end{equation}
Note that the following is the form of all killing fields on $\R^3$.
\begin{equation*}
    a_1\partial_{x}+a_2\partial_{y}+a_3\partial_{z}+b_1(x\partial_{z}-z\partial_{x})+b_2(x\partial_{y}-y\partial_{x})+b_3(z\partial_{y}-y\partial_{z}),
\end{equation*}
where $a_1,a_2,a_3,b_1,b_2,b_3\in\R$.
A direct computation using \eqref{e: sloan}, we see that $X_2$ is $\epsilon$-close in the $C^{k-1}$-norm to a vector field on $V$ of the following form,
\begin{equation}\label{e: rutgers}
    a_1\partial_{x}+a_2\partial_{y}+a_3\partial_{\theta}+b_1(x\partial_{\theta}-{\theta}\partial_{x})+b_2(x\partial_{y}-y\partial_{x})+b_3({\theta}\partial_{y}-y\partial_{\theta}).
\end{equation}

In the following we will estimate these coefficients and show that $|a_3\pm1|\le\epsilon$ and other coefficients are bounded in absolute values by $\epsilon$.
First, for a vector field in \eqref{e: rutgers} to be well-defined at $\theta=0$, we must have
\begin{equation*}
    |b_1|+|b_3|\le\epsilon.
\end{equation*}
Next, since $\nabla^{g_2} X_2=0$, it follows by \eqref{e: conference} that $|\nabla^{g_1} X_2|_{C^{k-1}(V)}\le\epsilon$, which implies $|b_2|\le\epsilon$.
So $X_2$ is $\epsilon$-close in the $C^{k-1}$-norm to the vector
\begin{equation*}
    Y:=a_1\partial_{x}+a_2\partial_{y}+a_3\partial_{\theta},
\end{equation*}
and hence the flow generated by $Y$, which is
\begin{equation*}
    \begin{cases}
    x(t;x_0,y_0,\theta_0)=x_0+a_1t,\\
    y(t;x_0,y_0,\theta_0)=y_0+a_2t,\\
    \theta(t;x_0,y_0,\theta_0)=\theta_0+a_3t,
    \end{cases}
\end{equation*}
is $\epsilon$-close in the $C^{k-1}$-norm to the flow generated by $X_2$ on $V$.
Since the flow of $X_2$ is $2\pi$-periodic, it follows that
\begin{equation}\label{e: a1a2a3}
    |a_1|+|a_2|+|2\pi\,a_3-2m\pi|\le \epsilon,
\end{equation}
for some $m\in\mathbb{N}$.
Note that $X_i$ is a unit speed velocity vector of the $g_i$-minimal geodesic loop, $i=1,2$, it is easy to see that for sufficiently small $\epsilon$ we must have either $|X_2-X_1|\le\frac{1}{1000}$ or $|X_2+X_1|\le\frac{1}{1000}$. 
This implies $|a_3\pm1|\le\frac{1}{100}$, which combined with \eqref{e: a1a2a3} implies 
\begin{equation*}
    |a_1|+|a_2|+|a_3\pm1|\le\epsilon,
\end{equation*}
which proves the lemma.
\end{proof}

The next lemma is a step further than Lemma \ref{l: two epsilon cylindrical planes are close}, which shows that if a $SO(2)$-symmetric metric is $\epsilon$-close to an $\epsilon$-cylindrical plane, then their killing fields are $C_0\epsilon$-close.

\begin{lem}\label{l: killing fields are close}
Let $k\in\mathbb{N}$.
There exists $C_0,\overline{\epsilon}>0$ such that the following holds for all $\epsilon<\overline{\epsilon}$. 
Let $(U,g)$ be a 3D Riemannian manifold, $x_0\in U$.
Suppose $g$ is $SO(2)$-symmetric metric and $X$ is the killing field of the $SO(2)$-isometry.
Suppose also $(U,g,x_0)$ is $\epsilon$-close to $(\RR\times S^1,g_{stan})$ in the $C^k$-norm, and
\begin{equation*}
    |X-\partial_{\theta}|\le\frac{1}{1000}\quad \textit{on}\quad B_g(x_0,1000)\subset U,
\end{equation*}
where $\partial_{\theta}$ denotes the killing field along the $S^1$-direction in an $\epsilon$-cylindrical plane.
Then
\begin{equation*}
    \left|X-\partial_{\theta}\right|_{C^{k-1}(B_g(x_0,1000))}\le C_0\epsilon.
\end{equation*}
\end{lem}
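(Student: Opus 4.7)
The plan is to mirror the argument of Lemma \ref{l: two epsilon cylindrical planes are close}, now with the Killing field $X$ of the $SO(2)$-symmetric metric $g$ in place of $\phi_{2*}(\partial_\theta)$. Throughout I will use $\epsilon$ to denote constants of the form $C(k)\epsilon$ whose value may change from line to line. Let $\phi$ be the $\epsilon$-isometry from the hypothesis, and write $\widetilde g := (\phi^{-1})^*g$ on the pullback of $B_g(x_0,1000)$, so that $|\widetilde g - g_{stan}|_{C^k} \le \epsilon$ in the coordinates $(x,y,\theta)$, lifted to $(x,y,z) \in \R^3$ on the universal cover when convenient.

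First I would upgrade the $C^0$ bound $|X|\le 2$ on $B_g(x_0,1000)$ to a $C^{k-1}$ bound. Since $X$ is a Killing field of $g$, it satisfies the standard second-order identity expressing $\nabla^2 X$ in terms of the curvature tensor of $g$ contracted with $X$. The closeness $|\widetilde g - g_{stan}|_{C^k}\le\epsilon$, together with the flatness of $g_{stan}$ on the universal cover, bounds the curvature of $\widetilde g$ and its covariant derivatives up to order $k-2$ uniformly. Bootstrapping the Killing identity along $g$-geodesics from $x_0$ then yields $|X|_{C^{k-1}(B_g(x_0,1000))} \le C$ for a constant independent of $\epsilon$. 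Combined with $\mathcal{L}_X g = 0$ and the $C^k$-closeness of $g$ to $g_{stan}$, this gives
\begin{equation*}
|\mathcal{L}_X g_{stan}|_{C^{k-1}} = |\mathcal{L}_X(g_{stan}-g)|_{C^{k-1}} \le C\epsilon.
\end{equation*}

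Next, by the same direct computation used in Lemma \ref{l: two epsilon cylindrical planes are close}, after lifting to the universal cover $X$ is $\epsilon$-close in the $C^{k-1}$-norm to some Killing field of the flat metric $dx^2+dy^2+dz^2$, i.e.\ to a vector field of the form
\begin{equation*}
Y = a_1\partial_x + a_2\partial_y + a_3\partial_z + b_1(x\partial_z - z\partial_x) + b_2(x\partial_y - y\partial_x) + b_3(z\partial_y - y\partial_z).
\end{equation*}
Because $X$ descends to a globally defined vector field on $U$ that is $2\pi$-periodic in $\theta$, the coefficients $b_1$ and $b_3$ must vanish up to error $\epsilon$. Finally, the assumed $C^0$ bound $|X-\partial_\theta|\le\tfrac{1}{1000}$, together with the fact that $X$ is a unit-speed generator of the $SO(2)$-action, forces $|a_1|+|a_2|+|b_2|+|a_3-1|\le\epsilon$, exactly as in the earlier lemma, giving $|X-\partial_\theta|_{C^{k-1}(B_g(x_0,1000))}\le C_0\epsilon$. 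The main obstacle is the first step: turning the purely $C^0$ hypothesis on $X$ into a $C^{k-1}$ bound requires exploiting the overdetermined Killing system together with the uniform curvature control supplied by $C^k$-closeness of $g$ to the flat cylinder.
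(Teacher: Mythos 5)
Your proposal follows the same overall route as the paper's proof: pass to $g_1 := (\phi^{-1})^*g_{stan}$, use $\mathcal{L}_X g = 0$ to get $|\mathcal{L}_X g_1|_{C^{k-1}} \le C\epsilon$, conclude that $X$ is $C^{k-1}$-close to a flat Killing field, drop the non-periodic terms $b_1, b_3$, and then use the $C^0$ hypothesis $|X - \partial_\theta| \le \tfrac{1}{1000}$ together with $2\pi$-periodicity of the flow to pin down the remaining coefficients. The one place where you genuinely add content is the first step: the paper passes directly from $\mathcal{L}_X g = 0$ to $|\mathcal{L}_X(g_1-g)|_{C^{k-1}} \le C\epsilon$, but this last quantity contains $\nabla^k X$, so it implicitly needs $|X|_{C^k}$ bounded, while the hypothesis is only $C^0$. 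You correctly flag this and supply the standard remedy via the overdetermined Killing system: the identity $\nabla^2_{ij}X_k = -R^{\ell}{}_{ijk}X_\ell$ together with the $C^{k-2}$-smallness of the curvature of $g$ (inherited from $C^k$-closeness to the flat $g_{stan}$) gives a bootstrap that upgrades the $C^0$ bound to uniform $C^{k-1}$ control (one needs an elliptic estimate for $\Delta X + \Ric(X) = 0$ or an argument for $\nabla X$ at one point to start the ODE integration; you gesture at this but do not spell it out).

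One small inaccuracy: you say the coefficient bounds $|a_1| + |a_2| + |b_2| + |a_3 - 1| \le \epsilon$ follow ``exactly as in the earlier lemma,'' but in fact the argument for $b_2$ here is different. In Lemma \ref{l: two epsilon cylindrical planes are close} the rotational coefficient $b_2$ is killed by the $g_2$-parallelness of $X_2$; in the present lemma $X$ is not assumed parallel, and the paper instead uses the $2\pi$-periodicity of the flow of $X$ combined with evaluating the flow at points $y_0 = \pm 100$. Your mention of periodicity and the $C^0$ bound covers the correct idea but elides this distinction. The rest is essentially identical to the paper.
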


Note that
the assumption $|X-\partial_{\theta}|\le\frac{1}{1000}$ in this lemma is necessary even if we derive a better bound using it.
For example, on $\RR\times S^1$, a $SO(2)$-isometry could be either a rotation in the $xy$-plane around the origin, or a rotation in the $S^1$-factor, but their killing fields are not close to each other.

\begin{proof}
We shall use $\epsilon$ to denote all constants $C_0\epsilon$, where $C_0>0$ is a constant independent of $\epsilon$.
First, let $\phi_1$ be the $\epsilon$-isometry to $(\RR\times S^1,g_{stan})$, and let $(x,y,\theta)$, $x,y\in(-\epsilon^{-1},\epsilon^{-1})$ and $\theta\in [0,2\pi)$, be local coordinates on an open subset $V$ containing $B_g(x_0,1000)$, such that
$g_1:=(\phi_1^{-1})^{*}g_{stan}$ can be written as $d\theta^2+dx^2+dy^2$. 
Then $\theta$ can be lifted to a function $z$ on the universal covering $\widetilde{V}\ri V$, and the induced metric on $\widetilde{V}$ is $dz^2+dx^2+dy^2$. 

Assume $\epsilon$ is sufficiently small, then
\begin{equation*}
    |g-g_1|_{C^k(U)}\le\epsilon.
\end{equation*}
Seeing also $\LL_Xg=0$, this implies
\begin{equation*}
    \left|\LL_{X}g_1\right|_{C^{k-1}(U)}
    =\left|\LL_{X}(g_1-g)\right|_{C^{k-1}(U)}\le\epsilon.
\end{equation*}
Similarly as in Lemma \ref{l: two epsilon cylindrical planes are close}, this implies that $X$ is $\epsilon$-close in the $C^{k-1}$-norm to the following vector field on $V$,
\begin{equation}\label{e: Y vector}
    Y:=a_1\partial_{x}+a_2\partial_{y}+a_3\partial_{\theta}+b_2(x\partial_{y}-y\partial_{x}),
\end{equation}
where $a_1,a_2,a_3,b_2\in\R$.

In the following we will show that $|a_3-1|\le\epsilon$ and $|a_1|,|a_2|,|b_2|\le\epsilon$.
First, assume $b_2\neq0$.
Then the flow generated by $Y$ is
\begin{equation}\label{e: case}
    \begin{cases}
    y(t;x_0,y_0,\theta_0)=y_0\cos b_2t+\frac{a_1}{b_2}(1-\cos b_2t)+\frac{a_2}{b_2}\sin b_2t\\
    x(t;x_0,y_0,\theta_0)=x_0\cos b_2t+\frac{a_2}{b_2}(\cos b_2t-1)+\frac{a_1}{b_2}\sin b_2t\\
    \theta(t;x_0,y_0,\theta_0)=\theta_0+a_3t.
    \end{cases}
\end{equation}
Since the flow generated by $X$ is $2\pi$-periodic and $Y$ is $\epsilon$-close to $X$, it follows that 
\begin{equation}\label{e: y0x0}
    |x(2\pi;x_0,y_0,\theta_0)-x_0|+|y(2\pi;x_0,y_0,\theta_0)-y_0|\le\epsilon,
\end{equation}
for some $m\in\mathbb{N}$.
First, by $|X-\partial_{\theta}|\le\frac{1}{1000}$ we have
\begin{equation*}
    \left|Y-\partial_{\theta}\right|\le |X-Y|+\left|X-\partial_{\theta}\right|\le \epsilon+\left|X-\partial_{\theta}\right|\le\frac{1}{500}
\end{equation*}
which implies $|b_2|\le\frac{1}{100}$.
Moreover, by taking $y_0=100,-100$ in \eqref{e: y0x0} and using \eqref{e: case}, we get
\begin{equation*}
    200|\cos (2\pi b_2)-1|= |y(2\pi;0,100,0)-100-y(2\pi;0,-100,0)+100|\le\epsilon,
\end{equation*}
which combined with $|b_2|\le\frac{1}{100}$ implies $|b_2|\le\epsilon$. 

So we may assume $b_2=0$ in \eqref{e: Y vector}, so $X$ is $\epsilon$-close to the following vector field in the $C^{k-1}$-norm on $V$,
\begin{equation}\label{e: Z vector}
    Z:=a_1\partial_{x}+a_2\partial_{y}+a_3\partial_{\theta},
\end{equation}
which generates the flow
\begin{equation}\label{e: case again}
    \begin{cases}
    y(t;x_0,y_0,\theta_0)=y_0+a_2t;\\
    x(t;x_0,y_0,\theta_0)=x_0+a_1t;\\
    \theta(t;x_0,y_0,\theta_0)=\theta_0+a_3t.
    \end{cases}
\end{equation}
The $2\pi$-periodicity of the flow of $X$ implies immediately 
\begin{equation*}
    |a_1|+|a_2|+|2\pi a_3-2m\pi|\le \epsilon.
\end{equation*}
Using $|X-\partial_{\theta}|\le\frac{1}{1000}$ this implies $|a_3-1|\le\epsilon$, which proves the lemma.

\end{proof}

In the following lemma, we show that one can glue up one parameter groups of diffeomorphisms that are $\epsilon$-close to each other on their intersections, to obtain a global one parameter group of diffeomorphisms that are $C_0\epsilon$-close to them, where the constant $C_0$ does not depend on $\epsilon$.

\begin{lem}\label{l: glue up local diffeomorphisms from N x S1 to M}
Let $m,k\in\mathbb{N}$. There exists $C_0,\overline{\epsilon}>0$ such that the following holds for all $\epsilon<\overline{\epsilon}$. 
Let $(M,g)$ be a 3D Riemannian manifold. Suppose $(M,g)$ is $\epsilon$-close to $(\RR\times S^1,g_{stan})$ at all $x\in M$.
Suppose $\{U_i\}_{i=1}^{\infty}$ is an open covering of $M$ such that at most $m$ of them intersect at one point. Moreover, there is a  one parameter group of diffeomorphisms $\{\phi_{i,t}\}_{t\in\R}$ on each $U_i$, which satisfies:
\begin{enumerate}
    \item\label{it: 1} $\phi_{i,0}=\phi_{i,2\pi}=i.d.$.
    \item\label{it: 2} $|\phi_{i,t*}(\partial_{t})-\partial_{\theta}|\le\frac{1}{1000}$, where $\partial_{\theta}$ denotes the killing field along the $S^1$-direction in an $\epsilon$-cylindrical plane up to sign.
    \item\label{it: 3} $|\phi_{i}-\phi_{j}|_{C^{k}((U_i\cap U_j)\times S^1)}\le\epsilon$, where $\phi_i(x,\theta)=\phi_{i,\theta}(x)$ for any $(x,\theta)\in U_i\times S^1$.
\end{enumerate}
Then there exists a one parameter group of diffeomorphisms $\{\psi_{t}\}_{t\in\R}$ on $M$  satisfying
\begin{enumerate}
    \item\label{property 1} $\psi_{0}=\psi_{2\pi}=i.d.$.
    \item\label{property 2} $|\psi-\phi_{i}|_{C^{k}(U_i\times S^1)}\le C_0 \epsilon$ for all $i$.
    \item\label{property 3} $\psi_{t}=\phi_{i,t}$ on $\{x\in M: B_g(x,1000\,r(x))\subset U_i\}$.
\end{enumerate}
\end{lem}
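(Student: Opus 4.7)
The plan is to produce $\psi$ by averaging the infinitesimal generators $X_i := \frac{d}{dt}|_{t=0}\phi_{i,t}$ via a partition of unity and then reparametrizing along orbits to restore exact $2\pi$-periodicity. The key observation is that because $M$ is globally $\epsilon$-close to $\RR\times S^1$ with bounded covering multiplicity $m$, vector-field averaging behaves uniformly well, and because each generator $X_i$ has a closed $2\pi$-periodic flow, small averaging perturbations still yield closed orbits whose periods are smooth $\epsilon$-perturbations of $2\pi$.

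First I would construct a smooth partition of unity $\{\chi_i\}$ subordinate to $\{U_i\}$ with uniform bounds $|\nabla^j\chi_i|_g \le C(m,k)$ for $j=0,\ldots,k$, and with the additional property that $\chi_i \equiv 1$ on the ``deep region'' $V_i := \{x\in M : B_g(x,1000\,r(x))\subset U_i\}$ while $\chi_j \equiv 0$ on $V_i$ for every $j\ne i$. (After an $\epsilon$-small refinement the $V_i$'s can be arranged pairwise disjoint, which makes this consistent with $\sum_i \chi_i = 1$.) Set $X := \sum_i \chi_i X_i$. Differentiating condition (\ref{it: 3}) in $t$ at $t=0$ yields $|X_i - X_j|_{C^{k-1}(U_i\cap U_j)} \le C\epsilon$, and since only $\le m$ summands are nonzero at any point,
\begin{equation*}
    |X - X_i|_{C^{k-1}(U_i)} \le C_0(m,k)\,\epsilon, \qquad |X - \partial_\theta| \le \tfrac{1}{500}.
\end{equation*}
On $V_i$, by construction $X = X_i$ identically.

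Each $X_i$ has a flow of period exactly $2\pi$ by (\ref{it: 1}), so its orbits are embedded circles of $g$-length close to $2\pi$. A Gronwall estimate on $[0,2\pi]$ shows that the orbit of $X$ through each $x\in M$ is a closed embedded curve whose least period $T(x)$ satisfies $|T(x)-2\pi|\le C_0\epsilon$; the implicit function theorem applied to the Poincar\'e return map on a local transversal then gives $T\in C^{k-1}(M)$. For $x\in V_i$, the $X_i$-orbit through $x$ has length $\approx 2\pi$, which is much less than $1000\,r(x)$, so it remains in $V_i$, where $X=X_i$; hence $T(x)=2\pi$ exactly on $V_i$. Finally, set $\widetilde X := \frac{2\pi}{T}\,X$ and let $\psi_t$ be its flow. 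Then $\psi_t$ is $2\pi$-periodic, yielding (\ref{property 1}); on $V_i$ one has $\widetilde X = X_i$ and the orbit stays in $V_i$, so $\psi_t \equiv \phi_{i,t}$ there, yielding (\ref{property 3}); and (\ref{property 2}) follows from standard ODE dependence on parameters applied to $|\widetilde X - X_i|_{C^{k-1}(U_i)}\le C_0\epsilon$, integrated over the compact interval $[0,2\pi]$ and read off as a $C^k$-estimate on the maps $\phi_i, \psi \colon U_i \times S^1 \to M$.

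The main technical obstacle is the simultaneous construction of the partition of unity with both the deep-region property ($\chi_i=1$ on $V_i$, $\chi_j=0$ on $V_i$ for $j\ne i$) and uniform derivative bounds: this requires arranging the deep sets $V_i$ to be essentially disjoint while keeping the multiplicity bounded by $m$ and the $C^k$-norms of the bump functions controlled independently of $\epsilon$, and it is here that both the uniform $\epsilon$-closeness of $M$ to $\RR\times S^1$ and the bounded multiplicity of the cover are essential. Once this is done, the closed-orbit/period-reparametrization step is standard.
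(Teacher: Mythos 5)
Your construction is genuinely different from the paper's: you average the infinitesimal generators $X_i$ via a partition of unity, then try to reparametrize time to restore exact $2\pi$-periodicity, whereas the paper instead averages the \emph{maps} $F_i(x,\theta)=\phi_{i,\theta}(x)$ by an iterated geodesic interpolation (the $\Sigma_K$ construction), producing a global diffeomorphism $F\colon N\times S^1\to M$, and then takes $\psi_\theta$ to be conjugation of the rotation on the $S^1$ factor through $F$. In the paper's approach $2\pi$-periodicity is automatic because $\psi_\theta = F\circ(\mathrm{id}\times R_\theta)\circ F^{-1}$ where $R_\theta$ is a genuine rotation of $S^1$.

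The step where you assert that each orbit of $X=\sum_i\chi_i X_i$ is a \emph{closed} curve of period $T(x)\approx 2\pi$ is a genuine gap, and I don't see how to repair it within your framework. The issue is that being $C^0$-close to a vector field with closed orbits does not force closed orbits. You appeal to the implicit function theorem on a Poincar\'e return map, but for the unperturbed flow of $X_i$ \emph{every} point of a transversal is a fixed point of the return map, so the linearization $DP-\mathrm{id}$ vanishes identically and the IFT gives nothing; after perturbation the return map can be fixed-point free near a given starting point. Concretely, take $X_1=\partial_\theta$ on $\R^2\times S^1$ and $X_2=\Psi_*(\partial_\theta)$ for $\Psi(x,y,\theta)=(x+a(\theta),y+b(\theta),\theta)$ with $a(0)=a(2\pi)=b(0)=b(2\pi)=0$; both $X_1$ and $X_2$ have $2\pi$-periodic flows, and both are $\epsilon$-close to $\partial_\theta$. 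If $\chi_2=\chi_2(x)$ with $\chi_2'(0)\ne 0$, $a'(\theta)=\epsilon\sin\theta$, $b'(\theta)=\epsilon\cos\theta$, then the orbit of $X=\chi_1X_1+\chi_2X_2$ through $(0,y_0,0)$ has
\begin{equation*}
    y(2\pi)-y(0)=\epsilon\int_0^{2\pi}\chi_2\!\left(x(t)\right)\cos t\,dt = -\tfrac{\pi}{2}\,\chi_2'(0)\,\epsilon^2 + O(\epsilon^3)\neq 0,
\end{equation*}
so the orbit is a non-closed spiral. Reparametrizing time cannot fix this, because no rescaling $\widetilde X=\lambda X$ can turn a non-closed orbit into a closed one. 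The moral is that closure is a property of the unparametrized foliation by orbits, and convex combinations of vector fields do not preserve the ``closed-orbit'' property; you must instead interpolate at the level of the circle maps $\theta\mapsto\phi_{i,\theta}(x)$ (so compactness of $S^1$ guarantees the averaged curve is still a closed loop), which is exactly what the paper's $\Sigma_K$ construction does.

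The remaining steps of your proof (the deep-region property of $\chi_i$, the $C^k$-closeness of the flows given $C^{k-1}$-closeness of the generators) are fine and use the same uniform-multiplicity and $\epsilon$-cylindrical-plane inputs as the paper, so only the closure step needs to be replaced.
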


\begin{proof}
In the following we denote $\phi_{i,t}$ as $\phi_{i,\theta}$, $\theta\in[0,2\pi]$, and $C_0$ denotes for all positive constants that depend on $k$.
Since $(M,g)$ is covered by $\epsilon$-cylindrical planes, 
by a standard gluing up argument, 
we can find a smooth complete surface $N$ embedded in $M$, such that the tangent space of $N$ is $\frac{1}{100}$-almost orthogonal to $\partial_{\theta}$ in each $\epsilon$-cylindrical plane. 
Equip the manifold $N\times S^1$ with a warped product metric $\overline{g}=g_N+d\theta^2$, $\theta\times[0,2\pi)$, where $g_N$ is the induced metric of $(M,g)$ on $N$.

First, we will use the local one parameter groups $\phi_{i,\theta}$ to construct local diffeomorphisms $F_i: N\times S^1\ri M$.
Let $V_i=U_i\cap N$, then $\{V_i\}_{i=1}^{\infty}$ is an open covering of $N$, and at most $m$ of them intersect at one point.
Let $F_i: V_i\times S^1\ri U_i$ be defined by 
\begin{equation*}
    F_i(x,\theta)=\phi_{i,\theta}(\varphi(x)).
\end{equation*}
Then $F_i$ is a diffeomorphism, and 
\begin{equation*}
    |F_i-F_j|_{C^k((V_i\cap V_j)\times S^1)}\le C_0\epsilon.
\end{equation*}

Next, we will first construct a global diffeomorphism $F:N\times S^1\ri M$ by gluing up the diffeomorphisms $F_i$, such that $F$ is $C_0\epsilon$-close to each $F_i$. 
Suppose $\overline{\epsilon}$ is sufficiently small such that for any $x\in M$, $B_g(x,\overline{\epsilon})$ is a convex neighborhood of $x$, i.e. the minimizing geodesics connecting any two points in $B_g(x,\overline{\epsilon})$ are unique and contained in $B_g(x,\overline{\epsilon})$.
Let $\Delta_2$ be the open neighborhood of $\{(x,x)\in M^2\}$ the diagonal of $M^2$, 
\begin{equation*}
    \Delta_2=\{(x,y)\in M^2: d_g(x,y)\le\overline{\epsilon}\}\subset M^2.
\end{equation*}
Define the smooth map
\begin{equation*}
    \Sigma_{2}: \{(s_1,s_2)\in[0,1]^2: s_1+s_2=1\}\times \Delta_2\ri M,
\end{equation*}
as $\Sigma_2(s_1,s_2,x_1,x_2)=\gamma_{x_2,x_1}(s_1)$ where $\gamma_{x_2,x_1}:[0,1]\ri M$ is a minimizing geodesic from $x_2$ to $x_1$.
Then $\Sigma_2$ satisfies the following properties for all $s_1,s_2\in[0,1]$, $s_1+s_2=1$, and $x,x_1,x_2\in M$:
\begin{enumerate}
    \item $\Sigma_2(1,0,x_1,x_2)=x_1$, $\Sigma_2(0,1,x_1,x_2)=x_2$.
    \item $\Sigma_2(s_1,s_2,x,x)=x$.
\end{enumerate}
Then for each $K\in\mathbb{N}$, we can inductively construct the open neighborhood of $\{(x,...,x)\in M^K\}$ the diagonal in $M^K$, see also \cite{Bamler2020CompactnessTO},
\begin{equation*}
    \Delta_K=\{(x_1,...,x_K)\in M^K: d_g(x_i,x_j)\le\overline{\epsilon}\}\subset M^K,
\end{equation*}
and the smooth map
\begin{equation*}
    \Sigma_K: \{(s_1,...,s_K)\in[0,1]^K: s_1+\cdots+s_K=1\}\times \Delta_K\ri M,
\end{equation*}
by defining
\begin{equation*}
    \Sigma_K(s_1,...,s_K,x_1,...,x_K):=\Sigma_2(1-s_K,s_K,\Sigma_{K-1}(s_1,...,s_{K-1},x_1,...,x_{K-1}),x_K),
\end{equation*}
with the following properties for all $s_1,...,s_K\in[0,1]$, $s_1+\cdots+s_K=1$, and $x,x_1,x_2,...,x_K\in M$:
\begin{enumerate}
    \item If for some $j\in\{1,...,K\}$ we have $s_j=1$ and $s_i=0$ for all $i\neq j$, then $\Sigma_K(s_1,...,s_K,x_1,...,x_K)=x_j$.
    \item $\Sigma_K(s_1,...,s_K,x,...,x)=x$.
    \item If $s_{K-i+1}=...=s_K=0$ for some $i\ge 1$, then $\Sigma_K(s_1,...,s_K,x_1,...,x_K)=\Sigma_{K-1}(s_1,...,s_{K-i},x_1,...,x_{K-i})$.
\end{enumerate}

Let $\{h_i\}_{i=1}^{\infty}$ be a partition of unity of $N\times S^1$ subordinated to $\{V_i\times S^1\}_{i=1}^{\infty}$, such that $h_i$ is constant on each $S^1$-factor, and $h_i\equiv1$ on $\widetilde{V}_i\times S^1$, where $\widetilde{V}_i=\{x\in N: B_{g_N}(x,500\,r(x))\subset V_i\}\times S^1$.
Then let $F: N\times S^1\ri M$ be such that for any $x\in N\times S^1$, 
\begin{equation*}
    F(x):=\Sigma_K(h_1(x),...,h_K(x),F_1(x),...,F_K(x)),
\end{equation*}
where $K\in\mathbb{N}$ is some integer such that $h_{K'}(x)=0$ for any $K'>K$.
By the properties of $\Delta_K$ and $\Sigma_K$, we see that $F$ is a well-defined smooth map, and it satisfies
\begin{equation}\label{e: diffeos close}
    |F-F_i|_{C^k(V_i\times S^1)}\le C_0\epsilon.
\end{equation}

Next, we will show that $F$ is a diffeomorphism. 
First, by \eqref{e: diffeos close} and the definition of $\overline{g}$ we see that for any $p\in N\times S^1$ and $v\in T_p(N\times S^1)$ we have
\begin{equation}\label{e: covering map}
    |F_{*p}(v,v)|_g\ge0.9|v|_{\overline{g}}.
\end{equation}
So $F_*$ is non-degenerate.
Next, we argue that $F$ is injective. To see this, observe that 
by \eqref{e: diffeos close} and assumption \eqref{it: 2} we have that $F$ is injective on $B_{g_N}(x,2)\times S^1$ for any $x\in N$, and $F(x_1\times S^1)\cap F(x_2\times S^1)=\emptyset$ for any $x_1,x_2\in N$ such that $d_{g_N}(x_1,x_2)\ge 1$.
Now suppose $F(x_1,\theta_1)=F(x_2,\theta_2)$, then we must have $d_{g_N}(x_1,x_2)<1$, and hence $x_1=x_2,\theta_1=\theta_2$ as desired.
Therefore, $F$ is a diffeomorphism.

Next, let $\theta\in[0,2\pi)$ be the parametrization of $S^1$, and let $X:=F_*(\partial_{\theta})$. Then 
$X$ generates a $2\pi$-periodic one-parameter group of diffeomorphisms $\psi_{\theta}$, $\theta\in[0,2\pi)$ on $M$.
In the following we will show that $\psi_{\theta}$ satisfies all the properties.
Denote $\psi(x,\theta)=\psi_{\theta}(x)$ for all $(x,\theta)\in M\times S^1$.
First, let $\widetilde{U}_i=\{x\in M: B_g(x,1000\,r(x))\subset U_i\}$. Then it is easy to see that $\widetilde{U}_i\subset F(\widetilde{V}_i\times S^1)$. Since $h_i\equiv1$ on $\widetilde{V}_i\times S^1$, it follows that $F=F_i$, and hence $\psi_{\theta}=\phi_{i,\theta}$ on $\widetilde{U}_i$, which verifies property \eqref{property 3}.

Lastly,
to verify property \eqref{property 2}, we note that for any $x\in U_i=F_i(V_i\times S^1)$, suppose $x=F_i(z,\theta_1)$, then for any $\theta\in[0,2\pi)$, we have
\begin{equation*}
    \phi_{i,\theta}(x)=\phi_{i,\theta+\theta_1}(x)=F_i(z,\theta_1+\theta),
\end{equation*}
and also
\begin{equation*}
    \psi(F\circ F_i^{-1}(x),\theta)
    =\psi(F(z,\theta_1),\theta)=F(z,\theta_1+\theta).
\end{equation*}
Therefore, using \eqref{e: diffeos close} the closeness of $F$ and $F_i$, as well as the closeness of $F\circ F_i^{-1}:U_i\ri M$ and $i.d.: U_i\ri M$, we can deduce that
\begin{equation*}
    |\psi-\phi_{i}|_{C^k(U_i\times S^1)}\le C_0\epsilon,
\end{equation*}
which verifies \eqref{property 2}.
\end{proof}

Now we prove the main result of this subsection, which shows that 
if there are some $SO(2)$-symmetric metrics which are $\epsilon$-close to each other, then we can glue them up to obtain a global $SO(2)$-symmetric metric which is $C_0\epsilon$-close to the original metrics.

\begin{lem}\label{l: glue cylindrical plane}
Let $k,m\in\mathbb{N}$. There exists $C_0,\overline{\epsilon}>0$ such that the following holds for all $\epsilon<\overline{\epsilon}$. 
Let $(M,g)$ be a 3D Riemannian manifold diffeomorphic to $\RR\times S^1$, which is $\epsilon$-close to $(\RR\times S^1,g_{stan})$ at all $x\in M$.
Suppose $\{U_i\}_{i=1}^{\infty}$ is a locally finite covering of $M$ such that at most $m$ of them intersects at one point, and there is a $SO(2)$-symmetric metric $g_i$ on $W_i:=\bigcup_{x\in U_i}B_g(x,1000r(x))$, with the $SO(2)$-isometry $\{\phi_{i,\theta}\}_{\theta\in[0,2\pi)}$ and killing field $X_i$, which satisfies
\begin{equation}\label{e: gi and g}
    |g_i-g|_{C^k(W_i)}\le\epsilon,\quad\textit{and}\quad |X_i-\partial_{\theta}|_{C^0(W_i)}\le\frac{1}{1000},
\end{equation}
where $\partial_{\theta}$ denotes the killing field along the $S^1$-direction in an $\epsilon$-cylindrical plane up to sign.
Let $\{h_i\}_{i=1}^{\infty}$ be a partition of unity that subordinates to $\{U_i\}_{i=1}^{\infty}$. Then we can find a $SO(2)$-symmetric metric $\overline{g}$ on $M$ with $SO(2)$-isometry $\{\psi_{\theta}\}_{\theta\in[0,2\pi)}$ such that
\begin{equation}\label{e: g3 and g}
    |\overline{g}-g|_{C^{k-1}(M)}\le C_0\epsilon\quad\textit{and}\quad|\psi_{\theta}-\phi_{i,\theta}|_{C^{k-1}(U_i)}\le C_0\epsilon.
\end{equation}
Moreover, $\overline{g}=g_i$ on the subset $\{x\in M: h_i(\phi_{i,\theta}(x))=1,\,\theta\in[0,2\pi)\}$.
\end{lem}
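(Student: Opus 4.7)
The plan is to glue the local $SO(2)$-actions $\{\phi_{i,\theta}\}$ into a single global one-parameter group on $M$ using Lemma \ref{l: glue up local diffeomorphisms from N x S1 to M}, then define $\overline{g}$ by averaging a partition-of-unity patched metric with respect to this global action. I first upgrade the hypothesis $|X_i - \partial_\theta|_{C^0} \le 1/1000$: applying Lemma \ref{l: killing fields are close} at each $x \in W_i$ using the $\epsilon$-cylindrical chart of $(M,g)$ centered there gives the $C^{k-1}$-bound $|X_i - \partial_\theta|_{C^{k-1}(W_i)} \le C_0 \epsilon$. After possibly pre-composing each $\phi_{j,\theta}$ with $\theta \mapsto -\theta$ on overlapping charts (as permitted by Lemma \ref{l: two epsilon cylindrical planes are close}), integrating this bound along the flows yields
$$|\phi_i - \phi_j|_{C^{k-1}((W_i \cap W_j) \times S^1)} \le C_0 \epsilon.$$
Thus $\{\phi_{i,\theta}\}$ satisfies the three hypotheses of Lemma \ref{l: glue up local diffeomorphisms from N x S1 to M}, producing a $2\pi$-periodic one-parameter group $\{\psi_\theta\}$ of diffeomorphisms of $M$ with $|\psi - \phi_i|_{C^{k-1}(U_i \times S^1)} \le C_0 \epsilon$ and $\psi_\theta = \phi_{i,\theta}$ on $\widetilde U_i := \{x \in M : B_g(x, 1000 r(x)) \subset U_i\}$.

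Next, set $\tilde g := \sum_i h_i\, g_i$, which is a well-defined smooth metric on $M$ since $\mathrm{supp}(h_i) \subset U_i \subset W_i$. Define
$$\overline{g} := \frac{1}{2\pi} \int_0^{2\pi} \psi_\theta^* \tilde g \, d\theta.$$
Changing variables by $\theta \mapsto \theta + \theta_0$ shows $\psi_{\theta_0}^* \overline{g} = \overline{g}$ for all $\theta_0$, so $\overline{g}$ is $SO(2)$-invariant with isometry group $\{\psi_\theta\}$. The overlap bound (at most $m$ sets meet at any point) together with \eqref{e: gi and g} gives $|\tilde g - g|_{C^{k-1}(M)} \le C_0 \epsilon$. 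Decomposing
$$\overline g - g = \frac{1}{2\pi}\int_0^{2\pi}\!\psi_\theta^*(\tilde g - g)\,d\theta \;+\; \frac{1}{2\pi}\int_0^{2\pi}\!(\psi_\theta^* g - g)\,d\theta,$$
the first integral is $C_0\epsilon$-small in $C^{k-1}$ because $\psi_\theta$ is a $C_0\epsilon$-perturbation of the standard rotation and therefore has uniformly bounded Jacobians; for the second, on each $U_i$ the identity $\phi_{i,\theta}^* g_i = g_i$ together with $|g - g_i|_{C^k} \le \epsilon$ gives $|\phi_{i,\theta}^* g - g|_{C^{k-1}} \le C_0 \epsilon$, and transferring to $\psi_\theta$ via the closeness $|\psi - \phi_i|_{C^{k-1}} \le C_0 \epsilon$ completes the first bound in \eqref{e: g3 and g}. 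The second bound in \eqref{e: g3 and g} is immediate from Lemma \ref{l: glue up local diffeomorphisms from N x S1 to M}.

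Finally, for the equality assertion let $x \in M$ satisfy $h_i(\phi_{i,\theta}(x)) = 1$ for all $\theta \in [0, 2\pi)$. Since $\sum_j h_j \equiv 1$ and the cover is locally finite, $h_j \equiv 0$ on the entire $\phi_i$-orbit $O_x$ of $x$ for every $j \neq i$, so $\tilde g|_{O_x} = g_i|_{O_x}$. The same hypothesis (interpreted as membership in the interior of $\{h_i=1\}$) forces $O_x \subset \widetilde U_i$, whence $\psi_\theta = \phi_{i,\theta}$ along $O_x$, and the $\phi_i$-invariance of $g_i$ gives $\psi_\theta^* \tilde g(x) = \phi_{i,\theta}^* g_i(x) = g_i(x)$ for every $\theta$; averaging yields $\overline g(x) = g_i(x)$. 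The principal technical obstacle is the propagation of $C^{k-1}$-control through the pull-back operation in the closeness bound: one loses one derivative invoking Lemma \ref{l: killing fields are close}, another in Lemma \ref{l: glue up local diffeomorphisms from N x S1 to M}, and these derivative losses must be tracked carefully against the hypothesis $|g_i - g|_{C^k} \le \epsilon$ so that the final estimate \eqref{e: g3 and g} still holds at order $k-1$ with a universal constant $C_0$ independent of $\epsilon$.
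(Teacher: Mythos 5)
Your overall architecture matches the paper's: upgrade the $C^0$-closeness of $X_i$ to $\partial_\theta$ to a $C^{k-1}$-bound via Lemma \ref{l: killing fields are close}, feed the resulting pairwise closeness of $\{\phi_{i,\theta}\}$ into Lemma \ref{l: glue up local diffeomorphisms from N x S1 to M} to obtain a global $2\pi$-periodic group $\{\psi_\theta\}$, then average the patched metric $\widehat g=\sum_i h_i g_i$ over $\psi_\theta$. However, there is a genuine gap at the step ``After possibly pre-composing each $\phi_{j,\theta}$ with $\theta\mapsto-\theta$ on overlapping charts.'' The $C^{k-1}$-closeness of $X_i$ to $\partial_\theta$ holds only \emph{up to sign}, so to make the $\phi_{i,\theta}$ pairwise $C_0\epsilon$-close you must choose, once and for all, a sign $\sigma_i\in\{\pm1\}$ for each $i$ so that $\sigma_iX_i$ and $\sigma_jX_j$ agree on every overlap $U_i\cap U_j$. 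This is a global orientation/monodromy problem: pairwise corrections are not independent (a cycle of three overlapping $U_i$'s with pairwise-incompatible sign relations gives a cocycle obstruction), and Lemma \ref{l: two epsilon cylindrical planes are close} only tells you the two local $\partial_\theta$'s agree \emph{or} differ by a sign on each overlap — it does not say the signs can be globalized. The paper resolves this by showing that an inconsistent sign cocycle would produce an embedded Klein bottle in $M$, which is impossible because $M$ is diffeomorphic to $\R^2\times S^1$ and hence embeds in $\R^3$; this is the one place where the hypothesis on the diffeomorphism type of $M$ is actually used, and your argument never invokes it. Without some version of this topological input, the pairwise bound $|\phi_i-\phi_j|_{C^{k-1}}\le C_0\epsilon$ — and therefore hypothesis (3) of Lemma \ref{l: glue up local diffeomorphisms from N x S1 to M} — is not established.

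A secondary point: for the equality assertion you need $\{x:h_i(\phi_{i,\theta}(x))=1,\ \forall\theta\}\subset\widetilde U_i=\{x:B_g(x,1000r(x))\subset U_i\}$, since the conclusion $\psi_\theta=\phi_{i,\theta}$ from Lemma \ref{l: glue up local diffeomorphisms from N x S1 to M} is only guaranteed on $\widetilde U_i$; ``interpreted as membership in the interior of $\{h_i=1\}$'' does not imply this containment for an arbitrary partition of unity, and a word about how the $h_i$ are propagated into the construction of $\psi_\theta$ (e.g. by running Lemma \ref{l: glue up local diffeomorphisms from N x S1 to M} with a partition of unity compatible with the given $h_i$) is needed.
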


\begin{proof}
In the following $C_0$ denotes for all positive constants that depend on $k,m$.
Let 
\begin{equation*}
    V_i=\{x\in M: h_i(\phi_{i,\theta}(x))=1,\,\theta\in[0,2\pi)\}.
\end{equation*}
First, applying Lemma \ref{l: killing  fields are close} to $g_i$ we have
\begin{equation}\label{e: Xtheta}
    \left|X_i-\partial_{\theta}\right|_{C^{k-1}(U_i)}\le C_0\epsilon,
\end{equation}
where $\partial_{\theta}$ is the killing field along the $S^1$-direction in an $\epsilon$-cylindrical plane up to sign.
As in Lemma \ref{l: glue up local diffeomorphisms from N x S1 to M}, let 
$N$ be a 2D complete surface smoothly embedded in $M$, whose the tangent space is $\frac{1}{100}$-almost orthogonal to $\partial_{\theta}$. 

Next, we will construct a diffeomorphism $\sigma:N\times S^1\ri M$, such that $\sigma|_{N\times\{0\}}=id_N$ and  $\sigma_*(\partial_{\theta})$ is $\frac{1}{100}$-close to the $S^1$-factor of any $\epsilon$-cylindrical planes. 
To do this, first we can find a covering of $M$ by $\epsilon$-cylindrical planes such that the number of them intersecting at any point is bounded by a universal constant. Then we claim that after reversing the $\theta$-coordinate in certain $\epsilon$-cylindrical planes we can arrange that the vector fields $\partial_{\theta}$ are $C_0\epsilon$-close in the intersections.
Suppose the claim does not hold, then by Lemma \ref{l: two epsilon cylindrical planes are close} it is easy to find an embedded Klein bottle in $M$. Since $M$ is diffeomorphic to $\RR\times S^1$, which can be embedded into $\R^3$ as a tubular neighborhood of a circle, it follows that the Klein bottle can be embedded in $\R^3$, which is impossible by \cite[Corollary 3.25]{Hatcher:478079}.
Now the diffeomorphism $\sigma$ follows immediately from applying Lemma \ref{l: glue up local diffeomorphisms from N x S1 to M}.

Therefore, we can replacing $X_i$ by $-X_i$ for some $i$ so that they are all $\frac{1}{100}$-close to $\sigma_*(\partial_{\theta})$.
So \eqref{e: Xtheta} implies
\begin{equation}\label{e: vector field close}
    |X_i-X_j|_{C^{k-1}(U_i\cap U_j)}\le C_0\epsilon.
\end{equation}
Replacing $\phi_{i,\theta}$ by $\phi_{i,-\theta}$ for such $i$, then this implies
\begin{equation}\label{e: closeness of maps}
    |\phi_{i,\theta}-\phi_{j,\theta}|_{C^{k-1}(U_i\cap U_j)}\le C_0\epsilon.
\end{equation}
Then by Lemma \ref{l: glue up local diffeomorphisms from N x S1 to M} we can construct a one parameter group of diffeomorphisms $\{\psi_{\theta}\}$ on $M$, such that $\psi_0=\psi_{2\pi}=i.d.$ and
\begin{equation*}
    |\psi_{\theta}-\phi_{i,\theta}|_{C^{k-1}(U_i)}\le C_0\epsilon,
\end{equation*}
and $\psi_{\theta}=\phi_{i,\theta}$ on $V_i$.

Let $\widehat{g}=\sum_{i=1}^{\infty}h_i\cdot g_i$, then $\widehat{g}=g_i$ on $V_i$ and by \eqref{e: gi and g} we have
\begin{equation*}
    |\widehat{g}-g|_{C^{k-1}}\le C_0\epsilon\quad\textit{on}\quad M.
\end{equation*}
Let
\begin{equation*}
    \overline{g}=\frac{1}{2\pi}\int_0^{2\pi}\psi^*_{\theta}\widehat{g}\,d\theta,
\end{equation*}
then $(M,\overline{g})$ is $SO(2)$-symmetric under the isometries $\psi_{\theta}$, and
\begin{equation*}\begin{split}
    |\overline{g}-g_i|_{C^{k-2}(U_i)}&
    \le\frac{1}{2\pi}\int_0^{2\pi}|\psi_{\theta}^*g-\phi^*_{i,\theta}g_i|_{C^{k-2}(U_i)}\,d\theta\\
    &\le\frac{1}{2\pi}\int_0^{2\pi}|\psi_{\theta}^*(g-g_i)|_{C^{k-2}(U_i)}+|\psi_{\theta}^*g_i-\phi^*_{i,\theta}g_i|_{C^{k-2}(U_i)}\,d\theta\le C_0\epsilon,
\end{split}\end{equation*}
which combined with \eqref{e: gi and g} implies \eqref{e: g3 and g}.

Moreover, if $x\in V_i$, then
$\psi_{\theta}(x)=\phi_{i,\theta}(x)$ and $\widehat{g}(x)=g_i(x)$, so we have
\begin{equation*}\begin{split}
    \overline{g}(x)
    =\frac{1}{2\pi}\int_0^{2\pi}\psi^*_{\theta}(\widehat{g}(\psi_{\theta}(x)))\,d\theta
    =\frac{1}{2\pi}\int_0^{2\pi}\phi^*_{i,\theta}(g_i(\phi_{i,\theta}(x)))\,d\theta=g_i(x),
\end{split}\end{equation*}
which finishes the proof.
\end{proof}

\subsection{Surgery on the soliton metric}
\label{ss: Surgery on the soliton metric part1}
In this subsection we will conduct a surgery on the soliton by first removing a neighborhood of the edges $\Gamma$ and then grafting a region covered by $\epsilon$-cylindrical planes onto the soliton. After the surgery, we obtain a complete metric on $\RR\times S^1$, which is covered everywhere by $\epsilon$-cylindrical planes.

We fix some conventions and notations.
First, in the rest of the entire section we assume $(M,g)$ is a 3D steady gradient soliton with positive curvature that is not a Bryant soliton.
Then by Theorem \ref{l: wing-like} we may assume after a rescaling that
\begin{equation}\label{e: dinner}
    \lim_{s\rii}R(\Gamma_1(s))=\lim_{s\rii}R(\Gamma_2(s))= 4.
\end{equation}
Next, let $\rho=d_g(\cdot,\Gamma)$, for any $0<A<B$ we write $\Gamma_{[A,B]}=\rho^{-1}([A,B])$,  $\Gamma_{\le A}=\rho^{-1}([0,A])$,  $\Gamma_{\ge A}=\rho^{-1}([A,\infty))$, and
$\Gamma_A=\rho^{-1}(A)$.
In following lemma we extend the soliton metric on $\Gamma_{\ge A}$ to obtain a complete manifold $(M',g')$ diffeomorphic to $\RR\times S^1$, which is covered by $\epsilon$-cylindrical planes.

\begin{lem}[The $\epsilon$-grafted soliton metric $g'$]\label{l: lemma one}
Let $(M,g)$ be a 3D steady gradient soliton with positive curvature that is not a Bryant soliton satisfying \eqref{e: dinner}.
For any $\epsilon>0$ and $m\in\mathbb{N}$, there exist $A>0$ and a complete Riemannian manifold $(M',g')$ diffeomorphic to $\RR\times S^1$ such that the followings hold:
\begin{enumerate}
    \item There is an isometric embedding $\phi:(\Gamma_{> A},g)\ri (M',g')$.
    \item $(M',g')$ is an $\epsilon$-cylindrical plane at any point $x\in M'$ in the $C^m$-norm.
\end{enumerate} 
\end{lem}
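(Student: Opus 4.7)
My plan is to construct $M'$ by retaining the soliton metric on the region $\Gamma_{\ge A}$ (far from the edge $\Gamma$) and smoothly gluing in a half-cylinder extension across the boundary $\Gamma_A$. The $\epsilon$-cylindrical-plane property for $(M',g')$ will follow from the fact that, for $A$ sufficiently large, the soliton is already pointwise close to $\RR\times S^1$ on $\Gamma_{\ge A}$, and the attached piece can be chosen to have a standard cylindrical-plane metric.

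To choose $A$, I would combine Theorem \ref{l: wing-like} with Lemma \ref{l: DR to cigar} and the polynomial curvature decay of Theorem \ref{t: R upper bd}: for $A$ large enough, every point $x$ with $d_g(x,\Gamma)\ge A-10$ is the center of an $\epsilon_0$-cylindrical plane in the $C^{m+10}$-norm, where $\epsilon_0\ll\epsilon$ is to be chosen later depending on $\epsilon$ and $m$. By the normalization $\lim_{s\rii}R(\Gamma_i(s))=4$, the volume scale $r(x)$ is within $\epsilon_0$ of $1$ on this region, and Lemma \ref{l: MT-glue} provides a smooth $S^1$-fibration on $\Gamma_{\ge A-10}$ whose fibers have length close to $2\pi$. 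Now, by Lemma \ref{l: two chains}, $\Gamma$ admits an explicit open tubular neighborhood in $M\cong\R^3$ via the two solid-cylinder chains $\mathcal{C}_1,\mathcal{C}_2$ together with a bounded open set near the critical point; for $A$ large, $\Gamma_{<A}$ is contained inside this neighborhood, so $\Gamma_{\ge A}$ is diffeomorphic to the complement of an open solid tube around an unknotted properly embedded arc in $\R^3$, namely $\RR\times S^1$, with boundary $\Gamma_A\cong\R\times S^1$. I then define
\begin{equation*}
M':=\Gamma_{\ge A}\cup_{\Gamma_A}(\Gamma_A\times(-\infty,0])
\end{equation*}
by attaching a half-cylinder along $\Gamma_A$; this is again diffeomorphic to $\RR\times S^1$, and the natural inclusion $\phi:\Gamma_{>A}\hookrightarrow M'$ serves as the desired isometric embedding once $g'$ extends $g$.

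To construct $g'$, I would first use the fact that on the $\epsilon_0$-cylindrical-plane region the distance function $\rho=d_g(\cdot,\Gamma)$ is smooth near $\Gamma_A$ (with unit gradient field), and its outward normal exponential map $\psi:\Gamma_A\times(-\delta,\delta)\to M$ is a diffeomorphism onto a uniform collar neighborhood of $\Gamma_A$ for some $\delta>0$ independent of the point. In these collar coordinates, the pulled-back soliton metric $\psi^*g$ is $\epsilon_0$-close in $C^m$-norm to the product metric $g_{\Gamma_A}+dt^2$, which after a bounded diffeomorphism is itself $\epsilon_0$-close to $g_{stan}$ on $\RR\times S^1$. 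I then set $g':=g$ on $\Gamma_{>A}$ (yielding the isometric embedding), extend by using $\psi^*g$ on $\Gamma_A\times[-\delta/2,0]$ (which matches smoothly across $\Gamma_A$ by construction), and set $g'$ equal to the standard product metric $g_{stan}$ on $\Gamma_A\times(-\infty,-\delta]$. On the interpolation strip $\Gamma_A\times[-\delta,-\delta/2]$, I use a smooth cutoff $\chi$ to interpolate between these two metrics. Because both endpoint metrics are $\epsilon_0$-close in $C^m$-norm to $g_{stan}$, the interpolated metric is $C_m\epsilon_0$-close to $g_{stan}$ everywhere on $M'$, and so $(M',g')$ is $\epsilon$-cylindrical plane at every point once $\epsilon_0$ is chosen small enough.

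The main obstacle I expect is setting up the collar coordinates on $\Gamma_A$ in a globally consistent way: the normal exponential map is guaranteed to be a diffeomorphism only up to the focal/cut locus, and the $S^1$-fibers from Lemma \ref{l: MT-glue} must be reconciled with this transverse picture. This is handled by a gluing argument in the spirit of Lemma \ref{l: glue up local diffeomorphisms from N x S1 to M}, exploiting that $\Gamma_A$ is globally a topological cylinder $\R\times S^1$ and that the $\epsilon_0$-cylindrical geometry provides uniform control over all the relevant derivatives; once the collar is in place, the cutoff interpolation is routine.
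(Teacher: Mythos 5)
Your plan of directly attaching a half-cylinder $\Gamma_A\times(-\infty,0]$ to $\Gamma_{\ge A}$ along its boundary runs into a genuine obstruction that the paper's construction is designed to avoid: the level set $\Gamma_A=\{d_g(\cdot,\Gamma)=A\}$ is not a smooth hypersurface. The distance function $d_g(\cdot,\Gamma)$ is only Lipschitz along the cut locus of $\Gamma$, and this cut locus meets $\Gamma_A$ for every value of $A$. To see this concretely, recall that the soliton is asymptotic to a sector of angle $\theta\in(0,\pi)$ (Corollary \ref{c: Asymptotic to a sector}); far from $p$ the curve $\Gamma=\Gamma_1\cup\Gamma_2$ looks like the two edges of a ``V'', and points equidistant from both edges are cut points, occurring at every distance scale. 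In the model $\RR\times S^1$ with $\Gamma$ projecting onto a ``V'' in the $\RR$-factor, the level set $\{d=A\}$ has a crease of fixed opening angle along the bisector, independent of how large $A$ is. Consequently $\Gamma_A$ is not $C^1$ as a hypersurface, there is no uniform collar for the normal exponential map $\psi$, and the intrinsic metric on $\Gamma_A$ is not $C^m$-close to a flat cylinder near the crease; the closeness of $\psi^*g$ to $g_{\Gamma_A}+dt^2$ fails in any norm stronger than Lipschitz. Your cutoff interpolation would therefore produce a metric whose $C^m$-estimate blows up precisely where the boundary fails to be smooth, and this does not improve as $A\rii$. The ``global consistency'' worry you raise at the end is real, but the essential difficulty is local and unavoidable: a distance level set of $\Gamma$ is the wrong place to perform surgery.

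The paper never touches $\Gamma_A$. Instead it uses the forward soliton flow together with the polynomial curvature decay (Theorem \ref{t: R upper bd}) and Shi's derivative estimates to show that the metrics $\phi_{-k}^*g$ are $C^m$-close to $g$ on $\phi_k(\Gamma_{\ge A})$, builds an interpolated metric $g_k$ on $\Gamma_{\ge A}$ with $g_k=\phi_{-k}^*g$ on $\phi_k(\Gamma_{\ge A})$ and $g_k$ everywhere $C_0\epsilon$-close to $g$, and then takes the pointed Cheeger--Gromov limit of $(\Gamma_{\ge A},g_k,\phi_k(p))$ as $k\rii$. Because the base points $\phi_k(p)$ drift to infinity while the geometry stays uniformly controlled, the boundary of $\Gamma_{\ge A}$ escapes from every fixed-radius ball, so the limit $(M',g')$ is automatically a complete manifold; it is $C_0\epsilon$-close to the smooth limit of $(M,g,\phi_k(p))$, which is $\RR\times S^1$ by Theorem \ref{l: wing-like}, hence diffeomorphic to $\RR\times S^1$ and covered by $C_0\epsilon$-cylindrical planes. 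The isometric embedding of $(\Gamma_{>A},g)$ is inherited as the limit of the isometric embeddings $\phi_k:(\Gamma_{\ge A},g)\ri(\phi_k(\Gamma_{\ge A}),g_k)$. Pushing the boundary off to infinity, rather than gluing along it, is the mechanism your proposal is missing.
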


\begin{proof}
Let $A>0$ be sufficiently large such that $\Gamma_{\ge A}$ is covered by $\epsilon$-cylindrical planes.
We may furthermore increase $A$ depending on $m,\epsilon$ and the soliton $(M,g)$.

By using the Ricci flow equation $\partial_tg(t)=-2\Ric(g(t))$, $g(t)=\phi_{-t}^*g$, the quadratic curvature upper bound in Theorem \ref{t: R upper bd}, and Shi's derivative estimates \cite{Shi1987derivative1}, we may assume when $A$ is sufficiently large that
\begin{equation*}
    |\nabla^{\ell}(g-\phi^*_{-(k+1)}g)|\le\epsilon\quad\textit{on}\quad\phi_{k}(\ga)\quad \ell=0,...,m,
\end{equation*}
where the covariant derivatives are taken with respect to $g$.
Therefore, for each $k\in\mathbb{N}$, by a standard gluing-up argument, we can construct a metric $g_k$ on $\ga$ which satisfies
\begin{equation*}
    g_k=\phi^*_{-k}g \quad\textit{on}\quad \phi_{k}(\ga),
\end{equation*}
and for all $i=0,1,2,...,k-1$,
\begin{equation}\label{costco}
    |\nabla^{\ell}(g_k-g)|\le C_0\,\epsilon\quad \textit{on}\quad M,\quad \ell=0,...,m,
\end{equation}
where here and below $C_0$ denotes all positive constants that only depend on the soliton and $m$.

Now fixed a point $p\in\ga\subset M$ and let $p_k:=\phi_{k}(p)$.
Then after passing to a subsequence we may assume that the pointed manifolds $(\ga,g_k,p_k)$ converge to a smooth manifold $(M',g',p')$.
At the same time, the isometric embeddings $\phi_{k}:(\ga,g,p)\ri(\phi_{k}(\ga),g_k,p_k)\subset(M,g_k,p_k)$ smoothly converge to an isometric embedding $\pi:(\ga,g,p)\rightarrow(M',g',p')$ in the $C^m$-sense.

Furthermore, by \eqref{costco}, we see that $(M',g',p')$ is $C_0\epsilon$-close to the smooth limit of $(M,g,p_k)$, which by Lemma \ref{l: DR to cigar} must be isometric to $\RR\times S^1$ after a suitable rescaling. In particular, this implies that $(M',g')$ is complete, diffeomorphic to $\RR\times S^1$, and covered by $C_0\epsilon$-cylindrical planes.

\end{proof}

We now use the $\epsilon$-grafted soliton metric $g'$ from Lemma \ref{l: lemma one} to generate a family of metrics $g'(t)$ on $M'$, which satisfies the Ricci flow equation in a staircase region of $M'\times[0,\infty)$. 
In future proofs in this section, the flow $(M',g'(t))$ will be used as domains of harmonic map heat flows.

First, let $X$ be a smooth vector field such that $X=\nabla f$ on $\Gamma_{\ge A+200}$, and $X=0$ on $M'\setminus\ga$, and $|\nabla^mX|\le C_0$, $m=0,1,...,100$.
Second, let $\psi_{t}$ be the family of diffeomorphisms generated by $X$ with $\psi_{0}=\textnormal{id}$. 
Let 
\begin{equation*}
    g'(t)=\psi_{t}^*g',\quad t\ge0,
\end{equation*}
be a smooth family of metrics on $M'$.
Then $(M',g'(t))$ is covered by  $\epsilon$-cylindrical planes everywhere for all $t\ge0$.

For a subset $U\subset M'$ and $t\ge0$, let
\begin{equation*}
    U^t=\{x\in M': x\in\psi_t(U)\}.
\end{equation*}
Replacing $A$ by $A+200$, then we have
$X=\nabla f$, $\psi_t=\phi_t$, and $g'=g$ on $\Gamma_{>A}$.
Note also that $\phi_t(\Gamma_{>A})\subset\Gamma_{>A}$, we have that for any $B\ge A$, 
\begin{equation*}\begin{split}
    \Gamma^t_{>B}&=\phi_t(\Gamma_{>B})=\{x\in \Gamma_{>A}: d_{g(t)}(x,\Gamma)>B\},
\end{split}\end{equation*}
where in the second equality we identified $\Gamma_{>A}\subset M'$ with $\Gamma_{>A}\subset M$, and used the fact that $g(t)=\phi_t^*g$.
Since $g(t)$ satisfies the Ricci flow equation, it follows that the Ricci flow equation holds on the open subset
\begin{equation*}
    \bigcup_{t\ge0}\,(\Gamma^t_{> A}\times\{t\})\subset M'\times[0,\infty).
\end{equation*}
We call $(M',g')$ the $\epsilon$-grafted soliton, and $(M',g'(t))$ the $\epsilon$-grafted soliton flow.

\subsection{Surgery on SO(2)-symmetric metrics}
\label{ss: Surgery on the background SO(2)-symmetric metrics part 1}
The next lemma allows us to do a surgery on a $SO(2)$-symmetric metric $\widehat{g}$ on an open subset containing $\Gamma_{\ge B}$ for some large $B>0$.
This surgery extends the incomplete $SO(2)$-symmetric metric $\widehat{g}$ to a complete $SO(2)$-symmetric metric.
Moreover, if $\widehat{g}$ is close to the soliton metric $g$, then the resulting complete metric is close to the grafted soliton metric $g'$.

In future proofs in this section, we will run harmonic map heat flows from the grafted soliton flow $(M',g'(t))$ to Ricci flows starting from some suitable $SO(2)$-symmetric metrics we obtained by the surgery.

\begin{lem}[A global $SO(2)$-symmetric metric]\label{l: lemma two}
There are constants $C_0>0$ such that the following holds:

For any $\epsilon>0$, let $A>0$ and $(M',g')$ be the $\epsilon$-grafted soliton from Lemma \ref{l: lemma one}. Then for any $B>A$, suppose $\widehat{g}$ is a $SO(2)$-symmetric metric on an open subset $U\supset\Gamma_{\ge B}$ with the $SO(2)$-isometry $\psi_{\theta}$ and the killing field $X$, such that
\begin{equation}\label{e: banana}
    |\widehat{g}-g'|_{C^{100}(U)}\le \epsilon,\quad |X-\partial_{\theta}|_{C^0(U)}<\frac{1}{1000},
\end{equation}
where $\partial_{\theta}$ is the killing field of an $\epsilon$-cylindrical plane. 
Then there is a $SO(2)$-symmetric metric $\widetilde{g}$ on $M'$ with the $SO(2)$-isometry $\widetilde{\psi}_{\theta}$ and the killing field $\widetilde{X}$, such that
\begin{equation*}
    |\widetilde{g}-g'|_{C^{98}(M')}\le C_0\,\epsilon\quad\textit{and}\quad|\widetilde{X}-\partial_{\theta}|_{C^{98}(M')}\le C_0\,\epsilon.
\end{equation*}
Moreover, we have $\widetilde{g}=\widehat{g}$ and $\widetilde{\psi}_{\theta}=\psi_{\theta}$ on $\Gamma_{\ge B+100}$.
\end{lem}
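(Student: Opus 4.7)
\textbf{Proof plan for Lemma \ref{l: lemma two}.} The strategy is to reduce the construction to an application of the gluing lemma (Lemma \ref{l: glue cylindrical plane}). The input to that lemma is a locally finite cover of $M'$ together with a locally defined $SO(2)$-symmetric metric on an enlargement of each chart, all close to the background metric $g'$ and with killing fields close to $\partial_\theta$. In the region $\Gamma_{\ge B}$ we already have the given metric $\widehat{g}$; in the complementary, more compact region we will manufacture local $SO(2)$-symmetric metrics by pulling back the standard flat cylindrical-plane metric via the $\epsilon$-isometries guaranteed by Lemma \ref{l: lemma one}.

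First, I would fix a locally finite cover $\{U_i\}_{i\ge 0}$ of $M'$ with bounded overlap multiplicity and a subordinate partition of unity $\{h_i\}_{i\ge 0}$, arranged so that $U_0\supset \Gamma_{\ge B+50}$, the closure of $U_0$ is contained in $\Gamma_{\ge B+10}$, and $h_0\equiv 1$ on $\Gamma_{\ge B+100}$ while $h_i\equiv 0$ there for $i\ge 1$. On the enlarged chart $W_0=\bigcup_{x\in U_0}B_{g'}(x,1000\,r(x))\subset \Gamma_{\ge B}\subset U$ I would simply take $g_0=\widehat{g}|_{W_0}$, which is $SO(2)$-symmetric and, by \eqref{e: banana}, satisfies $|g_0-g'|_{C^{100}(W_0)}\le\epsilon$ with $|X-\partial_\theta|<1/1000$. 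For each $i\ge 1$, since $(M',g')$ is an $\epsilon$-cylindrical plane at every point, I would pick an $\epsilon$-isometry $\phi_i:\R^2\times S^1\supset V_i\to W_i$ centered at a basepoint of $U_i$ and define $g_i=\phi_{i*}g_{stan}$; this is $SO(2)$-symmetric with killing field $X_i=\phi_{i*}(\partial_\theta)$, and by the definition of an $\epsilon$-isometry, $|g_i-g'|_{C^{100}(W_i)}\le\epsilon$ and $X_i$ is $\epsilon$-close to $\partial_\theta$.

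With this data in place, I would apply Lemma \ref{l: glue cylindrical plane} to $(M',g')$ (which is diffeomorphic to $\R^2\times S^1$ and $\epsilon$-close to the standard cylindrical plane at every point by Lemma \ref{l: lemma one}), producing a global $SO(2)$-symmetric metric $\widetilde{g}$ on $M'$ with isometry group $\widetilde\psi_\theta$ and killing field $\widetilde{X}$ such that
\begin{equation*}
|\widetilde{g}-g'|_{C^{98}(M')}\le C_0\,\epsilon,\qquad |\widetilde\psi_\theta-\phi_{i,\theta}|_{C^{98}(U_i)}\le C_0\,\epsilon
\end{equation*}
for all $i$, and $\widetilde{g}=g_i$ on $\{x\in M':h_i(\phi_{i,\theta}(x))=1\text{ for all }\theta\}$. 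By our choice of partition of unity, on $\Gamma_{\ge B+100}$ one has $h_0\equiv 1$ and $h_i\equiv 0$ for $i\ge 1$, and the whole region is invariant under $\psi_\theta$; hence $\widetilde{g}=g_0=\widehat{g}$ and $\widetilde\psi_\theta=\psi_\theta$ on $\Gamma_{\ge B+100}$. The bound on $|\widetilde{X}-\partial_\theta|_{C^{98}(M')}$ follows from the second conclusion of Lemma \ref{l: glue cylindrical plane} together with the fact that each $X_i$ (respectively $X$ itself in the $i=0$ chart) is $C_0\epsilon$-close to $\partial_\theta$ in high norm by Lemma \ref{l: killing fields are close}.

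The only real point requiring care is the orientation of $\partial_\theta$ across charts: the vector field $\partial_\theta$ is only defined up to sign on each $\epsilon$-cylindrical plane, so a priori the killing fields $X_i$ might disagree in sign on overlaps. This is exactly the issue addressed in the proof of Lemma \ref{l: glue cylindrical plane}, where the fact that $M'\cong \R^2\times S^1$ embeds in $\R^3$ (so contains no embedded Klein bottle) forces a consistent global choice of sign after possibly reversing finitely many $\phi_i$ and $\psi_\theta$. This is the only nontrivial compatibility check; everything else is a direct unpacking of the hypotheses of Lemma \ref{l: glue cylindrical plane} and its conclusion.
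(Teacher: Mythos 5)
Your proposal follows essentially the same route as the paper's proof: cover $M'$ by a distinguished chart near $\Gamma_{\ge B}$ plus a locally finite family of $\epsilon$-cylindrical planes, supply local $SO(2)$-symmetric metrics on the enlarged charts, and invoke Lemma \ref{l: glue cylindrical plane} (with the partition of unity rigged so the averaging leaves $\widehat{g}$ untouched on $\Gamma_{\ge B+100}$). You also spell out the natural choice $g_i=\phi_{i*}g_{stan}$ on the auxiliary charts, which the paper leaves implicit, and you flag the Klein-bottle/orientation point correctly.

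However, the containment you assert, $W_0=\bigcup_{x\in U_0}B_{g'}(x,1000\,r(x))\subset\Gamma_{\ge B}$, is false with your chosen buffers. On $\Gamma_{\ge A}$ the grafted metric is $\epsilon$-close to the normalized $\R^2\times S^1$, so the volume scale satisfies $r(x)\approx 1$, and hence $1000\,r(x)\approx 1000$. A buffer of $10$ (or $50$) between $U_0$ and $\Gamma_B$ therefore does not keep $W_0$ inside $\Gamma_{\ge B}$; you would need a buffer on the order of $1000$. But you cannot simply enlarge the buffer to $1000$ either, since the conclusion requires $\widetilde{g}=\widehat{g}$ on $\Gamma_{\ge B+100}$, which forces $h_0\equiv 1$ (and in particular $U_0\supset$) a $\psi_\theta$-invariant neighborhood of $\Gamma_{\ge B+100}$ --- incompatible with $\overline{U_0}\subset\Gamma_{\ge B+1000}$. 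The paper sidesteps this tension by taking $U_0=U$ outright, effectively placing the burden on the hypothesis that $\widehat{g}$ be available on a sufficiently large neighborhood of $\Gamma_{\ge B}$ (as it is in the applications); you should do the same rather than try to force $W_0\subset\Gamma_{\ge B}$, which is impossible at these scales.
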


\begin{proof}[Proof of Lemma \ref{l: lemma two}]
Let $\psi_{\theta}$, $\theta\in[0,2\pi)$ be the $SO(2)$-isometry of $\widehat{g}$.
It is easy to find a covering of $M'$ by a sequence of $\epsilon$-cylindrical planes $\{U_i\}_{i=1}^{\infty}$ and $U_0=U$ such that the number of them intersecting at any point is bounded by a universal constant.
Then we can find a partition of unity $\{h_i\}_{i=0}^{\infty}$ subordinate to $\{U_i\}_{i=0}^{\infty}$ such that the function $h_0$ satisfies $h_0(\psi_{\theta}(x))=1$ for all $x\in\Gamma_{\ge B+100}$. 
Now the assertions follow immediately from applying Lemma \ref{l: glue cylindrical plane}.
\end{proof}

\subsection{An approximating metric away from the edge}\label{ss: the desired exponential decay part 2}
In this subsection, we construct a $SO(2)$-symmetric approximation metric away from the edge $\Gamma$ such that the error decays at the rate $O(e^{-2(1+\epsilon_1)d_g(\cdot,\Gamma)})$.

In the proof of Theorem \ref{t: precise approximation}, we will need to choose some constants that are sufficiently large or small such that certain requirements are satisfied. In order to show that the dependence between these constants is not circular, we introduce the following parameter order, 
\begin{equation*}
    \delta_0,C_0,T,\underline{D},\epsilon,A,D.
\end{equation*}
such that each parameter is chosen depending only on the preceding parameters.

\begin{theorem}[Approximation with a good exponential decay]\label{t: precise approximation}
Let $(M,g,f,p)$ be a 3D steady gradient soliton that is not a Bryant soliton satisfying \eqref{e: dinner}. Then there exist a constant $\epsilon_1,A_1>0$ and a $SO(2)$-symmetric metric $\widehat{g}$ on an open subset containing $\Gamma_{\ge A_1}$ such that $\Gamma_{\ge A_1}$ is covered by $\epsilon$-cylindrical planes, and
\begin{equation*}
    |\nabla^m(g-\widehat{g})|\le  e^{-2(1+\epsilon_1)d_g(\cdot,\Gamma)}\quad\textit{on}\quad\Gamma_{\ge A_1},\quad m=0,...,100.
\end{equation*}
Moreover, let $X$ be the killing field of the $SO(2)$-isometry of $\widehat{g}$ and $\partial_{\theta}$ be the $SO(2)$-killing field of an $\epsilon$-cylindrical plane, then we have $|X-\partial_{\theta}|\le\frac{1}{1000}$.
\end{theorem}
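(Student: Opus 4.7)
The plan is to construct $\widehat{g}$ via a double induction that progressively improves a $SO(2)$-symmetric approximation by iteratively applying the nonlinear symmetry improvement theorem (Theorem \ref{t: symmetry improvement}). The outer induction raises the exponential decay rate $\alpha$ of the error, and the inner induction propagates an improvement to larger and larger distances from $\Gamma$. The outer induction runs over an arithmetic sequence $\alpha_n$ with small step $\delta < 0.01$, starting from some small $\alpha_0 > 0$ (easily obtainable from Corollary \ref{c: RF closeness}, since the soliton is uniformly close to $\RR\times S^1$ away from $\Gamma$ after a mild rescaling, so that one may take $\widehat{g}_0$ to be a small modification of the local $\RR\times S^1$ metric glued together via Lemma \ref{l: glue cylindrical plane}) and terminating once $\alpha_n > 2 + 2\epsilon_1$ for a suitable $\epsilon_1 > 0$; this is possible because Theorem \ref{t: symmetry improvement} permits $\alpha$ up to $2.02$, and we will exploit the fact that along $\Gamma$ the soliton converges to $\R\times\cigar$ (Theorem \ref{l: wing-like}) so that, in a suitable reference parabolic neighborhood, the $g$-distance $d_{g(t)}(\cdot,\Gamma)$ shrinks at a rate approaching $2$, converting the factor $e^{2\alpha T}$ in Theorem \ref{t: symmetry improvement} into an effective decay factor $e^{-(\alpha-2)\cdot\text{(time)}}$.

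For the outer induction step, assume $\widehat{g}_n$ exists with $|\widehat{g}_n - g|_{C^{100}} \le e^{-\alpha_n d_g(\cdot,\Gamma)}$. The inner induction constructs a sequence $\widehat{g}_{n,k}$ refining the estimate on $\Gamma_{\ge iD}$ by a factor $C^{-i}$ for $i \le k$, and the limit $\widehat{g}_{n+1} := \lim_k \widehat{g}_{n,k}$ will satisfy the next exponential bound with rate $\alpha_{n+1}$. To pass from $\widehat{g}_{n,k}$ to $\widehat{g}_{n,k+1}$, apply the surgery machinery of Subsections \ref{ss: Surgery on the soliton metric part1}--\ref{ss: Surgery on the background SO(2)-symmetric metrics part 1}: replace the soliton $(M,g)$ by the grafted flow $(M', g'(t))$ (Lemma \ref{l: lemma one}) and extend $\widehat{g}_{n,k}$ via Lemma \ref{l: lemma two} to a complete $SO(2)$-symmetric metric $\widetilde{g}_{n,k}$ on $M'$, then run a harmonic map heat flow from $(M', g'(t))$ into the Ricci flow of $\widetilde{g}_{n,k}$ on $[0,T]$. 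The pulled-back difference is a Ricci--DeTurck perturbation $h(t)$ with background the $SO(2)$-symmetric Ricci flow of $\widetilde{g}_{n,k}$. Let $\widehat{g}_{n,k+1}$ be the terminal metric $\widetilde{g}_{n,k}(T)$ modified by absorbing the rotationally invariant part $h_+(T)$ (which preserves $SO(2)$-symmetry) and composed with the diffeomorphism of the harmonic map flow. Theorem \ref{t: symmetry improvement} applied at points of increasing distance from $\Gamma$ then gives the required $C^{-(k+1)}$ gain on $\Gamma_{\ge (k+1)D}$, using that the soliton's $SO(2)$-flat background structure on the almost-flat region makes $\alpha \le 2.02$ consistent with the curvature lower bound from Theorem \ref{l: R>e^{-2r}}.

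The main technical obstacle is the interplay between the grafting surgery, the choice of $T$, and the factor $e^{2\alpha T}$ in Theorem \ref{t: symmetry improvement}. We must ensure: (i) the surgery error introduced by Lemmas \ref{l: lemma one}--\ref{l: lemma two} is buried deep in $\Gamma_{\le A}$, where by choosing $A$ sufficiently large (depending on $T, \epsilon$) this error is dominated by the target bound $e^{-\alpha_{n+1}d_g(\cdot,\Gamma)}$; (ii) the factor $e^{2\alpha T}$ appearing at a fixed point $x \in \Gamma_{\ge iD}$ is compensated by the distance distortion $d_{g(t)}(x,\Gamma) \ge d_g(x,\Gamma) + (2-\delta)t$, which follows from Theorem \ref{l: wing-like} and the distance distortion estimates in Section \ref{s: asymptotic geometry}; and (iii) the $C^{100}$ bounds are propagated using the local Schauder-type derivative estimates for Ricci--DeTurck flow perturbations cited in Theorem \ref{t: symmetry improvement}. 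Combined, these give a net improvement by a factor $e^{-(\alpha_{n+1}-\alpha_n)d_g(\cdot,\Gamma)}$ on each ring $\Gamma_{[iD,(i+1)D]}$ provided $T$ and $D$ are chosen appropriately.

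Finally, the estimate $|X-\partial_\theta| \le \frac{1}{1000}$ on the killing field follows from Lemma \ref{l: killing fields are close} applied to $\widehat{g} := \widehat{g}_N$ once the $C^{100}$-closeness $|\widehat{g} - g|_{C^{100}} \le e^{-(2+2\epsilon_1)d_g(\cdot,\Gamma)}$ holds on $\Gamma_{\ge A_1}$ (and since the local $SO(2)$-action on $\widehat{g}$ is, by construction via Lemma \ref{l: glue cylindrical plane}, compatible with $\partial_\theta$ of the ambient $\epsilon$-cylindrical plane structure, making the sign ambiguity in Lemma \ref{l: killing fields are close} harmless). Setting $\epsilon_1$ slightly smaller than $\tfrac{1}{2}(\alpha_N - 2)$ where $\alpha_N$ is the terminal value of the outer induction yields the stated decay bound on $g - \widehat{g}$.
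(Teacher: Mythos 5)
Your proposal follows essentially the same double-induction strategy as the paper: the outer induction raises the exponential decay rate $\alpha_n$ in small arithmetic steps up to just above $2$, the inner induction propagates a $C^{-i}$ improvement outward over the annuli $\Gamma_{\ge iD}$, each step runs the grafting surgery (Lemmas \ref{l: lemma one}--\ref{l: lemma two}) followed by the harmonic-map heat flow with Ricci--DeTurck perturbation, and the factor $e^{2\alpha T}$ from Theorem \ref{t: symmetry improvement} is absorbed by the $\approx 2$ rate of contraction of $d_{g(t)}(\cdot,\Gamma)$ coming from Theorem \ref{l: wing-like}. The one imprecision is at the very end: the bound $|X-\partial_\theta|\le\frac{1}{1000}$ cannot be deduced from Lemma \ref{l: killing fields are close} after the fact, since that lemma already takes $|X-\partial_\theta|\le\frac{1}{1000}$ as a hypothesis; rather it must be carried through the double induction as an invariant --- it is in fact required by hypothesis \eqref{e: banana} in each call to Lemma \ref{l: lemma two} --- and its single-step propagation is verified by pushing $\partial_\theta$ forward under the harmonic-map heat flow, invoking the uniqueness of Ricci flow to identify the pushed-forward killing field, and then applying Lemma \ref{l: two epsilon cylindrical planes are close}.
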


\begin{proof}
We choose the following constants which satisfies the above parameter order, and whose values may be further adjusted later: 
\begin{enumerate}
    \item Let $\delta_0>0$ be from Theorem \ref{t: symmetry improvement}. 
    \item Let $C_0>0$ be the maximum of $1$ and the constants $C_0>0$ from Lemma \ref{l: two epsilon cylindrical planes are close}, \ref{l: glue cylindrical plane}, and \ref{l: lemma two}. 
    \item Let $T>T_0$, where $T_0$ is from Theorem \ref{t: symmetry improvement}. 
    Assume also that $2C_0^2\cdot e^{400}<e^{\frac{\delta_0}{2}T}$ and $T^{1/2}>\frac{160}{\delta_0}$.
    Let $\underline{D}(T)>0$ be determined by Theorem \ref{t: symmetry improvement}.
    \item\label{i: 3} Let $0<\epsilon<\min\{\frac{1}{1000C_0}\frac{\overline{\delta}(T)}{C_0},\frac{\overline{\epsilon}(T)}{e^{400}C_0^2}\}$, where $\overline{\delta}(T),\overline{\epsilon}(T)>0$ are constants determined by Theorem \ref{t: symmetry improvement}.
    \item Let $A>0$ be sufficiently large so that
    \begin{enumerate}
        \item $g'=g$ on $\Gamma_{\ge A}\subset M'$, where $(M',g')$ is the $\frac{\epsilon}{C_0}$-grafted soliton from Lemma \ref{l: lemma one} and $\Gamma_{\ge A}$ is covered by $\frac{\epsilon}{C_0}$-cylindrical planes. Let $(M',g'(t))$ be the $\frac{\epsilon}{C_0}$-grafted soliton flow, which it satisfies the Ricci flow equation on
        \begin{equation*}
            \bigcup_{t\ge0}\Gamma^t_{> A}\subset M'\times[0,\infty).
        \end{equation*}
        \item By Theorem \ref{l: wing-like} and the assumption \eqref{e: dinner} we may assume that for any point $x\in\Gamma_{\ge A}\subset M$ and $t\ge0$, we have
    \begin{equation*}
        2-\frac{\delta_0}{16}\le \frac{d}{dt}d_g(\phi_t(x),\Gamma)\le2+\frac{\delta_0}{16}.
    \end{equation*}
    \end{enumerate}
    \item Let $D=\max\{A,\ln C_0,\ln\epsilon^{-1},10\underline{D},100T^{1/2},e^{400}\}$, and $\frac{\ln C_0}{D}<0.01$. 
\end{enumerate}

We will impose two inductive assumptions. The first one produces a finite sequence of metrics $\widehat{g}_n$ on $\Gamma_{\ge D_n}$ until $n$ is sufficiently large so that $\widehat{g}_n$ satisfies the assertion of the theorem.
For each fixed $n$, suppose the first inductive assumption holds for $n$, then the second inductive assumption produces an infinite sequence of metrics $\widehat{g}_{n,i}$, $i=0,1,2,...$, on $\Gamma_{\ge D_n}$, where $\widehat{g}_{n,0}=\widehat{g}_{n}$. We will then take a limit of these metrics $\widehat{g}_{n,i}$ as $i\rii$ and obtain a metric $\widehat{g}_{n+1}$ that satisfies the first inductive assumption for $n+1$.

We will see that all metrics in the proof are $\frac{1}{2000C_0}$-close to $(\RR\times S^1,g_{stan})$. So all the following derivatives and norms at a point $x$ are taken and measured with respect to $(\phi_x^{-1})^*g_{stan}$, where $\phi_x$ is a $\frac{1}{2000C_0}$-isometry at $x$. Note that for a different choice of $\phi_x$, the estimates differ at most by the factor $1.1$.

\textbf{Inductive assumption one:} 
For any $n\in\mathbb{N}$ such that $\frac{n\ln C_0}{D}\le2.02$,
there are a sequence of increasing constants $D_n>0$ and a $SO(2)$-symmetric metric $\widehat{g}_{n}$ on $\Gamma_{\ge D_n}$ such that for $\alpha_n:=\frac{n\ln C_0}{D}\le2.02$ we have
\begin{equation}\label{e: induction on n}
    |\nabla^m(\widehat{g}_{n}-g)|\le \epsilon\cdot e^{-\alpha_n(d_g(\cdot,\Gamma)-D_n)} \quad \textit{on}\quad \Gamma_{\ge D_n}\quad m=0,...,100.
\end{equation}
Moreover, let $X_n$ be the killing field of the $SO(2)$-isometry of $\widehat{g}_n$ and $\partial_{\theta}$ be the $SO(2)$-killing field of an $\epsilon$-cylindrical plane, then we have
\begin{equation}\label{e: indone-vector}
    |X_n-\partial_{\theta}|\le\frac{1}{1000}.
\end{equation}
Suppose inductive assumption one is true for a moment. Since $\frac{\ln C_0}{D}<0.01$, we can find an integer $n$ such that $2<\frac{n\ln C_0}{D}\le2.02$. Then the metric $\widehat{g}_n$ on $\Gamma_{\ge D_n}$ satisfies the assertion of the theorem, with $\epsilon_1=\frac{1}{2}(\frac{n\ln C_0}{D}-2)$ and $A_1=D_n$.
So the theorem follows immediately after establishing inductive assumption one.

First, for $n=0$,
since $(M',g')$ is covered everywhere by $\frac{\epsilon}{C_0}$-cylindrical planes and $g'=g$ on $\Gamma_{\ge A}$, by applying Lemma \ref{l: glue cylindrical plane} we obtain a $SO(2)$-symmetric metric on $M'$ which is covered by $C_0\epsilon$-cylindrical planes, and its restriction on $\Gamma_{\ge D_0}$ satisfies the inductive assumptions for some $D_0\ge A+D$.
Now suppose the inductive assumption holds for $n\ge 0$, in the rest of the proof we show that it also holds for $n+1$.
Without loss of generality, we may assume $\frac{(n+1)\delta_0T}{2D}<2.02$, because otherwise we are done.
Now we impose a second inductive assumption.

\textbf{Inductive assumption two:}
Let $n\ge0$ be fixed.
Then for any $k\in\mathbb{N}$ there exists a $SO(2)$-symmetric metric $\widehat{g}_{n,k}$ on an open subset in $M$ containing $\Gamma_{\ge D_n}$, which satisfies
\begin{equation}\begin{split}\label{e: induction on i}
|\nabla^m(\widehat{g}_{n,k}-g)|\le \epsilon\cdot C_0^{-i}\cdot e^{-\alpha_n( d_g(\cdot,\Gamma)-D_n)}\quad \textit{on}\quad\Gamma_{\ge D_n+iD}\quad\\
    \textit{for}\quad i=0,...,k,\quad m=0,...,100.
\end{split}\end{equation}
Moreover, let $X_{n,k}$ be the killing field of the $SO(2)$-isometry of $\widehat{g}_{n,k}$ and $\partial_{\theta}$ be the $SO(2)$-killing field of an $\epsilon$-cylindrical plane, then we have
\begin{equation}\label{e: vector n,k}
    |X_{n,k}-\partial_{\theta}|\le\frac{1}{1000}.
\end{equation}

For $k=0$, inductive assumption two clearly holds for $\widehat{g}_{n,0}=\widehat{g}_n$.
Now assume it is true for an integer $k\ge0$, we will show it also holds for $k+1$.
First, since
\begin{equation*}
   |\nabla^m(\widehat{g}_{n,k}-g)|\le \epsilon\quad
    \textit{on}\quad \Gamma_{\ge D_n},\quad m=0,...,100,
\end{equation*}
by applying Lemma \ref{l: lemma two} we obtain a $SO(2)$-symmetric metric $\widetilde{g}_{n,k}$ on $M'$ with the $SO(2)$-isometry $\psi_{n,k}$ and the killing field $\widetilde{X}_{n,k}$, such that $\widetilde{g}_{n,k}=\widehat{g}_{n,k}$ on $\Gamma_{\ge D_n+100}$ and 
\begin{equation}\label{e: global epsilon}
    |\nabla^m(\widetilde{g}_{n,k}-g')|\le C_0\epsilon,\quad\textit{and}\quad|\widetilde{X}_{n,k}-\partial_{\theta}|\le C_0\epsilon\quad\textit{on}\quad M',\quad m=0,...,98.
\end{equation}

Moreover, we claim that the following holds:
\begin{equation}\label{e: induction on i1}
    \begin{split}
|\nabla^m(\widetilde{g}_{n,k}-g)|\le C_0 \cdot e^{400}\cdot \epsilon\cdot C_0^{-i}\cdot e^{-\alpha_n( d_g(\cdot,\Gamma)-D_n)}\quad \textit{on}\quad\Gamma_{\ge D_n+iD}\quad\\
    \textit{for}\quad i=-1,0,...,k,\quad m=0,...,98.
\end{split}
\end{equation}
To show this, for all $i\ge 1$, since $\widetilde{g}_{n,k}=\widehat{g}_{n,k}$ on $\Gamma_{\ge D_n+100}$, and $D\ge 100$, the claim clearly holds by \eqref{e: induction on i}. 
For $i=-1$, the claim follows directly from \eqref{e: global epsilon}.
For $i=0$, to show the claim holds on $\Gamma_{\ge D_n}$, on the one hand we note that on $\Gamma_{\ge D_n+100}$, we have $\widetilde{g}_{n,k}=\widehat{g}_{n,k}$, and thus the claim holds by \eqref{e: induction on i}; On the other hand, on $\Gamma_{[D_n, D_n+100]}$, we have $d_g(\cdot,\Gamma)-D_n\le 100$, so the claim \eqref{e: induction on i1} follows from \eqref{e: global epsilon} and $\alpha_n<4$.

Let $(M',\widetilde{g}_{n,k}(t))$ be the Ricci flow that starts from $\widetilde{g}_{n,k}(0)=\widetilde{g}_{n,k}$. 
We may take $\epsilon$ sufficiently small so that $(M',\widetilde{g}_{n,k}(t))$ exists up to time $T$ and $|\Rm|_{\widetilde{g}(t)}\le\frac{1}{T}$ for all $t\in[0,T]$,
there is a smooth harmonic map heat flow $\{\chi_{n,k,t}\}:(M',g'(t))\ri(M',\widetilde{g}_{n,k}(t))$, $t\in[0,T]$, with $\chi_{n,k,0}=\textit{i.d.}$, and the perturbation $h_{n,k}(t):=(\chi_{n,k,t}^{-1})^*g'(t)-\widetilde{g}_{n,k}(t)$ satisfies $|h_{n,k}(t)|\le\frac{1}{1000}$.
See \cite[Lemma A.24]{bamler2022uniqueness} for the existence of harmonic map heat flows and estimates of perturbations.

For the fixed $n$ and $k$, we will omit the subscripts $n,k$ in $\chi_{n,k,t},\widetilde{g}_{n,k}(t),h_{n,k}(t)$ for a moment. 
For a fixed $i=0,1,...,k+1$, let 
\begin{equation*}
    x\in\Gamma^T_{\ge D_n+iD}\quad\textit{and}\quad x'=\chi_T(x).
\end{equation*}
We will apply Theorem \ref{t: symmetry improvement} (symmetry improvement) at $(x',0)$ with suitable constants that will be determined later. 
In the following we will
verify all assumptions of Theorem \ref{t: symmetry improvement} and determine the constants. We first prove the following claim.
\begin{claim}\label{claim: carro}
For any $L>10 T^{1/2}$, we have $B_{\widetilde{g}(0)}(x',L)\subset \chi_t(B_{g'(t)}(x,10\,L))$ for all $t\in[0,T]$.
\end{claim}

\begin{proof}
First, we observe that by $|h|\le\frac{1}{1000}$ we have
\begin{equation}\label{e: beef0}
    B_{\widetilde{g}(t)}(\chi_t(x),5L)\subset \chi_t(B_{g'(t)}(x,10\,L)).
\end{equation}
Now let $y\in B_0(x',L)$, by triangle inequality we have
\begin{equation}\label{e: beef}
    \begin{split}
        d_{\widetilde{g}(t)}(y,\chi_t(x))&\le d_{\widetilde{g}(t)}(x',\chi_t(x))+d_{\widetilde{g}(t)}(x',y).\\
    \end{split}
\end{equation}
On the one hand, by the local drift estimate of harmonic map heat flows \cite[Lemma A.18]{bamler2022uniqueness}, and the curvature bound $|\Rm|<\frac{1}{T}$, we have
\begin{equation}\label{e: beef1}
    d_{\widetilde{g}(t)}(x',\chi_t(x))=d_{\widetilde{g}(t)}(\chi_t(x),\chi_T(x))\le 10\,(T-t)^{1/2}\le 10\,T^{1/2}<L.
\end{equation}
On the other hand, by the distance distortion estimate on the Ricci flow $(M',\widetilde{g}_k(t))$ under the curvature bound $|\Rm|<\frac{1}{T}$, we have
\begin{equation}\label{e: beef2}
    d_{\widetilde{g}(t)}(x',y)\le 2\,e^{\sup|\Rm|\cdot T}\,d_{\widetilde{g}(0)}(x',y) \le4\,d_{\widetilde{g}(0)}(x',y)\le 4\,L.
\end{equation}
Combining \eqref{e: beef}\eqref{e: beef1}\eqref{e: beef2} we obtain $d_{\widetilde{g}(t)}(y,\chi_t(x))\le 5\,L$, which by \eqref{e: beef0} implies the claim.
\end{proof}

First, we verify assumption \eqref{e: alpha} in Theorem \ref{t: symmetry improvement}. 
Let 
\begin{equation*}
    \mathcal{H}(x')=C_0\cdot e^{400}\cdot\epsilon\cdot C_0^{-(i-1)}\cdot e^{-\alpha_n( d_g(x',\Gamma)-D_n)}.
\end{equation*}
Let $y\in\Gamma_{\ge D_n+(i-1)D}$, 
then by the triangle inequality we have
\begin{equation*}
    d_g(y,\Gamma)\ge d_g(x',\Gamma)-d_g(y,x'),
\end{equation*}
and thus by \eqref{e: induction on i1} we obtain
\begin{equation}\label{e: star}
    \begin{split}
        |\nabla^m h|(y,0)&\le
        \mathcal{H}(x')\cdot 
        e^{\alpha_n\, d_g(y,x')}\quad\textit{on}\quad \Gamma_{\ge D_n+(i-1)D},\quad m=0,...,98.
    \end{split}
\end{equation}
Since $x\in\Gamma^T_{\ge D_n+iD}$, by the definition of the flow $g'(t)$ we see that $x\in\Gamma^t_{\ge D_n+iD}$ for all $t\in[0,T]$. In particular, we have $x\in \Gamma^0_{\ge D_n+iD}=\Gamma_{\ge D_n+iD}$.
So taking $L=D/10$ in Claim \ref{claim: carro} we see that
\begin{equation*} 
    B_{\widetilde{g}(0)}(x',\underline{D})\subset B_{\widetilde{g}(0)}(x',D/10)\subset B_g(x,D)\subset\Gamma_{\ge D_n+(i-1)D}
\end{equation*}
By \eqref{e: star} this verifies the assumption \eqref{e: alpha} in Theorem \ref{t: symmetry improvement} at $(x',0)$, with the constants $\alpha$ there equal to $\alpha_n$, and $\epsilon$ there equal to $\mathcal{H}(x')$.

Second, we verify the assumption of Theorem \ref{t: symmetry improvement} that the perturbation $h$ restricted on $B_{\widetilde{g}(0)}(x',D_{\#})\times[0,T]$ is a Ricci-Deturck flow perturbation for
\begin{equation*}
    D_{\#}=\frac{1}{10}(d_{g(T)}(x,\Gamma)-D_n+D)>\underline{D}.
\end{equation*}
This follows from Claim \ref{claim: carro} because by taking $L=D_{\#}$ we have
\begin{equation*}\begin{split}
    B_{\widetilde{g}(0)}(x',D_{\#})&\subset\chi_t(B_{g'(t)}(x,d_{g(T)}(x,\Gamma)-D_n+D))\\
    &\subset\chi_t(B_{g'(t)}(x,d_{g(t)}(x,\Gamma)-D_n+D))\subset\chi_t(\Gamma^t_{\ge D_n-D})\subset\chi_t(\Gamma^t_{\ge A}),
    \end{split}
\end{equation*}
for all $t\in[0,T]$. 
Moreover, by taking $\epsilon$ sufficiently small we may assume $|\nabla^kh|\le\frac{1}{1000}$, $k=0,1$, on $B_{\widetilde{g}(0)}(x',D_{\#})\times[0,T]$. See \cite[Lemma A.14]{bamler2022uniqueness} for the local derivative estimate of Ricci flow perturbations.

Lastly, we will verify assumption \eqref{e: double} of Theorem \ref{t: symmetry improvement}. Recall that assumption \eqref{e: double} consists of two estimates of $|h|$ on the parabolic boundary of $B_{\widetilde{g}(0)}(x',D_{\#})\times[0,T]=(\partial B_{\widetilde{g}(0)}(x',D_{\#})\times[0,T])\cup (B_{\widetilde{g}(0)}(x',D_{\#})\times\{0\})$. 
We first verify the assumption on $\partial B_{\widetilde{g}(0)}(x',D_{\#})\times[0,T]$, which follows immediately from the following claim and $|h|\le\frac{1}{1000}$.
\begin{claim}
$e^{100 D_{\#}}\cdot \mathcal{H}(x')\ge\frac{1}{1000}$.
\end{claim}

\begin{proof}
Note by \eqref{e: beef} we have $d_g(x,x')\le 10T^{1/2}$, and $d_g(x,\Gamma)\le d_{g(T)}(x,\Gamma)+2.1T$, we have
\begin{equation*}
    d_g(x',\Gamma)\le d_g(x,\Gamma)+d_g(x,x')
    \le d_{g(T)}(x,\Gamma)+2.1T+10T^{1/2}
    =10D_{\#}+2.1T+10T^{1/2}+D_n.
\end{equation*}
Substituting this into $\mathcal{H}(x')$ and seeing that $e^{-4(2.1T+10T^{1/2})}\ge e^{-10T}\ge e^{-D}\ge e^{-10 D_2}$, and $\epsilon\ge e^{-D}\ge e^{-10D_2}$, the claim follows.
\end{proof}

Since $B_{\widetilde{g}(0)}(x',D_{\#})\subset\Gamma_{\ge D_n-D}$, the assumption \eqref{e: double} on $B_{\widetilde{g}(0)}(x',D_{\#})\times\{0\}$ follows immediately from the following claim

\begin{claim}\label{c: check condition 2}
$|\nabla^mh|(y,0)\le  \mathcal{H}(x')\cdot e^{4\,d_g(x',y)}$ for all $y\in \Gamma_{\ge D_n-D}$, $m=0,...,98$.
\end{claim}

\begin{proof}

If $y\in\Gamma_{\ge D_n+(i-1)D}$, then the claim holds by \eqref{e: star}.
Now assume $y\in \Gamma_{[D_n+(j-1)D, D_n+jD)}$ for some $0\le j\le i-1$. Then we have
\begin{equation}\label{e: dij}
    d_g(x',y)\ge (i-j)D,
\end{equation}
which combined with
\eqref{e: induction on i1} implies
\begin{equation}\begin{split}\label{e: hd}
    |\nabla^mh|(y,0)&
    \le \mathcal{H}(x')\cdot  C_0^{i-j}\cdot e^{\alpha_n\,(d_g(x',\Gamma)-d_g(y,\Gamma))}\\
\end{split}\end{equation}
Note that \eqref{e: dij} implies $C_0^{i-j}\le e^{\frac{\ln C_0}{D}\cdot d_g(x',y)}\le e^{0.01d_g(x',y)}$,
using also $d_g(x',y)\ge d_g(x',\Gamma)-d_g(y,\Gamma)$ and \eqref{e: hd} we obtain
the claim.
\end{proof}

Therefore, applying Theorem \ref{t: symmetry improvement} (symmetric improvement) at $(x',0)$ we obtain
\begin{equation}\begin{split}\label{e: first h}
    |\nabla^mh_-|(x',T)&\le\mathcal{H}(x')\cdot e^{2\alpha_n T}\cdot e^{-\delta_0T}\\
    &\le C_0\cdot e^{400}\cdot\epsilon\cdot C_0^{-(i-1)}\cdot e^{-\alpha_n( d_g(x',\Gamma)-D_n)}\cdot e^{2\alpha_n T}\cdot e^{-\delta_0T},
\end{split}\end{equation}
for $m=0,...,100$.
Since $T^{1/2}>\frac{160}{\delta_0}$, using the triangle inequality $d_g(x',\Gamma)-d_g(x,\Gamma)\ge -d_g(x,x')$ and 
\begin{equation*}
    d_g(x',x)=d_g(\chi_{T}(x),x)\le10\,T^{1/2},
\end{equation*}
it is easy to see that $e^{-\frac{\delta_0}{4}T}\cdot e^{-\alpha_n(d_g(x',\Gamma)-D_n)}\le e^{-\alpha_n(d_g(x,\Gamma)-D_n)}$,
which combined with \eqref{e: first h} implies
\begin{equation}\begin{split}\label{e: hen}
    |\nabla^mh_-|(x',T)
    &\le C_0\cdot e^{400}\cdot\epsilon\cdot C_0^{-(i-1)}\cdot e^{-\alpha_n( d_g(x,\Gamma)-D_n)}\cdot e^{2\alpha_n T}\cdot e^{-\frac{3\delta_0}{4}T},
\end{split}\end{equation}
for $m=0,...,100$.
Moreover, by the assumption of $A$ we have
\begin{equation*}
    d_{g(T)}(x,\Gamma)\le d_g(x,\Gamma)-\left(2-\frac{\delta_0}{16}\right)T\le d_g(x,\Gamma)-\left(2-\frac{\delta_0}{4\alpha_n}\right)T,
\end{equation*}
and hence $e^{-\frac{\delta_0}{4}T}\cdot e^{-\alpha_n(d_g(x,\Gamma)-D_n)}\cdot e^{2\alpha_nT}\le e^{-\alpha_n(d_{g(T)}(x,\Gamma)-D_n)}$,
which together with \eqref{e: hen} implies
\begin{equation}\begin{split}\label{e: ce}
    |\nabla^mh_-|(x',T)
    &\le C_0\cdot e^{400}\cdot\epsilon\cdot C_0^{-(i-1)}\cdot e^{-\alpha_n( d_{g(T)}(x,\Gamma)-D_n)}\cdot e^{-\frac{\delta_0}{2}T},
\end{split}\end{equation}
for $m=0,...,100$.
So by the assumption $2C_0^2\cdot e^{400}<e^{\frac{\delta_0}{2}T}$ and \eqref{e: ce} we obtain
\begin{equation*}
    |\nabla^m h_-|(x',T)\le\frac{1}{2}\epsilon\cdot C_0^{-i}\cdot e^{-\alpha_n( d_{g(T)}(x,\Gamma)-D_n)},
\end{equation*}
and hence
\begin{equation}\label{e: pullback h}
    |\nabla^m\chi_T^*h_-|(x,T)\le\epsilon\cdot C_0^{-i}\cdot e^{-\alpha_n( d_{g(T)}(x,\Gamma)-D_n)}.
\end{equation}

Now we restore the subscripts $n,k$.
Since $g'(T)=\chi_{n,k,T}^*(h_{n,k}(T)+\widetilde{g}_{n,k}(T))$ and $h_{n,k}(T)=h_{n,k,+}(T)+h_{n,k,-}(T)$,
by letting 
\begin{equation*}
    \widehat{g}_{n,k+1}=\chi_{n,k,T}^*(h_{n,k,+}(T)+\widetilde{g}_{n,k}(T)),
\end{equation*}
we see that $\widehat{g}_{n,k+1}$ is a $SO(2)$-symmetric metric on $M'$ and \eqref{e: pullback h} implies
\begin{equation}\begin{split}\label{e: like2}
    |\nabla^m(g'(T)-\widehat{g}_{n,k+1})|\le\epsilon\cdot C_0^{-i}\cdot e^{-\alpha_n( d_{g(T)}(\cdot,\Gamma)-D_n)},\quad\textit{on}\quad\Gamma^T_{\ge D_n+iD},
\end{split}\end{equation}
where $m=0,...,100$.
This is true for all $i=0,1,...,k+1$. 

Since $g'(T)=\phi_{-T}^*g$ and $\phi_{-T}:(\Gamma^T_{
\ge A},g'(T))\ri(\Gamma_{\ge A},g)$ is an isometry, replacing $\widehat{g}_{n,k+1}$ by $\phi_T^*(\widehat{g}_{n,k+1})$, then by \eqref{e: like2} we obtain a $SO(2)$-symmetric metric $\widehat{g}_{n,k+1}$ on $M'$ such that
\begin{equation}\label{e: firstind}
    |\nabla^m(g-\widehat{g}_{n,k+1})|\le\epsilon\cdot C_0^{-i}\cdot e^{-\alpha_n( d_{g}(\cdot,\Gamma)-D_n)},\quad\textit{on}\quad\Gamma_{\ge D_n+iD},\quad m=0,...,98.
\end{equation}
This verifies \eqref{e: induction on i} in inductive assumption two for $k+1$.

It remains to verify \eqref{e: vector n,k} in inductive assumption two for $k+1$.
To do this, let $\psi_x:(\RR\times S^1,g_{stan})\ri (\RR\times S^1,\widetilde{g},x)$ be an $C_0\epsilon$-isometry at $x\in \RR\times S^1$, we may assume that $\epsilon$ is sufficiently small so that $\psi_x:(\RR\times S^1,g_{stan})\ri (\RR\times S^1,\widetilde{g}(t),x)$ is an $\frac{1}{2000C_0}$-isometry for all $t\in[0,T]$. 
By \eqref{e: global epsilon} we see
\begin{equation}\label{e: so2killing field}
    |(\chi_{n,k,T}^{-1})_*\widetilde{X}_{n,k}-(\chi_{n,k,T}^{-1})_*\partial_{\theta}|\le 4C_0\epsilon.
\end{equation}
Note that by the uniqueness of Ricci flow, see e.g. \cite{ChenBL}, the $SO(2)$-isometry $\psi_{n,k,\theta}$ of $\widetilde{g}_{n,k}(0)$ is $SO(2)$-isometry of $\widetilde{g}_{n,k}(t)$ for all $t\in[0,T]$.
In particular, $\widetilde{X}_{n,k}$ is the killing field of  $\widetilde{g}_{n,k}(T)$ and hence $(\chi_{n,k,T}^{-1})_*\widetilde{X}_{n,k}$ is the killing field of the $\widehat{g}_{k+1}$.
So the vector field $(\chi_{n,k,T}^{-1})_*\partial_{\theta}$ is the $SO(2)$-killing field of an $\frac{1}{2000C_0}$-cylindrical plane. 
By Lemma \ref{l: two epsilon cylindrical planes are close}, for the $SO(2)$-killing field $\partial_{\theta'}$ of an $\epsilon$-cylindrical plane, we have $|\partial_{\theta'}-(\chi_{n,k,T}^{-1})_*\partial_{\theta}|\le\frac{1}{2000}$, which combined with 
\eqref{e: so2killing field} implies 
\begin{equation*}
    |(\chi_{n,k,T}^{-1})_*\widetilde{X}_{n,k}-\partial_{\theta'}|\le\frac{1}{1000},
\end{equation*}
which confirms \eqref{e: vector n,k} in inductive assumption two for $k+1$.
Combining this with \eqref{e: firstind} we verified inductive assumption two for $k+1$.

Therefore, inductive assumption two holds for all $k\ge0$.
Now let $p\in \Gamma_{> D_n}$ be some fixed point, then we may assume after passing to a subsequence and letting $k\rii$ that the pointed $SO(2)$-symmetric manifolds $(M',\widehat{g}_{n,k},p)$ converge in the $C^{98}$-norm to a $SO(2)$-symmetric manifold $(M',\widehat{g}_{n+1},p)$, which satisfies
\begin{equation}\label{e: readyh}
    |\nabla^m(g-\widehat{g}_{n+1})|\le \epsilon\cdot C_0^{-i}\cdot e^{-\alpha_n(d_g(\cdot,\Gamma)-D_n)} \quad \textit{on}\quad \Gamma_{\ge D_n+iD},
\end{equation}
for $m=0,...,98$.
Moreover, let $X_{n+1}$ be the killing field of the $SO(2)$-isometry of $\widehat{g}_{n+1}$ and $\partial_{\theta}$ be the $SO(2)$-killing field of an $\epsilon$-cylindrical plane, then  $|X_{n+1}-\partial_{\theta}|\le\frac{1}{1000}$, which verifies \eqref{e: indone-vector} for $n+1$.
For any $i\ge0$, and $y\in\Gamma_{[D_n+iD,D_n+(i+1)D)}$, we have $d_g(y,\Gamma)\le D_n+(i+1)D$, which together with \eqref{e: readyh} implies 
\begin{equation}\label{e: readyh2}
    |\nabla^m(g-\widehat{g}_{n+1})|(y)\le \epsilon\cdot e^{-\alpha_{n+1}(d_g(y,\Gamma)-D_{n+1})},
\end{equation}
where $D_{n+1}=\frac{(D+D_n)\ln C_0+\alpha_n D_nD}{\ln C_0+\alpha_nD}>D_n$
and $\alpha_{n+1}=\frac{(n+1)\ln C_0}{D}$. Therefore, we have
\begin{equation*}
    |\nabla^m(g-\widehat{g}_{n+1})|\le \epsilon\cdot e^{-\alpha_{n+1}(d_g(\cdot,\Gamma)-D_{n+1})} \quad \textit{on}\quad \Gamma_{\ge D_{n+1}},
\end{equation*}
for $m=0,...,100$, which verifies inductive assumption one for $n+1$ and thus proves the theorem.

\end{proof}

\subsection{Extend the approximating metric near the edges}
\label{ss: near the edge part 3}
The approximating $SO(2)$-symmetric metric in Theorem \ref{t: precise approximation} is constructed on an open subset away from $\Gamma$.
Next, we want to extend it to a $SO(2)$-symmetric metric which is also defined on a neighborhood of $\Gamma$. 

Seeing that the soliton dimension reduces to $\R\times\cigar$ along the $\Gamma$, we can find a sequence of $SO(2)$-symmetric metrics in balls centered at $\Gamma$ whose radius going to infinity, such that these metrics are $e^{-2(1+\epsilon_1)d_g(\cdot,\Gamma)}$-close to the soliton metric.
Then by using Lemma \ref{l: glue cigar} we can glue up these metrics near $\Gamma$ with the metric $\overline{g}$ at suitable distance to $\Gamma$, and obtain a $SO(2)$-symmetric metric defined everywhere outside of a compact subset of $M$.
This new metric will inherit their closeness to $g$ both near $\Gamma$ and away from $\Gamma$.
To achieve this goal, we need the following lemma which compares an almost killing field $Y$ in $(\R\times\cigar,g_{\Sigma})$ with the actual killing field.

\begin{lem}\label{l: glue cigar}
For any $C_1,\epsilon_1>0$, there exists $C(C_1,\epsilon_1)>0$ such that the following holds:
Write the metric of $(\R\times\cigar,g_{\Sigma})$ as $g_{\Sigma}=ds^2+dr^2+\varphi^2(r)d\theta^2$ under the coordinate $(s,r,\theta)$, where $r$ is the distance to the line $\R\times\{x_{tip}\}$. Suppose $R((0,x_{tip}))=4$. 
Suppose $Y$ is a vector field defined on $A\le r\le B$ for some $0<A<B$, such that
\begin{equation}\label{e: L_Yg}
    |\nabla^k(\LL_{Y}g_{\Sigma})|\le C_1\,e^{-2(1+\epsilon_1)d_g(\cdot,\Gamma)}\quad k=0,...,98.
\end{equation}
Suppose also that 
\begin{equation}\label{e: assump limit}
    \lim_{r\rii}|\nabla^k(Y-\partial_{\theta})|(\cdot,B,\cdot)\le C_1\,e^{-2(1+\epsilon_1)A},\quad k=0,...,98.
\end{equation}
Then we have
\begin{equation*}
    |\nabla^k(Y-\partial_{\theta})|(\cdot,A,\cdot)\le C\,e^{-2(1+\epsilon_1)A}\quad k=0,...,96.
\end{equation*}
\end{lem}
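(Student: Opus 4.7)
The plan exploits the fact that the isometry group of $(\R\times\cigar,g_\Sigma)$ is generated by translations in $s$ and rotations in $\theta$, so its Killing algebra is the two-dimensional space $\mathrm{span}\{\partial_s,\partial_\theta\}$. Since $Y$ is almost-Killing by \eqref{e: L_Yg}, the difference $Z := Y-\partial_\theta$ should be well-approximated by $\alpha\partial_s+\beta\partial_\theta$ for constants $\alpha,\beta$, and the asymptotic condition \eqref{e: assump limit} at the boundary $r=B$ will force these constants to be of size $C\,e^{-2(1+\epsilon_1)A}$.

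I would first set $Z=Y-\partial_\theta$; since $\partial_\theta$ is Killing, $\LL_Z g_\Sigma=\LL_Y g_\Sigma$, and so $|\nabla^k\LL_Z g_\Sigma|\le C_1\, e^{-2(1+\epsilon_1)r}$ on $\{A\le r\le B\}$. Writing $Z=a\partial_s+b\partial_r+c\partial_\theta$, the six components of $\LL_Z g_\Sigma$ are, respectively, $2\partial_s a$, $2\partial_r b$, $2\varphi\varphi'b+2\varphi^2\partial_\theta c$, $\partial_r a+\partial_s b$, $\partial_\theta a+\varphi^2\partial_s c$, and $\partial_\theta b+\varphi^2\partial_r c$, from which one reads off that the kernel of $Z\mapsto \LL_Z g_\Sigma$ is exactly $\{\alpha\partial_s+\beta\partial_\theta\}$. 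I would then decompose $Z=Z_++Z_-$ into its $SO(2)$-invariant ($\theta$-averaged) and oscillatory parts; by the $SO(2)$-invariance of $g_\Sigma$ both pieces satisfy the almost-Killing equation with source terms of the same size.

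For the oscillatory part $Z_-$, expand in Fourier modes $\sum_{n\ne 0}e^{in\theta}(a_n,b_n,c_n)$. The $(s\theta)$- and $(r\theta)$-equations let one solve $a_n,b_n$ in terms of $\partial_s c_n,\partial_r c_n$ modulo the source, and substituting into the remaining equations yields a second-order elliptic equation for $c_n$ with a strictly positive mass $\sim n^2/\varphi^2(r)$, coercive because $\varphi(r)\ge \tanh A>0$ on our region; standard weighted energy estimates then give $|Z_-|_{C^{96}}\le C\,e^{-2(1+\epsilon_1)r}$. For the invariant part $Z_+$, integrating the $(rr)$-equation $\partial_r b=O(e^{-2(1+\epsilon_1)r})$ radially from $r$ out to $B$ and using the boundary estimate from \eqref{e: assump limit} gives $|b|\le C\,e^{-2(1+\epsilon_1)r}$; the $s$-propagation equations for $c$ and $a$, together with the asymptotic condition at $s\to\infty$, then force $Z_+=\alpha\partial_s+\beta\partial_\theta+O(e^{-2(1+\epsilon_1)r})$ with $|\alpha|,|\beta|\le C\,e^{-2(1+\epsilon_1)A}$. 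Evaluating at $r=A$ gives $|Z|(\cdot,A,\cdot)\le C\,e^{-2(1+\epsilon_1)A}$. Higher covariant derivatives are obtained by differentiating the Killing equations and iterating, which accounts for the loss of two derivatives.

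\textbf{Main obstacle.} The delicate point is extracting the sharp $e^{-2(1+\epsilon_1)r}$ decay for the $b$-component of $Z_+$, since the $(\theta\theta)$-equation $\varphi\varphi'b+\varphi^2\partial_\theta c=O(e^{-2(1+\epsilon_1)r})$ with $\partial_\theta c=0$ naively yields only $|b|\le C(\varphi\varphi')^{-1}e^{-2(1+\epsilon_1)r}\sim C\,e^{-2\epsilon_1 r}$ since $\varphi\varphi'\sim 4e^{-2r}$ for large $r$, which is insufficient. The fix is to bypass the $(\theta\theta)$-equation for the pointwise estimate and instead integrate the $(rr)$-equation $\partial_r b=O(e^{-2(1+\epsilon_1)r})$ radially outward from $r$ to $B$, combined with the boundary value of $b$ at $(\cdot,B,\cdot)$ provided by \eqref{e: assump limit}; the analogous difficulty for the $c$-component is resolved by integrating the $(s\theta)$- and $(r\theta)$-equations along $s$ back from $s=\infty$.
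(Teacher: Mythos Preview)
Your overall framework is reasonable, but there is a genuine gap, and the proof is far more elaborate than what is actually needed.

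\textbf{The gap.} You appeal to an ``asymptotic condition at $s\to\infty$'' to control the $a$- and $c$-components of the invariant part $Z_+$, and in your obstacle paragraph you propose to integrate the $(s\theta)$-equation ``along $s$ back from $s=\infty$''. But the hypothesis \eqref{e: assump limit} gives data only on the hypersurface $\{r=B\}$; there is no information about $Y$ as $s\to\pm\infty$. (The $\lim_{r\to\infty}$ in the statement is a typographical artifact; in the paper's own proof and in the subsequent application the condition is simply a bound at $r=B$.) Without such data, integrating the $(s\theta)$-equation $\varphi^2\partial_s c=O(e^{-2(1+\epsilon_1)r})$ along $s$ determines $c$ only up to an arbitrary function of $r$, which you cannot bound. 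The same issue obstructs your treatment of $a$.

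\textbf{What the paper does instead.} The paper bypasses the Fourier decomposition, the elliptic theory, and the invariant/oscillatory split entirely. It uses only the three components of $\LL_Y g_\Sigma$ that contain a $\partial_r$:
\[
(\LL_Y g_\Sigma)(\partial_r,\partial_r)=2\partial_r Y^r,\quad
(\LL_Y g_\Sigma)(\partial_r,\partial_s)=\partial_r Y^s+\partial_s Y^r,\quad
(\LL_Y g_\Sigma)(\partial_r,\partial_\theta)=\partial_\theta Y^r+\varphi^2\partial_r Y^\theta.
\]
First integrate the $(\partial_r,\partial_r)$-equation in $r$ from $A$ to $B$, using the boundary value at $r=B$, to bound $Y^r$ and all its $s,\theta$-derivatives (this is exactly your argument for $b$). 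Then substitute those bounds for $\partial_s Y^r$ and $\partial_\theta Y^r$ into the other two equations; each becomes $\partial_r Y^s=O(\cdot)$ and $\varphi^2\partial_r Y^\theta=O(\cdot)$, which one again integrates in $r$ from $A$ to $B$ against the boundary data at $r=B$. Two radial integrations, three equations, done --- no mode decomposition and no use of the $(ss)$-, $(\theta\theta)$-, or $(s\theta)$-components at all. This also explains the loss of exactly two derivatives. Your correct instinct for the $b$-component (radial integration to $r=B$) is precisely the mechanism for all three components; you just need to reuse it rather than switching to $s$-integration.
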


\begin{proof}
Write $Y=Y^s\partial_s+Y^r\partial_r+Y^{\theta}\partial_{\theta}$ under the coordinate $(s,r,\theta)$, then
by the formula of Lie derivatives for a symmetric 2-tensor, we have
\begin{equation}\label{e: lie}
    \begin{split}
        \LL_Y g_{\Sigma}(\partial_{r},\partial_{\theta})&=\partial_{\theta} Y^r+\pr Y^{\theta}\varphi^2;\\
        \LL_Y g_{\Sigma}(\pr,\ps)&=\pr Y^s+\ps Y^r;\\
        \LL_Y g_{\Sigma}(\pr,\pr)&=2\,\pr Y^r.\\
    \end{split}
\end{equation}
Moreover, by assumption \eqref{e: assump limit} we have
\begin{equation*}
   (|Y^s|_{C^k}+|Y^r|_{C^k}+|Y^{\theta}-1|)_{C^k}(\cdot,B,\cdot)\le C\,e^{-2(1+\epsilon_1)A}.
\end{equation*}
By the third equation in \eqref{e: lie} and \eqref{e: L_Yg} and integrating from $r=A$ to $r=B$, we see that $|Y^r|_{C^{k-1}}(\cdot,A,\cdot)\le C\,e^{-2(1+\epsilon_1)A}$. Substituting this into the first two equations in \eqref{e: lie} and integrating from $r=A$ to $r=B$ we obtain $(|Y^{\theta}-1|_{C^{k-2}}+|Y^s|)_{C^{k-2}}(\cdot,A,\cdot)\le C\,e^{-2(1+\epsilon_1)A}$, which proves the lemma.
\end{proof} 

We now prove the main result of this section.

\begin{theorem}\label{c: best approximation}
Let $(M,g,f,p)$ be a 3D steady gradient soliton that is not a Bryant soliton. Assume $\lim_{s\rii}R(\Gamma_1(s))=\lim_{s\rii}R(\Gamma_2(s))= 4$. Then there exist a constant $C,\epsilon_1>0$ and a $SO(2)$-symmetric metric $\overline{g}$ defined outside of a compact subset of $M$ such that for any $k=0,...,98$, the followings hold:
\begin{enumerate}
    \item $|\nabla^k(g-\overline{g})|\rightarrow 0$ as $x\rightarrow\infty$.
    \item $|\nabla^k(g-\overline{g})|\le C\, e^{-2(1+\epsilon_1)d_g(\cdot,\Gamma)}$.
\end{enumerate}
\end{theorem}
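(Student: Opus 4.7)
The plan is to take the $SO(2)$-symmetric metric $\widehat{g}$ supplied by Theorem \ref{t: precise approximation}, which is defined on $\Gamma_{\ge A_1}$ with the exponential error bound $|\nabla^k(g-\widehat{g})|\le e^{-2(1+\epsilon_1)d_g(\cdot,\Gamma)}$, and extend it across the remaining neighborhood of $\Gamma$ by splicing in a $SO(2)$-symmetric model coming from the asymptotic limit $\R\times\cigar$. By Corollary \ref{l: new Gamma} and Theorem \ref{l: wing-like}, along each $\Gamma_i$ the rescaling-free limit is $(\R\times\cigar,g_c,x_{tip})$, so on each tubular neighborhood $U_i$ of $\Gamma_i([s_0,\infty))$ of radius $2A_1+100$ (for large $s_0$), an iterative pull-back construction similar to Lemma \ref{l: lemma one}, but retaining the $SO(2)$-symmetry of $g_c$ rather than breaking it, produces a $SO(2)$-symmetric metric $\overline{g}_i$ on $U_i$ with killing field $X_i$ such that $|\nabla^k(g-\overline{g}_i)|\to 0$ as $s\to\infty$ along $\Gamma_i$, for $k=0,\dots,100$.

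Second, on the overlap annular region $V_i := U_i\cap\Gamma_{[A_1,A_1+100]}$, both $\widehat{g}$ and $\overline{g}_i$ are defined and $SO(2)$-symmetric, with killing fields $X$ and $X_i$. To glue them using a cutoff $\eta$ depending only on $d_g(\cdot,\Gamma)$ (which is invariant under both $SO(2)$-actions), I first need to align the killing fields. Since $\mathcal{L}_X\widehat{g}=0$, Theorem \ref{t: precise approximation} together with the $C^{98}$-closeness of $\overline{g}_i$ to $g$ on $V_i$ yields
\begin{equation*}
|\nabla^k(\mathcal{L}_X\overline{g}_i)| = |\nabla^k\mathcal{L}_X(\overline{g}_i-\widehat{g})| \le C\,e^{-2(1+\epsilon_1)d_g(\cdot,\Gamma)}.
\end{equation*}
Because $\overline{g}_i$ is $C^{98}$-close to $g_c$ on $V_i$ (by the cigar approximation), Lemma \ref{l: glue cigar} applied to $Y=X$ (with the natural boundary condition at the outer boundary $\{d_g(\cdot,\Gamma)=A_1+100\}$, where $X_i$ is already very close to $\partial_\theta$ by construction) gives $|\nabla^k(X-X_i)|\le C\,e^{-2(1+\epsilon_1)d_g(\cdot,\Gamma)}$ on $V_i$. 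A $SO(2)$-equivariant diffeomorphism of size $O(e^{-2(1+\epsilon_1)d_g(\cdot,\Gamma)})$ then identifies $X_i$ with $X$, modifying $\overline{g}_i$ only by a term of the same order.

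Third, with killing fields now coinciding, set $\overline{g}:=\eta\,\overline{g}_i+(1-\eta)\,\widehat{g}$ on each $U_i$ (with $\eta\equiv 1$ on a neighborhood of $\Gamma$ and $\eta\equiv 0$ on $\Gamma_{\ge A_1+100}$), and $\overline{g}:=\widehat{g}$ elsewhere. Since $\eta$ is $SO(2)$-invariant and the two summands share the same $SO(2)$-action, $\overline{g}$ is $SO(2)$-symmetric. The error
\begin{equation*}
|\nabla^k(g-\overline{g})| \le \eta\,|\nabla^k(g-\overline{g}_i)| + (1-\eta)\,|\nabla^k(g-\widehat{g})| + \text{(terms from }\nabla^\ell\eta\text{ acting on }\overline{g}_i-\widehat{g}\text{)}
\end{equation*}
is bounded by $C\,e^{-2(1+\epsilon_1)d_g(\cdot,\Gamma)}$ on the gluing annulus, because $\overline{g}_i-\widehat{g}=(\overline{g}_i-g)+(g-\widehat{g})$ is controlled by the same exponential there. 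This proves (2). For (1), on $\Gamma_{\ge A_1+100}$ we have $\overline{g}=\widehat{g}$ and $e^{-2(1+\epsilon_1)d_g(\cdot,\Gamma)}\to 0$ as $x\to\infty$; on the complementary region near $\Gamma$ we have $\overline{g}=\overline{g}_i$ for large $s$, and the convergence $|\nabla^k(g-\overline{g}_i)|\to 0$ along $\Gamma_i$ is guaranteed by the first step.

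The main obstacle is the killing field alignment: the $SO(2)$-action of $\widehat{g}$ inherited from the inductive construction of Theorem \ref{t: precise approximation} need not match the natural $SO(2)$-action of the cigar model $\overline{g}_i$, and a naive convex combination of the two metrics would not be genuinely $SO(2)$-symmetric. Lemma \ref{l: glue cigar} is precisely the technical tool that turns the smallness of $\mathcal{L}_X\overline{g}_i$ into pointwise smallness of $X-X_i$ at the required exponential rate $e^{-2(1+\epsilon_1)d_g(\cdot,\Gamma)}$, so that the correcting diffeomorphism is absorbed into the error term without spoiling the decay.
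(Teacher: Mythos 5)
Your overall strategy matches the paper's: take the exponentially close $SO(2)$-symmetric metric $\widehat{g}$ of Theorem~\ref{t: precise approximation}, construct an $SO(2)$-symmetric cigar model near $\Gamma$, align the killing fields via Lemma~\ref{l: glue cigar}, and glue with an $SO(2)$-invariant cutoff. The genuine gap is your choice of a \emph{fixed-width} gluing annulus $V_i=U_i\cap\Gamma_{[A_1,A_1+100]}$ with a cutoff $\eta$ depending only on $d_g(\cdot,\Gamma)$. Theorem~\ref{t: precise approximation} controls $|\nabla^k(g-\widehat{g})|$ only by $e^{-2(1+\epsilon_1)d_g(\cdot,\Gamma)}$; at the \emph{fixed} distance $d_g(\cdot,\Gamma)\in[A_1,A_1+100]$ this is a constant, and nothing guarantees that the error decays to $0$ along $\Gamma_i$ at fixed distance from $\Gamma$. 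Since on the transition annulus you retain a positive $(1-\eta)$-fraction of $g-\widehat{g}$, conclusion (1) need not hold there; your verification of (1) checks only the two regions where $\overline{g}$ equals exactly one of $\overline{g}_i$ or $\widehat{g}$ and silently skips the transition annulus. A second symptom of the same problem is the boundary condition of Lemma~\ref{l: glue cigar}: with outer radius $B=A_1+100$ fixed, the best input is Lemma~\ref{l: killing fields are close}, which gives $|\nabla^k(Y-\partial_\theta)|(\cdot,B,\cdot)$ of the order of the local cylindrical-plane closeness $\approx e^{-2(A_1+100)}$ on the cigar; this is $\le e^{-2(1+\epsilon_1)A_1}$ only if $\epsilon_1 A_1\le 100$, a constraint that need not hold. (Also, the bound is needed for $Y$, the killing field of $\widehat{g}$, not for $X_i$, contrary to your parenthetical.)

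Both issues are resolved by letting the gluing region migrate outward, as in the paper: the pure-cigar region $W_i$ contains balls $B_g(\Gamma_i(s),D_3(s))$ with $D_3(s)\to\infty$, and the cutoff is supported in balls $B_g(\Gamma_i(s),D_2(s))$ with $D_2(s)\to\infty$, so the transition occurs at distance $\ge D_3(s)\to\infty$ from $\Gamma$, where both $|g-\widehat{g}|$ and $|g-\widetilde{g}_i|$ are exponentially small; then any $x\notin W_1\cup W_2$ with $d_g(x,p)\to\infty$ automatically has $d_g(x,\Gamma)\to\infty$, so the exponential estimate alone gives (1). Likewise the Lemma~\ref{l: glue cigar} boundary condition is met by choosing $B$ large depending on $A$ so that the Lemma~\ref{l: killing fields are close} error $\epsilon(B)$ drops below $e^{-2(1+\epsilon_1)A}$. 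Your identification of Lemma~\ref{l: glue cigar} as the key alignment tool is correct; the fix is simply to let the annulus grow with $s$ rather than be fixed.
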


\begin{proof}
We will use $\epsilon(D)$ to denote all functions which goes to zero as $D\rii$.
Let $C>0$ denote all constants that depending only on the soliton.
On the one hand, since the manifold dimension reduces along $\Gamma$ to $\R\times\cigar$, for each $i=1,2$, by a standard gluing-up argument we can find a $SO(2)$-symmetric metric $\widetilde{g}_i$ on an open subset $U_i$ containing the balls $B_g(\Gamma_i(s),D_1(s))$ for all large $s$, such that $\lim_{s\rii}D_1(s)=\infty$, and the followings hold:
\begin{enumerate}
    \item Let $\epsilon_1>0$ be from Theorem \ref{t: precise approximation}. Then for each $i=1,2$, we have
    \begin{equation}\label{e: Li}
        |\nabla^k(\widetilde{g}_i-g)|\le\min\{\epsilon(d_g(\cdot,p)),\, C\,e^{-2(1+\epsilon_1)d_g(\cdot,\Gamma)}\},\quad\textit{on}\quad U_i
    \end{equation}
   \item $U_1\cap U_2=\emptyset$.
   \item Let $\psi_{i,\theta}$, $\theta\in[0,2\pi)$, be the $SO(2)$-isometries for $(U_i,\widetilde{g}_i)$, $i=1,2$,
we can also assume that there is an embedded surface $N_i$ in $U_i\cap\Gamma_{\ge 1000}$, which is diffeomorphic to $\RR$, and intersects each $S^1$-orbit of $\psi_{i,\theta}$ exactly once, and its tangent space $T_xN_i$ at $x\in N_i$ is $\frac{1}{100}$-close to the orthogonal space of the $S^1$-orbit passing through $x$.
\item For each large $s$, there is a smooth map $\psi_{i,s}$ from $B_g(\Gamma_i(s),D(s))$ into $(\R\times\cigar,g_{\Sigma})$ which is a diffeomorphism onto the image, such that 
\begin{equation}\label{e: liying}
    |\nabla^k(\widetilde{g}_i-\psi_{i,s}^*g_{\Sigma})|\le C\,e^{-2(1+\epsilon_1)d_g(\cdot,\Gamma)}.
\end{equation}
Let $X_i$ be the killing field of the $SO(2)$-isometry of $\widetilde{g}_i$, then
\begin{equation}\label{e: X}
    |\nabla^k(X_i-(\psi_{i,s})^{-1}_*(\partial_{\theta}))|\le C\,e^{-2(1+\epsilon_1)d_g(\cdot,\Gamma)}.
\end{equation}
\end{enumerate}

On the other hand, by Theorem \ref{t: precise approximation} we have a $SO(2)$-symmetric metric $\widehat{g}$ defined on an open subset $U\supset\Gamma_{\ge A}$ for some $A>0$ such that 
\begin{equation}\label{e: ying}
    |\nabla^k(g-\widehat{g})|\le C\, e^{-2(1+\epsilon_1)d_g(\cdot,\Gamma)}.
\end{equation}
Let $\psi_{\theta}$, $\theta\in[0,2\pi)$, be the $SO(2)$-isometries for $(U,\widehat{g})$, and
$Y$ be the killing field of $\psi_{\theta}$. 
Next, we will compare the two vector fields $X_i$ and $Y$ on $\Gamma_{> A}\cap U_i$.
First, by \eqref{e: Li},\eqref{e: liying},\eqref{e: ying} we have 
\begin{equation}\label{e: stem}
    |\nabla^k(\LL_Y\psi^*_{i,s}g_{\Sigma})|\le C\, e^{-2(1+\epsilon_1)d_g(\cdot,\Gamma)},
\end{equation}
Second, we also have
\begin{equation*}
    |\nabla^k(\widehat{g}-\psi^*_{i,s}g_{\Sigma})|\le|\nabla^k(\widehat{g}-g)|+|\nabla^k(g-\widetilde{g}_i)|+|\nabla^k(\widetilde{g}_i-\psi^*_{i,s}g_{\Sigma})|\le\epsilon(d_g(\cdot,\Gamma)).
\end{equation*}
Moreover, the manifold is an $\epsilon(d_g(\cdot,\Gamma))$-cylindrical plane at all points where the metrics $\widehat{g}$ and $\psi^*_{i,s}g_{\Sigma}$ are defined. 
So we can apply Lemma \ref{l: killing fields are close} and deduce
\begin{equation}\label{e: shanzhuxiao}
    |\nabla^k(Y-(\psi_{i,s})^{-1}_*(\partial_{\theta}))|\le\epsilon(d_g(\cdot,\Gamma)).
\end{equation}
In particular, for any $A>0$, there exists $B>A$ such that $\epsilon(B)<e^{-2(1+\epsilon_1)A}$.

Therefore, for each $i=1,2$,
by \eqref{e: stem} and \eqref{e: shanzhuxiao} we can apply Lemma \ref{l: glue cigar} and deduce that there is a function $D_2:[s_0,\infty)\ri\R_+$ for a sufficiently large $s_0$ such that $B_g(\Gamma_i(s),D_2(s))\subset U_i$ and $D_2(s)\rii$ as $s\rii$, and the following holds on $B_g(\Gamma_i(s),D_2(s))$,
\begin{equation*}
    |\nabla^k(Y-(\psi_{i,s})^{-1}_*(\partial_{\theta}))|\le C\, e^{-2(1+\epsilon_1)d_g(\cdot,\Gamma)}.
\end{equation*}
Combining this with \eqref{e: X} we see that the following holds on $V_i=\cup_{s> s_0}B_g(\Gamma(s),D_2(s))$,
\begin{equation}\label{e: paufu}
    |\nabla^k(Y-X_i)|\le C\, e^{-2(1+\epsilon_1)d_g(\cdot,\Gamma)}, 
\end{equation}
which implies
\begin{equation*}
    |\nabla^k(\psi-\psi_i)|\le C\, e^{-2(1+\epsilon_1)d_g(\cdot,\Gamma)}.
\end{equation*}

Let $h: V_1\cup V_2\cup U\ri[0,1]$ be a smooth cut-off function satisfying $h\equiv1$ on a $\psi_{i,\theta}$-invariant open subset $W_i\subset V_i$ which contains $B_g(\Gamma_i(s),D_3(s))$ with $D_3(s)\rii$ as $s\rii$ for each $i=1,2$, and $h\equiv0$ on a $\psi_{\theta}$-invariant open subset containing $U\setminus (V_1\cup V_2)$.
Let $V_{i,0}$ be a $\psi_{i,\theta}$-invariant open subset such that
\begin{equation*}
    W_i\cap\Gamma_{\ge 100}\subset V_{i,0}\subset V_i\cap\Gamma_{\ge 100}.
\end{equation*}

Next, we define a smooth map $F_i:N_i\times S^1\ri M$ by letting 
\begin{equation*}
    F_i(x,\theta)=\Sigma_2(1-h,h,\psi_{\theta}(x),\psi_{i,\theta}(x)),
\end{equation*}
where $x\in N_i$, $\theta\in[0,2\pi)$.
Then in the same way as in Lemma \ref{l: glue up local diffeomorphisms from N x S1 to M} we can show that $F_i$ is a diffeomorphism onto the image, and
\begin{equation*}
    |\nabla^k(F_{i*}(\partial_{\theta})-X_i)|\le C\,e^{-2(1+\epsilon_1)d_g(\cdot,\Gamma)},
\end{equation*}
Let $\Psi_{i,\theta}$ be the $2\pi$-periodic one parameter group of diffeomorphisms generated by  $F_{i*}(\partial_{\theta})$, $i=1,2$.
Since $V_1\cap V_2=\emptyset$, we have $\Psi_{1,\theta}=\Psi_{2,\theta}$ on $U\setminus(V_1\cup V_2)$. Therefore, we obtain a $2\pi$-periodic one parameter group of diffeomorphisms $\Psi_{\theta}$ on $V_1\cup V_2\cup U$, which satisfies
\begin{enumerate}
    \item $\Psi_{\theta}=\psi_{i,\theta}$ on $W_i$.
    \item $\Psi_{\theta}=\psi_{\theta}$ on $U\setminus(V_1\cup V_2)$.
    \item $|\Psi-\psi|_{C^k(U\times S^1)}+|\Psi-\psi_i|_{C^k(V_i\times S^1)}\le C\,e^{-2(1+\epsilon_1)d_g(\cdot,\Gamma)}$.
\end{enumerate}
So by letting
\begin{equation*}
    \overline{g}=\frac{1}{2\pi}\int_0^{2\pi}\Psi^*_{\theta}(h\cdot \widetilde{g}_i+(1-h)\cdot\widehat{g})\,d\theta.
\end{equation*}
we obtain a $SO(2)$-symmetric metric on $V_1\cup V_2\cup U$, which contains the complement of a compact subset. Moreover, $\overline{g}$ satisfies the following properties:
\begin{enumerate}
    \item $\overline{g}=\widetilde{g}_i$ on $W_i$, for each $i=1,2$.
    \item $\overline{g}=\widehat{g}$ on $U\setminus (V_1\cup V_2)$.
    \item For some $D_0>0$, we have
    \begin{equation}\label{e: happy}
    \begin{cases}
            |\overline{g}-g|\le C\,e^{-2(1+\epsilon_1)d_g(\cdot,\Gamma)}\quad\textit{on}\quad \Gamma_{\ge D_0}\\
            |\overline{g}-g|\le\epsilon(d_g(\cdot,p))\quad\textit{on}\quad W_1\cup W_2.
    \end{cases}
\end{equation}
\end{enumerate}
The first inequality implies that $\overline{g}$ satisfies the assertion of exponential decay away from $\Gamma$.
Moreover, since $d_g(x,p)\rii$ as $d_g(x,\Gamma)\rii$ for any $x\in M\setminus (W_1\cup W_2)$, the first inequality implies $|\overline{g}-g|\le\epsilon(d_g(\cdot,p))$ also holds on $M\setminus (W_1\cup W_2)$. So $\overline{g}$ satisfies the assertion of decaying to zero at infinity.

\end{proof}

\section{The evolution of the Lie derivative}\label{s: lie}
In this section, $(M,g)$ is a 3D steady gradient soliton that is not a Bryant soliton, and $(M,g(t))$ is the Ricci flow of the soliton. Let $h(t)$ be a linearized Ricci-DeTurck flow with background metric $g(t)$. The main result is Proposition \ref{p: h decays to zero}, which shows that $h(t)$ tends to zero as $t$ goes to infinity, if the initial value $h(0)$ satisfies the condition $\frac{h(x,0)}{R(x)}\ri0$ as $x\rii$. 
In particular, let $\overline{g}$ be the approximating $SO(2)$-symmetric metric obtained from Theorem \ref{c: best approximation}, and $\partial_{\theta}$, $\theta\in[0,2\pi)$, be the killing field of the $SO(2)$-symmetry. We show that the Lie derivative $\LL_{\partial_{\theta}}g$ satisfies this initial condition, hence decays to zero as $t\rii$ under the linearized Ricci-DeTurck equation.

\subsection{The vanishing of a heat kernel at infinity}
In this subsection we prove a vanishing theorem of the heat kernel to a certain heat type equation at time infinity.
We will see that for a linearized Ricci Deturck flow $h(t)$, the norm $|h|(\cdot,t)$ is controlled by the convoluted integral of this heat kernel and $|h|(\cdot,0)$.

Let $G$ be the heat kernel of the heat type equation 
\begin{equation}\label{e: special}
    \pt H=\Delta H+\frac{2|\Ric|^2}{R}H.
\end{equation}
That is,  for any $t>s$, $x,y\in M$, 
\begin{equation*}\begin{split}
    \pt G(x,t;y,s)&=\Delta_{x,t} G(x,t;y,s)+\frac{2|\Ric|^2(x,t)}{R(x,t)}G(x,t;y,s),\\
    \lim_{t\searrow s}G(\cdot,t;y,s)&=\delta_{y}.
\end{split}\end{equation*}

\begin{lem}[Vanishing of heat kernel at time infinity]\label{l: vanishing of heat kernel}
Let $p$ be the point where $R$ attains the maximum. For any fixed $D>0$, let 
\begin{equation*}
    u_D(x,t)=\int_{B_0(p,D)}G(x,t;y,0)\,d_0y,
\end{equation*}
then $\sup_{B_{t}(p,D)}u_D(\cdot,t)\ri0$ as $t\rii$.
\end{lem}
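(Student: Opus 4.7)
The plan is to reduce the problem to an energy estimate for a self-adjoint drift-diffusion on the \emph{fixed} manifold $(M,g)$, whose invariant measure has infinite total mass. Since $\phi_t\colon(M,g)\to(M,g(t))$ is an isometry (because $g(t)=\phi_{-t}^*g$) and $\phi_t(p)=p$ (since $p$ is the critical point of $f$ by Theorem~\ref{t: Rmax critical point}), one has $B_t(p,D)=\phi_t(B_0(p,D))$. Setting $U(y,t):=u_D(\phi_t(y),t)$, this gives $\sup_{B_t(p,D)}u_D(\cdot,t)=\sup_{B_0(p,D)}U(\cdot,t)$. A direct computation, using $\phi_t^*\nabla f=\nabla f$ (the flow preserves its own generator), shows
\begin{equation*}
\partial_t U = \Delta_g U + \nabla f\cdot\nabla U + V_0\,U,\qquad V_0:=\frac{2|\Ric|^2}{R},
\end{equation*}
on $(M,g)$ with $U(\cdot,0)=\mathbf{1}_{B_0(p,D)}$. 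Pulling back the evolution $\partial_t R=\Delta_{g(t)}R+2|\Ric|^2$ the same way shows that the scalar curvature $R$ of $(M,g)$ is a positive \emph{stationary} solution of this PDE.

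Factoring out the stationary solution by writing $U=R\,W$ and using $\Delta R+\nabla f\cdot\nabla R+V_0 R=0$ yields the pure drift-diffusion
\begin{equation*}
\partial_t W = \Delta W + b\cdot\nabla W,\qquad b:=\nabla\bigl(f+2\log R\bigr),
\end{equation*}
with no zeroth-order term. The operator $\Delta+b\cdot\nabla = e^{-F}\mathrm{div}(e^F\nabla\,\cdot\,)$ with $F:=f+2\log R$ is self-adjoint on $L^2(d\mu)$ for the weighted measure
\begin{equation*}
d\mu := e^{f}R^{2}\,d\mathrm{vol}_g,
\end{equation*}
so an integration by parts (justified because $u_D\le R/c$ is bounded, hence $W$ is bounded, and $W,\nabla W\in L^2(d\mu)$ for $t>0$) yields the energy identity $\frac{d}{dt}\int W^2\,d\mu=-2\int|\nabla W|^2\,d\mu\le 0$. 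Since $W(\cdot,0)=R^{-1}\mathbf{1}_{B_0(p,D)}$ is bounded and compactly supported, $\int W_0^2\,d\mu=\int_{B_0(p,D)}e^{f}\,d\mathrm{vol}<\infty$ and $\int W^2\,d\mu$ is bounded for all time.

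The crucial quantitative input, and the step I expect to be the main obstacle, is $\mu(M)=\infty$. Along each integral curve $\Gamma_i$ from Corollary~\ref{l: new Gamma}, Theorem~\ref{l: wing-like} gives $R$ converging to a positive limit at infinity, so $R^2$ is bounded below in a tubular $g$-neighborhood of $\Gamma_i$; the soliton identity $R+|\nabla f|^2=R(p)$ then forces $|\nabla f|$ to be bounded below away from the critical point, so $f$ grows at least linearly along $\Gamma_i$, and $e^{f}R^2$ is exponentially large on a set of infinite $g$-volume. Granted $\mu(M)=\infty$, the argument concludes by contradiction: for any $t_i\to\infty$, the bound $\int_0^\infty\!\int|\nabla W|^2\,d\mu\,dt<\infty$ forces $\int_{-1}^{1}\!\int|\nabla W_i|^2\,d\mu\,ds\to 0$ for $W_i(y,s):=W(y,t_i+s)$. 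Since $b$ and $V_0$ are smooth and bounded on compact subsets of $(M,g)$ (using $R>0$, $|\nabla f|^2\le R(p)$, and $V_0\le R$) and $0\le W\le 1/\inf_{B_0(p,D)}R$, interior parabolic regularity extracts a locally smooth subsequential limit $W_\infty$ on $M\times[-1,1]$ with $\nabla W_\infty\equiv 0$; hence $W_\infty$ is a constant $c_\infty$. Fatou's lemma on an exhausting sequence of compacts then yields $c_\infty^2\,\mu(M)\le\int W_0^2\,d\mu<\infty$, forcing $c_\infty=0$. Thus $W(\cdot,t_i)\to 0$ locally uniformly for every $t_i\to\infty$, so $\sup_{B_0(p,D)}W(\cdot,t)\to 0$; multiplying by the uniform bound $R\le R(p)$ on $B_0(p,D)$ translates this back to $\sup_{B_t(p,D)}u_D(\cdot,t)=\sup_{B_0(p,D)}U(\cdot,t)\to 0$.
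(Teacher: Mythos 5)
Your argument is correct in outline but takes a genuinely different route from the paper. The paper works directly with the quantity $F_D(t)=\int_{B_0(p,D)}\int_M G(x,t;y,0)\,d_tx\,d_0y$, computes $\partial_t F_D$ using the conjugate heat equation, and gets strict monotonicity from the 3D Anderson--Chow pinching $2|\Ric|^2<R^2$; the contradiction then comes from $F_D\ge0$ decreasing at a definite rate. You instead conjugate by $\phi_t$ to land on a \emph{static} drift--diffusion with potential on $(M,g)$, observe that $R$ is a positive stationary solution (which is just the soliton identity $\Delta R+\langle\nabla f,\nabla R\rangle+2|\Ric|^2=0$), and factor it out to kill the potential term, producing a symmetric operator with respect to $d\mu=e^fR^2\,d\mathrm{vol}_g$. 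The pinching never appears; what replaces it is the observation that $\mu(M)=\infty$, which you correctly identify as the load-bearing input (indeed on the 2D cigar, where the pinching degenerates to equality, one also has $\mu(M)<\infty$, so both approaches break down consistently). This is a clean $L^2$-energy argument and, granted its lemmas, more structural than the paper's.

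The one genuine gap is the justification of the energy identity $\frac{d}{dt}\int W^2\,d\mu=-2\int|\nabla W|^2\,d\mu$. You assert this follows ``because $W$ is bounded, and $W,\nabla W\in L^2(d\mu)$ for $t>0$,'' but boundedness of $W$ together with $\mu(M)=\infty$ gives \emph{nothing}: a bounded function is not a priori in $L^2$ of an infinite measure, so the very integrability you invoke is not free, and the integration by parts on a noncompact manifold with respect to a measure that grows exponentially along $\Gamma$ requires a cutoff argument with controlled boundary terms. What actually makes this work is the Gaussian upper bound on the heat kernel: since $V_0=2|\Ric|^2/R\le R\le R(p)$ is bounded, the kernel $G$ of $\partial_t u=\Delta u+V_0u$ is dominated by $e^{R(p)t}$ times the ordinary heat kernel, which enjoys the Gaussian bound of Lemma~\ref{l: heat kernel lower bound implies upper bound}. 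Hence for fixed $t>0$ one has $u_D(x,t)\lesssim_t\exp(-d_0^2(x,p)/Ct)$, and since $1/R\lesssim e^{Cd_g(\cdot,\Gamma)}$ and $d\mu\lesssim e^{Cd_g(\cdot,p)}\,d\mathrm{vol}$, the super-exponential decay of $u_D$ beats every exponential, giving $W(\cdot,t)\in L^2(d\mu)$ and vanishing of boundary terms against radial cutoffs; a standard interior parabolic estimate then handles $\nabla W$. You should spell this out rather than treat it as automatic. The remaining steps --- the $\mu(M)=\infty$ computation, the parabolic compactness, Fatou on an exhaustion, and the translation back via $\sup_{B_t(p,D)}u_D=\sup_{B_0(p,D)}U$ --- are all sound.
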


\begin{proof}
Note that $u_D$ satisfies the equation \eqref{e: special}.
First, we show that there exists $C_1>0$ such that $u_D(x,t)\le C_1$ for all $(x,t)\in M\times[0,\infty)$.
Since the scalar curvature satisfies the equation $\partial_t R=\Delta R+\frac{2|\Ric|^2}{R}\cdot R$, by using the reproduction formula we have
\begin{equation*}
    R(x,t)=\int_M G(x,t;y,0)R(y,0)\,d_0y.
\end{equation*}
By compactness we have for some $c>0$ that $R(y,0)\ge c$ for all $y\in B_0(p,D)$, so it follows that $u_D(x,t)\le c^{-1}R(x,t)\le c^{-1}R(p)$. So we may take $C_1=c^{-1}R(p)$.

Now suppose the claim does not hold, then there exist $\epsilon>0$ and a sequence of $t_i\rii$ and $x_i\in B_{t_i}(p,D)$ such that $u_D(x_i,t_i)\ge\epsilon>0$. Without loss of generality we may assume that $t_{i+1}\ge t_i+1$.
Since $u\le C_1$, by a standard parabolic estimate we see that $|\partial_t u_D|(x,t)+|\nabla u_D|(x,t)\le C_2$ for some $C_2>0$.
Therefore, there exists $\delta_1\in(0,1)$ such that
\begin{equation}\label{e: this}
    \int_{B_t(p,D)}\int_{B_0(p,D)}G(x,t;y,0)\,d_0y\,d_tx=\int_{B_t(p,D)}u(x,t)\,d_tx\ge\delta_1,
\end{equation}
for all $t\in [t_i,t_i+\delta_1]$.

Since $\Rm>0$, we can choose an orthonormal basis $\{e_1,e_2,e_3\}$ such that $\Rm(e_1\wedge e_2)=-\lambda_3\,e_1\wedge e_2$, $\Rm(e_1\wedge e_3)=-\lambda_2\,e_1\wedge e_3$, $\Rm(e_2\wedge e_3)=-\lambda_1\,e_2\wedge e_3$ for some $\lambda_1,\lambda_2,\lambda_3>0$. So it is easy to see  $2|\Ric|^2=2(\lambda_2+\lambda_3)^2+2(\lambda_1+\lambda_3)^2+2(\lambda_1+\lambda_2)^2$ and $R^2=4(\lambda_1+\lambda_2+\lambda_3)^2$, which implies
\begin{equation*}
  2|\Ric|^2-R^2<0.
\end{equation*}
So by compactness there is $\delta_2>0$ such that
\begin{equation}\label{e: onehand}
    \sup_{B_t(p,D)}\frac{2|\Ric|^2-R^2}{R}\le-\delta_2.
\end{equation}

Let $F_D(t)=\int_{B_0(p,D)}\int_M G(x,t;y,0)\,d_tx\,d_0y$, then since
\begin{equation}\label{e: integratey}
\begin{split}
    \pt&\int_M G(x,t;y,0)\,d_tx
    =\int_M\pt G(x,t;y,0)-R(x,t)G(x,t;y,0)\,d_tx\\
    =&\int_M\Delta_{x,t} G(x,t;y,0)+\frac{2|\Ric|^2(x,t)}{R(x,t)}G(x,t;y,0)-R(x,t)G(x,t;y,0)\,d_tx\\
    =&\int_M     \frac{G(x,t;y,0)}{R(x,t)}(2|\Ric|^2(x,t)-R^2(x,t))\,d_tx,
\end{split}
\end{equation}
where we used the fact that the heat kernel $G$ satisfies a Gaussian upper bound so that $\int_{M}\Delta_{x,t} G(x,t;y,0)\,d_tx$ vanishes by the divergence theorem. Hence we obtain
\begin{equation}\label{e: sum}
    \pt F_D(t)=\int_{B_0(p,D)}\int_M \frac{G(x,t;y,0)}{R(x,t)}(2|\Ric|^2-R^2)(x,t)\,d_tx\,d_0y<0,
\end{equation}
and by \eqref{e: this} and \eqref{e: onehand} we see that $\pt F_D(t)\le-\delta_1\delta_2$ for all $t\in[t_i,t_i+\delta_1]$.
Note $G$ is everywhere positive so that $F_D(t)>0$, this implies
\begin{equation*}
\begin{split}
F_D(0)&<\lim_{t\rii}F_D(t)-F_D(0)\le -\sum_{i=1}^{\infty}\int_{t_i}^{t_i+\delta_1}\delta_1\cdot\delta_2\,dt=-\sum_{i=1}^{\infty}\delta_1^2\cdot \delta_2,
\end{split}
\end{equation*}
which is a contradiction. So this proves the lemma.
\end{proof}

\subsection{The vanishing of the Lie derivative at infinity}
We prove the main result in this subsection by applying the heat kernel estimates.
First, we prove a lemma using the Anderson-Chow pinching estimate. 

\begin{lem}[c.f.\cite{AC}]\label{l: AC}
Let $(M,g(t))$, $t\in[0,T]$, be a 3D Ricci flow with positive sectional curvature. Consider a solution $h$ to the linearized Ricci-Deturck flow on $(M,g(t))$, and a positive solution $H$ to the following equation
\begin{equation}\label{e: H evolve}
    \partial_t H=\Delta H+\frac{2|\Ric|^2(x,t)}{R(x,t)}H.
\end{equation}
Then the following holds:
\begin{equation*}
    \partial_t\left(\frac{|h|^2}{H^2}\right)\le\Delta\left(\frac{|h|^2}{H^2}\right)+2\nabla H\cdot\nabla\left(\frac{|h|^2}{H^2}\right).
\end{equation*}
\end{lem}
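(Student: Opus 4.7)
The strategy is to first derive a pointwise evolution inequality for $|h|^2$ from the Anderson-Chow pinching, then pass to $|h|$ via Kato's inequality, and finally compare $|h|$ with the supersolution $H$.

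The first step is to invoke the Anderson-Chow computation \cite{AC}. For a solution $h$ to the linearized Ricci-DeTurck flow $\partial_t h = \Delta_L h$ on a 3D Ricci flow with positive sectional curvature, their algebraic inequality on the reaction term (which exploits the 3D identity relating $\Rm$ to $\Ric$ and $R$, together with positivity of the eigenvalues of $\Ric$) yields
\begin{equation*}
    (\partial_t - \Delta)|h|^2 \le -2|\nabla h|^2 + \frac{4|\Ric|^2}{R}\,|h|^2.
\end{equation*}
This is the cleanest form of the Anderson-Chow bound and is the only ingredient we need from their paper.

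Next, I would pass from $|h|^2$ to $|h|$. Using $\Delta|h|^2 = 2|h|\Delta|h| + 2|\nabla |h||^2$ wherever $|h|>0$, and then Kato's inequality $|\nabla|h||^2 \le |\nabla h|^2$, a direct computation gives
\begin{equation*}
    (\partial_t - \Delta)|h| \;\le\; \frac{1}{|h|}\bigl(|\nabla|h||^2 - |\nabla h|^2\bigr) + \frac{2|\Ric|^2}{R}\,|h| \;\le\; \frac{2|\Ric|^2}{R}\,|h|.
\end{equation*}
Thus $|h|$ is a subsolution of precisely the equation \eqref{e: H evolve} that $H$ satisfies. (At points where $|h|=0$ the inequality is understood in the barrier/viscosity sense, which is standard.)

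For the third step, set $v = |h|/H$ and compute $(\partial_t - \Delta)v$ using the product/quotient rules: if $|h| = vH$ then
\begin{equation*}
    (\partial_t - \Delta)(vH) = H(\partial_t - \Delta)v + v(\partial_t - \Delta)H - 2\nabla v \cdot \nabla H,
\end{equation*}
so subtracting $v\cdot\bigl(\partial_t - \Delta\bigr)H = v\cdot\frac{2|\Ric|^2}{R}H$ from the inequality for $|h|$ cancels the reaction term and yields
\begin{equation*}
    (\partial_t - \Delta)v \;\le\; \tfrac{2}{H}\,\nabla H \cdot \nabla v.
\end{equation*}
The final step is to square: writing $\frac{|h|^2}{H^2} = v^2$ and using $\Delta v^2 = 2v\Delta v + 2|\nabla v|^2$ together with $\nabla v^2 = 2v\nabla v$, one immediately obtains
\begin{equation*}
    (\partial_t - \Delta)\!\left(\tfrac{|h|^2}{H^2}\right) \le -2|\nabla v|^2 + \tfrac{2}{H}\nabla H \cdot \nabla(v^2) \;\le\; \tfrac{2\nabla H}{H}\cdot \nabla\!\left(\tfrac{|h|^2}{H^2}\right),
\end{equation*}
which is the stated inequality (the drift term being $2\nabla\log H \cdot \nabla(|h|^2/H^2)$, as required by scaling). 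The only conceptual difficulty is the Anderson-Chow reaction bound in the first step; the rest is essentially bookkeeping with the quotient rule and Kato's inequality.
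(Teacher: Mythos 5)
Your proposal is correct and reaches the right conclusion, but the route is different in detail from the paper's. The paper works directly with the quotient $u = |h|^2/H^2$ and, following \cite{AC}, derives the \emph{identity}
\begin{equation*}
    \partial_t u = \Delta u + \frac{2}{H}\nabla H\cdot\nabla u + \frac{4}{H^2}\Bigl(R_{ijkl}h_{il}h_{jk}-\frac{|\Ric|^2}{R}|h|^2\Bigr) - \frac{2}{H^4}\bigl|H\nabla_i h_{jk}-(\nabla_i H)h_{jk}\bigr|^2,
\end{equation*}
from which the conclusion is immediate once one invokes the Anderson--Chow pinching $\Rm(h,h)\le \frac{|\Ric|^2}{R}|h|^2$: both remainder terms are nonpositive. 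You instead extract only the \emph{inequality} $(\partial_t-\Delta)|h|^2\le -2|\nabla h|^2+\frac{4|\Ric|^2}{R}|h|^2$, pass to $|h|$ using Kato's inequality, compare $|h|$ with the supersolution $H$ via the quotient rule, and square at the end. Both ways use the same Anderson--Chow input and give the same result; the paper's version is cleaner because it never leaves the smooth quantities $|h|^2$ and $H$ (so the maximum-principle computation is an identity with manifestly signed remainders, with no issues at $\{h=0\}$), whereas your route through $|h|$ requires the barrier/viscosity interpretation at zeros of $h$, which you correctly flag. One further point in your favor: you noticed that the drift term must be $\frac{2}{H}\nabla H\cdot\nabla(|h|^2/H^2) = 2\nabla\log H\cdot\nabla(|h|^2/H^2)$ by scaling, and indeed the factor $1/H$ is inadvertently dropped in the paper's displayed statement and proof (a harmless typo, since the drift term plays no role in the maximum-principle application that follows).
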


\begin{proof}
By a direct computation using $\partial_th=\Delta_{L,g(t)}h$ and \eqref{e: H evolve} we have, see \cite{AC},
\begin{equation*}\begin{split}
    \partial_t\left(\frac{|h|^2}{H^2}\right)=\Delta\left(\frac{|h|^2}{H^2}\right)+2\nabla H\cdot\nabla\left(\frac{|h|^2}{H^2}\right)+\frac{4}{H^2}\left(R_{ijkl}h_{il}h_{jk}-\frac{|\Ric|^2}{R}|h|^2\right)\\
    -2\frac{|H\nabla_ih_{jk}-(\nabla_iH)h_{jk}|^2}{H^4}.
\end{split}\end{equation*}
Then the lemma follows immediately form the pinching estimate from \cite{AC} that for any non-zero symmetric 2-tensor $h$,
\begin{equation*}
    \frac{\Rm(h,h)}{|h|^2}\le\frac{|\Ric|^2}{R}.
\end{equation*}
\end{proof}

We now prove the main result of this section. Note that $|h|(\cdot,t)$ is controlled by the convoluted integral of the heat kernel of \eqref{e: special},
we split the integral into two parts, where in the compact region, the integral tends to zero as a consequence of our vanishing theorem. In the non-compact subset, we use the assumption $\frac{h(x,0)}{R(x)}\ri0$ as $x\rii$ and the reproduction formula of the scalar curvature to deduce that the integral is bounded above by arbitrarily small multiples of the scalar curvature. 

\begin{prop}\label{p: h decays to zero}
Let $(M,g(t))$ be the Ricci flow of a 3D steady gradient Ricci soliton that is not a Bryant soliton. Consider a solution $h(t)$, $t\in[0,\infty)$, to the linearized Ricci-Deturck flow on $M$, i.e.
\begin{equation*}
    \partial_t h(t)=\Delta_{L,g(t)}h(t).
\end{equation*}
Suppose $h$ satisfies the following initial condition
\begin{equation*}
    \frac{|h|(x,0)}{R(x)}\ri0\quad \textit{as}\quad x\rii.
\end{equation*}
Then $|h|(x,t)$ converges to $0$ smoothly and uniformly on any compact subset $B_t(p,D)$ as $t\rii$.
\end{prop}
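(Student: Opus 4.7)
The plan is to combine the Anderson--Chow pinching from Lemma \ref{l: AC} with the vanishing theorem Lemma \ref{l: vanishing of heat kernel} via a two-region split on the initial-time integral, exactly as sketched in the Section \ref{s: lie} outline. Fix $p$ a maximum point of $R$ and let $\epsilon_0>0$ be arbitrary. By hypothesis, choose $D=D(\epsilon_0)>0$ so that $|h|(y,0)\le \epsilon_0\, R(y,0)$ for all $y\in M\setminus B_0(p,D)$.

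First I would establish the pointwise heat-kernel bound
\begin{equation*}
  |h|(x,t)\;\le\;\int_M G(x,t;y,0)\,|h|(y,0)\,d_0y,
\end{equation*}
where $G$ is the heat kernel of \eqref{e: ok}. Set $H(x,t):=\int_M G(x,t;y,0)|h|(y,0)\,d_0y$, which is a positive solution of \eqref{e: H evolve} with $H(\cdot,0)=|h|(\cdot,0)$. Lemma \ref{l: AC} then asserts that $|h|^2/H^2$ is a subsolution of a linear parabolic equation; since $(|h|^2/H^2)(\cdot,0)\le 1$, a weak maximum principle yields $|h|\le H$ on $M\times[0,\infty)$. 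The justification of the maximum principle on the noncompact manifold is the only mildly subtle point: because $R\ge c>0$ on any fixed compact set and because the heat kernel $G$ admits the Gaussian upper bound (via Lemma \ref{l: heat kernel lower bound implies upper bound} together with the coefficient bound $2|\Ric|^2/R\le C R$ from Theorem \ref{t': curvature estimate}), both $H$ and the comparison quantities have the growth control needed for the standard maximum principle argument on complete Ricci flows.

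Next I would split the integral at a fixed $t$ and a point $x\in B_t(p,D')$ as
\begin{equation*}
  H(x,t)=\underbrace{\int_{B_0(p,D)}G(x,t;y,0)|h|(y,0)\,d_0y}_{\mathrm{I}(x,t)} \;+\;\underbrace{\int_{M\setminus B_0(p,D)}G(x,t;y,0)|h|(y,0)\,d_0y}_{\mathrm{II}(x,t)} .
\end{equation*}
For the tail term, the choice of $D$ gives
\begin{equation*}
  \mathrm{II}(x,t)\;\le\;\epsilon_0\int_M G(x,t;y,0)\,R(y,0)\,d_0y\;=\;\epsilon_0\, R(x,t)\;\le\;\epsilon_0\,R(p),
\end{equation*}
using the reproduction formula for $R$ (it solves \eqref{e: ok}) and Theorem \ref{t: Rmax critical point}. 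For the interior term, $|h|(\cdot,0)$ is bounded by some $K<\infty$ on the compact set $B_0(p,D)$, so
\begin{equation*}
  \mathrm{I}(x,t)\;\le\;K\int_{B_0(p,D)}G(x,t;y,0)\,d_0y\;=\;K\,u_D(x,t),
\end{equation*}
and by Lemma \ref{l: vanishing of heat kernel}, $\sup_{x\in B_t(p,D')}u_D(x,t)\to 0$ as $t\to\infty$. Combining, $\limsup_{t\to\infty}\sup_{B_t(p,D')}|h|(x,t)\le \epsilon_0\, R(p)$; since $\epsilon_0$ was arbitrary this gives the claimed uniform $C^0$-decay on compact subsets.

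Finally, to upgrade $C^0$ to $C^\infty$ convergence on compact sets I would apply standard interior parabolic Schauder estimates for the linearized Ricci--DeTurck equation, using that $(M,g(t))$ has bounded geometry on any fixed compact set (indeed, uniformly in $t$ because $g(t)=\phi_{-t}^*g$ and the tensor $|h|$ remains uniformly bounded on these sets by the step above). The main obstacle I anticipate is the maximum principle justification in the first step, i.e., ruling out a loss of mass for $|h|^2/H^2$ at spatial infinity; once the Gaussian upper bound on $G$ is combined with the a priori bound $|h|(y,0)\le C(1+R(y,0))$ coming from the hypothesis, this reduces to a routine cut-off argument and the rest of the proof proceeds as described.
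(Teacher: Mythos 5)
Your proposal is correct and follows essentially the same route as the paper's proof of Proposition~\ref{p: h decays to zero}: apply the Anderson--Chow pinching (Lemma~\ref{l: AC}) to compare $|h|$ with the solution $H$ of \eqref{e: H evolve} having initial value $|h|(\cdot,0)$, then split the convolution integral into a compact part handled by the vanishing theorem (Lemma~\ref{l: vanishing of heat kernel}) and a tail handled by the hypothesis together with the reproduction formula $R(x,t)=\int_M G(x,t;y,0)R(y,0)\,d_0y$. The only additions relative to the paper's write-up are your explicit remarks on justifying the noncompact maximum principle and on the $C^\infty$ upgrade via interior Schauder estimates, both of which the paper treats implicitly; these are fine and consistent with the intended argument.
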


\begin{proof}

Now let $H(x,t)=\int_M G(x,t;y,0)|h|(y,0)\,d_0y$, 
then $H$ solves the equation
\begin{equation*}
    \partial_t H=\Delta H+\frac{2|\Ric|^2(x,t)}{R(x,t)}H.
\end{equation*}
Therefore, by Lemma \ref{l: AC} and applying the weak maximum principle to $\frac{|h|^2}{H^2}$, we see that $|h|\le H$, that is, 
\begin{equation}\label{e: compare}
    |h(x,t)|\le\int_M G(x,t;y,0)|h|(y,0)\,d_0y.
\end{equation}

For any $\epsilon>0$, by the assumption on $|h|(\cdot,0)$, we can find some $D>0$ such that $|h(y,0)|\le\epsilon R(y,0)$ for all $y\in M\setminus B_0(p,D)$. We may assume $D>\frac{1}{\epsilon}$.
So by \eqref{e: compare} and using Lemma \ref{l: vanishing of heat kernel} (vanishing of heat kernel) we have for all sufficiently large $t$ and $x\in B_{t}(p,D)$ that    
\begin{equation*}
\begin{split}
    |h(x,t)|&\le\int_{B_0(p,D)}G(x,t;y,0)|h|(y,0)\,d_0y+\int_{M\setminus B_0(p,D)}G(x,t;y,0)|h|(y,0)\,d_0y\\
    &\le\epsilon+\epsilon\int_MG(x,t;y,0)R(y,0)\,d_0y\\
    &=\epsilon+\epsilon R(x,t)\le\epsilon(1+R(p)),
\end{split}
\end{equation*}
This implies $\sup_{B_t(p,D)}|h|(\cdot,t)\le\epsilon(1+R(p))$ for all large $t$, and the assertion follows by letting $\epsilon\ri0$. 
\end{proof}

As a direct application, we prove the following

\begin{cor}\label{c: Lie derivative tends to zero}
Let $\overline{g}$ be the $SO(2)$-symmetric metric from Theorem \ref{c: best approximation}, and $X$ be a vector field on $M$ which is a smooth extension of the killing field of the $SO(2)$-isometry of $\overline{g}$, and $X$ has a bounded norm. 
Let $h(t)$ be the solution to the following initial value problem of the linearized Ricci-Deturck flow
\begin{equation*}\begin{cases}
    \pt h(t)=\Delta_{L,g(t)}h(t),\\
    h(0)=\LL_Xg.
\end{cases}\end{equation*}
Then $|h|(x,t)$ converges to $0$ smoothly and uniformly on any compact subset $B_t(p,D)$ as $t\rii$.
\end{cor}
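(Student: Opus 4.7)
The strategy is to reduce the corollary to Proposition \ref{p: h decays to zero} by verifying that the initial data $h(0) = \LL_X g$ satisfies the smallness condition $|h(x,0)|/R(x) \to 0$ as $x \to \infty$. The key input is that $X$ is a Killing field of the approximating metric $\overline{g}$ on the complement of a compact set, combined with the sharp two-sided curvature estimates from Theorem \ref{t': curvature estimate} and the approximation estimates from Theorem \ref{c: best approximation}.

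First, outside a compact subset $K \subset M$, we have $\LL_X \overline{g} = 0$, and hence
\begin{equation*}
    \LL_X g = \LL_X(g - \overline{g}) \quad \textnormal{on } M \setminus K.
\end{equation*}
Writing out the Lie derivative in coordinates, $|\LL_X(g-\overline{g})| \le |X|\,|\nabla(g-\overline{g})| + C|\nabla X|\,|g-\overline{g}|$. The norm $|X|$ is bounded by assumption, and $|\nabla X|$ is bounded at infinity because $X$ is the Killing field of the $SO(2)$-symmetric metric $\overline{g}$, whose orbits have uniformly bounded length (since $\overline{g}$ is $C^{98}$-close to $g$, which is asymptotic to the two local models $\R \times \cigar$ and $\RR \times S^1$ with bounded fiber length by Theorem \ref{l: wing-like}); the identity $\nabla^{\overline{g}}_i X_j + \nabla^{\overline{g}}_j X_i = 0$ together with the closeness $|g-\overline{g}|_{C^{98}} \to 0$ from Theorem \ref{c: best approximation} then bounds $|\nabla X|_g$ at infinity. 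Consequently
\begin{equation*}
    |\LL_X g|(x) \le C\bigl(|g-\overline{g}|(x) + |\nabla(g-\overline{g})|(x)\bigr) \quad \textnormal{for } x \in M \setminus K.
\end{equation*}

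Next I split the analysis into two regions depending on a large constant $D>0$. On $\{d_g(\cdot,\Gamma) \ge D\}$, Theorem \ref{c: best approximation} gives the exponential decay $|g-\overline{g}|_{C^1} \le C\,e^{-2(1+\epsilon_1)\,d_g(\cdot,\Gamma)}$, while Theorem \ref{t': curvature estimate} provides, upon choosing $\epsilon_0 < \epsilon_1$, the lower bound $R \ge C^{-1}\,e^{-2(1+\epsilon_0)\,d_g(\cdot,\Gamma)}$. Dividing,
\begin{equation*}
    \frac{|\LL_X g|(x)}{R(x)} \le C\,e^{-2(\epsilon_1-\epsilon_0)\,d_g(x,\Gamma)} \longrightarrow 0 \quad \textnormal{as } d_g(x,\Gamma) \to \infty.
\end{equation*}
On the complementary region $\{d_g(\cdot,\Gamma) \le D\} \cap (M \setminus K)$, any sequence $x_i \to \infty$ must travel to infinity along $\Gamma$, so by Theorem \ref{l: wing-like} the pointed manifolds $(M,g,x_i)$ converge to $(\R \times \cigar, g_c)$, in particular $R(x_i) \to 4$ while $|\LL_X g|(x_i) \to 0$ by the first statement of Theorem \ref{c: best approximation}. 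Combining the two regions yields $|\LL_X g|(x)/R(x) \to 0$ as $x \to \infty$.

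Applying Proposition \ref{p: h decays to zero} to $h(t)$ with initial data $h(0) = \LL_X g$ then gives the desired conclusion. The main obstacle is the control on $|\nabla X|$ at infinity: one must argue that the smooth extension $X$ of the Killing field of $\overline{g}$ has uniformly bounded covariant derivative with respect to $g$, which is what allows the quantitative comparison $|\LL_X g| \lesssim |g-\overline{g}|_{C^1}$. Once this is granted, the exponential lower bound on $R$ from Theorem \ref{t': curvature estimate} is matched term-for-term with the exponential decay of $g-\overline{g}$ from Theorem \ref{c: best approximation}, and the corollary follows immediately from the heat-kernel vanishing mechanism of Proposition \ref{p: h decays to zero}.
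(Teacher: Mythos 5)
Your argument is correct and follows essentially the same route as the paper: reduce to Proposition~\ref{p: h decays to zero} and verify the hypothesis $|\LL_X g|/R\to 0$ by matching the exponential decay $|g-\overline g|\le C e^{-2(1+\epsilon_1)d_g(\cdot,\Gamma)}$ of Theorem~\ref{c: best approximation} against the exponential lower bound $R\ge C^{-1}e^{-2(1+\epsilon_0)d_g(\cdot,\Gamma)}$ with $\epsilon_0<\epsilon_1$, using the complementary property $|g-\overline g|\to 0$ at infinity to treat the region at bounded distance from $\Gamma$. One small imprecision in your write-up: the Killing equation for $\overline g$ together with $|g-\overline g|_{C^{98}}\to 0$ does not by itself bound $|\nabla X|_g$, since the Killing equation only controls the symmetric part of $\nabla X$; the actual source of the bound is the $C^1$-closeness of $X$ to the model killing field $\partial_\theta$ recorded in the construction, i.e.\ Lemma~\ref{l: killing fields are close} and the estimate $|X-\partial_\theta|\le 1/1000$ appearing in Theorem~\ref{t: precise approximation} and Theorem~\ref{c: best approximation}.
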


\begin{proof}
By Proposition \ref{p: h decays to zero} it suffices to show that $\frac{|h|(x,0)}{R(x)}\ri0$ as $x\rii$. We claim this is true.
On the one hand, by Theorem \ref{c: best approximation}, there exists $\epsilon_1,C_1>0$ such that the following holds,
\begin{equation}\label{e: sit}
\begin{cases}
        |h(0)|\le C_1\cdot e^{-2(1+\epsilon_1)d_g(\cdot,\Gamma)}\quad\textit{on}\quad M,\\
        |h(x,0)|\ri0\quad\textit{as}\quad x\rii.
\end{cases}\end{equation}
On the other hand, by Theorem \ref{l: R>e^{-2r}} (Scalar curvature exponential lower bound) we can find a constant $C_2>0$ depending only on $\epsilon_1$ such that 
\begin{equation}\label{e: straight}
    R\ge C_2^{-1}e^{-2(1+\frac{\epsilon_1}{2})d_g(\cdot,\Gamma)}.
\end{equation}

For any $\epsilon>0$, by the second condition in \eqref{e: sit} we can find $D(\epsilon)>0$ such that $|h|(x,0)\le\epsilon$ for all $x\in M\setminus B_g(p,D(\epsilon))$. First, if $x\in \Gamma_{\le L(\epsilon)}$, where $L(\epsilon)=\frac{\ln\frac{C}{\sqrt{\epsilon}}}{2(1+\epsilon_1)}$, we have 
\begin{equation*}
    |h|(x,0)\le\epsilon=\sqrt{\epsilon}\cdot C\cdot e^{-2(1+\epsilon_1)L(\epsilon)}\le\sqrt{\epsilon}\cdot R(x).
\end{equation*}
Second, if $x\in \Gamma_{\ge L(\epsilon)}$, then by the first condition in \eqref{e: sit} and \eqref{e: straight} we obtain 
\begin{equation*}
    |h|(x,0)\le C_1\cdot e^{-\epsilon_1\,d_g(x,\Gamma)}\cdot e^{-2(1+\frac{\epsilon_1}{2})\,d_g(x,\Gamma)}\le C_1C_2\cdot e^{-\epsilon_1\,L(\epsilon)}\cdot R(x).
\end{equation*}
Note $L(\epsilon)\rii$ as $\epsilon\ri0$, the claim follows immediately.
\end{proof}

\section{Construction of a killing field}\label{s: killing field}
Let $(M,g)$ be a 3D steady gradient soliton that is not a Bryant soliton, and $(M,g(t))$, $t\in(-\infty,\infty)$, be the Ricci flow of the soliton.
In this section, we study the evolution of a vector field $X(t)$ under the equation 
\begin{equation}\label{e: X(t)}
    \partial_t X(t)=\Delta_{g(t)} X(t)+\Ric_{g(t)}(X),
\end{equation}
In particular, we will choose $X(t)$ such that $X(0)$ to be the killing field of the $SO(2)$-isometry of the approximating metric obtained from Theorem \ref{c: best approximation}, and show that $X(t_i)$ converges to a non-zero killing field of the soliton $(M,g)$ for a sequence $t_i\rii$.

Throughout this section, we assume 
\begin{equation*}
    \lim_{s\rii}R(\Gamma_1(s))=\lim_{s\rii}R(\Gamma_2(s))= 4.
\end{equation*} and $\overline{g}$ is the $SO(2)$-symmetric metric from Theorem \ref{c: best approximation} which satisfies
\begin{equation}\label{e: quote}
  |\nabla^k(g-\overline{g})|\le C\, e^{-2(1+\epsilon_1)d_g(\cdot,\Gamma)},\quad k=0,...,98.
\end{equation}

Let $A>0$ be sufficiently large so that $\Gamma_{\ge A}$ is covered by $\epsilon$-cylindrical planes. In particular, the $SO(2)$-isometry $\psi_{\theta}$ of $\overline{g}$ acts freely on an open subset $U\supset\Gamma_{\ge A}$.
So we can find a 2D manifold $(N,g_N)$ and a Riemannian submersion $\pi:(U,g)\ri(N,g_N)$ which maps a $S^1$-orbit to a point in $N$.

Let $\rho:(-1,1)^2\ri B\subset N$ be a local coordinate at $p\in N$ such that $\rho(0,0)=p$, and $s:B\ri U$ be  a section of the Riemannian submersion $\pi$. Then the map $\Phi:(-1,1)^2\times[0,2\pi)\ri U$ defined by $\Phi(x,y,\theta)=\psi_{\theta}(s(\rho(x,y)))$, gives a local coordinate at $s(p)\in U$ under which $\overline{g}$ can be written as follows,
\begin{equation*}
    \overline{g}=\sum_{\alpha,\beta=x,y}g_{\alpha\beta}\,d{\alpha}\,d{\beta}+G(d\theta+A_xdx+A_ydy)^2,
\end{equation*}
where $G, A_x,A_y, g_{\alpha\beta}$ are functions that are independent of $\theta$.
Then $Y=\partial_{\theta}$.
Note that a change of section changes the connection form $A=A_xdx+A_ydy$ by an exact form, and leaves invariant the curvature form
\begin{equation*}
    dA=(\partial_{x}A_y-\partial_{y}A_x)dx\wedge dy=F_{xy}\,dx\wedge dy.
\end{equation*}

\begin{lem}\label{l: dA and nabla dA}
For $k=0,...,96$, there are $C_k>0$ such that for all $q\in U$ we have
\begin{equation}\label{e: nabla dA0}
    |\widetilde{\nabla}^{\ell} ( dA)|(\pi(q))\le\frac{C_k}{d_g^k(q,\Gamma)},\quad \ell=0,1,
\end{equation}
and 
\begin{equation}\label{e: nablavarphi}
    |\widetilde{\nabla}^{\ell}G^{1/2}|(\pi(q))\le\frac{C_k}{d_g^k(q,\Gamma)},\quad\ell=1,2,
\end{equation}
for $k=0,...,96$, where $\widetilde{\nabla}$ denotes the covariant derivative on $(N,g_N)$.
\end{lem}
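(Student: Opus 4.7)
The plan is to deduce the required bounds on $dA$ and $G^{1/2}$ from corresponding bounds on the covariant derivatives of the Killing vector field $X=\partial_\theta$ of $\overline g$, which satisfies $|X|_{\overline g}^2=G$.

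First, I would show that $|\nabla^m\Rm_{\overline g}|(q)\le C_{k,m}\,d_g^{-k}(q,\Gamma)$ for all $k\in\mathbb N$ and all $m\le 94$. Indeed, Theorem \ref{t': curvature estimate} together with the pointwise bound $|\Rm_g|\le CR$ (valid in dimension three under nonnegative sectional curvature) yields this decay for $\Rm_g$; Shi's derivative estimates applied to the Ricci flow of the soliton propagate it to higher covariant derivatives; finally the exponential closeness $|\nabla^m(\overline g-g)|\le Ce^{-2(1+\epsilon_1)d_g(\cdot,\Gamma)}$ from \eqref{e: quote} transfers the bound to $\overline g$.

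Next, the second-order Killing identity $\nabla_i\nabla_jX^k=R_{ij}{}^k{}_p\,X^p$ gives
\begin{equation*}
|\nabla^{m+2}X|_{\overline g}\le C\sum_{j=0}^{m}|\nabla^j\Rm_{\overline g}|\cdot|\nabla^{m-j}X|_{\overline g},
\end{equation*}
which combined inductively with Step~1 yields $|\nabla^m X|_{\overline g}(q)\le C_{k,m}\,d_g^{-k}(q,\Gamma)$ for $m\ge 2$. To obtain the same bound for $m=1$, I would integrate $|\nabla^2X|_{\overline g}$ along a smooth ray $\gamma$ based at $q$ with $d_g(\gamma(s),\Gamma)$ growing at least linearly in $s$; the smooth convergence of $\overline g$ to the flat product $\R^2\times S^1$ at infinity (Theorem \ref{t: R_1=R_2} together with \eqref{e: quote}) forces $|\nabla X|_{\overline g}(\gamma(s))\to 0$ as $s\to\infty$, and integration back then produces $|\nabla X|_{\overline g}(q)\le C_k\,d_g^{-k}(q,\Gamma)$.

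Finally, writing $\tilde\partial_\alpha=\partial_\alpha-A_\alpha\partial_\theta$ for the horizontal lifts, a direct computation from $\mathcal L_X\overline g=0$ gives the identities
\begin{equation*}
\widetilde\nabla_\alpha G^{1/2}=G^{-1/2}\,\overline g(\nabla_{\tilde\partial_\alpha}X,X),\qquad F_{xy}=2G^{-1}\,\overline g(\nabla_{\tilde\partial_x}X,\tilde\partial_y),
\end{equation*}
so that $|\widetilde\nabla G^{1/2}|$ and $|dA|$ at $\pi(q)$ are controlled by $|\nabla X|_{\overline g}(q)$ times factors which are uniformly bounded since $G\to 1$ and $\widetilde g_{\alpha\beta}\to\delta_{\alpha\beta}$ at infinity. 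Covariantly differentiating these identities and substituting the bounds on $|\nabla^2X|_{\overline g}$ and $|\nabla^3X|_{\overline g}$ from Step~2 yields the estimates for $\widetilde\nabla(dA)$ and $\widetilde\nabla^2G^{1/2}$. The principal delicate point is the radial integration step used to handle the case $m=1$: it hinges on showing that $|\nabla X|_{\overline g}$ vanishes at infinity, which follows from the $C^{96}$-asymptotic flatness of $\overline g$ outside any bounded neighborhood of $\Gamma$, where $X=\partial_\theta$ is parallel in the limiting product geometry.
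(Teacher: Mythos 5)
Your proof is correct, but it takes a genuinely different algebraic route from the paper's. The paper works directly from the O'Neill--Lott curvature formulas for the $SO(2)$-Riemannian submersion: the mixed component $\overline R_{\theta\alpha\beta\alpha}=-\tfrac12 GF_{xy;\alpha}-\tfrac34 G_{,\alpha}F_{xy}$ is exactly $-\tfrac12 G^{-1/2}\widetilde\nabla_\alpha(G^{3/2}F_{xy})$, so polynomial decay of $\overline\Rm$ (from Theorem~\ref{t: R upper bd} plus \eqref{e: quote}) immediately yields decay of $\widetilde\nabla(G^{3/2}dA)$ with \emph{no} derivative estimates on $\overline\Rm$ needed; then Kato's inequality and integration along $\phi_t(q)$ recover $G^{3/2}dA$, and the first component of \eqref{e: DR} gives $\widetilde\nabla^2G^{1/2}$ similarly. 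You instead invoke the Killing identity $\nabla_a\nabla_b X_c=R_{cbad}X^d$, which makes $\nabla^2X$ linear in $\Rm_{\overline g}$, and propagate upward by Leibniz to $\nabla^mX$ for $m\ge2$; you then use the same integration-from-infinity trick (vanishing of $\nabla X$ in the $\R^2\times S^1$ limit, plus the integral curve of $\nabla f$ moving away from $\Gamma$ at a definite rate) to produce the $m=1$ bound, and finally express $dA$ and $\widetilde\nabla G^{1/2}$ algebraically in $\nabla X$, $\nabla^2X$. Your route avoids the explicit O'Neill--Lott formulas, sitting instead inside the general picture that a Killing field satisfies a linear second-order system driven by curvature, but it pays for this by requiring polynomial decay of $\nabla^m\Rm_{\overline g}$ (which you must supply via local Shi estimates together with the arbitrariness of $k$), whereas the paper needs only the undifferentiated curvature tensor. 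The integration step is shared, and in both cases it is precisely the mechanism that downgrades the decay power by one, which is harmless since $k$ is arbitrary. One small remark: for the $\ell=1$ and $\ell=2$ cases you only need $\nabla^2X$, not $\nabla^3X$; and to run the Leibniz induction for $m\ge2$ you should note that $|X|$ and $|\nabla X|$ are a priori uniformly bounded (from $|X-\partial_\theta|\le\tfrac{1}{1000}$ and the smooth asymptotics of $\overline g$), which is enough for the lower-order terms.
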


\begin{proof}
We adopt the notion such that for a tensor $\tau=\tau^{i_1...i_r}_{j_1...j_s}\,dx^{j_1}\otimes\cdots \otimes dx^{j_s}\otimes \partial_{x_{i_1}}\otimes \cdots\otimes\partial_{x_{i_r}}$ on $B$, we have $\tau^{i_1...i_r}_{j_1...j_s,k}=\partial_{x_k}(\tau^{i_1...i_r}_{j_1...j_s})$, $\tau^{i_1...i_r}_{j_1...j_s,k\ell}=\partial^2_{x_k,x_{\ell}}(\tau^{i_1...i_r}_{j_1...j_s})$, and
\begin{equation*}\begin{split}
    \widetilde{\nabla}_{\partial_{x_k}}\tau&=\tau^{i_1...i_r}_{j_1...j_s\,;\,k}\;dx^k\otimes dx^{j_1}\otimes\cdots \otimes dx^{j_s}\otimes \partial_{x_{i_1}}\otimes \cdots\otimes\partial_{x_{i_r}}\\
     \widetilde{\nabla}^2_{\partial_{x_{k},x_{\ell}}}\tau&=\tau^{i_1...i_r}_{j_1...j_s\,;\,k\ell}\;dx^k\otimes dx^{\ell}\otimes dx^{j_1}\otimes\cdots \otimes dx^{j_s}\otimes \partial_{x_{i_1}}\otimes \cdots\otimes\partial_{x_{i_r}}.
\end{split}\end{equation*}

For a point $p$ in the base manifold parametrized by $x,y$, it is convenient to choose the section so that $A(p)=0$. Then the non-zero components of the curvature tensor $\overline{R}_{IJKL}$ of $\overline{g}$ are given in terms of the components of the curvature tensor $R_{\alpha\beta\gamma\delta}$ of $(B,g_N)$, the components $F_{xy}$, and the function $G$ by the following, see \cite[Section 4.2]{Lott2007DimensionalRA},
\begin{equation}\label{e: DR}
\begin{split}
    \overline{R}_{\theta\alpha\theta\beta}&=-\frac{1}{2}G_{;\alpha\beta}+\frac{1}{4}G^{-1}G_{,\beta}G_{,\alpha}+\frac{1}{4}g^{\alpha\beta}G^2F_{xy}^2,\\
    \overline{R}_{\theta\alpha\beta\alpha}&=-\frac{1}{2}GF_{xy;\alpha}-\frac{3}{4}G_{,\alpha}F_{xy},\\
    \overline{R}_{\alpha\beta\alpha\beta}&=R_{\alpha\beta\alpha\beta}-\frac{3}{4}GF_{xy}^2.
\end{split}
\end{equation}

Let $R_{IJKL}$ be the components of the curvature tensor of the soliton metric $g$, then by Theorem \ref{t: R upper bd} we have $|R_{IJKL}|\le\frac{C_k}{d^k_g(\cdot,\Gamma)}$ for any $k\in\mathbb{N}$ and $C_k>0$. So by \eqref{e: quote} we obtain $|\overline{R}_{IJKL}|\le\frac{C_k}{d^k_g(\cdot,\Gamma)}$ after replacing $C_k$ by a possibly larger number. 
In particular, by the second equation in \eqref{e: DR} we obtain
\begin{equation}\label{e: nabla dA}
    |\widetilde{\nabla} (G^{\frac{3}{2}} dA)|(\pi(q))\le\frac{C_k}{d_g^k(q,\Gamma)},
\end{equation}
for all $q\in U$.
By Kato's inequality this implies 
\begin{equation}\label{e: im}
    |\widetilde{\nabla} |G^{\frac{3}{2}} dA||(\pi(q))\le|\widetilde{\nabla} (G^{\frac{3}{2}} dA)|(\pi(q))\le\frac{C_k}{d_g^k(q,\Gamma)}.
\end{equation}
By Theorem \ref{l: wing-like} there exists $C>0$ such that 
\begin{equation*}
    d_g(\phi_t(q),\Gamma)\ge d_g(q,\Gamma)+C^{-1}\,t,\quad t\ge0,
\end{equation*}
for any point $q\in\Gamma_{\ge A}$ where $A>0$ is sufficiently large.
Note that $(M,g,\phi_t(q))$ converge to $\RR\times S^1$ as $t\rii$, we have
$\lim_{t\rii}|G^{\frac{3}{2}}dA|(\pi(\phi_t(q)))=0$.
Since by \eqref{e: im} we have
\begin{equation}
    \left|\frac{d}{dt}\,|G^{\frac{3}{2}}dA|(\pi(\phi_t(q)))\right|=\left|\langle\widetilde{\nabla}|G^{\frac{3}{2}}dA|,\pi_*(\nabla f(\phi_t(q)))\rangle\right|\le \frac{C_k}{d_g^k(\phi_t(q),\Gamma)},
\end{equation}
integrating which from zero to infinity, we see that there is some $C_{k-1}>0$ such that
\begin{equation*}
    |G^{\frac{3}{2}}dA|(\pi(q))\le\frac{C_{k-1}}{d_g^{k-1}(q,\Gamma)}.
\end{equation*}
This together with \eqref{e: nabla dA} implies \eqref{e: nabla dA0}.
It also implies $|\frac{1}{4}g^{\alpha\beta}G^2F_{xy}^2|(\pi(q))\le\frac{C_k}{d_g^k(q,\Gamma)}$
in the first equation in \eqref{e: DR}, and hence implies $|\widetilde{\nabla}^2G^{1/2}|(\pi(q))\le\frac{C_k}{d_g^k(q,\Gamma)}$.
Similarly as before, we obtain $|\widetilde{\nabla}G^{1/2}|(\pi(q))\le\frac{C_k}{d_g^k(q,\Gamma)}$ by an integration.

\end{proof}

Let $\partial_{\theta}$ be the killing field of the $SO(2)$-isometry of $\overline{g}$ outside of a compact subset of $M$. We can extend it to a smooth vector field $Y$ on $M$ such that $|Y|\le 10$. Let $Y(t)=\phi_{t*}Y$ for all $t\ge0$, and 
\begin{equation*}
    Q(t)=-\partial_t(Y(t))+\Delta_{g(t)}Y(t)+\Ric_{g(t)}(Y(t)).
\end{equation*}
We will often abbreviate it as $Q(t)=-\partial_tY+\Delta Y+\Ric(U)$ when there is no confusion. Next, we show that $Q(t)$ has a polynomial decay away from $\Gamma$.
\begin{lem}\label{l: Q(t)}
For $k=0,...,94$, there are $C_k>0$ such that 
\begin{equation*}
    |Q(x,t)|\le\frac{C_k}{d^k_t(x,\Gamma)},
\end{equation*}
for all $t\ge0$.
\end{lem}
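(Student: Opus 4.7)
The plan is to exploit the self-similar structure of the soliton Ricci flow to reduce the estimate at time $t \ge 0$ to an estimate at time $0$, and then to bound $|Q(0)|$ by combining Theorem \ref{c: best approximation} (the exponential closeness of $\overline{g}$ to $g$), Theorem \ref{t': curvature estimate} (polynomial curvature decay), and Lemma \ref{l: dA and nabla dA} (polynomial decay of the $SO(2)$-bundle quantities).

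First, the self-similarity reduction. Since $g(t) = \phi_{-t}^*g$ and $Y(t) = \phi_{t*}Y = \phi_{-t}^*Y$, the map $\phi_{-t}\colon (M,g(t)) \to (M,g)$ is an isometry under which $Y(t)$ corresponds to $Y$. Since $\nabla f$ is preserved by its own flow, a direct computation gives $\partial_t Y(t) = -[\nabla f, Y(t)]$; because $\Delta$ and $\Ric$ intertwine with isometric pullback, it follows that $Q(t) = \phi_{-t}^*Q(0)$ and hence $|Q|(x,t) = |Q(0)|(\phi_{-t}(x))$. As $\phi_t$ preserves $\Gamma$, we also have $d_{g(t)}(x,\Gamma) = d_g(\phi_{-t}(x),\Gamma)$, so the lemma reduces to showing
\begin{equation*}
|Q(0)|(y) \le C_k\, d_g^{-k}(y,\Gamma)\qquad \textit{for all }y \in M,\ k = 0,\ldots,94.
\end{equation*}

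Next, I decompose $Q(0) = [\nabla f, Y] + (\Delta Y + \Ric(Y))$. For the second summand, the Bochner-type identity $(\Delta Y + \Ric(Y))^i = \nabla_j(\LL_Y g)^{ij} - \tfrac{1}{2}\nabla^i\, \mathrm{tr}(\LL_Y g)$, together with $\LL_Y\overline{g} = 0$ (so $\LL_Y g = \LL_Y(g-\overline{g})$) and the exponential bound $|\nabla^{\ell}(g-\overline{g})| \le C\,e^{-2(1+\epsilon_1)d_g(\cdot,\Gamma)}$ from Theorem \ref{c: best approximation}, yields exponential decay of $|\Delta Y + \Ric(Y)|$, which beats any polynomial rate. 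For the first summand, the soliton equation $\nabla^2 f = \Ric$ gives $[\nabla f, Y] = \nabla_{\nabla f}Y - \Ric(Y)$; here $|\nabla f|$ is bounded by $R + |\nabla f|^2 = \mathrm{const}$, $|Y|\le 10$, and $|\Ric| \le C\,R$ decays of any polynomial order by Theorem \ref{t': curvature estimate}, so the only remaining task is to show that $|\nabla Y|$ (and $|\nabla^2 Y|$, needed for $\Delta Y$) decays polynomially of any order $\le 94$ away from $\Gamma$.

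For this last point, because $Y$ coincides with the Killing field of $\overline{g}$ outside a compact set, $\overline{\nabla}_i Y_j = \tfrac{1}{2}(dY^\flat)_{ij}$ is antisymmetric; in the local trivialization $\overline{g} = g_{\alpha\beta}\,d\alpha\,d\beta + G(d\theta+A)^2$, one has $dY^\flat = dG\wedge(d\theta+A) + G\,dA$, and Lemma \ref{l: dA and nabla dA} supplies polynomial decay of order up to $96$ for $|dA|$, $|\widetilde{\nabla}dA|$, $|\widetilde{\nabla}G^{1/2}|$, and $|\widetilde{\nabla}^2 G^{1/2}|$. This yields the desired polynomial decay of $|\overline{\nabla}Y|$ and $|\overline{\nabla}^2 Y|$, and passing from $\overline{\nabla}$ to $\nabla$ contributes only exponentially small errors through $|\Gamma - \overline{\Gamma}| \le C|\nabla(g-\overline{g})|$, with the derivative loss from $96$ to $94$ absorbing these transitions. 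The main technical point is exactly this polynomial control of $|\nabla^{\ell}Y|$: because $Y$ is Killing only for the approximating metric $\overline{g}$ and not for $g$, its derivatives do not automatically decay, and one has to exploit the explicit $SO(2)$-bundle structure encoded by Lemma \ref{l: dA and nabla dA} to recover them; once this is in hand, the remaining estimates are routine algebra.
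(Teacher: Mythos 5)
Your proposal is correct and follows the paper's strategy for the self-similarity reduction and for the term $\Delta Y+\Ric(Y)$ (via the Bochner-type divergence identity and $\LL_Y g = \LL_Y(g-\overline{g})$), but you handle the remaining term $\LL_{\nabla f}Y=[\nabla f,Y]$ by a genuinely different route. The paper works entirely in the bundle coordinates $(x,y,\theta)$, writes $\nabla f=F^{\theta}\partial_{\theta}+F^{\alpha}\partial_{\alpha}$, computes $\LL_{\nabla f}\partial_{\theta}$ in terms of $\partial_{\theta}F^{\theta}$ and $\partial_{\theta}F^{\alpha}$, and then extracts those coefficients from the $(\theta\beta)$ and $(\theta\theta)$ components of $\LL_{\nabla f}\overline{g}$, using that $\LL_{\nabla f}\overline{g}=2\Ric+\LL_{\nabla f}(\overline{g}-g)$ decays polynomially and invoking Lemma \ref{l: dA and nabla dA} to control the stray $\widetilde{\nabla}G$ term. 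You instead use the identity $[\nabla f,Y]=\nabla_{\nabla f}Y-\Ric(Y)$ (from $\nabla^2 f=\Ric$), dispose of $\Ric(Y)$ by the polynomial curvature decay, and reduce to bounding $|\nabla Y|$; this you do by passing to $\overline{\nabla}$ (exponentially small change via the Christoffel difference) and using the Killing antisymmetry $\overline{\nabla}_iY_j=\tfrac12(dY^{\flat})_{ij}$ together with $Y^{\flat}=G(d\theta+A)$, so that $dY^{\flat}=dG\wedge(d\theta+A)+G\,dA$ is controlled by Lemma \ref{l: dA and nabla dA} (with $|d\theta+A|_{\overline{g}}=G^{-1/2}$ bounded away from $\Gamma$). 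This is cleaner and more intrinsic than the paper's coordinate manipulation, and avoids the need to carefully track cross terms $\overline{g}_{\alpha\theta}$ in a good gauge; the paper's computation, on the other hand, never needs to invoke the covariant derivative $\nabla_{\nabla f}Y$ or pass between $\nabla$ and $\overline{\nabla}$. Two small remarks: the parenthetical claim that $|\nabla^2Y|$ is ``needed for $\Delta Y$'' is a non sequitur, since you have already discharged $\Delta Y+\Ric(Y)$ via the Bochner identity and the bracket decomposition only uses first covariant derivatives of $Y$; and your notation $\Gamma-\overline{\Gamma}$ for Christoffel differences unfortunately collides with the curve $\Gamma$ in this paper, so it would be better to write, say, $\nabla-\overline{\nabla}$ directly.
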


\begin{proof}

Since $g(t)=\phi^*_{-t}g$, $\phi_t:(M,g)\ri(M,g(t))$ is an isometry, it follows
\begin{equation*}
    \partial_t Y(t)=\partial_t(\phi_{t*}Y)=\phi_{t*}(\LL_{\nabla f}Y),\quad 
    \Delta_{g(t)}Y(t)=\phi_{t*}(\Delta Y),\quad 
    \Ric_{g(t)}(Y(t))=\phi_{t*}(\Ric(Y)).
\end{equation*}
So the lemma reduces to show the following:
\begin{equation*}
    |-\LL_{\nabla f}Y+\Delta Y+\Ric(Y)|\le\frac{C_k}{d^k_g(\cdot,\Gamma)}.
\end{equation*}
Let $h=\LL_Yg$, then by a direct computation we have the identity, see e.g. \cite{brendlesteady3d},
\begin{equation}\label{e: oxtail}
    div(h)-\frac{1}{2}tr\nabla h=\Delta Y+\Ric(Y).
\end{equation}
Since by \eqref{e: quote} we have
\begin{equation*}
    |\nabla^{k-1}h|=|\nabla^{k-1}(\LL_Yg)|=|\nabla^{k-1}(\LL_Y(g-\overline{g}))|\le C\cdot e^{-2(1+\epsilon_1)d_g(\cdot,\Gamma)},
\end{equation*}
it then follows that 
\begin{equation*}
    |div(h)|+|tr\nabla h|\le C\cdot e^{-2(1+\epsilon_1)d_g(\cdot,\Gamma)}.
\end{equation*}
Therefore, by \eqref{e: oxtail} and Theorem \ref{t: R upper bd} (scalar curvature polynomial upper bounds) 
\begin{equation}\label{water}
    |\Delta Y+\Ric(Y)|\le C\cdot e^{-2(1+\epsilon_1)d_g(\cdot,\Gamma)}\le\frac{C_k}{d^k_g(\cdot,\Gamma)}.
\end{equation}
So the lemma reduces to estimate $|\LL_{\nabla f}Y|$.

To do this, we assume $\nabla f=F^{\theta}\partial_{\theta}+F^{\alpha}\partial_{\alpha}$, $\alpha=x,y$, where $F^{\theta}=\partial_{\theta} f\cdot G^{-1}$. 
Then we can compute that
\begin{equation}\label{e: Y derivative}
    \LL_{\nabla f}Y=\LL_{\nabla f}\partial_{\theta}=\partial_{\theta}F^{\theta}\cdot\partial_{\theta}+\partial_{\theta}F^{\alpha}\cdot\partial_{\alpha}.
\end{equation}
We will see in the following equations that the two components $\partial_{\theta}F^{\theta}$ and $\partial_{\theta}F^{\alpha}$ also appear in the components of $\LL_{\nabla f}\overline{g}$ which we will compare to. 
Replace the coordinate function $\theta$ by $\theta+\int_0^{y}\int_0^{x}\partial_{y}A_x(x',y')\,dx'\,dy'\mod 2\pi$, then by \eqref{e: nabla dA0} we have
\begin{equation*}
    \overline{g}=\sum_{\alpha,\beta=x,y}g_{\alpha\beta}\,d{\alpha}\,d{\beta}+Gd\theta^2+\overline{h},
\end{equation*}
where $\overline{h}$ is a 2-tensor satisfies $|\nabla^{\ell}\overline{h}|\le \frac{C_k}{d_g^k(\cdot,\Gamma)}$, $\ell=0,1$. In particular, this implies
\begin{equation*}
    |\overline{g}_{\alpha\theta}|+|\partial_{\beta}\overline{g}_{\alpha\theta}|\le\frac{C_k}{d_g^k(\cdot,\Gamma)},\quad \alpha,\beta=x,y.
\end{equation*}
So by a direct computation we obtain
\begin{equation}\label{e: g derivative}
    \begin{split}
        (\LL_{\nabla f}\overline{g})_{\theta \beta}&
        =2\,\partial_{\theta}F^{\alpha}\cdot\overline{g}_{\alpha\beta}-F^{\theta}\partial_{\beta}G+O(d_g^{-k}(\cdot,\Gamma)),\\
        (\LL_{\nabla f}\overline{g})_{\theta\theta}&=F^{\alpha}\partial_{\alpha}G+2(\partial_{\theta}F^{\theta})\cdot G+O(d_g^{-k}(\cdot,\Gamma)).
    \end{split}
\end{equation}
where $O(d_g^{-k}(\cdot,\Gamma))$ denotes functions that are bounded by $\frac{C_k}{d_g^{k}(\cdot,\Gamma)}$ in absolute values.

Since $\frac{1}{2}\LL_{\nabla f}g=\nabla^2 f=\Ric$, we have
\begin{equation*}\begin{split}
    |\LL_{\nabla f}\overline{g}|&\le|\LL_{\nabla f}(\overline{g}-g)|+|\LL_{\nabla f}g|\le C\cdot e^{-2(1+\epsilon_1)d_g(\cdot,\Gamma)}+2|\Ric_g|
    \le \frac{C_k}{d_g^{k}(\cdot,\Gamma)}.
\end{split}\end{equation*}
Therefore, by comparing \eqref{e: Y derivative} and \eqref{e: g derivative} we can deduce 
\begin{equation*}
    |\LL_{\nabla f}Y|\le \frac{C_k}{d_g^{k}(\cdot,\Gamma)}+C\cdot|\widetilde{\nabla} G|,
\end{equation*}
which combined with \eqref{e: nablavarphi} implies
\begin{equation*}
    |\LL_{\nabla f}Y|\le\frac{C_k}{d_g^k(\cdot,\Gamma)},
\end{equation*}
which proves the lemma.
\end{proof}

Let $Z(t)$ be a vector field which solves the following equation
\begin{equation}\label{e: Z equation}
\begin{cases}
    -\partial_t Z+\Delta Z+\Ric(Z)=Q(t),\\
    Z(0)=0.
\end{cases}
\end{equation}
In the next lemma, we show that $Z(t)$ has a polynomial decay away from $\Gamma$.

\begin{lem}\label{l: Z equation}
For $k=0,...,92$, there are $C_k>0$ such that $|Z(t)|\le\frac{C_k}{d^k_t(\cdot, \Gamma)}$.
\end{lem}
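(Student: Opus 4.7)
\smallskip

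\textbf{Proof proposal for Lemma \ref{l: Z equation}.} The plan is to reduce the vector-field equation for $Z$ to a scalar heat inequality, apply Duhamel's principle, and then split the resulting space-time integral into a region near the backward spacetime track of $\Gamma$ and its complement, estimating each piece using the heat-kernel bounds of Lemma \ref{l: L-geodesic} together with the decay of $Q$ from Lemma \ref{l: Q(t)}.

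First I would rewrite the equation as $\partial_tZ=\Delta Z+\Ric(Z)-Q(t)$ and apply the standard Bochner/Weitzenb\"ock computation for vector fields on Ricci flow, namely that for $W$ solving $\partial_tW=\Delta W+\Ric(W)$ one has $(\partial_t-\Delta)|W|^2=-2|\nabla W|^2$. Combined with Kato's inequality this yields, for our $Z$, the scalar inequality
\begin{equation*}
\partial_t|Z|\le \Delta|Z|+|Q(t)|,\qquad |Z|(\cdot,0)=0.
\end{equation*}
By the maximum principle / Duhamel representation with the heat kernel $G(x,t;y,s)$ of $\partial_tu=\Delta u$ on the Ricci flow, this gives
\begin{equation*}
|Z|(x,t)\le\int_0^t\!\!\int_M G(x,t;y,s)\,|Q|(y,s)\,d_sy\,ds.
\end{equation*}

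Fix $k\le92$ and let $r=d_t(x,\Gamma)$. I would split the $y$-integral at each time $s$ into the near-tube region $N_s=\{y:d_s(y,\Gamma)\ge r/4\}$ and its complement. On $N_s$, Lemma \ref{l: Q(t)} applied with index $k+2\le 94$ gives $|Q|(y,s)\le C_{k+2}\,(r/4)^{-(k+2)}$, and using $\int_M G(x,t;y,s)\,d_sy\le 1$ together with the linear growth $d_s(y,\Gamma)\gtrsim d_0(y,\Gamma)+C^{-1}s$ and the distance distortion estimate \eqref{e: distance change}, the contribution of $N_s$ integrated over $s\in[0,t]$ is bounded by a convergent integral of the form $\int_0^\infty C_{k+2}(r+C^{-1}s)^{-(k+2)}\,ds\le C_k r^{-(k+1)}\le C_k r^{-k}$. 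On the complement $M\setminus N_s$, one has $d_s(x,y)\ge r/2$ (by triangle inequality, since $d_s(x,\Gamma)\ge r-C(t-s)$ for the Ricci flow of the soliton), so Lemma \ref{l: L-geodesic} and Lemma \ref{l: heat kernel lower bound implies upper bound} supply a Gaussian estimate
\begin{equation*}
G(x,t;y,s)\le C\,(t-s)^{3/2}\exp\!\bigl(-r^2/(C(t-s))\bigr),
\end{equation*}
while $|Q|(y,s)\le C_0$ on this region by Lemma \ref{l: Q(t)} with $k'=0$; together with volume comparison this yields a contribution bounded by $C\,e^{-r/C}\le C_k r^{-k}$.

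The main technical obstacle is the bookkeeping between the time-dependent distance $d_s(\cdot,\Gamma)$ and the fixed reference scale $r=d_t(x,\Gamma)$: one needs that along the spacetime backward cone of $(x,t)$ the distance to $\Gamma$ stays comparable to $r$, so that the decay of $|Q|$ near the tube is genuinely at rate $r^{-(k+2)}$. This is exactly the content of Theorem \ref{l: wing-like} and the distance-distortion estimate \eqref{e: distance change}, which allow one to pass from $d_s(y,\Gamma)$ to $d_t(x,\Gamma)$ up to an additive $C(t-s)$, and this additive term is what is absorbed by the extra two indices (from $k+2$ in Lemma \ref{l: Q(t)} down to $k$ in the conclusion). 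The loss of two indices (94 to 92) is precisely the price of one time-integration of a polynomially-decaying source against the heat kernel.
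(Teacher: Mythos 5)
Your high-level strategy coincides with the paper's: reduce to a scalar heat-type inequality with source $|Q|$, represent via Duhamel against the heat kernel, and split the space-time integral into a near region (where the polynomial decay of $Q$ from Lemma \ref{l: Q(t)} is used) and a far region (where the Gaussian bound from Lemma \ref{l: L-geodesic} kills everything). This is exactly what the paper does, fusing Lemma \ref{l: Z equation} and its unnamed companion sublemma. The one genuine simplification in your route is that you work with $|Z|$ rather than $|Z|^2$: the Kato step $|Z|(\partial_t-\Delta)|Z|\le -\langle Q,Z\rangle\le |Q|\,|Z|$ divides cleanly, so you never need the paper's preliminary step of establishing $|Z|\le C$ (which the paper needs to absorb the cross-term $\langle Q,Z\rangle$ when working with $|Z|^2$). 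That is a real, if small, gain; the preliminary bound is still implicitly needed to justify the maximum principle / Duhamel representation on a noncompact manifold, but since $|Q|$ is uniformly bounded this is routine.

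However, the distance-distortion bookkeeping is written with the wrong monotonicity in two places, and this is not cosmetic. You write $d_s(y,\Gamma)\gtrsim d_0(y,\Gamma)+C^{-1}s$ and later $d_s(x,\Gamma)\ge r-C(t-s)$. Both go in the wrong direction: by \eqref{e: distance change}, for $s<t$ one has $d_s(\cdot,\Gamma)\ge d_t(\cdot,\Gamma)+1.9(t-s)$, i.e.\ the distance to $\Gamma$ \emph{increases} going backward in time (the cigar edges contract forward along the flow). This sign matters. With the incorrect lower bound $d_s(x,\Gamma)\ge r-C(t-s)$, the far-region complement gives only $d_s(x,y)\gtrsim r$, and the resulting integral $\int_0^t(t-s)^3\exp(-c\,r^2/(t-s))\,ds$ is unbounded in $t$ (for $t-s\gg r^2$ the exponential is $\approx 1$), so the argument does not close. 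With the correct bound $d_s(x,\Gamma)\ge r+1.9(t-s)$ you get $d_s(x,y)\gtrsim r+(t-s)$ on the far region, hence a Gaussian factor $\lesssim e^{-c\,r}e^{-c(t-s)}$ whose $s$-integral is $\lesssim e^{-cr}$, as required. Relatedly, your near region $N_s=\{y: d_s(y,\Gamma)\ge r/4\}$ with $r=d_t(x,\Gamma)$ fixed only yields $|Q|(y,s)\lesssim r^{-(k+2)}$ uniformly in $s$, which does not produce the convergent $\int_0^\infty(r+C^{-1}\tau)^{-(k+2)}d\tau$ you then invoke; the paper instead splits at $B_s(x, d_s(x,\Gamma)/1000)$, on which $d_s(y,\Gamma)\ge d_s(x,\Gamma)/2\gtrsim r+(t-s)$, so $|Q|$ genuinely decays in $t-s$ and the time integral loses exactly one power of $r$. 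Once you replace your split and your distortion inequalities by the time-$s$ versions keyed to $d_s(x,\Gamma)$, everything you wrote goes through.
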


\begin{proof}
We can compute that
\begin{equation*}\begin{split}
    \Delta|Z|^2&=2\langle\Delta Z, Z\rangle+2\langle\nabla Z, \nabla Z\rangle\\
    \partial_t|Z|^2&=2\langle\partial_t Z, Z\rangle-2\,\Ric(Z,Z),
\end{split}\end{equation*}
combining which with \eqref{e: Z equation} we obtain 
\begin{equation*}
    \partial_t|Z(t)|^2=\Delta_{g(t)}|Z(t)|^2-2|\nabla_{g(t)}Z(t)|_{g(t)}^2-2\langle Q(t),Z(t)\rangle_{g(t)}, 
\end{equation*}\
which we will often abbreviate as $\partial_t|Z|^2=\Delta|Z|^2-2|\nabla Z|^2-2\langle Q,Z\rangle$.
Similarly, we can show $\partial_t|X|^2=\Delta|X|^2-2|\nabla X|^2\le\Delta|X|^2$.
So by the maximum principle we get $|X(t)|\le|X(0)|\le C$, and hence
\begin{equation*}
    |Z(t)|\le|Y(t)|+|X(t)|\le C.
\end{equation*}
So $\partial_t|Z|^2\le\Delta|Z|^2+C\cdot|Q|\le\Delta|Z|^2+\frac{C}{d^m_t(\cdot, \Gamma)}$. The lemma now follows immediately from the following lemma.
\end{proof}

\begin{lem}
Let $(M,g(t))$ be a 3D steady gradient soliton that is not a Bryant soliton. Let $u: M\times[0,T]\rii$ be a smooth non-negative function, which satisfies $u(\cdot,0)=0$, and
\begin{equation*}
    \partial_tu\le\Delta u+\frac{C_0}{d_t^k(\cdot,\Gamma)},
\end{equation*}
for some $k\ge 2$. Then there exists $C=C(C_0,k)>0$ such that $u(\cdot,t)\le\frac{C}{d_t^{k-1}(\cdot,\Gamma)}$.
\end{lem}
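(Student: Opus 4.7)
The strategy I would use is a supersolution/comparison argument in the forward Ricci flow $(M,g(t))$, exploiting the distance-shrinking effect of positive Ricci curvature. Setting $\rho_t := d_{g(t)}(\cdot,\Gamma)$, I take as candidate supersolution
\[
v(x,t) := A\,\rho_t^{-(k-1)}(x),
\]
with $A$ to be determined. A direct computation on $\{\rho_t > 0\}$ gives
\[
(\partial_t - \Delta) v = A(k-1)\rho_t^{-k}\bigl(\Delta\rho_t - \partial_t\rho_t\bigr) - Ak(k-1)\rho_t^{-k-1}.
\]

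The first step is to bound $\Delta\rho_t - \partial_t\rho_t$ from below on $\{\rho_t \ge r_0\}$ for $r_0$ large. The first variation of arclength along a $g(t)$-minimizing geodesic $\gamma$ from $x$ to $\Gamma$ gives $-\partial_t\rho_t \ge \int_\gamma \Ric(\gamma',\gamma')\,ds$. By Lemma \ref{l: inf achieved on the non-compact portion} and Lemma \ref{l: tip contracting}, for $\rho_t(x)$ large every such minimizer terminates at an $\epsilon$-tip point of $\Gamma$ where the geometry is $\epsilon$-close to $\R\times\cigar$ with $R(x_{tip})=4$; combined with the explicit cigar integral \eqref{e: integrate Ricci} and Theorem \ref{l: wing-like}, this yields $-\partial_t\rho_t \ge 2 - o(1)$ as $\rho_t\to\infty$. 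Simultaneously, $\Delta\rho_t = O(1/\rho_t)$, as follows either from Laplacian comparison in non-negative sectional curvature or from the cylindrical-plane asymptotics, since $\rho_t$ becomes the radial coordinate on the $\R^2$-factor of $\R^2\times S^1$. Therefore, for $r_0$ sufficiently large and $A \ge C_0/(k-1)$, I obtain $(\partial_t - \Delta)v \ge C_0/\rho_t^k$ on $\{\rho_t \ge r_0\}$.

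Next I would apply the parabolic maximum principle. Noting that in the intended application (Lemma \ref{l: Z equation}), $u = |Z|^2$ is a priori bounded by some $M_0$ depending only on the soliton data, I enlarge $A$ so that $A/r_0^{k-1} \ge M_0$. Then $u - v \le 0$ at $t=0$ (where $u\equiv 0$) and on the lateral boundary $\{\rho_t = r_0\}$ (where $u \le M_0 \le v$). Since $u - v$ is bounded on $\{\rho_t \ge r_0\}\times[0,T]$ and satisfies $\partial_t(u-v) \le \Delta(u-v)$, the bounded parabolic maximum principle---cleanest to justify after pulling back by the isometry $\phi_{-t}:(M,g(t))\to(M,g)$, which renders the region static as $\{d_g(\cdot,\Gamma)\ge r_0\}$---yields $u \le v$ there. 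On the complement $\{\rho_t < r_0\}$ the trivial bound $u \le M_0 \le (M_0 r_0^{k-1})\,\rho_t^{-(k-1)}$ completes the argument with $C := \max\{A,\,M_0 r_0^{k-1}\}$.

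The main difficulty I foresee is establishing the quantitative distance-distortion $-\partial_t\rho_t \ge 2 - o(1)$ uniformly on the non-compact region $\{\rho_t \ge r_0\}$: one must verify that every $g(t)$-minimizer realizing $\rho_t(x)$ terminates at an $\epsilon$-tip point (controlling the possibility that it bends back toward the critical point $p$), and then quantify $\int_\gamma \Ric \to 2$ uniformly in $x$. The former is handled by Lemma \ref{l: inf achieved on the non-compact portion}, but the uniformity in $x$ requires combining it with Theorem \ref{l: wing-like} in a precise way. A secondary subtlety is that the advertised dependence $C = C(C_0,k)$ implicitly requires the a priori bound $M_0$ on $u$, which is fine for the intended application but should be made explicit in the statement.
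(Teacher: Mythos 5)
Your approach is genuinely different from the paper's. The paper proves this lemma with the Duhamel representation
\[
u(x,t)\le\int_0^t\int_M G(x,t;y,s)\frac{C_0}{d_s^k(y,\Gamma)}\,d_sy\,ds,
\]
splits the spatial integral into near and far pieces, and uses the Gaussian heat kernel upper bound of Lemma \ref{l: L-geodesic} together with the distance distortion estimate \eqref{e: distance change}. You propose instead an explicit supersolution $v=A\,\rho_t^{-(k-1)}$ with a parabolic comparison argument; this is more elementary in principle and avoids the heat-kernel machinery, at the cost of needing pointwise differential control of $\rho_t=d_t(\cdot,\Gamma)$.

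The gap is the asserted bound ``$\Delta\rho_t=O(1/\rho_t)$.'' Only one side of this is available, and it is the \emph{wrong} side for your inequality. Laplacian comparison in nonnegative curvature and the cylindrical-plane asymptotics give the upper bound $\Delta\rho_t\le C/\rho_t$ in the barrier sense; but since $\psi(r)=Ar^{-(k-1)}$ is decreasing, the computation
\[
(\partial_t-\Delta)v = A(k-1)\rho_t^{-k}\bigl(\Delta\rho_t-\partial_t\rho_t\bigr)-Ak(k-1)\rho_t^{-k-1}
\]
needs a \emph{lower} bound $\Delta\rho_t\ge -C/\rho_t$, so that the first term is not driven downward. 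This lower bound fails at the cut locus of $\Gamma$: there $\rho_t$ is only semiconcave, locally a minimum of smooth distance functions, and its distributional Laplacian contains a \emph{negative} singular measure supported on the cut locus. Since $\psi'<0$, that flips to a \emph{positive} singular part in $\Delta v$, so $v$ is a subsolution, not a supersolution, across the cut locus, and the distributional inequality $\partial_t v-\Delta v\ge C_0/\rho_t^k$ cannot hold there. The cut locus of $\Gamma$ is nonempty here (for instance along the ``equatorial'' locus equidistant from $\Gamma_1$ and $\Gamma_2$), and it is a set of codimension one lying in the region $\{\rho_t\ge r_0\}$ where your comparison is supposed to run, so this is not an ignorable technicality. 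The cylindrical-plane asymptotics describe the smooth part of $\rho_t$ but cannot repair the singular part. (Your other inputs---the distortion estimate $-\partial_t\rho_t\ge 2-o(1)$ via \eqref{e: distance change}, Lemma \ref{l: inf achieved on the non-compact portion}, and the static-domain reduction via $\phi_{-t}$---are fine, and your remark about the implicit $C^0$ bound on $u$ is a legitimate observation.) To rescue the comparison route you would need to replace $\rho_t$ by a smooth regularization $\tilde\rho_t$ comparable to $\rho_t$ with a genuine lower Laplacian bound and controlled time-derivative; this is substantially more work than the proposal indicates, which is presumably why the paper opts for the heat-kernel argument, where the cut locus never appears.
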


\begin{proof}
Let $C>0$ denote all constants depending on $C_0,k$. Denote $d_t(x,\Gamma)$ by $r(x,t)$, which satisfies the distance distortion estimates \eqref{e: distance change}. By the maximum principle and the integral formula for solutions of heat type equations, we obtain
\begin{equation*}
    u(x,t)\le\int_0^t\int_{M}G(x,t;y,s)\frac{C_0}{r^k(y,s)}\,d_sy\,ds,
\end{equation*}
where $G(x,t;y,s)$ is the heat kernel of the heat equation under $g(t)$, see \eqref{e: standard heat kernel}.

For a fixed $s\in[0,t]$, we split the integral $\int_{M}G(x,t;y,s)\frac{C_0}{r^k(y,s)}\,d_sy$ into two integrals on $B_s(x,\frac{r(x,s)}{1000})$ and $M\setminus B_s(x,\frac{r(x,s)}{1000})$, and denote them respectively by $I(s)$ and $II(s)$. 
We will estimate them similarly as in the proof of Theorem \ref{t: R upper bd}.
For $II(s)$, note $\frac{d_s^2(y,x)}{t-s}\ge\frac{r(x,s)}{C}$ for all $y\in M\setminus B_s(x,\frac{r(x,s)}{1000})$, by the heat kernel estimates Lemma \ref{l: L-geodesic} and Lemma \ref{l: heat kernel lower bound implies upper bound} we obtain
\begin{equation*}
    II(s)\le C\int_{M\setminus B_s(x,\frac{r(x,s)}{1000})}e^{-\frac{d_s^2(y,x)}{C(t-s)}}\,d_sy
    \le C\cdot e^{-\frac{r(x,s)}{C}}\le\frac{C}{r^k(x,s)}.
\end{equation*}
For $I(s)$, we have $r(y,s)\ge\frac{r(x,s)}{2}$ for all $y\in B_{s}(x,\frac{r(x,s)}{1000})$, and thus
\begin{equation*}
    I(s)\le C\sup_{y\in B_s(x,\frac{r(x,s)}{1000})}r^{-k}(y,s)\le\frac{C}{r^{k}(x,s)}.
\end{equation*}
Therefore, by \eqref{e: distance change}
and the estimates of $I(s)$ and $II(s)$, we obtain 
\begin{equation*}
    u(x,t)\le\int_0^t\frac{C}{r^k(x,s)}\,ds
    \le C\left(\frac{1}{r^{k-1}(x,t)}-\frac{1}{(r(x,t)+1.9\,t)^{k-1}}\right)\le\frac{C}{r^{k-1}(x,t)}.
\end{equation*}
\end{proof}

Now we prove the main result of this section, which finds a non-trivial killing field of $(M,g)$ as time goes to infinity.

\begin{prop}\label{l: non-vanishing}
There exists a vector field $X_{\infty}$ which does not vanish everywhere and has bounded norm such that $\LL_{X_{\infty}}g=0$.
\end{prop}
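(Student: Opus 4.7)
The plan is to construct $X_\infty$ as a subsequential limit of $\widetilde X(t):=\phi_{-t*}X(t)$, where $X(t):=\phi_{t*}Y-Z(t)$. First I would verify directly that $X(t)$ satisfies
\begin{equation*}
    \partial_t X(t)=\Delta_{g(t)}X(t)+\Ric_{g(t)}(X(t)),\qquad X(0)=Y,
\end{equation*}
which is an immediate consequence of the definition of $Q(t)$ and \eqref{e: Z equation}. By a standard computation on a Ricci flow background, the symmetric $2$-tensor $\LL_{X(t)}g(t)$ then solves the linearized Ricci-Deturck equation $\partial_t h=\Delta_{L,g(t)}h$ with initial value $h(0)=\LL_Y g$. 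Since $Y$ extends the killing field of $\overline g$, outside a compact set $\LL_Y g=\LL_Y(g-\overline g)$, so by \eqref{e: quote} and Theorem \ref{l: R>e^{-2r}} the hypothesis $|\LL_Y g|/R\to 0$ of Corollary \ref{c: Lie derivative tends to zero} is satisfied. Consequently $|\LL_{X(t)}g(t)|_{g(t)}\to 0$ smoothly and uniformly on $B_{g(t)}(p,D)$ for every $D$, where $p$ denotes the critical point of $f$.

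Next, using that $\phi_t:(M,g)\to(M,g(t))$ is an isometry fixing both $p$ and $\Gamma$, I would transport $X(t)$ back to $(M,g)$ by $\widetilde X(t):=\phi_{-t*}X(t)=Y-\widetilde Z(t)$, where $\widetilde Z(t):=\phi_{-t*}Z(t)$. The identity
\begin{equation*}
    |\LL_{\widetilde X(t)}g|_g(x)=|\LL_{X(t)}g(t)|_{g(t)}(\phi_t(x)),
\end{equation*}
combined with $\phi_t(B_g(p,D))=B_{g(t)}(p,D)$, gives $|\LL_{\widetilde X(t)}g|_g\to 0$ uniformly on each compact subset of $M$. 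Moreover
\begin{equation*}
    |\widetilde Z(t)|_g(x)=|Z(t)|_{g(t)}(\phi_t(x))\le\frac{C_k}{d_{g(t)}^k(\phi_t(x),\Gamma)}=\frac{C_k}{d_g^k(x,\Gamma)}
\end{equation*}
by Lemma \ref{l: Z equation} and the $\phi$-invariance of $\Gamma$, so $|\widetilde X(t)|_g\le|Y|_g+C$ is uniformly bounded. Parabolic regularity for the equation satisfied by $X(t)$, transferred via the isometry $\phi_{-t}$, provides uniform $C^k$ bounds on $\widetilde X(t)$, so along a subsequence $t_i\to\infty$ we obtain a smooth limit $X_\infty=\lim_i\widetilde X(t_i)$ satisfying $\LL_{X_\infty}g=0$ and $|X_\infty|_g\le C$.

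For the non-vanishing assertion, the key input is the polynomial decay estimate of Lemma \ref{l: Z equation}: the bound $|\widetilde Z(t)|_g(x)\le C_k/d_g^k(x,\Gamma)$ is uniform in $t$. On the other hand, far from $\Gamma$ the metric $\overline g$ is close to $(\RR\times S^1,g_{stan})$, so $|Y|^2_{\overline g}=G$ satisfies $G\to 1$ as $d_g(\cdot,\Gamma)\to\infty$ by Lemma \ref{l: dA and nabla dA} together with Theorem \ref{l: wing-like}; combined with $|g-\overline g|\to 0$ at infinity (Theorem \ref{c: best approximation}), this yields $|Y|_g(x)\to 1$ as $d_g(x,\Gamma)\to\infty$. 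Fixing any $x$ with $d_g(x,\Gamma)$ sufficiently large and using $k=2$ in Lemma \ref{l: Z equation} gives $|\widetilde X(t)|_g(x)\ge|Y|_g(x)-C/d_g^2(x,\Gamma)\ge 1/2$ uniformly in $t$, so $|X_\infty|_g(x)\ge 1/2$ and $X_\infty\not\equiv 0$.

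The main obstacle is really the non-vanishing step: Section \ref{s: lie} easily forces $\LL_{X(t)}g(t)\to 0$, but without the refined polynomial decay of $Z(t)$ one could only conclude $|\widetilde Z(t)|_g$ is bounded, leaving open the possibility that $\widetilde Z(t)\to Y$ so that the limit killing field vanishes. The polynomial decay of $Z(t)$, ultimately a consequence of the fast decay of $Q(t)$ (Lemma \ref{l: Q(t)}) and the curvature upper bound (Theorem \ref{t: R upper bd}), is what rules this out.
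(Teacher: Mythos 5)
Your proposal is correct and follows essentially the same route as the paper: take $X(t)=\phi_{t*}Y-Z(t)$, check that $\LL_{X(t)}g(t)$ solves the linearized Ricci--DeTurck equation with initial data $\LL_Y g$, invoke Corollary \ref{c: Lie derivative tends to zero} to kill the Lie derivative in the limit, transport back to $(M,g)$ by the isometry $\phi_{-t}$, and rule out vanishing by comparing $|Y|\approx 1$ with the polynomial decay of $Z$ away from $\Gamma$. The only cosmetic difference is at the non-vanishing step: the paper simply selects a single point $q$ where the $\epsilon$-cylindrical plane structure (Theorem \ref{l: wing-like}) forces $|Y|(q)$ to be $\epsilon$-close to $1$ and $r(q,0)$ to be large, while you assert the slightly stronger statement that $|Y|_g\to 1$ as $d_g(\cdot,\Gamma)\to\infty$; to justify that from Lemma \ref{l: dA and nabla dA} you do need to integrate the bound $|\widetilde\nabla G^{1/2}|\lesssim d_g^{-2}(\cdot,\Gamma)$ along a $\phi_t$-trajectory and feed in the limit value $1$ from Theorem \ref{l: wing-like}, exactly as the paper does for $dA$ inside the proof of that lemma — but only the existence of one such point is actually needed. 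Likewise you bound $|\widetilde X(t)|_g$ using the $k=0$ case of Lemma \ref{l: Z equation}, whereas the paper derives the $C^0$-bound on $X(t)$ directly from the maximum principle applied to $|X|^2$; both are valid and interchangeable.
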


\begin{proof}
Let $X(t)=Y(t)-Z(t)$ and $\widetilde{X}(t)=\phi_{-t*}X(t)$.
We will show that there exists a sequence $t_i\rii$ such that the vector fields $\widetilde{X}(t_i)$ on $M$ smoothly converge to a non-zero killing field $X_{\infty}$.

By the definitions of $Z(t)$ and $X(t)$, it is easy to see
\begin{equation*}
\begin{cases}
    \partial_t X(t)=\Delta X(t)+\Ric(X(t)),\\
    X(0)=Y(0).
\end{cases}
\end{equation*}
Let $h(t)=\LL_{X(t)}g(t)$, then a direct computation shows that $h(t)$ satisfies the  linearized Ricci-DeTurck equation
\begin{equation*}
    \begin{cases}
    \partial_t h(t)=\Delta_L h(t),\\
    h(0)=\LL_Xg=\LL_Yg.
\end{cases}
\end{equation*}

Note we have the isometry 
\begin{equation*}
    (M,g(t),p,X(t),h(t))\xrightarrow[]{\phi_{-t}} (M,g,p,\widetilde{X}(t),\widetilde{h}(t)).
\end{equation*}
So $\widetilde{h}(t):=\phi_t^*h(t)=\LL_{\widetilde{X}(t)}g$. 

By Theorem \ref{l: wing-like}, for any $\epsilon>0$, the manifold is covered by $\epsilon$-cylindrical planes on scale $1$ on $\Gamma_{\ge A}$ for sufficiently large $A$. So we may pick a point $q\in M$ such that $||Y|(q,0)-1|\le\epsilon$ and $r(q,0)>2C_1+1$, where $C_1>0$ is the constant from Lemma \ref{l: Z equation}.
Then by the definition of $Y$ we have $|Y|(\phi_t(q),t)=|Y|(q,0)\ge1-\epsilon$, and by Lemma \ref{l: Z equation} we have
$|Z|(\phi_t(q),t)=|Z|(q,0)\le\frac{1}{2}$.
Therefore,
\begin{equation*}
    |\widetilde{X}|_g(q,t)=|X|_{g(t)}(\phi_t(q),t)\ge |Y|_{g(t)}(\phi_t(q),t)-|Z|_{g(t)}(\phi_t(q),t)\ge\frac{1}{2}-\epsilon.
\end{equation*}

Next, by the $C^0$-upper bound $|X|(t)\le C$, and the standard interior estimates for linear parabolic equations; see e.g. \cite[Theorem 7.22]{lieberman1996second}, we have uniform $C^k$-upper bounds on $|\widetilde{X}|(t)$.
Therefore, by the Arzella-Ascoli theorem, there exists $t_i\rii$ such that $\widetilde{X}(t_i)$ smoothly uniformly converges to a vector field $X_{\infty}$ and correspondingly $\widetilde{h}(t_i)$ converges to a smooth symmetric 2-tensor $\LL_{X_{\infty}}g$.

First, we have $X_{\infty}\neq0$, because $|X_{\infty}|(q)=|\widetilde{X}|(q,t_i)\ge\frac{1}{2}-\epsilon$. 
Moreover, by Corollary \ref{c: Lie derivative tends to zero} we see that $\widetilde{h}(t_i)$ converges to $0$ smoothly and uniformly on any compact subsets of $M$, which implies $\LL_{X_{\infty}}g=0$.
\end{proof}

\section{Proof of the O(2)-symmetry}\label{s: O(2)-symmetry}
In this section we prove the $O(2)$-symmetry for all 3D steady gradient solitons that are not the Bryant soliton. By Proposition \ref{l: non-vanishing} we find a non-zero smooth vector field $X$ such that $\LL_Xg=0$. We will show that $X$ induces an isometric $O(2)$-action $\{\chi_{\theta}\}_{\theta\in[0,2\pi)}$.
Throughout this section we assume $(M,g,f,p)$ is a 3D steady gradient solitons with positive curvature that is not the Bryant soliton, where $p$ is the critical point of $f$, and $f(p)=0$.

First, let $\{\chi_{\theta}\}_{\theta\in\mathbb{R}}$, be the one parameter group of isometries generated by $X$, we show that $X$ and $\nabla f$ are commutative, and hence the diffeomorphisms they generate are commutative.
\begin{lem}\label{l: comm}
$\left[X,\nabla f\right]=0$, and $\chi_{\theta}\circ\phi_t=\phi_t\circ\chi_{\theta}$, $t\in\R,\theta\in\R$.
\end{lem}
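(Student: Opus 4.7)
The plan is to show first that $X$ vanishes at the critical point $p$ of $f$, then that $Xf$ is everywhere zero, and finally that these two facts together force the commutativity with $\nabla f$ at the level of vector fields and hence at the level of flows.

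I would begin with the identity $X(R)=0$. Since $\mathcal{L}_X g=0$, the field $X$ preserves the Ricci tensor and hence the scalar curvature. By Theorem~\ref{t: Rmax critical point}, $R$ attains its maximum only at the critical point $p$ of $f$. Because $R$ is constant along each orbit of $X$, the orbit through $p$ must lie inside the set $\{R=R_{\max}\}=\{p\}$. Therefore $\chi_\theta(p)=p$ for all $\theta\in\R$, and in particular $X(p)=0$.

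Next I would show $Xf\equiv 0$. Since $X$ is Killing, $\mathcal{L}_X$ commutes with the Levi--Civita connection, so
\begin{equation*}
\mathcal{L}_X(\nabla^2 f)=\nabla^2(\mathcal{L}_X f)=\nabla^2(Xf).
\end{equation*}
The soliton equation $\nabla^2 f=\Ric$ combined with $\mathcal{L}_X\Ric=0$ gives $\nabla^2(Xf)=0$, so $\nabla(Xf)$ is a parallel vector field on $(M,g)$. Since $(M,g)$ has positive Ricci curvature, the Bochner formula applied to $|\nabla(Xf)|^2$ forces $\nabla(Xf)=0$. Hence $Xf$ is constant; evaluating at $p$ and using $X(p)=0$ we conclude $Xf\equiv 0$.

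From $Xf\equiv 0$ and $\chi_0=\mathrm{id}$ one infers $f\circ\chi_\theta=f$ for every $\theta$. Since $\chi_\theta$ is an isometry preserving $f$, it preserves its gradient: $\chi_{\theta\,*}\nabla f=\nabla f$. Differentiating this identity at $\theta=0$ yields $[X,\nabla f]=0$. Finally, $X$ has bounded norm by Proposition~\ref{l: non-vanishing} and $|\nabla f|$ is bounded by the soliton identity $R+|\nabla f|^2\equiv\mathrm{const}$, so both vector fields are complete on the complete manifold $(M,g)$; thus $[X,\nabla f]=0$ implies the commutativity of their flows, $\chi_\theta\circ\phi_t=\phi_t\circ\chi_\theta$.

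The only step that is not a routine tensor manipulation is the proof that $X(p)=0$, and the key external input is the uniqueness of the maximum point of $R$ provided by Theorem~\ref{t: Rmax critical point}. Everything else is standard: commutation of $\mathcal{L}_X$ and $\nabla$ for Killing fields, the soliton identity, the Bochner argument ruling out nontrivial parallel vector fields in positive Ricci curvature, and the basic fact that commuting complete vector fields generate commuting flows.
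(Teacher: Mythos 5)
Your proof is correct and follows the same overall strategy as the paper: fix the unique maximum point $p$ of $R$ (which is the critical point of $f$), observe that $\chi_\theta(p)=p$ since $\chi_\theta$ preserves $R$, and then leverage the soliton equation $\nabla^2 f=\Ric$ together with $\chi_\theta^*g=g$ to upgrade this pointwise fixed point to global invariance of $f$. The only difference is in the middle step. The paper shows $f\circ\chi_\theta=f$ directly by integrating $\nabla^2(f\circ\chi_\theta)=\chi_\theta^*\Ric=\Ric=\nabla^2 f$ twice along a minimizing geodesic from $p$, using that $(f\circ\chi_\theta)(p)=f(p)$ and $\nabla(f\circ\chi_\theta)(p)=\nabla f(p)=0$. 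You instead set $u=Xf$, use $\mathcal{L}_X\nabla=0$ for Killing $X$ to get $\nabla^2 u=\mathcal{L}_X(\nabla^2 f)=\mathcal{L}_X\Ric=0$, and then rule out a nonzero parallel gradient via positive Ricci curvature. Both arguments are sound; the paper's integration is a bit more elementary (no extra appeal to positivity of $\Ric$), while your parallel-vector-field route is a cleaner one-line tensor computation. The final passage from $f\circ\chi_\theta=f$ to $[X,\nabla f]=0$ and the flow commutation (with the completeness check for $X$ and $\nabla f$) matches the paper.
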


\begin{proof}
We first show that the potential function is invariant under $\chi_{\theta}$. Let $p$ be the critical point of $f$. Since $p$ is the unique maximum point of $R$, we have $\chi_{\theta}(p)=p$ for all $t$, and hence 
\begin{equation*}
    f\circ\chi_{\theta}(p)=f(p)=0,\quad \nabla (f\circ\chi_{\theta})(p)=\nabla f(p)=0,\quad \nabla^2(f\circ\chi_{\theta})=\Ric=\nabla^2 f.
\end{equation*}
For any $x\in M$, let $\sigma:[0,1]\ri M$ be a minimizing geodesic from $p$ to $x$, then
\begin{equation*}\begin{split}
    f(\chi_{\theta}(x))&=f(\chi_{\theta}(p))+\int_0^1\int_0^r\nabla^2(f\circ\chi_{\theta})(\sigma'(s),\sigma'(s))\,ds\,dr\\
    &=f(p)+\int_0^1\int_0^r\nabla^2f(\sigma'(s),\sigma'(s))\,ds\,dr
    =f(x).
\end{split}\end{equation*}
So $f\circ\chi_{\theta}\equiv f$.
Now since $\chi^*_{\theta}(f)=f$ and $\chi^*_{\theta}g=g$, it is easy to see 
$\chi^*_{\theta}\left(\nabla f\right)=\nabla f$.
So $\left[X,\nabla f\right]=0$ and hence $\chi_{\theta}\circ\phi_t=\phi_t\circ\chi_{\theta}$. 
\end{proof}

Second, we show that $\chi_{\theta}$ is a $SO(2)$-isometry.

\begin{lem}
There exists $\lambda>0$ such that after replacing $\{\chi_{\theta}\}$ by $\{\chi_{\lambda\theta}\}$, we have that $\{\chi_{\theta}\}$ is a $SO(2)$-isometry on $M$.
\end{lem}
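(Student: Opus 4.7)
The plan is to exploit the fixed point $p$ of the flow $\chi_{\theta}$ and linearize there. Since $\chi_{\theta}(p)=p$ for all $\theta$ (because $p$ is the unique maximum of $R$, as used in the previous lemma), we have $X(p)=0$, and the differential $d\chi_{\theta}|_{p}$ is a one-parameter subgroup of $O(T_pM)\cong O(3)$ generated by the skew-symmetric endomorphism $A:T_pM\to T_pM$, $A(v):=\nabla_vX|_p$. The idea is to find the period of $A$ as a rotation in $T_pM$, show it is also a period of $\chi_{\theta}$ on all of $M$, and rescale.

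First I would verify $A\ne 0$. If instead $A=0$, then $X(p)=0$ and $\nabla X|_p=0$; a standard rigidity fact for Killing fields (they satisfy the second-order ODE $\nabla^2_{v,w}X=R(X,v)w$ along geodesics, so $X$ and $\nabla X$ at one point determine $X$ globally on the connected manifold $M$) would force $X\equiv 0$, contradicting Proposition \ref{l: non-vanishing}. Since $T_pM$ is $3$-dimensional and $A$ is a nonzero skew-symmetric endomorphism, its eigenvalues are $\{0,\pm i\omega\}$ for a unique $\omega>0$, and consequently $\exp(\theta A)$ is periodic in $\theta$ with fundamental period $T_0:=2\pi/\omega$.

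Next I would close the loop globally. Since $\chi_{T_0}$ is an isometry of the connected Riemannian manifold $(M,g)$ that fixes $p$ and satisfies $d\chi_{T_0}|_p=\exp(T_0A)=\mathrm{id}_{T_pM}$, the standard fact that an isometry agreeing with the identity to first order at one point is the identity (via the exponential map at $p$) yields $\chi_{T_0}=\mathrm{id}_M$. Setting $\lambda:=T_0/(2\pi)=1/\omega$ and replacing $\{\chi_\theta\}$ by $\{\chi_{\lambda\theta}\}$, the flow descends to an isometric action of $\R/2\pi\mathbb{Z}\cong SO(2)$ on $M$. Effectiveness follows because $T_0$ is the fundamental period of $d\chi_\theta|_p$, so $\chi_{\lambda\theta}\ne\mathrm{id}$ for $\theta\in(0,2\pi)$.

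I do not anticipate a serious obstacle here: the only genuine input is the nonvanishing of $A$, which is a direct consequence of the uniqueness of Killing fields, and the rest is linear algebra in $\mathfrak{so}(3)$ combined with the rigidity of isometries. The more substantive steps of the $O(2)$-symmetry argument (commutativity with $\phi_t$, the fixation of $\Gamma$, and the vanishing of the curvature form needed to upgrade $SO(2)$ to $O(2)$) are handled in the surrounding discussion rather than in this lemma.
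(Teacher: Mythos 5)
Your proof is correct but takes a genuinely different route from the paper. The paper exploits the invariance of $f$ (established in the preceding lemma) to restrict $\chi_{\theta}$ to a compact level set $f^{-1}(a)\cong S^2$, where any nontrivial orientation-preserving isometric $\R$-action is a rotation about two antipodal fixed points; periodicity is read off there, and the rigidity of isometries (identity $1$-jet at a point implies identity) is applied at a point of $f^{-1}(a)$ to promote periodicity to all of $M$. You instead linearize at the fixed critical point $p$ itself: the nonvanishing of $A=\nabla X|_p$ follows from the standard rigidity of Killing fields together with Proposition \ref{l: non-vanishing}, and then the fact that a nonzero element of $\mathfrak{so}(3)$ generates a periodic one-parameter subgroup does all the work, with the same isometry-rigidity fact closing the argument. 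Your route is more self-contained and bypasses the sphere-level-set geometry, but it leans on the special feature that in dimension $3$ a skew-symmetric endomorphism has a single rotation frequency, so $\exp(\theta A)$ is automatically periodic; the paper's argument via the $2$-sphere would generalize more readily to settings where the fixed-point linearization could a priori have incommensurate frequencies. Both proofs ultimately rest on the same rigidity principle, applied at different points, and both correctly establish effectiveness by tracking the fundamental period of the linearization.
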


\begin{proof}
Since $f$ is invariant under $\chi_{\theta}$, it follows that the level sets of $f$ is invariant under $\chi_{\theta}$. So $\chi_{\theta}$ induces an isometry on each level set of $f$.
Since the level sets $f^{-1}(a)$, $a>0$, are compact and diffeomorphic to $S^2$, it is easy to see that $X|_{f^{-1}(a)}$ vanishes at exactly two points, and $\chi_{\theta}|_{f^{-1}(a)}$ acts by rotations with two fixed points.

Therefore, after replacing $X$ by $\lambda X$ for some $\lambda>0$ we may assume that $\chi_{\theta}|_{f^{-1}(a)}=id$ if and only if $\theta=2k\pi$, for $k\in\mathbb{Z}$.
In particular, for a point $y\in f^{-1}(a)$, we have $\chi_{2\pi}(y)=y$, and $(\chi_{2\pi}|_{f^{-1}(a)})_{*y}$ is the identity transformation of the tangent space $T_yf^{-1}(a)$.
Since $\chi_{\theta}$ is a smooth family of diffeomorphisms, and $\chi_0=id$, it follows that $\chi_{2\pi}$ preserves the orientation. So $(\chi_{2\pi})_{*y}$ is the identity transformation of $T_yM$, and hence $\chi_{2\pi}=id$.
Therefore, $\chi_{\theta}$, $\theta\in[0,2\pi)$ is a $SO(2)$-isometry. 
\end{proof}

Next, we show that the fixed point set
Let $\Gamma'=\{x\in M: X(x)=0\}=\{x\in M: \chi_{\theta}(x)=x,\theta\in\R\}$ of the $SO(2)$-isometry $\chi_{\theta}$ coincides with $\Gamma=\Gamma_1(-\infty,\infty)\cup\Gamma_2(-\infty,\infty)\cup\{p\}$, where $\Gamma_1,\Gamma_2$ are two integral curves of $\nabla f$ from Corollary \ref{l: new Gamma}.

\begin{lem}
$\Gamma=\Gamma'$.
\end{lem}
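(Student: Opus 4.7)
The plan is to prove both inclusions $\Gamma\subset\Gamma'$ and $\Gamma'\subset\Gamma$ separately. The first uses the commutation $[X,\nabla f]=0$ from Lemma \ref{l: comm} together with the characterisation of $\Gamma_1,\Gamma_2$ (via Corollary \ref{l: new Gamma} and Theorem \ref{l: wing-like}) as the only integral curves of $\nabla f$ along which the manifold dimension-reduces to $(\R\times\cigar,x_{tip})$. The second is a blow-up argument: at a point of $\Gamma'$ far from $p$, neither of the two possible asymptotic limits $(\RR\times S^1,g_{stan})$ or $(\R\times\cigar,x_{tip})$ admits a $SO(2)$-fixed point whose local picture is compatible with $\Gamma'$ being an integral curve of $\nabla f$, except in the cigar case centred at the tip.

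For $\Gamma\subset\Gamma'$: The critical point $p$ is the unique maximum of $R$ and is already known to be fixed by $\chi_\theta$ from the proof of Lemma \ref{l: comm}. Since $\chi_\theta\circ\phi_t=\phi_t\circ\chi_\theta$, the curve $\chi_\theta(\Gamma_i)$ is again an integral curve of $\nabla f$ asymptoting to $p$ at $-\infty$ and dimension-reducing to $(\R\times\cigar,x_{tip})$ at $+\infty$; by Corollary \ref{l: new Gamma} and the uniqueness of integral curves through any point, $\chi_\theta$ permutes $\{\Gamma_1,\Gamma_2\}$, and continuity in $\theta$ with $\chi_0=\mathrm{id}$ forbids the swap, so $\chi_\theta(\Gamma_i)=\Gamma_i$ setwise. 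Commuting with the shift flow $\phi_t$ forces $\chi_\theta|_{\Gamma_i}$ to itself be a shift $s\mapsto s+a(\theta)$. The identity $\frac{d}{ds}R(\Gamma_i(s))=\langle\nabla R,\nabla f\rangle=-2\Ric(\nabla f,\nabla f)<0$ from \eqref{e: two} makes $R|_{\Gamma_i}$ strictly monotone, and combined with $\chi_\theta$-invariance of $R$ this forces $a(\theta)\equiv 0$, giving $\Gamma_1\cup\{p\}\cup\Gamma_2\subset\Gamma'$.

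For $\Gamma'\subset\Gamma$: The fixed set of a nontrivial isometric $SO(2)$-action on a $3$-manifold is a smooth totally geodesic $1$-submanifold. On $\Gamma'\setminus\{p\}$ the vector field $\nabla f$ is $\chi_\theta$-invariant, nonzero, and hence tangent to $\Gamma'$, so each component of $\Gamma'\setminus\{p\}$ is an integral curve of $\nabla f$. Given $q_n\in\Gamma'$ with $q_n\to\infty$ along such a curve, Theorem \ref{l: wing-like} says $(M,r^{-2}(q_n)g,q_n)$ subconverges to either $(\R\times\cigar,x_{tip})$ or $(\RR\times S^1,g_{stan})$. The Killing field $X$ satisfies $X(q_n)=0$ while the operator $\nabla X|_{q_n}$ is scale-invariant and realises the linearised rotation of the $SO(2)$-action (of rotation number $\pm 1$ for the effective action), so the rescaled Killing fields subconverge to a nontrivial Killing field $X_\infty$ on the limit that vanishes at $q_\infty$ and generates a limiting isometric $SO(2)$-action (the period $2\pi$ passes to the limit because $\chi_{2\pi}=\mathrm{id}$ for each $n$). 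On $\RR\times S^1$, the classification of $SO(2)$-subgroups of $\mathrm{Isom}(\RR\times S^1)$ with a fixed point shows that the only option is rotation in the $\R^2$-factor, whose fixed set is a closed circle $\{c\}\times S^1$ pointing in the $S^1$-direction; meanwhile $f-f(q_n)$ subconverges to an affine function $f_\infty$ on $\RR\times S^1$ with $|\nabla f_\infty|^2=R(p)>0$, and single-valuedness on $S^1$ forces $\nabla f_\infty$ to lie in the $\R^2$-direction. Since $\Gamma'$ is both tangent to $\nabla f$ and contained in the fixed set, its tangent at $q_\infty$ would lie simultaneously in the orthogonal $S^1$- and $\R^2$-directions, a contradiction. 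Hence the blow-up at $q_n$ is $(\R\times\cigar,x_{tip})$, so $q_n$ is an $\epsilon$-tip point for every $\epsilon>0$; Lemma \ref{l: two chains} and Lemma \ref{l: Gamma}, together with uniqueness of integral curves of $\nabla f$, force the arm of $\Gamma'$ containing $q_n$ to coincide with $\Gamma_1$ or $\Gamma_2$. A closed-loop component of $\Gamma'$ disjoint from $p$ would likewise be an integral curve of $\nabla f$, but $f$ is strictly increasing along any integral curve and so such a component cannot close up.

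The main obstacle is making the blow-up convergence of the $SO(2)$-action fully rigorous: because $X(q_n)=0$ the rescaled $C^0$-norm of $X$ at the base point vanishes, and what survives in the limit is the scale-invariant infinitesimal rotation $\nabla X|_{q_n}$. One must check that the $2\pi$-periodicity of the action is preserved in the limit, so that $X_\infty$ generates an $SO(2)$-action rather than an $\R$-action with dense orbits, and then invoke the classification of isometric $SO(2)$-subgroups of $\mathrm{Isom}(\RR\times S^1)$ with non-empty fixed set. Once this is in place, the direction clash between $\nabla f_\infty$ (in the $\R^2$-direction) and the fixed $S^1$-circle (in the $S^1$-direction) is a one-line observation, and the remaining topological points, excluding closed-loop components and matching the two arms of $\Gamma'$ with $\Gamma_1$ and $\Gamma_2$, are straightforward.
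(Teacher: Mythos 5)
Your proposal is structured differently from the paper's proof, and the differences are worth noting, but there is a genuine gap that prevents either inclusion from closing as written.

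Both of your directions ultimately rest on an appeal of the form ``$\gamma$ is an integral curve of $\nabla f$ emanating from $p$ along which the rescaled soliton converges to $(\R\times\cigar,x_{tip})$, and by Corollary~\ref{l: new Gamma} and uniqueness of integral curves, $\gamma$ must equal $\Gamma_1$ or $\Gamma_2$.'' But Corollary~\ref{l: new Gamma} only asserts the \emph{existence} of two such curves, not their uniqueness among integral curves with those asymptotics, and the ODE uniqueness theorem for $\nabla f$ only identifies two integral curves once you have produced a common point. Producing that common point is the actual content of the lemma, and it is exactly the step your argument skips in both directions (for $\Gamma\subset\Gamma'$ you invoke it to get ``$\chi_\theta$ permutes $\{\Gamma_1,\Gamma_2\}$''; for $\Gamma'\subset\Gamma$ you invoke it to force ``the arm of $\Gamma'$ to coincide with $\Gamma_1$ or $\Gamma_2$''). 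Until the intersection is established, it is conceivable a priori that $\Gamma'_1$ and $\Gamma_1$ are two disjoint integral curves running in parallel out along the same chain $\mathcal{C}_1$, each reducing to the cigar tip at the correct rate.

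The paper closes this gap with a short backward-flow squeeze. Granting (as you also argue via the blow-up, and the paper asserts more briefly) that along both $\Gamma_j$ and $\Gamma'_j$ the rescaled soliton converges to $(\R\times\cigar,x_{tip})$, one concludes that for large $s$ the two curves lie in the same chain and within a tube of radius $o(1)$ around the tip locus, so there are $x_i\in\Gamma_j$, $y_i\in\Gamma'_j$ far from $p$ with $d_g(x_i,y_i)\to 0$. Choosing $t_i>0$ with $\phi_{-t_i}(x_i)\in B_g(p,2)\setminus B_g(p,1)$, and using that forward Ricci flow of a nonnegatively curved soliton shrinks distances, one gets
\begin{equation*}
d_g\big(\phi_{-t_i}(x_i),\phi_{-t_i}(y_i)\big)=d_{g(t_i)}(x_i,y_i)\le d_g(x_i,y_i)\to 0,
\end{equation*}
and a subsequential limit $q\neq p$ lies in both $\Gamma_j$ and $\Gamma'_j$; ODE uniqueness then gives $\Gamma_j=\Gamma'_j$. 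This single argument yields both inclusions at once, making your separate treatment (the setwise-fix plus $R$-monotonicity argument for $\Gamma\subset\Gamma'$, and the classification of $SO(2)$-subgroups of $\mathrm{Isom}(\RR\times S^1)$ for $\Gamma'\subset\Gamma$) unnecessary machinery, although the latter is a reasonable elaboration of what the paper dispatches with ``it is easy to see.'' Your observation that closed-loop components of $\Gamma'$ are ruled out because $f$ is strictly increasing along integral curves is correct and a nice touch, but the core of the lemma is the intersection step, which you need to supply.
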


\begin{proof}
Note that $\Gamma\setminus\{p\},\Gamma'\setminus\{p\}$ are both unions of two integral curves of $\nabla f$.
Let $\Gamma'_1,\Gamma'_2$ are the two components of $\Gamma'\setminus\{p\}$.
It suffices to show that for each $j=1,2$, the integral curves $\Gamma_j,\Gamma'_j$, intersect at some point, after possibly switching the order of $\Gamma_1$ and $\Gamma_{2}$.
To see this, note that on the one hand, by Corollary \ref{l: new Gamma} we have that the manifolds $(M,r^{-2}(x)g,x)$ converge to $(\R\times\cigar,r^{-2}(x_{tip})g_c,x_{tip})$ for any sequence $x\rii$ along $\Gamma$.
On the other hand, since the points on $\Gamma'$ are fixed points of the $SO(2)$-isometry, it is easy to see that the manifolds $(M,r^{-2}(x)g,x)$ converge to $(\R\times\cigar,r^{-2}(x_{tip})g_c,x_{tip})$ for any sequence $x\rii$ along $\Gamma'$.

Therefore, for any $i\in\mathbb{N}$, switching the order of $\Gamma_1$ and $\Gamma_{2}$ we may assume that there are two points $x_i\in\Gamma_1\cap (M\setminus B_g(p,2))$ and $y_i\in\Gamma'_1\cap(M\setminus B_g(p,2))$ such that $d_g(x_i,y_i)<i^{-1}$. Let $t_i>0$ be a constant such that $\phi_{-t_i}(x_i)\in B_g(p,2)\setminus B_g(p,1)$. Then
\begin{equation*}
    d_g(\phi_{-t_i}(x_i),\phi_{-t_i}(y_i))=d_{g(t_i)}(x_i,y_i)\le d_g(x_i,y_i)\le i^{-1}\ri0.
\end{equation*}
So after passing to a subsequence we may assume $\phi_{-t_i}(x_i), \phi_{-t_i}(y_i)\ri q\neq p$, and hence
$q\in\Gamma_1\cap\Gamma'_1$ and $\Gamma_1=\Gamma_2$. Similarly we can show $\Gamma_2=\Gamma'_2$. 
\end{proof}

Lastly, we prove the $O(2)$-symmetry, that is, there exist a totally geodesic surface $N\subset M$ and a diffeomorphism $\Phi: N\times S^1\ri M\setminus\Gamma$ such that the pull-back metric $\Phi^*g$ is a warped-product metric $\Phi^*g=g_N+\varphi^2d\theta^2$, $\theta\in[0,2\pi)$, where $g_N$ is the induced metric on $N$ and $\varphi$ is a function on $N$. 

\begin{lem}
The $SO(2)$-isometry $\chi_{\theta}$ is an $O(2)$-isometry.
\end{lem}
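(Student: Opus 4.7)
The plan is to reduce the $O(2)$-upgrade of $\chi_\theta$ to the vanishing of the curvature of the connection of the principal $SO(2)$-bundle $\pi : M \setminus \Gamma \to N := (M \setminus \Gamma)/SO(2)$ and then to establish this vanishing by a two-stage blow-up. In the coordinates introduced before Lemma \ref{l: dA and nabla dA} the metric has the form $g = g_{\alpha\beta}\,d\alpha\,d\beta + G(d\theta + A_x\,dx + A_y\,dy)^2$, and the $SO(2)$-action extends to an $O(2)$-action exactly when the curvature $F = dA = F_{xy}\,dx \wedge dy$ vanishes on $N$: indeed $N$ is simply connected (being the quotient of a space diffeomorphic to $\R^3$ minus two non-compact curves meeting at $p$ by a free $SO(2)$-action, hence diffeomorphic to a half-plane), so $F = 0$ gives a global horizontal section and the warped product $g = g_N + \varphi^{2}\,d\theta^{2}$. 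I would introduce the scalar quantity
\begin{equation*}
  \mathcal{Q} := G^{2}\,|F|^{2}_{g_N} = \frac{2\,G^{2}\,F_{xy}^{2}}{\det g_N}.
\end{equation*}
Under $g \mapsto \lambda^{2} g$ one has $G \mapsto \lambda^{2}G$ and $\det g_N \mapsto \lambda^{4}\det g_N$, while $A_x, A_y$ (and therefore $F_{xy}$) are scale invariant since they are determined by the horizontal distribution, which does not change under a constant conformal rescaling. Thus $\mathcal{Q}$ is scaling invariant and continuous under smooth $SO(2)$-equivariant pointed Cheeger--Gromov convergence.

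The first step is to show that $\mathcal{Q} \equiv 0$ on $N$ follows from $\mathcal{Q}$ vanishing at a single point $\tilde x_0 = \pi(x_0)$ for some $x_0 \in \Gamma_1$. Since $\LL_X g = 0$ and $[X, \nabla f] = 0$ by Lemma \ref{l: comm}, the gradient $\nabla f$ projects to a vector field $\widetilde{\nabla f}$ on $N$ whose flow preserves $A$ up to gauge. Combining the mixed submersion identity $\overline{R}_{\theta\alpha\beta\alpha} = -\frac{1}{2}G F_{xy;\alpha} - \frac{3}{4}G_{,\alpha}F_{xy}$ from \eqref{e: DR} with the soliton equation $\Ric(\partial_\theta, \partial_\alpha) = \nabla^{2} f(\partial_\theta, \partial_\alpha)$ and the $SO(2)$-invariance $\partial_\theta f = 0$ produces a first-order linear transport equation for $F_{xy}$ along $\widetilde{\nabla f}$. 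Since $\widetilde{\nabla f}$ is non-vanishing on $N \setminus \{\pi(p)\}$ and the forward and backward $\widetilde\phi_t$-orbit of any neighborhood of $\pi(\Gamma_1)$ sweeps out all of $N$ (the backward orbit limiting to $\pi(p)$), and since Lemma \ref{l: dA and nabla dA} gives the super-polynomial decay $|F| \le C_k\,d_g(\cdot,\Gamma)^{-k}$ at spatial infinity, the transport equation forces $\mathcal{Q} \equiv 0$ on $N$ once $\mathcal{Q}(\tilde x_0) = 0$.

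The second step is to establish $\mathcal{Q}(\tilde x_0) = 0$ by a two-stage rescaling. Fix $x_0 \in \Gamma_1$ and let $x_k \in M \setminus \Gamma$ approach $x_0$; by Corollary \ref{l: new Gamma} the pointed manifolds $(M, r(x_0)^{-2} g, x_0)$ converge smoothly and $SO(2)$-equivariantly to $(\R \times \cigar, r(x_{tip})^{-2} g_c, x_{tip})$, on which $SO(2)$ acts by the standard rotation on the $\cigar$ factor, so the scale invariance and continuity of $\mathcal{Q}$ give $\lim_k \mathcal{Q}(x_k) = \mathcal{Q}$ evaluated near the cigar tip in $\R \times \cigar$. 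Rescaling once more around the cigar tip, $(\cigar, \lambda^{-2} g_c, x_{tip}) \to (\R^{2}, dx^{2} + dy^{2}, 0)$ as $\lambda \to 0$ since the cigar is smooth at its tip, the product converges equivariantly to $(\R^{3}, g_{\mathrm{flat}}, 0)$ with the $SO(2)$-action equal to rotation about the line tangent to $\R \times \{x_{tip}\}$. On this Euclidean $\R^{3}$ the principal bundle is trivial, $F \equiv 0$, and hence $\mathcal{Q} \equiv 0$; applying the scale invariance of $\mathcal{Q}$ twice forces $\mathcal{Q}(\tilde x_0) = 0$. Combined with the first step this yields $F \equiv 0$ on $N$, the warped product decomposition of $g$ on $M \setminus \Gamma$, and the $O(2)$-symmetry of $(M, g)$.

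The principal difficulty lies in the first step: extracting a clean pointwise transport equation for $F_{xy}$ (equivalently for $\mathcal{Q}$) from the soliton equation and the submersion identities \eqref{e: DR}. These formulas couple $F_{xy}$ quadratically with $G$ and its first and second covariant derivatives, and with the mixed Ricci components that must be re-expressed through $\nabla^{2} f$; the second Bianchi identity for $\overline{R}$ must be combined with the $SO(2)$-invariance of $f$ to eliminate $G_{,\alpha}F_{xy}$-type terms, and the argument must handle the vanishing of $\nabla f$ at the critical point $p$, where all trajectories of $\widetilde{\nabla f}$ on $N$ accumulate, without losing control of $\mathcal{Q}$. Dealing with the super-polynomial but not exponential decay of $F$ at infinity from Lemma \ref{l: dA and nabla dA}, together with the possible singular behavior of $\mathcal{Q}$ on $\pi(\Gamma)$, is what forces the indirect blow-up reduction via scaling invariance instead of a direct maximum principle on $N$.
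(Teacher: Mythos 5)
Your blow-up at $\Gamma$ and the use of a scale-invariant quantity to detect vanishing of the curvature of the $SO(2)$-connection are both sound and in fact close to what the paper does. The essential gap is in the first step: the transport equation you want does not go in the direction you claim. Writing the soliton metric in the coordinates $(r,t,\theta)$ where $\partial_t = \nabla f$ and $\partial_\theta = X$, the identity $\Ric(\nabla f, X) = \nabla^2 f(\nabla f, X) = \tfrac{1}{2}X(|\nabla f|^2) = 0$ combined with the mixed submersion formula $\overline{R}_{\theta r t r} = -\tfrac{1}{2}GF_{rt;r} - \tfrac{3}{4}G_{,r}F_{rt}$ yields a first-order ODE for $G^{3/2}F$ in the $r$-direction, i.e. \emph{transverse} to $\widetilde{\nabla f}$ inside a level set of $f$, not along $\widetilde{\nabla f}$. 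To obtain a transport equation along $\widetilde{\nabla f}$ as you propose, one would instead need $\Ric(X, v) = 0$ for $v$ orthogonal to $\nabla f$ in the base; this is $\nabla^2 f(X, v)$ and has no reason to vanish, and neither the second Bianchi identity nor the $SO(2)$-invariance of $f$ eliminates it. Consequently there is no way to propagate the vanishing of $\mathcal{Q}$ forward in time from a single point on $\Gamma_1$, and the "sweeping out $N$ by $\widetilde\phi_t$-orbits" step also fails for the more elementary reason that transport along a single integral curve never yields vanishing on a full neighborhood (one needs it on a whole transversal).

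The paper's argument uses exactly the $r$-direction transport that the soliton equation actually supplies: fix a level set $\Sigma = f^{-1}(a)$, parametrize a minimizing $\Sigma$-geodesic $\sigma(r)$ between the two fixed points $x_a, \overline{x}_a \in \Gamma$, and integrate $\widetilde{\nabla}_{\partial_r}(G^{3/2}dA) = 0$ from $r=0$, where the blow-up at $x_a$ (a single rescaling $(M,r_i^{-2}g,x_a) \to \R^3$, since $g$ is smooth at $x_a$; your two-stage rescaling through $\R \times \cigar$ is unnecessary) forces $\lim_{r\to 0}|G\,dA| = 0$. This gives $dA = 0$ on $\Sigma$, and letting $a$ vary covers all of $M \setminus \Gamma$. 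Finally, combining $dA = 0$ with $A_t = \langle\nabla f, X\rangle G^{-1} = 0$ and the initial normalization $A_r(r,0,\theta)=0$ forces $A \equiv 0$, so the metric is globally a warped product. To repair your argument you would need to replace the transport along $\widetilde{\nabla f}$ by transport along the level sets of $f$ and sweep the level set value, rather than sweeping by the gradient flow from a boundary point.
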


\begin{proof}
Let $\Sigma=f^{-1}(a)$ for some fixed $a>0$, and $\sigma:[0,1]\ri \Sigma$ be a minimizing geodesic in $\Sigma$ connecting the two fixed points $\{x_a,\overline{x}_a\}$.
Let $\Phi:(0,1)\times(-\infty,\infty)\times [0,2\pi)\ri M\setminus\Gamma$ be a diffeomorphism defined as
\begin{equation*}
    \Phi(r,t,\theta)=\phi_t(\chi_{\theta}(\sigma(r)))=\chi_{\theta}(\phi_t(\sigma(r)).
\end{equation*}
Then we can write the metric under this coordinate as
\begin{equation*}
    g=\sum_{\alpha,\beta=r,t}g_{\alpha\beta}dx_{\alpha}dx_{\beta}+G(d\theta+A)^2.
\end{equation*}
Since the vectors $\partial_{r},\partial_t=\nabla f,\partial_{\theta}=X$ are orthogonal at all points in $\Sigma\setminus\{x_a,\overline{x}_a\}=\Phi(\{(r,0,\theta): r\in(0,1),\theta\in[0,2\pi)\})$, 
the connection form $A=A_r\,dr+A_t\,dt$ vanishes at these points.
Moreover, we have $A_t=0$
everywhere because $\langle\partial_t,\partial_{\theta}\rangle=\langle\nabla f,X\rangle=0$. 

On the one hand, by the curvature formula \eqref{e: DR} for a $SO(2)$-symmetric metric we have
\begin{equation}\label{e: Ricextra}
    \Ric_{t\theta}=R_{\theta r  t r}=-\frac{1}{2}G\Phi_{rt;r}-\frac{3}{4}G_{,r}F_{rt}.
\end{equation}
On the other hand, by the soliton equation $\Ric=\nabla^2 f$ we have
\begin{equation}\label{e: Ricci by the soliton}
    \Ric_{t\theta}=\nabla^2_{t\theta} f=-\left\langle\nabla_{\nabla f}\nabla f,\partial_{\theta}\right\rangle=-\left\langle\nabla_{\partial_t}\partial_t,\partial_{\theta}\right\rangle
    =\frac{1}{2}\partial_{\theta}\left\langle\partial_t,\partial_t\right\rangle=0,
\end{equation}
where we used $\left\langle\partial_t,\partial_{\theta}\right\rangle=\langle X,-\nabla f\rangle=0$ and $\partial_{\theta}\left\langle\partial_t,\partial_t\right\rangle=X(|\nabla f|^2)=0$.
So by \eqref{e: Ricextra} and \eqref{e: Ricci by the soliton} we have
\begin{equation}\label{e: der of F is zero}
    \widetilde{\nabla}_{\partial_{r}}(G^{3/2}dA)=G^{\frac{1}{2}}(GF_{rt;r}+\frac{3}{2}G_{,r}F_{rt})\,dr\wedge dt=0,
\end{equation}
at points in $\Sigma\setminus\{x_a,\overline{x}_a\}$.

\begin{claim}
$dA=0$ holds on $M\setminus\Gamma$.
\end{claim}

\begin{proof}[Proof of the claim]

Consider the rescaled manifolds $(M,r_i^{-2}g,x_a)$ where $r_i>0$ is an arbitrary sequence going to zero.
Then it is easy to see that
$(M,r_i^{-2}g,q)$ smoothly converges to the Euclidean space $\R^3$, with $\Gamma$ converging to a straight line, which we may assume to be the $z$-axis after a change of coordinates.
So the $SO(2)$-isometries on $(M,r_i^{-2}g,q)$ converges to the rotation around the $z$-axis.
Note that $|G\,dA|$ is scaling invariant, this convergence implies $|G\,dA|(r_i,\theta,0)\ri0$ as $i\rii$, which proves $\lim_{r\ri0}|G\,dA|(r,\theta,0)=0$. So $\lim_{r\ri0}|G^{3/2}dA|(r,\theta,0)=0$.
Then by \eqref{e: der of F is zero}, we get $G^{3/2}dA(r,\theta,0)=0$ and $dA(r,\theta,0)=0$. So $dA=0$ on $\Sigma\setminus\{x_a,\overline{x}_a\}$.

Note that we may choose $\Sigma$ to be $f^{-1}(a)$ for any $a>0$, the same argument implies $dA=0$ everywhere on $M\setminus\Gamma$, which proves the claim.
\end{proof}

Now since $dA=0$ and $A_t=0$, we have $\frac{\partial A_r}{\partial t}=\frac{\partial A_t}{\partial r}=0$. Note $A_r(r,0,\theta)$=0, this implies $A_r(r,t,\theta)=0$ for all $t\in\R$.
So $A=0$ and hence the metric can be written as the following warped-product form under the coordinates $(r,t,\theta)$, 
\begin{equation*}
    g=\sum_{\alpha,\beta=r,t}g_{\alpha\beta}dx_{\alpha}dx_{\beta}+Gd\theta^2.
\end{equation*}

\end{proof}

Using the $O(2)$-symmetry and the $\mathbb{Z}_2$-symmetry at infinity, we can follow the same line as in \cite[Theorem 1.5]{Lai2020_flying_wing} to prove Theorem \ref{t': quantitative relation}.

\bibliography{bib}

\begin{thebibliography}{10}

\bibitem{AC}
G.~Anderson and B.~Chow.
\newblock A pinching estimate for solutions of the linearized ricci flow system
  on 3-manifolds.
\newblock {\em Calculus of Variations and Partial Differential Equations},
  23:1--12, 2002.

\bibitem{angenent2022unique}
S.~Angenent, S.~Brendle, P.~Daskalopoulos, and N.~{\v{S}}e{\v{s}}um.
\newblock Unique asymptotics of compact ancient solutions to three-dimensional
  ricci flow.
\newblock {\em Communications on Pure and Applied Mathematics},
  75(5):1032--1073, 2022.

\bibitem{Bakas2009AncientSO}
I.~Bakas, S.~Kong, and L.~Ni.
\newblock Ancient solutions of ricci flow on spheres and generalized hopf
  fibrations.
\newblock {\em Journal für die reine und angewandte Mathematik (Crelles
  Journal)}, 663, 2012.

\bibitem{Bamler_thesis}
R.~Bamler.
\newblock {Ricci flow with surgery}.
\newblock {\em diploma thesis}, 2007.

\bibitem{Bamler2020CompactnessTO}
R.~Bamler.
\newblock Compactness theory of the space of super ricci flows.
\newblock {\em arXiv: Differential Geometry}, 2020.

\bibitem{Bamler2021OnTR}
R.~Bamler and B.~Kleiner.
\newblock On the rotational symmetry of 3-dimensional $\kappa$-solutions.
\newblock {\em Journal f{\"u}r die reine und angewandte Mathematik (Crelles
  Journal)}, 2021:37 -- 55, 2021.

\bibitem{BamA}
R.~H. Bamler.
\newblock {Long-time behavior of 3-dimensional ricci flow A: Generalizations of
  perelman's long-time estimates}.
\newblock {\em Geometry and Topology}, 22(2):775--844, 2018.

\bibitem{BamD}
R.~H. Bamler.
\newblock {Long-time behavior of 3-dimensional ricci flow D: Proof of the main
  results}.
\newblock {\em Geometry and Topology}, 22(2):949--1068, 2018.

\bibitem{bamler2022uniqueness}
R.~H. Bamler and B.~Kleiner.
\newblock Uniqueness and stability of ricci flow through singularities.
\newblock {\em Acta Mathematica}, 228(1):1--215, 2022.

\bibitem{Bourni_convex}
T.~Bourni, M.~Langford, and G.~Tinaglia.
\newblock {Convex ancient solutions to mean curvature flow}.
\newblock {\em arXiv:1907.03932}, 2019.

\bibitem{Bourni_jdg}
T.~Bourni, M.~Langford, and G.~Tinaglia.
\newblock {Collapsing ancient solutions of mean curvature flow}.
\newblock {\em Journal of Differential Geometry}, 119(2):187 -- 219, 2021.

\bibitem{brendlesteady3d}
S.~Brendle.
\newblock {Rotational symmetry of self-similar solutions to the Ricci flow}.
\newblock {\em Inventiones Mathematicae}, 194(3):731--764, 2013.

\bibitem{Brendle_jdg_high}
S.~Brendle.
\newblock {Rotational symmetry of Ricci solitons in higher dimensions}.
\newblock {\em Journal of Differential Geometry}, 97(2):191 -- 214, 2014.

\bibitem{Brendle2019UniquenessOC}
S.~Brendle and K.~Choi.
\newblock Uniqueness of convex ancient solutions to mean curvature flow in
  $\mathbb{R}^3$.
\newblock {\em Inventiones mathematicae}, 217:35--76, 2019.

\bibitem{Brendle2021OC}
S.~Brendle and K.~Choi.
\newblock Uniqueness of convex ancient solutions to mean curvature flow in
  higher dimensions.
\newblock {\em Geometry and Topology}, 2021.

\bibitem{BrendleNaffDasSesum}
S.~Brendle, P.~Daskalopoulo, K.~Naff, and N.~Sesum.
\newblock Uniqueness of compact ancient solutions to the higher dimensional
  ricci flow.
\newblock {\em arXiv:2102.07180}, 2021.

\bibitem{Brendle2011AncientST}
S.~Brendle, G.~Huisken, and C.~Sinestrari.
\newblock Ancient solutions to the ricci flow with pinched curvature.
\newblock {\em Duke Mathematical Journal}, 158:537--551, 2011.

\bibitem{brendle2023rotational}
S.~Brendle and K.~Naff.
\newblock Rotational symmetry of ancient solutions to the ricci flow in higher
  dimensions.
\newblock {\em Geometry \& Topology}, 27(1):153--226, 2023.

\bibitem{bryant}
R.~Bryant.
\newblock {Ricci flow solitons in dimension three with SO (3)-symmetries}.
\newblock {\em preprint, Duke Univ}, pages 1--24, 2005.

\bibitem{BuragoBuragoIvanov}
D.~Burago, Y.~D. Burago, and S.~O. Ivanov.
\newblock {\em A Course in Metric Geometry}, volume~33.
\newblock Graduate Studies in Mathematics. Providence, RI: American
  Mathematical Society, 2001.

\bibitem{BGP}
Y.~Burago, M.~Gromov, and G.~Perelman.
\newblock {A.D. Alexandrov spaces with curvature bounded below}.
\newblock {\em Russian Mathematical Surveys}, 47(2):1--58, 1992.

\bibitem{CaoHD}
H.-D. Cao.
\newblock {Recent Progress on Ricci Solitons}.
\newblock {\em Advanced Lectures in Mathematics}, 11:1--38, 2010.

\bibitem{Cao-Kahler}
H.-D. Cao.
\newblock Existence of gradient kahler-ricci solitons.
\newblock {\em Elliptic and Parabolic Methods in Geometry}, 03 2012.

\bibitem{infinitesimal}
H.~D. Cao and C.~He.
\newblock {Infinitesimal rigidity of collapsed gradient steady Ricci solitons
  in dimension three}.
\newblock {\em Communications in Analysis and Geometry}, 26(3):505--529, 2018.

\bibitem{Catino}
G.~Catino, P.~Mastrolia, and D.~D. Monticelli.
\newblock {Classification of expanding and steady ricci solitons with integral
  curvature decay}.
\newblock {\em Geometry and Topology}, 20(5):2665--2685, 2016.

\bibitem{Chan2019CurvatureEF}
P.-Y. Chan.
\newblock Curvature estimates for steady ricci solitons.
\newblock {\em Transactions of the American Mathematical Society}, 2019.

\bibitem{chan2022dichotomy}
P.-Y. Chan and B.~Zhu.
\newblock On a dichotomy of the curvature decay of steady ricci solitons.
\newblock {\em Advances in Mathematics}, 404:108458, 2022.

\bibitem{ChenBL}
B.-L. Chen.
\newblock {Strong Uniqueness of the Ricci Flow}.
\newblock {\em J. Differential Geom.}, 82(2):363--382, 2007.

\bibitem{Chen-Zhu-Pinch}
B.-L. Chen and X.-P. Zhu.
\newblock Complete riemannian manifolds with pointwise pinched curvature.
\newblock {\em Inventiones Mathematicae}, 140:423--452, 05 2000.

\bibitem{Chen2011OnFA}
X.~Chen and Y.~Wang.
\newblock On four-dimensional anti-self-dual gradient ricci solitons.
\newblock {\em The Journal of Geometric Analysis}, 25:1335--1343, 2011.

\bibitem{Chow2007a}
B.~Chow, S.-C. Chu, and D.~Glickenstein.
\newblock {The Ricci flow: techniques and applications Volume 2– Part I :
  Geometric Aspects}.
\newblock {\em Part I: Geometric Aspects {\ldots}}, 2, 2007.

\bibitem{RFTandA3}
B.~Chow, S.-C. Chu, D.~Glickenstein, C.~Guenther, J.~Isenberg, T.~Ivey,
  D.~Knopf, P.~Lu, F.~Luo, and L.~Ni.
\newblock {The Ricci flow: techniques and applications. Part III:
  geometric-analytic aspects}.
\newblock {\em American Mathematical Society}, 163, 2010.

\bibitem{chow2022four}
B.~Chow, Y.~Deng, and Z.~Ma.
\newblock On four-dimensional steady gradient ricci solitons that dimension
  reduce.
\newblock {\em Advances in Mathematics}, 403:108367, 2022.

\bibitem{HaRF}
B.~Chow, P.~Lu, and L.~Ni.
\newblock {\em {Hamilton's Ricci Flow}}.
\newblock American Mathematical Society, 2006.

\bibitem{Chu}
S.-C. Chu.
\newblock {Type II ancient solutions to the Ricci flow on surface}.
\newblock {\em Comm. Anal. Geom.}, 15(1):195--215, 2007.

\bibitem{2dancientcompact}
P.~Daskalopoulos, R.~Hamilton, and N.~Sesum.
\newblock {Classification of ancient compact solutions to the ricci flow on
  surfaces}.
\newblock {\em Journal of Differential Geometry}, 91(2):171--214, 2012.

\bibitem{Sesum}
P.~Daskalopoulos and N.~Sesum.
\newblock {Eternal Solutions to the Ricci Flow on $\mathbb{R}^2$}.
\newblock {\em Int Math Res Notic}, 2006.

\bibitem{DZ}
Y.~Deng and X.~Zhu.
\newblock {3d steady Gradient Ricci Solitons with linear curvature decay}.
\newblock {\em arXiv:1612.05713}, 2016.

\bibitem{De15}
A.~Deruelle.
\newblock {Smoothing out positively curved metric cones by Ricci expanders}.
\newblock {\em Geometric and Functional Analysis}, 26(1):188--249, 2016.

\bibitem{Cao2011BachflatGS}
H.~dong Cao, G.~Catino, Q.~Chen, C.~Mantegazza, and L.~Mazzieri.
\newblock Bach-flat gradient steady ricci solitons.
\newblock {\em Calculus of Variations and Partial Differential Equations},
  49:125--138, 2011.

\bibitem{Cao2009OnLC}
H.~dong Cao and Q.~Chen.
\newblock On locally conformally flat gradient steady ricci solitons.
\newblock {\em Transactions of the American Mathematical Society},
  364:2377--2391, 2009.

\bibitem{du2021hearing}
W.~Du and R.~Haslhofer.
\newblock Hearing the shape of ancient noncollapsed flows in r4.
\newblock {\em Comm. Pure Appl. Math.(to appear)}, 2021.

\bibitem{Evans2010PartialDE}
L.~C. Evans.
\newblock Partial differential equations, second edition.
\newblock 2010.

\bibitem{Fateev}
V.~A. Fateev.
\newblock The sigma model (dual) representation for a two-parameter family of
  integrable quantum field theories.
\newblock {\em Nucl. Phys.}, 473(B), 1996.

\bibitem{FIK}
M.~Feldman, T.~Ilmanen, and D.~Knopf.
\newblock {Rotationally Symmetric Shrinking and Expanding Gradient
  Kähler-Ricci Solitons}.
\newblock {\em Journal of Differential Geometry}, 65(2):169 -- 209, 2003.

\bibitem{cigar}
R.~Hamilton.
\newblock {The Ricci flow on surfaces}.
\newblock {\em Contemporary Mathematics}, 71:237--261, 1988.

\bibitem{Hamilton_singularity_formation}
R.~Hamilton.
\newblock {The formations of singularities in the Ricci Flow}.
\newblock {\em Surveys in Differential Geometry}, 2(1):7--136, 1993.

\bibitem{Hamilton_ric}
R.~S. Hamilton.
\newblock {Three-manifolds with positive Ricci curvature}.
\newblock {\em Journal of Differential Geometry}, 17(2):255 -- 306, 1982.

\bibitem{supR}
R.~S. Hamilton.
\newblock {Eternal solutions to the Ricci flow}.
\newblock {\em Journal of Differential Geometry}, 38(1):1--11, 1993.

\bibitem{distanceH}
R.~S. Hamilton.
\newblock {The formation of singularities in the Ricci flow}.
\newblock {\em Surveys in differential geometry}, 2:7--136, 1995.

\bibitem{Hatcher:478079}
A.~Hatcher.
\newblock {\em {Algebraic topology}}.
\newblock Cambridge Univ. Press, Cambridge, 2000.

\bibitem{Hatcher}
A.~Hatcher.
\newblock Notes on basic 3-manifold topology.
\newblock 2001.

\bibitem{HN}
H.-J. Hein and A.~Naber.
\newblock {New logarithmic Sobolev inequalities and an $\epsilon$-regularity
  theorem for the Ricci flow}.
\newblock {\em Communications on Pure and Applied Mathematics},
  67(9):1543--1561, 2014.

\bibitem{white}
D.~Hoffman, T.~Ilmanen, F.~Mart{\'{i}}n, and B.~White.
\newblock {Graphical translators for mean curvature flow}.
\newblock {\em Calculus of Variations and Partial Differential Equations},
  58(4), 2019.

\bibitem{Huisken2015ConvexAS}
G.~Huisken and C.~Sinestrari.
\newblock Convex ancient solutions of the mean curvature flow.
\newblock {\em Journal of Differential Geometry}, 101:267--287, 2015.

\bibitem{Lu_Wang_Ricci_shrinker}
B.~Kotschwar and L.~Wang.
\newblock {Rigidity of asymptotically conical shrinking gradient Ricci
  solitons}.
\newblock {\em Journal of Differential Geometry}, 100(1):55 -- 108, 2015.

\bibitem{Lai2019}
Y.~Lai.
\newblock {Ricci flow under local almost non-negative curvature conditions}.
\newblock {\em Advances in Mathematics}, 343:353--392, 2019.

\bibitem{Lai2020_flying_wing}
Y.~Lai.
\newblock {A family of 3d Steady Gradient solitons that are flying wings}.
\newblock {\em arXiv:2010.07272}, 2020.

\bibitem{Lai2021Producing3R}
Y.~Lai.
\newblock Producing 3d ricci flows with non-negative ricci curvature via
  singular ricci flows.
\newblock {\em Geometry and Topology}, 25-7:3629--3690, 2021.

\bibitem{lieberman1996second}
G.~M. Lieberman.
\newblock {\em Second order parabolic differential equations}.
\newblock World scientific, 1996.

\bibitem{Lott2007DimensionalRA}
J.~Lott.
\newblock Dimensional reduction and the long-time behavior of ricci flow.
\newblock {\em Commentarii Mathematici Helvetici}, 85:485--534, 2007.

\bibitem{MT}
J.~Morgan and G.~Tian.
\newblock {Ricci flow and the Poincare conjecture}.
\newblock {\em arXiv:math/0607607}, 2009.

\bibitem{MT2}
J.~W. Morgan and G.~Tian.
\newblock Completion of the proof of the geometrization conjecture.
\newblock {\em arXiv:0809.4040}, 2008.

\bibitem{Munteanu2019PoissonEO}
O.~Munteanu, C.-J.~A. Sung, and J.~Wang.
\newblock Poisson equation on complete manifolds.
\newblock {\em Advances in Mathematics}, 2019.

\bibitem{Pel1}
G.~Perelman.
\newblock {The entropy formula for the Ricci flow and its geometric
  applications.}
\newblock {\em http://arxiv.org/abs/math/0211159}, 2002.

\bibitem{petersen}
P.~Petersen.
\newblock {\em {Riemannian geometry}}.
\newblock Springer, second edition, 2006.

\bibitem{Shi1987derivative1}
W.-X. Shi.
\newblock {Deforming the metric on complete Riemannian manifolds}.
\newblock {\em Journal of differential geometry}, 30(1):223--301, 1987.

\bibitem{simon2021local}
M.~Simon and P.~M. Topping.
\newblock Local mollification of riemannian metrics using ricci flow, and ricci
  limit spaces.
\newblock {\em Geometry \& Topology}, 25(2):913--948, 2021.

\bibitem{Spruck2020CompleteTS}
J.~Spruck and L.~Xiao.
\newblock Complete translating solitons to the mean curvature flow in r3 with
  nonnegative mean curvature.
\newblock {\em American Journal of Mathematics}, 142:993--1015, 2020.

\bibitem{Wangxujia}
X.~J. Wang.
\newblock {Convex solutions to the mean curvature flow}.
\newblock {\em Annals of Mathematics}, 173(3):1185--1239, 2011.

\bibitem{ZhaoZhu}
Z.~Zhao and X.~Zhu.
\newblock Rigidity of the bryant ricci soliton.
\newblock {\em arXiv:2212.02889}, 2022.

\bibitem{Zhu2021RotationalSO}
J.~Zhu.
\newblock Rotational symmetry of uniformly 3-convex translating solitons of
  mean curvature flow in higher dimensions.
\newblock {\em arXiv:2103.16382}, 2021.

\bibitem{zhu2022so}
J.~Zhu.
\newblock {$SO(2)$ Symmetry of the Translating Solitons of the Mean Curvature
  Flow in $\mathbb{R}^4$}.
\newblock {\em Annals of PDE}, 8(1):6, 2022.

\end{thebibliography}
\bibliographystyle{abbrv}

\end{document}